\renewcommand{\epsilon}{\varepsilon}
\newtheorem{theorem}{Theorem}[section]
\newtheorem{lemma}[theorem]{Lemma}
\newtheorem{proposition}[theorem]{Proposition}
\newtheorem{corollary}[theorem]{Corollary}
\theoremstyle{definition}
\newtheorem{remark}[theorem]{Remark}
\newtheorem{definition}[theorem]{Definition}
\numberwithin{equation}{section}
\newcommand\n{\mathbf{n}}
\newcommand\fg{\mathfrak{g}}
\newcommand\ft{\mathfrak{t}}
\newcommand\fs{\mathfrak{s}}
\newcommand\m{\mathbf{m}}
\newcommand\sn{\partial\mathbf{n}}
\newcommand\cR{\mathcal{R}}
\newcommand\R{\mathbb{R}}
\newcommand\Z{\mathbb{Z}}
\newcommand\N{\mathbb{N}}
\newcommand{\connect}{\xleftrightarrow}
\begin{document}

\title{The supercritical phase of the $\varphi^4$ model is well behaved}

\author{Trishen S. Gunaratnam\footnotemark[1]\footnote{TIFR Mumbai and ICTS Bengaluru, \url{trishen@math.tifr.res.in, trishen.gunaratnam@icts.res.in}}\:, Christoforos Panagiotis\footnotemark[2]\footnote{University of Bath, \url{cp2324@bath.ac.uk}}\:, \\ Romain Panis\footnotemark[3]\footnote{Institut Camille Jordan (Lyon 1), \url{panis@math.univ-lyon1.fr}, \url{severo@math.univ-lyon1.fr}}\:, Franco Severo\footnotemark[3]}

\maketitle

\begin{abstract}
In this article, we analyse the $\varphi^4$ model on $\mathbb Z^d$ in the supercritical regime $\beta > \beta_c$. We consider a random cluster representation of the $\varphi^4$ model, which corresponds to an Ising random cluster model on a random environment. We prove that the supercritical phase of this percolation model on $\mathbb Z^d$ ($d\geq 2$) is well behaved in the sense that, for every $\beta>\beta_c$, local uniqueness of macroscopic clusters occurs with high probability, uniformly in the boundary conditions. This result provides the basis for renormalisation techniques used to study several fine properties of the supercritical phase.  As applications, we prove surface order exponential bounds for the (lower) large deviations of the empirical magnetisation as well as for the spectral gaps of dynamical $\varphi^4$ models in the entire supercritical regime.
\end{abstract}

\tableofcontents

\section{Introduction}

The $\varphi^4$ model is a natural generalisation of the Ising model to a lattice spin model with unbounded spins. On a finite graph $G=(V,E)$, it is given by a probability measure on configurations $\varphi \in \mathbb R^V$ defined by its expectation values on bounded measurable functions $F:\mathbb R^\Lambda \rightarrow \mathbb R$,
\begin{equation}
\langle F(\varphi)\rangle = \frac{1}{Z} \int_{\mathbb R^V} F(\varphi)\exp \left( \beta \sum_{\{x,y\} \in E} \varphi_x\varphi_y -\sum_{x \in V}(g\varphi_x^4+a\varphi_x^2) \right) \prod_{x \in V} \mathrm{d}\varphi_x,
\end{equation}
where $Z$ is a normalisation constant, $\beta \geq 0$ is the inverse temperature, and $g>0$ and $a \in \mathbb R$ are coupling constants. It is well known that this model interpolates between the Ising model and the Gaussian free field. Furthermore, it arises in Euclidean quantum field theory, where it is used to construct (or destruct) the continuum $\varphi^4$ model \cite{Nelson1966PHI42d,GlimmJaffe1973PHI43d,Sokal1982Destructive,AizenmanGeometricAnalysis1982,FrohlichTriviality1982,AizenmanDuminilTriviality2021}.

The $\varphi^4$ model is a rich statistical physics model in its own right. On the hypercubic lattice $\mathbb Z^d$, it undergoes a \emph{phase transition} as $\beta$ is varied, provided the dimension satisfies $d \geq 2$. Namely, there is a \emph{critical point} $\beta_c$ separating a \emph{subcritical}/\emph{disordered} phase ($\beta < \beta_c$), in which two-point correlations tend to $0$ when the system size is large, and a \emph{supercritical}/\emph{ordered} phase ($\beta>\beta_c$), in which two-point correlations stay bounded away from $0$. The model's \emph{critical phase} is of significant interest since it is expected to be in the same \emph{universality class} as the Ising model in all dimensions, meaning that the two model's \emph{critical exponents} are expected to coincide. We refer to the introduction of \cite{GunaratnamPanagiotisPanisSeveroPhi42022} (see also \cite{A21,PanisThesis}) for a more complete discussion. On the mathematical side, \emph{subcritical sharpness} results \cite{AizenmanBarskyFernandezSharpnessIsing1987} imply an exponential decay of correlations for all $\beta<\beta_c$. In the critical phase, continuity of the phase transition in dimensions $d\geq 3$ \cite{GunaratnamPanagiotisPanisSeveroPhi42022}, triviality of the critical and near-critical scaling limits in $d\geq 4$ \cite{AizenmanGeometricAnalysis1982,FrohlichTriviality1982,AizenmanDuminilTriviality2021,PanisTriviality2023}, and bounds on the (near-)critical two-point function in dimensions $d\geq 3$ \cite{FrohlichSimonSpencerIRBounds1976,FrohlichIsraelLiebSimon1978,Sakai2015Phi4,DuminilPanis2024newLB} have been established.  Furthermore, the so-called \emph{weakly--coupled} $\varphi^4$ model (which corresponds to choosing $g$ small enough in \eqref{eq:single_site}) has been extensively studied in the literature through the \emph{renormalization group} approach \cite{GawedzkiKupiainen1985massless,Hara1987rigorous,FeldmanMagnenRivasseau1987construction,BauerschmidtBrydgesSlade2014Phi4fourdim,SladeTombergRGWeakPhi4in2016,BauerschmidtBrydgesSladeBOOKRG2019}. The model's critical behaviour in dimension $d=2$, weakly-coupled or not, remains a fascinating challenge.

In this paper, we prove that the supercritical phase of the $\varphi^4$ model is well behaved in the sense that properties which are typical of a robust supercritical behaviour are valid in the entire supercritical regime. 
In our Theorem~\ref{thm:ldp free}, we establish an important example of such a property: a surface order large deviation bound for the empirical magnetisation of the $\varphi^4$ model for all $\beta>\beta_c$. This result has implications for the decay of spectral gaps for dynamical $\varphi^4$ models in the supercritical regime, see Theorem \ref{theorem: dynamics}.

The main tool to prove Theorem \ref{thm:ldp free} is a \emph{random cluster}--- or Fortuin--Kasteleyn (FK)--- representation for the $\varphi^4$ model, which we define as an Ising random cluster model on a random environment. Although this object has been studied in the physics literature \cite{brower1989embedded}, in this article we give (to the best of our knowledge) the first mathematical treatment of this representation. Our main result is Theorem~\ref{thm: local uniqueness}, where we prove a \emph{supercritical sharpness} result for this percolation model in all dimensions $d\geq2$. The corresponding result for the Ising random cluster model was first obtained by Bodineau \cite{Bod05}. In words, this result says that, for every $\beta>\beta_c$, there exists a \emph{unique} macroscopic cluster in a large box with high probability, uniformly in boundary conditions. Supercritical sharpness is the essential input allowing to use non-perturbative renormalisation techniques to obtain a detailed description of the supercritical phase of statistical physics models. Indeed, we use a coarse graining argument, inspired by that of Pizstora \cite{Pis96}, to deduce Theorem~\ref{thm:ldp free} from Theorem~\ref{thm: local uniqueness}. 

Proving supercritical sharpness results for general models remains a significant challenge in percolation theory.
In dimension $d=2$, one can often rely on duality and crossing probabilities to prove such a result by relating the supercritical behaviour of the model with the subcritical behaviour of its dual model. 
In particular, supercritical sharpness is known for many percolation models in 2D, such as the random cluster model with $q\geq1$ \cite{BeffaraDuminilSelfDualPoint} (previously known for $q=1,2$ due to \cite{Kesten1980criticalproba,CCS87}), Voronoi percolation \cite{BR06} and Boolean percolation \cite{ATT18}. 
However, in dimensions $d\geq3$ there is no dual (percolation) model available in general, so a much more sophisticated approach is required. For instance, the random cluster model in dimensions $d\geq3$ has only been proved to satisfy supercritical sharpness in the special cases $q=1$ (corresponding to Bernoulli percolation) and $q=2$ (corresponding to the Ising model) in the celebrated works of Grimmett and Marstrand \cite{GM90} and Bodineau \cite{Bod05}, respectively--- see also \cite{CMT24} and \cite{Severo24} for new proofs. Although the problem remains open for the random cluster model with $q\neq 1,2$, several other percolation models were proved to satisfy supercritical sharpness in the last few years, such as level-set percolation for the (discrete) Gaussian free field \cite{DGRS19} and some continuous Gaussian fields \cite{Sev21}, the vacant set of random interlacements \cite{DGRST23a,DGRST23b,DGRST23c}, the occupied set of Boolean percolation \cite{DT22}, and Voronoi percolation \cite{DS23}.

Our proof coarsely follows the strategy of \cite{Severo24}. However, this approach requires handling boundary conditions for both the spin model and the associated random cluster measure. To address this, we develop new techniques, building on the random tangled current representation of the $\varphi^4$ model introduced in \cite{GunaratnamPanagiotisPanisSeveroPhi42022}. We stress that our result is new even in the two-dimensional case, where the random cluster representation does not seem to have a treatable dual model or any integrable feature. This is also one of the reasons why continuity of the phase transition remains open for $d=2$, despite being known for $d\geq3$ \cite{GunaratnamPanagiotisPanisSeveroPhi42022}.

\subsection{Surface order large deviations}\label{section:intro surface order large deviations}

We begin by introducing the model and relevant quantities to state the theorem. We fix once and for all constants $g>0$ and $a\in \R$ and let $\rho_{g,a}$ be the so-called single-site  measure on $\R$ defined by
\begin{equation}\label{eq:single_site}
    \textup{d}\rho_{g,a}(t)
    =
    \frac{1}{z_{g,a}}e^{-gt^4-at^2}\textup{d}t,
\end{equation}
where $z_{g,a}=\int_\R e^{-gt^4-at^2}\textup{d}t$. Let $\Lambda$ be a finite subset of $\mathbb Z^d$. Define $E(\Lambda):=\{\{x,y\}\subset \Lambda: x\sim y\}$ where $x\sim y$ means that $x$ and $y$ are neighbours in $\mathbb Z^d$. We often denote the elements of $E(\Lambda)$ as $xy=\{x,y\}$. Given an external magnetic field $h\in \mathbb R$, we consider the Hamiltonian 
\begin{equation}
    H_{\Lambda,h}(\varphi)
    :=
    -\sum_{\substack{xy\in E(\Lambda)}} \varphi_x\varphi_y - h\sum_{x\in \Lambda}\varphi_x, 	
    \qquad \forall \varphi\in \R^V.
\end{equation}
The $\varphi^4$ model on $\Lambda$ with external magnetic field $h$ and at inverse temperature $\beta\geq0$ is the probability measure $\nu_{\Lambda,\beta,h}$ (also denoted $\langle\cdot\rangle_{\Lambda,\beta,h})$ given by
\begin{equation}
    \mathrm{d}\nu_{\Lambda,\beta,h}(\varphi)
    \propto
    e^{-\beta H_{\Lambda,h}(\varphi)}\textup{d}\rho_{\Lambda,g,a}(\varphi),
\end{equation}
where $\textup{d}\rho_{\Lambda,g,a}(\varphi):=\prod_{x\in \Lambda}\textup{d}\rho_{g,a}(\varphi_x)$.
We may omit $h$ from the notation when $h=0$. We consider the infinite volume measures
\begin{equation}
    \nu_{\beta,h}=\lim_{\Lambda\uparrow \Z^d} \nu_{\Lambda,\beta,h}.
\end{equation}
It is classical that this limit exists and is monotonic increasing in $h$, see e.g.~\cite{GriffithsCorrelationsIsing1-1967}. 
We can then define the so-called \emph{plus measure} $\nu_\beta^+$ 
via the following monotonic weak limit:
\begin{equation}\label{eq:phi4_plus_def}
    \nu_\beta^+=\lim_{h\downarrow 0} \nu_{\beta,h}.
\end{equation}
The measure $\nu_\beta^+$ is translation invariant, ergodic and extremal. In fact, every translation invariant Gibbs measure for the $\varphi^4$ model is a linear combination of the two extremal measures $\nu_\beta^+$ and $\nu_\beta^-(\textup{d}\varphi):=\nu_\beta^+(-\textup{d}\varphi)$, see \cite{GunaratnamPanagiotisPanisSeveroPhi42022}.

The \emph{spontaneous magnetisation} at inverse temperature $\beta$ is defined as
\begin{equation}
    m^*(\beta)
    :=
    \langle \varphi_0 \rangle^+_\beta,
\end{equation}
where $\langle f\rangle^+_\beta$ denotes the expectation of $f$ with respect to $\nu^+_\beta$.
We can now define the \emph{critical point}
\begin{equation}
    \beta_c=\beta_c(g,a)
    :=
    \inf\lbrace \beta\geq0:\text{ } m^*(\beta)>0\rbrace.
\end{equation}
It is known that $\beta_c\in (0,\infty)$ for every $d\geq 2$ and all $(g,a)\in (0,\infty)\times \mathbb R$  \cite{GlimmJaffeSpencer1975PHI4,wells1977some} (alternatively, this can be proven by directly using the random cluster representation of Section \ref{sec:random_cluster}). 
Another quantity of interest is the \emph{empirical magnetisation} on $\Lambda$ given by
\begin{equation}
    m_{\Lambda}:= \frac{1}{|\Lambda|} \sum_{x\in\Lambda} \varphi_x.
\end{equation}
We will mostly be interested in the empirical magnetisation on boxes $\Lambda_n:=\{-n,\dots,n\}^d$ for $n \in \mathbb N^*$. Our first theorem is the following surface order large deviation estimate for the empirical magnetisation in the entire supercritical regime.

\begin{theorem}\label{thm:ldp free}
    For every $d\geq2$, $\beta > \beta_c$ and $\delta\in(0,m^*(\beta) )$, there exist constants $c,C\in(0,\infty)$ such that for every $n$ large enough,
    \begin{equation}\label{eq:ldp surface}
    e^{-Cn^{d-1}}\leq \nu_{\Lambda_n,\beta}\Big[ |m_{\Lambda_n}|\leq m^*(\beta)-\delta\Big]\leq e^{-cn^{d-1}}.
    \end{equation}
\end{theorem}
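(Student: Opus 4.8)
The plan is to transfer the problem to the random cluster representation of Section~\ref{sec:random_cluster} and combine it with the supercritical sharpness result of Theorem~\ref{thm: local uniqueness} through a Pisztora-type coarse graining \cite{Pis96}, following, in spirit, Bodineau's treatment of the Ising model \cite{Bod05}. In this representation one samples an environment together with an FK-type percolation configuration $\omega$ on $\Lambda_n$, and then an independent uniform sign $\sigma_{\mathcal{C}}\in\{\pm1\}$ for every cluster $\mathcal{C}$ of $\omega$, so that $\varphi_x=\sigma_{\mathcal{C}(x)}|\varphi_x|$ and
\[
m_{\Lambda_n}=\frac{1}{|\Lambda_n|}\sum_{\mathcal{C}}\sigma_{\mathcal{C}}\,w(\mathcal{C}),\qquad w(\mathcal{C}):=\sum_{x\in\mathcal{C}}|\varphi_x|.
\]
I would prove the two inequalities of \eqref{eq:ldp surface} separately, the lower bound being the softer one.

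\emph{Lower bound.} The idea is to pay a surface-order price to force the box into a mixed state. Conditioning the spins on a thin hyperplane slab $L\subset\Lambda_n$ (with $|L|\asymp n^{d-1}$) to lie in $[-1,1]$ is an event of probability at least $e^{-Cn^{d-1}}$, as one sees by comparing the $\Lambda_n$-partition function with the product of the partition functions of the two half-boxes that $L$ separates, up to a factor $K^{|L|}$. Conditionally on this event the two half-boxes are independent, and are i.i.d.\ up to reflection once $L$ is further conditioned to a symmetric neighbourhood of the origin. Each half-box is supercritical, so by the same coarse graining as below its empirical magnetisation lies within $\epsilon$ of $\pm\theta$ for a density $\theta\in(0,m^*(\beta)]$ with probability tending to $1$, each sign occurring with probability $\to 1/2$ by symmetry. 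On the event that the two halves carry opposite signs, of probability tending to $1/4$, one gets $|m_{\Lambda_n}|\le C\epsilon+o(1)<m^*(\beta)-\delta$ for $\epsilon$ small, which gives the lower bound.

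\emph{Upper bound.} Fix $\epsilon>0$ small and a large fixed scale $\ell$, tile $\Lambda_n$ by boxes $B$ of side $\ell$, and declare $B$ \emph{good} if, in a slightly fattened box $\widehat B$: $\omega$ has a unique crossing cluster $\mathcal{C}_B$ (this is precisely Theorem~\ref{thm: local uniqueness}), $\mathcal{C}_B$ carries magnetisation density at least $m^*(\beta)-\epsilon$ in $B$, every cluster meeting $B$ other than $\mathcal{C}_B$ has diameter at most $(\log\ell)^2$, and $\sum_{x\in\widehat B}|\varphi_x|\le C\ell^d$. Using Theorem~\ref{thm: local uniqueness} uniformly over boundary conditions, an identification of the crossing-cluster density with $m^*(\beta)$ through the random cluster representation of the spontaneous magnetisation, and elementary concentration for the environment, one gets $\mathbb{P}[B\text{ good}]\ge 1-\eta(\ell)$ with $\eta(\ell)\to0$; hence the good blocks stochastically dominate a highly supercritical, finite-range dependent site percolation on $\Lambda_n/\ell$. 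As the crossing clusters of overlapping good blocks necessarily lie in the same $\omega$-cluster, on the event $\mathcal{E}_n$ that the coarse good percolation has a component covering all but an $\epsilon$-fraction of $\Lambda_n/\ell$ — of probability at least $1-e^{-cn^{d-1}}$ by the standard supercritical percolation estimates \cite{Pis96} — these clusters merge into a single cluster $\mathcal{C}^*$ with $w(\mathcal{C}^*)\ge(m^*(\beta)-C\epsilon)|\Lambda_n|$. Writing $m_{\Lambda_n}=\sigma_{\mathcal{C}^*}\,w(\mathcal{C}^*)/|\Lambda_n|+R$ with $R:=|\Lambda_n|^{-1}\sum_{x\notin\mathcal{C}^*}\varphi_x$, it remains to bound $|R|$. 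Inside a good block of the giant coarse component the spins outside $\mathcal{C}^*$ sit on clusters of diameter at most $(\log\ell)^2$ confined to $\widehat B$, carrying independent signs, so their total contribution is at most $\epsilon|B|$ except on an event of probability $e^{-c\ell^d/\mathrm{polylog}\,\ell}$; therefore $R$ can receive a macroscopic contribution only from the $O(\epsilon)$-fraction of blocks that are bad, lie outside the giant component, or fail this local sign test, and since with probability at least $1-e^{-cn^{d-1}}$ the environment places mass $o_\epsilon(1)\,|\Lambda_n|$ on any such set of sites, $|R|\le\epsilon+o_\epsilon(1)$ with probability at least $1-e^{-cn^{d-1}}$. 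Altogether $|m_{\Lambda_n}|\ge m^*(\beta)-C\epsilon>m^*(\beta)-\delta$ on an event of probability at least $1-e^{-cn^{d-1}}$, which is the desired bound.

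The hard part is the coarse-graining step, and especially the design of the block events. Unlike in the Ising case, $\omega$ lives on a random environment and Theorem~\ref{thm: local uniqueness} must be invoked uniformly over both spin and random cluster boundary conditions; making "good" a local event of probability close to $1$, and guaranteeing that the crossing clusters of neighbouring good blocks actually belong to a common $\omega$-cluster, is where the techniques built on the random tangled current representation of \cite{GunaratnamPanagiotisPanisSeveroPhi42022} are needed. A second, more structural issue is the claim that the crossing cluster carries asymptotically the full density $m^*(\beta)$: this requires relating FK-connectivity to the spontaneous magnetisation and, implicitly, excluding any gap between the "free" and "wired" densities throughout $\beta>\beta_c$. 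The control of $R$, by contrast, is robust — once the non-giant clusters are confined inside good blocks, independence of the cluster signs makes $R$ negligible at volume order, so it never constrains the rate $n^{d-1}$.
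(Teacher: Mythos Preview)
Your upper-bound strategy is essentially the paper's: both run a Pisztora coarse graining driven by Theorem~\ref{thm: local uniqueness}, identify a unique macroscopic cluster carrying density $m^*(\beta)$, and control the remainder via the independent $\pm1$ signs attached to non-giant clusters. The paper organises the argument slightly differently --- it isolates two volume-order inputs (a tail bound on $|\varphi|$ and the statement that $\tfrac{1}{|\Lambda_n|}\sum_x \mathsf a_x \mathbbm 1_{|\mathcal C_x|\geq K}\gtrsim m^*(\beta)$, the latter resting on $\Psi^0_\beta=\Psi^1_\beta$) and a single surface-order input (Proposition~\ref{prop: cluster renormalisation}: non-maximal large clusters have small total volume) --- rather than packing magnetisation density into the block event as you do, but the content is the same, and you correctly flag that the ``no free/wired gap'' for the random cluster measure is the structural point that makes the density identification work.

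Your lower bound, however, has a genuine gap. Conditioning on the \emph{event} $\{\varphi_x\in[-1,1]:x\in L\}$ does not make the two half-boxes independent: only conditioning on the actual values $\varphi|_L=\psi$ does, and then each half-box carries the (generically sign-biased) boundary condition $\psi$, so the claim that the two magnetisation signs are independent and uniform on $\{\pm1\}$ is not justified. Even after integrating $\psi$ over a sign-symmetric set you only recover symmetry under the \emph{global} flip $(m_A,m_B)\mapsto(-m_A,-m_B)$, which does not force the opposite-sign configuration to have probability bounded away from zero. The paper avoids this entirely by working in the random cluster representation: it partitions $\Lambda_n$ into boxes of side $\varepsilon n$, pays a surface-order FKG cost to close all random-cluster edges between boxes, and then --- because the cluster signs $\sigma_\mathcal{C}$ are \emph{always} i.i.d.\ uniform $\pm1$ given $(\omega,\mathsf a)$ --- the conditional variance of $\sum_x\varphi_x$ is at most $\sum_{\text{boxes}}(\sum_{x\in\text{box}}\mathsf a_x)^2=O(\varepsilon^d n^{2d})$, and a second-moment bound finishes. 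Your hyperplane idea can be repaired along the same lines (close random-cluster edges across the slab rather than conditioning on spin values), but as written it does not go through.
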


Surface order large deviations were first established for the two-dimensional Ising model \cite{schonmann1987second} at very low temperatures (i.e.\ $\beta\gg \beta_c$). The result was later extended to all $\beta > \beta_c$ \cite{CCS87}, and further generalised to higher dimensions \cite{Pis96}. Surface order large deviations are closely related to phase segregation phenomena and the emergence of Wulff shapes
\cite{dobrushin1992wulff,ioffe1995exact,cerf2000wulff,bodineau2000rigorous}. In the case of the $\varphi^4$ model, the techniques of \cite{CGW22} can be adapted to establish the surface order bounds in Theorem \ref{thm:ldp free} at very low temperatures. Our contribution is to extend them to every $\beta > \beta_c$.

\begin{remark} \label{rem: vol order}
The lower large deviation of \eqref{eq:ldp surface} differs from the upper large deviation, which is always of volume order. Indeed, it is not hard to prove (see Section~\textup{\ref{sec:coarse_graining}}) that for every $\beta\geq0$ and $\delta>0$, there exist $c,C>0$ such that for every $n$ large enough
\begin{equation}\label{eq:ldp volume}
     e^{-Cn^d}\leq \nu_{\Lambda_n,\beta}\Big[ |m_{\Lambda_n}|\geq m^*(\beta)+\delta \Big]\leq e^{-cn^d}.
     \end{equation}
Both large deviation estimates of \eqref{eq:ldp surface} and \eqref{eq:ldp volume} can be extended to the infinite volume measure. See also \cite{comets1986grandes, olla1988large}.
\end{remark}

\subsection{Application to dynamical phase transitions}

Dynamical spin models are a cornerstone of non-equilibrium statistical physics and theoretical computer science. Amongst these, \emph{Glauber dynamics} for the Ising model--- or \emph{stochastic Ising models}--- form an important class.
In finite volume, these dynamics are continuous-time (or discrete-time) Markov chains that converge in law to the Ising model. It is classical by the Perron--Frobenius theorem that these chains converge to equilibrium exponentially fast, with a volume-dependent rate governed by the \emph{spectral gap}--- which is the inverse of \emph{relaxation time} and closely related to the \emph{mixing time}. See \cite{martinelli99} for more information. 

A central question of the field is to understand the interplay between the phase transition of the equilibrium model and the behaviour of the spectral gaps of the dynamics as the volume tends to infinity. For example, the dynamical Ising model with free boundary conditions exhibits a phase transition at the critical point $\beta_c$ of the equilibrium model. In the subcritical phase $\beta<\beta_c$, the spectral gaps remain bounded away from $0$ as the system size grows. Moreover, the dynamics exhibit the \emph{cutoff phenomenon} (see \cite{LubSly13,LubSly16} and references therein). In the supercritical phase $\beta > \beta_c$, the spectral gaps decay exponentially with surface order rate \cite{cesi1996two}. At $\beta_c$, the dynamics is conjectured to exhibit a \emph{critical slowdown}, whereby spectral gaps decay polynomially fast. Polynomial bounds have been rigorously established in $d=2$ \cite{LubSly12}, but remain open in higher dimension (see, however, a conditional result in this direction \cite{Hu2025} which depends on the conjectured polynomial decay of the finite volume magnetisation at criticality).

Similar phase transitions are expected to occur for dynamical $\varphi^4$ models. We consider two natural dynamics (continuous-time Markov processes). The first is called \emph{Langevin dynamics}, also known as \emph{stochastic quantisation equations}, and corresponds to the Markov process generated by solutions of a system of stochastic differential equations. These arise as discretisations of $\varphi^4$ singular stochastic partial differential equations, which have received significant interest in recent years (see \cite{CGW22} and references therein). The second is the \emph{heat-bath dynamics}, a single-site update process more reminiscent of the Glauber dynamics for the Ising model. This corresponds to the process where the spins are resampled according to jump rates determined by the equilibrium measure conditional on the values of its neighbours. 

As in the case of the Ising model, both of these dynamics exhibit exponential convergence to equilibrium, with rate governed by their spectral gaps. We postpone formal definitions  to Section \ref{section:spectral gap}, and state our main theorem concerning the dynamics. 

\begin{theorem} \label{theorem: dynamics}
Let $d \geq 2$ and $\beta > \beta_c$. There exists $c>0$ such that for every $n$ sufficiently large,
\begin{equation}
\lambda(\Lambda_n,\beta)	\leq e^{-cn^{d-1}},
\end{equation}
where $\lambda(\Lambda_n,\beta)$ is the spectral gap of either Langevin dynamics or heat-bath dynamics of $\nu_{\Lambda_n,\beta}$.
\end{theorem}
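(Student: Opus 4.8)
The plan is to use the variational characterisation of the spectral gap. As recalled in Section~\ref{section:spectral gap}, for each of the two dynamics the spectral gap admits the Rayleigh-quotient formula
\[ \lambda(\Lambda_n,\beta) \;=\; \inf \Big\{ \frac{\mathcal{E}_{\Lambda_n}(f,f)}{\mathrm{Var}_{\nu_{\Lambda_n,\beta}}(f)} \ :\ \mathrm{Var}_{\nu_{\Lambda_n,\beta}}(f)>0 \Big\}, \]
where $\mathcal{E}_{\Lambda_n}$ is the associated Dirichlet form. So it suffices to exhibit one test function $f$ with $\mathcal{E}_{\Lambda_n}(f,f)\le e^{-cn^{d-1}}$ and $\mathrm{Var}_{\nu_{\Lambda_n,\beta}}(f)$ bounded below. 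Following the strategy used for stochastic Ising models \cite{cesi1996two}, I would take $f:=\psi(m_{\Lambda_n})$, where $\psi:\R\to[-1,1]$ is a fixed smooth odd function with $\psi(t)=\mathrm{sgn}(t)$ for $|t|\ge\epsilon$ and $\|\psi'\|_\infty\le C/\epsilon$, and $\epsilon\in(0,m^*(\beta))$ is a small parameter fixed at the end. Morally, $f$ records which pure phase the configuration is in, and $\psi'(m_{\Lambda_n})$ is supported on the event $\{|m_{\Lambda_n}|\le\epsilon\}$ that Theorem~\ref{thm:ldp free} shows is exponentially unlikely.

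For the variance lower bound: since $\nu_{\Lambda_n,\beta}$ is invariant under $\varphi\mapsto-\varphi$ and $\psi$ is odd, $\langle f\rangle_{\Lambda_n,\beta}=0$; and as $f^2\ge\mathbbm{1}[|m_{\Lambda_n}|\ge\epsilon]$ we get $\mathrm{Var}_{\nu_{\Lambda_n,\beta}}(f)\ge\nu_{\Lambda_n,\beta}[|m_{\Lambda_n}|\ge\epsilon]$. Applying Theorem~\ref{thm:ldp free} with $\delta:=m^*(\beta)-\epsilon\in(0,m^*(\beta))$, so that $m^*(\beta)-\delta=\epsilon$, gives $\nu_{\Lambda_n,\beta}[|m_{\Lambda_n}|\le\epsilon]\le e^{-cn^{d-1}}$, hence $\mathrm{Var}_{\nu_{\Lambda_n,\beta}}(f)\ge 1/2$ for $n$ large.

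For the Dirichlet energy: in the Langevin case $\mathcal{E}_{\Lambda_n}(f,f)=\sum_{x\in\Lambda_n}\langle(\partial_{\varphi_x}f)^2\rangle_{\Lambda_n,\beta}$ and $\partial_{\varphi_x}f=|\Lambda_n|^{-1}\psi'(m_{\Lambda_n})$, so
\[ \mathcal{E}_{\Lambda_n}(f,f)\;\le\;\frac{\|\psi'\|_\infty^2}{|\Lambda_n|}\,\nu_{\Lambda_n,\beta}\big[|m_{\Lambda_n}|\le\epsilon\big]\;\le\;\frac{C}{\epsilon^2\,|\Lambda_n|}\,e^{-cn^{d-1}}. \]
In the heat-bath case $\mathcal{E}_{\Lambda_n}(f,f)=\sum_{x\in\Lambda_n}\langle\mathrm{Var}_x(f)\rangle_{\Lambda_n,\beta}$, where $\mathrm{Var}_x$ is the variance with respect to the conditional law of $\varphi_x$; writing $\mathrm{Var}_x(f)=\tfrac12\langle(\psi(m)-\psi(m'))^2\rangle$ for two independent resamples of $\varphi_x$ (so $m-m'=|\Lambda_n|^{-1}(\varphi_x-\varphi_x')$) and using that $\psi$ is $C/\epsilon$-Lipschitz and locally constant outside $(-\epsilon,\epsilon)$, one bounds $\mathrm{Var}_x(f)\le C\epsilon^{-2}|\Lambda_n|^{-2}\langle(\varphi_x-\varphi_x')^2\,;\,\mathbbm{1}[|m|\le 2\epsilon\text{ or }|m'|\le 2\epsilon]\rangle$ up to a super-exponentially small correction from the event that a resampled spin exceeds $\epsilon|\Lambda_n|$ in modulus. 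Using the standard facts that the single-site conditional laws $\propto\exp(\beta t\sum_{y\sim x}\varphi_y-gt^4-at^2)\,\mathrm{d}t$ have second moments bounded uniformly in the neighbouring spins, and that the quartic confinement of $\rho_{g,a}$ gives uniform Gaussian-type tail bounds on each $\varphi_x$ under $\nu_{\Lambda_n,\beta}$, one sums over $x$, averages, and applies Cauchy--Schwarz to obtain $\mathcal{E}_{\Lambda_n}(f,f)\le C\epsilon^{-2}|\Lambda_n|^{-1}\nu_{\Lambda_n,\beta}[|m_{\Lambda_n}|\le 2\epsilon]^{1/2}+(\text{negligible})$. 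Choosing $\epsilon$ additionally small enough that $2\epsilon<m^*(\beta)$ and invoking Theorem~\ref{thm:ldp free} as above yields $\mathcal{E}_{\Lambda_n}(f,f)\le C'e^{-c'n^{d-1}}$ in both cases, and combining with the variance bound gives $\lambda(\Lambda_n,\beta)\le 2C'e^{-c'n^{d-1}}$.

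Given Theorem~\ref{thm:ldp free}, this argument is essentially routine: the test function $\psi(m_{\Lambda_n})$ ``sees'' only the exponentially rare event that the empirical magnetisation is near zero. The step demanding the most care is the control of the heat-bath Dirichlet form in the presence of unbounded spins, where one must invoke uniform moment (or, if preferred, uniform Poincaré) bounds for the single-site conditional laws — a classical consequence of the quartic confinement in $\rho_{g,a}$ — together with the elementary bookkeeping ensuring that the plateau width $\epsilon$ of $\psi$ is compatible with the constraint $\delta\in(0,m^*(\beta))$ in Theorem~\ref{thm:ldp free}. I do not expect any substantial obstacle beyond these points.
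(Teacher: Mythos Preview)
Your proposal is correct and follows essentially the same route as the paper: both use the Rayleigh quotient with the test function $\psi(m_{\Lambda_n})$ for a smooth odd cutoff $\psi$, bound the variance from below via spin-flip symmetry and Theorem~\ref{thm:ldp free}, and bound the Dirichlet form by combining the Lipschitz property of $\psi$ with Theorem~\ref{thm:ldp free} after excising the super-exponentially rare event that a single (re)sampled spin exceeds order $|\Lambda_n|$. One small caveat: the single-site conditional second moment is \emph{not} bounded uniformly in the neighbouring spins (the conditional mean grows like $(\sum_{y\sim x}\varphi_y)^{1/3}$), but the paper handles this exactly as you anticipate, by Cauchy--Schwarz and the bound on $\langle\varphi_x^4\rangle_{\Lambda_n,\beta}$ coming from the regularity estimate Proposition~\ref{prop:regularity} after averaging over the full measure.
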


Theorem \ref{theorem: dynamics} is a direct application of Theorem \ref{thm:ldp free} once a variational characterisation of $\lambda(\Lambda_n,\beta)$ is known. In the case of Langevin dynamics, our result was already known in the very low temperature regime $\beta \gg \beta_c$ \cite{CGW22} (strictly speaking \cite{CGW22} established the spectral gap decay with the continuum dynamics in mind, although it is straightforward to check that their argument extends to the lattice dynamics considered here), and our contribution is to extend this to every $\beta>\beta_c$. Furthermore, for $
\beta<\beta_c$, it was recently shown that the spectral gaps remain bounded away from $0$ as $n \rightarrow \infty$ for the Langevin dynamics \cite{BD24}. We expect the same to hold for heat-bath dynamics as well. Combining this with our result would then establish the existence of a (sharp) dynamical phase transition for both of these dynamics at $\beta_c$.

\subsection{Random cluster for $\varphi^4$ and supercritical sharpness}

The main tool used in this work is a percolation representation of the $\varphi^4$ model, which plays a role analogous to that of the random cluster representation of the Ising model. In Section~\ref{sec:random_cluster}, we define the model in detail and derive some of its basic properties, such as a coupling with the spin system, monotonicity in boundary conditions and correlation inequalities. 
We only give a rough description of the model here and defer the reader to Section~\ref{sec:random_cluster} for details.

Conditionally on the absolute value field $|\varphi|$, the sign field $\mathrm{sign}(\varphi)$ can be seen as an Ising model with random coupling constants $J(|\varphi|)_{x,y}=\beta |\varphi_x||\varphi_y|$. Therefore, we can consider the corresponding random cluster representation in a random environment. For every $\Lambda\subset \Z^d$, let $\Psi_{\Lambda,\beta}$ be the probability measure on configurations $(\omega,\mathsf{a})\in \{0,1\}^{E(\Lambda)} \times (\mathbb R^+)^\Lambda$, where $\mathsf{a}$ is distributed as $|\varphi|$ (under $\nu_{\Lambda,\beta}$), and $\omega$--- conditionally on $\mathsf{a}$--- is distributed as an Ising random cluster model with coupling constants $J_{x,y}=\beta \mathsf{a}_x \mathsf{a}_y$. Certain correlation functions of $\varphi$ can then be expressed in terms of connectivity properties for $\Psi_{\Lambda,\beta}$. For example, we have 
\begin{equation}
\langle \varphi_x \varphi_y\rangle_{\Lambda,\beta}= \Psi_{\Lambda,\beta}[ \mathsf{a}_x \mathsf{a}_y \mathbbm{1}\{x \overset{\omega}{\longleftrightarrow} y\}].    
\end{equation}
We can also construct random cluster measures $\Psi^\#_{\Lambda,\beta}$ with general \emph{boundary conditions} $\#$, see Section~\ref{sec:random_cluster} for details.

Similarly to the case of the Ising model, the crucial step to establish Theorem~\ref{thm:ldp free} is to prove that the percolation model $\Psi_{\Lambda,\beta}$ is ``well behaved'' for every $\beta>\beta_c$. In other words, we establish a \emph{supercritical sharpness} result for $\Psi_{\Lambda,\beta}$. There are several ways of defining supercritical sharpness. In dimensions $d\geq3$, a classical way is to consider the notion of \emph{slab percolation}, as done in \cite{GM90} for Bernoulli percolation and in \cite{Bod05} for the Ising model. However, this notion becomes meaningless in 2D, where classically the crossing of long rectangles is used instead. 
In order to state our result for all dimensions $d\geq 2$ in a unified way, we choose to consider the notion of \emph{local uniqueness}. Furthermore, this notion is more handy when performing renormalisation arguments to prove non-perturbative results.

For every scale $L\geq1$, consider the \emph{local uniqueness event} $U(L)$ of percolation configurations $\omega\in \{0,1\}^{E(\Lambda_{8L})}$ such that there exists at least one cluster of $\omega$ crossing the annulus $\Lambda_{8L}\setminus \Lambda_L$, and any two paths in $\omega$ crossing the annulus $\Lambda_{4L}\setminus \Lambda_{2L}$ are connected in $\omega\cap(\Lambda_{8L}\setminus \Lambda_L)$. Our main result is the following.

\begin{theorem}\label{thm: local uniqueness}
For every $d\geq 2$ and $\beta>\beta_c$, 
\begin{equation}
\lim_{L\to\infty}\inf_{\#}\Psi^{\#}_{\Lambda_{10L},\beta}[U(L)]= 1.    
\end{equation}
\end{theorem}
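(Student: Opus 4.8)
The plan is to follow the renormalisation scheme of \cite{Severo24}, adapted to handle the boundary conditions that appear in the random environment percolation model $\Psi^{\#}_{\Lambda,\beta}$. The strategy rests on a bootstrap: one shows that a ``weak'' connectivity statement at scale $L$ — namely that crossing the annulus $\Lambda_{4L}\setminus\Lambda_{2L}$ happens with probability bounded away from zero, uniformly in boundary conditions — can be upgraded, via a multi-scale construction, into the strong local uniqueness statement $U(L)$ with probability tending to $1$. First I would set up the right notion of a ``good box'' at a mesoscopic scale $\ell \ll L$: a box is good if it is crossed by a cluster in every direction, if all such crossing clusters are connected to each other within a slightly enlarged box, and if the environment $\mathsf{a}$ is not too degenerate (e.g.\ the empirical average of $\mathsf{a}_x^2$ over the box is comparable to $\langle \varphi_0^2\rangle_\beta^+$, and $\mathsf{a}$ does not have anomalously small values on a macroscopic fraction of sites). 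The key point is that for $\beta>\beta_c$ the single-site marginals of $|\varphi|$ are uniformly non-degenerate, so conditionally on a typical environment the Ising random cluster model with couplings $\beta\mathsf{a}_x\mathsf{a}_y$ is itself supercritical with a uniform margin, and one can import the quantitative slab/crossing estimates there.

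The core of the argument is the renormalisation inequality. Using the monotonicity in boundary conditions and the finite-energy / positive-association properties of $\Psi^{\#}_{\Lambda,\beta}$ established in Section~\ref{sec:random_cluster}, together with the FKG inequality for the annealed measure, I would show that if boxes are good with probability $p_\ell$ at scale $\ell$, then at scale $L = K\ell$ (for a large fixed constant $K$) boxes are good with probability at least $1 - f(1-p_\ell)$ for some function $f$ with $f(x) = o(x)$, say $f(x) \le C x^{1+\alpha}$, as long as $1-p_\ell$ is below a threshold. Iterating this across dyadic (or geometric) scales drives the failure probability to $0$ super-polynomially fast, and a union bound over the $O((L/\ell)^{d-1})$ boxes needed to tile the relevant annuli still gives failure probability $\to 0$. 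The passage from ``all mesoscopic boxes in a region are good and suitably overlapping'' to the event $U(L)$ is then a deterministic gluing lemma: overlapping good boxes force their crossing clusters to merge, yielding a unique macroscopic cluster crossing $\Lambda_{8L}\setminus\Lambda_L$ and the uniqueness of annulus-crossing paths; this is a standard coarse-graining combinatorial step, essentially as in \cite{Pis96,GM90}.

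The main obstacle — and the place where genuinely new input is needed beyond \cite{Severo24} — is controlling the boundary conditions simultaneously for the spin model and the random cluster model. In $\Psi^{\#}_{\Lambda,\beta}$ the boundary condition affects both the law of the environment $\mathsf{a}$ near $\partial\Lambda$ and the wiring of $\omega$, and these are not independent; moreover the worst-case boundary condition for the environment need not be the worst for the wiring. The plan is to handle this using the random tangled current representation of \cite{GunaratnamPanagiotisPanisSeveroPhi42022}: one represents $\langle\cdot\rangle^{\#}$-expectations via sourceless tangled currents, uses a switching-lemma-type argument to compare a box deep inside $\Lambda_{10L}$ with free versus arbitrary boundary conditions, and shows that the Radon--Nikodym derivative between the environment-plus-wiring law seen from inside the bulk under $\#$ and under free boundary conditions is controlled on the relevant events. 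Concretely, I expect to prove a ``finite-energy from the bulk'' statement: conditionally on the configuration outside $\Lambda_{8L}$, the restriction of $(\omega,\mathsf{a})$ to $\Lambda_L$ stochastically dominates (and is dominated by) a version with uniformly non-degenerate environment, with constants independent of $\#$. This uniformity, fed into the renormalisation inequality, is what allows the $\inf_{\#}$ to survive the limit $L\to\infty$. The remaining steps — verifying the base case $p_\ell$ close to $1$ for $\ell$ large from the infinite-volume supercriticality, and the routine FKG/finite-energy manipulations — I would treat as standard.
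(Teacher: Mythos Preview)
Your proposal has a genuine gap at the point you label ``standard'': the base case of the renormalisation, namely that mesoscopic boxes are good with probability close to $1$ for some large $\ell$, is precisely the content of the theorem and cannot be read off from $m^*(\beta)>0$ alone. The claim that ``conditionally on a typical environment the Ising random cluster model with couplings $\beta\mathsf{a}_x\mathsf{a}_y$ is itself supercritical with a uniform margin'' is circular: the only information furnished by $\beta>\beta_c$ is that the \emph{annealed} model has positive magnetisation, and this says nothing a priori about quenched supercriticality with margin, nor about crossing probabilities being close to $1$ uniformly in boundary conditions. A renormalisation inequality of the form $1-p_{K\ell}\le C(1-p_\ell)^{1+\alpha}$ is only useful once $1-p_\ell$ is already below the threshold; absent a seed estimate, the scheme never starts. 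This seed is the hard part of supercritical sharpness, not a routine verification.

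The paper supplies the seed by a completely different route. First, it proves that the surface tension $\tau_\beta$ of the $\varphi^4$ model is strictly positive for every $\beta>\beta_c$, by adapting the Lebowitz--Pfister argument (which hinges on the Ginibre inequality) together with a thick-boundary approximation of $\nu_\beta^+$ (Proposition~\ref{prop: thick conv}). Second, it transfers this into a surface-order exponential bound on disconnection for the \emph{free} $\varphi^4$ random cluster measure (Theorem~\ref{thm:free_surface_tension}); this comparison requires the uniqueness of the half-space random cluster measure with positive boundary field (Proposition~\ref{prop: unique half-space measure}). It is in these two propositions---not in a bulk Radon--Nikodym comparison during the renormalisation---that the random tangled currents are actually used. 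Only with Theorem~\ref{thm:free_surface_tension} in hand does a renormalisation run: in $d=2$ via the RSW theorem of \cite{KohlerTassionGeneralRSW}, and in $d\ge 3$ via the Bernoulli-sprinkling and slab-percolation scheme of \cite{Severo24}, followed by an onion argument. So your global architecture (coarse-graining plus gluing) matches the \emph{final} step of the paper's proof, but you are missing the entire mechanism that produces the initial input, and you have misidentified where and how the tangled currents enter.
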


Theorem~\ref{thm: local uniqueness} implies that the percolation model $\Psi_{\Lambda,\beta}^{\#}$ is ``well behaved'' for every $\beta>\beta_c$. Indeed, it allows to compare the model with a highly supercritical Bernoulli percolation via standard renormalisation arguments. For instance, one can prove that the unique giant component is robust and ubiquitous, while the other components are tiny (namely logarithmically small). 
This is the key feature underlying the proof Theorem~\ref{thm:ldp free}, which combines Theorem~\ref{thm: local uniqueness} and a coarse graining argument. Another consequence of Theorem~\ref{thm: local uniqueness} is that the infinite volume plus measure $\nu_\beta^+$ can be obtained as the limit of $\varphi^4$ measures on $\Lambda_n$ with boundary condition $\eta_n\gg 1/n^{d-1}$, see Proposition \textup{\ref{prop: weak plus measure}}.
As mentioned above, we expect Theorem~\ref{thm: local uniqueness} to find several other applications concerning the supercritical behaviour of the $\varphi^4$ model.

\subsection{Outline of the proof}

We will now give a more detailed description of the proofs of our theorems, starting with Theorem~\ref{thm: local uniqueness}. At a global level, the strategy of proof is similar to that of \cite{Bod05,Severo24}.  However, several new ideas are necessary in our context in order to deal with extra difficulties, especially coming from the unboundedness of the spins. The main novelty is the use of the random tangled current representation of \cite{GunaratnamPanagiotisPanisSeveroPhi42022} in order to deal with some of these challenges.

The first step is to prove that a certain notion of surface tension for the $\varphi^4$ model is positive for every $\beta>\beta_c$. We adapt the corresponding argument of Lebowitz and Pfister \cite{lebowitz1981surface} for the Ising model. Roughly speaking, the surface tension measures the energetic cost of imposing a surface of disagreeing spins separating two halves of a box. For the Ising model, the surface tension can be rigorously defined as simply 
$$\tau_\beta:=-\lim_{L\to\infty}\frac{1}{L^{d-1}}\log \frac{Z^{+,-}_{\Lambda_L,\beta}}{Z^+_{\Lambda_L,\beta}},$$ where $Z^{+,-}_{\Lambda_L,\beta}$ and $Z^{+}_{\Lambda_L,\beta}$ denote, respectively, the Ising partition functions with Dobrushin (i.e.~$+1$ on the top and $-1$ on the bottom) and plus (i.e.\ $+1$ everywhere) boundary conditions. For the $\varphi^4$ model, while we could in principle define the surface tension in the same way, the proof of positivity from \cite{lebowitz1981surface} would only adapt if we knew that the $\varphi^4$ measure with boundary conditions $+1$ converges to the plus measure $\nu^+_\beta$ defined in \eqref{eq:phi4_plus_def}. This last fact is not easy to prove due to the unboundedness of the spins (however, this does follow, a posteriori, from our results, as proved in Proposition~\ref{prop: weak plus measure}). Even though one can obtain $\nu^+_\beta$ as the limit of finite volume measures with \emph{growing} boundary conditions (see \eqref{eq:conv_plus}), these measures are not ``regular'' (in the sense of Proposition~\ref{prop:regularity}) up to the boundary, which would cause us problems in the next step, where we compare the corresponding random cluster measure with its free counterpart. In order to solve this issue, we prove in Proposition~\ref{prop: thick conv} that the $\varphi^4$ measures with magnetic field equal to $+1$ on a \emph{thick boundary} of thickness $\log L$ (namely $\Lambda_L\setminus \Lambda_{L-\log L}$) converge to $\nu^+_\beta$ as $L$ tends to infinity. This result allows us to approximate $\nu^+_\beta$ by a sequence of measures that are ``regular'' up to the boundary since the magnetic field is bounded. The proof is based on a comparison between boundary conditions, which is derived through the use of the random tangled currents representation constructed in \cite{GunaratnamPanagiotisPanisSeveroPhi42022}.
We then use these ``thick-plus'' measures (and their Dobrushin counterpart) to define the surface tension $\tau_\beta$ of the $\varphi^4$ model, and to prove in Proposition~\ref{prop: surface tension positive} that indeed $\tau_\beta>0$ for every $\beta>\beta_c$. We stress that this proof, adapted from \cite{lebowitz1981surface}, crucially relies on the Ginibre inequality (see Proposition~\ref{prop:Ginibre}), which is specific to the Ising and $\varphi^4$ models.

The second step is to transfer the information that $\tau_\beta>0$ to the $\varphi^4$ random cluster model. It follows rather easily from the Edwards--Sokal coupling that the probability that the top and bottom of a box are disconnected from each other in the corresponding random cluster representation decays exponentially in the surface order (see Lemma~\ref{lem: ratio probablistic expression}). However, we need to prove that the same happens for the random cluster measure with \emph{free} boundary condition, which is in fact the minimal one. We therefore perform a comparison argument to go from the ``thick plus'' random cluster measure to the ``free'' one. In order to do so, we adapt an argument originally used by Bodineau \cite{Bod05} for the Ising model. However, this argument relies on the fact that there exists a unique random cluster measure on the half-space with any positive magnetic field on the boundary. For the Ising model, this follows from a work of Fröhlich and Pfister on the wetting transition \cite{FP87}, together with the Lee--Yang theorem. Instead of extending the results of \cite{FP87} to the $\varphi^4$ model (which remains an open problem), we give a completely different proof--- relying again on the random tangled currents--- of the uniqueness of half-space measures with positive boundary field (see Proposition~\ref{prop: unique half-space measure}). We remark that our proof can be easily adapted to give a new proof of this uniqueness result for the Ising model. 

The two steps described above imply that for every $\beta>\beta_c$, the probability that a box is not crossed by a cluster decays exponentially in the surface order for the $\varphi^4$ random cluster model with minimal (i.e.~free) boundary conditions at a macroscopic distance from the box. Finally, we prove that this implies that local uniqueness happens with high probability. We use two different approaches, depending on the dimension. In dimension $d=2$, we use the Russo--Seymour--Welsh theorem for FKG measures recently obtained by K{\"o}hler-Schindler and Tassion \cite{KohlerTassionGeneralRSW} to deduce that a macroscopic annulus is surrounded by a circuit with high probability, which easily implies the local uniqueness event $U(L)$. In dimensions $d\geq 3$, a different argument is required and we follow the approach of \cite{Severo24}, which is inspired by the work of Benjamini and Tassion \cite{BenjaminiTassion}. We first prove that the surface order exponential bound on disconnection implies local uniqueness with the help of an independent \emph{Bernoulli sprinkling}. By a standard static renormalisation, this implies that this Bernoulli-sprinkled random cluster model (with free boundary conditions) percolates on sufficiently thick slabs. Then, we prove in Proposition~\ref{prop: coupling} that the Bernoulli-sprinkled random cluster measure is stochastically dominated by the original random cluster measure at a slightly higher $\beta$, thus implying slab percolation for the latter. We stress that, although such a domination is trivial for the Ising random cluster model (see e.g.~\cite[Lemma 2.5]{Severo24}), this is not the case in our context since the absolute value field $|\varphi|$ can take arbitrarily small values. 
Once percolation on slabs is proved for the free random cluster measure for all $\beta>\beta_c$, we conclude that local uniqueness holds by a classical ``onion argument'', thus completing the proof of Theorem~\ref{thm: local uniqueness}.

As mentioned above, the surface order large deviation of Theorem~\ref{thm:ldp free} follows from Theorem~\ref{thm: local uniqueness} via a coarse graining argument, very similar to the one that Pisztora \cite{Pis96} developed for the Ising model. Another essential ingredient in the coarse graining argument is the uniqueness of the infinite volume measure for the $\varphi^4$ random cluster model (see Lemma \ref{lem: volume ldp small average}). This is proved in Proposition~\ref{prop: unique gibbs measure} as a consequence of the characterization of translation invariant Gibbs measures for the $\varphi^4$ model obtained in \cite{GunaratnamPanagiotisPanisSeveroPhi42022}.
Since the spins are continuous and unbounded, some extra care is required when running the coarse graining argument in our context, but this is dealt with in a rather simple way by relying on regularity estimates.

\paragraph{Organisation of the paper.} In Section~\ref{sec:preliminaries} we recall the main properties of the $\varphi^4$ model, including some correlation inequalities, a regularity theorem and the switching principle for the random tangled current representation. In Section~\ref{sec:magnetisation} we prove the aforementioned Propositions~\ref{prop: thick conv} and \ref{prop: unique half-space measure} on the convergence of the thick-plus measures to $\nu^+_\beta$ and the uniqueness of half-space measure with positive magnetic field. This section is the only one that uses the random tangled currents. In Section~\ref{sec:random_cluster} we properly define the random cluster representation for the $\varphi^4$ model and prove its main basic properties. Section~\ref{sec:surface_tension} is devoted to the first two steps in the proof of Theorem~\ref{thm: local uniqueness} described above, while Section~\ref{sec:uniqueness} is concerned with the final step. In Section~\ref{sec:coarse_graining} we deduce Theorem~\ref{thm:ldp free} from Theorem~\ref{thm: local uniqueness}. Finally, in Section~\ref{section:spectral gap} we use Theorem \ref{thm:ldp free} to prove Theorem \ref{theorem: dynamics}.

\paragraph{Acknowledgements.} We thank the University of Geneva, where the project was initiated, for its hospitality. TSG is supported by the Department of Atomic Energy, Government of India, under project no.12-R\&D-
TFR-5.01-0500, and supported in part by an endowment of the Infosys Foundation. RP acknowledges the support of the Swiss National Science Foundation through a Postdoc.Mobility grant, the NCCR SwissMAP, and the European Research Council (ERC) under the European Union’s Horizon 2020 research and innovation programme (grant agreement No.\ 757296). FS was supported by the ERC grants CriSP (No.\ 851565) and Vortex (No.\ 101043450).

\section{Preliminaries}\label{sec:preliminaries}
In this section, we recall some classical definitions and properties related to the $\varphi^4$ model. For further details and generalisations, see \cite{GunaratnamPanagiotisPanisSeveroPhi42022}.

\subsection{The $\varphi^4$ model on $\mathbb Z^d$}\label{sec:definition_phi4}

Let $\Lambda$ be a finite subset of $\mathbb Z^d$. Recall that $E(\Lambda)=\{\{x,y\}\subset \Lambda: x\sim y\}$ and that we write the elements of $E(\Lambda)$ as $xy$. Let $g>0$ and $a\in \mathbb R$. Since $g,a$ will be fixed for the entire article, we drop them from the notation. The ferromagnetic $\varphi^4$ model on $\Lambda$ (with parameters $(g,a)$) at inverse temperature $\beta\geq0$ and coupling strengths $J=(J_e)_{e\in E(\Lambda)} \in(\mathbb R^+)^{E(\Lambda)}$, and with external magnetic field $\mathsf{h}=(\mathsf{h}_x)_{x\in \Lambda}\in \mathbb R^\Lambda$ is the finite volume probability measure $\nu_{\Lambda,\beta,\mathsf{h},J}$ whose expectation values are defined for $F:\mathbb R^
\Lambda \rightarrow \mathbb R$ bounded and measurable by
\begin{equation}
    \langle F(\varphi)
    \rangle_{\Lambda,\beta,\mathsf{h},J}:=\frac{1}{Z^{\varphi^4}_{\Lambda,\beta,\mathsf{h},J}}\int F(\varphi)\exp\left(-\beta H_{\Lambda,\mathsf{h},J}(\varphi)\right) \textup{d}\rho_{\Lambda,g,a}(\varphi),
\end{equation}
where $Z^{\varphi^4}_{\Lambda,\beta,\mathsf{h},J}$ is the partition function which guarantees that $\langle 1\rangle_{\Lambda,\beta,\mathsf h,J}=1$, and the Hamiltonian is given by 
\begin{equation}
    H_{\Lambda,\mathsf{h},J}(\varphi):=-\sum_{xy\in E(\Lambda)}J_{xy}\varphi_x\varphi_y-\sum_{x\in \Lambda}\mathsf{h}_x\varphi_x.
\end{equation}

We let $\langle \cdot\rangle_0$ be the expectation with respect to the measure $\rho_{g,a}$. Unless specified otherwise, we will consider the homogeneous case $J\equiv 1$ (i.e.\ $J_e=1$ for every $e\in E(\Lambda)$) and drop $J$ from the notation.

The above definition includes the possibility of having \emph{boundary conditions}. Let us first introduce a useful notation. The (inner) vertex boundary of $\Lambda$ is the set $\partial \Lambda := \{ x \in \Lambda: \exists y \in \Lambda^c, x\sim y\}$.  Boundary conditions are interpreted as (and are synonymous in this paper with) inhomogeneous external magnetic fields $\mathsf h$ that satisfy $\mathsf{h}_x=0$ for $x\in {\rm Int}(\Lambda):=\Lambda\setminus \partial \Lambda$. 

Let us now discuss an important property satisfied by these measures: the domain Markov property. We begin by introducing some useful notation. Let $\Delta\subset \Lambda$ be finite subsets of $\mathbb Z^d$. If $\varphi\in \mathbb R^\Lambda$, we define a magnetic field $\mathsf{h}^{\Lambda,\Delta}_\varphi$ on $\Delta$ (or, the boundary condition on $\partial \Delta$ induced by $\varphi$) as follows: for all $x\in \Delta$,
\begin{equation}\label{eq:bc->mag_field}
\mathsf{h}^{\Lambda,\Delta}_\varphi(x):=\mathbbm{1}_{x\in \partial \Delta}\sum_{\substack{y\in \Lambda\setminus \Delta\\xy\in E(\Lambda)}}\varphi_y.
\end{equation}
We denote by $\nu_{\Lambda,\beta,\mathsf{h}}^{(\Lambda\setminus\Delta)}$ the push-forward of $\nu_{\Lambda,\beta,\mathsf{h}}$ under the restriction map $(\varphi_x)_{x\in \Lambda}\mapsto (\varphi_x)_{x\in \Lambda\setminus \Delta}$.
If $\mathsf{h}\in \mathbb R^\Lambda$ and $\beta\geq0$, the measure $\nu_{\Lambda,\beta,\mathsf{h}}$ introduced above satisfies, for every $f:\mathbb R^{\Delta}\rightarrow \mathbb R^+$,
\begin{equation}\label{eq:dlr}
    \nu_{\Lambda,\beta,\mathsf{h}}[f]=\int_{\mathbb R^{\Lambda\setminus \Delta}}\nu_{\Delta,\beta,\mathsf{h}+\mathsf{h}^{\Lambda,\Delta}_\varphi}[f]\mathrm{d}\nu_{\Lambda,\beta,\mathsf{h}}^{(\Lambda\setminus\Delta)}(\varphi).
\end{equation}

\subsection{Correlation inequalities}\label{sec:correlation_ineq}

We now present various classical correlation inequalities that will be used in this paper (see \cite{GunaratnamPanagiotisPanisSeveroPhi42022} and references therein for proofs and background). We begin with correlations of increasing functions.

\begin{proposition}[FKG inequalities, {\cite[Propositions~A.1--A.2]{GunaratnamPanagiotisPanisSeveroPhi42022} and \cite[Corollary~6.4]{LammersOtt2021}}] \label{prop: FKG phi4} Let $\Lambda \subset \mathbb Z^d$ be finite, $\beta\geq0$, $J\in (\mathbb R^+)^{E(\Lambda)}$, and $\mathsf{h}\in \mathbb R^\Lambda$. Then, for any increasing and bounded functions $F,G:\R^\Lambda\to \R$,
\begin{equation}
    \langle F(\varphi)G(\varphi)\rangle_{\Lambda,\beta,\mathsf{h},J}\geq \langle F(\varphi)\rangle_{\Lambda,\beta,\mathsf{h},J}\langle G(\varphi)\rangle_{\Lambda,\beta,\mathsf{h},J}.
\end{equation}
The same holds for the absolute value field if $\mathsf h_x \geq 0$ for every $x \in \Lambda$, namely
\begin{equation}
    \langle F(|\varphi|)G(|\varphi|) \rangle_{\Lambda,\beta,\mathsf{h},J}\geq \langle F(|\varphi|) \rangle_{\Lambda,\beta,\mathsf{h},J} \langle G(|\varphi|) \rangle_{\Lambda,\beta,\mathsf{h},J}.
\end{equation}
\end{proposition}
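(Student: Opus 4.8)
The plan is to derive both inequalities from the \emph{FKG lattice condition} — log-supermodularity of the relevant Lebesgue density — together with the FKG theorem for absolutely continuous measures on $\R^\Lambda$ (Holley's criterion in the continuum).

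For the first inequality, I would write the density of $\nu_{\Lambda,\beta,\mathsf{h},J}$ with respect to Lebesgue measure on $\R^\Lambda$, namely $f(\varphi)\propto\exp\big(\beta\sum_{xy\in E(\Lambda)}J_{xy}\varphi_x\varphi_y+\sum_{x\in\Lambda}\mathsf{h}_x\varphi_x-\sum_{x\in\Lambda}(g\varphi_x^4+a\varphi_x^2)\big)$, and compute, for $x\neq y$, $\partial_x\partial_y\log f(\varphi)=\beta J_{xy}\,\mathbbm{1}_{\{xy\in E(\Lambda)\}}\geq0$, using $\beta\geq0$ and $J\geq0$; the single-site terms affect only the diagonal of the Hessian and are irrelevant. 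Hence $\log f$ is supermodular, i.e.\ $f(\varphi\vee\psi)f(\varphi\wedge\psi)\geq f(\varphi)f(\psi)$ pointwise, and the continuum FKG theorem (obtained by discretising each coordinate, applying Holley's inequality on the finite lattice, and passing to the limit — the super-exponential decay of $f$ trivialises all integrability issues) yields positive association, hence the claim for bounded increasing $F,G$.

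The second inequality is more delicate since $\varphi\mapsto|\varphi|$ is not monotone, so one cannot restrict $f$ to a sublattice. Instead I would integrate out the signs to get the law of $\mathsf{a}:=|\varphi|$: its density on $(\R^+)^\Lambda$ is $\tilde f(\mathsf{a})\propto\big(\prod_{x\in\Lambda}e^{-g\mathsf{a}_x^4-a\mathsf{a}_x^2}\big)\,Z^{\mathrm{Ising}}_\Lambda\big(\{\beta J_{xy}\mathsf{a}_x\mathsf{a}_y\},\{\mathsf{h}_x\mathsf{a}_x\}\big)$, where $Z^{\mathrm{Ising}}_\Lambda$ is the Ising partition function with the indicated couplings and external fields, all of which are nonnegative precisely because $\mathsf{h}\geq0$ (and $\mathsf{a}\geq0$). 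The product of single-site weights does not affect the lattice condition, so it suffices to show $\log Z^{\mathrm{Ising}}_\Lambda$ is supermodular in $\mathsf{a}$, i.e.\ $\partial_x\partial_y\log Z^{\mathrm{Ising}}_\Lambda\geq0$ for $x\neq y$. Differentiating, this quantity equals $\beta J_{xy}\langle\sigma_x\sigma_y\rangle_{\mathrm{Ising}}+\mathrm{Cov}_{\mathrm{Ising}}(A_x,A_y)$, where $A_x$ is the $\mathsf{a}_x$-derivative of the exponent defining $Z^{\mathrm{Ising}}_\Lambda$ — a sum of the monomials $\sigma_x\sigma_v$ (with $v\sim x$) and $\sigma_x$ with nonnegative coefficients. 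Both terms are nonnegative by the first and second Griffiths (GKS) inequalities for the ferromagnetic Ising model with nonnegative external field. Supermodularity of $\tilde f$ then gives positive association of the law of $|\varphi|$ via Holley's criterion as before. (This absolute-value-FKG statement is exactly \cite[Corollary~6.4]{LammersOtt2021}, which one may alternatively invoke; the second Griffiths inequality used above itself follows from Ginibre's inequality.)

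I expect the main obstacle to be the second part. For the spin field the inequality is essentially forced by the bilinearity of the interaction, whereas absolute-value-FKG is genuinely special to even single-site measures and even interactions, and establishing the supermodularity of $\log Z^{\mathrm{Ising}}_\Lambda$ in the field strengths requires real input — the GKS-II / Ginibre inequality together with the $\varphi\mapsto-\varphi$ symmetry — rather than a one-line Hessian computation. Transferring Holley's lattice criterion to the continuum is a minor technicality absorbed by the quartic decay of the single-site weight.
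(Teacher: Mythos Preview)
The paper does not supply its own proof of this proposition; it is stated with citations to \cite[Propositions~A.1--A.2]{GunaratnamPanagiotisPanisSeveroPhi42022} and \cite[Corollary~6.4]{LammersOtt2021}, and your approach---Holley's lattice criterion via the off-diagonal Hessian for the spin field, and integrating out signs plus GKS-II/Ginibre for the absolute-value field---is the standard route and matches what those references do. (As a side remark: the paper does prove a conditional version of absolute-value-FKG for its random cluster marginal, Proposition~\ref{prop: absolute value FKG}, but by a different method---induction on the number of unconditioned sites combined with Griffiths' inequality---rather than by verifying log-supermodularity directly; either method would work here.)
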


As a consequence of the FKG inequalities above, we obtain the following monotonicity properties of correlations: the first inequality is standard, and the second inequality follows from a straightforward adaptation of the proof of \cite[Lemma 2.13]{GunaratnamPanagiotisPanisSeveroPhi42022}. 

\begin{proposition}\label{prop:stoc_monotonicity}
Let $\Lambda \subset \mathbb Z^d$ be finite, and $\beta\geq0$. Let also $J_1,J_2\in (\mathbb R^+)^{E(\Lambda)}$ be such that $J_1\leq J_2$, and $\mathsf{h}_1,\mathsf{h}_2\in \mathbb R^\Lambda$ be such that $\mathsf{h}_1\leq \mathsf{h}_2$. Then, for any increasing function $F:\mathbb{R}^{\Lambda}\to\mathbb{R}$ we have 
\begin{equation}
\langle F(\varphi) \rangle_{\Lambda,\beta,\mathsf{h}_1,J_1}\leq \langle F(\varphi) \rangle_{\Lambda,\beta,\mathsf{h}_2,J_2}.  
\end{equation}
The same holds for the absolute value field if $|\mathsf{h}_1| \leq \mathsf{h}_2$, namely for any increasing function $F: (\mathbb{R}^+)^{\Lambda}\to\mathbb{R}$ we have 
\begin{equation}
\langle F(|\varphi|) \rangle_{\Lambda,\beta,\mathsf{h}_1,J_1}\leq \langle F(|\varphi|) \rangle_{\Lambda,\beta,\mathsf{h}_2,J_2}.  
\end{equation}
\end{proposition}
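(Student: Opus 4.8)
The plan is to deduce both inequalities from the FKG property of Proposition~\ref{prop: FKG phi4} via the standard Holley/Radon-Nikodym argument, exploiting the fact that the Hamiltonian $H_{\Lambda,\mathsf h,J}$ is monotone in both $\mathsf h$ and $J$ when evaluated at ``aligned'' configurations. For the first statement, I would first reduce to the case where only the magnetic field changes or only the coupling constants change, by interpolating through the intermediate measure $\nu_{\Lambda,\beta,\mathsf h_1,J_2}$ (or $\nu_{\Lambda,\beta,\mathsf h_2,J_1}$); it then suffices to prove the monotonicity separately in $\mathsf h$ (with $J$ fixed) and in $J$ (with $\mathsf h$ fixed). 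In each of these two cases, the Radon–Nikodym derivative of the larger measure with respect to the smaller one is proportional to $\exp\!\big(\beta\sum_{xy} (J_{2,xy}-J_{1,xy})\varphi_x\varphi_y + \beta\sum_x (\mathsf h_{2,x}-\mathsf h_{1,x})\varphi_x\big)$, which is \emph{not} monotone in $\varphi$ because the product $\varphi_x\varphi_y$ is not monotone in $\varphi$, and $\varphi_x$ enters with a nonnegative but the sign of $\varphi_y$ is unconstrained. So the naive "larger density is increasing, hence apply FKG" does not work directly.

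The fix is the classical trick of passing to the even/odd or sign–absolute-value decomposition is \emph{not} what is needed here; rather, for the field-only case ($J_1=J_2$, $\mathsf h_1\le \mathsf h_2$) one should note that $F(\varphi)\mapsto \langle F(\varphi)\rangle_{\Lambda,\beta,\mathsf h,J}$ with $\mathsf h\mapsto \mathsf h + t e_x$ has derivative in $t$ equal to $\beta\,\mathrm{Cov}(F(\varphi),\varphi_x)\ge 0$ by the first FKG inequality (since $\varphi_x$ is increasing), and integrating over $t\in[0,1]$ along the path from $\mathsf h_1$ to $\mathsf h_2$ coordinate by coordinate gives the result. This differential/GKS-type argument avoids the monotone-density issue entirely. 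For the coupling-only case ($\mathsf h_1=\mathsf h_2$, $J_1\le J_2$), the derivative in $J_{xy}$ is $\beta\,\mathrm{Cov}(F(\varphi),\varphi_x\varphi_y)$, and one invokes the \emph{second} FKG inequality (FKG for $|\varphi|$) together with the Ginibre/GKS-type inequality from \cite{GunaratnamPanagiotisPanisSeveroPhi42022}: conditionally on $|\varphi|$ the signs form a ferromagnetic Ising model, so $\mathrm{Cov}(F(\varphi),\varphi_x\varphi_y)\ge 0$ holds when $F$ is increasing. (Alternatively, one may just cite the first displayed inequality of Proposition~\ref{prop:stoc_monotonicity} as being "standard" and spend the real work on the second.)

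For the second statement — the monotonicity of the law of $|\varphi|$ under $|\mathsf h_1|\le \mathsf h_2$ and $J_1\le J_2$ — the key point is that the absolute-value marginal is insensitive to the signs of $\mathsf h_1$, so we may assume $0\le \mathsf h_1\le \mathsf h_2$ after replacing $\mathsf h_1$ by $|\mathsf h_1|$; this replacement does not change the law of $|\varphi|$ because under $\nu_{\Lambda,\beta,\mathsf h_1,J_1}$ flipping $\varphi\mapsto -\varphi$ on a site flips the sign of the corresponding $\mathsf h_1$-term, and a suitable sign change argument (or: conditionally on $|\varphi|$ the sign field is an Ising model whose absolute-value marginal does not depend on the sign pattern of the external field) shows the $|\varphi|$-marginal only depends on $|\mathsf h_1|$. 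Once both fields are nonnegative, the same differential argument applies: the derivative of $\langle F(|\varphi|)\rangle$ in $\mathsf h_x\ge 0$ is $\beta\,\mathrm{Cov}(F(|\varphi|),\varphi_x)$, and one writes $\varphi_x = |\varphi_x|\,\mathrm{sign}(\varphi_x)$ and uses Ginibre together with the second FKG inequality to see this covariance is nonnegative; similarly for $\partial_{J_{xy}}$, using $\mathrm{Cov}(F(|\varphi|),\varphi_x\varphi_y)\ge 0$. Integrating along a monotone path in $(\mathsf h,J)$-space from $(|\mathsf h_1|,J_1)$ to $(\mathsf h_2,J_2)$ finishes the proof.

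I expect the main obstacle to be the sign/absolute-value bookkeeping in the second statement: making rigorous the claim that the $|\varphi|$-marginal depends on $\mathsf h_1$ only through $|\mathsf h_1|$, and checking that the relevant covariances $\mathrm{Cov}(F(|\varphi|), \varphi_x)$ and $\mathrm{Cov}(F(|\varphi|), \varphi_x\varphi_y)$ are nonnegative for increasing $F$ — this is precisely where one must combine the Ginibre inequality (to handle the sign field conditionally on $|\varphi|$) with the FKG inequality for $|\varphi|$, and it is the step the paper flags as "a straightforward adaptation of the proof of \cite[Lemma 2.13]{GunaratnamPanagiotisPanisSeveroPhi42022}". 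Everything else — the interpolation along monotone paths, differentiating partition functions — is routine.
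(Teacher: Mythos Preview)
The most serious gap is in your reduction for the second inequality. You claim the law of $|\varphi|$ under $\nu_{\Lambda,\beta,\mathsf h_1,J_1}$ depends only on $|\mathsf h_1|$, so that one may replace $\mathsf h_1$ by $|\mathsf h_1|$ at the outset. This is false: the $|\varphi|$-density is proportional to the Ising partition function $Z^{\mathrm{Ising}}$ with couplings $\beta J_{1,xy}\mathsf a_x\mathsf a_y$ and fields $\beta\mathsf h_{1,x}\mathsf a_x$, and this is \emph{not} invariant under sign-flips of individual components of $\mathsf h_1$ (only under a global flip). On two sites with coupling $K>0$ one has $Z^{\mathrm{Ising}}=2e^K\cosh(H_1{+}H_2)+2e^{-K}\cosh(H_1{-}H_2)$, which differs for $(H_1,H_2)=(1,1)$ versus $(1,-1)$. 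The correct route --- the ``adaptation of \cite[Lemma~2.13]{GunaratnamPanagiotisPanisSeveroPhi42022}'' the paper alludes to, and which reappears explicitly around \eqref{eq:log derivative1} and in the proof of Proposition~\ref{prop:monotonicity_FK} --- bypasses any such reduction: write the Radon--Nikodym derivative of the two $|\varphi|$-marginals as $Z^{\mathrm{Ising}}_2(\mathsf a)/Z^{\mathrm{Ising}}_1(\mathsf a)$, whose logarithmic derivative in $\mathsf a_z$ equals $\beta\sum_y\mathsf a_y\big(J_{2,zy}\langle\sigma_z\sigma_y\rangle_2-J_{1,zy}\langle\sigma_z\sigma_y\rangle_1\big)+\beta\big(\mathsf h_{2,z}\langle\sigma_z\rangle_2-\mathsf h_{1,z}\langle\sigma_z\rangle_1\big)$. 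Ginibre's inequality for the underlying Ising models (valid since $K^1\le K^2$ and $|H^1|\le H^2$) gives $\langle\sigma_A\rangle_2\ge|\langle\sigma_A\rangle_1|$, making each bracket nonnegative. Since $\mathsf h_2\ge 0$, absolute-value FKG (Proposition~\ref{prop: FKG phi4}) applies to the larger measure, and Holley's criterion yields the domination directly.

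For the first inequality, your $\mathsf h$-monotonicity argument at fixed $J$ is correct. But your fix for the $J$-part --- that $\mathrm{Cov}(F(\varphi),\varphi_x\varphi_y)\ge 0$ for every increasing $F$ --- is not only unjustified by your sketch, it is false. For two-site Ising with $\mathsf h=(1,0)$ and the increasing function $F=-\mathbbm{1}\{\sigma=(-1,-1)\}$, one has $F\sigma_1\sigma_2=F$, hence $\mathrm{Cov}(F,\sigma_1\sigma_2)=-P(\sigma=(-1,-1))\cdot(1-\langle\sigma_1\sigma_2\rangle)<0$, so $\langle F\rangle$ strictly decreases in $J$. This shows the first display cannot hold as stated when $J_1<J_2$; the ``standard'' result the paper invokes is the $\mathsf h$-monotonicity at fixed $J$.
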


We now turn to spin correlations. In what follows, given $\Lambda\subset\mathbb{Z}^d$ finite and $A:\Lambda\rightarrow \mathbb{N}$, we write $\varphi_A\coloneqq \prod_{x\in \Lambda} \varphi_x^{A_x}$.

\begin{proposition}[Monotonicity in coupling strength, {\cite[Proposition 3.18 and Remark 3.19]{GunaratnamPanagiotisPanisSeveroPhi42022}}] \label{prop:monotonicity}
Let $\Lambda \subset \mathbb Z^d$ be finite, $\beta\geq0$, $J\in (\mathbb R^+)^{E(\Lambda)}$, $\mathsf{h}\in \mathbb (\mathbb R^+)^\Lambda$, and $A: \Lambda\to \N$. Then, for every $e\in E(\Lambda)$, the function
$J_e\mapsto \langle \varphi_A\rangle_{\Lambda,\beta,\mathsf{h},J}$ is increasing.
\end{proposition}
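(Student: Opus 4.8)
# Proof Plan for Proposition \ref{prop:monotonicity}

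\textbf{Proof sketch.} The plan is to compute the derivative of $\langle \varphi_A\rangle_{\Lambda,\beta,\mathsf h,J}$ in a single coupling constant $J_e$ and to recognise it as a truncated ("Griffiths--type") correlation. Fix an edge $e=xy\in E(\Lambda)$ and regard $\beta$, $\mathsf h$ and the couplings $J_f$ for $f\neq e$ as frozen. Since $g>0$, the single-site density $e^{-gt^4-at^2}$ decays faster than any Gaussian, so for all $z\in\Lambda$ the bilinear terms $\beta J_{zz'}\varphi_z\varphi_{z'}\le \tfrac{\beta}{2}J_{zz'}(\varphi_z^2+\varphi_{z'}^2)$ and the linear terms $\beta\mathsf h_z\varphi_z$ appearing in $-\beta H_{\Lambda,\mathsf h,J}$ are dominated at infinity by $-g\sum_z\varphi_z^4$. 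Hence $e^{-\beta H_{\Lambda,\mathsf h,J}(\varphi)}$, and any fixed polynomial multiple of it, is integrable against $\mathrm d\rho_{\Lambda,g,a}$, locally uniformly in $J_e$, so one may differentiate under the integral sign. Using $\partial_{J_e}\big(-\beta H_{\Lambda,\mathsf h,J}(\varphi)\big)=\beta\varphi_x\varphi_y$, a direct computation gives
\begin{equation}
\frac{\partial}{\partial J_e}\,\langle \varphi_A\rangle_{\Lambda,\beta,\mathsf h,J}
=\beta\Big(\langle \varphi_A\varphi_x\varphi_y\rangle_{\Lambda,\beta,\mathsf h,J}-\langle \varphi_A\rangle_{\Lambda,\beta,\mathsf h,J}\,\langle \varphi_x\varphi_y\rangle_{\Lambda,\beta,\mathsf h,J}\Big).
\end{equation}
Introducing the multi-index $C:\Lambda\to\N$ given by $C_z:=\mathbbm 1_{z=x}+\mathbbm 1_{z=y}$ (so that $\varphi_C=\varphi_x\varphi_y$, using $x\neq y$), the right-hand side equals $\beta\big(\langle\varphi_{A+C}\rangle-\langle\varphi_A\rangle\langle\varphi_C\rangle\big)$. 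Thus the proposition reduces to the second Griffiths inequality for the $\varphi^4$ model with non-negative external field: for all multi-indices $A,C:\Lambda\to\N$,
\begin{equation}\label{eq:GKSII-plan}
\langle \varphi_{A}\varphi_{C}\rangle_{\Lambda,\beta,\mathsf h,J}\;\geq\;\langle \varphi_{A}\rangle_{\Lambda,\beta,\mathsf h,J}\,\langle \varphi_{C}\rangle_{\Lambda,\beta,\mathsf h,J},\qquad \mathsf h\in(\R^+)^\Lambda.
\end{equation}

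The second step is to justify \eqref{eq:GKSII-plan}. The key point is that the single-site measure $\rho_{g,a}$ belongs to the Griffiths--Simon class: being even, it arises as a weak limit of the laws of $c_N\sum_{i=1}^{N}\sigma_i$, where $(\sigma_i)$ are Ising spins on the complete graph $K_N$ with suitable ferromagnetic couplings and vanishing field, and $c_N\to 0$. Consequently, $\langle\cdot\rangle_{\Lambda,\beta,\mathsf h,J}$ is a limit of expectations of ferromagnetic Ising models with non-negative external fields (on a larger vertex set obtained by blowing up each site of $\Lambda$ into a clique), and the classical GKS-I and GKS-II inequalities for such Ising models pass to the limit. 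This yields both $\langle\varphi_A\rangle\ge 0$ and \eqref{eq:GKSII-plan}; it is precisely the content of the correlation inequalities recalled in \cite{GunaratnamPanagiotisPanisSeveroPhi42022} and the references therein. Plugging \eqref{eq:GKSII-plan} with $C_z=\mathbbm 1_{z=x}+\mathbbm 1_{z=y}$ into the displayed identity shows that the derivative in $J_e$ is non-negative for every value of $J_e$, so $J_e\mapsto\langle\varphi_A\rangle_{\Lambda,\beta,\mathsf h,J}$ is non-decreasing, which is the claim. (Iterating over the edges of $E(\Lambda)$ further gives $\langle\varphi_A\rangle_{\Lambda,\beta,\mathsf h,J_1}\le\langle\varphi_A\rangle_{\Lambda,\beta,\mathsf h,J_2}$ whenever $J_1\le J_2$, though only the one-edge statement is needed here.)

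The only non-routine ingredient is the Griffiths second inequality \eqref{eq:GKSII-plan}: unlike the FKG inequality of Proposition~\ref{prop: FKG phi4}, it is \emph{not} a consequence of a monotonicity/lattice argument but genuinely uses the algebraic structure of the moments of $\rho_{g,a}$ via the Griffiths--Simon approximation (equivalently, a duplicated-variable argument specific to even single-site measures of $\varphi^4$ type). Once \eqref{eq:GKSII-plan} is granted, the remainder of the proof is the elementary differentiation-under-the-integral computation above.
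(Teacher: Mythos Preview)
Your argument is correct and is the standard one: differentiate under the integral to obtain the covariance $\beta\big(\langle\varphi_A\varphi_x\varphi_y\rangle-\langle\varphi_A\rangle\langle\varphi_x\varphi_y\rangle\big)$, then invoke the second Griffiths inequality for $\varphi^4$ (valid since $\rho_{g,a}$ is in the Griffiths--Simon class and $\mathsf h\ge 0$). The paper does not supply its own proof of this proposition but simply cites \cite[Proposition~3.18 and Remark~3.19]{GunaratnamPanagiotisPanisSeveroPhi42022}, where exactly this differentiation-plus-GKS-II route is taken; so your proposal matches the intended proof.
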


\begin{proposition}[Ginibre inequality, {\cite[Proposition~2.11]{GunaratnamPanagiotisPanisSeveroPhi42022}}]\label{prop:Ginibre} Let $A,B: \Lambda\rightarrow \N$. Then, for any $\mathsf{h}_1, \mathsf{h}_2\in (\mathbb R^+)^\Lambda$ such that $|\mathsf{h}_1|\leq \mathsf{h}_2$,
\begin{equation}\label{eq:Ginibre}
    \langle \varphi_A\varphi_B\rangle_{\Lambda,\beta,\mathsf{h}_2,J}-\langle \varphi_A\varphi_B\rangle_{\Lambda,\beta,\mathsf{h}_1,J}\geq \left|\langle \varphi_A\rangle_{\Lambda,\beta,\mathsf{h}_2,J}\langle \varphi_B\rangle_{\Lambda,\beta,\mathsf{h}_1,J}-\langle \varphi_A\rangle_{\Lambda,\beta,\mathsf{h}_1,J}\langle\varphi_B\rangle_{\Lambda,\beta,\mathsf{h}_2,J}\right|.
\end{equation}
\end{proposition}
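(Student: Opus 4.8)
The plan is to establish \eqref{eq:Ginibre} --- a multi-component form of the classical Ginibre inequality --- by the duplicated-variable method, handling the $\varphi^4$ single-site weight through the Griffiths--Simon reduction to the Ising model. Write $\langle\cdot\rangle_i:=\langle\cdot\rangle_{\Lambda,\beta,\mathsf h_i,J}$ and $Z_i$ for the corresponding partition function ($i=1,2$), and let $\mathbb E$ denote the law of an independent pair $(\varphi,\varphi')$ with $\varphi\sim\langle\cdot\rangle_1$ and $\varphi'\sim\langle\cdot\rangle_2$. Calling $X$ and $Y$ the left- and right-hand sides inside the absolute value in \eqref{eq:Ginibre}, independence gives $X=\mathbb E[\varphi'_A\varphi'_B-\varphi_A\varphi_B]$ and $Y=\mathbb E[\varphi'_A\varphi_B-\varphi_A\varphi'_B]$, so the elementary identities $X\pm Y=\mathbb E\big[(\varphi'_A\mp\varphi_A)(\varphi'_B\pm\varphi_B)\big]$ reduce \eqref{eq:Ginibre} to proving that $\mathbb E\big[(\varphi'_A\mp\varphi_A)(\varphi'_B\pm\varphi_B)\big]\ge0$ for both sign choices. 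Passing to the sum/difference variables $s_x=(\varphi'_x+\varphi_x)/\sqrt2$ and $d_x=(\varphi'_x-\varphi_x)/\sqrt2$, the quadratic interaction becomes $\beta\sum_{xy}J_{xy}(s_xs_y+d_xd_y)$, the magnetic term becomes $\tfrac1{\sqrt2}\sum_x\big[(\mathsf h_1+\mathsf h_2)_x s_x+(\mathsf h_2-\mathsf h_1)_x d_x\big]$ --- with all coefficients nonnegative, since $|\mathsf h_1|\le\mathsf h_2$ --- and $\varphi'_A-\varphi_A$ (resp.\ $\varphi'_A+\varphi_A$) becomes a polynomial in $(s,d)$ with nonnegative coefficients, each of whose monomials has odd (resp.\ even) total degree in the variables $d$. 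Thus $(\varphi'_A\mp\varphi_A)(\varphi'_B\pm\varphi_B)$ is a nonnegative combination of monomials $s^\alpha d^\gamma$, and if the single-site weight $\mathrm d\rho(\varphi_x)\,\mathrm d\rho(\varphi'_x)$ were, in these variables, the product of an even weight in $s_x$ and an even weight in $d_x$, the model would split into two independent ferromagnetic $\varphi^4$-type systems with nonnegative external fields, and the bounds $\langle s^\alpha d^\gamma\rangle\ge0$ --- hence the desired non-negativity --- would follow at once from the first Griffiths--Kelly--Sherman inequality.

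The obstruction is that this factorisation fails: since $\varphi_x^4+(\varphi'_x)^4=\tfrac12 s_x^4+\tfrac12 d_x^4+3s_x^2 d_x^2$, the rotated weight carries a spurious single-site factor $e^{-3g s_x^2 d_x^2}$ that couples $s_x$ and $d_x$ with the ``wrong'' sign and is not of ferromagnetic type, so GKS cannot be applied directly (expanding this factor yields an alternating series). To bypass this I would invoke the Griffiths--Simon approximation, as set up in \cite{GunaratnamPanagiotisPanisSeveroPhi42022}: the single-site measure $\rho_{g,a}$ is a weak limit, with convergence of all moments, of the laws of block spins $c_N\sum_{i=1}^N\sigma_i$ of finite ferromagnetic Ising systems. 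Replacing $\rho_{g,a}$ by such a block measure at every site of $\Lambda$ turns $\langle\cdot\rangle_{\Lambda,\beta,\mathsf h_i,J}$ into a genuine inhomogeneous ferromagnetic Ising model with nonnegative external field (here one uses $\mathsf h_1,\mathsf h_2\ge0$), in which $\varphi_A$ and $\varphi_B$ become polynomials with nonnegative coefficients in the Ising spins, hence --- using $\sigma^2=1$ --- nonnegative combinations of monomials $\sigma_C$. Since $\mathbb E\big[(\varphi'_A\mp\varphi_A)(\varphi'_B\pm\varphi_B)\big]$ is bilinear in $(\varphi_A,\varphi_B)$, it suffices to prove its non-negativity when $\varphi_A$ and $\varphi_B$ are single monomials $\sigma_C,\sigma_D$, and the $\varphi^4$ statement then follows by letting $N\to\infty$.

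It remains to prove the Ginibre inequality for the Ising model, namely $\mathbb E\big[(\sigma'_C\mp\sigma_C)(\sigma'_D\pm\sigma_D)\big]\ge0$ for a ferromagnetic system with $0\le\mathsf h_1\le\mathsf h_2$. This is the classical inequality of Ginibre, whose proof I would follow: the substitution $\tau_x:=\sigma_x\sigma'_x\in\{\pm1\}$ (keeping $\sigma$) turns the doubled interaction into $\beta\sum_{xy}J_{xy}\sigma_x\sigma_y(1+\tau_x\tau_y)$ with nonnegative random couplings $1+\tau_x\tau_y$, the doubled field into $\sum_x\sigma_x\big((\mathsf h_1)_x+(\mathsf h_2)_x\tau_x\big)$, and one has $(\sigma'_C\mp\sigma_C)(\sigma'_D\pm\sigma_D)=\sigma_{C\triangle D}(\tau_C\mp1)(\tau_D\pm1)$. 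Summing over $\sigma$ first and expanding $\prod_x\big(\cosh A_x+\sigma_x\sinh A_x\big)$ with $A_x=(\mathsf h_1)_x+(\mathsf h_2)_x\tau_x$, one uses that $J_{xy}(1+\tau_x\tau_y)\ge0$ so each partial sum over $\sigma$ is a nonnegative multiple of some $\langle\sigma_{C'}\rangle$ by GKS~I, that $|\mathsf h_1|\le\mathsf h_2$ forces $\sinh A_x$ to have the sign of $\tau_x$, and that $(\tau_C\mp1)(\tau_D\pm1)$ vanishes unless $\tau_C$ and $\tau_D$ take prescribed values --- so that the residual signs cancel upon summing over $\tau$. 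The (elementary but fiddly) sign bookkeeping in this last step is the only genuinely delicate point. Overall, I expect the main obstacle to be conceptual rather than computational: the naive duplicated-variable computation breaks down for the $\varphi^4$ weight, and the key move is to recognise that one should instead reduce to the Ising case, where the same computation goes through.
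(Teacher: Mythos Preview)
The paper does not give its own proof of this proposition: it is quoted from \cite[Proposition~2.11]{GunaratnamPanagiotisPanisSeveroPhi42022}, with only a remark that extending the hypothesis from boundary fields to bulk fields is immediate. So there is no in-paper argument to compare against.

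That said, your proposal is correct and follows the standard route. The duplicated-variable reduction to the two non-negativity statements $\mathbb E\big[(\varphi'_A\mp\varphi_A)(\varphi'_B\pm\varphi_B)\big]\ge0$ is exactly right, and your diagnosis of why the naive $(s,d)$ computation fails for $\varphi^4$ (the $e^{-3gs_x^2d_x^2}$ cross-term has the wrong sign) is on point --- this is precisely why one passes through the Griffiths--Simon approximation, which is also how the cited reference proceeds. The reduction to Ising via block spins is legitimate since all moments converge, and the bilinearity argument reducing to single monomials $\sigma_C,\sigma_D$ is clean.

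Your sketch of the Ising case is also correct, though you undersell it slightly. With $\tau=\sigma\sigma'$, the conditional $\sigma$-model given $\tau$ decouples into the $\tau$-constant clusters (since $1+\tau_x\tau_y=0$ across their boundary); gauging $\sigma_x\to\tau_x\sigma_x$ makes all fields nonnegative, and under this gauge $\sigma_{C\triangle D}(\tau_C\mp1)(\tau_D\pm1)$ picks up a factor $\tau_{C\triangle D}=\tau_C\tau_D$, so on the support ($\tau_C=\mp1$, $\tau_D=\pm1$) the full prefactor is $(\mp2)(\pm2)(\mp1)=4>0$, and GKS~I finishes. So the ``fiddly bookkeeping'' is in fact a two-line computation once the gauge is chosen.
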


\begin{remark} 
In \textup{\cite{GunaratnamPanagiotisPanisSeveroPhi42022}}, we only proved Proposition \textup{\ref{prop:Ginibre}} when $\mathsf{h}_1,\mathsf{h}_2\equiv 0$ in $\Lambda\setminus \partial \Lambda$, i.e.\ when $\mathsf{h}_1,\mathsf{h}_2$ can be seen as boundary conditions, but the proof adapts mutatis mutandis. 
\end{remark}

\subsection{Quartic tail bounds}\label{sec:regularity}

For the $\varphi^4$ model, since spins are unbounded, there is a priori no notion of a maximal or minimal boundary condition. However, it is natural to expect that, similarly to the single site measure $\rho_{g,a}$, the finite volume measures $\nu_{\Lambda,\beta,\mathsf{h},J}$ also have tails of the type $e^{-c\varphi^4}$. In this case, one can deduce that, with high probability, every spin in a domain $\Lambda$ is smaller than $C(\log |\Lambda|)^{1/4}$ in absolute value, for $C$ large enough. This suggests that $\pm C(\log |\Lambda|)^{1/4}$ are good candidates for the maximal and minimal boundary condition. This is indeed the case and follows from a regularity property encapsulated in Proposition~\ref{prop:regularity} below. Recall that for $\Delta\subset \Lambda$, $\nu_{\Lambda,\beta,\mathsf{h}}^{(\Delta)}$ denotes the push-forward of $\nu_{\Lambda,\beta,\mathsf{h}}$ under the restriction map $(\varphi_x)_{x\in \Lambda}\mapsto (\varphi_x)_{x\in \Delta}$. Additionally, we let $\mathrm{d}\varphi_\Delta:=\prod_{x\in \Delta}\mathrm{d}\varphi_x$. The following proposition states that the measures $\nu_{\Lambda_L,\beta,\mathsf h}$ have (uniform) tails of the form $e^{-c\varphi^4}$ at vertices that are sufficiently far away from locations where $\mathsf{h}$ takes large values. Following the terminology used in \cite{GunaratnamPanagiotisPanisSeveroPhi42022}, we will refer to this result as a \emph{regularity} estimate. See Figure \ref{fig:regularity} for an illustration.

Recall that $|\cdot|$ denotes the infinite norm on $\mathbb R^d$, and that for every $L \in \mathbb N^*$, $\Lambda_L= \{-L,\dots,L\}^d = \{ x \in \mathbb Z^d : |x|\leq L\}$, and for every $z \in \mathbb Z^d$, $\Lambda_L(z)=z+\Lambda_L$. Furthermore, for every $z \in \mathbb Z^d$ and $A \subset \mathbb Z^d$, we write ${\rm dist}^\infty(z,A)= \inf \{ |z-a| : a \in A \}$ with the convention that ${\rm dist}^\infty(z,\emptyset) = \infty$.

\begin{proposition}\label{prop:regularity}
For every $\beta,h\geq0$, there exists a constant $C=C(\beta,h)\in(0,\infty)$, depending continuously on the parameters,
such that the following holds.
Let $L\geq1$, and $\mathsf h \in \mathbb R^{\Lambda_L}$ be such that $|\mathsf h_x|\leq h$ for every $x \in \Lambda_L\setminus \partial \Lambda_L$ and $\mathsf |\mathsf h_x| \leq \sqrt{\log L}$ for every $x \in \partial \Lambda_L$. Then, for every $z\in \Lambda_L$ such that $M(z):=\mathrm{dist}^\infty(z,\{x\in \partial\Lambda_L:~|\mathsf{h}_x|>h\})\geq L$, 

\begin{equation}
\label{eq:regularity}
\nu^{(\Delta)}_{\Lambda_L, \beta, \mathsf h} (|\cdot|)\preccurlyeq \rho_{\Delta,g/2,0}(C+|\cdot|),
\end{equation}
where $\preccurlyeq$ refers to stochastic domination of measures, $\Delta\subset \Lambda_{m-\sqrt{m}}(z)\cap\Lambda_L$ for $m:=\min\{M(z),2L\}$, and 
$\mu(f(\cdot))$ denotes the push-forward of the measure $\mu$ with respect to the function $f$. Moreover, $\langle \varphi^2_x \rangle_{\Lambda_L,\beta,\mathsf{h}}\leq C\sqrt{\log L}$ for every $x\in \Lambda_L$. 
\end{proposition}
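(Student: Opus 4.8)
The plan is to prove the stochastic domination \eqref{eq:regularity} by a multi-scale comparison argument that reduces the problem to the single-site tail bound $e^{-gt^4/2}$, controlling the "external field" coming from the spins outside $\Delta$ at each scale. First I would note that, by the domain Markov property \eqref{eq:dlr}, conditionally on the field outside $\Delta$ the restricted measure is a $\varphi^4$ measure on $\Delta$ with an induced boundary field $\mathsf h + \mathsf h^{\Lambda_L,\Delta}_\varphi$; hence it suffices to show that this induced field is not too large with overwhelming probability, and that a $\varphi^4$ measure on any domain with bounded boundary field has the claimed $\rho_{g/2,0}$-type tails. The latter is a standard single-site computation: completing the square in $-g\varphi_x^4 - a\varphi_x^2 + (\text{field})\varphi_x$ absorbs the linear and quadratic terms into a shift and a change from $g$ to $g/2$ at the cost of a deterministic constant $C$, using FKG (Proposition \ref{prop: FKG phi4}) and monotonicity in the field (Proposition \ref{prop:stoc_monotonicity}) to dominate the conditional law of $\varphi_x$ given its neighbours by a product measure. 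The content of the proposition is that the geometric condition $M(z)\geq L$ gives enough "insulation" so that the boundary field only enters through a region where it is bounded by $h$, while the contributions of interior spins are themselves controlled inductively.

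The key steps, in order, are as follows. (1) Establish a \emph{one-step} estimate: for a $\varphi^4$ measure on a finite domain $D$ with a boundary field bounded by some $R$, every spin $\varphi_x$ is stochastically dominated by $\rho_{g/2,0}(C_0 + C_1 R + |\cdot|)$ for explicit constants; this follows from the single-site analysis above combined with a union bound and Gaussian-type tail integration. (2) Set up a \emph{dyadic/iterative scheme}: choose a sequence of nested annuli between $\partial(\Lambda_{m-\sqrt m}(z)\cap \Lambda_L)$ and the locations where $\mathsf h$ is large (distance at least of order $\sqrt m$), and show that the maximal spin on each successive shell towards the inside is, with probability $1 - |\Lambda_L|^{-10}$ say, at most $C(\log L)^{1/4}$, using step (1) with $R \asymp (\log L)^{1/4}$ fed in from the previous shell. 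The width $\sqrt m$ and the thickness $\log L$ in the hypothesis are exactly what is needed for the number of shells times the per-shell failure probability to be summable. (3) On the high-probability event that all interior spins in the insulating region are $\le C(\log L)^{1/4}$, the induced field $\mathsf h^{\Lambda_L,\Delta}_\varphi$ on $\partial\Delta$ is bounded by $2d \cdot C(\log L)^{1/4}$, but since $\Delta \subset \Lambda_{m-\sqrt m}(z)$ sits at distance $\gtrsim \sqrt m \gg \log L$ from that region, one more application of the domain Markov property peels off this logarithmic field — the crucial point being that a bounded field decays after $O(\log L)$ layers back to an $O(1)$ effective field by the exponential relaxation inherent in step (1). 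Conclude \eqref{eq:regularity} by combining this with step (1) applied on $\Delta$ with $R = O(1)$. (4) For the second moment bound $\langle \varphi_x^2\rangle_{\Lambda_L,\beta,\mathsf h}\leq C\sqrt{\log L}$, apply the same scheme without the restriction to far-away $z$: every spin is dominated by $\rho_{g/2,0}(C + |\cdot|)$ plus a defect term of size $O((\log L)^{1/4})$ coming from the possibly-large boundary field, and squaring and integrating gives $O(\sqrt{\log L})$; here one uses $|\mathsf h_x|\le \sqrt{\log L}$ on $\partial\Lambda_L$ directly together with monotonicity (Proposition \ref{prop:stoc_monotonicity}) to bound by the measure with field exactly $\sqrt{\log L}$ on the boundary.

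The main obstacle I expect is step (2)–(3): propagating a \emph{uniform} (in boundary conditions, and in the shape of $\Delta$) control of spins through the insulating annulus, because the "field" fed from one shell to the next is itself random and unbounded, so the iteration must be run on the level of stochastic domination by product measures rather than on expectations, and one must be careful that the constants do not blow up over the $O(\sqrt{L})$ or $O(L)$ iterations. This is handled by working with truncated events $\{\max_{\text{shell}} |\varphi| \le C(\log L)^{1/4}\}$ and using the FKG inequality to show that conditioning on such an event only helps (decreases spins), so that the per-shell estimates chain multiplicatively with a benign union-bound loss; the continuous dependence of $C(\beta,h)$ on the parameters then comes from the explicit single-site completion-of-the-square constants. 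A secondary technical point is that $\Delta$ is an arbitrary subset of $\Lambda_{m-\sqrt m}(z)\cap\Lambda_L$, not a box, so the domain Markov step must be applied with $\Delta$ itself as the inner region and one should not assume any regularity of $\partial\Delta$ — but since the bound on the induced field only uses $\varphi$-values on $\Lambda_L\setminus\Delta$, which lie in the controlled region, this causes no real difficulty.
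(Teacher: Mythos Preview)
There is a genuine inconsistency in your outline. Your one-step estimate (1) asserts that a boundary field of size $R$ yields stochastic domination by $\rho_{g/2,0}(C_0 + C_1 R + |\cdot|)$, i.e.\ the shift is \emph{linear} in $R$. But step (3) then invokes an ``exponential relaxation inherent in step (1)'' to bring a field of order $(\log L)^{1/4}$ down to $O(1)$ after $O(\log L)$ layers. A linear bound contains no such contraction: iterating $R \mapsto C_0 + C_1 R$ does not decay unless $C_1<1$, which there is no reason to expect here. The actual mechanism you need is that the quartic potential produces a \emph{sublinear} response --- with a linear field $R$ the minimiser of $g\varphi^4 - R\varphi$ sits at $\varphi \asymp R^{1/3}$ --- so the correct one-step bound is domination by $\rho_{g/2,0}(C_0 + C_1 R^{1/3} + |\cdot|)$. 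Compare Lemma~\ref{lem: cond regularity}, whose proof runs exactly this cube-root propagation (via the event $\{\mathsf a_y \le \max\{s^{d(x,y)+1}, C\max_{z\sim y}\mathsf a_z^{1/3}\}\}$). With that correction steps (2)--(3) do contract, reaching $O(1)$ after $O(\log\log L)$ layers.

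The paper's own proof takes a much shorter route and does not iterate at all. It (i) enlarges the domain from $\Lambda_L$ to $\Lambda_L\cup\Lambda_m(z)$ and increases the field to $\max\{\mathsf h,h\}$ via absolute-value monotonicity (Proposition~\ref{prop:stoc_monotonicity}), which pushes all large-field vertices to the boundary of the enlarged domain, at distance $\geq m$ from $\Delta$; and then (ii) invokes a black-box density bound, \cite[Corollary~D.12]{GunaratnamPanagiotisPanisSeveroPhi42022}, which directly gives that the density on $\Delta$ is pointwise $\leq e^{-\sum_{x\in\Delta}(\tfrac g2\varphi_x^4 - A)}$. The stochastic domination and the $\sqrt{\log L}$ second-moment bound then follow in one line (the latter from \cite[Remark~D.13]{GunaratnamPanagiotisPanisSeveroPhi42022}). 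So all the multi-scale work is hidden in the cited result, and the present proof is just a monotonicity reduction. Your approach, once step (1) is corrected, is in the spirit of the alternative argument alluded to in the Remark following the proof and carried out for single vertices in Lemma~\ref{lem: cond regularity}; it has the merit of being self-contained, at the cost of being substantially longer.
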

\begin{figure}[htb]
    \centering
    \includegraphics[width=0.7\linewidth]{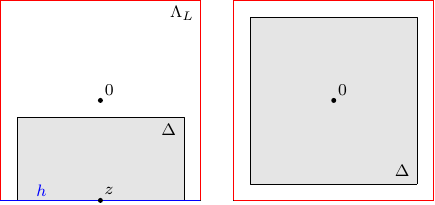}
    \put(-335,30){\color{red}$\sqrt{\log L}$}
    \caption{An illustration of two applications of Proposition \ref{prop:regularity}. On both sides, the magnetic field $\mathsf{h}$ is equal to $0$ in $\Lambda_L\setminus\partial \Lambda_L$. On the left: the part of $\partial \Lambda_L$ in red carries a ``large'' magnetic field $\mathsf{h}_x=\sqrt{\log L}$, while the blue part carries a constant magnetic field $\mathsf{h}_x=h$. With the notations of Proposition \ref{prop:regularity}, the point $z$ satisfies $M(z)\geq L$. The restriction of $\nu_{\Lambda_L,\beta,\mathsf{h}}$ to the shaded grey region $\Delta$ has quartic tails. On the right: $\mathsf{h}_x=\sqrt{\log L}$ for $x \in \partial \Lambda_L$. Here, $M(0)\geq L$ and the restriction of $\nu_{\Lambda_L,\beta,\mathsf{h}}$ to the shaded grey region $\Delta$ has quartic tails.}
    \label{fig:regularity}
\end{figure}

\begin{proof}
Let $z$ and $\Delta\subset\Lambda_{m-\sqrt{m}}(z)\cap \Lambda_L$ be as in the statement, and define $\Lambda=\Lambda_L\cup\Lambda_m(z)$. By Proposition~\ref{prop:stoc_monotonicity}, 
\begin{equation}
\nu^{(\Delta)}_{\Lambda_L, \beta, \mathsf h} (|\cdot|)\preccurlyeq \nu^{(\Delta)}_{\Lambda, \beta, \tilde{\mathsf h}} (|\cdot|),    
\end{equation}
where $\tilde{\mathsf{h}}_x=\max\{\mathsf{h}_x,h\}$ for $x\in \Lambda_L$, and $\tilde{\mathsf{h}}_x=h$ for $x\in \Lambda\setminus \Lambda_L$. Note that the only vertices where $\tilde{\mathsf{h}}>h$ lie at $\partial \Lambda$, and $|\mathsf{h}_x|\leq \sqrt{\log (\mathrm{dist}^\infty(z,\partial \Lambda))}$ for every $x\in \partial \Lambda$.
We can thus apply \cite[Corollary D.12]{GunaratnamPanagiotisPanisSeveroPhi42022} (for the translation of $\Lambda$ by $-z$) to deduce that the density of $\nu^{(\Delta)}_{\Lambda, \beta, \tilde{\mathsf h}}$ (with respect to the product Lebesgue measure) is upper bounded by $e^{-\sum_{x\in \Delta}(\frac{g}{2} \varphi^4_x-A)}$ for some constant $A>0$. 
Note that there exists $C>0$ large enough such that for every $u=(u_x)_{x \in \Delta} \in (\mathbb R^+)^\Delta$, letting $I(u)=\prod_{x \in \Delta}[C+u_x,\infty)$, we have that 
\begin{equation}
\begin{aligned}
\int_{I(u)}e^{-\sum_{x \in \Delta}(\frac{g}{2} \varphi^4_x-A)} \prod_{x\in \Delta}\mathrm{d}\varphi_x&\leq \rho_{\Delta,g/2,0}(C+|\varphi_x|\geq C+u_x ,\, \forall x\in \Delta) \\ &= \rho_{\Delta,g/2,0}(|\varphi_x|\geq u_x ,\, \forall x\in \Delta).  
\end{aligned}
\end{equation}
This implies the desired stochastic domination \eqref{eq:regularity}. The second part of the statement follows from \cite[Remark D.13]{GunaratnamPanagiotisPanisSeveroPhi42022}.
\end{proof}

\begin{remark}Let us remark that the argument of Lemma~\textup{\ref{lem: cond regularity}} can be adapted to give an alternative proof of the statement in the case where $\Delta$ consists of a single vertex.
\end{remark}

\begin{remark}\label{rem:boundary regularity}
 Proposition~\textup{\ref{prop:regularity}} implies that regularity estimates hold far away from the boundary, and even at boundary vertices that are sufficiently far away from vertices where $\mathsf{h}$ is large. In the particular case where $|\mathsf{h}_x|\leq h$ for every $x\in \Lambda_L$, the regularity estimates hold everywhere in $\Lambda_L$. 
\end{remark}
\begin{remark}\label{rem:tightness}
 Proposition~\textup{\ref{prop:regularity}} implies that the family of measures considered is tight. We will use this several times (together with monotonicity arguments) to prove that certain sequences of measures converge to an infinite volume measure. Moreover, it will follow immediately that in these cases the corresponding infinite volume measure inherits the bound \eqref{eq:regularity}.
\end{remark}

As mentioned above, a consequence of Proposition~\ref{prop:regularity} is that we can make sense of maximal (and minimal) boundary conditions as follows. Let us write $\partial^{\rm ext}\Lambda := \{ x \in \Lambda^c: \exists \; y \in \Lambda, x \sim y \}$ to denote the exterior boundary of $\Lambda$ and $\overline \Lambda:= \Lambda \sqcup \partial^{\rm ext}\Lambda$.
It follows from a union bound that there exists a large enough constant $C_{0}=C_0(h)\in(0,\infty)$ such that, setting $\mathfrak{M}_\Delta:=C_{0} (1\vee\log |\Delta|)^{1/4}$, one has  
\begin{equation}\label{eq:maximum_reg}
    \nu_{\Lambda_L,\beta,\mathsf{h}} \left( \max_{x\in\overline \Delta} |\varphi_x| \leq \mathfrak{M}_\Delta \right) = 1-o(1),
\end{equation}
where $o(1)$ tends to $0$ uniformly in $\Delta$ of the form $\overline \Delta\subset \Lambda_{m-\sqrt{m}}(z)\cap\Lambda_L$ as $|\Delta|\rightarrow\infty$, with $m=\min\{M(z),2L\}$ and $z$ satisfying $M(z)=\mathrm{dist}^\infty(z,\{x\in \partial\Lambda_L:~|\mathsf{h}(x)|>h\})\geq L$. 
Having the Markov property \eqref{eq:dlr} in mind, we can therefore think of $\mathfrak{M}_\Delta$ as the effectively maximal boundary condition in $\Delta$.
It follows from \eqref{eq:dlr}, \eqref{eq:maximum_reg} and monotonicity in $\mathsf{h}$ (see Proposition~\ref{prop:stoc_monotonicity}), that for every increasing event $E$,
\begin{equation}
    \nu_{\Lambda_L,\beta,\mathsf{h}}(E)\leq \nu_{\Delta,\beta,\mathfrak{p}_\Delta}(E) + o(1),
\end{equation}
where, again, $o(1)$ tends to $0$ uniformly in $\Delta$ chosen as above as $|\Delta|\to\infty$, and where 
\begin{equation}
    \mathfrak{p}_{\Delta}(x):= \mathsf{h}^{\overline{\Delta},\Delta}_{\mathfrak M_\Delta}(x),
\end{equation}
with $\mathsf{h}_{\varphi}^{\overline{\Delta},\Delta}$ defined in \eqref{eq:bc->mag_field}.
When clear from context, we omit the subscript $\Delta$ in the notation $\mathfrak{p}_\Delta$.
Then, as shown in \cite{Ruelle1970,LebowitzPresutti1976} (see also \cite{GunaratnamPanagiotisPanisSeveroPhi42022} in the context of general graphs of polynomial growth), one has 
\begin{align}\label{eq:conv_plus}
\nu_{\Lambda_L,\beta,\mathfrak{p}_{\Lambda_L}}&\underset{L\to\infty}{\longrightarrow} \nu^+_\beta,
\end{align}
where $\nu^+_\beta$ is the translation-invariant, extremal probability measure $\nu^+_\beta$ introduced in Section \ref{section:intro surface order large deviations}. Analogously, we have $\nu_{\Lambda_L,\beta,\mathfrak{m}_{\Lambda_L}}\underset{L\to\infty}{\longrightarrow} \nu_\beta^-$, where for a finite set $\Lambda\subset \mathbb Z^d$,  $\mathfrak{m}_\Lambda:=-\mathfrak{p}_\Lambda$.

In \cite{GunaratnamPanagiotisPanisSeveroPhi42022}, we additionally proved that for every $\beta\geq0$, the set of translation invariant Gibbs measures at $\beta$ (see \cite[Definition 1.1]{GunaratnamPanagiotisPanisSeveroPhi42022}) consists of convex combinations of the extremal measures $\nu^+_\beta$ and $\nu^-_\beta$. In particular, one has
\begin{equation}\label{eq:free=plus+minus/2}
    \nu_\beta=\nu_{\beta,0}=\frac{1}{2}\Big(\nu^+_\beta+\nu_\beta^-\Big).
\end{equation}

\begin{remark}\label{rem:def_p}
A question that arises naturally is the following: how big or small can a boundary field $\mathsf{h}_L$ be so that $\langle \cdot \rangle_{\Lambda_L,\beta,\mathsf{h}_L}\underset{L\to\infty}{\longrightarrow} \langle \cdot \rangle^+_\beta$? As a consequence of our results, for $\beta>\beta_c$, the convergence holds for boundary fields satisfying $\mathsf{h}_L\leq \mathfrak{p}_L$ and $L^{d-1}\mathsf{h}_L\to\infty$. This is established in Proposition~\textup{\ref{prop: weak plus measure}}. However, the convergence may also hold for boundary fields that grow faster than $\mathfrak{p}_{\Lambda}$, provided the regularity estimates of Proposition~\textup{\ref{prop:regularity}} remain valid deep in the bulk. Based on the argument in Lemma~\textup{\ref{lem: cond regularity}}, we expect that regularity estimates hold even for exponentially growing boundary fields. 
\end{remark}

\subsection{The random tangled current representation}\label{sec:tangled_currents}

We recall the random tangled current representation of the $\varphi^4$ model introduced in \cite{GunaratnamPanagiotisPanisSeveroPhi42022}. We begin by defining what a tangled current is and then state our main combinatorial tool: the \emph{switching principle}. This object will only be used in Section~\ref{sec:magnetisation} in order to compare measures with different boundary conditions.

\subsubsection{Tangled currents}

We begin by recalling the definition of a current. Below, $\mathfrak{g}$ refers to a ghost vertex and we let $\varphi_\mathfrak g\equiv 1$. Let $(\Lambda,E(\Lambda))$ be a finite graph. If $\mathsf{h}\in \mathbb R^{\Lambda}$, we let $\Lambda[\mathsf{h}]$ be the graph with vertex set $\Lambda^{\mathfrak{g}}:=\Lambda\cup\{\mathfrak g\}$, and edge set $E(\Lambda[\mathsf{h}]):=E(\Lambda)\cup\{\{x,\fg\}:x\in \Lambda, \: \mathsf{h}_x\neq 0\}$. 
We will sometimes make a slight abuse of notation and additionally view $\Lambda[\mathsf{h}]$ as a weighted graph. This means that the notation $\Lambda[\mathsf{h}]$ also carries the information of the value of $\mathsf{h}$. When $\mathsf{h}\equiv 0$, the graph $\Lambda[\mathsf{h}]$ is equal to $(\Lambda,E(\Lambda))$.

\paragraph{Single currents.} Let $\mathsf{h}\in \mathbb R^\Lambda$. A \emph{current} $\n$ on $\Lambda[\mathsf{h}]$ is a function $\n: E(\Lambda[\mathsf{h}])\rightarrow \mathbb N$. We let $\Omega_{\Lambda[\mathsf{h}]}$ be the set of currents on $\Lambda[\mathsf{h}]$. We write $\n_{x,y}=\n_{xy}=\n(x,y)$ for the value of $\n$ on $xy\in E(\Lambda[\mathsf{h}])$. Given $\n \in \Omega_{\Lambda[\mathsf{h}]}$, let $\Delta\n(x):=\sum_{e\in E(\Lambda[\mathsf{h}]), \: e\ni x} \n_{e}$ be the $\n$-degree of $x$. The set of \emph{sources} of $\n$ is defined by
\begin{equation}
    \sn:=\{x\in \Lambda^\fg: \Delta\n(x)\textup{ is odd}\}.
\end{equation}
Additionally, we define
\begin{equation}
\mathcal{M}(\Lambda^\fg)
:=
\Big\lbrace A \in \mathbb N^{\Lambda^\fg}: A_\fg\leq 1,  \: \sum_{x\in \Lambda^\fg}A_x \textup{ is even}\Big\rbrace
\end{equation}
to be the set of \emph{admissible moments} or \emph{source functions}  on $\Lambda^\fg$, and write $\partial A:=\{x \in \Lambda^\fg : A_x \textup{ is odd}\}$. We write $A=\emptyset$ if $A_x=0$ for all $x\in \Lambda^\fg$. Additionally, if $A=\mathds{1}_x+\mathds{1}_y$, we write $A=xy$. Given $A,B\in \mathcal{M}(\Lambda^\fg)$, define $A+B \in \mathcal{M}(\Lambda^\fg)$ by:
\begin{equation}
(A+B)_x
:=
A_x+B_x
\end{equation}
for all $x \in \Lambda$, and
\begin{equation}
    (A+B)_\fg:=A_\fg+B_\fg \textup{ mod }2.
\end{equation}
Finally, for $\n\in \Omega_{\Lambda[\mathsf{h}]}$ and $A\in \mathcal{M}(\Lambda^\fg)$, we introduce the weight
\begin{equation}
    w^{A}_{\beta,\mathsf{h}}(\n):=\prod_{xy\in E(\Lambda)}\frac{\beta^{\n_{xy}}}{\n_{xy}!}\prod_{x\in \Lambda}\frac{(\beta\mathsf{h}_x)^{\n_{x\fg}}}{\n_{x\fg}!}\langle\varphi^{A_x+\Delta\n(x)}\rangle_0.
\end{equation}
Since $\rho_{g,a}$ is an even measure, the above weight is equal to zero unless $\sn=\partial A$.

It is possible to expand the correlation functions of the $\varphi^4$ model to relate them to currents. Recall that, for $A\in \mathcal{M}(\Lambda^\fg)$, $\varphi_A=\prod_{x\in \Lambda}\varphi_x^{A_x}$.
\begin{lemma}[\hspace{1pt}{\cite[Proposition~3.1]{GunaratnamPanagiotisPanisSeveroPhi42022}}]\label{lem: current expansion} Let $\beta>0$ and $\mathsf{h}\in \mathbb R^\Lambda$. Then, for all $A\in \mathcal{M}(\Lambda^\fg)$,
\begin{equation}
    \langle \varphi_A\rangle_{\Lambda,\beta,\mathsf{h}}=\frac{\sum_{\sn=\partial A}w^A_{\beta,\mathsf{h}}(\n)}{\sum_{\sn=\emptyset}w^\emptyset_{\beta,\mathsf{h}}(\n)}.
\end{equation}
\end{lemma}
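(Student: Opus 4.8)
The plan is to carry out the standard random current expansion, now in the setting where the single-spin measure is $\rho_{g,a}$ rather than $\pm1$-valued. Starting from the definition,
\begin{equation}
\langle \varphi_A\rangle_{\Lambda,\beta,\mathsf h}=\frac{1}{Z^{\varphi^4}_{\Lambda,\beta,\mathsf h}}\int_{\R^\Lambda}\varphi_A\,\exp\Big(\beta\!\!\sum_{xy\in E(\Lambda)}\!\!\varphi_x\varphi_y+\beta\sum_{x\in\Lambda}\mathsf h_x\varphi_x\Big)\,\mathrm d\rho_{\Lambda,g,a}(\varphi),
\end{equation}
I would first Taylor-expand each factor of the exponential: $e^{\beta\varphi_x\varphi_y}=\sum_{\n_{xy}\geq0}\frac{(\beta\varphi_x\varphi_y)^{\n_{xy}}}{\n_{xy}!}$ over the edges $xy\in E(\Lambda)$, and $e^{\beta\mathsf h_x\varphi_x}=\sum_{\n_{x\fg}\geq0}\frac{(\beta\mathsf h_x\varphi_x)^{\n_{x\fg}}}{\n_{x\fg}!}$ over the ghost edges (those $x$ with $\mathsf h_x\neq0$). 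Expanding the product of all these series produces a sum over currents $\n\in\Omega_{\Lambda[\mathsf h]}$, where $\n$ records the chosen power on each edge of $\Lambda[\mathsf h]$.

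The second step is to interchange this (infinite) sum with the integral over $\R^\Lambda$. This is the only point where unboundedness of the spins plays a role, and I expect it to be the main technical obstacle compared to the Ising case. It is handled by Tonelli's theorem applied to the absolute values: the sum of the integrals of the moduli of the terms equals
\begin{equation}
\int_{\R^\Lambda}|\varphi_A|\,\exp\Big(\beta\!\!\sum_{xy\in E(\Lambda)}\!\!|\varphi_x\varphi_y|+\beta\sum_{x\in\Lambda}|\mathsf h_x||\varphi_x|\Big)\,\mathrm d\rho_{\Lambda,g,a}(\varphi),
\end{equation}
which is finite because $\rho_{g,a}$ has quartic tails $\propto e^{-gt^4-at^2}$, so that $\int e^{c|t|^2}\,\mathrm d\rho_{g,a}(t)<\infty$ for every $c$ and all polynomial moments exist; the quartic decay dominates the quadratic growth coming from the finitely many edges. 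Hence Fubini applies and the sum and integral may be exchanged.

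After the exchange, each term is $\prod_{xy\in E(\Lambda)}\frac{\beta^{\n_{xy}}}{\n_{xy}!}\prod_{x\in\Lambda}\frac{(\beta\mathsf h_x)^{\n_{x\fg}}}{\n_{x\fg}!}$ times $\int_{\R^\Lambda}\prod_{x\in\Lambda}\varphi_x^{A_x+\Delta\n(x)}\,\mathrm d\rho_{\Lambda,g,a}(\varphi)$, using that the total power of $\varphi_x$ collected from $\varphi_A$ and from all edges and ghost edges incident to $x$ is exactly $A_x+\Delta\n(x)$ (with $\varphi_\fg\equiv1$, so the ghost carries no integration variable). Since $\rho_{\Lambda,g,a}=\bigotimes_{x\in\Lambda}\rho_{g,a}$ is a product measure, this integral factorises as $\prod_{x\in\Lambda}\langle\varphi^{A_x+\Delta\n(x)}\rangle_0$, so the term equals $w^{A}_{\beta,\mathsf h}(\n)$. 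Because $\rho_{g,a}$ is even, $\langle\varphi^{k}\rangle_0=0$ whenever $k$ is odd, so $w^A_{\beta,\mathsf h}(\n)=0$ unless $A_x+\Delta\n(x)$ is even at every $x\in\Lambda^\fg$, i.e.\ unless $\sn=\partial A$ (the ghost parity being read mod $2$, consistent with the definition of $A+B$). Summing, the numerator equals $\sum_{\sn=\partial A}w^A_{\beta,\mathsf h}(\n)$. Applying the same expansion to $Z^{\varphi^4}_{\Lambda,\beta,\mathsf h}$ — which is the $A=\emptyset$ case — gives $\sum_{\sn=\emptyset}w^\emptyset_{\beta,\mathsf h}(\n)$ for the denominator (note that working with $\rho_{g,a}$, rather than Lebesgue measure, the normalising constants $z_{g,a}^{|\Lambda|}$ never appear). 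Taking the ratio yields the claimed identity.
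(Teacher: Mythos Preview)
Your argument is correct and is exactly the standard random current expansion: Taylor-expand each Boltzmann factor, justify the interchange via Tonelli using the quartic tails of $\rho_{g,a}$, factorise over vertices by the product structure of $\rho_{\Lambda,g,a}$, and read off the source constraint from the evenness of $\rho_{g,a}$. The paper does not give its own proof of this lemma; it simply quotes it from \cite[Proposition~3.1]{GunaratnamPanagiotisPanisSeveroPhi42022}, where the argument is precisely the one you outline. One minor clarification: the parity constraint at $\fg$ is not enforced by the vanishing of any moment (since $\varphi_\fg\equiv1$), but follows automatically by handshaking once the constraint holds at every $x\in\Lambda$, because both $|\sn|$ and $|\partial A|$ are even.
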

We now turn to the definition of \emph{tangled} currents. We will introduce two useful notions: the \emph{single} tangled current, and the \emph{double} tangled current. The latter will be useful to state the switching principle below. We fix $\mathsf{h}\in (\mathbb R^+)^\Lambda$. 

\paragraph{Single tangled current.}
We fix $A \in \mathcal{M}(\Lambda^\fg)$, and let $\n \in 
\Omega_{\Lambda[\mathsf{h}]}$ be such that $\partial \n = \partial 
A$. For each  $z\in \Lambda$, we define the block $
\mathcal{B}^A_z(\n)$ as follows: for each $y\in \Lambda^\fg$ such that $yz\in E(\Lambda[\mathsf{h}])$, it 
contains $\n_{zy}$ points labelled $(zy(k))_{1\leq k \leq 
\n_{zy}}$, and it also contains $A_z$ points labelled $(za(k))_{1\leq k \leq 
A_z}$. Note that $\mathcal{B}^A_z(\n)$ has cardinality $\Delta \n(z)
+A_z$, which is an even number by definition. We also write $
\mathcal{B}^A_\fg(\n)=\lbrace \fg\rbrace$, and when $A_\fg=1$ we may 
also write $\fg a(1)=\fg$. For each $z \in \Lambda$, let $
\mathcal{T}_{\n}^A(z)$ be the set of {\it even partitions} 
of the block $\mathcal{B}^A_z(\n)$, i.e.\ for every $P=\{P_1,P_2,\ldots,P_k\}\in \mathcal{T}_{\n}^A(z)$, each $|P_i|$ is even.
Define
\begin{equation}
\mathcal{T}_{\n}^A
=
\underset{z \in \Lambda}{\bigotimes} \,\mathcal{T}_{\n}^A(z).
\end{equation}
An element $\mathfrak{t} \in \mathcal{T}_{\n}^A$ is called a \textit{tangling}. We sometimes refer to an element $\mathfrak{t}_z$ of $\mathcal{T}_{\n}^A(z)$ as a tangling (of $\mathcal{B}_z^A(\n)$), this will be clear from the context. We call the pair $(\n, \mathfrak{t})$ a \textit{tangled current}. 

Let $\mathcal{H}^A(\n)$ be the graph consisting of vertex set 
\begin{equation}
\bigcup_{z \in \Lambda^\fg} \mathcal{B}^A_z(\n),
\end{equation}
and edge set 
\begin{equation}
\left(\bigcup_{\lbrace x,y\rbrace\subset \Lambda}\left\lbrace \lbrace xy(k),yx(k) \rbrace: \: 1\leq k \leq \n_{xy}\right\rbrace\right)\cup\left(\bigcup_{x\in \Lambda : \:h_x\neq 0}\lbrace \lbrace x\fg(k),\fg\rbrace: \: 1\leq k\leq \n_{x\fg}\rbrace\right).	
\end{equation}
Given any tangling $\ft \in \mathcal{T}_{\n}^A$, the graph $\mathcal{H}^A(\n)$  naturally induces a graph $\mathcal{H}^A(\n,\ft)$ defined as follows. For each block $\mathcal{B}_z^A(\n)$, if $\ft_z=\{P_1,\ldots,P_k\}\in \mathcal{T}_\n^A(z)$, for every $1\leq i \leq k$, we add $\binom{|P_i|}{2}$ edges connecting pairwise the vertices lying in $P_i$. In words, we add to $\mathcal H^A(\n)$ the complete graph on the elements of each partition induced by $\ft$.
 This gives a canonical notion of connectivity in a tangled current $(\n,\ft)$.
In the case where $A=\emptyset$, we may drop the superscript $A$ from the notation.

We now move to the definition of double tangled currents. We will need the following terminology. 
\begin{definition} Let $\Lambda'\subset\Lambda$ be finite subsets of $\mathbb Z^d$. If $(\mathsf{h},\mathsf{h}')\in (\mathbb R^+)^\Lambda\times (\mathbb R^+)^{\Lambda'}$, we say that $\mathsf{h}'$ is a \emph{restriction} of $\mathsf{h}$ if 
\begin{equation}
    \{ x \in \Lambda' : \mathsf{h}_x' \neq \mathsf{h}_x\}\subset \{x \in \Lambda': \mathsf{h}'_x=0\}.
\end{equation}
 Note that by definition, $\Lambda'[\mathsf{h}']$ is a subgraph of $\Lambda[\mathsf{h}]$.
\end{definition}

\paragraph{Double tangled current.} Let $\Lambda'\subset \Lambda$ and $\mathsf{h}'\in(\mathbb R^+)^{\Lambda'}$ be a restriction of $\mathsf{h}$.
Fix $(A,B)\in \mathcal{M}(\Lambda^\fg)\times \mathcal{M}((\Lambda')^\fg)$. Let $\n_1 \in \Omega_{\Lambda[\mathsf{h}]}$ and $\n_2 \in \Omega_{\Lambda'[\mathsf{h}']}$ 
satisfy $\partial \n_1 = \partial A$ and $\partial \n_2 = \partial B$. We trivially view $\n_2$ as an element of 
$\Omega_{\Lambda[\mathsf{h}]}$ by setting $\n_2(x,y) = 0$ for all edges $xy\in E(\Lambda[\mathsf{h}])\setminus E(\Lambda'[\mathsf{h}'])$.  
We define $\mathcal{B}_z^{A,B}(\n_1,\n_2)$ as the disjoint union of $\mathcal{B}_z^{A}(\n_1)$ and $\mathcal{B}_z^{B}(\n_2)$.   
We write 
$\mathcal{T}_{\n_1,\n_2}^{A,B}(z)$ for the set of even partitions of the set $\mathcal{B}_z^{A,B}(\n_1,\n_2)$ whose restriction 
to $\mathcal{B}_z^A(\n_1)$ and $\mathcal{B}_z^B(\n_2)$ is also an even partition, i.e.\ for every $P=\{P_1,P_2,\ldots, P_k\}\in \mathcal{T}_{\n_1,\n_2}^{A,B}(z)$, both $|P_i\cap \mathcal{B}_z^A(\n_1)|$ and $|P_i \cap \mathcal{B}_z^B(\n_2)|$ are even numbers. The elements of $\mathcal{T}_{\n_1,\n_2}^{A,B}(z)$ are called {\it admissible} partitions.
As above, we define
\begin{equation}
\mathcal{T}^{A,B}_{\n_1,\n_2}
=
\underset{z \in \Lambda}{\bigotimes} \,\mathcal{T}^{A,B}_{\n_1,\n_2}(z).
\end{equation}
As above, for any $\ft$ in $\mathcal{T}_{\n_1,\n_2}
^{A,B}$, we can define a graph 
$\mathcal{H}^{A,B}(\n_1,\n_2,\ft)$ by the 
following procedure: for each block
$\mathcal{B}_z^{A,B}(\n_1,\n_2)$, if $\ft_z
=\{P_1,\ldots,P_k\}\in \mathcal{T}_{\n_1,\n_2}^{A,B}(z)$,
for every $1\leq i \leq k$, we add $\binom{|P_i|}{2}$ edges connecting pairwise the vertices lying in $P_i$. This provides a natural notion of connectivity in $\mathcal{H}^{A,B}(\n_1,\n_2)$.
If $G=(V(G),E(G))$ is a subgraph of $\Lambda[\mathsf{h}]$, we let $\mathcal{H}^{A,B}_{G}(\n_1,\n_2,\ft)$ be the induced subgraph of $\mathcal{H}^{A,B}(\n_1,\n_2,\ft)$, where we restrict to blocks labelled by $V(G)$ and edges which projects to elements of $E(G)$.

\subsubsection{Switching principle for tangled currents}

We now state the version of the switching principle which will be used in this paper. A more general statement can be found in \cite{GunaratnamPanagiotisPanisSeveroPhi42022}.

Let $\Lambda'\subset \Lambda$ be two finite subsets of $\mathbb Z^d$ and $\beta>0$. Let $(\mathsf{h},\mathsf{h}')\in (\mathbb R^+)^\Lambda\times (\mathbb R^+)^{\Lambda'}$ with $\mathsf{h}'$ a restriction of $\mathsf{h}$.
For $(A,B)\in \mathcal M(\Lambda^\fg)\times \mathcal{M}((\Lambda')^\fg)$, we define $ \Omega_{\Lambda[\mathsf{h}],\Lambda'[\mathsf{h}']}^{\mathcal{T},A,B}$ to be the set of double tangled currents on $\Lambda[\mathsf{h}]$ with source functions $A,B$, i.e the set of triples $(\n_1,\n_2,\ft)$ where: $(\sn_1,\sn_2)=(\partial A,\partial B)$, and $\ft \in \mathcal{T}^{A,B}_{\n_1,\n_2}$.  We sometimes identify such a triple with its associated graph $\mathcal H^{A,B}(\n_1,\n_2,\ft)$. We set
 \begin{equation}
 	\Omega^\mathcal{T}_{\Lambda[\mathsf{h}],\Lambda'[\mathsf{h}']}:=\bigcup_{(A,B)\in \mathcal M(\Lambda^\fg)\times \mathcal{M}((\Lambda')^\fg)}\Omega_{\Lambda[\mathsf{h}],\Lambda'[\mathsf{h}']}^{\mathcal{T},A,B}.
 \end{equation}
 
Before stating the switching principle, we need a definition.

\paragraph{Pairing events.} Let $(B, C)\in \mathcal M(\Lambda^\fg)\times \mathcal{M}((\Lambda')^\fg)$. Assume that $B\leq C$, i.e.\ $B_x\leq C_x$ for every $x\in (\Lambda')^\fg$. Define $\mathcal{F}_{\Lambda'[\mathsf{h}']}^{C}(B)$ to be the subset of $\Omega_{\Lambda[\mathsf{h}],\Lambda'[\mathsf{h}']}^{\mathcal{T},C,\emptyset}$ consisting of all double tangled currents $(\n_1,\n_2,\ft)$ satisfying the following condition: each connected component of $\mathcal H_{\Lambda'[\mathsf{h}']}^{C,\emptyset}(\n_1,\n_2,\ft)$ intersects $\{zc(k): z\in (\Lambda')^\fg, \: C_z-B_z+1\leq k \leq C_z\}$ an even number of times.

\begin{theorem}[\hspace{1pt}{\cite[Theorem~3.11]{GunaratnamPanagiotisPanisSeveroPhi42022}}]\label{thm: switching lemma rtc}
Let $\Lambda'\subset\Lambda$ be finite subsets of $\mathbb Z^d$. Let $(\mathsf{h},\mathsf{h}')\in (\mathbb R^+)^\Lambda\times (\mathbb R^+)^{\Lambda'}$ with $\mathsf{h}'$ a restriction of $\mathsf{h}$. For every $z\in \Lambda$, every $(C,D)\in \mathcal M(\Lambda^\fg)\times  \mathcal M((\Lambda')^\fg)$, every $(\n_1,\n_2)\in \Omega_{\Lambda[\mathsf{h}]}\times \Omega_{\Lambda'[\mathsf{h}']}$ with $(\sn_1,\sn_2)=(\partial C,\partial D)$, there exists a probability measure $\rho_{z,\n_1,\n_2}^{C,D}$ on $\mathcal{T}_{\n_1,\n_2}^{C,D}(z)$ such that the following holds. For every $(A,B)\in \mathcal{M}(\Lambda^\fg)\times \mathcal M((\Lambda')^\fg)$, 
\begin{equation}
\sum_{\substack{\partial \n_1 = \partial A \\ \partial \n_2 = \partial B}} 
w^A_{\beta,\mathsf{h}}(\n_1) w^B_{\beta,\mathsf{h}'}(\n_2) =\sum_{\substack{\partial \n_1 = \partial(A+B) \\ \partial \n_2 = \emptyset}} 
w^{A+B}_{\beta,\mathsf{h}}(\n_1)w^{\emptyset}_{\beta,\mathsf{h}'}(\n_2)  \rho^{A+B,\emptyset}_{\n_1,\n_2} \big[ \mathcal{F}_{\Lambda'[\mathsf{h}']}^{A+B,\emptyset}(B) \big],
\end{equation}
where 
\begin{equation}
\rho_{\n_1,\n_2}^{A+B,\emptyset}:=\bigotimes_{z \in \Lambda} \rho_{z,\n_1, \n_2}^{A+B,\emptyset},
\end{equation}
is a probability measure on $\mathcal{T}_{\n_1,\n_2}^{A+B,\emptyset}$, and $\mathcal{F}_{\Lambda'[\mathsf{h}']}^{A+B,\emptyset}(B)$ is the event defined above.
\end{theorem}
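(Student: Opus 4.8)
The plan is to prove this switching principle by the same template as Aizenman's switching lemma for the Ising model, carrying along the extra bookkeeping forced by the blocks and tanglings, and constructing the per-vertex tangling measures $\rho^{C,D}_{z,\n_1,\n_2}$ in the process. First I would expand the pair of single-current weights edge by edge and vertex by vertex. Writing $N:=\n_1+\n_2$ (with $\n_2$ trivially viewed as a current on $\Lambda[\mathsf h]$ supported on $E(\Lambda'[\mathsf h'])$), for each edge $e$ one has
\[
\frac{\beta^{\n_1(e)}}{\n_1(e)!}\cdot\frac{\beta^{\n_2(e)}}{\n_2(e)!}=\frac{\beta^{N(e)}}{N(e)!}\binom{N(e)}{\n_1(e)},
\]
so the binomial factor records a $2$-colouring of the $N(e)$ parallel copies of $e$ into ``$\n_1$-copies'' and ``$\n_2$-copies'' (similarly at the ghost edges, with $\beta$ replaced by $\beta\mathsf h_x$). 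For each vertex $z$ the contribution is the product of single-site moments $\langle\varphi^{A_z+\Delta\n_1(z)}\rangle_0\langle\varphi^{B_z+\Delta\n_2(z)}\rangle_0$ for $z\in\Lambda'$, and a single factor $\langle\varphi^{A_z+\Delta\n_1(z)}\rangle_0$ otherwise. I would then freeze the total current $N$ and sum over all splittings $(\n_1,\n_2)$ of $N$ compatible with the prescribed sources $\partial\n_1=\partial A$, $\partial\n_2=\partial B$.

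The combinatorial heart is to turn the configuration ``$N$ $+$ splitting'' into an honest finite multigraph. For this one realises each vertex-moment as a sum over even partitions of the associated block $\mathcal B^{A,B}_z$ --- this is exactly what creates the blocks and tanglings $\ft$ --- so that a splitting of $N$ together with a compatible refinement of each block's partition becomes a $2$-colouring of the edges of the tangled-current graph $\mathcal H^{A,B}(\n_1,\n_2,\ft)$. At this point the standard fact applies: in any finite multigraph, the number of spanning subgraphs with a prescribed odd-degree set is $2^{c}$, where $c$ is the dimension of the cycle space, if that set meets every connected component in an even number of points, and $0$ otherwise; moreover, when it is nonzero it is independent of which realizable set is chosen. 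Since $\partial N=\partial(A+B)$ already forces $\partial(A+B)$ to be realizable, comparing the count for the source pair $(\partial A,\partial B)$ with the one for $(\partial(A+B),\emptyset)$ produces exactly the indicator that the distinguished source-points $\{zc(k):z\in(\Lambda')^{\fg},\ C_z-B_z+1\le k\le C_z\}$ (with $C=A+B$) are realizable, i.e.\ that every connected component of $\mathcal H^{A+B,\emptyset}_{\Lambda'[\mathsf h']}(\n_1,\n_2,\ft)$ meets that set an even number of times --- which is precisely the event $\mathcal F^{A+B,\emptyset}_{\Lambda'[\mathsf h']}(B)$. Reassembling the partition-sums at each vertex back into the single-current weights $w^{A+B}_{\beta,\mathsf h}(\n_1)\,w^{\emptyset}_{\beta,\mathsf h'}(\n_2)$ and normalising the per-vertex partition weights yields the probability measures $\rho^{A+B,\emptyset}_{z,\n_1,\n_2}$, whose tensor product over $z\in\Lambda$ is the $\rho^{A+B,\emptyset}_{\n_1,\n_2}$ appearing in the statement, and hence the claimed identity.

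The genuinely new difficulty, absent in the Ising case, is the choice of the decomposition of the single-site moments $\langle\varphi^{2k}\rangle_0$ into a \emph{positive} sum over even partitions that is moreover compatible with the admissibility constraint on double blocks, so that the weights $\rho^{C,D}_{z,\n_1,\n_2}$ are genuine probability measures. The naive cumulant (Ursell) expansion does not work here: the fourth Ursell coefficient of $\rho_{g,a}$ is non-positive (this is the Lebowitz inequality $\langle\varphi^4\rangle_0\le 3\langle\varphi^2\rangle_0^2$), and in the double-well regime $a\ll 0$ even the admissible-partition weights cease to be nonnegative. One therefore has to use a manifestly positive representation of the moments --- for instance a Griffiths--Simon type approximation of $\rho_{g,a}$ by finite single-site Ising systems, which pushes the internal structure into the partition --- together with the GKS-type inequality $\langle\varphi^{2(a+b)}\rangle_0\ge\langle\varphi^{2a}\rangle_0\langle\varphi^{2b}\rangle_0$ for the single-site measure to control the normalisations, and then verify that conditioning such a positive partition measure on the $(\n_1,\n_2)$-split is exactly $\rho^{C,D}_{z,\n_1,\n_2}$.

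I expect this positivity-and-compatibility bookkeeping --- showing no sign is lost when the per-vertex measures tensorise and recombine with the edge-colourings --- to absorb the bulk of the work. The remaining points are routine: the ghost vertex must be treated as a single site of unbounded degree with the coordinate $A_\fg$ handled $\bmod\ 2$, and one must check that the switching only moves sources inside the common subgraph $\Lambda'[\mathsf h']$ (which is why the pairing event $\mathcal F$ and the graph $\mathcal H$ are restricted to $\Lambda'[\mathsf h']$). Modulo these, the skeleton above is a direct adaptation of the Ising switching lemma.
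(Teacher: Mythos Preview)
The paper does not prove this theorem; it is quoted verbatim as \cite[Theorem~3.11]{GunaratnamPanagiotisPanisSeveroPhi42022} and used as a black box. So there is no ``paper's own proof'' to compare against here.

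That said, your sketch is essentially the argument carried out in the cited reference. The skeleton --- binomial splitting of edge weights, per-vertex decomposition of $\langle\varphi^{2k}\rangle_0$ into a sum over even partitions (tanglings), application of the standard multigraph switching identity to the resulting graph $\mathcal H$, and then renormalisation of the per-vertex partition sums into probability measures $\rho^{C,D}_{z,\n_1,\n_2}$ --- is exactly right. You have also correctly identified the one genuinely nontrivial point: the naive moment-cumulant decomposition does not give nonnegative weights, and the construction in \cite{GunaratnamPanagiotisPanisSeveroPhi42022} indeed goes through a Griffiths--Simon approximation of $\rho_{g,a}$ by a block of $N$ Ising spins, so that the tangling at $z$ is inherited from the internal Ising random-current structure on the mini-block and is therefore automatically nonnegative; one then takes $N\to\infty$. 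The admissibility constraint (each class of $\ft$ has even intersection with both $\mathcal B^A_z(\n_1)$ and $\mathcal B^B_z(\n_2)$) falls out of the Ising parity constraints on the internal currents, and Remark~\ref{rem:dependencyoftanglingmeasures} (that $\rho^{C,D}_{z,\n_1,\n_2}$ depends only on $(g,a,\Delta\n_1(z)+C_z,\Delta\n_2(z)+D_z)$) is then immediate from the construction. Your handling of the ghost and of the restriction to $\Lambda'[\mathsf h']$ is also correct.
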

\begin{remark}\label{rem:dependencyoftanglingmeasures}
As it turns out (see the construction in \textup{\cite{GunaratnamPanagiotisPanisSeveroPhi42022}}), for any $z$, the measures $\rho^{A,B}_{z,\n_1,\n_2}$ depend only on 
$(g,a,\Delta \n_1(z)+A_z,\Delta\n_2(z)+B_z)$, but they do not depend on $\Lambda$, $\beta$, $\mathsf{h}$ or the rest of $\n_1,\n_2$.
Sometimes, for $\Delta{\n}_1(z)+A_z=2k$ and $\Delta{\n}_2(z)+B_z=2k'$ ($k,k'\geq 0$), we will denote the tangling measure by $\rho^{2k,2k'}:=\rho^{A,B}_{z,\n_1,\n_2}$.
\end{remark}

We can also state the switching lemma in a probabilistic way. We start by introducing the random tangled current measures of interest. We define $\rho^{A}_{z,\n}:=\rho_{z,\n,0}^{A,\emptyset}$ and
\begin{equation}
    \rho_{\n}^A:=\bigotimes_{z\in \Lambda}\rho^{A}_{z,\n}.
\end{equation}
We view $\rho^A_{\n}$ as a measure on single currents. 
In view of Remark~\ref{rem:dependencyoftanglingmeasures}, we may sometimes write $\rho^{2k}:=\rho^{A}_{z,\n}$, for $\Delta{\n}(z)+A_z=2k$.
With a small abuse of notation, we view the set of single tangled currents on $\Lambda[\mathsf{h}]$ as a subset of $\Omega^\mathcal{T}_{\Lambda[\mathsf{h}],\Lambda'[\mathsf{h}']}$ that we denote by $\Omega_{\Lambda[\mathsf{h}]}^{\mathcal{T}}$. Similarly, we define for $A\in \mathcal M(\Lambda^\fg)$ the set $\Omega_{\Lambda[\mathsf{h}]}^{\mathcal{T},A}\subset \Omega_{\Lambda[\mathsf{h}],\Lambda'[\mathsf{h}']}^{\mathcal{T},A,\emptyset}$.

\paragraph{Tangled current measures.}

Let $\Lambda'\subset\Lambda$ be finite subsets of $\mathbb Z^d$ and $(A,B) \in \mathcal{M}(\Lambda^\fg)\times \mathcal{M}((\Lambda')^\fg)$. Let $(\mathsf{h},\mathsf{h}')\in (\mathbb R^+)^\Lambda\times (\mathbb R^+)^{\Lambda'}$ with $\mathsf{h}'$ a restriction of $\mathsf{h}$. We define a measure $\mathbf P^{A}_{\Lambda[\mathsf{h}],\beta}$ on $\Omega_{\Lambda[\mathsf{h}]}^{\mathcal{T},A}$ by
\begin{equation}
    \mathbf P^{A}_{\Lambda[\mathsf{h}],\beta}[(\n,\mathfrak{t})]=\frac{w_{\beta,\mathsf{h}}^A(\n)\rho^A_{\n}(\mathfrak{t})}{\sum_{\partial \m=\partial A}w_{\beta,\mathsf{h}}^A(\m)}.
\end{equation}
We also define a measure $\mathbf P_{\Lambda[\mathsf{h}],\Lambda'[\mathsf{h}'],\beta}^{A,B}$ on $\Omega^{\mathcal{T},A,B}_{\Lambda[\mathsf{h}],\Lambda'[\mathsf{h}']}$ by
\begin{equation}
    \mathbf P^{A,B}_{\Lambda[\mathsf{h}],\Lambda[\mathsf{h}'],\beta}[(\n_1,\n_2,\ft)]=\frac{w_{\beta,\mathsf{h}}^A(\n_1)w_{\beta,\mathsf{h}'}^B(\n_2)\rho_{\n_1,\n_2}^{A,B}(\mathfrak{t})}{\sum_{\substack{\partial \n_1=\partial A\\ \partial \n_2=\partial B}}w_{\beta,\mathsf{h}}^A(\n_1)w_{\beta,\mathsf{h}'}^B(\n_2)}.
\end{equation}
We write $\mathbf{E}^A_{\Lambda[\mathsf{h}],\beta}$ (resp.\ $\mathbf{E}^{A,B}_{\Lambda[\mathsf{h}],\Lambda'[\mathsf{h}'],\beta}$) for the expectation with respect to the measure $\mathbf{P}^A_{\Lambda[\mathsf{h}],\beta}$ (resp.\ $\mathbf{P}^{A,B}_{\Lambda[\mathsf{h}],\Lambda'[\mathsf{h}'],\beta}$). The measures introduced above also depend on $\mathsf{h}$ (resp.\ $\mathsf{h}'$). However, as explained in the beginning of the section, this information is contained in the notation $\Lambda[\mathsf{h}]$ (resp.\ $\Lambda[\mathsf{h}']$). 
Using these definitions, we can reformulate the switching lemma in a more probabilistic way.

\begin{corollary}[Probabilistic version of the switching lemma]\label{thm: probabilistic switching rtc}
For every $(A,B)\in \mathcal{M}(\Lambda^\fg)\times \mathcal{M}((\Lambda')^\fg)$,
\begin{equation}
    \frac{\langle \varphi_A\rangle_{\Lambda,\beta,\mathsf{h}}\langle \varphi_B\rangle_{\Lambda',\beta,\mathsf{h}'}}{\langle \varphi_{A+B}\rangle_{\Lambda,\beta,\mathsf{h}}}=\mathbf{P}^{A+B,\emptyset}_{\Lambda[\mathsf{h}],\Lambda'[\mathsf{h}'],\beta}[\mathcal{F}_{\Lambda'[\mathsf{h}']}^{A+B}(B)].
\end{equation}
\end{corollary}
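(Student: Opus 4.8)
The plan is to deduce Corollary~\ref{thm: probabilistic switching rtc} directly from Theorem~\ref{thm: switching lemma rtc} by dividing both sides of the identity there by an appropriate normalising constant. First I would write out what the two sides of the switching principle mean once we apply the current expansion of Lemma~\ref{lem: current expansion}. The numerator $\langle \varphi_A\rangle_{\Lambda,\beta,\mathsf h}\langle \varphi_B\rangle_{\Lambda',\beta,\mathsf h'}$ expands, via Lemma~\ref{lem: current expansion} applied in $\Lambda[\mathsf h]$ and in $\Lambda'[\mathsf h']$ respectively, as
\begin{equation}
\langle \varphi_A\rangle_{\Lambda,\beta,\mathsf h}\langle \varphi_B\rangle_{\Lambda',\beta,\mathsf h'}
=
\frac{\sum_{\partial \n_1=\partial A}w^A_{\beta,\mathsf h}(\n_1)\sum_{\partial \n_2=\partial B}w^B_{\beta,\mathsf h'}(\n_2)}{\sum_{\partial \n_1=\emptyset}w^\emptyset_{\beta,\mathsf h}(\n_1)\sum_{\partial \n_2=\emptyset}w^\emptyset_{\beta,\mathsf h'}(\n_2)},
\end{equation}
and similarly $\langle \varphi_{A+B}\rangle_{\Lambda,\beta,\mathsf h}$ expands with numerator $\sum_{\partial \n_1=\partial(A+B)}w^{A+B}_{\beta,\mathsf h}(\n_1)$ and the same denominator $\sum_{\partial \n_1=\emptyset}w^\emptyset_{\beta,\mathsf h}(\n_1)$. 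Taking the ratio, the factor $\sum_{\partial \n_1=\emptyset}w^\emptyset_{\beta,\mathsf h}(\n_1)$ cancels, and one is left with
\begin{equation}
\frac{\langle \varphi_A\rangle_{\Lambda,\beta,\mathsf h}\langle \varphi_B\rangle_{\Lambda',\beta,\mathsf h'}}{\langle \varphi_{A+B}\rangle_{\Lambda,\beta,\mathsf h}}
=
\frac{\sum_{\partial \n_1=\partial A}w^A_{\beta,\mathsf h}(\n_1)\sum_{\partial \n_2=\partial B}w^B_{\beta,\mathsf h'}(\n_2)}{\sum_{\partial \n_1=\partial(A+B)}w^{A+B}_{\beta,\mathsf h}(\n_1)\cdot \sum_{\partial \n_2=\emptyset}w^\emptyset_{\beta,\mathsf h'}(\n_2)}.
\end{equation}

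Next I would apply Theorem~\ref{thm: switching lemma rtc} to rewrite the numerator $\sum_{\partial\n_1=\partial A}w^A_{\beta,\mathsf h}(\n_1)\sum_{\partial\n_2=\partial B}w^B_{\beta,\mathsf h'}(\n_2)$ as
\begin{equation}
\sum_{\substack{\partial\n_1=\partial(A+B)\\ \partial\n_2=\emptyset}} w^{A+B}_{\beta,\mathsf h}(\n_1)w^\emptyset_{\beta,\mathsf h'}(\n_2)\,\rho^{A+B,\emptyset}_{\n_1,\n_2}\big[\mathcal F^{A+B,\emptyset}_{\Lambda'[\mathsf h']}(B)\big].
\end{equation}
Substituting this into the ratio above, the denominator $\sum_{\partial\n_1=\partial(A+B)}w^{A+B}_{\beta,\mathsf h}(\n_1)\sum_{\partial\n_2=\emptyset}w^\emptyset_{\beta,\mathsf h'}(\n_2)$ is exactly the normalising constant appearing in the definition of $\mathbf P^{A+B,\emptyset}_{\Lambda[\mathsf h],\Lambda'[\mathsf h'],\beta}$, so the right-hand side becomes
\begin{equation}
\sum_{(\n_1,\n_2,\ft)\in \Omega^{\mathcal T,A+B,\emptyset}_{\Lambda[\mathsf h],\Lambda'[\mathsf h']}} \mathbf P^{A+B,\emptyset}_{\Lambda[\mathsf h],\Lambda'[\mathsf h'],\beta}[(\n_1,\n_2,\ft)]\,\mathbbm 1\big[(\n_1,\n_2,\ft)\in \mathcal F^{A+B,\emptyset}_{\Lambda'[\mathsf h']}(B)\big] = \mathbf P^{A+B,\emptyset}_{\Lambda[\mathsf h],\Lambda'[\mathsf h'],\beta}\big[\mathcal F^{A+B}_{\Lambda'[\mathsf h']}(B)\big],
\end{equation}
using that $\rho^{A+B,\emptyset}_{\n_1,\n_2}$ is the conditional law of the tangling $\ft$ given $(\n_1,\n_2)$ under $\mathbf P^{A+B,\emptyset}_{\Lambda[\mathsf h],\Lambda'[\mathsf h'],\beta}$, so that summing $w^{A+B}_{\beta,\mathsf h}(\n_1)w^\emptyset_{\beta,\mathsf h'}(\n_2)\rho^{A+B,\emptyset}_{\n_1,\n_2}(\ft)$ over $\ft$ in the event and over $(\n_1,\n_2)$ with the prescribed sources reconstructs the $\mathbf P$-probability of the event. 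This is precisely the claimed identity (with the harmless notational convention $\mathcal F^{A+B}_{\Lambda'[\mathsf h']}(B)=\mathcal F^{A+B,\emptyset}_{\Lambda'[\mathsf h']}(B)$ as defined in the pairing events paragraph).

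A couple of routine points need care. One must check that all the sums involved are finite so that the algebraic manipulations (cancelling the common factor, reindexing) are legitimate — this follows from the fact that $z_{g,a}<\infty$ and $\langle\varphi^k\rangle_0<\infty$ for all $k$, which make the partition functions finite and the weights summable, exactly as in the proof of Lemma~\ref{lem: current expansion}. One must also verify that $\langle\varphi_{A+B}\rangle_{\Lambda,\beta,\mathsf h}\neq 0$ so that the left-hand side makes sense; since $\mathsf h\in(\mathbb R^+)^\Lambda$ and $A+B$ has even total mass, this is positive by the Griffiths/Ginibre inequalities (nonnegativity of correlations suffices, combined with strict positivity of the partition function). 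I do not expect any genuine obstacle here: the statement is a direct probabilistic repackaging of Theorem~\ref{thm: switching lemma rtc} via Lemma~\ref{lem: current expansion}, and the only mildly delicate point is bookkeeping the denominators and confirming that the ratio of partition functions telescopes as claimed. The main thing to be careful about is that the denominator left over after applying the switching principle matches the normalisation in the definition of $\mathbf P^{A+B,\emptyset}_{\Lambda[\mathsf h],\Lambda'[\mathsf h'],\beta}$ — which it does, because $w^\emptyset_{\beta,\mathsf h'}$ summed over sourceless currents on $\Lambda'[\mathsf h']$ is exactly the factor accompanying $\sum_{\partial\n_1=\partial(A+B)}w^{A+B}_{\beta,\mathsf h}(\n_1)$ in that normalisation.
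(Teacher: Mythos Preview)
Your proposal is correct and is exactly the intended derivation: the paper presents this corollary as an immediate probabilistic reformulation of Theorem~\ref{thm: switching lemma rtc} via Lemma~\ref{lem: current expansion} and the definition of $\mathbf P^{A+B,\emptyset}_{\Lambda[\mathsf h],\Lambda'[\mathsf h'],\beta}$, without writing out the manipulations you have spelled out. Your bookkeeping of the partition functions and the identification of the leftover denominator with the normalisation of the double tangled current measure is precisely the content of the omitted proof.
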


In what follows, $\beta$ is often fixed and to lighten the notations we omit it in the notation of the tangled current measures.

\paragraph{Coupling between double and single tangled currents measures.}
Note that by definition, the marginal of $\n_1$ and $\n_2$ in $\mathbf{P}_{\Lambda[\mathsf{h}],\Lambda'[\mathsf{h}'],\beta}^{A,B}$ are independent and distributed according to the marginals of $\n$ in $\mathbf P^{A}_{\Lambda[\mathsf{h}],\beta}$ and $\mathbf P^{B}_{\Lambda'[\mathsf{h'}],\beta}$ respectively. However, the relation between $\ft$ and its marginals in these measures is less clear. In the following proposition, we establish a natural stochastic domination.

If $P,Q$ are two partitions of a set $S$, we say that $P$ is coarser than $Q$, and write $P\succ Q$, if any element of $P$ can be written as a union of elements of $Q$. Recall the shorthand notation for the tangling measures introduced in Remark \ref{rem:dependencyoftanglingmeasures}.

\begin{proposition}[\hspace{1pt}{\cite[Proposition~3.36]{GunaratnamPanagiotisPanisSeveroPhi42022}}]\label{prop:tanglings_domination}
Let $k,k'\in\mathbb N$. 
There exists a coupling of $\rho^{2k,2k'}$, $\rho^{2k}$ and $\rho^{2k'}$ such that if $(\ft,\ft_1,\ft_2)\sim (\rho^{2k,2k'},\rho^{2k},\rho^{2k'})$ and if $\ft_1\sqcup \ft_2$ is the partition whose partition classes are the partition classes of $\ft_1$ and $\ft_2$,
\begin{equation}
    \ft \succ \ft_1\sqcup \ft_2, \qquad \text{almost surely.}
\end{equation}
We write, 
\begin{equation}\label{eq: domination}
    \rho^{2k,2k'} 
    \succ
    \rho^{2k}\sqcup\rho^{2k'}.
\end{equation}
We may abuse the notation and write $\rho^{2k,2k'}$ for the associated (extended) measure on triplets $(\ft,\ft_1,\ft_2)$.
\end{proposition}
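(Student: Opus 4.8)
The only route I can see to this statement goes through the block-spin (Griffiths--Simon type) approximation of the single-site measure $\rho_{g,a}$ on which the random tangled current representation of \cite{GunaratnamPanagiotisPanisSeveroPhi42022} is built: I would establish the coupling at the approximating level, where the claim reduces to a transparent monotonicity of connected components of Ising currents, and then pass to the limit. Recall from \cite{GunaratnamPanagiotisPanisSeveroPhi42022} that one replaces each $\varphi^4$-site $z$ by a finite Ising system $\mathsf{G}_N(z)$ on $N$ internal spins with suitably tuned couplings and self-field ($N\to\infty$), that a current with $\Delta\n(z)+A_z=2k$ legs incident to $z$ is lifted to an internal Ising current on $\mathsf{G}_N(z)$ whose source set is determined by where these $2k$ legs attach, and that $\rho^{2k}$ is the $N\to\infty$ distributional limit of the partition of the $2k$ legs in which two legs are identified iff their attachment spins lie in the same connected component of the internal current. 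In the same way, a pair $(\n_1,\n_2)$ with $2k$ and $2k'$ legs at $z$ is lifted to a pair of internal currents $\m_1,\m_2$ on $\mathsf{G}_N(z)$ supported on disjoint source sets, and one checks from the construction that $\rho^{2k,2k'}$ is the $N\to\infty$ limit of the partition of the $2k+2k'$ legs induced by the connected components of the superposition $\m_1+\m_2$.

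Granting this, the argument is soft. Fix $N$ and realise $(\m_1,\m_2)$ on one probability space; let $\ft_1^{(N)}$ (resp.\ $\ft_2^{(N)}$) be the partition of the type-$1$ (resp.\ type-$2$) legs by the components of $\m_1$ (resp.\ $\m_2$), and $\ft^{(N)}$ the partition of all $2k+2k'$ legs by the components of $\m_1+\m_2$. Since $\m_1\le \m_1+\m_2$ and $\m_2\le \m_1+\m_2$ as currents, passing from $\m_i$ to $\m_1+\m_2$ only merges components; hence every component of $\m_1$ and of $\m_2$ is contained in a component of $\m_1+\m_2$, i.e.\ every class of $\ft_1^{(N)}\sqcup\ft_2^{(N)}$ is contained in a class of $\ft^{(N)}$, so $\ft^{(N)}\succ \ft_1^{(N)}\sqcup\ft_2^{(N)}$ almost surely. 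These three random partitions take values in the finite set of partitions of a fixed $(2k+2k')$-element set and of its two sub-blocks, so their joint law is tight; along a subsequence it converges to a coupling $(\ft,\ft_1,\ft_2)$ whose marginals are $\rho^{2k,2k'},\rho^{2k},\rho^{2k'}$ by the defining limits recalled above. Since ``$P\succ Q$'' is a closed relation on a finite state space, it passes to the limit, yielding $\ft\succ \ft_1\sqcup\ft_2$ almost surely, which is \eqref{eq: domination}.

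The hard part is not this monotonicity but the marginal identification underpinning it. One has to verify that the block-internal leg-partition laws really converge to $\rho^{2k}$ and $\rho^{2k,2k'}$ as defined in \cite{GunaratnamPanagiotisPanisSeveroPhi42022}; that the components of $\m_1+\m_2$ are automatically \emph{admissible}, i.e.\ meet $\mathcal{B}^{A}_z(\n_1)$ and $\mathcal{B}^{B}_z(\n_2)$ an even number of times each (forced by the parity of the source counts of $\m_1$ and $\m_2$, but it must be checked); that $\m_1$ and $\m_2$ may be taken independent conditionally on the two leg-counts; and that no reweighting is lost as $N\to\infty$. These are bookkeeping verifications once the Griffiths--Simon lift is written out. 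An alternative that sidesteps the limit is to induct on $k+k'$ using the recursive definition of the tangling measures and couple the recursion steps directly; but the block-spin picture is what makes the coarsening obvious, and it is the approach I would take.
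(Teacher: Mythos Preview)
The paper does not reprove this proposition: it is quoted verbatim from \cite[Proposition~3.36]{GunaratnamPanagiotisPanisSeveroPhi42022} and used as a black box. Your proposal is correct and is essentially the argument given in that reference; the only terminological remark is that what you call the ``Griffiths--Simon block $\mathsf{G}_N(z)$'' is, in the construction of \cite{GunaratnamPanagiotisPanisSeveroPhi42022}, specifically the near-critical Ising model on the complete graph $K_n$ (as recalled in Appendix~\ref{appendix:strict stochastic dom tanglings} of the present paper), and at that level the single and double tangling laws are exactly the partitions $\Pi_i(\n_i)$ and $\Pi(\n_1,\n_2)$ induced on the source sets by the connected components of $\n_i$ and of $\n_1+\n_2$, so your monotonicity $\ft^{(N)}\succ\ft_1^{(N)}\sqcup\ft_2^{(N)}$ is immediate and the passage to the limit is exactly as you describe.
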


\begin{remark}\label{rem:extended_measure}
    Since the tangling measures on each block are independent (conditionally on the currents $\n_1$ and $\n_2$), we can apply Proposition~\textup{\ref{prop:tanglings_domination}} to each block to construct a random quintuple $(\n_1,\n_2,\ft,\ft_1,\ft_2)$ such that
    \begin{enumerate}[label=(\alph*)]
        \item $(\n_1,\ft_1)\sim \mathbf{P}^A_{\Lambda[\mathsf{h}],\beta}$, 
        \item $(\n_2,\ft_2)\sim \mathbf{P}^B_{\Lambda'[\mathsf{h}']\beta}$,
        \item $(\n_1,\n_2,\ft)\sim \mathbf{P}^{A,B}_{\Lambda[\mathsf{h}],\Lambda'[\mathsf{h}'],\beta}$,
        \item $\ft(z) \succ \ft_1(z) \sqcup \ft_2(z)$ for every $z\in\Lambda$ almost surely.
    \end{enumerate} 
    When clear from context, we abuse notation and write $\mathbf{P}^{A,B}_{\Lambda[\mathsf{h}],\Lambda'[\mathsf{h}'],\beta}$ for this (extended) measure on quintuples.
\end{remark}

In fact, the above stochastic domination is strict in the sense of the following proposition, which is proved in Appendix \ref{appendix:strict stochastic dom tanglings}.

\begin{proposition}\label{prop: tanglings ECT}
For any $k_1>0$, $k_2>0$ there exists $\varepsilon=\varepsilon(k_1,k_2)>0$ such that for every even partition $P_1$ of $\{1,2,\ldots,2k_1\}$ and every even partition $P_2$ of $\{1,2,\ldots,2k_2\}$ we have
\begin{equation}
\rho^{2k_1,2k_2}[\mathfrak t = \mathrm{ECT} \mid \mathfrak t_1=P_1,\mathfrak t_2=P_2]\geq \varepsilon,    
\end{equation}
where $\mathrm{ECT}$ is the partition consisting of one element, i.e.\ the partition in which ``Everybody is Connected Together''.
\end{proposition}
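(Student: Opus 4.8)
The plan is to reduce the statement to a finite collection of strict positivity assertions and then to verify each of them by inspecting the explicit construction of the tangling measures from \cite{GunaratnamPanagiotisPanisSeveroPhi42022}. Fix $k_1,k_2\geq 1$. Since $\{1,\dots,2k_1\}$ and $\{1,\dots,2k_2\}$ are finite, there are only finitely many even partitions $P_1$ and $P_2$ to consider, so it suffices to prove that for each such pair the conditional probability in the statement is well defined and strictly positive; one then sets $\varepsilon(k_1,k_2)$ to be the minimum of these finitely many positive numbers. Moreover it is enough to show that
\[
\rho^{2k_1,2k_2}\big[\,\mathfrak t=\mathrm{ECT},\ \mathfrak t_1=P_1,\ \mathfrak t_2=P_2\,\big]>0
\]
for all even $P_1,P_2$: this forces $\rho^{2k_1,2k_2}[\mathfrak t_1=P_1,\mathfrak t_2=P_2]>0$, so that the conditioning is legitimate, and since the latter probability is at most $1$ the conditional probability dominates the quantity above. (The hypothesis $k_1,k_2>0$ is genuinely used here: if one of the two point sets were empty there would be no room to merge across blocks, and the conclusion would fail as soon as the surviving partition were not the one-block partition.)

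Next, recall from \cite{GunaratnamPanagiotisPanisSeveroPhi42022} the explicit construction of $\rho^{2k}$, of $\rho^{2k_1,2k_2}$, and of the coupled triple $(\mathfrak t,\mathfrak t_1,\mathfrak t_2)$ underlying Proposition~\ref{prop:tanglings_domination}. Two structural facts are what we need. First, $\rho^{2k}$ charges \emph{every} even partition of $\{1,\dots,2k\}$; this is a manifestation of the non-Gaussianity of the single-site measure $\rho_{g,a}$ and is part of, or an immediate consequence of, the construction. Second, $\mathrm{ECT}$ is the \emph{coarsest} partition of $\{1,\dots,2k_1\}\sqcup\{1,\dots,2k_2\}$, hence coarsens $P_1\sqcup P_2$ for any pair $(P_1,P_2)$; consequently the order relation $\mathfrak t\succ\mathfrak t_1\sqcup\mathfrak t_2$ built into the coupling never obstructs the triple $(\mathrm{ECT},P_1,P_2)$. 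Since the law of $(\mathfrak t,\mathfrak t_1,\mathfrak t_2)$ is supported on a finite set, proving the displayed inequality is the same as proving that this triple is attainable in the construction.

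It remains to exhibit, inside the construction, a positive probability route to the triple $(\mathrm{ECT},P_1,P_2)$ — in other words, a finite-energy type statement. Tracking the construction, such a triple is produced by a finite sequence of elementary random choices: first the choices realising $\mathfrak t_1=P_1$ and $\mathfrak t_2=P_2$ on $\{1,\dots,2k_1\}$ and $\{1,\dots,2k_2\}$ respectively, and then a "total merge" collapsing $P_1\sqcup P_2$ into the single class $\mathrm{ECT}$, one merge of partition classes at a time. Each such step has positive conditional probability given the current state — and hence, there being only finitely many states, a conditional probability bounded away from $0$ — because the combinatorial weights governing it are built from moments of $\rho_{g,a}$, which are all strictly positive ($\rho_{g,a}$ being a non-degenerate even measure on $\mathbb{R}$). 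Multiplying these finitely many positive factors yields the desired lower bound on the displayed quantity, and minimising over the finitely many pairs $(P_1,P_2)$ concludes the proof. I expect the crux to be precisely this last step: faithfully reproducing the construction of the coupling from \cite{GunaratnamPanagiotisPanisSeveroPhi42022} and checking that none of the weights met on the way to $\mathrm{ECT}$ degenerates — equivalently, that the all-merged tangling remains reachable no matter which fine partitions $P_1,P_2$ have been prescribed.
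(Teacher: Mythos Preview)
Your proposal is a plan rather than a proof: the crucial step is deferred to ``tracking the construction'' and ``checking that none of the weights met on the way to $\mathrm{ECT}$ degenerates,'' and you explicitly flag that you have not done this. The tangling measures $\rho^{2k_1,2k_2}$ and the coupling with $(\rho^{2k_1},\rho^{2k_2})$ are defined in \cite{GunaratnamPanagiotisPanisSeveroPhi42022} as weak limits (in $n$) of random current measures on the complete graph $K_n$, and a finite-energy property does not survive such a limit for free. There is no explicit finite ``sequence of elementary random choices'' producing the tangling directly from moments of $\rho_{g,a}$; rather, the tangling is the partition of the source set induced by the connectivity of a random current on $K_n$, and the coupled triple $(\mathfrak t,\mathfrak t_1,\mathfrak t_2)$ arises from the joint law of two currents $\n_1,\n_2$ on the same graph. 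Verifying that the triple $(\mathrm{ECT},P_1,P_2)$ has probability bounded away from $0$ uniformly as $n\to\infty$ is therefore a genuine question about the geometry of these currents near criticality, not a combinatorial bookkeeping exercise.

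The paper's proof carries out exactly this analysis. It works on the finite-$n$ approximants and invokes the main result of \cite{KPP24}, which gives quantitative control on cluster sizes for the near-critical Ising random current on $K_n$: with probability bounded below by some $\delta>0$, the source clusters of $\n_1$ realise the prescribed partition $P_1$ and simultaneously each has cardinality at least $c\sqrt{n}$, and likewise for $\n_2$ with $P_2$ and clusters of size in $[c\sqrt{n},\sqrt{n}/c]$. Conditionally on these events, the vertices of a given $\n_2$-cluster (outside the sources) are a uniformly random subset of the appropriate size, and an explicit binomial computation shows that such a subset of size $\asymp\sqrt{n}$ hits each of the $\n_1$-clusters of size $\asymp\sqrt{n}$ with probability bounded away from $0$ uniformly in $n$. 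Iterating over the $\n_2$-clusters gives $\Pi(\n_1,\n_2)=\{S_1\cup S_2\}$ with uniformly positive conditional probability, and taking $n\to\infty$ yields the statement. The $\sqrt{n}$ scaling from \cite{KPP24} is the substantive input here and is not recoverable from your outline.
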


\section{Bulk and boundary magnetisation}
\label{sec:magnetisation}

In this section, we prove two fundamental results about the magnetisation in the full-space and the half-space settings. The first result shows that one may replace the $+$ boundary condition $\mathfrak{p}$ by a sufficiently thick layer of external magnetic field at the boundary taking the value $+1$. The second result says that there is a unique infinite volume Gibbs measure on the half space with positive field on the boundary. The corresponding result for the Ising model was obtained by Bodineau \cite{Bod05} and relied on a work of Fröhlich and Pfister on the wetting transition \cite{FP87}, and a consequence of the Lee--Yang theorem obtained in \cite{MMP84}. Our proof for the $\varphi^4$ model has a more probabilistic flavour and can easily be adapted to the Ising model. For both statements, we strongly rely on the switching principle for tangled currents stated in Theorem \ref{thm: switching lemma rtc}.

\subsection{Bulk magnetisation}

Let $L\in \mathbb N$. We define $\mathsf{h}_L$ to be the external magnetic field taking value $+1$ on 
$\Lambda_{L}\setminus \Lambda_{L-\log L}$
and $0$ elsewhere.

\begin{proposition}\label{prop: thick conv} Let $d\geq 2$. For all $\beta >0$, we have
\begin{equation}
    \lim_{L\rightarrow \infty} \langle \varphi_0\rangle_{\Lambda_{L},\beta,\mathsf{h}_L}=\langle \varphi_0\rangle^+_\beta.
\end{equation}
\end{proposition}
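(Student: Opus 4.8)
The plan is to prove convergence by sandwiching the magnetisation $\langle \varphi_0\rangle_{\Lambda_L,\beta,\mathsf{h}_L}$ between quantities that both converge to $\langle\varphi_0\rangle^+_\beta$. The upper bound is immediate: since $\mathsf{h}_L \leq \mathfrak{p}_{\Lambda_L}$ (the magnetic field $+1$ on the thick annulus is dominated by the maximal boundary condition $\mathfrak{M}_{\Lambda_L}\sim(\log L)^{1/4}$ on $\partial\Lambda_L$, at least once $L$ is large, and is $0$ in the bulk), Proposition~\ref{prop:stoc_monotonicity} gives $\langle\varphi_0\rangle_{\Lambda_L,\beta,\mathsf{h}_L}\leq \langle\varphi_0\rangle_{\Lambda_L,\beta,\mathfrak{p}_{\Lambda_L}}$, and the right-hand side converges to $\langle\varphi_0\rangle^+_\beta$ by \eqref{eq:conv_plus}. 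So the whole content is the matching lower bound: $\liminf_L \langle\varphi_0\rangle_{\Lambda_L,\beta,\mathsf{h}_L} \geq \langle\varphi_0\rangle^+_\beta$. Also, by FKG/Griffiths the quantities are all nonnegative and one should first check that the limit exists (e.g.\ monotonicity in $L$ along a suitable subsequence, or just argue with $\liminf$), and that any subsequential limit is a translation-invariant Gibbs measure, hence a combination of $\nu^+_\beta$ and $\nu^-_\beta$; by symmetry and nonnegativity of $\langle\varphi_0\rangle$ it must be $t\nu^+_\beta + (1-t)\nu^-_\beta$ with the magnetisation equal to $(2t-1)m^*(\beta)$, so it suffices to show the magnetisation is at least $m^*(\beta)$.

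For the lower bound I would compare the thick-boundary field $\mathsf{h}_L$ with the genuine $+$ boundary condition via the random tangled current representation and the switching principle (Theorem~\ref{thm: switching lemma rtc} / Corollary~\ref{thm: probabilistic switching rtc}). The key identity is that, for a source at $0$ and ghost, $\langle\varphi_0\rangle_{\Lambda_L,\beta,\mathsf{h}} = \mathbf{P}^{...}[0 \xleftrightarrow{} \mathfrak{g}]$-type connectivity statements in the tangled current ensemble. The strategy is: take the measure with field $+1$ on the thick annulus $\Lambda_L\setminus\Lambda_{L-\log L}$, and show that a source current from $0$ reaches the ghost with probability close to what it would be under the true plus state. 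Concretely, run the comparison between $\langle\varphi_0\rangle_{\Lambda_L,\beta,\mathsf{h}_L}$ and $\langle\varphi_0\rangle_{\Lambda_{L-\log L},\beta,\mathsf{h}'}$ where $\mathsf{h}'$ is the boundary field on $\partial\Lambda_{L-\log L}$ induced by conditioning on $\varphi$ restricted to the annulus being large — using the domain Markov property \eqref{eq:dlr} together with the regularity estimate Proposition~\ref{prop:regularity} to say that, with high probability, the spins on the inner boundary of the annulus are of order at least a constant (not just bounded above), so that they generate an effective boundary field bounded below. Then a switching-lemma comparison shows $\langle\varphi_0\rangle_{\Lambda_L,\beta,\mathsf{h}_L}$ differs from $\langle\varphi_0\rangle_{\Lambda_{L-\log L},\beta,\,c\,\mathbf{1}_{\partial}}$ by a vanishing error, and one iterates or directly compares the latter with the plus measure.

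Here is the cleaner route I actually expect to work, mirroring the paper's stated plan. First, by the domain Markov property, $\langle\varphi_0\rangle_{\Lambda_L,\beta,\mathsf{h}_L}$ is an average of $\langle\varphi_0\rangle_{\Lambda_{L-\log L},\beta,\mathsf{h}^{\Lambda_L,\Lambda_{L-\log L}}_\varphi}$ over $\varphi$ on the annulus, and on the annulus the field is $+1$ everywhere, so Proposition~\ref{prop:regularity} (applied with $h=1$) combined with the FKG inequality for $|\varphi|$ gives that the induced inner boundary field $\mathsf{h}^{\Lambda_L,\Lambda_{L-\log L}}_\varphi$ on $\partial\Lambda_{L-\log L}$ is, with probability tending to $1$, bounded below coordinatewise by some deterministic $\epsilon_0>0$; indeed $\langle\varphi_x\rangle$ on the annulus stays bounded away from $0$ uniformly because the field is a fixed positive constant. (Using monotonicity in $\mathsf{h}$, Proposition~\ref{prop:stoc_monotonicity}, one can even restrict to the event where every boundary spin is in a fixed interval $[\epsilon_0, M]$.) Hence $\langle\varphi_0\rangle_{\Lambda_L,\beta,\mathsf{h}_L} \geq (1-o(1))\,\langle\varphi_0\rangle_{\Lambda_{L-\log L},\beta,\epsilon_0\mathbf{1}_{\partial}}$. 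Now $\epsilon_0\mathbf{1}_\partial$ is a genuine (small, constant) boundary field, and it is classical — again via Proposition~\ref{prop:regularity} giving tightness, monotonicity in $L$ (FKG), and the characterisation of translation-invariant Gibbs states from \cite{GunaratnamPanagiotisPanisSeveroPhi42022} — that $\langle\varphi_0\rangle_{\Lambda_{L'},\beta,\epsilon_0\mathbf{1}_\partial} \to \langle\varphi_0\rangle^+_\beta$ as $L'\to\infty$ (any subsequential Gibbs limit has positive magnetisation, so equals $\nu^+_\beta$). This gives $\liminf_L\langle\varphi_0\rangle_{\Lambda_L,\beta,\mathsf{h}_L}\geq \langle\varphi_0\rangle^+_\beta$, completing the argument.

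The main obstacle is the lower bound on the induced inner-boundary field: I need that conditioning on $\varphi$ in the thick annulus typically produces boundary spins bounded \emph{below} by a constant, not merely that they have quartic upper tails. This does not follow from Proposition~\ref{prop:regularity} alone; it requires an additional input — either a tangled-current argument showing $\langle\varphi_x\rangle_{\text{annulus},\,+1}\geq \epsilon_0$ with good concentration, or a direct use of the switching principle to compare the thick-plus ensemble with a constant-field ensemble without ever conditioning on small spins (this is presumably why the paper emphasises the random tangled currents here). Controlling the \emph{lower} tail of the magnetisation on the annulus uniformly in $L$, so that the $o(1)$ error is genuinely negligible against $\langle\varphi_0\rangle^+_\beta>0$, is the technical crux; everything else is monotonicity, tightness, and the Gibbs-state classification already available in the excerpt.
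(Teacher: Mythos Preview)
Your upper bound is fine and matches the paper. The lower bound, however, has a genuine gap --- in fact two, and you correctly sense one of them in your final paragraph but underestimate how fatal it is.

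First, the reduction step does not work. You want to condition on the spins in the thick annulus and say that, with probability $1-o(1)$, the induced field on $\partial\Lambda_{L-\log L}$ is coordinatewise at least some $\epsilon_0>0$. But this asks that \emph{every} spin $\varphi_y$ with $y$ adjacent to the inner boundary lies in $[\epsilon_0,\infty)$ simultaneously, and there are order $L^{d-1}$ such spins. Each one individually has a probability bounded away from $1$ of being $\geq \epsilon_0$ (these are continuous unbounded random variables; the field $+1$ shifts the mean but does not prevent negative values). FKG gives the wrong direction of inequality for the intersection of these increasing events, so you cannot conclude high probability for the intersection. Knowing that $\langle\varphi_x\rangle$ is bounded below on the annulus is a statement about means, not about the simultaneous positivity of many random spins.

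Second, and more seriously, even if you could reduce to a constant boundary field $\epsilon_0\mathbf{1}_{\partial}$, the claim that $\langle\varphi_0\rangle_{\Lambda_{L'},\beta,\epsilon_0\mathbf{1}_\partial}\to\langle\varphi_0\rangle^+_\beta$ is \emph{exactly} the kind of statement Proposition~\ref{prop: thick conv} is meant to establish; indeed the paper explicitly says this ``is not easy to prove due to the unboundedness of the spins'' and only obtains it \emph{a posteriori} (Proposition~\ref{prop: weak plus measure}), using Theorem~\ref{thm: local uniqueness}, which in turn rests on Proposition~\ref{prop: thick conv}. Your Gibbs-classification argument does not close the loop either: a subsequential limit being a convex combination $t\nu^+_\beta+(1-t)\nu^-_\beta$ with nonnegative magnetisation only gives $t\geq 1/2$, not $t=1$.

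The paper's actual proof avoids all of this. It compares $\langle\varphi_0\rangle_{\Lambda_L,\mathsf{h}_L}$ directly with $\langle\varphi_0\rangle_{\Lambda_{2L},\mathsf{h}_L+\mathfrak{p}_{2L}}$ (which converges to $\langle\varphi_0\rangle^+_\beta$ by standard arguments, since $\mathfrak{p}_{2L}$ is present) via the switching principle: the difference equals $\langle\varphi_0\rangle_{\Lambda_{2L},\mathsf{h}_L+\mathfrak{p}_{2L}}$ times the probability that $\tilde 0$ is disconnected from $\fg$ in $\mathcal{H}_{\Lambda_L[\mathsf{h}_L]}$ under the double tangled current. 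The source constraint forces a tangled-current path from $\tilde 0$ to $\fg$, which must cross the annulus $\Lambda_L\setminus\Lambda_{L-\log L}$. At each vertex $x$ along that crossing with bounded degree, one shows (via an insertion argument and Proposition~\ref{prop: tanglings ECT}) that $\n_2(x\fg)>0$ and $\ft_x=\mathrm{ECT}$ with uniformly positive conditional probability; since the annulus has width $\log L$, the crossing contains $\gtrsim \log L$ such good vertices, and the disconnection probability is at most $L^{-c}$. The thickness $\log L$ is exactly what makes this work --- it gives enough independent chances to hit the ghost.
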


\begin{proof} We fix $\beta>0$ and drop it from the notations. We will compare $\langle \varphi_0\rangle_{\Lambda_L,\mathsf{h}_L}$ to a finite-volume approximation of $\langle \varphi_0\rangle^+$. Let $\mathfrak{p}_k:=\mathfrak{p}_{\Lambda_{k}}$ for $k\geq 1$. 
We first claim that the following convergence result holds: 
\begin{equation}\label{eq:cv h_l+p_l to +}
    \lim_{L\rightarrow \infty}\langle\varphi_0\rangle_{\Lambda_{2L},\mathsf{h}_L+\mathfrak{p}_{2L}}=\langle \varphi_0\rangle^+.
\end{equation}
Indeed, by monotonicity (see Proposition \ref{prop:Ginibre}) one has $\langle \varphi_0\rangle_{\Lambda_{2L},\mathfrak{p}_{2L}}\leq \langle \varphi_0\rangle_{\Lambda_{2L},\mathsf{h}_L+\mathfrak{p}_{2L}}$. Then, using \eqref{eq:dlr}, Proposition \ref{prop:stoc_monotonicity}, the Cauchy--Schwarz inequality, and Proposition \ref{prop:regularity},
\begin{align}
    \langle \varphi_0\rangle_{\Lambda_{2L},\mathsf{h}_L+\mathfrak{p}_{2L}}&\leq\langle \varphi_0\rangle_{\Lambda_{L/2},\beta,\mathfrak{p}_{L/2}}+\sqrt{\langle \varphi_0^2\rangle_{\Lambda_{2L},\beta,\mathsf{h}_L+\mathfrak{p}_{2L}}}\sqrt{\nu_{\Lambda_{2L},\beta,\mathsf{h}_L+\mathfrak{p}_{2L}}[\max_{x \in \partial^{\rm ext} \Lambda_{L/2}} \varphi_x>\mathfrak{M}_{\Lambda_{L/2}}]}\notag
    \\&\leq \langle \varphi_0\rangle_{\Lambda_{L/2},\beta,\mathfrak{p}_{L/2}}+o(1),
\end{align}
where $o(1)$ tends to $0$ as $L$ tends to infinity, and where $\mathfrak{M}$ was defined above \eqref{eq:maximum_reg}. These two inequalities together with \eqref{eq:conv_plus} readily imply \eqref{eq:cv h_l+p_l to +}.

Since $\mathsf{h}_L$ is a restriction of $\mathsf{h}_L+\mathfrak{p}_{2L}$, the switching principle of Theorem \ref{thm: switching lemma rtc} gives
\begin{equation}\label{eq: thick switching}
     \langle \varphi_0\rangle_{\Lambda_{2L},\mathsf{h}_L+\mathfrak{p}_{2L}} - \langle \varphi_0\rangle_{\Lambda_L,\mathsf{h}_L}= \langle \varphi_0\rangle_{\Lambda_{2L},\mathsf{h}_L+\mathfrak{p}_{2L}}\mathbf{P}_{\Lambda_{2L}[\mathsf{h}_L+\mathfrak{p}_{2L}],\Lambda_L[\mathsf{h}_L]}^{0\fg,\emptyset} [\tilde 0 \centernot\longleftrightarrow \fg \text{ in } {\mathcal{H}_{\Lambda_L[\mathsf{h}_L]}^{0\fg,\emptyset}(\n_1,\n_2,\ft)}],
\end{equation}
where we recall that $\tilde{0}=0a(1)$ denotes the extra vertex in $\mathcal{B}_0$ given by the source constraint. For the rest of the proof, we will use the extended measure on quintuples $(\n_1,\n_2,\ft,\ft_1,\ft_2)$ introduced in the end of Section~\ref{sec:tangled_currents} and write $\mathbf{P}=\mathbf{P}_{\Lambda_{2L}[\mathsf{h}_L+\mathfrak{p}_{2L}],\Lambda_L[\mathsf{h}_L]}^{0\fg,\emptyset}$ for short. 

Under $\mathbf{P}$, there exists a path in $\mathcal{H}^{0\fg}(\n_1,\ft_1)$ that connects $\tilde 0$ to $\fg$ because of the source constraints. When $\tilde 0$ is not connected to $\fg$ in ${\Lambda_L[\mathsf{h}_L]}$, there must exist a path in $(\n_1,\ft_1)$ that crosses the annulus $\Lambda_{L}\setminus \Lambda_{L-\log L}$. Our aim is to show that any path that crosses the annulus $\Lambda_{L}\setminus \Lambda_{L-\log L}$ connects with high probability to $\fg$ before reaching $\partial \Lambda_L$, which implies that the right-hand side of \eqref{eq: thick switching} goes to $0$ as $L$ goes to infinity. See Figure \ref{fig: thick} for an illustration. 

\begin{figure}[htb]
    \centering
    \includegraphics[width=0.5\linewidth]{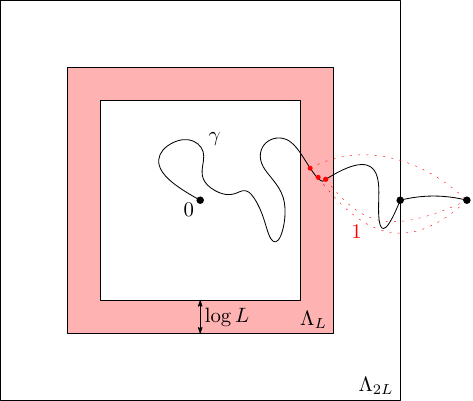}
    \put(2,92){\footnotesize$\mathfrak{g}$}
    \put(-29,97){\footnotesize$\mathfrak{p}_{2L}$}
    \caption{An illustration of the proof of Proposition \ref{prop: thick conv}. The magnetic field $\mathsf{h}_L$ is supported on the red region, while $\mathfrak{p}_{2L}$ is supported on $\partial \Lambda_{2L}$. A path $\gamma$ connects $\tilde{0}$ to $\fg$ in $\mathcal{H}^{0\fg}(\n_1,\ft_1)$. Red vertices correspond to blocks $\mathcal{B}_x$ along $\gamma$ in which the tangling is $\mathrm{ECT}(\mathcal{B}_x)$ and the degree $\Delta \n_1(x)$ is not too large. There are typically order $\log L$ many such vertices. Each red dotted edge represents a chance for $\tilde{0}$ to connect to the ghost within $\mathcal{H}^{0\fg}_{\Lambda_L[\mathsf{h}_L]}(\n_1,\ft_1)$.  The thickness of the red region is chosen in such a way that (at least) one of these edges is open with high probability.}
    \label{fig: thick}
\end{figure}

Given a constant $M\geq1$ to be specified later, consider the set of ``good vertices''
$$\mathcal{V}:=\{x\in \Lambda_L\setminus \Lambda_{L-\log L}:~\Delta \n_1(x)+\Delta_{\Lambda_L} \n_2(x)\leq M \text{ and } \Delta \n_1(x)+\Delta \n_2(x)\leq M+2 \},$$
where $\Delta_{\Lambda_L} \n_2(x):=\sum_{y\in \Lambda_L} \n_2(x,y)$. Let $\gamma$ be a path in $\Lambda_L\setminus \Lambda_{L-\log L}$ that crosses the annulus $\Lambda_L\setminus \Lambda_{L-\log L}$, and notice that by definition $\sum_{\substack{e: e \cap \gamma\neq \emptyset}}\n_1(e)+\n_2(e)\geq \frac{1}{2}\sum_{x\in \gamma} \Delta\n_1(x)+\Delta\n_2(x)\geq \frac{1}{2}|\gamma\cap \mathcal{V}^c| M$.
Therefore, by Markov's inequality for $2^{\sum_{\substack{e: e \cap \gamma\neq \emptyset}}\n_1(e)+\n_2(e)}$, Lemma~\ref{lem:degree_exp.moments} and Proposition~\ref{prop:regularity}, 
\begin{equation}\label{eq:good_path}
\langle \varphi_0\rangle_{\Lambda_{L},\mathsf{h}_L+\mathfrak{p}_{2L}}\mathbf{P}[|\gamma\cap\mathcal{V}^c|> |\gamma|/2]\leq e^{2(2d+1)C'|\gamma|} \cdot 2^{-M |\gamma|/4},
\end{equation}
where $C'>0$ is the constant in Lemma~\ref{lem:degree_exp.moments} (which in turn depends on the constant $C>0$ of Proposition~\ref{prop:regularity}). 
Consider the following event (which only depends on $\mathcal{V}$)
$$\mathcal{A}:= \{|\gamma\cap\mathcal{V}| \geq |\gamma|/2, ~~ \forall\, \gamma \text{ crossing } \Lambda_{L}\setminus \Lambda_{L-\log L} \}.$$ 
Note that the number of paths of length $k$ in $\mathbb{Z}^d$ crossing the annulus $\Lambda_L\setminus \Lambda_{L-\log L}$ is at most
$|\partial \Lambda_L|(2d)^k\leq C_1L^{d-1} (2d)^k$.
Choosing $M$ large enough so that $a:=2d e^{2(2d+1)C'}2^{-M/4}<e^{-d}$, a union bound and \eqref{eq:good_path} gives (note that $a^{\log L}< L^{-d}$) 
\begin{equation}\label{eq: high degree points}
\langle \varphi_0\rangle_{\Lambda_{2L},\mathsf{h}_L+\mathfrak{p}_{2L}}\mathbf{P}[\mathcal{A}^c]\leq C_1 L^{d-1}
\sum_{k\geq \log L} a^k\leq \frac{C_2}{L}.
\end{equation}

We will show that (the block of) each vertex $x\in\mathcal{V}$ has a uniformly positive probability (independently from each other) of being fully tangled and connected to $\fg$, thus implying that the event $\{\tilde 0\longleftrightarrow \fg\text{ in }\mathcal{H}_{\Lambda_L[\mathsf{h}_L]}^{0\fg,\emptyset}(\n_1,\n_2,\ft)\}$ occurs with high probability conditionally on $\mathcal{A}$.
Indeed, consider a configuration $(\m_1,\m_2)$ (with $\partial \m_1= \{0,\fg\}$ and $\partial \m_2=\emptyset
$) and let $x\in \Lambda_L$ be a vertex satisfying $\Delta \m_1(x)+\Delta_{\Lambda_L} \m_2(x)\leq M$ and $\m_2(x\fg)=0$.
Define the current $\tilde \m_2=\tilde \m_2^{x}$ as $\tilde \m_2(x\fg)=2$ and $\tilde \m_2(e)=\m_2(e)$ otherwise. Note that $\tilde \m_2$ has no sources, $\Delta \m_1(x)+\Delta_{\Lambda_L} \tilde \m_2(x)\leq M$, $\Delta \m_1(x)+\Delta \tilde \m_2(x)\leq M+2$ and that 
\begin{equation}
Cw_{\beta,\mathsf{h}_L}^\emptyset(\tilde \m_2)\geq w_{\beta,\mathsf{h}_L}^\emptyset(\m_2),
\end{equation}
where $C:=2\beta^{-2}\max\big\{\frac{\langle \varphi^{2k} \rangle_0}{\langle \varphi^{2k+2} \rangle_0} : 0\leq k\leq M/2\big\}.$
This implies that, for $(\m_1,\m_2)$ and $x$ as above,
\begin{equation}\label{eq:proof thick layer 1}
\mathbf{P}[\n_2(x\fg)=0 \mid \mathcal{C}(\m_1,\m_2,x)]\leq C \mathbf{P}[\n_2(x\fg)>0 \mid \mathcal{C}(\m_1,\m_2,x)],   
\end{equation}
where, 
\begin{align*}
\mathcal{C}(\m_1,\m_2,x):=\Big\{(\n_1,\n_2):\n_1=\m_1,\: \n_2|_{E(\Lambda_L[\mathsf{h}_L])\setminus\{x\fg\}}&=\m_2|_{E(\Lambda_L[\mathsf{h}_L])\setminus\{x\fg\}},\\&  \Delta \n_1(x)+\Delta \n_2(x)\leq M+2\Big\}.
\end{align*}
Hence, from \eqref{eq:proof thick layer 1} we find
\begin{equation}
\mathbf{P}[\n_2(x\fg)>0 \mid \mathcal C(\m_1,\m_2,x)] \geq \frac{1}{1+C}.    
\end{equation}
Furthermore, using Proposition \ref{prop: tanglings ECT}, there exists $\varepsilon=\varepsilon(M)$ such that, for $(\n_1,\n_2)$ satisfying $\Delta \n_1(x)+\Delta\n_2(x)\leq M+2$ and any tangling configuration $\fs_1$, 
\begin{equation}
\rho_{\n_1,\n_2}^{0\fg,\emptyset}[\ft_x=\mathrm{ECT}\:\mid \ft_1=\fs_1, \ft_{|{\Lambda_{L}\setminus\{x\}}}]\geq \varepsilon.
\end{equation}
The last two displayed equations easily imply that, conditionally on $(\n_1,\ft_1)$ and $\mathcal{V}$, the sequence of random variables $(\omega_x)_{x\in \mathcal{V}}$, where $\omega_x:=\mathbbm{1}_{\{\n_2(x\fg)>0,\: \ft_x=\mathrm{ECT}\}}$, stochastically dominates a sequence of independent Bernoulli random variables $(Z_x)_{x\in\mathcal{V}}$ with $\mathbb{P}[Z_x=1]=\varepsilon/(1+C)$. 

Now, notice that on the event $\{\tilde 0\centernot\longleftrightarrow \fg\text{ in }\mathcal{H}_{\Lambda_L[\mathsf{h}_L]}^{0\fg,\emptyset}(\n_1,\n_2,\ft)\}$, the cluster of $\tilde{0}$ in $\mathcal{H}^{0\fg}(\n_1,\ft_1)$ induces a path $\gamma$ in $\Z^d$ crossing the annulus $\Lambda_L\setminus\Lambda_{L-\log L}$, for which $\omega_x=0$ for every $x\in\gamma$. Furthermore, by definition, on the event $\mathcal{A}$, the set $\gamma'=\gamma\cap\mathcal{V}$ satisfies $|\gamma'|\geq |\gamma|/2\geq (\log L)/2$. Therefore 
\begin{equation}\label{eq: conditioning}
\mathbf{P}[\tilde 0\centernot\longleftrightarrow \fg\text{ in }\mathcal{H}_{\Lambda_L[\mathsf{h}_L]}^{0\fg,\emptyset}(\n_1,\n_2,\ft)\mid \mathcal{A}]\leq \mathbf{E}[\mathbf{P}[\omega_x=0 \; \forall x\in\gamma' \mid  (\n_1,\ft_1), \mathcal{V}]\mathbbm{1}_{\mathcal{A}}]  \leq L^{-c'} 
\end{equation}
for some $c'>0$, where in the last inequality we used the stochastic domination proved in the previous paragraph.
Combining \eqref{eq: conditioning} with \eqref{eq: high degree points} we obtain
\begin{align}
\langle \varphi_0\rangle_{\Lambda_{2L},\mathsf{h}_L+\mathfrak{p}_{2L}}\mathbf{P}[\tilde 0\centernot\longleftrightarrow &\fg\text{ in }\mathcal{H}_{\Lambda_L[\mathsf{h}_L]}^{0\fg,\emptyset}(\n_1,\n_2,\ft)]\notag
\\&\leq \langle\varphi_0\rangle_{\Lambda_{2L},\mathsf{h}_L+\mathfrak{p}_{2L}}\mathbf{P}[\tilde 0\centernot\longleftrightarrow \fg\text{ in }\mathcal{H}_{\Lambda_L[\mathsf{h}_L]}^{0\fg,\emptyset}(\n_1,\n_2,\ft),\: \mathcal{A}]+\frac{C_2}{L}\notag
\\&\leq \langle\varphi_0\rangle_{\Lambda_{2L},\mathsf{h}_L+\mathfrak{p}_{2L}}L^{-c'}+\frac{C_2}{L}.\label{eq: discon unlikely}
\end{align}
Combining \eqref{eq: discon unlikely}, \eqref{eq: thick switching}, and \eqref{eq:cv h_l+p_l to +}, we deduce that 
\begin{equation}
\lim_{L\rightarrow \infty} \langle \varphi_0\rangle_{\Lambda_L,\mathsf{h}_L}=\lim_{L\rightarrow \infty} \langle \varphi_0\rangle_{\Lambda_{2L},\mathsf{h}_L+\mathfrak{p}_{2L}}=\langle \varphi_0\rangle^+.
\end{equation}
This concludes the proof.
\end{proof}

\begin{remark}\label{rem: equality mag implies equality meas} It is straightforwad to extend Proposition~\textup{\ref{prop: thick conv}} to any $x \in \mathbb Z^d$. Since the sequence $(\langle \cdot \rangle_{\Lambda_L,\beta,\mathsf{h}_L})_{L\geq 1}$ is tight and $\langle \cdot \rangle_{\Lambda_L,\beta,\mathsf{h}_L}$ is stochastically dominated by $\langle \cdot \rangle_{\Lambda_{2L},\beta,\mathsf{h}_L+\mathfrak{p}_{2L}}$, it follows that any weak limit $\langle \cdot \rangle_{\beta}$ of $(\langle \cdot \rangle_{\Lambda_L,\beta,{\mathsf{h}_L}})_{L\geq 1}$ is stochastically dominated by $\langle \cdot \rangle^+_{\beta}$. In particular, there exists a monotone coupling $(\varphi,\varphi^+)$, $\varphi\sim \langle \cdot \rangle_{\beta}$, $\varphi^+\sim \langle \cdot \rangle^+_{\beta}$, where $\varphi_x\leq \varphi^+_x$ almost surely for every $x\in \mathbb{Z}^d$ by Strassen's theorem \textup{\cite{Strassen1965}}. Thus, by Proposition~\textup{\ref{prop: thick conv}} and its aforementioned extension, this implies that $\langle \cdot \rangle_{\beta}=\langle \cdot \rangle^+_{\beta}$, hence the full measure converges, namely $\lim_{L\rightarrow \infty} \langle \cdot \rangle_{\Lambda_L,\beta,h_L}=\langle \cdot \rangle^+_\beta$. 
\end{remark}

Let $\tilde{\mathsf{h}}_L$ be the external magnetic field which takes the value $+1$ for all $x\in \{-L,\ldots,-L+\log L-1\}\times \{-L,\ldots,L\}^{d-1}$ and $0$ elsewhere.
\begin{proposition}[One side is enough]\label{prop: one side is enough}
Let $d\geq 2$ and $\beta>0$. Then,
 \begin{equation}
     \liminf_{L\rightarrow\infty}\langle \varphi_0\rangle_{\Lambda_L,\beta,\tilde{\mathsf{h}}_L}\geq \frac{1}{2d}\langle \varphi_0\rangle^+_{\beta}.
 \end{equation}
\end{proposition}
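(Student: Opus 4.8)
The plan is to compare the one-sided field $\tilde{\mathsf h}_L$ with the full thick-boundary field $\mathsf h_L$ of Proposition~\ref{prop: thick conv} and to exploit the symmetry of the cube. Fix $\beta>0$ and, as in the proof of Proposition~\ref{prop: thick conv}, suppress it from the notation. For $1\le i\le 2d$, let $\tilde{\mathsf h}^{(i)}_L$ be the external field equal to $+1$ on the $i$-th ``thick face'' of $\Lambda_L$ (a slab of thickness $\log L$ adjacent to one of the $2d$ faces of $\Lambda_L$) and $0$ elsewhere, labelled so that $\tilde{\mathsf h}^{(1)}_L=\tilde{\mathsf h}_L$. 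The supports of the $\tilde{\mathsf h}^{(i)}_L$ cover the support $\Lambda_L\setminus\Lambda_{L-\log L}$ of $\mathsf h_L$, and each $\tilde{\mathsf h}^{(i)}_L$ is a restriction of $\mathsf h_L$ in the sense of Section~\ref{sec:tangled_currents}. Since $0$ is fixed by every lattice symmetry of $\Lambda_L$ and these symmetries act transitively on the $2d$ thick faces, $\langle\varphi_0\rangle_{\Lambda_L,\tilde{\mathsf h}^{(i)}_L}=\langle\varphi_0\rangle_{\Lambda_L,\tilde{\mathsf h}_L}$ for every $i$. By Proposition~\ref{prop: thick conv}, $\langle\varphi_0\rangle_{\Lambda_L,\mathsf h_L}\to\langle\varphi_0\rangle^+$, so it suffices to prove the finite-volume inequality
\[
\sum_{i=1}^{2d}\langle\varphi_0\rangle_{\Lambda_L,\tilde{\mathsf h}^{(i)}_L}\;\ge\;\langle\varphi_0\rangle_{\Lambda_L,\mathsf h_L},
\]
and then divide by $2d$ and let $L\to\infty$.

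To prove this inequality I would argue as in the proof of Proposition~\ref{prop: thick conv}. Since $\tilde{\mathsf h}^{(i)}_L$ is a restriction of $\mathsf h_L$, the switching principle (Theorem~\ref{thm: switching lemma rtc}, in the probabilistic form underlying \eqref{eq: thick switching}) gives
\[
\langle\varphi_0\rangle_{\Lambda_L,\tilde{\mathsf h}^{(i)}_L}=\langle\varphi_0\rangle_{\Lambda_L,\mathsf h_L}\cdot\mathbf{P}^{0\fg,\emptyset}_{\Lambda_L[\mathsf h_L],\Lambda_L[\tilde{\mathsf h}^{(i)}_L]}\big[\tilde 0\longleftrightarrow\fg\ \text{in}\ \mathcal H^{0\fg,\emptyset}_{\Lambda_L[\tilde{\mathsf h}^{(i)}_L]}(\n_1,\n_2,\ft)\big].
\]
Passing to the extended measure on quintuples $(\n_1,\n_2,\ft,\ft_1,\ft_2)$ of Remark~\ref{rem:extended_measure}, and using that $\ft\succ\ft_1\sqcup\ft_2$ together with the fact that $\Lambda_L[\tilde{\mathsf h}^{(i)}_L]$ is a subgraph of $\Lambda_L[\mathsf h_L]$, any connection between $\tilde 0$ and $\fg$ in the single tangled current $\mathcal H^{0\fg}_{\Lambda_L[\tilde{\mathsf h}^{(i)}_L]}(\n_1,\ft_1)$ is automatically realised in $\mathcal H^{0\fg,\emptyset}_{\Lambda_L[\tilde{\mathsf h}^{(i)}_L]}(\n_1,\n_2,\ft)$; since $(\n_1,\ft_1)\sim\mathbf{P}^{0\fg}_{\Lambda_L[\mathsf h_L]}$, I obtain
\[
\langle\varphi_0\rangle_{\Lambda_L,\tilde{\mathsf h}^{(i)}_L}\;\ge\;\langle\varphi_0\rangle_{\Lambda_L,\mathsf h_L}\cdot\mathbf{P}^{0\fg}_{\Lambda_L[\mathsf h_L]}\big[\tilde 0\longleftrightarrow\fg\ \text{in}\ \mathcal H^{0\fg}_{\Lambda_L[\tilde{\mathsf h}^{(i)}_L]}(\n_1,\ft_1)\big].
\]
The point of this reduction is that all $2d$ probabilities now live on the common space $\mathbf{P}^{0\fg}_{\Lambda_L[\mathsf h_L]}$.

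It remains to check that the $2d$ events $\{\tilde 0\longleftrightarrow\fg\ \text{in}\ \mathcal H^{0\fg}_{\Lambda_L[\tilde{\mathsf h}^{(i)}_L]}(\n_1,\ft_1)\}$, $1\le i\le 2d$, cover $\mathbf{P}^{0\fg}_{\Lambda_L[\mathsf h_L]}$-almost every configuration; summing the last display over $i$ and applying the union bound then yields the finite-volume inequality. As recalled in the proof of Proposition~\ref{prop: thick conv}, the source constraint forces, almost surely, a path $\gamma$ from $\tilde 0$ to $\fg$ in $\mathcal H^{0\fg}(\n_1,\ft_1)$. This path uses at least one ghost edge; I would let $\{x^\ast\fg(k),\fg\}$ be the first ghost edge met along $\gamma$ starting from $\tilde 0$, and let $i$ be the index with $x^\ast\in F_i$ (recall that the support of $\mathsf h_L$ is the union of the thick faces $F_i$). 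The initial segment of $\gamma$ up to and including that edge uses only edges internal to blocks of $\Lambda_L$ (current strands and tanglings) together with that single ghost edge incident to $F_i$, all of which are present in $\mathcal H^{0\fg}_{\Lambda_L[\tilde{\mathsf h}^{(i)}_L]}(\n_1,\ft_1)$; hence this segment is a path from $\tilde 0$ to $\fg$ in $\mathcal H^{0\fg}_{\Lambda_L[\tilde{\mathsf h}^{(i)}_L]}(\n_1,\ft_1)$, so the $i$-th event holds.

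The step I expect to be the main obstacle is this last one: one must verify carefully that deleting all ghost edges lying outside a single thick face $F_i$ does not sever the relevant portion of the tangled current graph, which is exactly why one follows a path and isolates its first ghost edge rather than working with the full cluster of $\tilde 0$. One also has to ensure that the reduction to a single current (discarding $\n_2$) is legitimate, so that the $2d$ probabilities sit on a common probability space and the union bound applies; this uses the domination $\ft\succ\ft_1\sqcup\ft_2$ from Remark~\ref{rem:extended_measure}. As in Proposition~\ref{prop: thick conv}, one reads $\log L$ as $\lfloor\log L\rfloor$ whenever integrality matters.
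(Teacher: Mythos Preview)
Your proof is correct and follows essentially the same approach as the paper. Both arguments apply the switching principle to compare $\tilde{\mathsf h}_L$ with $\mathsf h_L$, pass to the single tangled current via the domination $\ft\succ\ft_1\sqcup\ft_2$, and then use the source constraint together with the $2d$-fold symmetry of $\mathbf P^{0\fg}_{\Lambda_L[\mathsf h_L]}$ to conclude that the connection probability through a single thick face is at least $1/(2d)$; the paper simply compresses your explicit covering/union-bound step into the single line \eqref{eq:proof one side enough2}.
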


\begin{proof} Observe that $\tilde{\mathsf{h}}_L$ is a restriction of $\mathsf{h}_L$.
By the switching principle,
\begin{equation}\label{eq: switching one side is enough}
     \langle \varphi_0\rangle_{\Lambda_L,\beta,\mathsf{h}_L}- \langle \varphi_0\rangle_{\Lambda_L,\beta,\tilde{\mathsf{h}}_L}= \langle \varphi_0\rangle_{\Lambda_L,\beta, \mathsf{h}_L}\mathbf{P}_{\Lambda_L[\mathsf{h}_L],\Lambda_L[\tilde{\mathsf{h}}_L]}^{0\fg,\emptyset}[\tilde 0 \centernot\longleftrightarrow \fg \text{ in }\mathcal{H}_{\Lambda_L[\tilde{\mathsf{h}}_L]}^{0\fg,\emptyset}(\n_1,\n_2,\ft)].
\end{equation}
However, using the $\pi/2$ rotational symmetries of $\mathbf{P}_{\Lambda_L[\mathsf{h}_L]}^{0\fg}$, together with the fact that the event $\{\tilde{0} \longleftrightarrow \fg \text{ in } \mathcal H^{0\fg}_{\Lambda_L[\tilde{\mathsf{h}}_L]}(\n_1,\ft_1)\}$ happens by the source constraints, and the stochastic domination of Proposition \ref{prop:tanglings_domination}, we obtain
\begin{align}
\mathbf{P}_{\Lambda_L[\mathsf{h}_L],\Lambda_L[\tilde{\mathsf{h}}_L]}^{0\fg,\emptyset}[\tilde 0 \longleftrightarrow \fg \text{ in }\mathcal{H}^{0\fg,\emptyset}_{\Lambda_L[\tilde{\mathsf{h}}_L]}(\n_1,\n_2,\ft)]&\geq \mathbf{P}_{\Lambda_L[\mathsf{h}_L]}^{0\fg}[\tilde 0 \longleftrightarrow \fg \text{ in }\mathcal{H}^{0\fg}_{\Lambda_L[\tilde{\mathsf{h}}_L]}(\n_1,\ft_1)]&\geq \frac{1}{2d}.\label{eq:proof one side enough2}
\end{align}
Combining \eqref{eq:proof one side enough2} with \eqref{eq: switching one side is enough} implies that
\begin{equation}
     \liminf_{L\rightarrow\infty}\langle \varphi_0\rangle_{\Lambda_L,\beta,\tilde{\mathsf{h}}_L}\geq \frac{1}{2d} \liminf_{L\rightarrow\infty}\langle \varphi_0\rangle_{\Lambda_L,\beta, \mathsf{h}_L}=\frac{1}{2d}\langle \varphi_0\rangle^+_\beta,
\end{equation}
where we used Proposition~\ref{prop: thick conv} in the equality.
\end{proof}

\subsection{Boundary magnetisation}

Let $\mathbb{H}=\mathbb{Z}^{d-1}\times \mathbb{N}$, and write $\Lambda_L^+=\Lambda_L(L\mathbf{e}_d)$, where $\mathbf{e}_d$ is the unit vector of last coordinate equal to $1$. Given $h>0$,
we define the measures 
\begin{equation}\label{eq:half-space_measures}
\langle \cdot \rangle^{0,h}_{\Lambda_L^+,\beta}:= \langle \cdot \rangle_{\Lambda_L^+,\beta, \mathsf{h}} ~~~~~\text{ and }~~~~~ \langle \cdot \rangle^{\mathfrak{p},h}_{\Lambda_L^+,\beta}:= \langle \cdot \rangle_{\Lambda_L^+,\beta, \mathfrak{p}+\mathsf{h}},
\end{equation}
where $\mathfrak{p}$ denotes the magnetic field which is equal to $\mathfrak{p}_{\Lambda^+_L}$ on $\partial \Lambda_L^+\setminus \partial \mathbb H$ and $0$ elsewhere, and $\mathsf{h}$ denotes the magnetic field which is equal to $h$ on $\Lambda_L^+\cap \partial \mathbb H$ and $0$ elsewhere. (Although both $\mathfrak{p}$ and $\mathsf{h}$ depend on $L$ and $h>0$, we choose not to stress this dependency in the notation throughout this section as it will be clear in the context.)

\begin{lemma}\label{lem: existence of phi4 halspace measures} Let $d\geq 2$ and $\beta,h>0$. Then, as $L$ tends to infinity, $\langle \cdot \rangle^{\mathfrak p,h}_{\Lambda_L^+,\beta}$ and $\langle \cdot \rangle^{0,h}_{\Lambda_L^+,\beta}$ converge weakly to probability measures $\langle \cdot \rangle^{+,h}_{\mathbb{H},\beta}$ and $\langle \cdot \rangle^{0,h}_{\mathbb{H},\beta}$. Furthermore, both $\langle \cdot \rangle^{+,h}_{\mathbb{H},\beta}$ and $\langle \cdot \rangle^{0,h}_{\mathbb{H},\beta}$ are regular and invariant under the isometries of $\mathbb H$.
\end{lemma}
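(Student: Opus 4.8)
The plan is to establish existence of the weak limits via monotonicity and tightness, and then transfer the symmetry and regularity properties to the limit. First I would handle $\langle \cdot \rangle^{\mathfrak p,h}_{\Lambda_L^+,\beta}$. By the domain Markov property \eqref{eq:dlr} and monotonicity in boundary conditions (Proposition~\ref{prop:stoc_monotonicity}), increasing $L$ corresponds to enlarging the domain while the magnetic field on the enlarged region is bounded by $\mathfrak{p}$, so the sequence is monotone increasing in the appropriate sense (more precisely, $\langle F(\varphi)\rangle^{\mathfrak p,h}_{\Lambda_L^+,\beta}$ is monotone in $L$ for increasing local $F$, since the effective boundary field $\mathfrak{p}$ only decreases when one moves the boundary further out and replaces it by the induced field of a configuration capped at $\mathfrak M$, cf.\ the discussion around \eqref{eq:maximum_reg}--\eqref{eq:conv_plus}). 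Combined with the uniform quartic tail bounds of Proposition~\ref{prop:regularity} (which apply in the bulk of $\Lambda_L^+$ since the boundary field is at most $\sqrt{\log L}$ on $\partial\Lambda_L^+\setminus\partial\mathbb H$ and bounded on $\partial\mathbb H$), the family is tight (Remark~\ref{rem:tightness}). Monotone plus tight implies weak convergence to a limiting measure $\langle \cdot \rangle^{+,h}_{\mathbb H,\beta}$, which automatically inherits the bound \eqref{eq:regularity}, hence is regular.

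For $\langle \cdot \rangle^{0,h}_{\Lambda_L^+,\beta}$ the argument is the same but with the opposite monotonicity: here the boundary field on $\partial\Lambda_L^+\setminus\partial\mathbb H$ is zero, so enlarging $L$ relaxes an (effectively) free boundary further out, and by FKG/monotonicity in $\mathsf h$ the sequence $\langle F(\varphi)\rangle^{0,h}_{\Lambda_L^+,\beta}$ is monotone decreasing in $L$ for increasing local $F$ — this is the standard fact that free boundary conditions give the smallest measure among those with nonnegative field, applied here via \eqref{eq:dlr} with the observation that the conditional field induced on a smaller box is nonnegative on the $\partial\mathbb H$ part and can be dominated. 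Tightness again comes from Proposition~\ref{prop:regularity}, since $0\le \mathsf h\le h$ everywhere so the regularity estimate holds throughout $\Lambda_L^+$ (Remark~\ref{rem:boundary regularity}). Monotone decreasing plus tight yields the weak limit $\langle \cdot \rangle^{0,h}_{\mathbb H,\beta}$, which inherits \eqref{eq:regularity} and is therefore regular.

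For translation invariance (along $\partial\mathbb H$) and the reflection symmetries of $\mathbb H$: the finite-volume measures $\langle \cdot \rangle^{\#,h}_{\Lambda_L^+,\beta}$ are not themselves symmetric because $\Lambda_L^+$ is a finite box, but the symmetry is recovered in the limit. I would fix a local function $F$ supported near a fixed point and a lattice isometry $g$ of $\mathbb H$ (a horizontal translation, or a reflection fixing $\partial\mathbb H$, or a horizontal-axis reflection). For $L$ large, $g$ maps the support of $F$ and its neighbourhood into $\Lambda_L^+$, and one compares $\langle F\rangle^{\#,h}_{\Lambda_L^+,\beta}$ with $\langle F\circ g\rangle^{\#,h}_{g^{-1}\Lambda_L^+,\beta}$; since $g^{-1}\Lambda_L^+$ and $\Lambda_L^+$ both exhaust $\mathbb H$ and the limits exist and are independent of the exhausting sequence (by the monotonicity, any two monotone exhaustions give the same limit), we get $\langle F\rangle^{\#,h}_{\mathbb H,\beta}=\langle F\circ g\rangle^{\#,h}_{\mathbb H,\beta}$. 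The key point making this work is precisely that the limit does not depend on which increasing sequence of boxes is used to define it, which is a consequence of the sandwiching between two cofinal monotone sequences.

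The main obstacle I anticipate is pinning down the correct monotonicity of the finite-volume sequences in $L$ rigorously, and in particular the independence of the limit from the exhaustion. The subtlety is that $\mathfrak{p}$ is not literally a maximal boundary condition (spins are unbounded), so the monotone convergence has to be run through the effective-maximal-boundary-condition device of \eqref{eq:maximum_reg}--\eqref{eq:conv_plus}: one shows that $\langle \cdot \rangle^{\mathfrak p,h}_{\Lambda_L^+,\beta}$ is sandwiched, up to $o(1)$ errors controlled by Proposition~\ref{prop:regularity}, between the measures obtained by genuine monotone limits, and that these $o(1)$ errors vanish. Once this is set up the rest is routine, but getting the error terms uniform in the relevant region (as in the proof of Proposition~\ref{prop: thick conv}) requires care. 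An alternative, cleaner route for the symmetry statement is to note that $\langle \cdot \rangle^{+,h}_{\mathbb H,\beta}$ and $\langle \cdot \rangle^{0,h}_{\mathbb H,\beta}$ are Gibbs measures for the half-space specification with boundary field $h$ on $\partial\mathbb H$, and this specification is manifestly invariant under the isometries of $\mathbb H$; combined with the fact (to be used later, cf.\ Proposition~\ref{prop: unique half-space measure}) or at least the weaker fact that these two particular measures are canonically defined limits, invariance follows. I would present the direct exhaustion argument as the main proof since it is self-contained.
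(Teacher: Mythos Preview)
You have the monotonicity directions swapped. For $\langle \cdot \rangle^{0,h}_{\Lambda_L^+,\beta}$ the paper shows the sequence is \emph{increasing} in $L$, not decreasing: one views $\langle \cdot \rangle^{0,h}_{\Lambda_L^+,\beta}$ as a measure on $\Lambda_{L+1}^+$ with $J=0$ and $\mathsf h=0$ on the new part, and then Proposition~\ref{prop:stoc_monotonicity} (monotonicity in $J$ and $\mathsf h$) gives $\langle \cdot \rangle^{0,h}_{\Lambda_L^+,\beta} \preceq \langle \cdot \rangle^{0,h}_{\Lambda_{L+1}^+,\beta}$ directly. Your route via \eqref{eq:dlr} and ``the conditional field induced on a smaller box is nonnegative'' does not work as stated, since the induced boundary field is $\sum_{y}\varphi_y$ with $\varphi_y$ real-valued and possibly negative; you cannot conclude nonnegativity pointwise. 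The paper's argument bypasses this entirely.

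For $\langle \cdot \rangle^{\mathfrak p,h}_{\Lambda_L^+,\beta}$ the sequence is not literally monotone in either direction (and your parenthetical reasoning in fact points toward approximate \emph{decrease}, contradicting your own claim of increase). The paper does exactly what you describe in your ``main obstacle'' paragraph: it fixes $m$, uses regularity to get $\langle f\rangle^{\mathfrak p,h}_{\Lambda_L^+,\beta} \le \langle f\rangle^{\mathfrak p,h}_{\Lambda_m^+,\beta} + o(1)$ for $L\ge 3m$ via Cauchy--Schwarz and the effective-maximum device \eqref{eq:maximum_reg}, and then sends $L\to\infty$ followed by $m\to\infty$ to obtain $\limsup\le\liminf$. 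So your anticipated sandwiching is the actual proof, not a fallback; there is no genuine monotone sequence here to exploit. Your symmetry discussion is more detailed than the paper's (which simply asserts invariance), and your exhaustion argument is fine once the limit is shown to exist.
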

\begin{proof} We fix $h>0$ and $\beta>0$ and drop the latter from the notations.
We begin with the study of $(\langle \cdot\rangle_{\Lambda_L^+}^{0,h})_{L\geq 1}$. Proposition \ref{prop:stoc_monotonicity} implies the following (stochastic) monotonicity: for every $L\geq 1$, $\langle \cdot \rangle^{0,h}_{\Lambda_L^+} \preceq \langle \cdot \rangle^{0,h}_{\Lambda_{L+1}^+}$. Combined with Proposition \ref{prop:regularity}, this implies that $\langle \cdot \rangle^{0,h}_{\Lambda_L^+}$ converges weakly. 
We now turn to $(\langle \cdot\rangle_{\Lambda_L^+}^{\mathfrak p,h})_{L\geq 1}$. We follow the argument used in the proof of Proposition \ref{prop: thick conv}. Consider a bounded local increasing function $f$. Fix some $m\geq 1$ such that $\mathrm{sup}(f)\subset \Lambda_m^+$, and consider $L$ large enough such that $L\geq 3m$. Then, by Proposition \ref{prop:regularity}, the measures $\langle \cdot \rangle^{\mathfrak p,h}_{\Lambda_L^+}$ (with $L\geq 3m$) are (uniformly) regular in $\Lambda_m^+$. Combining this observation with the Cauchy--Schwarz inequality gives
\begin{equation}
\langle f \mathbbm{1}_{\exists x \in \partial^{\rm ext} \Lambda_m^+\cap\mathbb H : \varphi_x> \mathfrak{M}_{\Lambda_m^+}} \rangle^{\mathfrak p,h}_{\Lambda_L^+} \leq \sqrt{\langle f^2 \rangle^{\mathfrak p,h}_{\Lambda_L^+}} \sqrt{\langle \mathbbm{1}_{\exists x \in \partial^{\rm ext} \Lambda_m^+\cap\mathbb H : \varphi_x> \mathfrak{M}_{\Lambda_m^+}} \rangle^{\mathfrak p,h}_{\Lambda_L^+}}=o(1),
\end{equation}
where $o(1)$ tends to $0$ as $m$ tends to infinity.
By the monotonicity in the boundary conditions (see Proposition \ref{prop:stoc_monotonicity}),
\begin{equation}
\langle f \rangle^{\mathfrak p,h}_{\Lambda_L^+}=\langle f \mathbbm{1}_{\varphi_x\leq \mathfrak{M}_{\Lambda_m},\: \forall x \in \partial^{\rm ext} \Lambda_m^+\cap\mathbb H} \rangle^{\mathfrak p,h}_{\Lambda_L^+}+\langle f \mathbbm{1}_{\exists x \in \partial^{\rm ext} \Lambda_m^+\cap\mathbb H: \varphi_x> \mathfrak{M}_{\Lambda_m}} \rangle^{\mathfrak p,h}_{\Lambda_L^+}\leq \langle f \rangle^{\mathfrak p,h}_{\Lambda_m^+}+o(1).   
\end{equation}
Sending first $L$ to infinity and then $m$ to infinity we obtain 
\begin{equation}
\limsup_{L\to\infty} \langle f \rangle^{\mathfrak p,h}_{\Lambda_L^+} \leq \liminf_{L\to\infty} \langle f \rangle^{\mathfrak p,h}_{\Lambda_L^+},     
\end{equation}
which implies that the limit $\lim_{L\to\infty} \langle f \rangle^{\mathfrak p,h}_{\Lambda_L^+}$ exists. Since this holds for all increasing functions, by the inclusion-exclusion principle, it holds for all bounded local functions. Thus, the sequence $(\langle \cdot \rangle^{\mathfrak p,h}_{\Lambda_L^+})_{L\geq 1}$ converges weakly. 
\end{proof}

Combined with the methods of \cite[Section~5.1]{GunaratnamPanagiotisPanisSeveroPhi42022}, Lemma \ref{lem: existence of phi4 halspace measures} allows to construct the associated infinite volume (double) tangled current measures. Therefore, we obtain the following (the proof is omitted).

\begin{corollary}\label{cor: existence infinite halspace tangled measures} Let $d\geq 2$ and $\beta,h>0$. Then, as $L$ tends to infinity, the measures $\mathbf{P}^{\emptyset}_{\Lambda_L^+[\mathfrak{p}+\mathsf{h}],\beta}$, $\mathbf{P}^{\emptyset}_{\Lambda_L^+[\mathsf{h}],\beta}$, and $\mathbf{P}^{\emptyset,\emptyset}_{\Lambda_L^+[\mathfrak{p}+\mathsf{h}],\Lambda_L^+[\mathsf{h}],\beta}$ converge weakly to probability measures that we denote by $\mathbf{P}^\emptyset_{\mathbb H(+,h),\beta}$, $\mathbf{P}^\emptyset_{\mathbb H(0,h),\beta}$, and $\mathbf{P}^{\emptyset,\emptyset}_{\mathbb H(+,h),\mathbb H(0,h),\beta}$ respectively.
\end{corollary}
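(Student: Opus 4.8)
The plan is to establish the three stated weak convergences simultaneously by leveraging the weak convergence of the spin measures proved in Lemma~\ref{lem: existence of phi4 halspace measures}, together with the explicit current-weight formula of Lemma~\ref{lem: current expansion} and the construction of infinite-volume tangled current measures carried out in \cite[Section~5.1]{GunaratnamPanagiotisPanisSeveroPhi42022}. Indeed, for a fixed source-free double tangled current supported on a finite set of edges $F$, its $\mathbf{P}^{\emptyset,\emptyset}_{\Lambda_L^+[\mathfrak p+\mathsf h],\Lambda_L^+[\mathsf h],\beta}$-probability is a ratio: the numerator is the finite product of local weights $w^\emptyset_{\beta,\mathfrak p+\mathsf h}(\n_1)w^\emptyset_{\beta,\mathsf h}(\n_2)\rho^{\emptyset,\emptyset}_{\n_1,\n_2}(\ft)$, which stabilizes as soon as $L$ is large enough that $F$ and the relevant magnetic-field edges lie in $\Lambda_L^+$ (here one uses that $\mathfrak p = \mathfrak p_{\Lambda_L^+}$ on $\partial\Lambda_L^+\setminus\partial\mathbb H$ so the ghost edges attached to the $\mathfrak p$-boundary move off to infinity and do not interfere with $F$); the denominator is the full partition function $\sum_{\partial\n_1=\partial\n_2=\emptyset} w^\emptyset_{\beta,\mathfrak p+\mathsf h}(\n_1)w^\emptyset_{\beta,\mathsf h}(\n_2)$, which by Lemma~\ref{lem: current expansion} equals $Z^{\varphi^4}_{\Lambda_L^+,\beta,\mathfrak p+\mathsf h}\cdot Z^{\varphi^4}_{\Lambda_L^+,\beta,\mathsf h}$ up to the single-site normalisations. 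The key point is that such partition-function ratios converge because the corresponding spin measures converge (Lemma~\ref{lem: existence of phi4 halspace measures}) and are tight with uniform quartic tails (Proposition~\ref{prop:regularity}), which is exactly what the machinery of \cite[Section~5.1]{GunaratnamPanagiotisPanisSeveroPhi42022} is designed to exploit.

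Concretely, I would proceed as follows. First, following \cite[Section~5.1]{GunaratnamPanagiotisPanisSeveroPhi42022}, reduce the weak convergence of the tangled-current measures to the convergence of finite-dimensional marginals, i.e.\ to showing that for each finite edge set $F$ and each admissible source-free configuration on $F$, the probability that $\n_1,\n_2$ and $\ft$ restrict to that configuration on $F$ converges as $L\to\infty$. Second, write this probability as numerator over partition function as above; the numerator is eventually constant in $L$. Third, show the partition functions (suitably normalised) have a limit: this is where one invokes that $\langle\cdot\rangle^{\mathfrak p,h}_{\Lambda_L^+,\beta}$, $\langle\cdot\rangle^{0,h}_{\Lambda_L^+,\beta}$ converge (Lemma~\ref{lem: existence of phi4 halspace measures}) and that, by Proposition~\ref{prop:regularity}, the tails of the currents' degrees are uniformly controlled (via Lemma~\ref{lem:degree_exp.moments}), so that no mass escapes to infinity and the ratios of partition functions over successive volumes converge to $1$. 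Fourth, by Kolmogorov extension this defines the limiting measures $\mathbf P^\emptyset_{\mathbb H(+,h),\beta}$, $\mathbf P^\emptyset_{\mathbb H(0,h),\beta}$, $\mathbf P^{\emptyset,\emptyset}_{\mathbb H(+,h),\mathbb H(0,h),\beta}$, and the consistency of the three (the marginals of the double measure being the two single ones) is inherited from the finite-volume identity in the paragraph ``Coupling between double and single tangled currents measures''.

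The main obstacle I expect is the control of the boundary: because $\mathfrak p = \mathfrak p_{\Lambda_L^+}$ is an $L$-dependent growing field supported on the moving boundary $\partial\Lambda_L^+\setminus\partial\mathbb H$, one must argue that the ghost edges attached to these growing fields do not produce currents that reach back into a fixed finite window $F$ with non-vanishing probability. This is handled exactly as in the proof of Proposition~\ref{prop: thick conv}: the uniform regularity of $\langle\cdot\rangle^{\mathfrak p,h}_{\Lambda_L^+,\beta}$ in any fixed $\Lambda_m^+$ (established within Lemma~\ref{lem: existence of phi4 halspace measures}) gives uniform exponential moments on the current degrees via Lemma~\ref{lem:degree_exp.moments}, hence a uniform bound on the probability that a current path travels macroscopic distance, so the contribution of configurations ``touching'' the moving boundary is negligible. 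Once that is in place, the rest is the routine partition-function-ratio argument of \cite[Section~5.1]{GunaratnamPanagiotisPanisSeveroPhi42022}, which is why the authors state that the proof is omitted.
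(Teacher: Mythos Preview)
Your proposal is correct and matches the paper's approach: the paper omits the proof entirely, stating only that the result follows from Lemma~\ref{lem: existence of phi4 halspace measures} combined with the methods of \cite[Section~5.1]{GunaratnamPanagiotisPanisSeveroPhi42022}, which is precisely the machinery you identify and sketch. Your discussion of the moving-boundary obstacle (the $L$-dependent field $\mathfrak p$) and its resolution via the uniform regularity established inside the proof of Lemma~\ref{lem: existence of phi4 halspace measures} is a reasonable elaboration of what that omitted argument entails.
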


In fact, the two measures constructed in Lemma \ref{lem: existence of phi4 halspace measures} coincide for every $\beta,h>0$. Such a result was already derived for the Ising model in the work of Bodineau (see Step 2 in \cite[Lemma~3.1]{Bod05}). The argument used there relies on both the Lee--Yang theorem and a result of Fröhlich and Pfister on the wetting transition \cite{FP87}. Although the Lee--Yang property still holds in our setup, extending the results of Fröhlich and Pfister to the $\varphi^4$ model remains an open problem and is beyond the scope of this paper. Instead, we take a different approach and use the switching principle to obtain a more probabilistic proof of the desired equality. Let us mention that the switching principle was also used to analyse half-space measures in the context of the Book-Ising model, see \cite{duminil2023long}.

\begin{proposition}\label{prop: unique half-space measure} Let $d\geq 2$ and $\beta,h>0$. Then, for every $x \in \mathbb H$,
\begin{equation}\label{eq:equality mag half space}
    \langle \varphi_x\rangle_{\mathbb H,\beta}^{+,h}=\langle \varphi_x\rangle_{\mathbb H,\beta}^{0,h}.
\end{equation}
In particular, $\langle \cdot\rangle^{+,h}_{\mathbb H,\beta}=\langle \cdot\rangle^{0,h}_{\mathbb H,\beta}$.
\end{proposition}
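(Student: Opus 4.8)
The plan is to show the two half-space magnetisations agree by interpolating between boundary conditions via the switching principle, exactly analogously to the proof of Proposition~\ref{prop: thick conv}, but now with the extra boundary field $\mathsf{h}$ on $\partial\mathbb{H}$ playing the role of a stabiliser. Since $\mathsf{h}$ is a restriction of $\mathfrak{p}+\mathsf{h}$, the switching principle (Theorem~\ref{thm: switching lemma rtc}, in its probabilistic form) gives, for every $x\in\Lambda_L^+$ and every $L$ large,
\begin{equation}
\langle\varphi_x\rangle^{\mathfrak{p},h}_{\Lambda_L^+,\beta}-\langle\varphi_x\rangle^{0,h}_{\Lambda_L^+,\beta}=\langle\varphi_x\rangle^{\mathfrak{p},h}_{\Lambda_L^+,\beta}\,\mathbf{P}^{x\fg,\emptyset}_{\Lambda_L^+[\mathfrak{p}+\mathsf{h}],\Lambda_L^+[\mathsf{h}],\beta}\big[\tilde x\centernot\longleftrightarrow\fg\text{ in }\mathcal{H}^{x\fg,\emptyset}_{\Lambda_L^+[\mathsf{h}]}(\n_1,\n_2,\ft)\big].
\end{equation}
By Lemma~\ref{lem: existence of phi4 halspace measures} and Corollary~\ref{cor: existence infinite halspace tangled measures}, both sides converge as $L\to\infty$, and $\langle\varphi_x\rangle^{\mathfrak{p},h}_{\Lambda_L^+,\beta}$ stays bounded (regularity), so it suffices to prove that the disconnection probability tends to $0$; equivalently, that under the infinite-volume measure $\mathbf{P}^{\emptyset,\emptyset}_{\mathbb{H}(+,h),\mathbb{H}(0,h),\beta}$, the source $\tilde x$ is connected to $\fg$ inside $\mathcal{H}^{x\fg,\emptyset}_{\mathbb{H}[\mathsf{h}]}$ almost surely.

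The mechanism is the following: in the current $(\n_1,\ft_1)$ alone there is already a path from $\tilde x$ to $\fg$ by the source constraint $\partial\n_1=\partial(x\fg)=\{x,\fg\}$. If $\tilde x$ is not connected to $\fg$ inside $\mathcal{H}^{x\fg,\emptyset}_{\Lambda_L^+[\mathsf{h}]}$, then that $(\n_1,\ft_1)$-path from $\tilde x$ must reach $\fg$ only via an $x\fg$-edge of $\mathfrak{p}$, i.e.\ it must travel to $\partial\Lambda_L^+\setminus\partial\mathbb{H}$, hence it crosses every ``shell'' $\partial^{\rm ext}\Lambda_k^+\cap\mathbb{H}$ for $\mathrm{dist}(x,\partial\mathbb{H})\leq k\leq L$. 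I would then run the same argument as in Proposition~\ref{prop: thick conv}: on the slab $\mathbb{Z}^{d-1}\times\{j\}$ for each such level $j$, a crossing path $\gamma_j$ typically has a positive density of ``good vertices'' $y$ with bounded $\n_1$- and $\n_2$-degree (using Lemma~\ref{lem:degree_exp.moments} and regularity, exactly as in \eqref{eq:good_path}); at each such good vertex, by the comparison of weights $Cw^\emptyset_{\beta,\mathsf{h}}(\tilde\m_2)\geq w^\emptyset_{\beta,\mathsf{h}}(\m_2)$ (here using that $\mathsf{h}_y=h>0$ for $y\in\partial\mathbb{H}$, so the edge $y\fg$ of $\mathsf{h}$ exists) together with Proposition~\ref{prop: tanglings ECT}, there is a uniformly positive conditional probability that $\n_2(y\fg)>0$ and $\ft_y=\mathrm{ECT}$, independently across $y$. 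Wait — this requires the good vertices to lie on $\partial\mathbb{H}$, which a generic crossing $\gamma_j$ need not touch. The fix is to note that the $(\n_1,\ft_1)$-cluster of $\tilde x$, if it avoids $\fg$ in $\mathcal{H}_{\Lambda_L^+[\mathsf{h}]}$, must in particular reach the ``far'' face $\partial\Lambda_L^+\setminus\partial\mathbb{H}$; one then extracts a crossing of the sub-slab $\{-L\leq \text{(some coordinate)}\}$ — more robustly, one intersects the cluster with layers \emph{parallel to $\partial\mathbb{H}$}, reaching depth $L$, and on each such layer $\mathbb{Z}^{d-1}\times\{j\}$ runs the argument but now gaining the connection to $\fg$ through the \emph{$\mathfrak{p}$-side}; symmetrically, since the cluster also must come back down to depth $0$ to use a $\partial\mathbb{H}$-edge, it crosses layer $\mathbb{Z}^{d-1}\times\{0\}=\partial\mathbb{H}$, and on $\partial\mathbb{H}$ every good vertex $y$ has a $y\fg$-edge with weight $h>0$, giving the positive-probability ECT-connection to $\fg$ precisely as in \eqref{eq: conditioning}–\eqref{eq: discon unlikely}. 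Since the cluster of $\tilde x$ not containing $\fg$ forces such a crossing of $\partial\mathbb{H}$ of unbounded length (as $L\to\infty$ the disconnection would need arbitrarily long $\fg$-avoiding crossings), a Borel–Cantelli / union-bound argument on the number of crossing paths of length $\geq k$ (there are at most $C L^{d-1}(2d)^k$ of them) against the super-exponential decay $\varepsilon'^{\,k}$ of the probability that none of their good vertices connects to $\fg$ shows the disconnection probability vanishes. Passing to $L\to\infty$ then yields \eqref{eq:equality mag half space}.

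For the second assertion, equality of the magnetisations at every $x$ upgrades to equality of the full measures by the now-standard monotone-coupling argument (as in Remark~\ref{rem: equality mag implies equality meas}): $\langle\cdot\rangle^{0,h}_{\mathbb{H},\beta}\preceq\langle\cdot\rangle^{+,h}_{\mathbb{H},\beta}$ by Proposition~\ref{prop:stoc_monotonicity}, so Strassen's theorem gives a coupling $(\varphi,\varphi^+)$ with $\varphi_x\leq\varphi^+_x$ a.s.\ for all $x$; since $\mathbb{E}[\varphi^+_x-\varphi_x]=0$ and $\varphi^+_x-\varphi_x\geq 0$, we get $\varphi=\varphi^+$ a.s., hence the measures coincide.

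I expect the main obstacle to be the geometric bookkeeping in the step above: unlike the annulus setup of Proposition~\ref{prop: thick conv}, here one must argue carefully that an $\fg$-avoiding (inside $\mathcal{H}_{\mathbb{H}[\mathsf{h}]}$) cluster of $\tilde x$ is forced to produce a long crossing \emph{within $\partial\mathbb{H}$} (or within a layer where a $\fg$-edge is available), so that the positive-probability-per-good-vertex mechanism can be applied along a path of length $\to\infty$. One clean way to secure this is to first establish, using translation invariance along $\partial\mathbb{H}$ of $\langle\cdot\rangle^{0,h}_{\mathbb{H},\beta}$ and $\langle\cdot\rangle^{+,h}_{\mathbb{H},\beta}$ and the fact that $\langle\varphi_x\rangle^{0,h}_{\mathbb{H},\beta}>0$ (the boundary field $h>0$ induces positive magnetisation, by FKG/Ginibre), that the $\fg$-cluster in $\mathcal{H}^\emptyset_{\mathbb{H}[\mathsf{h}]}$ has positive density along $\partial\mathbb{H}$; then any $\fg$-avoiding cluster of $\tilde x$ must skirt around this $\fg$-cluster, which in the half-space forces it to wander to depth $\Omega(L)$ and hence produce the required long crossing. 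The remaining estimates — the degree bounds, the weight comparison, the ECT lower bound, and the union bound — are then mechanical repetitions of the corresponding displays in the proof of Proposition~\ref{prop: thick conv}.
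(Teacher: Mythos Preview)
Your opening is right: the switching principle gives the identity, and the task reduces to showing that the disconnection probability vanishes. You also correctly identify that the only $\fg$-edges available to $\n_2$ (which lives on $\Lambda_L^+[\mathsf{h}]$) sit on $\partial\mathbb{H}$, so the ``ECT + open ghost edge'' mechanism from Proposition~\ref{prop: thick conv} can only fire at vertices of $\partial\mathbb{H}$.

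The gap is precisely where you flag it. Disconnection in $\mathcal{H}_{\Lambda_L^+[\mathsf{h}]}$ forces the $(\n_1,\ft_1)$-cluster of $\tilde{x}$ to reach $\partial\Lambda_L^+\setminus\partial\mathbb{H}$, but nothing forces it to touch $\partial\mathbb{H}$ along the way, let alone along a path of growing length. Your claim that the cluster ``must come back down to depth $0$'' is simply false: if $x$ sits in the bulk, the cluster can go straight up to the $\mathfrak{p}$-boundary without ever visiting $\partial\mathbb{H}$. Your fallback---that a positive-density $\fg$-cluster along $\partial\mathbb{H}$ forces the $\fg$-avoiding cluster to ``skirt around'' it---does not work either: in $d\geq 3$ two disjoint infinite clusters coexist without obstructing one another, and even in $d=2$ the skirting would happen in the bulk, not on $\partial\mathbb{H}$, so it produces no usable good vertices.

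The paper resolves this with a genuinely different idea. It first passes to the infinite-volume sourceless measure (via an edge-insertion trick at $0\fg$), so that the bad event becomes: the $\mathcal{H}_1$-cluster of $\overline{0}$ is infinite yet meets $\partial\mathbb{H}$ in only finitely many good (bounded-degree) sites. This is then ruled out by a Burton--Keane style argument (Lemma~\ref{lem:infinite_touch-points}): if such clusters occurred with positive probability, translation invariance along $\partial\mathbb{H}$ would manufacture order $L^{d-1}$ distinct infinite clusters intersecting $\Lambda_L^+$, while a coarse-trifurcation count combined with degree bounds shows the expected number of such clusters is $o(L^{d-1})$. Once one knows the infinite cluster hits $\partial\mathbb{H}$ at infinitely many good sites, your ECT mechanism finishes the proof exactly as you outline. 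The second assertion (equality of measures via Strassen) is fine.
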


\begin{proof} 
Fix $h>0$ and $\beta>0$, and drop the latter from the notations. Note that the second part of the proposition follows from \eqref{eq:equality mag half space} by a similar argument to the one described in Remark \ref{rem: equality mag implies equality meas}.  We thus focus our attention on \eqref{eq:equality mag half space} and we do the case $x=0$. The same argument can be adapted to the general case.

Recall the definitions of $\mathfrak{p}$ and $\mathsf{h}$ below \eqref{eq:half-space_measures}. 
Notice that $\mathsf{h}$ is a restriction of $\mathfrak{p}+\mathsf{h}$. Hence, the switching principle of Theorem \ref{thm: switching lemma rtc} gives
\begin{equation}\label{eq:proof halfspace 1}
    \langle \varphi_0\rangle_{\Lambda_L^+}^{\mathfrak p,h}-\langle \varphi_0\rangle_{\Lambda_L^+}^{0,h}=\langle \varphi_0\rangle_{\Lambda_n^+}^{\mathfrak{p},h}\mathbf P_{\Lambda_L^+[\mathfrak{p}+\mathsf{h}],\Lambda_L^+[\mathsf{h}]}^{0\fg,\emptyset}[\tilde 0 \centernot\longleftrightarrow \fg \text{ in } \mathcal{H}^{0\fg,\emptyset}_{\Lambda_L^+[\mathsf{h}]}(\n_1,\n_2,\ft)]
\end{equation}
for every $L\geq1$. It is more convenient to work with a sourceless measure. To turn the measure in \eqref{eq:proof halfspace 1} into a sourceless one, we will add an edge between $\tilde{0}$ and $\fg$ in $\n_1$. This is performed rigorously in the next paragraph.

To each triplet $(\n_1,\n_2,\ft)$ with $\partial\n_1=0\fg$ and $\partial\n_2=\emptyset$, we associate a new triplet $(\n_1',\n_2,\ft')$ with $\partial\n_1=\partial\n_2=\emptyset$ defined as follows. First, let $\n_1'(e)=\n_1(e)+\mathds{1}_{e=0\fg}$ and $\ft'_x=\ft_x$ for every $x\neq0$. It remains to define $\ft'_0$, which is basically set to be the same as $\ft_0$, except for the different labelling of $\mathcal{B}^{0\fg,\emptyset}_0(\n_1,\n_2)$ and $\mathcal{B}^{\emptyset,\emptyset}_0(\n'_1,\n_2)$. In order to formally define $\ft$, recall from Section~\ref{sec:tangled_currents} the convention for labelling the blocks and that $\tilde{0}=0a(1)$ stands for the extra point in $\mathcal{B}^{0\fg,\emptyset}_0(\n_1,\n_2)$ due to the source constraint. Additionally, we denote $\overline{0}:=0\fg(\n'_1(0\fg))$ the ``last point'' in $\mathcal{B}^{\emptyset,\emptyset}_0(\n'_1,\n_2)$ associated to the edges from $0$ to $\fg$ in $\n'_1$. This way, we can write $\mathcal{B}^{0\fg,\emptyset}_0(\n_1,\n_2)\setminus\{\tilde{0}\}=\mathcal{B}^{\emptyset,\emptyset}_0(\n'_1,\n_2)\setminus\{\overline{0}\}$. Now, if $\ft=\{P_1,\ldots, P_k\}$ and $P_k$ is the subset of the partition containing $\tilde 0$, we let $\ft':=\{P_1,\ldots,P_{k-1},P_k'\}$ where $P_k':=P_k\cup\{\overline{0}\}\setminus \{\tilde 0\}$. 
Note that the map $(\n_1,\n_2,\ft)\mapsto (\n_1',\n_2,\ft')$ is one-to-one and that $\rho_{\n'_1,\n_2}^{\emptyset,\emptyset}(\ft')=\rho_{\n_1,\n_2}^{0\fg,\emptyset}(\ft)$. Additionally, observe that 
\begin{equation}
    w^{0\fg}_{\mathfrak{p}+\mathsf{h}}(\n_1)w^\emptyset_{\mathsf{h}}(\n_2)=(\beta h)^{-1}\n_1'(0,\fg)w^\emptyset_{\mathfrak{p}+\mathsf{h}}(\n_1')w^\emptyset_{\mathsf{h}}(\n_2).
\end{equation}
Recall the extended measure on quintuples $(\n_1,\n_2,\ft,\ft_1,\ft_2)$ introduced in the end of Section~\ref{sec:tangled_currents}. 
Using the above, 
\begin{multline}\label{eq:proof halfspace 2}
    \langle \varphi_0\rangle_{\Lambda_L^+}^{\mathfrak{p},h}\mathbf P_{\Lambda_L^+[\mathfrak{p}+\mathsf{h}],\Lambda_L^+[\mathsf{h}]}^{0\fg,\emptyset}[\tilde 0 \centernot\longleftrightarrow \fg \text{ in } \mathcal{H}^{0\fg,\emptyset}_{\Lambda_L^+[\mathsf{h}]}(\n_1,\n_2,\ft)]
    \\\leq 
    (\beta h)^{-1}\mathbf E_{\Lambda_L^+[\mathfrak{p}+\mathsf{h}],\Lambda_L^+[\mathsf{h}]}^{\emptyset,\emptyset}\Big[\n_1(0,\fg)\cdot\mathds{1}\Big\{ \overline{0}\longleftrightarrow \fg \text{ in }\mathcal{H}^\emptyset_{\Lambda_L^+[\mathfrak{p}]}(\n_1,\ft_1) ,\: \overline{0}\centernot\longleftrightarrow \fg \text{ in } \mathcal{H}_{\Lambda^+_L[\mathsf{h}]}^{\setminus \{\overline{0}\fg\}}\Big\}\Big],
\end{multline}
where $\mathcal{H}^{\setminus \{\overline{0}\fg\}}_{\Lambda^+_L[\mathsf{h}]}\coloneqq \mathcal{H}_{\Lambda^+_L[\mathsf{h}]}^{\emptyset,\emptyset}(\n_1,\n_2,\ft)\setminus \{\{\overline{0},\fg\}\}$. See Figure \ref{fig:halfspace1} for an illustration of the event appearing on the right-hand side of \eqref{eq:proof halfspace 2}. 

\begin{figure}[htb]
    \centering
    \includegraphics[width=0.7\linewidth]{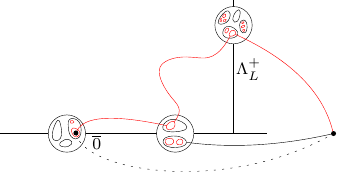}
    \put(2,42){$\mathfrak{g}$}
    \put(-310,38){\Large$\partial \mathbb H$}
    \caption{An illustration of the event appearing on the right-hand side of \eqref{eq:proof halfspace 2}. The red path is a subgraph of $\mathcal{H}^\emptyset_{\Lambda_L^+[\mathfrak{p}]}(\n_1,\ft_1)$ which connects $\overline{0}$ to $\mathfrak{g}$. Red (resp. black) bubbles correspond to elements of the tanglings of $\ft_1$ (resp. $\ft$). Note that by the increasing coupling described in Remark \ref{rem:extended_measure}, every partition class of $\ft_1$ is included in a partition class of $\ft$. The dotted line represents an open edge that we do not consider in the connection events. Note that in this picture, $\overline{0}$ does not connect to $\mathfrak{g}$ in $\Lambda_L^+[\mathsf{h}]$ when the dotted edge is removed.}
    \label{fig:halfspace1}
\end{figure}

Due to Lemma~\ref{lem:degree_exp.moments} and Proposition~\ref{prop:regularity}, $\n_1(0,\fg)$ is uniformly integrable under the sequence $\mathbf P_{\Lambda_L^+[\mathfrak{p}+\mathsf{h}],\Lambda_L^+[\mathsf{h}]}^{\emptyset,\emptyset}$, which in turn converges weakly to $\mathbf P_{\mathbb{H}(+,h),\mathbb H(0,h)}^{\emptyset,\emptyset}$ (by Corollary~\ref{cor: existence infinite halspace tangled measures}). Thus, plugging \eqref{eq:proof halfspace 2} in \eqref{eq:proof halfspace 1} and taking the limit $L\rightarrow \infty$, we obtain
\begin{equation}
    \langle \varphi_0\rangle_{\mathbb H,\beta}^{+,h}-\langle \varphi_0\rangle_{\mathbb H,\beta}^{0,h}\leq (\beta h)^{-1}\mathbf E_{\mathbb{H}(+,h),\mathbb H(0,h)}^{\emptyset,\emptyset}\Big[\n_1(0,\fg)\cdot\mathds{1}\Big\{\overline 0 \longleftrightarrow \infty \text{ in } \mathcal{H}_1, \: \overline 0\centernot\longleftrightarrow \fg \text{ in } \mathcal{H}^{\setminus \{\overline{0}\fg\}} \Big\}\Big],
\end{equation}
where $\mathcal{H}^{\setminus \{\overline{0}\fg\}}\coloneqq \mathcal{H}^{\emptyset,\emptyset}(\n_1,\n_2,\ft)\setminus \{\{\overline{0},\fg\}\}$, and $\mathcal{H}_1:=\mathcal{H}^\emptyset_{\mathbb H}(\n_1,\ft_1)$.
As a conclusion, it is sufficient to show that 
\begin{equation}\label{eq:goal_boundary-magnetisation}
    \mathbf{P}[\overline 0 \longleftrightarrow \infty \text{ in } \mathcal{H}_1, \: \overline 0\centernot\longleftrightarrow \fg \text{ in } \mathcal{H}^{\setminus \{\overline{0}\fg\}} ]=0,
\end{equation}
where we write $\mathbf{P}:=\mathbf{P}_{\mathbb{H}(+,h),\mathbb H(0,h)}^{\emptyset,\emptyset}$ to shorten the notation.

We now proceed to prove \eqref{eq:goal_boundary-magnetisation}.
This follows from the following lemma, which says that when the cluster of $\overline{0}$ in $\mathcal{H}_1$ is infinite, it necessarily touches $\partial \mathbb H$ at infinitely many ``good sites'', i.e.~sites with bounded $\n_1$ degree, see Figure \ref{fig:halfspace2} for an illustration.

\begin{lemma}\label{lem:infinite_touch-points}
For every $M\geq1$, let $\mathcal{G}_M\coloneqq \{x\in\partial \mathbb{H}:~\Delta{\n_1}(x)\leq M\}$. Then,
    \begin{equation}\label{eq:infinite_touch-points}
    \mathbf{P}
    [ |\mathcal{C}_1(0)|=\infty,\, |\pi(\mathcal{C}_1(0))\cap \mathcal{G}_M|<\infty~~ \forall\, M\geq1]=0,
    \end{equation}
where $\mathcal{C}_1(0)$ denotes cluster of $\overline 0$ in $\mathcal{H}_1$ and $\pi:\mathcal{H}_1\to \mathbb{H}$ is the natural projection.
\end{lemma}

Before proving the Lemma~\ref{lem:infinite_touch-points}, let us conclude the proof of Proposition~\ref{prop: unique half-space measure}. The idea 
is that at the ``good sites'' in $\mathcal{G}_M$, there is a uniformly positive conditional probability of connecting to the ghost $\fg$ in the double tangled current $\mathcal{H}^{\setminus \{\overline{0}\fg\}}$. The argument is similar to the one used in the proof of Proposition~\ref{prop: thick conv}. 

For each $M\geq1$, let $A_{M}:=\{|\pi(\mathcal{C}_1(0))\cap \mathcal{G}_{M}|=\infty\}$. We will prove that 
\begin{equation}\label{eq: intermediate}
    \mathbf{P}[\overline{0}\longleftrightarrow \fg \text{ in } \mathcal{H}^{\setminus \{\overline{0}\fg\}} \,|\, A_M]=1 ~~~~\forall M\geq1,
\end{equation}
which readily implies \eqref{eq:goal_boundary-magnetisation} by Lemma~\ref{lem:infinite_touch-points}. We condition on the $(\n_1,\ft_1)$-measurable set $\mathcal{U}_M:=\pi(\mathcal{C}_1(0))\cap \mathcal{G}_{M}$, which is infinite on the event $A_M$.
Since $\n_2$ is independent of $\n_1$, we can apply Lemma~\ref{lem:degree_exp.moments},  Proposition~\ref{prop:regularity}, and the exponential Markov inequality to deduce that for some $D>0$ we have
\begin{equation}
     \mathbf{P}[|\mathcal{V}_M|=\infty \,|\, A_M]=1,
\end{equation}
where $\mathcal{V}_M:=\{x\in \mathcal{U}_M: \Delta_{\mathbb H}\n_2(x)\leq D,~\Delta\n_2(x)\leq D+2\}$ and $\Delta_\mathbb{H}\n_2(x)=\sum_{y\in \mathbb H}\n_2(x,y)$.
Note that $\Delta\n_1(x)+\Delta_{\mathbb H}\n_2(x)\leq M'$ and $\Delta\n_1(x)+\Delta\n_2(x)\leq M'+2$ for every $x\in \mathcal{V}_M$, where $M':=M+D$.
% Since $D=\mathbf{E}[\Delta\n_2(0)]=\mathbf{E}[\Delta\n_2(x)]<\infty$ for every $x\in\partial \mathbb H$ by translation invariance, Lemma~\ref{lem:degree_exp.moments}, and Proposition~\ref{prop:regularity}, we deduce from the independence between $\n_1$ and $\n_2$ that 
% \begin{equation}
%     \mathbf{P}[|\mathcal{V}_M|=\infty \,|\, A_M]=1,
% \end{equation}
% where $\mathcal{V}_M:=\{x\in \mathcal{U}_M: \Delta_{\mathbb H}\n_2(x)\leq D+1,~\Delta\n_2(x)\leq D+3\}$ and $\Delta_\mathbb{H}\n_2(x)=\sum_{y\in \mathbb H}\n_2(x,y)$. Note that $\Delta\n_1(x)+\Delta_{\mathbb H}\n_2(x)\leq M'$ and $\Delta\n_1(x)+\Delta\n_2(x)\leq M'+2$ for every $x\in \mathcal{V}_M$, where $M':=M+D+1$.
Following the same lines as in the proof of Proposition~\ref{prop: thick conv}, one can prove that the random variables $\omega_x:=\mathbbm{1}_{\{\n_2(x\fg)>0,\, \ft_x=\mathrm{ECT}(\mathcal{B}_x)\}}$ (for $x\in \mathcal{V}_M$) stochastically dominate a sequence of i.i.d.~Bernoulli random variables of some parameter $\delta=\delta(M)>0$.
However, the event $\{\overline{0}\longleftrightarrow \fg \text{ in } \mathcal{H}^{\setminus \{\overline{0}\fg\}}\}$ happens as long as $\omega_x=1$ for some $x\in \mathcal{V}_M$. Thus,
\begin{equation}
    \mathbf{P}[\overline{0}\longleftrightarrow \fg \text{ in } \mathcal{H}^{\setminus \{\overline{0}\fg\}} \,|\, |\mathcal{V}_M|=\infty]=1,
\end{equation}
which implies \eqref{eq: intermediate}. This concludes the proof.
\end{proof}

\begin{figure}[htb]
    \centering
    \includegraphics[width=0.7\linewidth]{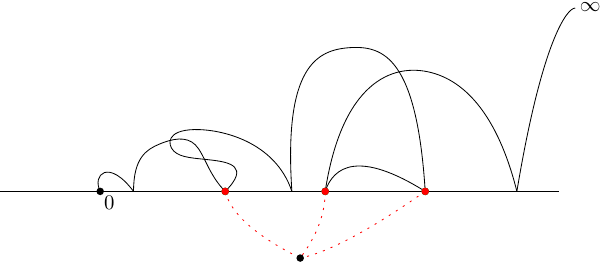}
    \put(-150,-7){$\mathfrak{g}$}
    \caption{An illustration of the strategy used in the proof of \eqref{eq:goal_boundary-magnetisation}. The infinite cluster lying in $\mathcal{H}_1$ has to touch the boundary infinitely many times. Even better, as shown in Lemma \ref{lem:infinite_touch-points}, it has to touch infinitely many ``good'' blocks where the total degree is uniformly bounded. These blocks are pictured in red. In each of them, there is a positive probability of connecting to the ghost. It is hence a zero probability event to observe an infinite cluster which manages to avoid $\mathfrak{g}$. }
    \label{fig:halfspace2}
\end{figure}

\begin{proof}[Proof of Lemma~\textup{\ref{lem:infinite_touch-points}}]
For every $L\geq 1$, let $\mathcal{N}_L$ be the number of distinct infinite clusters of $\mathcal{H}^1$ intersecting $\Lambda^+_L$. We will prove that, 
\begin{equation}\label{eq:goal_infinite_touch-points}
    \mathbf{E}[\mathcal{N}_L]=o(L^{d-1}).
\end{equation}
The lemma is a direct consequence of \eqref{eq:goal_infinite_touch-points}. Indeed, assume by contradiction that \eqref{eq:infinite_touch-points} does not hold. Since $\mathcal{G}_M\uparrow\partial \mathbb{H}$ almost surely as $M\to \infty$, there exist $1\leq M,K<\infty$ and $\delta>0$ such that 
\begin{equation}
\mathbf{P}[|\mathcal{C}_1(0)|=\infty,\, 0\in \mathcal{G}_M,\, |\mathcal{C}_1(0)\cap\mathcal{G}_M|\leq K]\geq \delta.
\end{equation}
Consider $\mathcal{U}\coloneqq \{x\in \partial\mathbb{H}:|\mathcal{C}_1(x)|=\infty,\, x\in \mathcal{G}_M,\, |\mathcal{C}_1(x)\cap\mathcal{G}_M|\leq K\}$, where $\mathcal{C}_1(x)$ is the cluster of $\overline{x}\coloneqq x\fg(\n_1(x\fg))$ in $\mathcal{H}_1$. Notice that by definition, each infinite cluster $\mathcal{C}$ coincides with at most $K$-many $\mathcal{C}(x)$, $x\in \mathcal{U}$. Therefore, by translation invariance one obtains
\begin{equation}
\mathbf{E}[\mathcal{N}_L]\geq \frac{1}{K}\mathbf{E}[|\mathcal{U}\cap \Lambda^+_L|]=\frac{1}{K}\sum_{x\in \partial\mathbb{H}\cap\Lambda^+_L} \mathbf{P}[x\in \mathcal{U}]
\geq \frac{\delta}{K}|\partial\mathbb{H}\cap\Lambda^+_L|\geq c(\delta,K)L^{d-1},
\end{equation}
which is in contradiction with \eqref{eq:goal_infinite_touch-points}.

It remains to prove \eqref{eq:goal_infinite_touch-points}, which we do now. We first show that infinite clusters are unlikely to be close to each other deep in the bulk. More precisely, let $\mathcal{N}_m(x)$ be the number of infinite clusters intersecting the box  $\Lambda^+_m(x):=x+\Lambda_m^+$ and consider
$$p_m(k)\coloneqq \mathbf{P} [\mathcal{N}_m(k\mathbf{e}_d)\geq 3],$$
where we recall that $\mathbf{e}_d=(0,\dots,0,1)$. We claim that for every $m\geq0$,
\begin{equation}\label{eq:pk_to_0}
    p_m(k)\longrightarrow 0 \text{ as } k\to\infty.
\end{equation}
The proof of \eqref{eq:pk_to_0} follows a Burton--Keane type argument (see \cite{BurtonKeane1989density}). First, fix $m\geq0$, $\delta>0$ and let $\mathcal{K}\subset \mathbb{N}$ be the set of $k\geq0$ such that $p_m(k)\geq \delta$. We will prove that $\mathcal{K}$ is finite, thus implying \eqref{eq:pk_to_0}. 
We say that an $m$-box $\Lambda^+_m(z)$ is a \emph{coarse-trifurcation} if every vertex in $\Lambda^+_m(z)$ belongs to the same infinite cluster $\mathcal{C}_z$, which in turn breaks into at least $3$ distinct infinite clusters if all vertices of $\Lambda^+_m(z)$ are removed. 
Notice that if $\mathcal{N}_m(z)\geq 3$, $\n_1(x,y)\geq1$ for all neighbours $x,y\in \Lambda^+_m(z)\cap \mathbb{H}$ and $\ft_x=\mathrm{ECT}$ for all vertices $x\in \Lambda^+_m(z)\cap \mathbb{H}$, then $\Lambda^+_m(z)$ is a coarse-trifurcation.
By a simple insertion-tolerance property, as in \cite[Proposition~5.12]{GunaratnamPanagiotisPanisSeveroPhi42022},
we deduce that there exists $\delta'=\delta'(m,\delta)>0$ such that for every $k\in \mathcal K$,
\begin{equation}\label{eq:qm>delta'}
    q_m(k)\coloneq \mathbf{P}[\Lambda^+_m(k\mathbf{e}_d) \text{ is a coarse-trifurcation}]\geq \delta'.
\end{equation} 
For full disclosure, here we used the following statement: for every $m\geq0$ and $\delta>0$ there exists $\delta'=\delta'(m,\delta)$>0 such that if an event $\mathcal{A}$ satisfies $\mathbf{P}[\mathcal{A}]\geq \delta$, then for any $m$-box $\Lambda^+_m(z)$ lying in $\mathbb H$, one has $\mathbf{P}[\Phi_{\Lambda^+_m(z)}^\mathcal{T}(A)]\geq \delta'$, where $\Phi^\mathcal{T}_{\Lambda_m(z)}$ is as in \cite[Definition~5.11]{GunaratnamPanagiotisPanisSeveroPhi42022}. Although this is not exactly the same statement as \cite[Proposition~5.12]{GunaratnamPanagiotisPanisSeveroPhi42022}, the proof is precisely the same.

For $L\geq 8m$, let $\mathcal{T}_L:=|\{z\in 8m\mathbb{H} \cap \Lambda^+_{L-8m}: \Lambda^+_m(z) \text{ is a coarse-trifurcation}\}|$ be the number of well spaced coarse-trifurcations of size $m$ deep inside $\Lambda^+_L$. By a classical deterministic argument--- see e.g.~the proof of \cite[Proposition 5.13]{GunaratnamPanagiotisPanisSeveroPhi42022}--- 
we have $\mathcal{T}_L\leq \mathcal{N}_L$. On the one hand, by translation invariance we have 
\begin{align*} 
\mathbf{E}[\mathcal{T}_L] &= \sum_{x\in (8m\mathbb{H})\cap\Lambda^+_{L-8m}} \mathbf{P}[\Lambda^+_m(x) \text{ is a coarse-trifurcation}]
\geq c\delta'|\mathcal{K}\cap [0,L-8m]| L^{d-1}.
\end{align*}
On the other hand, $\mathcal{N}_L$ is smaller than the sum of all the degrees over $\partial \Lambda^+_L$, hence $\mathbf{E}[\mathcal{N}_L]\leq CL^{d-1}$ by Lemma~\ref{lem:degree_exp.moments} and Proposition~\ref{prop:regularity}. We conclude that $|\mathcal{K}\cap [0,L-8m]|\leq C/c\delta'$ for every $L\geq 8m$, thus $\mathcal{K}$ is finite and \eqref{eq:pk_to_0} follows.

Finally, we show how to deduce \eqref{eq:goal_infinite_touch-points} from \eqref{eq:pk_to_0}. First, by applying Lemma~\ref{lem:degree_exp.moments} and Proposition~\ref{prop:regularity} again, we have $\mathbf{E}[\mathcal{N}_m(x)^2]\leq C(m^{d-1})^2$. Therefore, for $x\in \mathbb{Z}^{d-1}\times \{k\}$, $k\geq0$, by the Cauchy--Schwarz inequality, we have
\begin{align} 
\mathbf{E}[\mathcal{N}_m(x)]&\leq 2 + \mathbf{E}[\mathcal{N}_m(x) \mathbbm{1}_{\mathcal{N}_m(x)\geq3}] 
\notag\\&
\leq 2+\sqrt{p_m(k)\mathbf{E}[\mathcal{N}_m(x)^2]}
\leq 2+C_1\sqrt{p_m(k)} m^{d-1}.\label{eq:bound exp nm(x)}
\end{align}
Let $I\coloneqq \lceil \frac{L}{m} \rceil$ and, for every $i \in \{1,\dots, I \}$, cover $\partial \Lambda^+_L \cap (\mathbb{Z}^{d-1}\times [(i-1)m, im])$ with boxes $\Lambda^+_m(x^i_j)$, $1\leq j\leq J_i$, where $x^i_j\in \mathbb{Z}^{d-1}\times \{im\}$. Notice that we can take $J_i\leq C_2 (L/m)^{d-2}$ for every $i\leq I-1$, and $J_I\leq C_2 (L/m)^{d-1}$. Now, for every $L$ large enough,
\begin{align*} 
\mathbf{E}[\mathcal{N}_L] &\leq \sum_{\substack{i=1}}^I\sum_{j=1}^{J_i} \mathbf{E}[\mathcal{N}_m(x^i_j)]
\\&
\leq \sum_{i=1}^{I} |J_i| \left( 2+ C_1\sqrt{p_m(im)}\, m^{d-1}\right) \\
&\leq 4C_2 \left(\frac{L}{m}\right)^{d-1}+ C_1C_2\left(\frac{L}{m}\right)^{d-2}m^{d-1} \sum_{i=1}^{I-1}\sqrt{p_m(im)}+C_1C_2 L^{d-1}\sqrt{p_m(Im)}\\&= C_2L^{d-1} \left( \frac{4}{m^{d-1}} + \frac{C_1}{I-1} \sum_{i=1}^{I-1} \sqrt{p_m(im)}+C_1\sqrt{p_m(Im)}  \right)
\\&
\leq C_3 \left(\frac{L}{m}\right)^{d-1},
\end{align*}
where we used \eqref{eq:bound exp nm(x)} in the second line, and that due to \eqref{eq:pk_to_0}, one has $\sqrt{p_m(Im)}\to 0$ and $\frac{1}{I-1} \sum_{i=1}^{I-1} \sqrt{p_m(im)}\to 0$ as $L\to\infty$.
Since $m$ is arbitrary, \eqref{eq:goal_infinite_touch-points} follows.
\end{proof}

\section{Random cluster for the $\varphi^4$ model}\label{sec:random_cluster}

In this section, we introduce the random cluster representation of the $\varphi^4$ model. This representation arises by viewing $\varphi^4$ as an Ising model with random coupling constants coming from the absolute value field, and considering the random cluster representation of the resulting (random) Ising model. After we define the model, we derive some basic properties and correlation inequalities. We stress from the outset that the definitions and properties are true in greater generality: we could replace the product of $\varphi^4$ single-site measures by any product of single-site even measures $\mu$ on $\mathbb{R}$ having super-Gaussian tails, i.e.\ $\int e^{cx^2} \mathrm{d}\mu(x) <\infty$ for every $c>0$, so that the corresponding spin model is well defined for all values of $\beta\geq 0$.

\subsection{Definition of the $\varphi^4$ random cluster model}

Let $\Lambda\subset \mathbb Z^d$ be finite. Define $\partial^{\rm ext} \Lambda:=\{x\not \in \Lambda: \exists y\in \Lambda, \: y\sim x\}$ and set $\overline \Lambda := \Lambda \sqcup \partial^{\rm ext} \Lambda$. Also, recall that $E(\Lambda )= \{ xy \in  E(\mathbb Z^d) : x,y \in \Lambda\}$, and define $\overline E(\Lambda) = \{ xy \in E(\mathbb Z^d) : x\text{ or }y \in \Lambda \}$. 
Given an external magnetic field $\mathsf h \in (\mathbb R^+)^\Lambda$, recall the definition of the (weighted) graph $\Lambda[\mathsf h]=(\Lambda^\fg,E(\Lambda[\mathsf h]))$ from Section~\ref{sec:tangled_currents}, and also define $\overline E(\Lambda[\mathsf h]):=E(\Lambda[\mathsf h])\cup \overline{E}(\Lambda)$.
A boundary condition on $\Lambda[\mathsf h]$ is a pair $(\xi, \mathsf b)$, where $\xi=\{\xi_1,\ldots,\xi_{|\xi|}\}$ is a partition of $\partial^{\rm ext} \Lambda$, and $\mathsf{b}\in (\mathbb R^+)^{\partial^{\rm ext} \Lambda}$. Given a percolation configuration $\omega \in \{0,1\}^{\overline E(\Lambda[\mathsf h])}$, we denote by $k^\xi(\omega)$ the number of connected components in the graph obtained from  $(\overline{\Lambda}^{\mathfrak g}, \{ e \in\overline E(\Lambda[\mathsf h]):\omega_e=1\})$ by identifying the vertices in the elements of the partition $\xi$. In what follows, we abuse the notation and write $\mathrm{d}\rho_{g,a}(\mathsf a_x)$ to denote the pushforward $\mathrm d \rho_{g,a}(\varphi_x)$ under the map $\varphi_x\mapsto |\varphi_x|=\mathsf a_x$.

\begin{definition} \label{def: random cluster phi4}
Let $\Lambda \subset \mathbb Z^d$ be finite, $(\xi,\mathsf{b})$ be a boundary condition, and $\mathsf h\in (\mathbb R^+)^{\Lambda}$ be an external field. The $\varphi^4$ random cluster measure $\Psi_{\Lambda,\beta,\mathsf{h}}^{(\xi,\mathsf{b})}$ on $\Lambda$ with boundary condition $(\xi,\mathsf{b})$, at inverse temperature $\beta\geq 0$, and with external magnetic field $\mathsf{h}$ is the probability measure on pairs $(\omega,\mathsf{a})\in \{0,1\}^{\overline E(\Lambda[\mathsf{h}])}\times (\mathbb R^+)^{\overline{\Lambda}}$ satisfying $\mathsf{a}_x=\mathsf{b}_x$ for all $x\in \partial^{\rm ext} \Lambda$ defined by
\begin{equation}\label{eq: random cluster explicit density}
\begin{aligned}
\mathrm{d}\Psi_{\Lambda,\beta,\mathsf{h}}^{(\xi,\mathsf{b})}[(\omega,\mathsf{a})]=&\frac{\mathbbm 1_{\mathsf{a}|_{\partial^{{\rm ext}}\Lambda} = \mathsf b}}{Z^{(\xi,\mathsf{b})}_{\Lambda,\beta,\mathsf{h}}}\prod_{xy \in \overline{E}(\Lambda)} \sqrt{1-p(\beta,\mathsf a)_{xy}}\Big(\frac{p(\beta,\mathsf a)_{xy}}{1-p(\beta,\mathsf a)_{xy}}\Big)^{\omega_{xy}} \, \\
& \times \prod_{x\in \Lambda} \sqrt{1-p(\beta, \mathsf h, \mathsf a)_{x\fg}}\Big(\frac{p(\beta, \mathsf{h}, \mathsf a)_{x\fg}}{1-p(\beta,\mathsf{h}, \mathsf a)_{x\fg}}\Big)^{\omega_{x\fg}} 2^{k^\xi(\omega)} 
\prod_{x\in \Lambda}\mathrm{d} \rho_{g,a}(\mathsf{a}_x),
\end{aligned}
\end{equation}
where $p(\beta, \mathsf a)_{xy}:=1-e^{-2\beta \mathsf a_x \mathsf a_y}$, $p(\beta,\mathsf{h}, \mathsf a)_{x\fg}:=1-e^{-2\beta \mathsf h_x \mathsf a_x}$, and $Z^{(\xi,\mathsf{b})}_{\Lambda,\beta,\mathsf{h}}$ is a normalisation constant. We write $\Phi^{(\xi,\mathsf b)}_{\Lambda,\beta,\mathsf{h}}$ for the marginal on $\omega$ and $\mu_{\Lambda,\beta,\mathsf h}^{(\xi,\mathsf b)}$ for the marginal on $\mathsf a$.

If $\xi=\{\{x\}: x\in \partial^{\rm ext} \Lambda\}$, i.e.\ each partition class consists of a single vertex,
and $\mathsf{b}\equiv 0$, we call it the {free boundary condition}, and simply write $\Psi_{\Lambda,\beta,\mathsf{h}}^{0}$, $\Phi^{0}_{\Lambda,\beta,\mathsf{h}}$,  and $\mu_{\Lambda,\beta,\mathsf{h}}^{0}$. If $\xi=\{ \partial^{\rm ext} \Lambda\}$, i.e.\ all vertices are in the same partition,
and $\mathsf b \equiv \mathfrak M_{\Lambda}$, we call it the {wired plus boundary condition}, and we simply write $\Psi_{\Lambda,\beta,\mathsf{h}}^{(w,\mathfrak{p})}$, $\Phi^{(w,\mathfrak{p})}_{\Lambda,\beta,\mathsf{h}}$, and $\mu_{\Lambda,\beta,\mathsf{h}}^{(w,\mathfrak{p})}$. When $\mathsf{h}=0$, we may drop it from the notation.
\end{definition}

\begin{remark}
Unlike in the definition of the $\varphi^4$ measure from Section~\textup{\ref{sec:preliminaries}}, here we consider both boundary conditions and a magnetic field. We chose to define the random cluster measure in this way so that we can state its domain Markov property more easily. This makes it possible to define the same random cluster measure in different ways, by considering either a wired (i.e.~$\xi=\{\partial^{\rm ext}\}$) boundary condition (with $\mathsf{h}=0$) or a magnetic field supported on the boundary.  For instance, the measures $\Psi_{\Lambda,\beta,\mathfrak{p}}^{0}$ can be obtained from $\Psi_{\Lambda,\beta}^{(w,\mathfrak{p})}$ by identifying all vertices of $\partial^{\rm ext}\Lambda$ as a single vertex that we call $\fg$. 
\end{remark}

\begin{remark}
In \eqref{eq: random cluster explicit density}, we could replace $\rho_{g,a}$ with any even single-site measure $\mu$ with super-Gaussian tails. In particular, choosing $\mu$ to be an even measure which is a mixture of $\delta_0$ and $\delta_{\pm 1}$ we recover the dilute random cluster measure considered in \textup{\cite{Graham2006random, gunaratnam2024blume}} in the context of the Blume--Capel model.
\end{remark}

It is straightforward to check from \eqref{eq: random cluster explicit density} that a \emph{domain Markov property} holds for $\varphi^4$ random cluster measures, as stated in the following proposition.

\begin{proposition}[Domain Markov property]\label{prop: domain markov}
Let $\Lambda'\subset \Lambda$ be finite subsets of $\mathbb Z^d$, and $\beta\geq 0$. Consider a boundary condition $(\xi,\mathsf{b})$ on $\Lambda$ and let $\mathsf h\in (\mathbb{R}^+)^{\Lambda}$. For every $\theta\in \{0,1\}^{\overline E(\Lambda[\mathsf{h}]) \setminus \overline E(\Lambda'[\mathsf{h}])}$ and every $\mathsf{s}\in (\mathbb{R}^+)^{\Lambda\setminus \Lambda'}$, we have
\begin{equation}
\Psi^{(\xi,\mathsf{b})}_{\Lambda,\beta,\mathsf{h}}[(\omega,\mathsf{a}) \mid (\omega|_{\overline E(\Lambda[\mathsf{h}])\setminus \overline E(\Lambda'[\mathsf{h}])},\mathsf{a}|_{\Lambda\setminus \Lambda'})=(\theta,\mathsf{s})]=\Psi^{(\xi^{\theta},\mathsf{s})}_{\Lambda',\beta,\mathsf{h}}[(\omega,\mathsf{a})],
\end{equation}
where $\xi^{\theta}$ is the partition of $\partial^{\rm ext} \Lambda'$ where $x,y$ are in the same partition if and only if they are in the same connected component in the graph obtained from  $((\overline{\Lambda}\setminus \Lambda')^{\mathfrak g}, \{ e \in \overline E(\Lambda[\mathsf h])\setminus \overline E(\Lambda'[\mathsf h]):\theta_e=1\})$ by identifying the vertices in the elements of the partition $\xi$.
\end{proposition}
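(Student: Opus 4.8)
The plan is to read off the identity directly from the explicit density \eqref{eq: random cluster explicit density}, following the classical argument for the domain Markov property of the random cluster model. Fix $(\theta,\mathsf{s})$ and condition on the event $\{(\omega|_{\overline E(\Lambda[\mathsf{h}])\setminus \overline E(\Lambda'[\mathsf{h}])},\mathsf{a}|_{\Lambda\setminus\Lambda'})=(\theta,\mathsf{s})\}$. The conditional law of the remaining coordinates $(\omega|_{\overline E(\Lambda'[\mathsf{h}])},\mathsf{a}|_{\Lambda'})$ is, up to a normalising constant, the joint density \eqref{eq: random cluster explicit density} with $(\theta,\mathsf{s})$ substituted; so it suffices to show that this substituted density factorises as a constant depending only on $(\theta,\mathsf{s})$ times the density of $\Psi^{(\xi^\theta,\mathsf{s})}_{\Lambda',\beta,\mathsf{h}}$ in the variables $(\omega|_{\overline E(\Lambda'[\mathsf{h}])},\mathsf{a}|_{\Lambda'})$.

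The first step is to split the edge products. Every edge of $\overline E(\Lambda[\mathsf{h}])$ lies either in $\overline E(\Lambda'[\mathsf{h}])$ or in its complement. For an edge $e$ in the complement, both endpoints lie outside $\Lambda'$ (or $e=\{x,\fg\}$ with $x\in\Lambda\setminus\Lambda'$), so $\omega_e=\theta_e$ is fixed and the weights $p(\beta,\mathsf{a})_{xy}$ and $p(\beta,\mathsf{h},\mathsf{a})_{x\fg}$ entering that factor only involve $\mathsf{a}$-values prescribed by $\mathsf{s}$ (or by $\mathsf{b}$); these factors are therefore constants. The surviving factors are exactly those indexed by $\overline E(\Lambda')$ and by the ghost-edges $\{x,\fg\}$ with $x\in\Lambda'$, and for a surviving edge with an endpoint $y\in\partial^{\rm ext}\Lambda'$ the value $\mathsf{a}_y$ is the one read off from $(\theta,\mathsf{s})$ (and $\mathsf{b}$), which matches the hard constraint $\mathsf{a}|_{\partial^{\rm ext}\Lambda'}=\mathsf{s}$ built into $\Psi^{(\xi^\theta,\mathsf{s})}_{\Lambda',\beta,\mathsf{h}}$. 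Likewise $\prod_{x\in\Lambda}\mathrm{d}\rho_{g,a}(\mathsf{a}_x)=\prod_{x\in\Lambda'}\mathrm{d}\rho_{g,a}(\mathsf{a}_x)\cdot\prod_{x\in\Lambda\setminus\Lambda'}\mathrm{d}\rho_{g,a}(\mathsf{a}_x)$, and after conditioning the second product contributes only a constant.

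The one genuinely combinatorial ingredient is the cluster-count term. Writing $\omega^{\rm in}:=\omega|_{\overline E(\Lambda'[\mathsf{h}])}$ and letting $G_\theta$ be the ``outside'' graph on $(\overline\Lambda\setminus\Lambda')^{\mathfrak g}$ carrying the $\xi$-identifications and the open edges recorded by $\theta$, one checks the additivity
\[
k^\xi(\omega)=k^{\xi^\theta}(\omega^{\rm in})+c(\theta),
\]
where $c(\theta)$ is the number of connected components of $G_\theta$ disjoint from $\overline{\Lambda'}^{\mathfrak g}$: building the $\xi$-quotient graph of $\omega$ by first placing the outside edges and then the inside ones, these $c(\theta)$ components are never touched, while the merging of the remaining structure by the inside edges is, by the very definition of $\xi^\theta$ (which records precisely which vertices of $\partial^{\rm ext}\Lambda'$ are already wired together through the outside), the same as computing $k^{\xi^\theta}(\omega^{\rm in})$. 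Hence $2^{k^\xi(\omega)}=2^{c(\theta)}2^{k^{\xi^\theta}(\omega^{\rm in})}$, and combining the three factorisations yields that the conditional density is a constant multiple of the density of $\Psi^{(\xi^\theta,\mathsf{s})}_{\Lambda',\beta,\mathsf{h}}$, which is the claim. The only point requiring care is the bookkeeping of the ghost vertex $\fg$ in this last identity, since $\fg$ belongs to both the inside graph on $\overline{\Lambda'}^{\mathfrak g}$ and the outside graph on $(\overline\Lambda\setminus\Lambda')^{\mathfrak g}$; this is handled by treating $\fg$ throughout as an ordinary vertex, so that any component of $G_\theta$ containing $\fg$ automatically meets $\overline{\Lambda'}^{\mathfrak g}$ and is excluded from $c(\theta)$, and the connections to $\fg$ through the outside are already encoded in how $\xi^\theta$ is defined.
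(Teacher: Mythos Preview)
Your approach---verifying the identity directly from the explicit density \eqref{eq: random cluster explicit density}, factorising the edge products and single-site measures, and reducing everything to an additivity statement for the cluster count---is exactly what the paper intends: the paper gives no proof beyond declaring the proposition ``straightforward to check'' from the density.

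There is, however, a real issue with the ghost bookkeeping that your closing sentence does not actually resolve. As stated, $\xi^\theta$ is a partition of $\partial^{\rm ext}\Lambda'$ \emph{only}, and it records nothing about which of those boundary vertices are connected to $\fg$ through the outside. Concretely, take $\Lambda'=\{0\}\subset\Lambda=\{0,1,2\}\subset\mathbb Z$, free $\xi$, $\mathsf h_0=\mathsf h_1=1$, $\mathsf h_2=0$, and $\theta$ with $\theta_{1\fg}=1$ and the other outside edges closed; then $\xi^\theta=\{\{-1\},\{1\}\}$. Compare the two inside configurations agreeing except on the edge $0\fg$ (with $\omega^{\rm in}_{01}=1$, $\omega^{\rm in}_{-1,0}=0$): opening $0\fg$ leaves $k^\xi(\omega)$ unchanged (it adds an edge inside the already-connected full cluster $\{0,1,\fg\}$), but $k^{\xi^\theta}(\omega^{\rm in})$ drops by one (it merges the previously isolated $\fg$ into the cluster of $0$). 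Hence $k^\xi(\omega)-k^{\xi^\theta}(\omega^{\rm in})$ is not a function of $(\theta,\mathsf s)$ alone, and the conditional law is not $\Psi^{(\xi^\theta,\mathsf s)}_{\Lambda',\beta,\mathsf h}$ as literally written. The natural fix---and surely what is intended---is to let $\xi^\theta$ be the partition of $\partial^{\rm ext}\Lambda'\cup\{\fg\}$ induced by connectivity in $G_\theta$ (equivalently, wire to $\fg$ in the inner measure every boundary vertex that $\theta$ connects to $\fg$); with that amendment your additivity identity $k^\xi(\omega)=k^{\xi^\theta}(\omega^{\rm in})+c(\theta)$ becomes correct and the rest of your argument goes through verbatim. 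So the defect is not in your strategy but in the claim that ``the connections to $\fg$ through the outside are already encoded in how $\xi^\theta$ is defined'': they are not, and this is in fact a small imprecision in the proposition's own statement.
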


\begin{remark}
In words, the domain Markov property holds whenever we condition on the value of $\omega$ on a set of edges $E$, and the value of $\mathsf{a}$ on the endpoints of the edges in $E$. The reason why we define the configuration $\omega$ on $\overline E(\Lambda)$ in the first place is so that the domain Markov property holds.
\end{remark}

\subsection{Coupling to the spin model}

We now couple the $\varphi^4$ model with its random cluster measure introduced above in the same spirit as the Edwards--Sokal coupling between the Ising model and the random cluster model with cluster weight $q=2$ (see \cite[Chapter 1]{Grimmett2006RCM}). We will prove that the marginal $\mathsf{a}$ under $\Psi^0_{\Lambda,\beta,\mathsf{h}}$ is distributed according to the absolute value field under $\nu_{\Lambda,\beta,\mathsf{h}}$, and that the percolation marginal $\omega$--- conditionally on $\mathsf{a}$--- has the law of a FK-Ising model in a random environment coming from $\mathsf{a}$. 

We begin by introducing the FK-Ising random cluster model with inhomogeneous coupling constants. Let $\Lambda \subset \mathbb Z^d$, $\mathsf a \in (\mathbb R^+)^{\overline \Lambda}$, $
\mathsf h \in (\mathbb R^+)^\Lambda$, and $\xi$ be a partition of $\partial^{\rm ext}\Lambda$. Recall that, given $\beta \geq 0$, $p(\beta, \mathsf a)_{xy}=1-e^{-2\beta \mathsf a_x \mathsf a_y}$ for every edge $xy \in \overline E(\Lambda)$ and $p(\beta, \mathsf h, \mathsf a)_{x\mathfrak g}=1-e^{-2\beta \mathsf h_x \mathsf a_x}$ for every $x \in \Lambda$. Denote by $\boldsymbol \phi^\xi_{\Lambda,\beta,\mathsf h, \mathsf a}$ the probability measure on $\omega\in \{0,1\}^{\overline E(\Lambda[\mathsf h])}$ defined by
\begin{equation} \label{def: random weight rc}
\boldsymbol{\phi}^\xi_{\Lambda,\beta,\mathsf h, \mathsf a}[\omega]
=
\frac{1}{{\bf Z}^{\xi}_{\Lambda,\beta,\mathsf h, \mathsf a}}\prod_{xy \in \overline E(\Lambda)} \Big(\frac{p(\beta, \mathsf a)_{xy}}{1-p(\beta, \mathsf a)_{xy}}\Big)^{\omega_{xy}} \prod_{x \in \Lambda} \Big(\frac{p(\beta, \mathsf{h},\mathsf a)_{x\fg}}{1-p(\beta,\mathsf{h}, \mathsf a)_{x\fg}}\Big)^{\omega_{x\fg}} \, 2^{k^\xi(\omega)},
\end{equation} 
where ${\bf Z}^{\xi}_{\Lambda,\beta,\mathsf h, \mathsf a}$ is a normalisation constant.

 We now turn to definition of the Ising model (with inhomogeneous coupling constants) that arises in the Edwards--Sokal coupling of $\boldsymbol \phi^\xi_{\Lambda,\beta,\mathsf h, \mathsf a}$. For this definition, we may also consider $\mathsf{h}\in \mathbb R^\Lambda$. We say that a configuration $\sigma \in \{\pm 1\}^{\overline \Lambda}$ is $\xi$-admissible, and write $\sigma \sim_{\rm ext} \xi$, if $\sigma_x$ is constant for every $x$ in the same element of the partition $\xi$. Let $\langle \cdot \rangle^{{\rm Ising}, \xi}_{\Lambda,\beta,\mathsf h, \mathsf a}$ to the probability measure on $\{\pm 1\}^{\overline \Lambda}$ which assigns the following weight to every configuration $\sigma$:
\begin{equation}\label{eq:Ising_def}
\langle \sigma \rangle^{\textup{Ising},\xi}_{\Lambda,\beta,\mathsf h, \mathsf{a}}=\frac{ \mathbbm 1_{\sigma\sim_{\rm ext}\xi}
}{Z^{\textup{Ising},\xi}_{\Lambda,\beta,\mathsf h, \mathsf{a}}} \exp\Bigg( \beta\sum_{xy\in \overline{E}(\Lambda)}  \mathsf a_x \mathsf{a}_y \sigma_x \sigma_y + \beta\sum_{x\in \Lambda} \mathsf{h}_x  \mathsf{a}_x \sigma_x \Bigg),
\end{equation}
where $Z^{\textup{Ising},\xi}_{\Lambda,\beta,\mathsf h, \mathsf{a}}$ is a normalisation constant. If $\xi=\{\{x\}: x\in \partial^{\rm ext}\Lambda\}$, we let $Z^{\textup{Ising},\xi}_{\Lambda,\beta,\mathsf h, \mathsf{a}}=Z^{\textup{Ising},0}_{\Lambda,\beta,\mathsf h, \mathsf{a}}$, and if $\xi=\{\partial^{\rm ext}\Lambda\}$, we let $Z^{\textup{Ising},\xi}_{\Lambda,\beta,\mathsf h, \mathsf{a}}=Z^{\textup{Ising},w}_{\Lambda,\beta,\mathsf h, \mathsf{a}}$. Note the following fundamental relation,
\begin{equation}\label{eq: relation phi4 Ising partition functions}
    Z^{\varphi^4}_{\Lambda,\beta,\mathsf{h}}=\int_{(\mathbb R^+)^\Lambda}Z^{\rm Ising, 0}_{\Lambda,\beta,\mathsf{h},\mathsf{a}}\prod_{x\in \Lambda}\mathrm{d}\rho_{g,a}(\mathsf{a}_x).
\end{equation}

\begin{proposition} \label{prop: phi4rc is annealed rndisingrc}
Let $\Lambda \subset\mathbb Z^d$ be finite. For every boundary condition $(\xi,\mathsf{b})$ on $\Lambda$, every external magnetic field $\mathsf h\in (\mathbb R^+)^{\Lambda}$, and every inverse temperature $\beta\geq 0$ we have
\begin{equation} \label{eq: annealed-quenched rc}
\mathrm{d}\Psi_{\Lambda,\beta,\mathsf h}^{(\xi,\mathsf{b})}[(\omega,\mathsf{a})]=\boldsymbol{\phi}^{\xi}_{\Lambda,\beta,\mathsf h, \mathsf a}[\omega] \mathrm{d}\mu_{\Lambda,\beta,\mathsf{h}}^{(\xi,\mathsf{b})}(\mathsf{a}),
\end{equation}
and 
\begin{equation}\label{eq: absolute value density}
\mathrm{d}\mu_{\Lambda,\beta,\mathsf{h}}^{(\xi,\mathsf{b})}(\mathsf{a}) = 2\mathbbm 1_{\mathsf a|_{\partial^{\rm ext}\Lambda}=\mathsf b} \frac{Z^{{\rm Ising}, \xi}_{\Lambda,\beta,\mathsf h, \mathsf a}}{Z^{(\xi,\mathsf b)}_{\Lambda,\beta,\mathsf h}} \prod_{x \in \Lambda} \mathrm{d}\rho_{g,a}(\mathsf a_x).
\end{equation}
Moreover, $\mu^{0}_{\Lambda,\beta,\mathsf h}$ is the law of the absolute value field of the $\varphi^4$ spin measure $\nu_{\Lambda,\beta,\mathsf h}$.
\end{proposition}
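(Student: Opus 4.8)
The plan is to prove Proposition \ref{prop: phi4rc is annealed rndisingrc} by the standard Edwards--Sokal expansion adapted to the random-environment setting, treating the absolute value field $\mathsf{a}$ as a static background and the signs as an Ising configuration coupled to $\omega$ in the usual FK fashion. First I would establish \eqref{eq: annealed-quenched rc} and \eqref{eq: absolute value density} by inspection of the densities. Starting from \eqref{eq: random cluster explicit density}, I would collect the factors $\prod_{xy \in \overline{E}(\Lambda)}\sqrt{1-p(\beta,\mathsf a)_{xy}}$ and $\prod_{x\in\Lambda}\sqrt{1-p(\beta,\mathsf h,\mathsf a)_{x\fg}}$ into a single deterministic function of $\mathsf a$, call it $W(\mathsf a)$. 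Dividing and multiplying by the normalisation ${\bf Z}^{\xi}_{\Lambda,\beta,\mathsf h,\mathsf a}$ of \eqref{def: random weight rc}, the $\omega$-dependent part of \eqref{eq: random cluster explicit density} becomes exactly $\boldsymbol{\phi}^{\xi}_{\Lambda,\beta,\mathsf h,\mathsf a}[\omega]$ times $W(\mathsf a)\,{\bf Z}^{\xi}_{\Lambda,\beta,\mathsf h,\mathsf a}\big/Z^{(\xi,\mathsf b)}_{\Lambda,\beta,\mathsf h}$. Summing over $\omega$ (which costs nothing since $\boldsymbol{\phi}^{\xi}$ is a probability measure) identifies the $\mathsf a$-marginal $\mathrm{d}\mu^{(\xi,\mathsf b)}_{\Lambda,\beta,\mathsf h}(\mathsf a)$ as $\mathbbm 1_{\mathsf a|_{\partial^{\rm ext}\Lambda}=\mathsf b}\, W(\mathsf a)\,{\bf Z}^{\xi}_{\Lambda,\beta,\mathsf h,\mathsf a}\big/Z^{(\xi,\mathsf b)}_{\Lambda,\beta,\mathsf h}\,\prod_x \mathrm{d}\rho_{g,a}(\mathsf a_x)$, and dividing \eqref{eq: random cluster explicit density} by this gives \eqref{eq: annealed-quenched rc}.

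The second step is to identify $W(\mathsf a)\,{\bf Z}^{\xi}_{\Lambda,\beta,\mathsf h,\mathsf a}$ with $2\,Z^{{\rm Ising},\xi}_{\Lambda,\beta,\mathsf h,\mathsf a}$, which then yields \eqref{eq: absolute value density}. This is the classical Edwards--Sokal identity: one expands
\begin{equation}
Z^{{\rm Ising},\xi}_{\Lambda,\beta,\mathsf h,\mathsf a}=\sum_{\sigma\sim_{\rm ext}\xi}\prod_{xy\in\overline E(\Lambda)}e^{\beta\mathsf a_x\mathsf a_y\sigma_x\sigma_y}\prod_{x\in\Lambda}e^{\beta\mathsf h_x\mathsf a_x\sigma_x},
\end{equation}
writes each factor $e^{\beta\mathsf a_x\mathsf a_y\sigma_x\sigma_y}=e^{-\beta\mathsf a_x\mathsf a_y}\big[(1-p_{xy})+p_{xy}\mathbbm 1_{\sigma_x=\sigma_y}\big]$ (and similarly at the ghost, using $\sigma_{\fg}=+1$), expands the product over edges into a sum over $\omega\in\{0,1\}^{\overline E(\Lambda[\mathsf h])}$, and then sums over $\sigma$: the $\omega$-open edges force $\sigma$ to be constant on connected components, producing a factor $2^{k^\xi(\omega)}$ after accounting for the $\xi$-admissibility constraint and the identification of $\fg$ with $+$. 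The prefactor $\prod e^{-\beta\mathsf a_x\mathsf a_y}\cdot\prod e^{-\beta\mathsf h_x\mathsf a_x}$ is precisely $W(\mathsf a)$ since $1-p(\beta,\mathsf a)_{xy}=e^{-2\beta\mathsf a_x\mathsf a_y}$, so $\sqrt{1-p}=e^{-\beta\mathsf a_x\mathsf a_y}$; one must be slightly careful that the ghost contributes an extra global factor of $2$ (from the free choice of an overall sign when $\mathsf h\equiv 0$, or more precisely from the component containing $\fg$ being counted in $k^\xi$), which matches the $2$ in \eqref{eq: absolute value density}. This bookkeeping of the factor $2$ and of how the $\fg$-vertex interacts with the partition $\xi$ is the one place demanding genuine care.

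Finally, for the last sentence, I would specialise to the free boundary condition $(\xi=\{\{x\}\},\mathsf b\equiv 0)$. Then \eqref{eq: absolute value density} gives $\mathrm{d}\mu^0_{\Lambda,\beta,\mathsf h}(\mathsf a)=2\,Z^{{\rm Ising},0}_{\Lambda,\beta,\mathsf h,\mathsf a}\big/Z^{0}_{\Lambda,\beta,\mathsf h}\prod_x\mathrm{d}\rho_{g,a}(\mathsf a_x)$, and I would compare this with the law of $|\varphi|$ under $\nu_{\Lambda,\beta,\mathsf h}$. Writing $\varphi_x=\sigma_x\mathsf a_x$ with $\mathsf a_x=|\varphi_x|\geq 0$ and $\sigma_x=\mathrm{sign}(\varphi_x)\in\{\pm 1\}$, the evenness of $\rho_{g,a}$ means $\mathrm{d}\rho_{\Lambda,g,a}(\varphi)$ factorises as (counting measure on $\{\pm1\}^\Lambda$) $\otimes\,\prod_x\mathrm{d}\rho_{g,a}(\mathsf a_x)$ up to a factor $2^{-|\Lambda|}$, and $e^{-\beta H_{\Lambda,\mathsf h}(\varphi)}=\exp(\beta\sum_{xy}\mathsf a_x\mathsf a_y\sigma_x\sigma_y+\beta\sum_x\mathsf h_x\mathsf a_x\sigma_x)$, which upon summing over $\sigma$ produces exactly $Z^{{\rm Ising},0}_{\Lambda,\beta,\mathsf h,\mathsf a}$ (with free boundary, i.e.\ no external spins and no ghost identification constraint beyond $\sigma_{\fg}=+1$). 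Hence the pushforward of $\nu_{\Lambda,\beta,\mathsf h}$ under $\varphi\mapsto|\varphi|$ has density proportional to $Z^{{\rm Ising},0}_{\Lambda,\beta,\mathsf h,\mathsf a}\prod_x\mathrm{d}\rho_{g,a}(\mathsf a_x)$, and the normalisation \eqref{eq: relation phi4 Ising partition functions} identifies the constant, so this pushforward equals $\mu^0_{\Lambda,\beta,\mathsf h}$. I expect the main obstacle to be purely notational: keeping the ghost vertex, the partition $\xi$, and the overall factor of $2$ consistent between the three measures; the underlying computation is routine.
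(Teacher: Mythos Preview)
Your proposal is correct and follows essentially the same route as the paper: factor the density as $\boldsymbol{\phi}^\xi_{\Lambda,\beta,\mathsf h,\mathsf a}[\omega]$ times a function of $\mathsf a$ to obtain \eqref{eq: annealed-quenched rc}, then establish the Edwards--Sokal identity $2Z^{\mathrm{Ising},\xi}_{\Lambda,\beta,\mathsf h,\mathsf a}=W(\mathsf a)\,\mathbf Z^\xi_{\Lambda,\beta,\mathsf h,\mathsf a}$ (the paper runs the same computation starting from $\mathbf Z^\xi$ and counting the $2^{k^\xi(\omega)-1}$ sign configurations compatible with each $\omega$ once $\sigma_{\fg}=+1$ is fixed), and finally specialise to free boundary conditions via $\varphi=\sigma\cdot\mathsf a$ and \eqref{eq: relation phi4 Ising partition functions}. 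One small slip to fix when you write it out: the edge expansion should read $e^{\beta\mathsf a_x\mathsf a_y\sigma_x\sigma_y}=e^{-\beta\mathsf a_x\mathsf a_y}\big[1+\tfrac{p_{xy}}{1-p_{xy}}\mathbbm 1_{\sigma_x=\sigma_y}\big]$ (equivalently $e^{+\beta\mathsf a_x\mathsf a_y}\big[(1-p_{xy})+p_{xy}\mathbbm 1_{\sigma_x=\sigma_y}\big]$), after which the prefactor $W(\mathsf a)$ and the $\omega$-sum giving $\mathbf Z^\xi$ emerge exactly as you describe.
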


\begin{proof}
Observe that \eqref{eq: annealed-quenched rc} holds with
\begin{equation}\label{eq: expression dmu section 6}
\mathrm{d}\mu_{\Lambda,\beta,\mathsf{h}}^{(\xi,\mathsf{b})}(\mathsf{a})=\mathbbm 1_{\mathsf a|_{\partial^{\rm ext}\Lambda}=\mathsf b} \frac{{\bf Z}^{\xi}_{\Lambda,\beta,\mathsf h, \mathsf a}}{Z^{(\xi,\mathsf{b})}_{\Lambda,\beta,\mathsf{h}}}\prod_{xy \in \overline{E}(\Lambda)} \sqrt{1-p(\beta,\mathsf a)_{xy}} \prod_{x\in \Lambda} \sqrt{1-p(\beta, \mathsf h, \mathsf a)_{x\fg}}
\prod_{x\in \Lambda}\mathrm{d} \rho_{g,a}(\mathsf{a}_x).  \end{equation}
The equality \eqref{eq: annealed-quenched rc} then follows if we can establish
\begin{equation}\label{eq: partition function equality}
Z^{\textup{Ising},\xi}_{\Lambda,\beta,\mathsf h, \mathsf{a}}=\frac{1}{2}{\bf Z}^{\xi}_{\Lambda,\beta, \mathsf h, \mathsf{a}}\prod_{xy \in \overline{E}(\Lambda)} \sqrt{1-p(\beta,\mathsf a)_{xy}} \prod_{x\in \Lambda} \sqrt{1-p(\beta, \mathsf h, \mathsf a)_{x\fg}}.
\end{equation}
Here, the factor $1/2$ comes from the fact that we have (implicitly) fixed $\sigma_{\fg}=1$.

Let us now show \eqref{eq: partition function equality}. We say that a configuration $\omega\in \{0,1\}^{\overline{E}(\Lambda[\mathsf{h}])}$ is \emph{compatible} with some $\sigma\in \{\pm 1\}^{\overline\Lambda}$ and write $\omega\sim \sigma$ if $\omega_{xy}=0$ for every $xy\in \overline{E}(\Lambda[\mathsf{h}])$ such that $\sigma_x\neq \sigma_y$. Note that, if additionally $\xi \sim_{\rm ext} \sigma$ (which means that $x,y\in \partial^{\rm ext}\Lambda$ such that $\sigma_x\neq \sigma_y$ cannot lie in the same partition class of $\xi$), each $\omega$ is compatible with $2^{k^\xi(\omega)-1}$ configurations $\sigma$, since $\sigma$ needs to be constant on each cluster of $\omega$ and there are $2$ possibilities for the sign of each cluster of $\omega$ other than the cluster of $\fg$, where we implicitly extend $\sigma$ by setting $\sigma_{\mathfrak g}=1$. Thus,
\begin{equation}
{\bf Z}^{\xi}_{\Lambda,\beta, \mathsf h, \mathsf{a}}=2\sum_{\sigma\in \{\pm 1\}^{\overline \Lambda}}\mathbbm 1_{\xi\sim_{\rm ext}\sigma}\sum_{\omega\sim \sigma}\prod_{xy \in \overline{E}(\Lambda)} \Big(\frac{p(\beta,\mathsf{a})_{xy}}{1-p(\beta,\mathsf{a})_{xy}}\Big)^{\omega_{xy}} \prod_{x \in \Lambda} \Big(\frac{p(\beta,\mathsf{h},\mathsf{a})_{x\fg}}{1-p(\beta,\mathsf{h},\mathsf{a})_{x\fg}}\Big)^{\omega_{x\fg}}.    
\end{equation}
Furthermore, since $\omega_{xy}$ needs to be $0$ whenever $\sigma_x\neq \sigma_y$, and both values $0$ and $1$ are allowed whenever $\sigma_x=\sigma_y$, we get that ${\bf Z}^{\xi}_{\Lambda,\beta, \mathsf h, \mathsf{a}}$ is equal to
\begin{equation}
\begin{aligned}
2\sum_{\sigma\in \{\pm 1\}^{\overline \Lambda}}\mathbbm 1_{\xi\sim_{\rm ext}\sigma}\prod_{\substack{xy \in \overline{E}(\Lambda)\\ \sigma_x=\sigma_y}}&\sum_{\omega_{xy}\in \{0,1\}} \Big(\frac{p(\beta,\mathsf{a})_{xy}}{1-p(\beta,\mathsf{a})_{xy}}\Big)^{\omega_{xy}} \prod_{\substack{x \in \Lambda \\ \sigma_x=1}}\sum_{\omega_{x\fg}\in \{0,1\}} \Big(\frac{p(\beta,\mathsf{h},\mathsf{a})_{x\fg}}{1-p(\beta,\mathsf{h},\mathsf{a})_{x\fg}}\Big)^{\omega_{x\fg}}\\
&=2\sum_{\sigma\in \{\pm 1\}^{\overline \Lambda}}\mathbbm 1_{\xi\sim_{\rm ext}\sigma}\prod_{\substack{xy \in \overline{E}(\Lambda)\\ \sigma_x=\sigma_y}}e^{2\beta \mathsf{a}_x \mathsf{a}_y} \prod_{\substack{x \in \Lambda \\ \sigma_x=1}} e^{2\beta \mathsf{h}_x \mathsf{a}_{x}}.
\end{aligned}
\end{equation}
Since $\sigma_x\sigma_y=1$ when $\sigma_x=\sigma_y$ and $\sigma_x\sigma_y=-1$ when $\sigma_x\neq \sigma_y$, we can rewrite the latter to obtain \eqref{eq: partition function equality}.

It follows from \eqref{eq: absolute value density} that
\begin{equation}
Z^0_{\Lambda,\beta,\mathsf{h}}=2\int Z^{\rm Ising,0}_{\Lambda,\beta, \mathsf{h},\mathsf{a}}
\prod_{x\in \Lambda}\mathrm{d} \rho_{g,a}(\mathsf{a}_x).  
\end{equation}
The latter is equal to 2$Z^{\varphi^4}_{\Lambda,\beta,\mathsf{h}}$ thanks to \eqref{eq: relation phi4 Ising partition functions}, hence 
\begin{equation}\label{eq: phi 4 partition functionsequality} Z^0_{\Lambda,\beta,\mathsf{h}}=2Z^{\varphi^4}_{\Lambda,\beta,\mathsf{h}}.
\end{equation}
We can now combine equations \eqref{eq: relation phi4 Ising partition functions}, \eqref{eq: absolute value density} and \eqref{eq: phi 4 partition functionsequality} to deduce that $\mu^{0}_{\Lambda,\beta,\mathsf h}$ is the law of the absolute value field of $\nu_{\Lambda,\beta,\mathsf h}$.
\end{proof}

Proposition \ref{prop: phi4rc is annealed rndisingrc} together with the Edwards--Sokal coupling for the Ising model--- see \cite[Chapter 1]{Grimmett2006RCM}--- implies the following.

\paragraph{The Edwards--Sokal coupling for $\varphi^4$.}
\
\vspace{1mm}
\

\noindent\textit{From percolation to spin model.} 
Given a pair $(\omega,\mathsf{a})\sim \Psi^0_{\Lambda,\beta,\mathsf{h}}$, we can sample a field $\varphi\sim \langle \cdot \rangle_{\Lambda,\beta,\mathsf{h}}$ as follows.
\begin{enumerate}
\item[(i)] Consider a sequence of independent random variables $\sigma_{\mathcal{C}}\in \{\pm 1\}$, indexed by the connected components $\mathcal{C}$ of $\omega$, such that $\mathbb{P}[\sigma_{\mathcal{C}}=1]=1$ if $\mathcal{C}$ is the cluster of $\fg$, and $\mathbb{P}[\sigma_{\mathcal{C}}=1]=1/2$ otherwise.
\item[(ii)] For every cluster $\mathcal{C}$ and $x\in \mathcal{C}$, set $\varphi_x = \mathsf{a}_x \sigma_{\mathcal{C}}$.
\end{enumerate}

\noindent \textit{From spin model to percolation.} Conversely, given a field $\varphi\sim \langle \cdot \rangle_{\Lambda,\beta,\mathsf{h}}$, we can sample a pair $(\omega, \mathsf{a})\sim \Psi^0_{\Lambda,\beta,\mathsf{h}}$ as follows. (Recall that $\varphi_{\fg}=1$, and note that almost surely we have $\varphi_x\neq 0$ for all $x\in \Lambda$, so that $\textup{sgn}(\varphi_x):= \varphi_x/|\varphi_x|\in \{\pm 1\}$ exists almost surely.) 
\begin{enumerate}
\item[(i)] For each $x\in \Lambda$, set $\mathsf{a}_x=|\varphi_x|$.
\item[(ii)] For each edge $xy\in E(\Lambda[\mathsf{h}])$ such that $\textup{sgn}(\varphi_x) \neq \textup{sgn}(\varphi_y)$, set $\omega_{xy}=0$. 
\item[(iii)] For each edge $xy\in E(\Lambda)$ such that $\textup{sgn}(\varphi_x) = \textup{sgn}(\varphi_y)$, let $\omega_{xy}=1$ with probability equal to $p(\mathsf{a},\beta)_{xy}$, independently of  the other edges.
\item[(iv)] For each vertex $x\in \Lambda$ such that $\textup{sgn}(\varphi_x) = 1$, let $\omega_{x\mathfrak g}=1$
with probability equal to $p(\mathsf{a},\beta,\mathsf{h})_{x\fg}$, independently of the other edges.
\end{enumerate}

As a direct consequence of this coupling (see \cite[Theorem 1.16]{Grimmett2006RCM}) we obtain the following result for $\Psi^0_{\Lambda,\beta,\mathsf{h}}$.
We note that this result also holds for $\Psi^{(w,\mathfrak{p})}_{\Lambda,\beta}$, since $\Psi^{(w,\mathfrak{p})}_{\Lambda,\beta}=\Psi^0_{\Lambda,\beta,\mathfrak{p}_{\Lambda}}$. Below, if $x,y\in \Lambda^\fg$, we write $\{x\leftrightarrow y\}$ to denote the event that $x$ and $y$ lie in the same connected component of $\omega$.

\begin{corollary} \label{cor: ES correlations}
Let $\beta\geq 0$ and $\Lambda\subset \mathbb{Z}^d$ finite. For every $\mathsf h \in (\mathbb R^+)^\Lambda$, and every $x,y\in \Lambda$, 
\begin{align}
\label{eq: es cor: 1}
\langle \varphi_x \rangle_{\Lambda,\beta,\mathsf{h}}
&=
\Psi^0_{\Lambda,\beta,\mathsf{h}}[\mathsf{a}_x \mathbbm{1}_{x\longleftrightarrow \fg}], 
\\
\label{eq: es cor: 2}
\langle \textup{sgn}(\varphi_x) \rangle_{\Lambda,\beta,\mathsf{h}}
&=
\Psi^0_{\Lambda,\beta,\mathsf{h}}[x\longleftrightarrow \fg], 
\\ \label{eq: es cor: 3}
\langle \varphi_x \varphi_y \rangle_{\Lambda,\beta,\mathsf{h}}
&=
\Psi^0_{\Lambda,\beta,\mathsf{h}}[\mathsf{a}_x \mathsf{a}_y \mathbbm{1}_{x\longleftrightarrow y}], 
\\ \label{eq: es cor: 4}
\langle \textup{sgn}(\varphi_x) \textup{sgn}(\varphi_y) \rangle_{\Lambda,\beta,\mathsf{h}}
&=
\Psi^0_{\Lambda,\beta,\mathsf{h}}[x\longleftrightarrow y]. 
\end{align}   
\end{corollary}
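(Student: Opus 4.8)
The plan is to read off all four identities directly from the Edwards--Sokal coupling displayed just above the statement, by conditioning on the pair $(\omega,\mathsf a)$ and averaging over the independent cluster signs. Concretely, I would invoke the ``from percolation to spin model'' half of the coupling: if $(\omega,\mathsf a)\sim\Psi^0_{\Lambda,\beta,\mathsf h}$ and, conditionally on $(\omega,\mathsf a)$, one samples signs $\sigma_{\mathcal C}\in\{\pm1\}$ independently over the connected components $\mathcal C$ of $\omega$ with $\sigma_{\mathcal C}\equiv1$ on the cluster of $\fg$ and $\mathbb P[\sigma_{\mathcal C}=1]=1/2$ otherwise, and then sets $\varphi:=(\mathsf a_x\sigma_{\mathcal C(x)})_{x\in\Lambda}$, then $\varphi\sim\langle\cdot\rangle_{\Lambda,\beta,\mathsf h}$. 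This is precisely the combination of Proposition~\ref{prop: phi4rc is annealed rndisingrc} with the classical Edwards--Sokal coupling for the Ising model, so no new input is needed at this stage.

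Granting this, for \eqref{eq: es cor: 1} I would compute the conditional expectation $\mathbb E[\varphi_x\mid\omega,\mathsf a]$: on the event $\{x\longleftrightarrow\fg\}$ the cluster of $x$ is the cluster of $\fg$, hence $\sigma_{\mathcal C(x)}=1$ deterministically and $\mathbb E[\varphi_x\mid\omega,\mathsf a]=\mathsf a_x$; on the complementary event $\sigma_{\mathcal C(x)}$ is a centred $\pm1$ random variable, so $\mathbb E[\varphi_x\mid\omega,\mathsf a]=0$. Taking the expectation under $\Psi^0_{\Lambda,\beta,\mathsf h}$ yields \eqref{eq: es cor: 1}. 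Since $\mathsf a_x=|\varphi_x|>0$ almost surely, one has $\textup{sgn}(\varphi_x)=\sigma_{\mathcal C(x)}$, and the same dichotomy immediately gives \eqref{eq: es cor: 2}. For \eqref{eq: es cor: 3} and \eqref{eq: es cor: 4} I would instead look at $\sigma_{\mathcal C(x)}\sigma_{\mathcal C(y)}$ conditionally on $(\omega,\mathsf a)$: if $x\longleftrightarrow y$ the two signs coincide, so $\varphi_x\varphi_y=\mathsf a_x\mathsf a_y$ and $\textup{sgn}(\varphi_x)\textup{sgn}(\varphi_y)=1$; if $x\centernot\longleftrightarrow y$, then $\mathcal C(x)\neq\mathcal C(y)$, at most one of these clusters is the cluster of $\fg$, so at least one of $\sigma_{\mathcal C(x)},\sigma_{\mathcal C(y)}$ is an independent fair $\pm1$ sign and $\mathbb E[\sigma_{\mathcal C(x)}\sigma_{\mathcal C(y)}\mid\omega,\mathsf a]=0$. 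Averaging over $\Psi^0_{\Lambda,\beta,\mathsf h}$ then produces \eqref{eq: es cor: 3} and \eqref{eq: es cor: 4}.

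I do not expect any real obstacle here; the argument is the standard one behind \cite[Theorem~1.16]{Grimmett2006RCM}. The only point requiring mild care is the bookkeeping around the ghost cluster, whose sign is frozen to $+1$: in each case one checks that either the relevant clusters coincide (so the spins are determined) or at least one independent centred sign survives in the conditional expectation. Finally, the corresponding identities for $\Psi^{(w,\mathfrak p)}_{\Lambda,\beta}$ follow at once from the identification $\Psi^{(w,\mathfrak p)}_{\Lambda,\beta}=\Psi^0_{\Lambda,\beta,\mathfrak p_\Lambda}$ noted above.
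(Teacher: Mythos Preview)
Your proposal is correct and matches the paper's approach exactly: the paper does not give a separate proof but simply states that the corollary is a direct consequence of the Edwards--Sokal coupling, citing \cite[Theorem~1.16]{Grimmett2006RCM}. Your write-up just spells out the standard conditioning-on-$(\omega,\mathsf a)$ computation behind that citation, including the same care about the ghost cluster's frozen sign.
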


\subsection{Correlation inequalities and monotonicity}

In this section, we derive some basic correlation inequalities and monotonicity properties for $\Psi_{\Lambda,\beta,\mathsf{h}}^{(\xi,\mathsf{b})}$. We start by proving that $\Psi_{\Lambda,\beta,\mathsf{h}}^{(\xi,\mathsf{b})}$ satisfies the absolute value FKG property, which is equivalent to saying that the marginals $\mu^{(\xi,\mathsf b)}_{\Lambda, \beta, \mathsf h}$ satisfy the FKG inequality. We will in fact state a stronger inequality that holds for conditional probabilities. The proof is a modification of the classical argument in \cite[Theorem 4.4.1]{GlimmJaffeQuantumBOOK}.

\begin{proposition}[Absolute value FKG, conditional version]\label{prop: absolute value FKG}
Let $\Lambda\subset \mathbb{Z}^d$ be finite, let $(\xi,\mathsf{b})$ be a boundary condition on $\Lambda$, and let $\mathsf h\in (\mathbb{R}^+)^{\Lambda}$. For every  $A \subset \Lambda$, $\eta \in (\mathbb R^+)^A$, and increasing, square integrable functions $F,G: (\mathbb{R}^+)^{\overline{\Lambda} \setminus A}\to\mathbb{R}$, 
\begin{equation} \label{eq: conditional abs fkg}
\mu_{\Lambda,\beta,\mathsf{h}}^{(\xi,\mathsf{b})}[F\cdot G \mid \mathsf a|_A=\eta]\geq \mu_{\Lambda,\beta,\mathsf{h}}^{(\xi,\mathsf{b})}[F \mid \mathsf a|_A=\eta]\mu_{\Lambda,\beta,\mathsf{h}}^{(\xi,\mathsf{b})}[G \mid \mathsf a|_A=\eta].
\end{equation}
\end{proposition}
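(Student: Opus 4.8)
The plan is to reduce the statement to a standard criterion for the FKG inequality on $\mathbb R^n$ — namely that a smooth, strictly positive density $e^{-\Phi}$ on a box $\prod_i [0,\infty)$ satisfies the FKG lattice condition provided $\partial^2_{x_i x_j}\Phi \le 0$ for all $i\ne j$ (the "attractive potential" criterion, as in the Holley/FKG theorem and its continuous analogue used in \cite{GlimmJaffeQuantumBOOK}). Conditioning on $\mathsf a|_A = \eta$ simply freezes some coordinates, so it suffices to exhibit such a density for the free coordinates. First I would write down, using \eqref{eq: absolute value density}, the explicit conditional density of $\mu_{\Lambda,\beta,\mathsf h}^{(\xi,\mathsf b)}$ on the variables $(\mathsf a_x)_{x\in\Lambda\setminus A}$ (the boundary values are fixed to $\mathsf b$, and the values on $A$ to $\eta$): it is proportional to
\[
\mathbbm 1_{\mathsf a_x > 0\,\forall x}\; Z^{\mathrm{Ising},\xi}_{\Lambda,\beta,\mathsf h,\mathsf a}\;\prod_{x\in\Lambda\setminus A} e^{-g\mathsf a_x^4 - a\mathsf a_x^2}.
\]
So $\Phi(\mathsf a) = g\sum_x \mathsf a_x^4 + a\sum_x \mathsf a_x^2 - \log Z^{\mathrm{Ising},\xi}_{\Lambda,\beta,\mathsf h,\mathsf a}$ up to an additive constant. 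The single-site terms $g t^4 + a t^2$ contribute nothing to the mixed partials $\partial^2_{\mathsf a_x \mathsf a_y}$ for $x\ne y$, so the entire content is the claim that $-\log Z^{\mathrm{Ising},\xi}_{\Lambda,\beta,\mathsf h,\mathsf a}$ has nonpositive mixed second derivatives in the $\mathsf a$ variables, i.e.\ that $\log Z^{\mathrm{Ising},\xi}$ is "supermodular" in $\mathsf a$.

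The key computation is therefore: for $x\ne y$ in $\overline\Lambda$, show $\partial^2_{\mathsf a_x \mathsf a_y}\log Z^{\mathrm{Ising},\xi}_{\Lambda,\beta,\mathsf h,\mathsf a}\ge 0$. Differentiating the Ising partition function with Hamiltonian $\beta\sum_{uv}\mathsf a_u\mathsf a_v\sigma_u\sigma_v + \beta\sum_u \mathsf h_u\mathsf a_u\sigma_u$ in \eqref{eq:Ising_def}, one gets
\[
\partial_{\mathsf a_x}\log Z^{\mathrm{Ising},\xi} = \beta\Big\langle \sum_{y\sim x}\mathsf a_y\sigma_x\sigma_y + \mathsf h_x\sigma_x\Big\rangle^{\mathrm{Ising},\xi}_{\Lambda,\beta,\mathsf h,\mathsf a},
\]
and then differentiating again in $\mathsf a_y$ (with $y\ne x$) produces a term $\beta\,\langle\sigma_x\sigma_y\rangle$ (from the explicit $\mathsf a_y$ appearing when $y\sim x$) plus $\beta^2$ times a covariance of the form $\mathrm{Cov}(\sum_{w\sim x}\mathsf a_w\sigma_x\sigma_w + \mathsf h_x\sigma_x,\ \sum_{w\sim y}\mathsf a_w\sigma_y\sigma_w + \mathsf h_y\sigma_y)$ under the ferromagnetic Ising measure $\langle\cdot\rangle^{\mathrm{Ising},\xi}_{\Lambda,\beta,\mathsf h,\mathsf a}$. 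The covariance is a nonnegative linear combination (coefficients $\mathsf a_w, \mathsf a_{w'}, \mathsf h$, all $\ge 0$) of two-point covariances $\langle \sigma_x\sigma_w\sigma_y\sigma_{w'}\rangle - \langle\sigma_x\sigma_w\rangle\langle\sigma_y\sigma_{w'}\rangle$, each of which is $\ge 0$ by the Griffiths/GKS-II inequality for the ferromagnetic Ising model with arbitrary nonnegative couplings and fields (valid also with the wired identification given by $\xi$, since that is still a ferromagnetic Ising model). The stray $\beta\langle\sigma_x\sigma_y\rangle$ term is also $\ge 0$ by GKS-I. Hence $\partial^2_{\mathsf a_x\mathsf a_y}\log Z^{\mathrm{Ising},\xi}\ge 0$, so $\Phi$ has nonpositive mixed partials, and the FKG lattice condition holds.

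To finish, I would invoke the continuous FKG theorem (attractive potentials imply FKG, e.g.\ \cite[Theorem 4.4.1 / Appendix]{GlimmJaffeQuantumBOOK}, or Preston's criterion) applied to the conditional law on $(\mathbb R^+)^{\overline\Lambda\setminus A}$, which gives \eqref{eq: conditional abs fkg} for increasing bounded smooth $F,G$; the square-integrable case follows by truncation and monotone approximation. One technical point to address carefully is that the density is supported on the open orthant $\{\mathsf a_x>0\}$ and $\Phi\to+\infty$ only through the $\log Z^{\mathrm{Ising}}$ term being bounded — actually $Z^{\mathrm{Ising},\xi}$ stays bounded away from $0$ and $\infty$ on compacts, and $e^{-gt^4-at^2}$ provides the confinement, so the density extends to a positive smooth density on all of $(\mathbb R^+)^{\overline\Lambda\setminus A}$ with no boundary issues; this makes the differentiation under the expectation sign rigorous. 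The main obstacle is precisely this verification that $\log Z^{\mathrm{Ising},\xi}$ is supermodular in the environment $\mathsf a$ via second-moment Griffiths inequalities — everything else is the standard reduction. I should also note that taking $A=\emptyset$ recovers the unconditional absolute value FKG property for $\mu^{(\xi,\mathsf b)}_{\Lambda,\beta,\mathsf h}$, hence for $\Psi^{(\xi,\mathsf b)}_{\Lambda,\beta,\mathsf h}$.
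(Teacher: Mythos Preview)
Your proof is correct but takes a different route from the paper's. You verify the FKG lattice condition directly by computing the mixed second partials $\partial^2_{\mathsf a_x\mathsf a_y}\log Z^{\mathrm{Ising},\xi}$ and showing they are nonnegative via GKS-II, then invoke the continuous Holley/Preston criterion. The paper instead runs an induction on $|\Lambda\setminus A|$ in the style of \cite[Theorem~4.4.1]{GlimmJaffeQuantumBOOK}: the base case $n=1$ is trivial, and for the inductive step one conditions on one additional coordinate $\mathsf a_x=s$, uses the induction hypothesis, and needs the monotonicity $\mu^{A,\eta\Delta s}[F]\geq \mu^{A,\eta\Delta s'}[F]$ for $s\geq s'$ and $F$ increasing. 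That monotonicity is obtained by writing the Radon--Nikodym derivative as a ratio of Ising partition functions and showing it is increasing via its first logarithmic derivative and Griffiths' inequality (GKS-I).

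Both arguments ultimately rest on Griffiths inequalities, but yours uses the second-order form (covariances of products of spins are nonnegative) and packages the conclusion through an off-the-shelf continuous FKG theorem, while the paper's is self-contained and only needs the first-order form. Your approach is slightly shorter and perhaps more conceptual; the paper's avoids any discussion of regularity or boundary behaviour of the density needed to justify the lattice-condition route, and makes the conditional version an immediate byproduct of the inductive scheme rather than a separate observation.
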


\begin{proof}
For ease of notation, let us write for every $A \subset \Lambda$ and $\eta \in (\mathbb R^+)^A$
\begin{equation}
\mu^{A,\eta}[\:\cdot\:]:= \mu^{(\xi,\mathsf b)}_{\Lambda,\beta,\mathsf h} [\:\cdot \mid \mathsf a|_A=\eta].
\end{equation}
Let us first observe that, sampling two independent fields $\mathsf a, \mathsf a' \sim \mu^{A,\eta}$, for every square integrable $F,G : (\mathbb R
^+)^{\overline{\Lambda} \setminus A} \rightarrow \mathbb R$ we have 
\begin{equation} \label{eq: cond abs fkg 1}
\mu^{A,\eta}\otimes \mu^{A,\eta}\left[ (F(\mathsf a)-F(\mathsf a'))(G(\mathsf a)-G(\mathsf a')) \right]  = 2\left( \mu^{A,\eta}[FG] - \mu^{A,\eta}[F]\mu^{A,\eta}[G]\right)
\end{equation}
We will prove that $\mu^{A,\eta}$ satisfies \eqref{eq: conditional abs fkg}, or equivalently that the left-hand side of \eqref{eq: cond abs fkg 1} is positive, by induction on the number of sites $n$ we do \emph{not} condition on, i.e.\ $n=|\Lambda\setminus A|$. Notice when $n=1$, the desired FKG inequality follows (from \eqref{eq: cond abs fkg 1}) since $F(\mathsf a)-F(\mathsf a')$ and $G(\mathsf a)-G(\mathsf a')$ have the same sign if they are both increasing. Assume now that for some $n \geq 1$, we have
\begin{equation}
\mu^{A,\eta}[FG] \geq \mu^{A,\eta}[F]\mu^{A,\eta}[G]
\end{equation}
for every $A \subset \Lambda$ such that $|\Lambda \setminus A|\leq n$ and any $\eta \in (\mathbb R^+)^A$. Fix any such $A$ and $\eta$. We claim that for every $x \in A$, every $s,s' \geq 0$, and every increasing square integrable functions $F,G:(\mathbb R^+)^{\overline{\Lambda}\setminus A} \rightarrow \mathbb R$,
\begin{equation} \label{eq: cond abs fkg 2}
\mu^{A\setminus \{x\}, \eta|_{A\setminus \{x\}}} \otimes \mu^{A\setminus \{x\}, \eta|_{A\setminus \{x\}}} \left[ (F(\mathsf a)-F(\mathsf a'))(G(\mathsf a)-G(\mathsf a'))  \mid \mathsf a_x=s, \, \mathsf a'_x = s'\right] \geq 0.
\end{equation}
Upon taking expectation of the left-hand side of \eqref{eq: cond abs fkg 2} with respect to $\mu^{A\setminus \{x\},\eta|_{A\setminus \{x\}}}\otimes \mu^{A\setminus \{x\},\eta|_{A\setminus \{x\}}}$, using \eqref{eq: cond abs fkg 1}, and using the fact that $A,\eta,x$ were arbitrary, we obtain the desired induction hypothesis in the case $n+1$, thus completing the proof. 

It remains to prove the claim \eqref{eq: cond abs fkg 2}. For shorthand, for every $s \in \mathbb R^+$ let us write
\begin{equation}
    \mu^{A, \eta \Delta s}[\:\cdot\:]:= \mu^{A\setminus \{x\}, \eta|_{A\setminus \{x\}}}[\:\cdot \mid \mathsf a_x=s].
\end{equation}
Observe that $\mu^{A, \eta\Delta s}=\mu^{A,\tilde\eta}$, where $\tilde \eta \in (\mathbb R^+)^A$ is such that $\tilde\eta|_{A \setminus \{x\}} = \eta|_{A \setminus \{x\}}$ and $\tilde\eta_x~=~s$, and hence $\mu^{A,\eta\Delta s}$ satisfies the induction hypothesis. Let now $F,G:(\mathbb R^+)^{\overline{\Lambda} \setminus (A\setminus \{x\})}\rightarrow \mathbb R$ be increasing square integrable functions. Since the restrictions of $F,G$ to any subset are also increasing, we have that
\begin{align}\label{eq: cond abs fkg 3}
\begin{split}
\text{LHS of }\eqref{eq: cond abs fkg 2}
&=	
\mu^{A,\eta\Delta s}[FG] + \mu^{A,\eta\Delta s'}[FG]- \mu^{A,\eta\Delta s}[F]\mu^{A,\eta\Delta s'}[G]
- \mu^{A,\eta\Delta s'}[F]\mu^{A,\eta\Delta s}[G]
\\
&=
\mu^{A,\eta
\Delta s}[FG] - \mu^{A,\eta\Delta s}[F]\mu^{A,\eta\Delta s}[G]+\mu^{A,\eta \Delta s'}[FG] - \mu^{A,\eta\Delta s'}[F]\mu^{A,\eta\Delta s'}[G]
\\
&\quad\quad
+
\Big(\mu^{A,\eta\Delta s}[F]-\mu^{A,\eta\Delta s'}[F]\Big)\Big(\mu^{A,\eta\Delta s}[G]-\mu^{A,\eta\Delta s'}[G]\Big)
\\
&\geq
\Big(\mu^{A,\eta\Delta s}[F]-\mu^{A,\eta\Delta s'}[F]\Big)\Big(\mu^{A,\eta\Delta s}[G]-\mu^{A,\eta\Delta s'}[G]\Big),
\end{split}
\end{align}
where we used the induction hypothesis in the last inequality.

Assume, without loss of generality, that $s\geq s'$. We will now show that $\mu^{A,\eta\Delta s}[F] \geq \mu^{A,\eta\Delta s'}[F]$ for every increasing function $F$ as above (i.e.\ including if $F$ is swapped for $G$). This establishes that the last line of \eqref{eq: cond abs fkg 3} is positive, thereby completing the proof of the claim. On the event $\{ \mathsf a|_A = \tilde\eta\}$, where $\tilde \eta|_{A \setminus \{x\}}=\eta$ and $\tilde \eta_x= s'$, define the random variable 
\begin{equation}
Z(\mathsf a):= \frac{Z^{\rm Ising,\xi}_{\Lambda,\beta, \mathsf{h},\tilde {\mathsf{a}}}}{Z^{\rm Ising,\xi}_{\Lambda,\beta, \mathsf{h},\mathsf{a}}}, 
\end{equation}
where, $\tilde{\mathsf{a}}|_{\Lambda\setminus A}=\mathsf{a}|_{\Lambda\setminus A}$, $\tilde{\mathsf{a}}|_{A \setminus \{x\}}=\mathsf{a}|_{A\setminus\{x\}} (=\eta|_{A\setminus\{x\}})$, and $\tilde{\mathsf{a}}_x := s \geq s'=\mathsf{a}_x$. The logarithmic derivative of $Z$ with respect to $\mathsf a_y$ for $y \in \Lambda \setminus A$, is given by
\begin{equation}\label{eq:log derivative1}
\beta \sum_{z\sim y}\left(  \tilde {\mathsf a}_z \langle \sigma_y \sigma_z \rangle^{\rm Ising,\xi}_{\Lambda, \beta,\mathsf{h},\tilde {\mathsf{a}}}-\mathsf a_z \langle \sigma_y \sigma_z \rangle^{\rm Ising,\xi}_{\Lambda, \beta, \mathsf h,\mathsf{a}} \right)+ \beta \mathsf h_y \left( \langle \sigma_y \rangle^{\rm Ising,\xi}_{\Lambda, \beta, \mathsf{h},\tilde {\mathsf{a}}}- \langle \sigma_y \rangle^{\rm Ising,\xi}_{\Lambda, \beta, \mathsf h,\mathsf{a}} \right).
\end{equation}
By Griffiths' inequality, it is positive and hence $Z$ is an increasing function on $(\mathbb R^+)^{\overline{\Lambda} \setminus A}$. Therefore, by a direct computation and using that $F(\tilde{\mathsf{a}}) \geq F(\mathsf{a})$, followed by the induction hypothesis,
\begin{equation}
\mu^{A,\eta\Delta s}[F] \geq \frac{\mu^{A,\eta\Delta s'}[ZF]}{\mu^{A,\eta\Delta s'}[Z]} \geq \mu^{A,\eta \Delta s'}[F],
\end{equation}
as desired.
\end{proof}

We can now deduce that $\Psi_{\Lambda,\beta,\mathsf{h}}^{(\xi,\mathsf{b})}$ satisfies the full FKG property. We again state a stronger property that holds for conditional measures. 

\begin{proposition}[FKG]\label{prop:FKG random cluster}
Let $\Lambda\subset \mathbb{Z}^d$ finite, let $(\xi,\mathsf{b})$ be a boundary condition on $\Lambda$, and let $\mathsf h\in (\mathbb{R}^+)^{\Lambda}$. For every $A \subset \Lambda$, $\eta \in (\mathbb R^+)^\Lambda$, and every increasing, square-integrable functions $F,G: \{0,1\}^{\overline{E}(\Lambda)}\times (\mathbb R^+)^{\overline{\Lambda}}\to\mathbb{R}$,
we have
\begin{equation}
\Psi_{\Lambda,\beta,\mathsf{h}}^{(\xi,\mathsf{b})}[F\cdot G \mid \mathsf a|_A=\eta]
\geq
\Psi_{\Lambda,\beta,\mathsf{h}}^{(\xi,\mathsf{b})}[F \mid \mathsf a|_A=\eta] \Psi_{\Lambda,\beta,\mathsf{h}}^{(\xi,\mathsf{b})}[G \mid \mathsf a|_A=\eta].
\end{equation}
\end{proposition}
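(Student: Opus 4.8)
The plan is to deduce the full FKG inequality for $\Psi_{\Lambda,\beta,\mathsf h}^{(\xi,\mathsf b)}$ from the two ingredients already established: the absolute-value FKG inequality for the marginal $\mu_{\Lambda,\beta,\mathsf h}^{(\xi,\mathsf b)}$ (Proposition~\ref{prop: absolute value FKG}, in its conditional form) and the classical FKG inequality for the FK--Ising random cluster measure $\boldsymbol\phi^\xi_{\Lambda,\beta,\mathsf h,\mathsf a}$ with fixed environment $\mathsf a$. Recall from Proposition~\ref{prop: phi4rc is annealed rndisingrc} the factorisation $\mathrm d\Psi_{\Lambda,\beta,\mathsf h}^{(\xi,\mathsf b)}[(\omega,\mathsf a)]=\boldsymbol\phi^\xi_{\Lambda,\beta,\mathsf h,\mathsf a}[\omega]\,\mathrm d\mu_{\Lambda,\beta,\mathsf h}^{(\xi,\mathsf b)}(\mathsf a)$; the idea is that $\Psi$ is a ``monotone mixture'' of the measures $\boldsymbol\phi^\xi_{\Lambda,\beta,\mathsf h,\mathsf a}$ over the FKG-ordered family of environments $\mathsf a$, and such mixtures inherit FKG provided the conditional laws are stochastically monotone in the mixing parameter.

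The key steps, in order. \emph{Step 1: reduce to the unconditioned statement.} Conditioning on $\mathsf a|_A=\eta$ is, by the domain Markov property (Proposition~\ref{prop: domain markov}) applied appropriately, again a $\varphi^4$ random cluster measure of the same type on the smaller vertex set, with an updated boundary condition and environment; alternatively one runs the whole argument below directly with the conditional measures, using that Proposition~\ref{prop: absolute value FKG} is already stated conditionally. So it suffices to prove $\Psi[FG]\geq\Psi[F]\Psi[G]$ for increasing square-integrable $F,G$ on $\{0,1\}^{\overline E(\Lambda)}\times(\mathbb R^+)^{\overline\Lambda}$. \emph{Step 2: quenched FKG.} For fixed $\mathsf a$, the measure $\boldsymbol\phi^\xi_{\Lambda,\beta,\mathsf h,\mathsf a}$ is an FK--Ising ($q=2$) measure with nonnegative coupling constants $\beta\mathsf a_x\mathsf a_y\geq 0$ and $\beta\mathsf h_x\mathsf a_x\geq 0$, hence satisfies the lattice condition and therefore the FKG inequality in $\omega$ (see \cite[Chapter~3]{Grimmett2006RCM}). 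Thus $\boldsymbol\phi^\xi_{\Lambda,\beta,\mathsf h,\mathsf a}[F(\cdot,\mathsf a)G(\cdot,\mathsf a)]\geq\boldsymbol\phi^\xi_{\Lambda,\beta,\mathsf h,\mathsf a}[F(\cdot,\mathsf a)]\,\boldsymbol\phi^\xi_{\Lambda,\beta,\mathsf h,\mathsf a}[G(\cdot,\mathsf a)]$ for every $\mathsf a$. \emph{Step 3: monotonicity of the quenched expectations in $\mathsf a$.} Set $\widehat F(\mathsf a):=\boldsymbol\phi^\xi_{\Lambda,\beta,\mathsf h,\mathsf a}[F(\cdot,\mathsf a)]$ and similarly $\widehat G$; I must show $\widehat F$ is an increasing function of $\mathsf a\in(\mathbb R^+)^{\overline\Lambda}$. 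This follows because increasing $\mathsf a$ increases each $p(\beta,\mathsf a)_{xy}$ and $p(\beta,\mathsf h,\mathsf a)_{x\fg}$, so $\boldsymbol\phi^\xi_{\Lambda,\beta,\mathsf h,\mathsf a}$ is stochastically increasing in $\mathsf a$ (standard monotonicity of FK measures in their edge parameters, \cite[Theorem~3.21]{Grimmett2006RCM}), and moreover $F(\omega,\mathsf a)$ is itself increasing in $\mathsf a$ for fixed $\omega$; combining a stochastic domination with a pointwise monotone integrand gives the claim. \emph{Step 4: assemble.} Write
\begin{align}
\Psi[FG]
&=\int \boldsymbol\phi^\xi_{\Lambda,\beta,\mathsf h,\mathsf a}[F(\cdot,\mathsf a)G(\cdot,\mathsf a)]\,\mathrm d\mu(\mathsf a)
\;\geq\; \int \widehat F(\mathsf a)\widehat G(\mathsf a)\,\mathrm d\mu(\mathsf a)\notag\\
&\geq \Big(\int \widehat F(\mathsf a)\,\mathrm d\mu(\mathsf a)\Big)\Big(\int \widehat G(\mathsf a)\,\mathrm d\mu(\mathsf a)\Big)
=\Psi[F]\,\Psi[G],\notag
\end{align}
where $\mu=\mu_{\Lambda,\beta,\mathsf h}^{(\xi,\mathsf b)}$, the first inequality is Step~2, and the second is the absolute-value FKG of Proposition~\ref{prop: absolute value FKG} applied to the increasing functions $\widehat F,\widehat G$ from Step~3. (Square integrability of $\widehat F,\widehat G$ under $\mu$ follows from that of $F,G$ under $\Psi$ by Jensen/Cauchy--Schwarz.)

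The main obstacle is Step~3: verifying that $\widehat F(\mathsf a)=\boldsymbol\phi^\xi_{\Lambda,\beta,\mathsf h,\mathsf a}[F(\cdot,\mathsf a)]$ is genuinely increasing in $\mathsf a$. One has to be a little careful because both the measure and the integrand depend on $\mathsf a$; the clean way is to fix $\mathsf a\leq\mathsf a'$, use the stochastic domination $\boldsymbol\phi^\xi_{\Lambda,\beta,\mathsf h,\mathsf a}\preccurlyeq\boldsymbol\phi^\xi_{\Lambda,\beta,\mathsf h,\mathsf a'}$ to get a monotone coupling $(\omega,\omega')$ with $\omega\leq\omega'$, and then note $F(\omega,\mathsf a)\leq F(\omega',\mathsf a)\leq F(\omega',\mathsf a')$ by monotonicity of $F$ in both arguments, so that $\widehat F(\mathsf a)=\mathbb E[F(\omega,\mathsf a)]\leq\mathbb E[F(\omega',\mathsf a')]=\widehat F(\mathsf a')$. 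Everything else is a routine consequence of already-proved results. One should also remark that the same argument gives the unconditional FKG, and that the conditional statement is obtained either by the reduction in Step~1 or by carrying the $A,\eta$ through the entire computation verbatim, since all three inputs (quenched FKG, quenched monotonicity, Proposition~\ref{prop: absolute value FKG}) hold in their conditional forms.
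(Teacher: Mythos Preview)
Your proof is correct and follows essentially the same approach as the paper: quenched FKG for $\boldsymbol\phi^\xi_{\Lambda,\beta,\mathsf h,\mathsf a}$, monotonicity of $\mathsf a\mapsto\boldsymbol\phi^\xi_{\Lambda,\beta,\mathsf h,\mathsf a}[F(\cdot,\mathsf a)]$ via stochastic domination in the edge parameters, and then the conditional absolute-value FKG of Proposition~\ref{prop: absolute value FKG}. The paper carries the conditioning on $\mathsf a|_A=\eta$ directly through the argument (your second alternative in Step~1) rather than invoking a domain-Markov reduction, but otherwise the two proofs coincide.
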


\begin{proof}
For every $\mathsf{a} \in (\mathbb{R}^+)^{\overline{\Lambda}}$, let $F^{\mathsf{a}}(\omega)= F(\omega,\mathsf{a})$ and $G^{\mathsf{a}}(\omega)=G(\omega,\mathsf{a})$. Clearly these are increasing as functions of $\omega$.
By the FKG property of the random cluster model (see \cite[Theorem 3.8]{Grimmett2006RCM}), for every $\mathsf{a} \in (\mathbb{R}^+)^{\overline{\Lambda}}$ we have
\begin{align}
\boldsymbol{\phi}^\xi_{\Lambda,\beta,\mathsf h,\mathsf{a}}[F^{\mathsf{a}} \cdot G^{\mathsf{a}}]
&\geq
\boldsymbol{\phi}^\xi_{\Lambda,\beta,\mathsf h,\mathsf{a}}[F^{\mathsf{a}}]\boldsymbol{\phi}^\xi_{\Lambda,\beta,\mathsf h,\mathsf{a}}[G^{\mathsf{a}}].
\end{align}
The mapping $\mathsf{a} \mapsto \boldsymbol{\phi}^\xi_{\Lambda,\beta,\mathsf{h},\mathsf{a}}[F^{\mathsf{a}}]$ is increasing due to the monotonicity of $\boldsymbol{\phi}^\xi_{\Lambda,\beta,\mathsf{h},\mathsf{a}}$ in $\mathsf{a}$ (see \cite[Theorem 3.21]{Grimmett2006RCM}), and the fact that for $\mathsf a_1 \leq \mathsf a_2$, we have $F^{\mathsf{a}_1}(\omega) \leq F^{\mathsf{a}_2}(\omega)$ for every $\omega$. Hence, by the conditional absolute value FKG property \eqref{eq: conditional abs fkg}, we have
\begin{equation}
\begin{aligned}
\Psi_{\Lambda,\beta,\mathsf{h}}^{(\xi,\mathsf{b})}[F\cdot G \mid \mathsf a|_A=\eta]
&\geq
\mu_{\Lambda,\beta,\mathsf{h}}^{(\xi,\mathsf{b})}[\boldsymbol{\phi}^\xi_{\Lambda,\beta,\mathsf{h},\mathsf{a}}[F^{\mathsf{a}}]\boldsymbol{\phi}^\xi_{\Lambda,\beta,\mathsf{h},\mathsf{a}}[G^{\mathsf{a}}] \mid \mathsf a|_A=\eta]
\\
&\geq
\Psi_{\Lambda,\beta,\mathsf{h}}^{(\xi,\mathsf{b})}[F \mid \mathsf a|_A=\eta]\Psi_{\Lambda,\beta,\mathsf{h}}^{(\xi,\mathsf{b})}[G \mid \mathsf a|_A=\eta].
\end{aligned}
\end{equation}
This concludes the proof.
\end{proof}

As a consequence of the FKG property, we obtain the following monotonicity in boundary conditions, inverse temperature, and external magnetic field.
Before stating it, let us introduce a useful notation. For boundary conditions $(\xi_1,\mathsf{b}_1), (\xi_2,\mathsf{b}_2)$ on a common set $\Lambda$, we write $(\xi_1,\mathsf{b}_1) \leq (\xi_2,\mathsf{b}_2)$ if every partition class of $\xi_1$ is contained in a partition class of $\xi_2$, and $\mathsf{b}_1\leq \mathsf{b}_2$. Also, recall that for two measures $\mu_1$ and $\mu_2$, we write $\mu_1 \preccurlyeq \mu_2$ if $\mu_1(F)\leq \mu_2(F)$ for every increasing function $F$.

\begin{proposition}[Monotonicity properties of the random cluster measure]\label{prop:monotonicity_FK}
Let $\Lambda\subset \mathbb{Z}^d$ finite and $A\subset \Lambda$. For every boundary conditions $(\xi_1,\mathsf{b}_1) \leq (\xi_2,\mathsf{b}_2)$ on $\Lambda$, every $\eta_1,\eta_2\in (\mathbb{R}^+)^{A}$ such that $\eta_1\leq \eta_2$, every external magnetic fields $\mathsf{h}_1,\mathsf{h}_2\in (\mathbb R^+)^\Lambda$ with $\mathsf h_1\leq \mathsf h_2$, and every $0\leq\beta_1\leq \beta_2$ we have
\begin{align}
\Psi_{\Lambda,\beta_1,\mathsf h_1}^{(\xi_1,\mathsf{b}_1)}[\:\cdot \mid \mathsf{a}|_A=\eta_1] \preccurlyeq \Psi_{\Lambda,\beta_2,\mathsf h_2}^{(\xi_2,\mathsf{b}_2)}[\:\cdot \mid \mathsf{a}|_A=\eta_2].
\end{align}
In particular, 
\begin{equation}
\Psi^0_{\Lambda,\beta,\mathsf h} \preccurlyeq \Psi_{\Lambda,\beta,\mathsf h}^{(\xi,\mathsf{b})}\preccurlyeq \Psi^{(w,\mathfrak{p})}_{\Lambda,\beta,\mathsf h}  
\end{equation}
for every boundary condition $(\xi,\mathsf{b})$ on $\Lambda$ such that $\mathsf{b}\leq \mathfrak{M}_{\Lambda}$, and every $\mathsf{h}\in(\mathbb R^+)^\Lambda$.
\end{proposition}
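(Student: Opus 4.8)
The plan is to deduce the full monotonicity statement from the conditional absolute-value FKG inequality (Proposition \ref{prop: absolute value FKG}) together with the monotonicity properties of the (inhomogeneous, random-environment) FK-Ising measures $\boldsymbol{\phi}^\xi_{\Lambda,\beta,\mathsf h,\mathsf a}$ recorded in \cite{Grimmett2006RCM}. The key structural fact, from Proposition \ref{prop: phi4rc is annealed rndisingrc}, is the disintegration $\mathrm d\Psi^{(\xi,\mathsf b)}_{\Lambda,\beta,\mathsf h}[(\omega,\mathsf a)] = \boldsymbol\phi^\xi_{\Lambda,\beta,\mathsf h,\mathsf a}[\omega]\,\mathrm d\mu^{(\xi,\mathsf b)}_{\Lambda,\beta,\mathsf h}(\mathsf a)$, so that an increasing test function $F$ on $\{0,1\}^{\overline E(\Lambda)}\times(\mathbb R^+)^{\overline\Lambda}$ integrates to $\Psi^{(\xi,\mathsf b)}_{\Lambda,\beta,\mathsf h}[F\mid \mathsf a|_A=\eta] = \mu^{(\xi,\mathsf b)}_{\Lambda,\beta,\mathsf h}\big[\,\mathsf a\mapsto \boldsymbol\phi^\xi_{\Lambda,\beta,\mathsf h,\mathsf a}[F^{\mathsf a}]\;\big|\;\mathsf a|_A=\eta\,\big]$, where $F^{\mathsf a}(\omega)=F(\omega,\mathsf a)$. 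The proof then reduces to two monotonicity checks at the level of these two layers.

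First, I would treat the $\omega$-layer. For fixed $\mathsf a$, the map $F\mapsto \boldsymbol\phi^\xi_{\Lambda,\beta,\mathsf h,\mathsf a}[F^{\mathsf a}]$ is monotone in $(\xi,\beta,\mathsf h)$ and in $\mathsf a$: increasing $\xi$ (coarser partition, i.e.\ more wiring) and increasing $\beta$ or $\mathsf h$ raises the edge-probabilities $p(\beta,\mathsf a)_{xy}=1-e^{-2\beta\mathsf a_x\mathsf a_y}$ and $p(\beta,\mathsf h,\mathsf a)_{x\fg}=1-e^{-2\beta\mathsf h_x\mathsf a_x}$ and only increases the cluster-weight exponent $k^\xi(\omega)$ for configurations consistent with more wiring, so the standard FK comparison inequalities \cite[Theorem 3.21]{Grimmett2006RCM} apply; similarly $\mathsf a\mapsto p(\beta,\mathsf a)_{xy}$ is coordinatewise increasing, giving stochastic monotonicity of $\boldsymbol\phi^\xi_{\Lambda,\beta,\mathsf h,\mathsf a}$ in $\mathsf a$, and since $F^{\mathsf a_1}(\omega)\le F^{\mathsf a_2}(\omega)$ for $\mathsf a_1\le \mathsf a_2$, the composed map $\mathsf a\mapsto \boldsymbol\phi^\xi_{\Lambda,\beta,\mathsf h,\mathsf a}[F^{\mathsf a}]$ is an increasing function of $\mathsf a$ (this is exactly the argument already used in the proof of Proposition \ref{prop:FKG random cluster}). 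Second, I would treat the $\mathsf a$-layer: I claim $\mu^{(\xi_1,\mathsf b_1)}_{\Lambda,\beta_1,\mathsf h_1}[\,\cdot\mid\mathsf a|_A=\eta_1]\preccurlyeq \mu^{(\xi_2,\mathsf b_2)}_{\Lambda,\beta_2,\mathsf h_2}[\,\cdot\mid\mathsf a|_A=\eta_2]$. By \eqref{eq: absolute value density}, $\mu^{(\xi,\mathsf b)}_{\Lambda,\beta,\mathsf h}$ has density proportional to $\mathbbm 1_{\mathsf a|_{\partial^{\rm ext}\Lambda}=\mathsf b}\,Z^{{\rm Ising},\xi}_{\Lambda,\beta,\mathsf h,\mathsf a}\prod_x\mathrm d\rho_{g,a}(\mathsf a_x)$, and I would verify the FKG lattice condition / Holley criterion holds and that increasing $(\xi,\beta,\mathsf h)$ multiplies the density by an increasing function of $\mathsf a$ (this is the content behind the logarithmic-derivative computation \eqref{eq:log derivative1}–\eqref{eq:log derivative1}-style Griffiths argument already deployed in Proposition \ref{prop: absolute value FKG}); conditioning on $\mathsf a|_A=\eta$ with $\eta_1\le\eta_2$ then shifts the conditional law upward by the same Griffiths monotonicity. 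Combining the two layers — increasing integrand in the $\omega$-layer composed with a stochastically larger law in the $\mathsf a$-layer — yields $\Psi^{(\xi_1,\mathsf b_1)}_{\Lambda,\beta_1,\mathsf h_1}[F\mid\mathsf a|_A=\eta_1]\le \Psi^{(\xi_2,\mathsf b_2)}_{\Lambda,\beta_2,\mathsf h_2}[F\mid\mathsf a|_A=\eta_2]$.

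For the displayed consequence, I would specialize: the free boundary condition $(\xi,\mathsf b)=(\{\{x\}\},0)$ is minimal among all boundary conditions (finest partition, smallest $\mathsf b$), and the wired-plus boundary condition $(\{\partial^{\rm ext}\Lambda\},\mathfrak M_\Lambda)$ is maximal among those with $\mathsf b\le\mathfrak M_\Lambda$, so the chain $\Psi^0_{\Lambda,\beta,\mathsf h}\preccurlyeq\Psi^{(\xi,\mathsf b)}_{\Lambda,\beta,\mathsf h}\preccurlyeq\Psi^{(w,\mathfrak p)}_{\Lambda,\beta,\mathsf h}$ is immediate from the general statement with $A=\emptyset$, $\beta_1=\beta_2=\beta$, $\mathsf h_1=\mathsf h_2=\mathsf h$. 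The main obstacle I anticipate is the $\mathsf a$-layer monotonicity in $\beta$ and $\mathsf h$: unlike the ordinary FK comparison, here one is comparing the \emph{annealed} single-site densities, and one must argue that $\log Z^{{\rm Ising},\xi}_{\Lambda,\beta,\mathsf h,\mathsf a}$ has the requisite supermodularity and monotone-in-parameter behaviour as a function of $\mathsf a$ — this is where Griffiths' second inequality (equivalently, the Ginibre-type monotonicity of Ising correlations $\langle\sigma_x\sigma_y\rangle$ and $\langle\sigma_x\rangle$ in the couplings, already invoked around \eqref{eq:log derivative1}) does the work, but checking that nothing goes wrong under the conditioning $\mathsf a|_A=\eta$ and at the boundary constraint $\mathsf a|_{\partial^{\rm ext}\Lambda}=\mathsf b$ requires some care. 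Everything else is a routine assembly of inequalities already established in the excerpt.
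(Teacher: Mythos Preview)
Your proposal is correct and follows essentially the same two-layer approach as the paper: first establish stochastic monotonicity of the $\mathsf a$-marginals $\mu^{(\xi,\mathsf b)}_{\Lambda,\beta,\mathsf h}[\,\cdot\mid\mathsf a|_A=\eta]$ by showing the Radon--Nikodym derivative $Z(\mathsf a)=Z^{{\rm Ising},\xi_2}_{\Lambda,\beta_2,\mathsf h_2,\overline{\mathsf a}}/Z^{{\rm Ising},\xi_1}_{\Lambda,\beta_1,\mathsf h_1,\mathsf a}$ is increasing via Griffiths' inequality and then invoking the conditional absolute-value FKG (Proposition~\ref{prop: absolute value FKG}), and second use the standard FK-Ising monotonicity in $(\xi,\beta,\mathsf h,\mathsf a)$ on the $\omega$-layer. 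The paper handles the ``obstacle'' you flag exactly as you anticipate: the conditioning on $\mathsf a|_A=\eta$ is absorbed into the conditional version of the absolute-value FKG already established, so no additional work is needed there.
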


\begin{proof}
We first claim that 
\begin{equation}
\mu_{\Lambda,\beta_1,\mathsf h_1}^{(\xi_1,\mathsf{b}_1)}[\:\cdot \mid \mathsf{a}|_A=\eta_1] \preccurlyeq \mu_{\Lambda,\beta_2,\mathsf h_2}^{(\xi_2,\mathsf{b}_2)}[\:\cdot \mid \mathsf{a}|_A=\eta_2].    
\end{equation} 
Indeed, for any bounded increasing function $F:(\mathbb{R}^{+})^{\overline{\Lambda}\setminus A}\rightarrow \mathbb{R}$ we have
\begin{equation} 
\mu_{\Lambda,\beta_2,\mathsf h_2}^{(\xi_2,\mathsf{b}_2)}[F\mid \mathsf{a}|_A=\eta_2]
=
\frac{\mu_{\Lambda,\beta_1,\mathsf h_1}^{(\xi_1,\mathsf{b}_1)}[Z\cdot F\mid \mathsf{a}|_A=\eta_1]}{\mu_{\Lambda,\beta_1,\mathsf h_1}^{(\xi_1,\mathsf{b}_1)}[Z\mid \mathsf{a}|_A=\eta_1]}
\end{equation}
where
\begin{equation}
Z(\mathsf{a})
=
\frac{Z^{\textup{Ising},\xi_2}_{\Lambda,\beta_2,\mathsf{h}_2,\overline{\mathsf{a}}}}{Z^{\textup{Ising},\xi_1}_{\Lambda,\beta_1,\mathsf{h}_1,\mathsf{a}}},
\end{equation}
where $\overline{\mathsf{a}}|_A=\eta_2$, and is otherwise equal to $\mathsf{a}$.
By taking the logarithmic derivative and using Griffiths' inequality (as in \eqref{eq:log derivative1}), we see that $Z$ is an increasing function of $\mathsf{a}$. Using the absolute value FKG \eqref{eq: conditional abs fkg}, we deduce that 
\begin{equation} 
\mu_{\Lambda,\beta_1,\mathsf h_1}^{(\xi_1,\mathsf{b}_1)}[F\mid \mathsf{a}|_A=\eta_1]
\leq
\mu_{\Lambda,\beta_2,\mathsf h_2}^{(\xi_2,\mathsf{b}_2)}[F\mid \mathsf{a}|_A=\eta_2],
\end{equation}
which implies the claim.

We now proceed to the proof of the full monotonicity. Let $G:\{0,1\}^{\overline{E}(\Lambda)}\times(\mathbb{R}^{+})^{\overline{\Lambda}\setminus A}\to \mathbb{R}$ be an increasing function. For any $\mathsf{a} \in (\mathbb{R}^{+})^{\overline{\Lambda}}$, we have by the monotonicity properties of the random cluster model that
\begin{equation}
\boldsymbol{\phi}^{\xi_1}_{\Lambda,\beta_1,\mathsf h_1,\mathsf{a}}[G^{\mathsf{a}}]
\leq
\boldsymbol{\phi}^{\xi_2}_{\Lambda,\beta_2,\mathsf h_2,\mathsf{a}}[G^{\mathsf{a}}],
\end{equation}
where we recall that $G^{\mathsf{a}}(\omega)=G(\omega,\mathsf{a})$,
and that the mapping $\mathsf{a} \mapsto \boldsymbol{\phi}^{\xi_2}_{\Lambda,\beta_2,\mathsf h_2,\mathsf{a}}[G^{\mathsf{a}}]$ is increasing.
By the monotonicity of the absolute value field we deduce
\begin{equation}
\begin{aligned}
\Psi_{\Lambda,\beta_1,\mathsf h_1}^{(\xi_1,\mathsf{b}_1)}[G\mid \mathsf{a}|_A=\eta_1]
&\leq 
\mu_{\Lambda,\beta_1,\mathsf h_1}^{(\xi_1,\mathsf{b}_1)}[\boldsymbol{\phi}^{\xi_2}_{\Lambda,\beta_2,\mathsf h_2,\mathsf{a}}[G^{\mathsf{a}}]\mid \mathsf{a}|_A=\eta_1]\\
&\leq
\Psi_{\Lambda,\beta_2,\mathsf h_2}^{(\xi_2,\mathsf{b}_2)}[G\mid \mathsf{a}|_A=\eta_2].
\end{aligned}
\end{equation}
This completes the proof.
\end{proof}

As a corollary, we obtain the following monotonicity property for the free measure.

\begin{corollary}\label{cor: monotonicity free measure}
Let $\Lambda_1\subset \Lambda_2\subset \mathbb{Z}^d$ be finite and $\mathsf h_1\in (\mathbb{R}^+)^{\Lambda_1},\mathsf h_2\in (\mathbb{R}^+)^{\Lambda_2}$ such that $\mathsf h_1(x)\leq \mathsf h_2(x)$ for all $x\in \Lambda_1$. For every $\beta\geq 0$, we have
\begin{align}
\Psi_{\Lambda_1,\beta,\mathsf{h}_1}^{0} \preccurlyeq \Psi_{\Lambda_2,\beta,\mathsf{h}_2}^{0},
\end{align}    
in the sense that for every increasing, integrable function $F:\{0,1\}^{\overline{E}(\Lambda_2)}\times(\mathbb R^+)^{\overline{\Lambda_2}}\to \mathbb{R}$ that is $\Lambda_1$-measurable,
\begin{equation}
\Psi^0_{\Lambda_1,\beta,\mathsf h_1}[F|_{\Lambda_1}] \leq \Psi^0_{\Lambda_2,\beta, \mathsf h_2}[F].
\end{equation}
\end{corollary}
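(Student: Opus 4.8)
The plan is to reduce the comparison between measures living on two different domains to a comparison on the common domain $\Lambda_1$: we condition the larger measure $\Psi^0_{\Lambda_2,\beta,\mathsf h_2}$ on the configuration outside $\Lambda_1$ using the domain Markov property (Proposition~\ref{prop: domain markov}), and then apply the monotonicity in boundary conditions and magnetic field (Proposition~\ref{prop:monotonicity_FK}).

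In detail, fix an increasing, integrable, $\Lambda_1$-measurable function $F$; by truncation and monotone convergence it suffices to treat bounded $F$. Here ``$\Lambda_1$-measurable'' means that $F$ depends only on $(\omega|_{\overline E(\Lambda_1)},\mathsf a|_{\overline{\Lambda_1}})$, and since $\Lambda_1\subset\Lambda_2$ gives $\overline E(\Lambda_1)\subset\overline E(\Lambda_2)$ and $\overline{\Lambda_1}\subset\overline{\Lambda_2}$, this is exactly the measurability required for the domain Markov property with $\Lambda=\Lambda_2$ and $\Lambda'=\Lambda_1$; in particular $F$ does not depend on the ghost edges $x\fg$ with $x\in\Lambda_2\setminus\Lambda_1$, while the ghost edges inside $\Lambda_1$ belong to $\overline E(\Lambda_1[\mathsf h_2])$. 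Applying Proposition~\ref{prop: domain markov} to $\Psi^0_{\Lambda_2,\beta,\mathsf h_2}$ (free boundary condition on $\Lambda_2$, field $\mathsf h_2$), for $\Psi^0_{\Lambda_2,\beta,\mathsf h_2}$-almost every realisation $(\theta,\mathsf s)$ of $(\omega|_{\overline E(\Lambda_2[\mathsf h_2])\setminus\overline E(\Lambda_1[\mathsf h_2])},\mathsf a|_{\Lambda_2\setminus\Lambda_1})$ one has
\[
\Psi^0_{\Lambda_2,\beta,\mathsf h_2}\big[F \,\big|\,(\omega|_{\overline E(\Lambda_2[\mathsf h_2])\setminus\overline E(\Lambda_1[\mathsf h_2])},\mathsf a|_{\Lambda_2\setminus\Lambda_1})=(\theta,\mathsf s)\big]=\Psi^{(\xi^\theta,\mathsf s)}_{\Lambda_1,\beta,\mathsf h_2}[F],
\]
where $\xi^\theta$ is the partition of $\partial^{\rm ext}\Lambda_1$ induced by $\theta$ and $\mathsf s\in(\mathbb R^+)^{\partial^{\rm ext}\Lambda_1}$ records the conditioned values of $\mathsf a$ on $\partial^{\rm ext}\Lambda_1$ (those vertices in $\partial^{\rm ext}\Lambda_1\cap\partial^{\rm ext}\Lambda_2$ carrying the value $0$ forced by the free boundary condition on $\Lambda_2$); in particular $\mathsf s\geq 0$.

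Now the free boundary condition $(\xi_{\mathrm f},0)$ on $\Lambda_1$, whose partition classes are all singletons, satisfies $(\xi_{\mathrm f},0)\leq(\xi^\theta,\mathsf s)$, and $\mathsf h_1\leq\mathsf h_2|_{\Lambda_1}$ by hypothesis. Hence Proposition~\ref{prop:monotonicity_FK} (applied with $A=\emptyset$ and $\beta_1=\beta_2=\beta$) gives, for the increasing function $F$ and uniformly over $(\theta,\mathsf s)$,
\[
\Psi^0_{\Lambda_1,\beta,\mathsf h_1}[F]\leq\Psi^{(\xi^\theta,\mathsf s)}_{\Lambda_1,\beta,\mathsf h_2}[F].
\]
Integrating the previous displayed identity over $(\theta,\mathsf s)$ with respect to $\Psi^0_{\Lambda_2,\beta,\mathsf h_2}$ and using the tower property yields
\[
\Psi^0_{\Lambda_2,\beta,\mathsf h_2}[F]=\Psi^0_{\Lambda_2,\beta,\mathsf h_2}\Big[\Psi^{(\xi^\theta,\mathsf s)}_{\Lambda_1,\beta,\mathsf h_2}[F]\Big]\geq\Psi^0_{\Lambda_1,\beta,\mathsf h_1}[F],
\]
which is the claim; the general (integrable, unbounded) case follows by monotone convergence. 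The argument is essentially bookkeeping, and the only points requiring a little care are: checking that a $\Lambda_1$-measurable $F$ lies in the correct $\sigma$-algebra for the domain Markov property (in particular that the ghost edges inside $\Lambda_1$ are on the ``inside'' while $F$ ignores those in $\Lambda_2\setminus\Lambda_1$); verifying that the conditioned spins on $\partial^{\rm ext}\Lambda_1$ are nonnegative, so that the free boundary condition on $\Lambda_1$ is genuinely the minimal one in Proposition~\ref{prop:monotonicity_FK}; and the reduction to bounded $F$.
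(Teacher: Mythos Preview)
Your proof is correct and follows essentially the same approach as the paper's: both combine the domain Markov property (Proposition~\ref{prop: domain markov}) with the monotonicity in boundary conditions and magnetic field (Proposition~\ref{prop:monotonicity_FK}). The only cosmetic difference is that the paper first passes from $\mathsf h_2$ to the extension $\tilde{\mathsf h}_1$ of $\mathsf h_1$ on $\Lambda_2$ and then changes the domain, whereas you apply domain Markov directly to $\Psi^0_{\Lambda_2,\beta,\mathsf h_2}$ and handle both the boundary condition and the field in a single application of Proposition~\ref{prop:monotonicity_FK}; your organisation is arguably slightly more streamlined.
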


\begin{proof}
First note that by Proposition \ref{prop:monotonicity_FK},
\begin{equation}
\Psi_{\Lambda_2,\beta,\tilde {\mathsf h}_1}^{0}\preccurlyeq \Psi_{\Lambda_2,\beta,\mathsf{h}_2}^{0},   
\end{equation}
where $\tilde {\mathsf h}_1$ coincides with $\mathsf h_1$ on $\Lambda_1$ and is equal to $0$ on $\Lambda_2\setminus \Lambda_1$.
The desired inequality then follows by applying Proposition \ref{prop: domain markov} followed by Proposition \ref{prop:monotonicity_FK} (applied to the random boundary condition $(\xi,\mathsf{b})$ on $\Lambda_1$ induced from the domain Markov property) to yield that
\begin{equation}
\Psi_{\Lambda_1,\beta,\mathsf{h}_1}^{0}\preccurlyeq \Psi_{\Lambda_2,\beta,\tilde{\mathsf h}_1}^{0}.  
\end{equation}
This completes the proof.
\end{proof}

\subsection{Uniqueness of infinite volume measures}

In this section, we show uniqueness of full-space and half-space infinite volume measures. First, we consider the infinite volume limits arising from the measures $\Psi^0_{\Lambda,\beta}$ and $\Psi^{(w,\mathfrak{p})}_{\Lambda,\beta}$ as $\Lambda \uparrow \mathbb{Z}^d$, and show that they coincide for any $\beta\geq 0$. 

\begin{proposition}
Let $\beta\geq 0$. Then, $\Psi^0_{\Lambda,\beta}$ and $\Psi^{(w,\mathfrak{p})}_{\Lambda,\beta}$ converge weakly as $\Lambda \uparrow \mathbb{Z}^d$ to infinite volume measures denoted $\Psi^0_{\beta}$ and $\Psi^1_{\beta}$, respectively.
\end{proposition}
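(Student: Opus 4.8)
The plan is to establish convergence of each sequence by a monotonicity-plus-tightness argument, exactly in the spirit of the standard random cluster theory but with the extra bookkeeping forced by the absolute value field. First I would fix an increasing, bounded, local function $F$ depending on $(\omega,\mathsf a)$ through finitely many edges and vertices, all contained in some fixed box $\Lambda_m$. For the free measure, Corollary~\ref{cor: monotonicity free measure} (applied with $\mathsf h_1=\mathsf h_2=0$ and $\Lambda_1\subset\Lambda_2$) gives that $L\mapsto \Psi^0_{\Lambda_L,\beta}[F]$ is nondecreasing once $L\geq m$. Since $F$ is bounded, the limit $\lim_{L\to\infty}\Psi^0_{\Lambda_L,\beta}[F]$ exists. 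To upgrade convergence of expectations of increasing local functions to weak convergence of the measures, I would note that finite-dimensional distributions are determined by expectations of functions of the form $\prod_i \mathbbm 1_{\omega_{e_i}=1}\prod_j g_j(\mathsf a_{x_j})$ with each $g_j$ increasing and bounded; by inclusion–exclusion in the $\omega$-coordinates and a monotone-class/approximation argument in the $\mathsf a$-coordinates (using that the $\mathsf a$-marginals are tight by Proposition~\ref{prop:regularity}, so no mass escapes to infinity), the limits of all such expectations exist and are consistent, hence define a probability measure $\Psi^0_\beta$ on $\{0,1\}^{E(\mathbb Z^d)}\times(\mathbb R^+)^{\mathbb Z^d}$ to which $\Psi^0_{\Lambda_L,\beta}$ converges weakly.

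For the wired-plus measure the argument is the mirror image: I would show that $L\mapsto \Psi^{(w,\mathfrak p)}_{\Lambda_L,\beta}[F]$ is \emph{nonincreasing} in $L$ (for $L\geq m$) for increasing local $F$. This requires a comparison between $\Psi^{(w,\mathfrak p)}_{\Lambda_{L+1},\beta}$ and $\Psi^{(w,\mathfrak p)}_{\Lambda_L,\beta}$. Using the identity $\Psi^{(w,\mathfrak p)}_{\Lambda,\beta}=\Psi^0_{\Lambda,\beta,\mathfrak p_\Lambda}$ from the Definition/Remark after Definition~\ref{def: random cluster phi4}, together with the domain Markov property (Proposition~\ref{prop: domain markov}) to express $\Psi^{(w,\mathfrak p)}_{\Lambda_{L+1},\beta}$ restricted to $\Lambda_L$ as an average over boundary conditions on $\partial^{\rm ext}\Lambda_L$ induced by the configuration on the annulus $\Lambda_{L+1}\setminus\Lambda_L$, and then Proposition~\ref{prop:monotonicity_FK} to compare each such induced boundary condition (which is dominated by the wired-plus one, up to the $\mathfrak M$-cap, using $\mathfrak M_{\Lambda_L}\leq \mathfrak M_{\Lambda_{L+1}}$) with the full wired-plus boundary condition on $\Lambda_L$. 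This yields $\Psi^{(w,\mathfrak p)}_{\Lambda_{L+1},\beta}\preccurlyeq \Psi^{(w,\mathfrak p)}_{\Lambda_L,\beta}$ on $\Lambda_L$-measurable increasing events, so $\Psi^{(w,\mathfrak p)}_{\Lambda_L,\beta}[F]$ converges; the same inclusion–exclusion/tightness passage as above promotes this to weak convergence and defines $\Psi^1_\beta$.

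The main obstacle I anticipate is not the monotonicity (which is essentially bookkeeping with Propositions~\ref{prop: domain markov} and~\ref{prop:monotonicity_FK}), but the fact that the maximal boundary cap $\mathfrak M_{\Lambda_L}=C_0(1\vee\log|\Lambda_L|)^{1/4}$ \emph{grows with $L$}, so the ``wired plus'' boundary condition is not literally constant along the sequence. Consequently $\Psi^{(w,\mathfrak p)}_{\Lambda_{L+1},\beta}$ restricted to $\partial^{\rm ext}\Lambda_L$ is not pointwise dominated by $\mathfrak M_{\Lambda_L}$, so the clean inequality $\Psi^{(w,\mathfrak p)}_{\Lambda_{L+1},\beta}\preccurlyeq\Psi^{(w,\mathfrak p)}_{\Lambda_L,\beta}$ in Proposition~\ref{prop:monotonicity_FK} does not apply verbatim. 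I would handle this by instead showing the two-sided bound: for $L'\geq L\geq m$, $\Psi^{(w,\mathfrak p)}_{\Lambda_{L'},\beta}[F]$ differs from $\Psi_{\Lambda_L,\beta,\mathfrak p_{\Lambda_L}}[F]$ by an error that is $o(1)$ as $L\to\infty$, using Proposition~\ref{prop:regularity} and \eqref{eq:maximum_reg} to argue that the $\mathsf a$-field on $\partial^{\rm ext}\Lambda_L$ exceeds $\mathfrak M_{\Lambda_L}$ only with vanishing probability, and that on the complementary (high-probability) event the induced boundary condition is dominated by the capped wired-plus one; a Cauchy–Schwarz estimate as in the proof of Proposition~\ref{prop:regularity} controls the contribution of the bad event. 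This makes $(\Psi^{(w,\mathfrak p)}_{\Lambda_L,\beta}[F])_L$ Cauchy, yielding convergence. Finally, both limiting measures are automatically translation invariant (by translating the boxes) and satisfy the infinite-volume DLR/domain Markov equations, and inherit the regularity bound of Proposition~\ref{prop:regularity} by Remark~\ref{rem:tightness}; the equality $\Psi^0_\beta=\Psi^1_\beta$ is the content of the next result and is not claimed here.
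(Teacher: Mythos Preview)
Your approach is correct but takes a substantially longer route than the paper. The paper's proof is essentially one line: it notes $\Psi^{(w,\mathfrak p)}_{\Lambda,\beta}=\Psi^0_{\Lambda,\beta,\mathfrak p_\Lambda}$ and then transfers the already-established weak convergence of the \emph{spin} measures $\langle\cdot\rangle_{\Lambda,\beta}$ and $\langle\cdot\rangle_{\Lambda,\beta,\mathfrak p_\Lambda}$ (to $\nu_\beta$ and $\nu^+_\beta$, see \eqref{eq:conv_plus}) to the random cluster measures via the Edwards--Sokal coupling. The point is that under the coupling, any bounded local function of $(\omega,\mathsf a)$ integrates out to a bounded continuous local function of $\varphi$, so weak convergence of the spin measures directly yields weak convergence of the random cluster measures.

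By contrast, you work entirely at the random cluster level: monotonicity in volume for the free measure (fine), and for the wired-plus measure you correctly identify the obstacle that $\mathfrak M_{\Lambda_L}$ grows with $L$, then repair it with a regularity/Cauchy argument. That repair is sound, but it is precisely the argument by which $\langle\cdot\rangle_{\Lambda_L,\beta,\mathfrak p_{\Lambda_L}}\to\langle\cdot\rangle^+_\beta$ was already established for the spin model (compare the proof of Lemma~\ref{lem: existence of phi4 halspace measures}); you are redoing that work one level up. The paper's route is more economical because it recycles that convergence via Edwards--Sokal rather than reproving it, and it also sidesteps the inclusion--exclusion/monotone-class bookkeeping for the continuous $\mathsf a$-coordinates that your approach requires.
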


\begin{proof}
Note that $\Psi^{(w,\mathfrak{p})}_{\Lambda,\beta}=\Psi^0_{\Lambda,\beta,\mathfrak{p}_{\Lambda}}$. The desired result then follows from the weak convergence of the corresponding spin measures $\langle \cdot \rangle_{\Lambda,\beta}$ and $\langle \cdot \rangle_{\Lambda,\beta,\mathfrak{p}_{\Lambda}}$ and the Edwards--Sokal coupling.    
\end{proof}

Recall from \eqref{eq:free=plus+minus/2} that $\langle \cdot\rangle_\beta^0=\tfrac{1}{2}(\langle \cdot \rangle^+_{\beta}+\langle \cdot \rangle^-_{\beta})$. Using this result, we will now deduce that $\Psi^0_{\beta}$ and $\Psi^1_{\beta}$ coincide.

\begin{proposition}\label{prop: unique gibbs measure}
Let $\beta\geq 0$. One has that $\Psi^0_{\beta}=\Psi^1_{\beta}$.    
\end{proposition}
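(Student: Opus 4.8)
The plan is to deduce the equality $\Psi^0_\beta = \Psi^1_\beta$ from the spin-level identity $\langle\cdot\rangle_\beta^0 = \tfrac12(\langle\cdot\rangle^+_\beta + \langle\cdot\rangle^-_\beta)$ by exploiting the Edwards--Sokal coupling together with the sign-flip symmetry. First I would observe that it suffices to show that the two measures assign the same probability to every \emph{increasing} cylinder event of the form $\{x_1\leftrightarrow y_1,\ldots,x_k\leftrightarrow y_k\}$ together with events of the form $\{\mathsf a_{z_i}\le t_i\}$; by the monotonicity $\Psi^0_\beta\preccurlyeq\Psi^1_\beta$ from Proposition~\ref{prop:monotonicity_FK} (passed to the infinite volume limit), it is in fact enough to prove equality of expectations of a generating family of bounded increasing functions, and one reduces further to connectivity events by a standard inclusion-exclusion since the $\mathsf a$-marginals already agree (the $\mathsf a$-marginal of $\Psi^0_\beta$ is $|\varphi|$ under $\nu_\beta$, while the $\mathsf a$-marginal of $\Psi^1_\beta$ is $|\varphi|$ under $\nu^+_\beta$, and these coincide because $|\varphi|$ has the same law under $\nu^+_\beta$, $\nu^-_\beta$ and hence under $\nu_\beta^0=\tfrac12(\nu^+_\beta+\nu^-_\beta)$).

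The key step is the identity expressing two-point (and multi-point) connectivities in terms of spin correlations. From Corollary~\ref{cor: ES correlations} (and its wired-plus analogue), for the free measure one has $\Psi^0_{\Lambda,\beta}[\mathsf a_x\mathsf a_y\mathbbm1_{x\leftrightarrow y}] = \langle\varphi_x\varphi_y\rangle_{\Lambda,\beta}$, and since under the plus measure $\Psi^1_{\Lambda,\beta}=\Psi^0_{\Lambda,\beta,\mathfrak p_\Lambda}$ the ghost is present, the analogous statement reads $\Psi^1_{\Lambda,\beta}[\mathsf a_x\mathsf a_y(\mathbbm1_{x\leftrightarrow y}+\mathbbm1_{x\leftrightarrow\fg}\mathbbm1_{y\leftrightarrow\fg}-\mathbbm1_{x\leftrightarrow y\leftrightarrow\fg})]$-type combinations recover $\langle\varphi_x\varphi_y\rangle^+_\beta$ while $\Psi^1_{\Lambda,\beta}[\mathsf a_x\mathbbm1_{x\leftrightarrow\fg}] = \langle\varphi_x\rangle^+_\beta$. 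More cleanly: in the wired-plus picture, conditioning on $(\omega,\mathsf a)$ and resampling signs $\sigma_\mathcal C$ (with the ghost cluster forced to $+$), one gets $\langle\varphi_x\varphi_y\rangle^+_\beta = \Psi^1_\beta[\mathsf a_x\mathsf a_y\,\mathbb E[\sigma_{\mathcal C(x)}\sigma_{\mathcal C(y)}\mid\omega]]$, and $\mathbb E[\sigma_{\mathcal C(x)}\sigma_{\mathcal C(y)}\mid\omega]=\mathbbm1_{x\leftrightarrow y}$ on $\{x\leftrightarrow y\}$, $=\mathbbm1_{x\leftrightarrow\fg}\mathbbm1_{y\leftrightarrow\fg}$ off it. The same computation in the free picture gives $\langle\varphi_x\varphi_y\rangle_\beta^0 = \Psi^0_\beta[\mathsf a_x\mathsf a_y\mathbbm1_{x\leftrightarrow y}]$ (no ghost). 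Combining these with $\langle\varphi_x\varphi_y\rangle^0_\beta=\tfrac12\langle\varphi_x\varphi_y\rangle^+_\beta+\tfrac12\langle\varphi_x\varphi_y\rangle^-_\beta = \langle\varphi_x\varphi_y\rangle^+_\beta$ (the last equality because $|\varphi|$ and hence $\varphi_x\varphi_y$ — being even in the global sign — has the same law under $\nu^\pm_\beta$) would be the mechanism; one then transfers the ghost-connectivity contribution to a genuine $x\leftrightarrow y$ connectivity via the stochastic domination and a squeeze argument.

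Concretely I would run the squeeze as follows. Since $\Psi^0_\beta\preccurlyeq\Psi^1_\beta$, we have $\Psi^0_\beta[x\leftrightarrow y]\le\Psi^1_\beta[x\leftrightarrow y]$ for all $x,y$, and more generally $\Psi^0_\beta[F]\le\Psi^1_\beta[F]$ for increasing $F$ of the connectivities. It therefore suffices to show the averaged quantities match. Using the displayed relations and $\langle\varphi_x\varphi_y\rangle^0_\beta=\langle\varphi_x\varphi_y\rangle^+_\beta$, one obtains $\Psi^0_\beta[\mathsf a_x\mathsf a_y\mathbbm1_{x\leftrightarrow y}] = \Psi^1_\beta[\mathsf a_x\mathsf a_y(\mathbbm1_{x\leftrightarrow y}+\mathbbm1_{x\leftrightarrow\fg,y\leftrightarrow\fg})]\ge\Psi^1_\beta[\mathsf a_x\mathsf a_y\mathbbm1_{x\leftrightarrow y}]\ge\Psi^0_\beta[\mathsf a_x\mathsf a_y\mathbbm1_{x\leftrightarrow y}]$, where the last inequality is the monotonicity applied to the increasing function $(\omega,\mathsf a)\mapsto\mathsf a_x\mathsf a_y\mathbbm1_{x\leftrightarrow y}$ — forcing $\Psi^1_\beta[\mathsf a_x\mathsf a_y\mathbbm1_{x\leftrightarrow\fg,y\leftrightarrow\fg}]=0$, i.e.\ under $\Psi^1_\beta$ the ghost cluster is a.s.\ ``invisible'' to the product $\mathsf a_x\mathsf a_y$ for any fixed $x,y$, hence (varying $x,y$ and using that $\mathsf a>0$ a.s.) the ghost cluster of $\Psi^1_\beta$ contains no fixed vertex with positive probability. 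This means $\Psi^1_\beta$-a.s.\ the ghost cluster is finite (indeed $\Psi^1_\beta[x\leftrightarrow\fg]=0$ for every $x$, which is exactly the statement that the plus-measure has no infinite ghost cluster — consistent with $\langle\varphi_x\rangle^0_\beta = 0$). Dropping $\fg$ then identifies $\Psi^1_\beta$ restricted to $\overline E(\mathbb Z^d)$-events with a measure having the same connectivity statistics as $\Psi^0_\beta$; together with $\Psi^0_\beta\preccurlyeq\Psi^1_\beta$ and equality of all increasing-event probabilities this gives $\Psi^0_\beta=\Psi^1_\beta$.

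The main obstacle I anticipate is handling the ghost vertex carefully: in infinite volume the wired-plus measure $\Psi^1_\beta$ is built as a limit of $\Psi^0_{\Lambda,\beta,\mathfrak p_\Lambda}$, where the boundary field $\mathfrak p_\Lambda$ grows like $(\log|\Lambda|)^{1/4}$, so one must check that the ghost-connectivity events pass properly to the limit and that the squeeze inequalities survive — in particular that $\langle\varphi_x\varphi_y\rangle^+_\beta = \Psi^1_\beta[\mathsf a_x\mathsf a_y(\mathbbm1_{x\leftrightarrow y}+\mathbbm1_{x\leftrightarrow\fg,y\leftrightarrow\fg})]$ holds in the limit with no leakage of mass (this uses the uniform integrability from Proposition~\ref{prop:regularity} and Lemma~\ref{lem:degree_exp.moments}, exactly as in the proof of Proposition~\ref{prop: unique half-space measure}). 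A clean alternative avoiding the ghost altogether is to work directly with the sign-flip symmetry: couple $\Psi^1_\beta$ with the spin field $\varphi^+\sim\nu^+_\beta$ via Edwards--Sokal, note $\varphi^-:=-\varphi^+\sim\nu^-_\beta$ and $\tfrac12(\delta_{\varphi^+}+\delta_{\varphi^-})$ realizes $\nu^0_\beta$, and observe that the FK configuration $\omega$ (on $\overline E(\mathbb Z^d)$, not touching $\fg$) is unchanged under $\varphi\mapsto-\varphi$; this shows the $\omega$-marginal of $\Psi^1_\beta$ on ordinary edges equals that of $\Psi^0_\beta$, and then the monotonicity $\Psi^0_\beta\preccurlyeq\Psi^1_\beta$ promotes equality of marginals to equality of measures.
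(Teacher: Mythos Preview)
Your main ``squeeze'' argument contains a substantive error. In the Edwards--Sokal picture for $\Psi^1_\beta$, the correct identity is $\langle\varphi_x\varphi_y\rangle^+_\beta=\Psi^1_\beta[\mathsf a_x\mathsf a_y\mathbbm1_{x\leftrightarrow y\text{ in }\omega\cup\fg}]$; your sum $\mathbbm1_{x\leftrightarrow y}+\mathbbm1_{x\leftrightarrow\fg,y\leftrightarrow\fg}$ double-counts (if both are connected to $\fg$ then they are connected to each other through $\fg$). With the correct formula the squeeze yields only $\Psi^1_\beta[\mathsf a_x\mathsf a_y\mathbbm1_{x\not\leftrightarrow y,\,x\leftrightarrow\fg,\,y\leftrightarrow\fg}]=0$, i.e.\ uniqueness of the infinite cluster, \emph{not} $\Psi^1_\beta[x\leftrightarrow\fg]=0$. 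The latter is simply false when $\beta>\beta_c$ (Remark~\ref{rem: equality of magnetisation} gives $\Psi^1_\beta[\mathsf a_0\mathbbm1_{0\leftrightarrow\infty}]=m^*(\beta)>0$), so the conclusion you draw from it collapses. Even after correcting this, you have only matched the functionals $\Psi^{\bullet}_\beta[\mathsf a_x\mathsf a_y\mathbbm1_{x\leftrightarrow y}]$, which is far from ``equality of all increasing-event probabilities''; a further step is needed to pass to edge-marginals and then to the full joint law.

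Your alternative at the end is, however, essentially correct and is the cleanest route: the spin-to-percolation Edwards--Sokal map $\varphi\mapsto(\omega,\mathsf a)$ on ordinary edges depends only on $(|\varphi|,\mathrm{sgn}(\varphi_x)\mathrm{sgn}(\varphi_y))$, which is invariant under $\varphi\mapsto-\varphi$; combined with $\nu^0_\beta=\tfrac12(\nu^+_\beta+\nu^-_\beta)$ this gives directly that the \emph{joint} law of $(\omega,\mathsf a)$ is the same whether one starts from $\nu^+_\beta$ or $\nu^0_\beta$. (Your phrasing ``$\tfrac12(\delta_{\varphi^+}+\delta_{\varphi^-})$ realizes $\nu^0_\beta$'' is not literally correct, and the appeal to monotonicity at the end is superfluous since you have already matched joint laws, not just marginals.) The paper takes a closely related but more hands-on path: it invokes Strassen's theorem to produce a monotone coupling $(\mathsf a^0,\omega^0)\le(\mathsf a^1,\omega^1)$, uses $\langle|\varphi_0|\rangle^0_\beta=\langle|\varphi_0|\rangle^+_\beta$ and $\langle\varphi_x\varphi_y\rangle^0_\beta=\langle\varphi_x\varphi_y\rangle^+_\beta$ (both consequences of \eqref{eq:free=plus+minus/2} and sign-flip symmetry) to upgrade equalities of means to almost-sure equalities $\mathsf a^0=\mathsf a^1$ and $\mathbbm1\{x\overset{\omega^0}{\leftrightarrow}y\}=\mathbbm1\{x\overset{\omega^1}{\leftrightarrow}y\}$, and finally closes with the explicit Edwards--Sokal edge formula $\mathbb Q[\omega_{xy}]=\tfrac12\mathbb Q[p(\beta,\mathsf a)_{xy}(\mathbbm1\{x\leftrightarrow y\}+1)]$ to conclude $\omega^0_{xy}=\omega^1_{xy}$ a.s. Your sign-flip alternative reaches the same conclusion more directly, without the coupling gymnastics; the paper's route has the minor advantage of making the role of each spin identity explicit.
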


\begin{proof}
By Proposition~\ref{prop:monotonicity_FK}, we have $\Psi^0_{\beta}\preccurlyeq \Psi^1_{\beta}$. Hence, by Strassen's theorem \cite{Strassen1965}, there exists a coupling $(\mathbb{Q}, (\mathsf{a}^0,\omega^0), (\mathsf{a}^1,\omega^1))$ such that $(\mathsf{a}^0,\omega^0)\sim \Psi^0_{\beta}$, $(\mathsf{a}^1,\omega^1)\sim \Psi^1_{\beta}$, and
$\mathsf{a}^0\leq \mathsf{a}^1$, $\omega^0\leq \omega^1$ almost surely under $\mathbb{Q}$. Since $\langle \cdot \rangle_{\beta}$ is a convex combination of $\langle \cdot \rangle^+_{\beta}$ and $\langle \cdot \rangle^-_{\beta}$, and $\langle \cdot \rangle^-_{\beta}$ coincides with the pushforward of $\langle \cdot \rangle^+_{\beta}$ under the mapping $(\varphi_x) \mapsto -(\varphi_x)$, we can deduce that \begin{equation}\label{eq: abs value equality}
\langle |\varphi_x| \rangle^0_{\beta}=\langle |\varphi_x|\rangle^+_{\beta},   
\end{equation} 
and 
\begin{equation}\label{eq: two point equality}
\langle \varphi_x \varphi_y \rangle^0_{\beta}=\langle \varphi_x \varphi_y \rangle^+_{\beta}.    
\end{equation} 
It follows from \eqref{eq: abs value equality} that $\mathbb{Q}[\mathsf{a}^0_x]=\mathbb{Q}[\mathsf{a}^1_x]$, which, combined with the almost sure monotonicity, implies that
\begin{equation}\label{eq: as abs value equality}
\mathsf{a}^0=\mathsf{a}^1 \quad \mathbb{Q}\text{-almost surely}. 
\end{equation}
We now use \eqref{eq: two point equality} and \eqref{eq: as abs value equality} to deduce that $\omega^0_{xy}=\omega^1_{xy}$ $\mathbb{Q}$-almost surely, from which it follows that $\Psi^0_{\beta}=\Psi^1_{\beta}$, as desired.
We first observe that by combining Corollary \ref{cor: ES correlations}, \eqref{eq: two point equality}, and \eqref{eq: as abs value equality},
\begin{equation}
\mathbb{Q}[\mathsf{a}^0_x \mathsf{a}^0_y \mathbbm{1}\{x\overset{\omega^0}{\longleftrightarrow} y\} ]=\mathbb{Q}[\mathsf{a}^1_x \mathsf{a}^1_y\mathbbm{1}\{x\overset{\omega^1}{\longleftrightarrow} y\}]=\mathbb{Q}[\mathsf{a}^0_x \mathsf{a}^0_y\mathbbm{1}\{x\overset{\omega^1}{\longleftrightarrow} y\}].     
\end{equation}
Since $\mathsf{a}^0_x \mathsf{a}^0_y>0$ almost surely, using again the almost sure monotonicity we obtain that
\begin{equation}\label{eq: as two point equality}
\mathbbm{1}\{x\overset{\omega^0}{\longleftrightarrow} y\} =\mathbbm{1}\{x\overset{\omega^1}{\longleftrightarrow} y\} 
\quad \text{almost surely}.
\end{equation}
By the Edwards--Sokal coupling we have 
\begin{equation}\label{eq:inf vol eq proof 1}
\mathbb{Q}[\omega^0_{xy}]=\big\langle p(\beta,|\varphi|)_{xy} \mathbbm{1}\{\textup{sgn}(\varphi_x)=\textup{sgn}(\varphi_y)\} \big\rangle^0_{\beta}. \end{equation}
Note that 
$\mathbbm{1}\{\textup{sgn}(\varphi_x)=\textup{sgn}(\varphi_y)\}=\frac{1}{2}(\textup{sgn}(\varphi_x)\textup{sgn}(\varphi_y)+1)$,
and by the Edwards--Sokal coupling between the Ising and random cluster models, we have
$
\langle \textup{sgn}(\varphi_x)\textup{sgn}(\varphi_y) \mid |\varphi| \rangle^0_{\beta}= \mathbb{Q}[x\overset{\omega^0}{\longleftrightarrow} y \mid \mathsf{a}^0]
$.
Combining the two previous equations with \eqref{eq:inf vol eq proof 1} yields
\begin{equation}\label{eq: omega connectivity equality}
\mathbb{Q}[\omega^0_{xy}]=\frac{1}{2}\mathbb{Q}[p(\beta,\mathsf{a}^0)_{xy}(\mathbbm{1}\{x\overset{\omega^0}{\longleftrightarrow} y\}+1)],    
\end{equation}
and similarly,
\begin{equation}
\mathbb{Q}[\omega^1_{xy}]=\frac{1}{2}\mathbb{Q}[p(\beta,\mathsf{a}^1)_{xy}(\mathbbm{1}\{x\overset{\omega^1}{\longleftrightarrow} y\}+1)].    
\end{equation}
However, \eqref{eq: as abs value equality} and \eqref{eq: as two point equality} imply that
\begin{equation}
\frac{1}{2}\mathbb{Q}[p(\beta,\mathsf{a}^0)_{xy}(\mathbbm{1}\{x\overset{\omega^0}{\longleftrightarrow} y\}+1)]=\frac{1}{2}\mathbb{Q}[p(\beta,\mathsf{a}^1)_{xy}(\mathbbm{1}\{x\overset{\omega^1}{\longleftrightarrow} y\}+1)].   
\end{equation}
Thus, $\mathbb{Q}[\omega^0_{xy}]=\mathbb{Q}[\omega^1_{xy}]$, and by the almost sure monotonicity this implies that $\omega^0_{xy}=\omega^1_{xy}$ almost surely. This completes the proof.
\end{proof}

\begin{remark}\label{rem: equality of magnetisation} Note that Corollary~\textup{\ref{cor: ES correlations}}  and Proposition~\textup{\ref{prop: unique gibbs measure}} imply that for any $\beta\geq 0$, one has
\begin{equation}
m^*(\beta)=\Psi^1_\beta[\mathsf{a}_0 \mathbbm{1}_{\{0\longleftrightarrow\infty\}}]=\Psi^0_\beta[\mathsf{a}_0 \mathbbm{1}_{\{0\longleftrightarrow\infty\}}].
\end{equation}
\end{remark}

We conclude this section by considering infinite volume limits arising in the half-space setting. Following the setup of Proposition \ref{prop: unique half-space measure}, we let $h>0$ and define $\mathsf{h}\in(\mathbb R^+)^{\mathbb H}$ as $\mathsf{h}_x:=h\mathbbm{1}_{x\in \partial \mathbb H}$.
Define two measures $\Psi^{0,h}_{\mathbb H,\beta}$ and $\Psi^{1,h}_{\mathbb H,\beta}$ on $\{0,1\}^{E(\mathbb H[\mathsf{h}])}\times (\mathbb R^+)^{\mathbb H}$ as follows,
\begin{equation}
    \Psi^{0,h}_{\mathbb H,\beta}:=\lim_{L\rightarrow \infty}\Psi^{0}_{\Lambda_L^+,\beta,\mathsf{h}}, \qquad \Psi^{1,h}_{\mathbb H,\beta}:=\lim_{L\rightarrow \infty}\Psi^{0}_{\Lambda_L^+,\beta,\mathsf{h}+\mathfrak{p}_{\Lambda_L^+\setminus \partial \mathbb H}}
\end{equation}
Note that these limits exist thanks to Lemma \ref{lem: existence of phi4 halspace measures} and the Edwards--Sokal coupling.

\begin{proposition}\label{prop: equality half space random cluster measures} Let $d\geq 2$, $\beta\geq 0$ and $h>0$. Then,
\begin{equation}
    \Psi^{0,h}_{\mathbb H,\beta}=\Psi^{1,h}_{\mathbb H,\beta}.
\end{equation}
\end{proposition}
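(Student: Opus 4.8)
The plan is to deduce this half-space random cluster uniqueness statement from the corresponding spin result (Proposition~\ref{prop: unique half-space measure}) via the Edwards--Sokal coupling, mirroring the structure of the full-space argument in Proposition~\ref{prop: unique gibbs measure}. By Proposition~\ref{prop:monotonicity_FK} and the definitions, $\Psi^{0,h}_{\mathbb H,\beta}\preccurlyeq \Psi^{1,h}_{\mathbb H,\beta}$, so Strassen's theorem \cite{Strassen1965} yields a monotone coupling $(\mathbb Q,(\mathsf a^0,\omega^0),(\mathsf a^1,\omega^1))$ with $\mathsf a^0\leq \mathsf a^1$ and $\omega^0\leq \omega^1$ almost surely. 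The goal is to upgrade these inequalities to almost sure equalities.

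First I would identify $\mathsf a^0$ with $\mathsf a^1$. The marginals of $\Psi^{0,h}_{\mathbb H,\beta}$ and $\Psi^{1,h}_{\mathbb H,\beta}$ on the $\mathsf a$ coordinate are the absolute value fields of $\langle\cdot\rangle^{0,h}_{\mathbb H,\beta}$ and $\langle\cdot\rangle^{+,h}_{\mathbb H,\beta}$ respectively (by Proposition~\ref{prop: phi4rc is annealed rndisingrc}, passed to the limit). Since Proposition~\ref{prop: unique half-space measure} gives $\langle\cdot\rangle^{+,h}_{\mathbb H,\beta}=\langle\cdot\rangle^{0,h}_{\mathbb H,\beta}$, in particular $\langle |\varphi_x|\rangle^{+,h}_{\mathbb H,\beta}=\langle |\varphi_x|\rangle^{0,h}_{\mathbb H,\beta}$, so $\mathbb Q[\mathsf a^0_x]=\mathbb Q[\mathsf a^1_x]$ for every $x$; combined with $\mathsf a^0\leq \mathsf a^1$ a.s. this forces $\mathsf a^0=\mathsf a^1$ $\mathbb Q$-a.s. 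Next I would use the two-point functions: by Corollary~\ref{cor: ES correlations} (in the infinite volume half-space, obtained by taking limits) we have $\langle\varphi_x\varphi_y\rangle^{\bullet,h}_{\mathbb H,\beta}=\Psi^{\bullet,h}_{\mathbb H,\beta}[\mathsf a_x\mathsf a_y\mathbbm 1\{x\leftrightarrow y\}]$ for $\bullet\in\{0,1\}$, and the equality of spin two-point functions together with $\mathsf a^0=\mathsf a^1$ gives $\mathbb Q[\mathsf a^0_x\mathsf a^0_y(\mathbbm 1\{x\overset{\omega^0}\leftrightarrow y\}-\mathbbm 1\{x\overset{\omega^1}\leftrightarrow y\})]=0$; since $\mathsf a^0_x\mathsf a^0_y>0$ a.s. and $\omega^0\leq\omega^1$, this yields $\mathbbm 1\{x\overset{\omega^0}\leftrightarrow y\}=\mathbbm 1\{x\overset{\omega^1}\leftrightarrow y\}$ a.s. Finally, exactly as in the proof of Proposition~\ref{prop: unique gibbs measure}, the Edwards--Sokal formula $\mathbb Q[\omega^{\bullet}_e]=\tfrac12\mathbb Q[p(\beta,\mathsf a^\bullet)_e(\mathbbm 1\{x\overset{\omega^\bullet}\leftrightarrow y\}+1)]$ for a bulk edge $e=xy$ (and the analogous formula $\mathbb Q[\omega^\bullet_{x\fg}]=\mathbb Q[p(\beta,\mathsf h,\mathsf a^\bullet)_{x\fg}\,\mathbbm 1\{\mathrm{sgn}(\varphi_x)=1\}]$, using $\langle\mathrm{sgn}(\varphi_x)\rangle^{0,h}_{\mathbb H}=\langle\mathrm{sgn}(\varphi_x)\rangle^{+,h}_{\mathbb H}$ which again follows from Proposition~\ref{prop: unique half-space measure}) combined with $\mathsf a^0=\mathsf a^1$ and the connectivity equality shows $\mathbb Q[\omega^0_e]=\mathbb Q[\omega^1_e]$; monotonicity then gives $\omega^0=\omega^1$ a.s., hence $\Psi^{0,h}_{\mathbb H,\beta}=\Psi^{1,h}_{\mathbb H,\beta}$.

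The main technical point, rather than an obstacle, is making sure the infinite-volume Edwards--Sokal identities and Corollary~\ref{cor: ES correlations} genuinely pass to the half-space limit: one needs the weak convergence of $\Psi^{0}_{\Lambda_L^+,\beta,\mathsf h}$ and $\Psi^{0}_{\Lambda_L^+,\beta,\mathsf h+\mathfrak p}$ (which holds by Lemma~\ref{lem: existence of phi4 halspace measures} together with the Edwards--Sokal coupling, as noted before the statement) and that the relevant functionals ($\mathsf a_x\mathsf a_y\mathbbm 1\{x\leftrightarrow y\}$, $p(\beta,\mathsf a)_{xy}$, edge indicators) are sufficiently integrable to be preserved under the limit — this is guaranteed by the regularity estimates of Proposition~\ref{prop:regularity}, which provide uniform quartic tails and hence uniform integrability of the $\mathsf a$-variables. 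There is also a minor point that the half-space graph $\mathbb H[\mathsf h]$ has a ghost vertex attached along $\partial\mathbb H$, so ``$x\leftrightarrow y$'' should be read as connection in $\omega$ restricted away from $\fg$ when comparing with $\langle\varphi_x\varphi_y\rangle$; this is handled exactly as in the finite-volume Corollary~\ref{cor: ES correlations}. Beyond these bookkeeping matters the argument is a direct transcription of the full-space proof, so I expect no serious difficulty.
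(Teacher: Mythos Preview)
Your argument is correct, but it is considerably more elaborate than necessary. The paper's proof is essentially one line: Proposition~\ref{prop: unique half-space measure} gives the equality of the \emph{full} spin measures $\langle\cdot\rangle^{+,h}_{\mathbb H,\beta}=\langle\cdot\rangle^{0,h}_{\mathbb H,\beta}$, and the Edwards--Sokal coupling then yields the equality of random cluster measures directly, because the law of $(\omega,\mathsf a)$ is obtained from the law of $\varphi$ by a fixed Markov kernel (set $\mathsf a=|\varphi|$, then open edges independently with probabilities depending only on $\varphi$). Two equal spin laws therefore produce two equal random cluster laws, with no further work.

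The reason your Strassen-based argument is natural is that you are mirroring the full-space proof of Proposition~\ref{prop: unique gibbs measure}. But in that setting the spin measures $\langle\cdot\rangle^0_\beta$ and $\langle\cdot\rangle^+_\beta$ are \emph{not} equal (indeed $\langle\cdot\rangle^0_\beta=\tfrac12(\langle\cdot\rangle^+_\beta+\langle\cdot\rangle^-_\beta)$), and one only has equality of the absolute-value marginal and of two-point functions; this is precisely why the monotone coupling and the edge-by-edge computation are needed there. In the half-space with $h>0$, the positive boundary field breaks the $\pm$ symmetry and Proposition~\ref{prop: unique half-space measure} delivers full measure equality, so the whole Strassen machinery becomes redundant. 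Your proof works, and the technical points you flag (passage to the limit, uniform integrability) are genuine but routine; it simply does not exploit the strongest information available.
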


\begin{proof} By Proposition \ref{prop: unique half-space measure}, we have that $\langle \cdot\rangle^{+,h}_{\mathbb H,\beta}=\langle \cdot\rangle^{0,h}_{\mathbb H,\beta}$. The desired result follows readily from the Edwards--Sokal coupling.
\end{proof}

\section{Surface order bound on disconnections}\label{sec:surface_tension}

The goal of this section is to prove Theorem~\ref{thm:free_surface_tension} below, which states that disconnection probabilities for the $\varphi^4$ random cluster model with \emph{free} (and therefore any!) boundary condition decay exponentially in the surface order.

\begin{theorem}\label{thm:free_surface_tension}
Let $d\geq2$ and $\beta>\beta_c$. There exists $c_1>0$ such that for every $L\geq 1$ we have
\begin{equation}\label{eq:surface_free_box}
        \Psi^0_{\Lambda_{L},\beta}[\Lambda_\ell\longleftrightarrow \partial \Lambda_{c_1 L}]\geq 1-e^{-c_1 \ell^{d-1}}, ~~~ \forall\, 1\leq \ell\leq c_1 L.
    \end{equation}
\end{theorem}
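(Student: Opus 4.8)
The plan is to combine the positivity of surface tension for the $\varphi^4$ model with a Bodineau-type comparison argument to pass from ``thick-plus'' boundary conditions to the minimal (free) boundary condition, and then to upgrade the resulting surface-order disconnection bound into a crossing statement.

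\medskip
\noindent\textbf{Step 1: Surface tension positivity.}
First I would establish that the surface tension $\tau_\beta$ of the $\varphi^4$ model is strictly positive for every $\beta>\beta_c$. The cleanest route is the one sketched in the outline: use the ``thick-plus'' measures $\langle\cdot\rangle_{\Lambda_L,\beta,\mathsf{h}_L}$ (with $\mathsf{h}_L\equiv 1$ on $\Lambda_L\setminus\Lambda_{L-\log L}$) and their Dobrushin counterparts. By Proposition~\ref{prop: thick conv} and Remark~\ref{rem: equality mag implies equality meas}, these converge to $\nu^+_\beta$, and since the magnetic field is bounded by $1$, Proposition~\ref{prop:regularity} guarantees regularity up to the boundary. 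Defining
\begin{equation}
\tau_\beta:=-\lim_{L\to\infty}\frac{1}{L^{d-1}}\log\frac{Z^{\mathsf{h}^{\pm}_L}_{\Lambda_L,\beta}}{Z^{\mathsf{h}_L}_{\Lambda_L,\beta}},
\end{equation}
where $\mathsf{h}^\pm_L$ is the Dobrushin version ($+1$ on the thick top boundary, $-1$ on the thick bottom boundary), I would adapt the Lebowitz--Pfister argument~\cite{lebowitz1981surface}: the key input is the Ginibre inequality (Proposition~\ref{prop:Ginibre}), which controls the second derivative in the interpolating magnetic field and shows that if $\tau_\beta=0$ then the boundary spins would be asymptotically independent, forcing $m^*(\beta)=0$, a contradiction since $\beta>\beta_c$. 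This gives $\tau_\beta>0$.

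\medskip
\noindent\textbf{Step 2: Transfer to the random cluster model and to free boundary conditions.}
Via the Edwards--Sokal coupling (Corollary~\ref{cor: ES correlations} and the discussion around it), $\tau_\beta>0$ translates into an exponential (surface-order) bound on the probability that the top and bottom faces of a box are \emph{disconnected} in $\Psi^{(w,\mathfrak p)}_{\Lambda_L,\beta}$ (equivalently with the thick-plus magnetic field), since a sign disagreement across a surface is penalised by $e^{-\tau_\beta L^{d-1}}$ times lower-order corrections; this is the content of the analogue of Lemma~\ref{lem: ratio probablistic expression}. The harder part is to replace the wired/plus random cluster measure by the \emph{free} one $\Psi^0$, which by Proposition~\ref{prop:monotonicity_FK} is the minimal measure. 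Here I would run Bodineau's comparison: embed the box in a much larger region, use the domain Markov property (Proposition~\ref{prop: domain markov}) and the half-space uniqueness result (Proposition~\ref{prop: unique half-space measure}, equivalently Proposition~\ref{prop: equality half space random cluster measures}) which says that on the half-space with any positive boundary field the plus and free half-space measures coincide. Telescoping these half-space comparisons along the boundary of the enlarged box allows one to ``peel off'' the plus boundary condition at a negligible (sub-surface-order, in fact vanishing per-site) cost, yielding that $\Psi^0_{\Lambda_{c L},\beta}$ also has surface-order disconnection decay for a box well inside.

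\medskip
\noindent\textbf{Step 3: From disconnection to crossing.}
Finally I would convert the surface-order disconnection bound into the connection statement~\eqref{eq:surface_free_box}. The point is that if $\Lambda_\ell$ is \emph{not} connected to $\partial\Lambda_{c_1L}$, then by duality/blocking there must be a ``disconnecting surface'' separating $\Lambda_\ell$ from $\partial\Lambda_{c_1L}$ inside the annulus; decomposing the annulus into $O(1)$ slabs of width comparable to $\ell$ and applying the Step~2 estimate to each slab (using monotonicity in the domain, Corollary~\ref{cor: monotonicity free measure}, to dominate by the free measure on the smaller slab), a union bound over the $\mathrm{poly}(L)$ many candidate disconnecting surfaces of a given size — each contributing $e^{-c\,\ell^{d-1}}$ — closes the argument provided $\ell$ is not too small relative to $L$. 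Combining the slab estimates via the FKG inequality (Proposition~\ref{prop:FKG random cluster}) gives the stated bound with a single constant $c_1$.

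\medskip
\noindent\textbf{Main obstacle.}
The principal difficulty is Step~2: in the Ising case the comparison from wired to free is routine because the half-space wetting result of Fröhlich--Pfister~\cite{FP87} is available and the Edwards--Sokal coupling behaves well, but here the absolute-value field $|\varphi|$ can be arbitrarily small, so one cannot naively dominate or condition. This is precisely why Proposition~\ref{prop: unique half-space measure} — proved via the random tangled current representation rather than via~\cite{FP87} — is essential, and why care with regularity estimates (Proposition~\ref{prop:regularity}) is needed throughout to keep all error terms under control when the boundary field is only $+1$ rather than the (unbounded) maximal boundary condition $\mathfrak p$.
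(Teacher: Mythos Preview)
Your Steps~1 and~2 track the paper's strategy closely and correctly identify the key inputs: the Lebowitz--Pfister argument via Ginibre for $\tau_\beta>0$, the Edwards--Sokal translation to a disconnection estimate, and the crucial role of half-space uniqueness (Proposition~\ref{prop: unique half-space measure}) in the comparison down to free boundary conditions. One technical stage you skip in Step~2 is the intermediate reduction (the paper's Lemma~\ref{lem: strip to rectangle}): before the half-space comparison can be run, the thick-plus disconnection bound on the infinite strip must first be converted into a bound on a \emph{finite} rectangle $\mathcal{R}(L,\delta L)$ with a large constant field $H$ only on the top and bottom faces. This is done by conditioning the absolute-value field on a layer, forcing a ``T-shaped'' set of edges closed at cost $e^{O(\delta L^{d-1})}$, and choosing $\delta$ small; only then does Lemma~\ref{lem: large to free bc} interpolate the boundary field from $H$ down to $0$ via a Russo-type derivative, with the half-space uniqueness controlling the bulk contribution.

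Step~3, however, has a genuine gap. Your proposed argument --- enumerate disconnecting surfaces, decompose into slabs, union bound over $\mathrm{poly}(L)$ candidates each costing $e^{-c\ell^{d-1}}$ --- does not close: Step~2 gives a bound on the probability that top and bottom of a rectangle are disconnected, not a bound on the probability that a \emph{given} surface of closed edges occurs, so there is no Peierls sum to run. The paper's argument is quite different and much cleaner: starting from the free-rectangle bound
\[
\Psi^0_{\mathcal{R}(L,\delta L),\beta}\big[\partial^{\rm bot}\mathcal{R}(L,\delta L/2)\centernot\longleftrightarrow \partial^{\rm top}\mathcal{R}(L,\delta L/2)\big]\leq e^{-c_4 L^{d-1}},
\]
one covers the mid-hyperplane by $m\leq C(L/\ell)^{d-1}$ boxes $\Lambda_\ell(x_i)$ and observes that $\bigcap_i\{\Lambda_\ell(x_i)\centernot\longleftrightarrow \partial\Lambda_{\delta L/2}(x_i)\}$ forces the top--bottom disconnection. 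By FKG on these decreasing events,
\[
\prod_{i=1}^{m}\Psi^0_{\Lambda_L,\beta}\big[\Lambda_\ell(x_i)\centernot\longleftrightarrow \partial\Lambda_{\delta L/2}(x_i)\big]\leq e^{-c_4 L^{d-1}},
\]
so by pigeonhole some $i_0$ satisfies $\Psi^0_{\Lambda_L,\beta}[\Lambda_\ell(x_{i_0})\centernot\longleftrightarrow \partial\Lambda_{\delta L/2}(x_{i_0})]\leq e^{-c_4\ell^{d-1}/C}$; monotonicity in the domain (Corollary~\ref{cor: monotonicity free measure}) then re-centres this to give~\eqref{eq:surface_free_box}. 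The implication runs in the opposite direction to what you wrote: one deduces the small-box connection from the large-rectangle crossing, not the other way around.
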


Theorem~\ref{thm:free_surface_tension} is proved in two steps. First, we prove that the surface tension of the $\varphi^4$ model is positive for every $\beta>\beta_c$ by adapting the argument of Lebowitz and Pfister \cite{lebowitz1981surface}. We then perform a delicate comparison argument to deduce a corresponding statement for the \emph{free} $\varphi^4$ random cluster model.

\subsection{Definition of the surface tension}

Here we define a notion of surface tension for the $\varphi^4$ model and relate it to disconnection probabilities for the corresponding random cluster representation. As mentioned in the introduction, we consider a magnetic field of intensity $1$ on a thick boundary. The thick boundary will be important in Section~\ref{sec:Lebowitz-Pfister} in order to prove positivity of the surface tension with the help of Proposition~\ref{prop: thick conv}. The boundedness of the magnetic field will be important in Section~\ref{prop:from thick plus to free}, where we will rely on regularity up to the boundary (recall Proposition~\ref{prop:regularity}) in order to recover the \emph{free} $\varphi^4$ random cluster measure.

First, let us introduce some necessary definitions and notation. For $L,M\geq 1$, define the rectangle $\mathcal R(L,M) = \{ -L,\dots,L \}^{d-1} \times \{-M,\ldots,M\}$, and the infinite strip $\mathcal R(L) := \{-L,\dots,L\}^{d-1}\times \mathbb Z$. Let also $\partial^{\rm thick} \mathcal R(L,M) := \mathcal R(L,M) \setminus \{ -L+\log L+1,\dots,L-\log L-1 \}^{d-1} \times \{-M,\ldots,M\}$ and $\partial^{\rm thick} \mathcal R(L) := \mathcal R(L) \setminus \{ -L+\log L+1,\dots,L-\log L-1 \}^{d-1} \times \mathbb Z$ denote their thickened boundaries. We also define the top and bottom of the thick boundary $\partial^{\rm thick}_+\mathcal R(L,M):=\partial^{\rm thick}\mathcal{R}(L,M)\cap \{-L,\ldots,L\}^{d-1}\times \{1,\ldots, M\}$ and $\partial^{\rm thick}_-\mathcal R(L,M):=\partial^{\rm thick}\mathcal{R}(L,M)\cap \{-L,\ldots,L\}^{d-1}\times \{-M,\ldots, 0\}$. Let 
\begin{equation*}
\mathsf{h}^{+,-}_{L,M}(x)=
\begin{cases}
+1, & \text{for } x\in \partial^{\rm thick}_+\mathcal{R}(L,M),\\
-1, & \text{for } x\in\partial^{\rm thick}_-\mathcal{R}(L,M),\\
0, & \text{otherwise.}    
\end{cases}
\end{equation*}  
We also let $\mathsf{h}^{+,+}_{L.M}$ be the external magnetic field taking value $+1$ on $\partial^{\rm thick} \mathcal R(L,M)$ and $0$ otherwise. See Figure \ref{fig:R(L,M)} for an illustration. For ease of notation, we write the partition functions of the associated $\varphi^4$ models as follows:
\begin{equation}
Z^{+,+}_{\mathcal{R}(L,M),\beta}=Z^{\varphi^4}_{\mathcal{R}(L,M),\beta,\mathsf{h}^{+,+}_{L,M}} \quad \text{and} \quad Z^{+,-}_{\mathcal{R}(L,M),\beta}=Z^{\varphi^4}_{\mathcal{R}(L,M),\beta,\mathsf{h}^{+,-}_{L,M}}.
\end{equation}
We also let $\langle \cdot\rangle^{+,+}_{\mathcal{R}(L,M),\beta}=\langle \cdot\rangle_{\mathcal{R}(L,M),\beta,\mathsf{h}^{+,+}_{L,M}}$ and $\langle \cdot\rangle^{+,-}_{\mathcal{R}(L,M),\beta}=\langle \cdot\rangle_{\mathcal{R}(L,M),\beta,\mathsf{h}^{+,-}_{L,M}}$.

\begin{figure}[htb]
    \centering
    \includegraphics[width=0.4\linewidth]{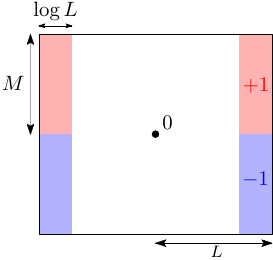}
    \caption{An illustration of the rectangle $\mathcal{R}(L,M)$. The support of the magnetic field $\mathsf{h}^{+,-}_{L,M}$ is the coloured region.}
    \label{fig:R(L,M)}
\end{figure}

\begin{definition}
The \emph{surface tension} of the $\varphi^4$ model on $\mathcal R(L,M)$ is defined as 
\begin{equation}
    \tau_{\beta}^{L,M}:=\frac{1}{L^{d-1}}\log \frac{Z^{+,+}_{\mathcal{R}(L,M),\beta}}{Z^{+,-}_{\mathcal{R}(L,M),\beta}}.
\end{equation}
We also define 
\begin{equation}
\tau_{\beta}^L:=\liminf_{M\to\infty}\tau_{\beta}^{L,M} \quad \text{and} \quad \tau_{\beta}:=\liminf_{L\to\infty}\tau_{\beta}^{L}.
\end{equation}
\end{definition}
The surface tension can be expressed naturally in terms of a disconnection event under the measure $\Psi^{0}_{\mathcal R(L,M),\beta,\mathsf{h}^{+,+}_{L,M}}$. In order to facilitate the comparison external magnetic fields $\mathsf h^{+,+}_{L,M}$ and $\mathsf h^{+,-}_{L,M}$, it is convenient to view $\Psi^{0}_{\mathcal R(L,M),\beta,\mathsf{h}^{+,+}_{L,M}}$ as a probability measure on a graph with \emph{two} ghost vertices $\{\mathfrak g^+,\mathfrak g^-\}$. We write $\mathcal{R}^{+,-}(L,M)$ for the graph with vertex set $\mathcal R(L,M)\cup \{\fg^{+},\fg^{-}\}$ and edge set $E(\mathcal{R}^{+,-}(L,M))$ given by the union of $\{xy\in E(\mathbb{Z}^d): x,y\in \mathcal R(L,M)\}$ and
\begin{equation}
\{x\fg^{-} : x\in \partial^{\rm thick}_- \mathcal R(L,M)\}\sqcup \{x\fg^{+}: x\in \partial^{\rm thick}_+ \mathcal R(L,M)\}.    
\end{equation}
We can now (re)define $\Psi^0_{\mathcal R(L,M), \beta, \mathsf h^{+,+}_{L,M}}$ on $\{0,1\}^{E(\mathcal{R}^{+,-}(L,M))}\times (\mathbb R^+)^{\mathcal{R}(L,M)}$ as in Definition~\ref{def: random cluster phi4} with straightforward modifications to the edge weights and where $k^0(\omega)$ is replaced by $\tilde k(\omega)$, where the clusters are counted in the graph obtained by identifying $\mathfrak g^-$ and $\mathfrak g^+$. Under the pushforward with respect to the bijection identifying the two ghosts, we have that $\Psi^0_{\mathcal R(L,M), \beta, \mathsf h^{+,+}_{L,M}}$ coincides with the definition of Section \ref{sec:random_cluster}. Below, we will use that the partition functions are the same.

\begin{lemma}\label{lem: ratio probablistic expression} 
For every $\beta>0$, and every $L,M\geq 1$ we have
\begin{equation}\label{eq: ratio probabilistic expression}
\frac{Z^{+,-}_{\mathcal{R}(L,M),\beta}}{Z^{+,+}_{\mathcal{R}(L,M),\beta}}= \Psi^{0}_{\mathcal R(L,M),\beta,\mathsf{h}^{+,+}_{L,M}}[\fg^{-} \centernot\longleftrightarrow \fg^{+}].
\end{equation}
In particular, $\tau_{\beta}\geq 0$ for every $\beta>0$.
\end{lemma}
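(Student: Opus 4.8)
The statement asserts the identity $Z^{+,-}_{\mathcal{R}(L,M),\beta}/Z^{+,+}_{\mathcal{R}(L,M),\beta} = \Psi^{0}_{\mathcal R(L,M),\beta,\mathsf{h}^{+,+}_{L,M}}[\fg^{-} \centernot\longleftrightarrow \fg^{+}]$, together with the consequence $\tau_\beta \geq 0$. The plan is to run the Edwards--Sokal / random cluster expansion for both partition functions and compare term by term, exploiting that the two magnetic fields $\mathsf h^{+,+}_{L,M}$ and $\mathsf h^{+,-}_{L,M}$ differ only in the sign of the coupling to the boundary, not in its magnitude.

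First I would set up the two-ghost picture explicitly. Using the fundamental relation \eqref{eq: relation phi4 Ising partition functions} (and its analogue with a magnetic field), write $Z^{+,+}_{\mathcal{R}(L,M),\beta} = \int Z^{\mathrm{Ising},0}_{\mathcal{R}(L,M),\beta,\mathsf h^{+,+}_{L,M},\mathsf a}\prod_x \mathrm d\rho_{g,a}(\mathsf a_x)$ and similarly for $Z^{+,-}$. Now for the Ising partition function, apply the high-temperature / random cluster expansion \eqref{eq: partition function equality}: introducing the two ghosts $\fg^+,\fg^-$ carrying spin $+1$ and $-1$ respectively, the edges $x\fg^+$ (for $x\in\partial^{\rm thick}_+$) have weight governed by $e^{2\beta |\mathsf h^{+,+}_x|\mathsf a_x} = e^{2\beta \mathsf a_x}$ and the edges $x\fg^-$ likewise. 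Crucially, $|\mathsf h^{+,+}_{L,M}| = |\mathsf h^{+,-}_{L,M}|$ pointwise, so the edge weights $p(\beta,\mathsf h,\mathsf a)$ in \eqref{def: random weight rc} are identical for the two fields; what changes is only the number of clusters being counted, because in the $(+,+)$ case $\fg^+$ and $\fg^-$ are the same spin-value (equivalently identified), while in the $(+,-)$ case they carry opposite signs and hence a configuration $\omega$ is admissible only if no cluster contains both. Concretely, the Ising expansion gives $Z^{\mathrm{Ising},0}_{\mathcal R(L,M),\beta,\mathsf h^{+,+}_{L,M},\mathsf a} \propto \sum_\omega 2^{\tilde k(\omega)}\prod (\cdots)$ with $\tilde k$ counting clusters after identifying $\fg^+\sim\fg^-$, whereas $Z^{\mathrm{Ising},0}_{\mathcal R(L,M),\beta,\mathsf h^{+,-}_{L,M},\mathsf a} \propto \sum_\omega \mathbbm 1_{\fg^+\centernot\leftrightarrow\fg^- \text{ in }\omega}\, 2^{\tilde k(\omega)+1}\prod(\cdots)$ with the same edge weights (here the extra factor $2$ comes from the cluster containing $\fg^-$ now being a genuinely separate cluster whose spin is nonetheless frozen, so it cancels after one is careful with the $\sigma_\fg=1$ convention; this bookkeeping is the one place to be cautious). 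Taking the ratio, the common factors $\prod \sqrt{1-p}$ and the $\mathsf a$-integration drop out in exactly the combination defining $\Psi^0_{\mathcal R(L,M),\beta,\mathsf h^{+,+}_{L,M}}$, leaving $Z^{+,-}/Z^{+,+} = \Psi^0_{\mathcal R(L,M),\beta,\mathsf h^{+,+}_{L,M}}[\fg^-\centernot\leftrightarrow\fg^+]$.

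For the second assertion, note that the right-hand side is a probability, hence lies in $[0,1]$, so $Z^{+,-}_{\mathcal R(L,M),\beta} \leq Z^{+,+}_{\mathcal R(L,M),\beta}$; therefore $\tau_\beta^{L,M} = L^{-(d-1)}\log(Z^{+,+}/Z^{+,-}) \geq 0$ for all $L,M$, and taking $\liminf$ first in $M$ and then in $L$ preserves the inequality, giving $\tau_\beta \geq 0$.

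\textbf{Main obstacle.} The only genuinely delicate point is the combinatorial bookkeeping of the ghost vertices and the factors of $2$: one must track carefully how the convention $\sigma_\fg = 1$ (used implicitly throughout Section~\ref{sec:random_cluster}) interacts with having \emph{two} ghosts of opposite prescribed sign, so that the cluster-count exponent and the prefactor match up exactly between the $(+,+)$ and $(+,-)$ expansions. Once the correspondence $\sigma\leftrightarrow(\omega,\text{cluster signs})$ is written out with the two ghosts identified in one case and separated in the other — mirroring the proof of \eqref{eq: partition function equality} in Proposition~\ref{prop: phi4rc is annealed rndisingrc} — the identity is immediate; everything else (the $\mathsf a$-integration, the cancellation of $\sqrt{1-p}$ factors, the positivity of $\tau_\beta$) is routine.
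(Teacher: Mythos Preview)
Your proposal is correct and follows essentially the same route as the paper's proof: both arguments pass through \eqref{eq: relation phi4 Ising partition functions} to reduce to the Ising level, then run the random-cluster expansion with two ghost vertices, observing that the edge weights agree (since $|\mathsf h^{+,+}_{L,M}|=|\mathsf h^{+,-}_{L,M}|$) and that the only difference is the disconnection constraint $\fg^+\centernot\longleftrightarrow\fg^-$ forced by the opposite ghost spins. Your identification of the factor-of-$2$ bookkeeping as the one delicate point is exactly right---the paper handles it by showing that in both cases the count of compatible spin configurations is $2^{\tilde k(\omega)-1}$ (with $\tilde k$ counting clusters after identifying the ghosts), so your tentative exponent $\tilde k(\omega)+1$ is off but you correctly flagged this as the place needing care; once straightened out, the argument goes through exactly as you describe.
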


\begin{proof}
Recall \eqref{eq: phi 4 partition functionsequality}, and note that 
in our case it implies that 
\begin{equation}\label{eq: pp partition equality}
Z^{+,+}_{\mathcal{R}(L,M),\beta}=\frac{1}{2}Z^0_{\mathcal R(L,M),\beta,\mathsf{h}^{+,+}_{L,M}}.    
\end{equation}
We claim that 
\begin{equation}\label{eq: pm parition equality}
Z^{+,-}_{\mathcal{R}(L,M),\beta}=\frac{1}{2}Z^0_{\mathcal R(L,M),\beta,\mathsf{h}^{+,+}_{L,M}}[\fg^{-} \centernot\longleftrightarrow \fg^{+}],   
\end{equation}
where $Z^0_{\mathcal R(L,M),\beta,\mathsf{h}^{+,+}_{L,M}}[\fg^{-} \centernot\longleftrightarrow \fg^{+}]$ denotes the partition function obtained by restricting the sum to configurations $\omega$ in which $\fg^{-}$ and $\fg^{+}$ lie in different clusters. Note that \eqref{eq: ratio probabilistic expression} follows readily from \eqref{eq: pp partition equality} and \eqref{eq: pm parition equality}.

Recall from \eqref{eq: relation phi4 Ising partition functions} that
\begin{equation}
Z^{+,-}_{\mathcal{R}(L,M),\beta,\mathsf{h}}=\int Z^{\rm Ising,0}_{\mathcal{R}(L,M),\beta, \mathsf{h}^{+,-}_{L,M},\mathsf{a}}
\prod_{x\in \mathcal{R}(L,M)}\mathrm{d} \rho_{g,a}(\mathsf{a}_x),  
\end{equation}
where $Z^{{\rm Ising},0}_{\Lambda,\beta,\mathsf h, \mathsf a}$ is defined (we only considered $\mathsf{h}\in (\mathbb R^+)^\Lambda$ but the definition extends to general $\mathsf{h}\in \mathbb R^\Lambda$) in \eqref{eq:Ising_def}. Furthermore, let $\mathbf{Z}^0_{\mathcal R(L,M),\beta,\mathsf h^{+,+}_{L,M}, \mathsf a}$ denote the partition function of the FK-Ising random cluster measure which is, as for the $\varphi^4$ random cluster measure above, viewed on the graph $\mathcal R^{+,-}(L,M)$, i.e.\ with two ghost vertices. By adapting the proof of \eqref{eq: partition function equality}, we will show: 
\begin{equation}\label{eq: partition function equality 2}
Z^{\textup{Ising},0}_{\mathcal{R}(L,M),\beta,\mathsf{h}^{+,-}_{L,M},\mathsf{a}}=\frac{1}{2}{\bf Z}^{0}_{\mathcal{R}(L,M),\beta, \mathsf{h}^{+,+}_{L,M},\mathsf{a}}[\fg^{-} \centernot\longleftrightarrow \fg^{+}]\prod_{xy\in E(\mathcal{R}(L,M))} e^{-\beta \mathsf{a}_x\mathsf{a}_y}\prod_{x\in \mathcal{R}(L,M)}e^{-\beta (\mathsf{h}^{+,+}_{L,M})_x\mathsf{a}_x}.    
\end{equation}
Upon integrating, this yields \eqref{eq: pm parition equality}.

Let us now show \eqref{eq: partition function equality 2}. Recall that we say that a spin configuration $\sigma \in \{\pm 1\}^{\mathcal R^{+,-}(L,M)}$ is compatible with $\omega\in \{0,1\}^{E(\mathcal{R}^{+,-}(L,M))}$ if $\sigma$ is constant in the clusters of $\omega$. We write $\omega\sim_{\fg}\sigma$ if in addition, $\sigma_{\mathfrak g^+}=1$ and $\sigma_{\mathfrak g^-}=-1$. Note that, given a configuration $\omega$ such that $\{\fg^{-} \centernot\longleftrightarrow \fg^{+}\}$, the number of configurations $\sigma$ such that $\omega\sim_{\fg}\sigma$ is $2^{\tilde k(\omega)-1}$, where we emphasise that $\tilde{k}(\omega)$ is the number of components in $\omega$ after identifying $\fg^+$ and $\fg^-$. Conversely, if $\omega\sim_{\mathfrak g} \sigma$, then $\omega \in \{\mathfrak g^- \centernot \longleftrightarrow \mathfrak g^+\}$.  Therefore, we can write
\begin{multline}
{\bf Z}^{0}_{\mathcal{R}(L,M),\beta, \mathsf{h}^{+,+}_{L,M},\mathsf{a}}[\fg^{-} \centernot\longleftrightarrow \fg^{+}]
\\=2 \sum_{\substack{\omega\in \{0,1\}^{E(\mathcal{R}^{+,-}(L,M))}\\ \fg^+\centernot\longleftrightarrow \fg^-}}2^{\tilde{k}(\omega)-1}\prod_{xy \in E(\mathcal{R}(L,M))} \Big(\frac{p(\beta,\mathsf{a})_{xy}}{1-p(\beta,\mathsf{a})_{xy}}\Big)^{\omega_{xy}} \prod_{x \in \mathcal{R}(L,M)} \Big(\frac{p(\beta,\mathsf{h}^{+,+}_{L,M},\mathsf{a})_{x\fg}}{1-p(\beta,\mathsf{h}^{+,+}_{L,M},\mathsf{a})_{x\fg}}\Big)^{\omega_{x\fg}}
\\= 2 \sum_{\substack{\sigma\in \{\pm 1\}^{\mathcal{R}^{+,-}(L,M)}\\\sigma_{\fg^+}=1\\\sigma_{\fg^-}=1}} \sum_{\omega\sim_\fg \sigma}\prod_{xy \in E(\mathcal{R}(L,M))} \Big(\frac{p(\beta,\mathsf{a})_{xy}}{1-p(\beta,\mathsf{a})_{xy}}\Big)^{\omega_{xy}} \prod_{x \in \mathcal{R}(L,M)} \Big(\frac{p(\beta,\mathsf{h}^{+,+}_{L,M},\mathsf{a})_{x\fg}}{1-p(\beta,\mathsf{h}^{+,+}_{L,M},\mathsf{a})_{x\fg}}\Big)^{\omega_{x\fg}}.    
\end{multline}
From there, the end of the computation is exactly the same as in the proof of \eqref{eq: partition function equality}.
\end{proof}

\subsection{Strict positivity of the surface tension}\label{sec:Lebowitz-Pfister}

Our aim now is to show that the strict inequality $\tau_{\beta}>0$ holds in the entire supercritical regime--- see Proposition~\ref{prop: surface tension positive}. This follows from an adaptation of the argument of \cite{lebowitz1981surface}, which crucially relies on the Ginibre inequality (Proposition~\ref{prop:Ginibre}). 

We will need the following lemma.

\begin{lemma}\label{lem:++}
Let $\beta> 0$ and $L\geq 1$. Then, both $\langle \cdot \rangle^{+,+}_{\cR(L,M),\beta}$ and $\langle \cdot \rangle^{+,-}_{\cR(L,M),\beta}$ converge weakly as $M\to\infty$. We write 
\begin{equation}
\langle \cdot \rangle^{+,+}_{\cR(L),\beta}:=\lim_{M\to\infty}\langle \cdot \rangle^{+,+}_{\cR(L,M),\beta}  \quad \text{and} \quad \langle \cdot \rangle^{+,-}_{\cR(L),\beta}:=\lim_{M\to\infty}\langle \cdot \rangle^{+,-}_{\cR(L,M),\beta}. 
\end{equation}
\end{lemma}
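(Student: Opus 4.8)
The plan is to prove convergence of each sequence in $M$ by establishing a suitable monotonicity in $M$ together with tightness, just as in the proof of Lemma~\ref{lem: existence of phi4 halspace measures}. Let me treat the two cases separately.

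\textbf{The $(+,+)$ case.} First I would observe that $\langle \cdot \rangle^{+,+}_{\cR(L,M),\beta}$ is the measure $\nu_{\cR(L,M),\beta,\mathsf{h}^{+,+}_{L,M}}$ with a \emph{nonnegative} magnetic field supported on $\partial^{\rm thick}\cR(L,M)$. The key point is monotonicity in $M$: passing from $\cR(L,M)$ to $\cR(L,M+1)$, one should check that $\langle \cdot \rangle^{+,+}_{\cR(L,M),\beta}\preceq \langle \cdot\rangle^{+,+}_{\cR(L,M+1),\beta}$. This follows from the domain Markov property \eqref{eq:dlr} applied to the subdomain $\cR(L,M)\subset \cR(L,M+1)$, together with Proposition~\ref{prop:stoc_monotonicity}: in $\cR(L,M+1)$ the induced boundary condition on the top and bottom faces of $\cR(L,M)$ is a nonnegative field (by Griffiths/FKG, since the field $\mathsf{h}^{+,+}_{L,M+1}$ is nonnegative, the spins on the extra layer are stochastically larger than under zero field), which dominates the $0$ field that $\mathsf{h}^{+,+}_{L,M}$ assigns there, while on the thick side-boundary the two fields agree and both equal $+1$. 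Hence the sequence is stochastically increasing in $M$. Combined with the regularity estimate of Proposition~\ref{prop:regularity} (the field is bounded by $1$ in the bulk and by $1$ on the thick boundary, so $\sqrt{\log}$-type bounds hold, giving tightness), a stochastically monotone tight sequence converges weakly. This is the easy half.

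\textbf{The $(+,-)$ case.} Here there is no global monotonicity in $M$ because the field $\mathsf{h}^{+,-}_{L,M}$ is not of fixed sign. Instead I would mimic the FKG/Ginibre-free argument at the level of the random cluster representation, or — more in the spirit of this paper — argue via the two-ghost graph $\cR^{+,-}(L,M)$. Consider the $\varphi^4$ random cluster measure $\Psi^0_{\cR(L,M),\beta,\mathsf{h}^{+,+}_{L,M}}$ on the graph with ghosts $\fg^+,\fg^-$; by the Edwards--Sokal coupling the measure $\langle\cdot\rangle^{+,-}_{\cR(L,M),\beta}$ is obtained from it by the spin-assignment procedure in which $\fg^+$ gets sign $+1$ and $\fg^-$ gets sign $-1$ (this is consistent with Lemma~\ref{lem: ratio probablistic expression}). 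Since $\Psi^0_{\cR(L,M),\beta,\mathsf{h}^{+,+}_{L,M}}$ is FKG and monotone in $M$ (again by the domain Markov property Proposition~\ref{prop: domain markov} and Proposition~\ref{prop:monotonicity_FK}, since enlarging $M$ only adds bulk vertices whose induced boundary condition dominates the free one), it converges weakly as $M\to\infty$; and the spin marginal $\langle\cdot\rangle^{+,-}_{\cR(L,M),\beta}$ is a continuous (bounded-local) functional of this random cluster configuration, so it converges as well. Tightness of the $\varphi^4$ side again comes from Proposition~\ref{prop:regularity}, the field being bounded by $1$ everywhere.

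\textbf{Main obstacle.} The delicate point is the $(+,-)$ convergence, precisely because the Dobrushin-type field breaks the straightforward stochastic monotonicity that one has in the $(+,+)$ case. My preferred route is to push everything through the random cluster picture with two ghosts, where the measure \emph{is} monotone (in $M$ and FKG), and then transfer back to spins via Edwards--Sokal; one must be careful that the relevant observables of $\langle\cdot\rangle^{+,-}$ are indeed continuous functions of $(\omega,\mathsf a)$ and that uniform integrability (from Proposition~\ref{prop:regularity} and Lemma~\ref{lem:degree_exp.moments}) justifies passing to the limit in unbounded quantities such as $\varphi_x$. An alternative, purely spin-side argument would be to use the domain Markov property to write $\langle f\rangle^{+,-}_{\cR(L,M),\beta}$ as an average over boundary data on a fixed $\cR(L,m)$ and show the boundary data converges; this again reduces to the $(+,+)$-type monotonicity for the side boundary plus control of the top/bottom layers, which themselves can be handled as in the proof of Proposition~\ref{prop: thick conv}. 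Either way, once weak convergence is in hand, the limiting measures $\langle\cdot\rangle^{+,\pm}_{\cR(L),\beta}$ are automatically translation invariant along the infinite ($d$-th) direction and inherit the regularity bounds, which is all that will be needed in the sequel.
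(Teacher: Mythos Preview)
Your treatment of the $(+,+)$ case is essentially the paper's argument: stochastic monotonicity in $M$ via Proposition~\ref{prop:stoc_monotonicity}, plus tightness from Proposition~\ref{prop:regularity}.

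For the $(+,-)$ case, however, there is a genuine gap. Your claim that ``the spin marginal $\langle\cdot\rangle^{+,-}_{\cR(L,M),\beta}$ is a continuous (bounded-local) functional of this random cluster configuration'' is not correct. In the two-ghost Edwards--Sokal picture, the sign assignment with $\sigma_{\fg^+}=+1$ and $\sigma_{\fg^-}=-1$ is only consistent on configurations where $\fg^+$ and $\fg^-$ are \emph{disconnected}; the spin measure $\langle\cdot\rangle^{+,-}_{\cR(L,M),\beta}$ is therefore obtained from $\Psi^0_{\cR(L,M),\beta,\mathsf h^{+,+}_{L,M}}$ \emph{conditioned} on the non-local decreasing event $\{\fg^-\centernot\longleftrightarrow\fg^+\}$ --- this is exactly what underlies Lemma~\ref{lem: ratio probablistic expression}. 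That conditioning is neither local nor continuous in $(\omega,\mathsf a)$, and its probability itself varies with $M$ (this is the surface tension). So weak convergence of $\Psi^0_{\cR(L,M),\beta,\mathsf h^{+,+}_{L,M}}$ does not by itself yield convergence of the conditioned measure, and the monotonicity you invoke is lost after conditioning.

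The paper sidesteps this entirely with a classical and much simpler trick: flip all spins in the bottom half $\{-L,\dots,L\}^{d-1}\times\{-M,\dots,0\}$. This turns $\mathsf h^{+,-}_{L,M}$ into $\mathsf h^{+,+}_{L,M}$ at the cost of flipping the sign of the nearest-neighbour interaction only across the interface $\{x_d\in\{0,1\}\}$, giving, for any bounded local $F$,
\[
\langle F(\varphi)\rangle^{+,-}_{\cR(L,M),\beta}
=\frac{\big\langle F(\tilde\varphi)\,\prod_{x\in\{-L,\dots,L\}^{d-1}}e^{-2\beta\varphi_{(x,1)}\varphi_{(x,0)}}\big\rangle^{+,+}_{\cR(L,M),\beta}}
{\big\langle \prod_{x\in\{-L,\dots,L\}^{d-1}}e^{-2\beta\varphi_{(x,1)}\varphi_{(x,0)}}\big\rangle^{+,+}_{\cR(L,M),\beta}}.
\]
The reweighting factor is a \emph{local} functional, supported on a fixed ($M$-independent) slab of width $2$, and has finite moments by Proposition~\ref{prop:regularity}. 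Convergence of $\langle\cdot\rangle^{+,-}_{\cR(L,M),\beta}$ then follows immediately from the already-established convergence and regularity of $\langle\cdot\rangle^{+,+}_{\cR(L,M),\beta}$.
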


\begin{proof} Observe that by Proposition \ref{prop:stoc_monotonicity}, one has that $\langle \cdot\rangle_{\mathbb Z^d,\beta,1}\succcurlyeq \langle \cdot \rangle^{+,+}_{\cR(L,M),\beta}$ for every $L,M\geq 1$. Moreover, the measure $\langle \cdot\rangle_{\mathbb Z^d,\beta,1}$ can be obtained as the limit of the sequence $(\langle \cdot\rangle_{\Lambda_n,\beta,1})_{n\geq 1}$, which is uniformly regular in the sense of Proposition~\ref{prop:regularity}, and hence $\langle \cdot\rangle_{\mathbb Z^d,\beta,1}$ inherits this property.  In particular, this implies that the measures $ \langle \cdot \rangle^{+,+}_{\cR(L,M),\beta}$ are also uniformly regular, and therefore tight. Using again Proposition \ref{prop:stoc_monotonicity}, the sequence $( \langle \cdot \rangle^{+,+}_{\cR(L,M),\beta})_{M\geq 1}$ is stochastically increasing and hence converges as $M\rightarrow \infty$.
% The convergence of $\langle \cdot \rangle^{+,+}_{\cR(L,M),\beta}$ follows from stochastic monotonicity of $\langle \cdot \rangle^{+,+}_{\cR(L,M),\beta}$ in $M$ by Proposition~\ref{prop:stoc_monotonicity}, and the fact that .
For $\langle \cdot \rangle^{+,-}_{\cR(L,M),\beta}$, note that by flipping the sign of all spins $\varphi_x$ for $x\in \{-L,\ldots,L\}^{d-1}\times \{-M,\ldots,0\}$, and denoting the resulting configuration $\tilde \varphi$,  we obtain
\begin{equation}
\langle F(\varphi) \rangle^{+,-}_{\cR(L,M),\beta}=  \frac{\langle F(\tilde \varphi) \prod_{x\in \{-L,\ldots,L\}^{d-1}}\exp(-2\beta \varphi_{(x,1)} \varphi_{(x,0)} )\rangle^{+,+}_{\cR(L,M),\beta}}{\langle \prod_{x\in \{-L,\ldots,L\}^{d-1}}\exp(-2\beta \varphi_{(x,1)} \varphi_{(x,0)} )\rangle^{+,+}_{\cR(L,M),\beta}}  
\end{equation}
for every bounded (local) measurable function $F$. The convergence of $\langle \cdot \rangle^{+,-}_{\cR(L,M),\beta}$ then follows from the convergence and regularity of $\langle \cdot \rangle^{+,+}_{\cR(L,M),\beta}$.
\end{proof}

\begin{proposition}\label{prop: surface tension positive}
For every $\beta > \beta_c$, 
\begin{equation}
\tau_\beta \geq \frac 1d \int_{\beta_c}^\beta \left( \langle \varphi_0 \rangle_u^+ \right)^2 \mathrm{d}u.
\end{equation}
In particular, $\tau_\beta > 0$.
\end{proposition}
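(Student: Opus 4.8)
The plan is to follow the strategy of Lebowitz and Pfister \cite{lebowitz1981surface}, adapted to the $\varphi^4$ setting. The key idea is to express the surface tension $\tau_\beta$ as an integral over $\beta$ of a derivative, and to use the Ginibre inequality (Proposition~\ref{prop:Ginibre}) to bound this derivative from below by a square of a one-point function. Throughout one works on the finite rectangle $\mathcal{R}(L,M)$, takes the limit $M\to\infty$ using Lemma~\ref{lem:++}, then $L\to\infty$, and compares with the bulk magnetisation via Proposition~\ref{prop: thick conv}.

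First I would write, for fixed $L,M$, the logarithmic derivative in $\beta$ of $\tau_\beta^{L,M} = \tfrac{1}{L^{d-1}}\log (Z^{+,+}_{\mathcal{R}(L,M),\beta}/Z^{+,-}_{\mathcal{R}(L,M),\beta})$. By the standard computation, $\partial_\beta \log Z^{\varphi^4}_{\Lambda,\beta,\mathsf h} = \sum_{xy \in E(\Lambda)} \langle \varphi_x\varphi_y\rangle_{\Lambda,\beta,\mathsf h} + \sum_x \mathsf h_x \langle \varphi_x\rangle_{\Lambda,\beta,\mathsf h}$. Hence
\begin{equation}
L^{d-1}\,\partial_\beta \tau_\beta^{L,M} = \sum_{xy} \left( \langle \varphi_x\varphi_y\rangle^{+,+}_{\mathcal{R}(L,M),\beta} - \langle \varphi_x\varphi_y\rangle^{+,-}_{\mathcal{R}(L,M),\beta}\right) + \sum_{x} \left( (\mathsf h^{+,+}_{L,M})_x\langle \varphi_x\rangle^{+,+}_{\mathcal{R}(L,M),\beta} - (\mathsf h^{+,-}_{L,M})_x\langle \varphi_x\rangle^{+,-}_{\mathcal{R}(L,M),\beta}\right).
\end{equation}
Since $\mathsf h^{+,+}_{L,M}$ and $\mathsf h^{+,-}_{L,M}$ differ only by the sign on the bottom thick boundary $\partial^{\rm thick}_-$, with $|\mathsf h^{+,-}_{L,M}| = \mathsf h^{+,+}_{L,M}$, the Ginibre inequality of Proposition~\ref{prop:Ginibre} applies to each term: for each edge $xy$, $\langle \varphi_x\varphi_y\rangle^{+,+} - \langle \varphi_x\varphi_y\rangle^{+,-} \ge |\langle\varphi_x\rangle^{+,+}\langle\varphi_y\rangle^{+,-} - \langle\varphi_x\rangle^{+,-}\langle\varphi_y\rangle^{+,+}| \ge 0$, and similarly the field terms are controlled. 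The point is that all these contributions are nonnegative, so one may keep only a well-chosen subset of them. Following \cite{lebowitz1981surface}, I would keep only the "vertical" edges crossing the middle plane $\{-L,\dots,L\}^{d-1}\times\{0,1\}$ (or the associated field terms), of which there are of order $L^{d-1}$, and discard the rest. On such a vertical edge $xy$ with $x = (x',0)$, $y=(x',1)$ in the bulk (i.e.\ $x' \in \{-L+\log L+1,\dots\}^{d-1}$, away from the thick boundary), the Ginibre bound combined with the symmetry $\langle\varphi_x\rangle^{+,-} = -\langle\varphi_y\rangle^{+,-}$ (reflection-plus-sign-flip across the middle plane, used in the proof of Lemma~\ref{lem:++}) gives a lower bound of the form $\langle\varphi_x\rangle^{+,+}\langle\varphi_y\rangle^{+,+} + (\langle\varphi_x\rangle^{+,+})(\langle\varphi_y\rangle^{+,+})$, i.e.\ roughly $2(\langle\varphi_x\rangle^{+,+})(\langle\varphi_y\rangle^{+,+})$ for $x,y$ in the bulk by approximate translation invariance. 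More precisely, one shows $\partial_\beta\tau_\beta^{L,M} \ge \tfrac{1}{d L^{d-1}}\sum_{x'} (\langle\varphi_{(x',0)}\rangle^{+,+}_{\mathcal{R}(L,M),\beta})^2$ up to boundary corrections, the factor $\tfrac1d$ accounting for choosing one of the $d$ coordinate directions for the crossing edges. Integrating from $\beta_c$ to $\beta$ (note $\tau_{\beta_c}^{L,M}\ge 0$ by Lemma~\ref{lem: ratio probablistic expression}) gives $\tau_\beta^{L,M} \ge \tfrac{1}{dL^{d-1}}\int_{\beta_c}^\beta \sum_{x'}(\langle\varphi_{(x',0)}\rangle^{+,+}_{\mathcal{R}(L,M),u})^2\,\mathrm du$.

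Next I would pass to the limit. First take $M\to\infty$: by Lemma~\ref{lem:++}, $\langle\cdot\rangle^{+,+}_{\mathcal{R}(L,M),u} \to \langle\cdot\rangle^{+,+}_{\mathcal{R}(L),u}$, and by Fatou (applied to $\liminf_M \tau_\beta^{L,M} = \tau_\beta^L$) one gets $\tau_\beta^L \ge \tfrac{1}{dL^{d-1}}\int_{\beta_c}^\beta \sum_{x'\in\{-L,\dots,L\}^{d-1}}(\langle\varphi_{(x',0)}\rangle^{+,+}_{\mathcal{R}(L),u})^2\,\mathrm du$ (here one needs uniform integrability of the one-point functions, which follows from the regularity estimates of Proposition~\ref{prop:regularity}, and monotone convergence or dominated convergence under the $u$-integral). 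Then take $L\to\infty$: the strip measure $\langle\cdot\rangle^{+,+}_{\mathcal{R}(L),u}$ converges to the full-space measure $\langle\cdot\rangle^+_u$ — this uses the comparison $\langle\cdot\rangle^{+,+}_{\mathcal{R}(L),u} \preccurlyeq \langle\cdot\rangle_{\mathbb{Z}^d,u,1}$ together with a lower bound matching the thick-plus convergence of Proposition~\ref{prop: thick conv} (the magnetic field $\mathsf h^{+,+}_{L,M}$ restricted to the strip is essentially $\mathsf h_L$ of that proposition, so the argument there applies, possibly requiring the auxiliary estimate of Proposition~\ref{prop: one side is enough} or Remark~\ref{rem: equality mag implies equality meas}). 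For $x'$ deep in the bulk of the strip, $\langle\varphi_{(x',0)}\rangle^{+,+}_{\mathcal{R}(L),u} \to \langle\varphi_0\rangle^+_u = m^*(u)$, and since a positive fraction of the $(2L+1)^{d-1}$ points $x'$ are in the bulk and contribute $\ge (m^*(u))^2 - o(1)$ each, while the one-point functions are uniformly bounded, a Riemann-sum / Fatou argument gives $\liminf_{L\to\infty}\tfrac{1}{dL^{d-1}}\sum_{x'}(\langle\varphi_{(x',0)}\rangle^{+,+}_{\mathcal{R}(L),u})^2 \ge \tfrac{1}{d}\cdot\tfrac{1}{(\text{normalisation})}\cdots$; careful bookkeeping of the normalisation $|\mathcal{R}(L)\cap(\mathbb{Z}^{d-1}\times\{0\})| = (2L+1)^{d-1}$ against $L^{d-1}$ yields exactly $\tau_\beta \ge \tfrac1d\int_{\beta_c}^\beta (\langle\varphi_0\rangle^+_u)^2\,\mathrm du$. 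The final "in particular" claim is immediate: for $u>\beta_c$ we have $m^*(u) = \langle\varphi_0\rangle^+_u > 0$ by definition of $\beta_c$ and monotonicity, so the integrand is strictly positive on $(\beta_c,\beta)$ and the integral is strictly positive.

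The main obstacle I anticipate is the careful handling of the limits and the boundary corrections: one must ensure that (i) discarding all but the vertical crossing edges and field terms is justified (this uses nonnegativity via Ginibre, which is clean), (ii) the one-point functions $\langle\varphi_{(x',0)}\rangle^{+,+}$ in the finite strip genuinely converge to $m^*(u)$ for bulk $x'$ and are uniformly bounded (regularity, Proposition~\ref{prop:regularity}, plus the thick-plus convergence, Proposition~\ref{prop: thick conv}, which is precisely why the definition uses a thick boundary rather than single-layer boundary conditions), and (iii) the exchange of $\liminf$ in $L$ and $M$ with the $\mathrm{d}u$-integral is valid — this is where Fatou's lemma and uniform integrability from the quartic tail bounds do the work. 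None of these steps is individually deep given the tools already assembled in the paper, but assembling them in the right order, and in particular verifying that the one-layer-vs-thick-layer subtlety does not cause the comparison with $\langle\cdot\rangle^+_u$ to fail, is the delicate part. The factor $\tfrac1d$ arises naturally and harmlessly from only using crossing edges in one of the $d$ coordinate directions (or, equivalently, from the fact that summing over all $d$ directions would overcount; either framing gives the stated constant).
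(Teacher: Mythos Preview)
Your overall strategy is the right one, and you correctly identify the key tools (Ginibre, Lemma~\ref{lem:++}, thick-plus convergence, Fatou), but the heart of the argument has a genuine gap.

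You propose to keep only the vertical edges crossing the single layer $\{x_d=0\}\cup\{x_d=1\}$. After applying Ginibre and passing to $M\to\infty$ (where $\langle\varphi_{(x',0)}\rangle^{+,+}_{\cR(L)}=\langle\varphi_{(x',1)}\rangle^{+,+}_{\cR(L)}$ by translation invariance of the strip $(+,+)$ measure), the single-edge contribution is
\[
\langle\varphi_{(x',0)}\rangle^{+,+}_{\cR(L)}\bigl[\langle\varphi_{(x',1)}\rangle^{+,-}_{\cR(L)}-\langle\varphi_{(x',0)}\rangle^{+,-}_{\cR(L)}\bigr],
\]
not ``roughly $2(\langle\varphi_{(x',0)}\rangle^{+,+})^2$'' as you claim. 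The bracket is a one-step increment of the $(+,-)$ one-point function \emph{at the interface}, where these one-point functions are small (they interpolate between $-m^*$ far below and $+m^*$ far above). There is no a priori reason this single increment is comparable to $\langle\varphi_{(x',0)}\rangle^{+,+}$. Your claimed ``symmetry $\langle\varphi_x\rangle^{+,-}=-\langle\varphi_y\rangle^{+,-}$'' does not help: plugging it in gives $|\langle\varphi_x\rangle^{+,-}|\cdot(\langle\varphi_x\rangle^{+,+}+\langle\varphi_y\rangle^{+,+})$, and the prefactor $|\langle\varphi_x\rangle^{+,-}|$ is again small.

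The paper's fix is to keep \emph{all} vertical edges in a slab of height $2N$, i.e.\ with $j\in\{-N,\dots,N-1\}$. After $M\to\infty$, translation invariance of the $(+,+)$ strip measure makes the $(+,+)$ factor constant in $j$, and the sum over $j$ \emph{telescopes} to
\[
\langle\varphi_{(x',0)}\rangle^{+,+}_{\cR(L)}\bigl[\langle\varphi_{(x',N)}\rangle^{+,-}_{\cR(L)}-\langle\varphi_{(x',-N)}\rangle^{+,-}_{\cR(L)}\bigr].
\]
Now one takes $N\to\infty$: far from the interface, Lemma~\ref{lem: pm convergence} shows $\langle\varphi_{(x',\pm N)}\rangle^{+,-}_{\cR(L)}\to\pm\langle\varphi_{(x',0)}\rangle^{+,+}_{\cR(L)}$, yielding the desired $2(\langle\varphi_{(x',0)}\rangle^{+,+}_{\cR(L)})^2$. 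This telescoping trick and the auxiliary Lemma~\ref{lem: pm convergence} are the missing pieces in your argument. A secondary point: the final comparison $\langle\varphi_{(x',0)}\rangle^{+,+}_{\cR(L),u}\gtrsim\langle\varphi_0\rangle^+_u$ is obtained via Proposition~\ref{prop: one side is enough} rather than Proposition~\ref{prop: thick conv} directly, since $\cR(L)$ is an infinite strip with thick boundary only on the lateral faces.
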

\begin{proof}
Our main tool will be the Ginibre inequality stated in Proposition~\ref{prop:Ginibre}.
We first differentiate $\tau^{L,M}_\beta$ with respect to $\beta$ to get that
\begin{align}
\begin{split}
\frac{\rm d}{\rm d\beta} \tau^{L,M}_\beta =  \frac{1}{L^{d-1}}& \sum_{xy\in E(\cR(L,M))} \left(\langle \varphi_x\varphi_y \rangle^{+,+}_{\cR(L,M),\beta} - \langle \varphi_x\varphi_y \rangle^{+,-}_{\cR(L,M),\beta}\right)
\\
&\quad +\frac{1}{L^{d-1}}\sum_{x\in \cR(L,M)}\left( \langle \mathsf h^{+,+}_{L,M}(x)\varphi_x \rangle_{\mathcal R(L,M),\beta} - \langle \mathsf h^{+,-}_{L,M}(x)\varphi_x \rangle_{\mathcal R(L,M),\beta} \right).
\end{split}
\end{align}
By \eqref{eq:Ginibre}, each term in the sums is non-negative. Let us fix some $N\geq 1$. By disregarding the horizontal edges and the edges in the complement of $E(\mathcal{R}(L,N))$, and applying Proposition \ref{prop:Ginibre}, we obtain, for every $M\geq N$,
\begin{align*}
\frac{\rm d}{\rm d\beta} \tau^{L,M}_\beta \geq  \frac{1}{L^{d-1}} \sum_{\substack{ x\in \{-L,\ldots,L\}^{d-1}\\ j\in\{-N,\ldots,N-1\}}} & \Big(\langle \varphi_{(x,j)} \rangle^{+,+}_{\cR(L,M),\beta} \langle \varphi_{(x,j+1)} \rangle^{+,-}_{\cR(L,M),\beta}\\
&\qquad- \langle \varphi_{(x,j+1)}\rangle^{+,+}_{\cR(L,M),\beta} \langle \varphi_{(x,j)} \rangle^{+,-}_{\cR(L,M),\beta}\Big).
\end{align*}
Taking the limit as $M\to \infty$, and using translation invariance in the $\mathbf{e}_d$ direction we obtain
\begin{align*}
\liminf_{M\to\infty}\frac{\rm d}{\rm d\beta} \tau^{L,M}_\beta &\geq  \frac{1}{L^{d-1}} \sum_{\substack{ x\in \{-L,\ldots,L\}^{d-1}\\ j\in\{-N,\ldots,N-1\}}}  \langle \varphi_{(x,0)} \rangle^{+,+}_{\cR(L),\beta} \left( \langle \varphi_{(x,j+1)} \rangle^{+,-}_{\cR(L),\beta} - \langle \varphi_{(x,j)} \rangle^{+,-}_{\cR(L),\beta} \right) \\
&= \frac{1}{L^{d-1}} \sum_{x\in \{-L,\ldots,L\}^{d-1}}  \langle \varphi_{(x,0)} \rangle^{+,+}_{\cR(L),\beta}\left(\langle \varphi_{(x,N)} \rangle^{+,-}_{\cR(L),\beta} - \langle \varphi_{(x,-N)} \rangle^{+,-}_{\cR(L),\beta}\right).
\end{align*}
Now, we can use Lemma~\ref{lem: pm convergence} below to obtain that
\begin{equation}
\liminf_{M\to\infty}\frac{\rm d}{\rm d\beta} \tau^{L,M}_\beta \geq \frac{1}{L^{d-1}} \sum_{x\in \{-L,\ldots,L\}^{d-1}}  2\left(\langle \varphi_{(x,0)} \rangle^{+,+}_{\cR(L),\beta}\right)^2.
\end{equation}
Applying Fatou's lemma we can now deduce that
\begin{equation}
\tau_{\beta}^L\geq \tau_{\beta_c}^L+\frac{1}{L^{d-1}} \sum_{x\in \{-L,\ldots,L\}^{d-1}} 2\int_{\beta_c}^{\beta} \left(\langle \varphi_{(x,0)} \rangle^{+,+}_{\cR(L),u}\right)^2 \mathrm{d}u.
\end{equation}
Without loss of generality, we only handle $\langle \varphi_{(x,0)} \rangle^{+,+}_{\cR(L),u}$ for $x\in \{-L,\ldots,L\}^{d-1}$ satisfying $x_1=|x|\in \{0,\ldots,L\}$. Using Proposition \ref{prop:monotonicity}, for $u\in (\beta_c,\beta)$ and $x\in \{-L,\ldots,L\}^{d-1}$ satisfying $x_1=|x|\in \{0,\ldots,L\}$, one has
\begin{equation}
    \langle \varphi_{(x,0)}\rangle^{+,+}_{\mathcal{R}(L),u}\geq \langle \varphi_{(x,0)}\rangle^{+,+}_{\mathcal{R}(L,L-x_1),u}\geq \langle\varphi_0\rangle_{\Lambda_{L-x_1}(x),u,\tilde{\mathsf{h}}_{L-x_1}(x)},
\end{equation}
where $\tilde{\mathsf{h}}_{L-x_1}(x)$ was defined above Proposition \ref{prop: one side is enough}. The latter proposition gives that
\begin{equation}
    \liminf_{L\rightarrow \infty} \langle\varphi_0\rangle_{\Lambda_{L-x_1}(x),u,\tilde{\mathsf{h}}_{L-x_1}(x)}\geq \frac{1}{2d}\langle \varphi_0\rangle^+_u.
\end{equation}
It follows that
\begin{equation}
\tau_{\beta}\geq \lim_{L\to\infty} \frac{1}{d L^{d-1}} \sum_{x\in \{-L,\ldots,L\}^{d-1}} \int_{\beta_c}^{\beta} \left(\langle \varphi_{0} \rangle^+_{u}\right)^2 \mathrm{d}u= \frac{1}{d}\int_{\beta_c}^{\beta} \left(\langle \varphi_{0} \rangle^+_{u}\right)^2 \mathrm{d} u >0,
\end{equation}
as desired.
\end{proof}

We now prove the lemma mentioned in the proof of Proposition~\ref{prop: surface tension positive}.

\begin{lemma}\label{lem: pm convergence}
Let $\beta>0$. For every $L\geq 1$ we have 
\begin{equation}
\lim_{N\to\infty}\langle \varphi_{(x,N)} \rangle^{+,-}_{\cR(L),\beta}=\langle \varphi_{(x,0)} \rangle^{+,+}_{\cR(L),\beta} \quad \text{and} \quad  \lim_{N\to\infty}\langle \varphi_{(x,-N)} \rangle^{+,-}_{\cR(L),\beta} =-\langle \varphi_{(x,0)} \rangle^{+,+}_{\cR(L),\beta} 
\end{equation}
for every $x\in \{-L,\ldots,L\}^{d-1}$.
\end{lemma}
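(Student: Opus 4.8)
## Proof Plan for Lemma \ref{lem: pm convergence}

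The plan is to exploit the monotonicity in the height coordinate together with the translation invariance of the strip measures, and to identify the two candidate limits as the up-spin and down-spin half-space measures on the strip $\mathcal R(L)$. First I would set up the relevant monotonicity: by Proposition~\ref{prop:stoc_monotonicity} (monotonicity in $\mathsf h$), applied to $\langle \cdot \rangle^{+,-}_{\mathcal R(L,M),\beta}$, flipping the lower half spins as in the proof of Lemma~\ref{lem:++}, one sees that as $N\to\infty$ the spins $\varphi_{(x,N)}$ are sampled from a measure which is increasing in $N$ (one is moving a fixed vertex further from the negatively magnetised bottom and keeping it at fixed distance—eventually infinite—from the positively magnetised top, or more precisely one compares the shifted measures). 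Concretely, I would argue that the sequence of shifted measures $\tau_{-N\mathbf e_d}\langle \cdot \rangle^{+,-}_{\mathcal R(L),\beta}$ (shift placing the observation vertex at height $0$) is stochastically monotone in $N$ and, being uniformly regular (inherited from $\langle \cdot\rangle_{\mathbb Z^d,\beta,1}$ as in Lemma~\ref{lem:++}), hence tight, so it converges. The limit is, by construction, a Gibbs measure on the full strip $\mathcal R(L)$ with $+1$ boundary field on $\partial^{\rm thick}\mathcal R(L)$ and no boundary at $\pm\infty$—this is precisely a ``plus'' infinite-strip measure; I would then want to identify it with $\langle \cdot\rangle^{+,+}_{\mathcal R(L),\beta}$.

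The identification of the limit with $\langle \cdot\rangle^{+,+}_{\mathcal R(L),\beta}$ is the key step, and it is here that I expect the main difficulty. Two things must be checked: first, that the limiting strip measure is translation invariant in the $\mathbf e_d$ direction (clear, since it is a limit of shifts), and second, that among translation-invariant Gibbs measures on the strip $\mathcal R(L)[\mathsf h^{+}]$ it is the maximal one, hence equals $\langle \cdot\rangle^{+,+}_{\mathcal R(L),\beta}$ which is obtained as an increasing limit from $\langle \cdot\rangle^{+,+}_{\mathcal R(L,M),\beta}$. For the first inequality $\lim_N\langle\varphi_{(x,N)}\rangle^{+,-}_{\mathcal R(L),\beta}\le\langle\varphi_{(x,0)}\rangle^{+,+}_{\mathcal R(L),\beta}$, one uses that $\mathsf h^{+,-}_{L,M}\le \mathsf h^{+,+}_{L,M}$ pointwise on the upper half, so after the sign flip making both configurations ``plus-like'' near the top, Proposition~\ref{prop:stoc_monotonicity} gives stochastic domination. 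For the reverse inequality, I would argue that the limit strip measure dominates $\langle\varphi_{(x,0)}\rangle^{+,+}_{\mathcal R(L,N),\beta}$ for every $N$: indeed in $\mathcal R(L,2N)$ with $\mathsf h^{+,-}$, the measure restricted to the top half $\{0,\dots,2N\}$ dominates (by FKG / monotonicity in boundary conditions) the measure $\langle\cdot\rangle^{+,+}_{\mathcal R(L,N),\beta}$ shifted appropriately, since a negative field at the bottom only lowers spins. Taking $N\to\infty$ then yields $\ge \langle\varphi_{(x,0)}\rangle^{+,+}_{\mathcal R(L),\beta}$.

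Once both inequalities are in hand, the first displayed limit follows. The second, $\lim_{N\to\infty}\langle\varphi_{(x,-N)}\rangle^{+,-}_{\mathcal R(L),\beta}=-\langle\varphi_{(x,0)}\rangle^{+,+}_{\mathcal R(L),\beta}$, is then immediate by the spin-flip symmetry of $\mathsf h^{+,-}_{L,M}$: reflecting $\mathcal R(L,M)$ through the hyperplane $\{x_d=1/2\}$ (which exchanges $\partial^{\rm thick}_+$ and $\partial^{\rm thick}_-$) and negating all spins leaves $\langle\cdot\rangle^{+,-}_{\mathcal R(L,M),\beta}$ invariant, so $\langle\varphi_{(x,-N)}\rangle^{+,-}=-\langle\varphi_{(x,N+1)}\rangle^{+,-}$, and the first part gives the claim. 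The main obstacle, as noted, is the clean identification of the increasing limit of the shifted Dobrushin-type strip measures with the plus strip measure $\langle\cdot\rangle^{+,+}_{\mathcal R(L),\beta}$; this requires carefully combining monotonicity in boundary conditions, monotonicity in $\mathsf h$, the uniform regularity from Lemma~\ref{lem:++} to pass to limits, and translation invariance, but involves no new ideas beyond the correlation inequalities of Section~\ref{sec:correlation_ineq} and the convergence arguments already used in the proof of Lemma~\ref{lem:++}.
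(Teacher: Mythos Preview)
Your upper bound is correct (and matches the paper, which uses Ginibre but that amounts to the same monotonicity you invoke), the monotonicity of the shifted measures is sound, and the symmetry argument for the second limit is fine. The gap is in the lower bound, and it is a real one.

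You write that in $\mathcal R(L,2N)$ with $\mathsf h^{+,-}$, ``the measure restricted to the top half $\{0,\dots,2N\}$ dominates (by FKG / monotonicity in boundary conditions) the measure $\langle\cdot\rangle^{+,+}_{\mathcal R(L,N),\beta}$ shifted appropriately, since a negative field at the bottom only lowers spins.'' But this does not give the inequality you need. The measure $\langle\cdot\rangle^{+,+}_{\mathcal R(L,N),\beta}$ has \emph{free} boundary at the top and bottom of the box. To dominate it from the $+,-$ measure restricted to a slab, you would need the random boundary condition induced by the spins just below the slab to be nonnegative. You have no control on that: those spins sit in a region carrying the $-1$ lateral field and may well be negative. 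The phrase ``a negative field at the bottom only lowers spins'' points in exactly the wrong direction---it gives an upper bound, not a lower one. Equivalently, your attempted identification of the limiting shifted measure as the \emph{maximal} translation-invariant strip Gibbs measure is unjustified; a priori it could be any such measure, and a separate uniqueness argument for the quasi-one-dimensional strip would be needed.

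The paper supplies the missing device: it introduces the random height
\[
\iota := \min\{i>0:\ \varphi_{(x,2i)}>0\ \text{for all } x\in\{-L,\ldots,L\}^{d-1}\},
\]
and proves that $\iota<\infty$ almost surely under $\langle\cdot\rangle^{+,-}_{\mathcal R(L),\beta}$ (regularity bounds the spins on odd layers, and conditionally on odd layers the signs on each even layer are positive with uniformly positive probability, independently across layers). Conditioning on $\{\iota=i\}$, the spins at height $2i$ are all strictly positive, so the Markov property yields a genuinely nonnegative boundary field at the bottom of the half-strip above height $2i$; this justifies
\[
\langle \varphi_{(x,N)}\mid\iota=i\rangle^{+,-}_{\mathcal R(L),\beta}\ \ge\ \langle \varphi_{(x,0)}\rangle^{+,+}_{\mathcal R(L,N-2i),\beta}.
\]
Sending $N\to\infty$ and then $i\to\infty$ (using $\iota<\infty$ a.s.\ together with Cauchy--Schwarz and regularity to control the tail contribution) gives the desired lower bound. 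Your outline is missing precisely this mechanism for manufacturing a positive boundary layer.
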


\begin{proof}
We will only obtain the limit of $\langle \varphi_{(x,N)} \rangle^{+,-}_{\cR(L),\beta}$. The limit of $\langle \varphi_{(x,-N)} \rangle^{+,-}_{\cR(L),\beta}$ can be obtained similarly, once one observes that $-\langle \varphi_{(x,0)} \rangle^{+,+}_{\cR(L),\beta}=\langle \varphi_{(x,0)} \rangle^{-,-}_{\cR(L),\beta}$.

Let us define 
\begin{equation}
\iota := \min \{i>0:~ \varphi_{(x,2i)} >0, \, \forall x \in \{-L,\ldots, L \}^{d-1}\}.
\end{equation}
We will first prove that $\iota$ is finite almost surely.

Reasoning as in the proof of Lemma \ref{lem:++}, the stochastic domination (by a product measure) of Proposition \ref{prop:regularity} extends to the measure $\langle \cdot \rangle^{+,-}_{\mathcal R(L), \beta}$. Therefore, there exists $M>0$ such that $\langle \cdot\rangle^{+,-}_{\cR(L),\beta}$--almost surely, there are infinitely many $i>0$ such that the event  
\begin{equation}
A_{M,i}:=\{|\varphi_{(x,\ell)}|\leq M ,\: \forall x\in \{-L,\ldots,L\}^{d-1}, \forall \ell \in \{2i-1,2i+1\} \}
\end{equation}
occurs. Let $\mathcal{R}^+_{\rm odd}(L):=\{(x,2j+1): x\in \{-L,\ldots,L\}^{d-1}, \: j\geq 0\}$.  Using the domain Markov property, we obtain that
\begin{equation}
\nu^{+,-}_{\mathcal{R}(L),\beta}\left[\Big\{\varphi_{(x,2i)}>0,\, \forall x\in \{-L,\ldots,L\}^{d-1}\Big\} \mid \varphi_{|\mathcal{R}^+_{\rm odd}(L)}=\eta_{|\mathcal{R}^+_{\rm odd}(L)} \right],
\end{equation}
remains bounded away from $0$, uniformly over $i$ and $\eta_{|\mathcal{R}^+_{\rm odd}(L)}\in A_{M,i}$. Furthermore, these events are independent of each other conditionally on $\varphi_{|\mathcal{R}^+_{\rm odd}(L)}$. Therefore, $\iota$ is indeed finite almost surely.

Let us now condition on the event $\{\iota = i\}$, which is measurable with respect to $\varphi_{(x,j)}$, $j\leq 2i$ and $x \in \{-L,\ldots,L\}^{d-1}$. By the Markov property and monotonicity in boundary condition, we have that for every $N\geq 2i$
\begin{equation}
\langle \varphi_{(x,N)} 
 \mid \iota = i \rangle^{+,-}_{\cR(L),\beta}\geq  \langle \varphi_{(x,0)} \rangle^{+,+}_{\cR(L,N-2i),\beta}.
\end{equation}
As a consequence,
\begin{equation}
    \langle \varphi_{(x,N)}  \rangle^{+,-}_{\cR(L),\beta}\geq \langle \varphi_{(x,0)} \rangle^{+,+}_{\cR(L,N-2i),\beta}\langle \mathds{1}_{\iota\leq i}\rangle_{\mathcal{R}(L),\beta}^{+,-}+\langle \varphi_{(x,N)}\mathds{1}_{\iota>i}  \rangle^{+,-}_{\cR(L),\beta}.
\end{equation}
By the Cauchy--Schwarz inequality and Proposition \ref{prop:regularity}, there exists a constant $C>0$ such that for every $i\geq 1$ we have
\begin{equation}
|\langle \varphi_{(x,N)} 
\mathbbm{1}_{\iota>i} \rangle^{+,-}_{\cR(L),\beta}|\leq \sqrt{\langle \varphi^2_{(x,N)}  \rangle^{+,-}_{\cR(L),\beta}\langle 
\mathbbm{1}_{\iota>i}  \rangle^{+,-}_{\cR(L),\beta}} 
\leq C\sqrt{\langle 
\mathbbm{1}_{\iota>i}  \rangle^{+,-}_{\cR(L),\beta}}=: \varepsilon_i.
\end{equation}
Thus, by Lemma \ref{lem:++}, for every $i>0$,
\begin{equation}
\liminf_{N\to\infty} \langle \varphi_{(x,N)} 
 \rangle^{+,-}_{\cR(L),\beta}\geq \langle \varphi_{(x,0)} \rangle^{+,+}_{\cR(L),\beta}\langle \mathbbm{1}_{\iota\leq i}  \rangle^{+,-}_{\cR(L),\beta}-\varepsilon_i.
 \end{equation}
 Since $\iota$ is finite almost surely, $\langle \mathbbm{1}_{\iota\leq i}  \rangle^{+,-}_{\cR(L),\beta}\rightarrow 1$ and $\varepsilon_i\rightarrow 0$ as $i\rightarrow \infty$. We can deduce that 
\begin{equation}
\liminf_{N\to\infty} \langle \varphi_{(x,N)} 
 \rangle^{+,-}_{\cR(L),\beta}\geq \langle \varphi_{(x,0)} \rangle^{+,+}_{\cR(L),\beta}.
 \end{equation}
Moreover, by Proposition \ref{prop:Ginibre} (with $A=\{(x,N)\}$ and $B=\emptyset$), for every $N$,
\begin{equation}
    \langle \varphi_{(x,N)}\rangle^{+,-}_{\mathcal{R}(L),\beta}\leq \langle \varphi_{(x,0)} \rangle^{+,+}_{\cR(L),\beta}.
\end{equation}
The proof follows readily from the two last displayed equations.

% Note that $I:=\{-L,\ldots,L\}^{d-1} \times \{2i\}$ splits $\mathcal{R}(L)$ into two connected components $\mathcal{R}^+$ and $\mathcal{R}^-$ such that $\fg^{+}\in \mathcal{R}^+$ and $\fg^{-}\in \mathcal{R}^-$. Since $\varphi_x>0$ for every $x\in \mathcal{R}^+$ which has a neighbour $y\in \mathcal{R}^-$, and $(x,N)\in \mathcal{R}^+$ for every $N$ large enough depending only on $I$, we can use the monotonicity in boundary conditions to deduce that 
% \begin{equation}
% \liminf_{N\to\infty} \langle \varphi_{(x,N)} 
%  \mid \mathcal{I}=I \rangle^{+,-}_{\cR(L),\beta}\geq  \langle \varphi_{(x,0)} \rangle^{+,+}_{\cR(L),\beta}.
% \end{equation}

\end{proof}

\subsection{Comparison with the free $\varphi^4$ random cluster model}\label{prop:from thick plus to free}

In this section we prove Theorem~\ref{thm:free_surface_tension}. Proposition~\ref{prop: surface tension positive} and Lemma~\ref{lem: ratio probablistic expression} readily imply a surface order bound on disconnection for the $\varphi^4$ random cluster measure $\Psi^{0}_{\mathcal R(L),\beta,\mathsf{h}^{+,+}_{L}}:=\lim_{M\to\infty}\Psi^{0}_{\mathcal R(L,M),\beta,\mathsf{h}^{+,+}_{L,M}}$--- see Lemma~\ref{lem: positivity of tau}. Theorem~\ref{thm:free_surface_tension} then follows from a comparison to the free measure $\Psi^0_{R(L,\delta L),\beta}$ (for $\delta>0$ well chosen). This comparison is established in two steps in Lemmas \ref{lem: strip to rectangle} and \ref{lem: large to free bc}. The uniqueness of the $\varphi^4$ random cluster measure on the half-space with positive magnetic field (recall Propositions~\ref{prop: unique half-space measure} and \ref{prop: equality half space random cluster measures}) will be crucial in the later step.

\begin{lemma}\label{lem: positivity of tau}
Let $\beta>\beta_c$. There exists $c_2>0$ such that for every $L \geq 1$,
\begin{equation}\label{eq: proba strip}
\Psi^{0}_{\mathcal R(L),\beta,\mathsf{h}^{+,+}_{L}}[ \fg^{-} \longleftrightarrow \fg^{+}] \geq 1 - e^{-c_2L^{d-1}}.
\end{equation}
\end{lemma}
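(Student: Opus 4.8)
The goal is to deduce the surface-order disconnection bound \eqref{eq: proba strip} for the strip measure $\Psi^{0}_{\mathcal R(L),\beta,\mathsf{h}^{+,+}_{L}}$ from the positivity of the surface tension $\tau_\beta>0$ established in Proposition~\ref{prop: surface tension positive}. The natural bridge is Lemma~\ref{lem: ratio probablistic expression}, which identifies the \emph{finite}-strip disconnection probability with the ratio of partition functions:
\begin{equation*}
\Psi^{0}_{\mathcal R(L,M),\beta,\mathsf{h}^{+,+}_{L,M}}[\fg^{-} \centernot\longleftrightarrow \fg^{+}] = \frac{Z^{+,-}_{\mathcal{R}(L,M),\beta}}{Z^{+,+}_{\mathcal{R}(L,M),\beta}} = \exp\big(-L^{d-1}\tau_\beta^{L,M}\big).
\end{equation*}
First I would fix $\beta>\beta_c$ and use Proposition~\ref{prop: surface tension positive} to choose $L$ large enough and then $c_2\in(0,\infty)$ so that $\tau_\beta^{L}=\liminf_{M\to\infty}\tau_\beta^{L,M}\geq 2c_2$; then for all $M$ large (depending on $L$) one has $\tau_\beta^{L,M}\geq c_2$, hence $\Psi^{0}_{\mathcal R(L,M),\beta,\mathsf{h}^{+,+}_{L,M}}[\fg^{-} \centernot\longleftrightarrow \fg^{+}]\leq e^{-c_2 L^{d-1}}$ for all $M$ sufficiently large. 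It remains to pass to the limit $M\to\infty$ and to upgrade from ``$L$ large enough'' to ``all $L\geq 1$''.

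For the limit $M\to\infty$: the event $\{\fg^{-}\longleftrightarrow \fg^{+}\}$ is increasing in $\omega$, and by Proposition~\ref{prop:monotonicity_FK} (monotonicity of the random cluster measures in the domain, via the domain Markov property and Corollary~\ref{cor: monotonicity free measure}) the sequence $\Psi^{0}_{\mathcal R(L,M),\beta,\mathsf{h}^{+,+}_{L,M}}$ is stochastically monotone in $M$ and converges weakly to $\Psi^{0}_{\mathcal R(L),\beta,\mathsf{h}^{+,+}_{L}}$ as defined in the statement. Since the connection event is, up to negligible boundary effects, a local increasing event, one obtains $\Psi^{0}_{\mathcal R(L),\beta,\mathsf{h}^{+,+}_{L}}[\fg^{-}\centernot\longleftrightarrow\fg^{+}]\leq \liminf_{M\to\infty}\Psi^{0}_{\mathcal R(L,M),\beta,\mathsf{h}^{+,+}_{L,M}}[\fg^{-}\centernot\longleftrightarrow\fg^{+}]\leq e^{-c_2 L^{d-1}}$; more carefully, one restricts the connection to occur through a fixed finite region (say within $\mathcal R(L,M_0)$ for a fixed $M_0$) to make it a genuine local event, uses weak convergence there, and then lets $M_0\to\infty$ using that $\{\fg^-\longleftrightarrow\fg^+\}=\bigcup_{M_0}\{\fg^-\longleftrightarrow\fg^+ \text{ within } \mathcal R(L,M_0)\}$ up to the obvious identification. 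Finally, the restriction to large $L$ is harmless: for the finitely many small values of $L$ not covered, one simply enlarges $c_2$ (decreases it) so that $1-e^{-c_2 L^{d-1}}$ becomes a vacuous lower bound (indeed $\leq 0$) for those $L$, or alternatively observes that the bound is only needed asymptotically and absorbs small $L$ into the constant.

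\textbf{Main obstacle.} The delicate point is the interchange of the $M\to\infty$ limit with the disconnection event, since $\{\fg^-\centernot\longleftrightarrow\fg^+\}$ is a decreasing event but is \emph{not} local (it references the two ghosts which are attached to the full thick boundary of the infinite strip). The clean way around this is to work with the complementary increasing event $\{\fg^-\longleftrightarrow\fg^+\}$ and to exhaust it by the increasing sequence of \emph{local} increasing events $E_{M_0}:=\{\fg^-\longleftrightarrow\fg^+$ using only edges incident to $\mathcal R(L,M_0)\}$. For fixed $M_0$ and all $M\geq M_0$, weak convergence (plus the fact that $\partial E_{M_0}$ has zero measure, or a monotone-limit argument using Proposition~\ref{prop:monotonicity_FK}) gives $\Psi^{0}_{\mathcal R(L),\beta,\mathsf{h}^{+,+}_{L}}[E_{M_0}]=\lim_{M\to\infty}\Psi^{0}_{\mathcal R(L,M),\beta,\mathsf{h}^{+,+}_{L,M}}[E_{M_0}]\geq 1-e^{-c_2 L^{d-1}}$; letting $M_0\to\infty$ and using continuity from below of the measure finishes the proof. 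One should also double-check that the stochastic monotonicity in $M$ is correctly oriented (larger $M$ gives a measure dominating the restriction of the smaller one on common edges, by Corollary~\ref{cor: monotonicity free measure} applied with the two-ghost graphs), so that the inequality for finite $M$ indeed survives in the limit without sign reversal; this is where the bulk of the (routine) care is needed.
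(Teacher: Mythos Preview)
Your proposal is correct and follows essentially the same route as the paper: combine Lemma~\ref{lem: ratio probablistic expression} with Proposition~\ref{prop: surface tension positive} to get the surface-order bound for large $L$, pass to the limit $M\to\infty$, and adjust the constant for small $L$. The paper's proof is terser---it simply records that the two inputs yield $\limsup_{L\to\infty}L^{-(d-1)}\log\Psi^{0}_{\mathcal R(L),\beta,\mathsf{h}^{+,+}_{L}}[\fg^{-}\centernot\longleftrightarrow\fg^{+}]<0$ and then handles small $L$ by the observation that $\Psi^{0}_{\mathcal R(L),\beta,\mathsf{h}^{+,+}_{L}}[\fg^{-}\longleftrightarrow\fg^{+}]>0$ for every $L\geq 1$---whereas you spell out the $M\to\infty$ passage via monotonicity and exhaustion by local events, which is the honest justification that the paper leaves implicit. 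One small slip: your first suggestion for small $L$ (making $1-e^{-c_2 L^{d-1}}\leq 0$) is impossible since that quantity is always positive; the correct fix, which you also mention and which the paper uses, is that the disconnection probability is strictly below $1$ for each fixed $L$, so one can shrink $c_2$ to accommodate finitely many values.
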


\begin{proof}
It follows from Lemma~\ref{lem: ratio probablistic expression} and Proposition~\ref{prop: surface tension positive} that 
\begin{equation}
\limsup_{L\to\infty}\frac{1}{L^{d-1}}\log\Psi^{0}_{\mathcal R(L),\beta,\mathsf{h}^{+,+}_{L}}[\fg^{-} \centernot\longleftrightarrow \fg^{+}]<0.    
\end{equation}
Since $\Psi^{0}_{\mathcal R(L),\beta,\mathsf{h}^{+,+}_{L}}[\fg^{-} \longleftrightarrow \fg^{+}]>0$ for every $L\geq 1$, there exists a constant $c_2>0$ such that 
\begin{equation}
\frac{1}{L^{d-1}}\log\Psi^{0}_{\mathcal R(L),\beta,\mathsf{h}^{+,+}_{L}}[\fg^{-} \centernot\longleftrightarrow \fg^{+}]\leq -c_2  \end{equation}
for every $L\geq 1$. This concludes the proof.
\end{proof}

We now obtain a finite volume analogue of Lemma~\ref{lem: positivity of tau} for a $\varphi^4$ random cluster measure defined on a rectangle with sufficiently large external magnetic field on the top and bottom faces. More precisely, for $h>0$, let $\Psi^{0,h}_{\mathcal{R}(L,M),\beta}$ denote the $\varphi^4$ random cluster measure on $\mathcal{R}(L,M)$ with external magnetic field equal to $h$ on $\partial^{\rm bot} \mathcal{R}(L,M)\cup \partial^{\rm top} \mathcal{R}(L,M)$ and $0$ otherwise, where
\begin{equation}
    \partial^{\rm bot} \mathcal{R}(L,M):=\{x \in \mathcal{R}(L,M): x_d=-M\}, \; \partial^{\rm top} \mathcal{R}(L,M):=\{x \in \mathcal{R}(L,M): x_d=M\}.
\end{equation}

\begin{lemma}\label{lem: strip to rectangle}
Let $\beta>\beta_c$. For every $h>0$ sufficiently large, there exist $\delta>0$ and $c_3=c_3(h)>0$ such that for every $L \geq 1$,
\begin{equation}
\Psi^{0,h}_{R(L,\delta L),\beta}[\partial^{{\rm bot}} \mathcal R(L, \delta L) \longleftrightarrow\partial^{{\rm top}} \mathcal R(L, \delta L)] \geq 1 - e^{-c_3 L^{d-1}}. 
\end{equation}
\end{lemma}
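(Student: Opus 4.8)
\textbf{Proof strategy for Lemma~\ref{lem: strip to rectangle}.}

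The plan is to deduce the finite-volume rectangle statement from Lemma~\ref{lem: positivity of tau} by passing through infinite-volume half-space measures, exploiting the uniqueness result of Proposition~\ref{prop: equality half space random cluster measures}. The subtlety is that Lemma~\ref{lem: positivity of tau} gives a surface order bound for the measure with magnetic field $+1$ on a \emph{thick} boundary of the infinite strip $\mathcal R(L)$, whereas we want a bound for a measure with a \emph{bounded} field $h$ (but on the \emph{thin} top and bottom faces only) on a bounded rectangle. Regularity up to the boundary is available precisely because the field is bounded, which is why we work with $h$ rather than growing boundary conditions.

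First I would observe that by the monotonicity in boundary conditions and magnetic field (Proposition~\ref{prop:monotonicity_FK}), for $h$ large enough one has $\Psi^{0}_{\mathcal R(L),\beta,\mathsf h^{+,+}_{L}} \preccurlyeq \Psi^{0,h}_{\mathcal R(L),\beta}$ in an appropriate sense (after reinterpreting the thick-boundary field as living on an enlarged rectangle), so it suffices to prove the disconnection bound for the measure $\Psi^{0,h}_{\mathcal R(L),\beta}$ on the infinite strip and then truncate to height $\delta L$. Concretely: since $\Psi^{0,h}_{R(L,\delta L),\beta}$ is obtained from $\Psi^{0,h}_{\mathcal R(L),\beta}$ by the domain Markov property with the minimal (free) boundary condition on the horizontal faces at height $\pm\delta L$, Proposition~\ref{prop:monotonicity_FK} again gives a stochastic domination of the restriction of $\Psi^{0,h}_{\mathcal R(L),\beta}$ to $R(L,\delta L)$ by $\Psi^{0,h}_{R(L,\delta L),\beta}$ only after one passes to the wired face measure — so one should instead argue that the connection event $\{\partial^{\rm bot}\longleftrightarrow\partial^{\rm top}\}$ in the finite rectangle is \emph{increasing} and compare directly using Corollary~\ref{cor: monotonicity free measure}, deducing $\Psi^{0,h}_{R(L,\delta L),\beta}[\partial^{\rm bot}\longleftrightarrow\partial^{\rm top}] \geq \Psi^{0,h}_{\mathcal R(L),\beta}[\partial^{\rm bot}\longleftrightarrow\partial^{\rm top}]$. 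Then I would bound the right-hand side below by the thick-boundary strip probability of Lemma~\ref{lem: positivity of tau} via monotonicity, yielding $1-e^{-c_2 L^{d-1}}$, which gives the claim with $c_3$ slightly smaller than $c_2$ and \emph{any} $\delta>0$.

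The honest main obstacle is that the previous paragraph is too optimistic: the event in Lemma~\ref{lem: positivity of tau} is $\{\fg^-\longleftrightarrow\fg^+\}$, i.e.\ a connection between the two ghosts attached to the \emph{thick} vertical boundary, which is not literally the same as $\{\partial^{\rm bot}\longleftrightarrow\partial^{\rm top}\}$ for a rectangle whose field sits on horizontal faces — the geometry has been rotated, and one must also insert the reduction from a field of intensity $1$ on a logarithmically thick layer to a bounded field $h$ on a single layer. I would handle this by the standard trick: a thick layer of field $1$ on $\log L$ sites stochastically dominates (for the absolute-value field, hence for the random cluster model by Proposition~\ref{prop:monotonicity_FK}) a single-layer field $h$ provided $h$ is chosen below the value produced by summing the field contributions through the domain Markov property — this is where ``$h$ sufficiently large'' enters in reverse, and one takes $h$ comparable to, but not larger than, the effective boundary strength. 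The cleanest route is: (i) use Proposition~\ref{prop:monotonicity_FK} to replace the thick $+1$ layer by a thin $h$ layer with $h$ large but fixed, losing only a constant in the exponent; (ii) use Corollary~\ref{cor: monotonicity free measure} to pass from the infinite strip $\mathcal R(L)$ to the finite rectangle $R(L,\delta L)$ for the increasing connection event, valid for every $\delta>0$; (iii) invoke Lemma~\ref{lem: positivity of tau} (after the harmless identification of the thick-boundary ghost-connection event with the top–bottom connection event, which holds because both ghosts are wired to the respective thick faces) to conclude. Each of steps (i)–(iii) only loses a multiplicative constant in the rate, so the final constant $c_3=c_3(h)>0$ exists, and the value of $\delta$ is immaterial — one may simply fix, say, $\delta=1$.
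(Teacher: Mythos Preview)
Your proposal has a genuine gap: the monotonicity in step (ii) goes the wrong way. Corollary~\ref{cor: monotonicity free measure} says that for \emph{free} boundary conditions the smaller domain has the \emph{smaller} probability of any increasing event, so
\[
\Psi^{0,h}_{R(L,\delta L),\beta}[\partial^{\rm bot}\longleftrightarrow\partial^{\rm top}] \;\leq\; \Psi^{0,h}_{\text{larger domain},\beta}[\partial^{\rm bot}\longleftrightarrow\partial^{\rm top}],
\]
which is the opposite of what you need. You in fact acknowledge this in your second paragraph (``only after one passes to the wired face measure'') and then immediately claim that invoking the same corollary for the increasing event fixes it --- it does not. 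Relatedly, your conclusion that ``the value of $\delta$ is immaterial'' is wrong: in the paper's proof $\delta$ must be taken \emph{small}, and this is not an artefact.

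There is also a geometric mismatch you underestimate. In Lemma~\ref{lem: positivity of tau}, the ghosts $\fg^\pm$ are attached to the upper and lower halves of the \emph{thick vertical sides} of $\mathcal R(L,M)$ (the field $\mathsf h^{+,+}_{L,M}$ lives on $\partial^{\rm thick}\mathcal R(L,M)$, which is a $\log L$-thickening in the first $d-1$ coordinates), whereas $\Psi^{0,h}_{R(L,\delta L),\beta}$ carries field $h$ on the \emph{top and bottom faces}. So the event $\{\fg^-\longleftrightarrow\fg^+\}$ is not a top--bottom crossing, and a path realising it can stay entirely in the thick side layer without ever touching $\partial^{\rm bot}\cup\partial^{\rm top}$. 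Your step (i) therefore cannot be a simple monotonicity in the field either, because the two fields are supported on different sets.

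The paper handles both issues by an explicit surgery. Step~1 conditions on $\{\mathsf a|_A\leq H\}$ at the slices $A$ just above and below height $\pm\delta L$, paying at most $e^{(c_2/2)L^{d-1}}$ (this is where $H$, and hence $h$, is chosen large), then wires $A$ to the ghosts so that the domain Markov property yields the finite-rectangle measure $\Psi^{0}_{R(L,\delta L),\beta,\mathsf h^{+,+}_{L,\delta L}+\mathsf H}$. Step~2 closes a carefully chosen edge set $T$ of size $O(\delta L^{d-1})$ which, when closed, forces any $\fg^-\!-\!\fg^+$ connection to cross from $\partial^{\rm bot}$ to $\partial^{\rm top}$; the cost of closing $T$ is $e^{C\delta L^{d-1}}$, absorbed by choosing $\delta$ small. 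Finally the thick $+1$ field is removed by a Radon--Nikodym comparison, again at cost $e^{C\delta L^{d-1}}$. The half-space uniqueness you mention (Proposition~\ref{prop: equality half space random cluster measures}) is not used here; it enters only in the \emph{next} lemma (Lemma~\ref{lem: large to free bc}), which interpolates the boundary field from $h$ down to $0$.
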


\begin{proof}
We fix $\beta>\beta_c$ and drop it from notation. Let $\delta>0$ be a constant to be determined. Recall Lemma~\ref{lem: positivity of tau}. Our aim is to first compare $\Psi^{0}_{\mathcal R(L),\mathsf{h}^{+,+}_{L}}$ to $\Psi^{0,h}_{\mathcal{R}(L,\delta L)}$ up to a cost that we can make smaller than $e^{(c_2/2) L^{d-1}}$ by tuning the value of $h$, and then forcing a particular set of edges to be closed, which in turn forces any path from $\fg^{-}$ to $\fg^{+}$ to connect $\partial^{\rm bot} \mathcal R(L, \delta L) $ to $ \partial^{\rm top} \mathcal R(L, \delta L)$. To achieve the latter at a small cost we need to choose $\delta$ sufficiently small. This naturally splits the proof into two steps.

\paragraph{Step 1.}
Let $\varepsilon>0$ be a constant to be determined. Introduce  $A:= A^{\rm top}\cup A^{\rm bot}$, where $A^{\rm top}=\partial^{\rm top}\mathcal{R}(L,\delta L+1)$ and $A^{\rm bot}=\partial^{\rm bot}\mathcal{R}(L,\delta L+1)$. By the FKG inequality and regularity, we can now choose $H>0$ to be large enough so that 
\begin{equation}
\Psi^0_{\mathcal{R}(L),\mathsf{h}^{+,+}_L} [\mathsf{a}|_A\leq H ]\geq e^{-c_2/2 L^{d-1}}, 
\end{equation}
where $c_2$ is the constant of Lemma~\ref{lem: positivity of tau}. Thus,
\begin{equation}\label{eq: proof lemma T1}
\Psi^0_{\mathcal{R}(L),\mathsf{h}^{+,+}_L} [\fg^{-}\centernot \longleftrightarrow \fg^{+}]\geq \Psi^0_{\mathcal{R}(L),\mathsf{h}^{+,+}_L} [\fg^{-}\centernot \longleftrightarrow \fg^{+} \mid \mathsf{a}|_A\leq H ] e^{-c_2/2 L^{d-1}}.
\end{equation}
By Proposition \ref{prop:monotonicity_FK}, and the FKG inequality of Proposition \ref{prop:FKG random cluster} (for the conditional measure $\Psi^0_{\mathcal R(L), \mathsf h^{+,+}_L}[\: \cdot \mid \mathsf a|_A = H]$),
\begin{equation}
\begin{aligned}
\Psi^0_{\mathcal{R}(L),\mathsf{h}^{+,+}_L} [\fg^{-}\centernot \longleftrightarrow \fg^{+} \mid \mathsf{a}|_A\leq H ]& \geq \Psi^0_{\mathcal{R}(L),\mathsf{h}^{+,+}_L} [\fg^{-}\centernot \longleftrightarrow \fg^{+} \mid \mathsf{a}|_{A}=H] \\ &\geq \Psi^0_{\mathcal{R}(L),\mathsf{h}^{+,+}_L} [\fg^{-}\centernot \longleftrightarrow \fg^{+} \mid \mathsf a|_A=H,  \mathcal{A}],
\end{aligned}
\end{equation}
where \begin{equation}
    \mathcal A:= \{\omega_e = 1, \: \forall e \in E(A)\}\cap \{  \exists \, (x,y) \in A^{\rm top}\times  A^{\rm bot} : \: \omega_{x\mathfrak g^+}=1, \: \omega_{y\mathfrak g^-}=1 \}.
\end{equation}
Let us denote $\mathsf{H}$ the external magnetic field which is equal to $H$ on $\partial^{\rm top}\mathcal{R}(L,\delta L)\cup \partial^{\rm bot}\mathcal{R}(L,\delta L)$. Since on the event $\mathcal A$, $ A^{\rm top}$ is connected to $\fg^+$, and $A^{\rm bot}$ is connected to $\fg^-$, the domain Markov property of Proposition \ref{prop: domain markov} gives
\begin{equation}\label{eq: proof lemma T2}
\Psi^{0}_{\mathcal R(L),\mathsf{h}^{+,+}_{L}}[\:\cdot \mid \mathsf a|_A=H,  \mathcal A]=  \Psi^{0}_{\mathcal R(L,\delta L),\mathsf{h}^{+,+}_{L,\delta L}+\mathsf{H}}.
\end{equation} 
Combining \eqref{eq: proof lemma T1} and \eqref{eq: proof lemma T2}, and applying Lemma \ref{lem: positivity of tau}, we obtain
\begin{equation}\label{eq: comparison 3}
\Psi^{0}_{\mathcal R(L,\delta L),\mathsf{h}^{+,+}_{L,\delta L}+\mathsf{H}}[\fg^{-}\centernot \longleftrightarrow \fg^{+}]\leq e^{-c_2/2 L^{d-1}}.
\end{equation}

\begin{figure}[htb]
    \centering
    \includegraphics[width=0.7\linewidth]{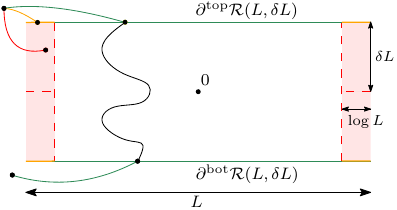}
    \put(-307,155){$\fg^+$}
    \put(-307,32){ $\fg^-$}
    \caption{An illustration of \eqref{eq:Tproofinclusion}. The magnetic field $\mathsf{h}^{+,+}_{L,\delta L}+\mathsf{H}$ is supported on the coloured regions: in the red region it is equal to $1$, in the green region it is equal to $H$, and in the overlap of the two regions--- in orange--- it is equal to $H+1$. This provides three different ways to connect to either of the two ghost vertices. If there exists an open path in $\omega$ connecting $\fg^+$ to $\fg^-$ and additionally the graph $T$--- depicted by the dotted red line--- is fully closed in $\omega$, then there must exist an open path from $\partial^{\rm bot} \mathcal{R}(L,\delta L)$ to $\partial^{\rm top} \mathcal{R}(L,\delta L)$.}
    \label{fig:t graph}
\end{figure}

\paragraph{Step 2.} We now move to the second part of the proof.
We begin by defining the graph $T=(V(T),E(T))$ defined as follows: 
\begin{multline}
    V(T):= \Big\{x \in \{-L,\ldots ,L\}^{d-1}\times \{0,1\}: L-\log L\leq |x|\leq L\Big\}\\\cup \Big\{x \in \partial \Lambda_{L-\log L }\cup \partial \Lambda_{L-\log L +1}: |x_d|\leq \delta L\Big\},
\end{multline}
and $E(T):=\{xy\in E(\mathbb Z^d): x,y\in V(T)\}$. Let us define the event
\begin{equation}
\mathcal{E}:= \{\partial^{\rm bot} \mathcal R(L, \delta L)\centernot\longleftrightarrow \partial^{\rm top} \mathcal R(L, \delta L)\}.
\end{equation}
Observe that
\begin{equation}\label{eq:Tproofinclusion}
\mathcal{E}\cap \{\omega|_{E(T)}=0\} \subset \{\fg^{-}\centernot \longleftrightarrow \fg^{+}\}\cap \{\omega|_{E(T)}=0\}.
\end{equation}
See Figure \ref{fig:t graph} for an illustration. To estimate the cost of $\{\omega|_{E(T)}=0\}$, we first bound the absolute value field on $V(T)$.
By Proposition \ref{prop:regularity}, there exists a constant $M>0$ such that
$\Psi^{0}_{R(L,\delta L),\mathsf{h}^{+,+}_{L,\delta L}+\mathsf{H}}[\mathcal{B}]\geq \frac{1}{2}$,
where
\begin{equation}
\mathcal{B}:=\left\{\sum_{xy\in E(T)} \mathsf{a}_x \mathsf{a}_y\leq M |E(T)|\right\}. \end{equation}
Since $\boldsymbol{\phi}^{0,H}_{\mathcal{R}(L,\delta),\mathsf{h}^{+,+}_{L,\delta L}+\mathsf{H},\mathsf{a}}$ is stochastically dominated by Bernoulli percolation of parameter $p(\beta,\mathsf{a})$,
\begin{equation}\label{eq: close edges}
\begin{aligned}
\Psi^{0}_{R(L,\delta L),\mathsf{h}^{+,+}_{L,\delta L}+\mathsf{H}}[\omega|_{E(T)}=0 \mid \mathcal{B}]&\geq \Psi^{0}_{R(L,\delta L),\mathsf{h}^{+,+}_{L,\delta L}+\mathsf{H}}\left[\exp\Big(-2\beta\sum _{xy\in E(T)} \mathsf{a}_x \mathsf{a}_y\Big)\mid \mathcal{B}\right]
\\&\geq e^{-2\beta M |E(T)|}.
\end{aligned}
\end{equation}
Thus,
\begin{equation}\label{eq: comparison 4}
\begin{aligned}
\Psi^{0}_{R(L,\delta L),\mathsf{h}^{+,+}_{L,\delta L}+\mathsf{H}}[\fg^{-} \centernot \longleftrightarrow \fg^{+}]&\geq \frac{1}{2} e^{-2\beta M |E(T)|}\Psi^{0}_{R(L,\delta L),\mathsf{h}^{+,+}_{L,\delta L}+\mathsf{H}}[\fg^{-} \centernot \longleftrightarrow \fg^{+}\mid \mathcal{B}\cap \{ \omega|_{E(T)}=0 \}]\\
&\geq \frac{1}{2} e^{-2\beta M |E(T)|}\Psi^{0}_{R(L,\delta L),\mathsf{h}^{+,+}_{L,\delta L}+\mathsf{H}}[\mathcal{E}\mid \mathcal{B}\cap \{ \omega|_{E(T)}=0 \}].
\end{aligned}
\end{equation}
We now compare the latter probability with $\Psi^{0,H}_{R(L-\log L,\delta L)}[\mathcal{E}]$.
Using that $e^{\beta \mathsf{a}_x \mathsf{a}_y}\geq 1$ and the definition of the event $\mathcal{B}$ we see that
\begin{equation}\label{eq: comparison 5}
\begin{aligned}
\Psi^{0}_{R(L,\delta L),\mathsf{h}^{+,+}_{L,\delta L}+\mathsf{H}}&[\mathcal{E}\mid \mathcal{B}\cap \{ \omega|_{E(T)}=0 \}] \\
 &\geq e^{-\beta M |E(T)|}\frac{\Psi^{0}_{R(L,\delta L),\mathsf{h}^{+,+}_{L,\delta L}+\mathsf{H}}[\exp(\sum_{xy \in E(T)}\beta \mathsf{a}_x \mathsf{a}_y)\mathbbm{1}_{\mathcal{E}} \mid \mathcal{B}\cap  \{ \omega|_{E(T)}=0 \}]}{\Psi^{0}_{R(L,\delta L),\mathsf{h}^{+,+}_{L,\delta L}+\mathsf{H}}[\exp(\sum_{xy \in E(T)}\beta \mathsf{a}_x \mathsf{a}_y) \mid   \{ \omega|_{E(T)}=0\}]}\\
  &\geq e^{-\beta M |E(T)|}\frac{\Psi^{0}_{R(L,\delta L),\mathsf{h}^{+,+}_{L,\delta L}+\mathsf{H}}[\exp(\sum_{xy \in E(T)}\beta \mathsf{a}_x \mathsf{a}_y)\mathbbm{1}_{\mathcal{E}} \mathbbm{1}_{\mathcal{B}} \mid   \{ \omega|_{E(T)}=0 \}]}{\Psi^{0}_{R(L,\delta L),\mathsf{h}^{+,+}_{L,\delta L}+\mathsf{H}}[\exp(\sum_{xy \in E(T)}\beta \mathsf{a}_x \mathsf{a}_y) \mid   \{ \omega|_{E(T)}=0 \}]}.
\end{aligned}
\end{equation}
By the expression \eqref{eq: random cluster explicit density} for the density, the ratio in the last line of \eqref{eq: comparison 5} is equal to $\Psi^{0}_{R(L,\delta L),\mathsf{h}^{+,+}_{L,\delta L}+\mathsf{H},J}[\mathcal{E} \cap \mathcal{B}]$, where $J_e=0$ for every $e\in E(T)$, and $J_e=1$ otherwise. Applying Proposition \ref{prop:regularity} for $\Psi^{0}_{R(L,\delta L),\mathsf{h}^{+,+}_{L,\delta L}+\mathsf{H},J}$, we can increase the value of $M$, if necessary, so that $\Psi^{0}_{R(L,\delta L),\mathsf{h}^{+,+}_{L,\delta L}+\mathsf{H},J}[\mathcal{B}]\geq 1-e^{-L^{d-1}}$. At this point, we take cases. Either
\begin{equation}
\Psi^{0}_{R(L,\delta L),\mathsf{h}^{+,+}_{L,\delta L}+\mathsf{H},J}[\mathcal{E}]\leq 2 \Psi^{0,H}_{R(L,\delta L),J}[\mathcal{B}^c]    
\quad \text{or} \quad 
\Psi^{0}_{R(L,\delta L),\mathsf{h}^{+,+}_{L,\delta L}+\mathsf{H},J}[\mathcal{E}]> 2 \Psi^{0,H}_{R(L,\delta L),J}[\mathcal{B}^c].    
\end{equation}
In the first case, by definition of $J$, the desired result follows directly since we have 
\begin{equation}
\Psi^{0}_{R(L,\delta L),\mathsf{h}^{+,+}_{L,\delta L}+\mathsf{H},J}[\mathcal{E}]=\Psi^{0,H}_{R(L-\log L,\delta L)}[\mathcal{E}].
\end{equation}
In the second case, we have
\begin{equation}\label{eq: comparison 6}
\begin{aligned}
\Psi^{0}_{R(L,\delta L),\mathsf{h}^{+,+}_{L,\delta L}+\mathsf{H},J}[\mathcal{E} \cap \mathcal{B}]&\geq \Psi^{0,H}_{R(L,\delta L),J}[\mathcal{E}]-\Psi^{0}_{R(L,\delta L),\mathsf{h}^{+,+}_{L,\delta L}+\mathsf{H},J}[\mathcal{B}^c]\\&\geq \frac{1}{2}\Psi^{0}_{R(L,\delta L),\mathsf{h}^{+,+}_{L,\delta L}+\mathsf{H},J}[\mathcal{E}]=\frac{1}{2}\Psi^{0,H}_{R(L-\log L,\delta L)}[\mathcal{E}].
\end{aligned}
\end{equation}
Combining inequalities \eqref{eq: comparison 3}, \eqref{eq: comparison 4}, \eqref{eq: comparison 5} and \eqref{eq: comparison 6} we obtain 
\begin{align}
    \Psi^{0,H}_{R(L-\log L,\delta L)}[\mathcal{E}]&\leq 4e^{3\beta M |E(T)|} \Psi^{0}_{\mathcal{R}(L),\mathsf{h}^{+,+}_{L,\delta L}+\mathsf{H}}[\fg^{-}\centernot \longleftrightarrow \fg^{+}]
    \\&\leq 4e^{3\beta M |E(T)|} e^{-c_2/2L^{d-1}}.\label{eq:Tproof4}
\end{align} 
Recall that $|E(T)|\leq C\delta^{d-1}L^{d-1}$, and choose $\delta$ small enough such that, for $L$ large enough
\begin{equation}\label{eq:Tproof5}
    4e^{3\beta M |E(T)|} \leq e^{(c_2/4)L^{d-1}}.
\end{equation}
Plugging \eqref{eq:Tproof5} in \eqref{eq:Tproof4}, and choosing $c_3$ small enough to accommodate the small values of $L$ concludes the proof.
\end{proof}

Finally, we show that we can reduce the probability to free boundary conditions.

\begin{lemma}\label{lem: large to free bc} Let $\beta>\beta_c$. There exist $\delta,c_4>0$ such that for every $L \geq 1$,
\begin{equation}\label{eq: large to free bc}
\Psi^0_{\mathcal R(L,\delta L),\beta}[ \partial^{\rm bot} \mathcal R(L, \delta L/2) \longleftrightarrow \partial^{\rm top} \mathcal R(L, \delta L/2)] \geq 1 - e^{-c_4 L^{d-1}}. 
\end{equation}
\end{lemma}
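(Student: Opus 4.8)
The plan is to deduce Lemma~\ref{lem: large to free bc} from Lemma~\ref{lem: strip to rectangle} by removing the positive magnetic field $h$ on the top and bottom faces of the rectangle. The key point is that, thanks to the uniqueness of the half-space $\varphi^4$ random cluster measure with positive boundary field (Propositions~\ref{prop: unique half-space measure} and \ref{prop: equality half space random cluster measures}), turning off this boundary field has essentially no effect deep inside the bulk, i.e.\ at a macroscopic distance from the top and bottom faces. This is the same strategy that Bodineau used for the Ising model in \cite{Bod05}, and it is where we crucially need the half-space uniqueness established in Section~\ref{sec:magnetisation}.

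First I would fix $h>0$ large and $\delta>0$ small so that Lemma~\ref{lem: strip to rectangle} applies, giving $\Psi^{0,h}_{\mathcal R(L,\delta L),\beta}[\partial^{\rm bot}\mathcal R(L,\delta L)\longleftrightarrow \partial^{\rm top}\mathcal R(L,\delta L)]\geq 1-e^{-c_3 L^{d-1}}$. The aim is to transfer this to $\Psi^0_{\mathcal R(L,\delta L),\beta}$ (no boundary field) at the cost of shrinking the crossed region from $\mathcal R(L,\delta L)$ to $\mathcal R(L,\delta L/2)$. The natural way to compare is to write, via the domain Markov property (Proposition~\ref{prop: domain markov}) applied to conditioning on $(\omega,\mathsf a)$ on the edges incident to $\partial^{\rm top}\mathcal R(L,\delta L)\cup \partial^{\rm bot}\mathcal R(L,\delta L)$, the measure $\Psi^{0,h}_{\mathcal R(L,\delta L),\beta}$ as an average of measures $\Psi^{(\xi,\mathsf b)}_{\mathcal R(L,\delta L-1),\beta}$ with random boundary conditions, and similarly for $\Psi^0$. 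The issue is that $\Psi^{0,h}$ tends to create large connected components touching $\partial^{\rm top}$ and $\partial^{\rm bot}$ — which is precisely why it crosses — whereas $\Psi^0$ does not; so a naive monotone comparison goes the wrong way. Instead, one should compare connection events for the \emph{thinner} rectangle: by monotonicity in boundary conditions, $\Psi^0_{\mathcal R(L,\delta L),\beta}$ restricted to $\mathcal R(L,\delta L/2)$ dominates $\Psi^0_{\mathcal R(L,\delta L/2),\beta}$ (Corollary~\ref{cor: monotonicity free measure}); so it suffices to lower-bound $\Psi^0_{\mathcal R(L,\delta L/2),\beta}[\partial^{\rm bot}\mathcal R(L,\delta L/2)\longleftrightarrow \partial^{\rm top}\mathcal R(L,\delta L/2)]$. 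Now I would compare $\Psi^0_{\mathcal R(L,\delta L/2),\beta}$ and $\Psi^{0,h}_{\mathcal R(L,\delta L/2),\beta}$ using the half-space uniqueness: conditionally on the absolute value field being bounded by $\mathfrak M$ on $\partial^{\rm top}$ and $\partial^{\rm bot}$ (which costs only $e^{-cL^{d-1}}$ by regularity, Proposition~\ref{prop:regularity}, and FKG), the local behaviour near each face is governed, as $L\to\infty$, by the half-space measure with bounded boundary field; by Proposition~\ref{prop: equality half space random cluster measures} this coincides with the half-space measure with $+$ wired boundary, and one extracts a uniform lower bound on the probability that a macroscopic portion of each face connects far into the bulk.

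The cleanest route, and the one I expect to actually use, is a direct comparison of Radon--Nikodym derivatives: writing both $\Psi^{0,h}_{\mathcal R(L,\delta L)}$ and $\Psi^0_{\mathcal R(L,\delta L)}$ using \eqref{eq: random cluster explicit density}, the only difference is the factor $\prod_{x\in \partial^{\rm top}\cup\partial^{\rm bot}}\sqrt{1-p(\beta,\mathsf h,\mathsf a)_{x\fg}}\big(\tfrac{p}{1-p}\big)^{\omega_{x\fg}}$, i.e.\ the presence of ghost-edges from the top/bottom faces. Conditioning on $\mathsf a\leq H$ on those faces (paying $e^{-cL^{d-1}}$), this density ratio is bounded above and below by $e^{\pm C h H L^{d-1}}$ — but that is far too lossy to beat $e^{-c_3 L^{d-1}}$. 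So the honest argument must be geometric rather than via crude density bounds: one shows that on the event that $\partial^{\rm bot}\mathcal R(L,\delta L)\longleftrightarrow\partial^{\rm top}\mathcal R(L,\delta L)$ under $\Psi^{0,h}$, there is with conditional probability $\geq 1-e^{-cL^{d-1}}$ already a crossing of the \emph{sub}-rectangle $\mathcal R(L,\delta L/2)$ by a cluster that does \emph{not} use the ghost at all (using slab percolation / local uniqueness type estimates that the crossing cluster is macroscopically thick and robust), and such a ghost-free crossing event has the same probability under $\Psi^0_{\mathcal R(L,\delta L)}$ because removing the ghost-edges incident to the far-away top/bottom faces leaves the law of the configuration in the central sub-rectangle essentially unchanged — quantitatively, by the domain Markov property, after conditioning on the configuration outside $\mathcal R(L,\delta L - 1)$, the boundary condition induced on $\mathcal R(L,\delta L/2)$ from the $\Psi^{0,h}$ side stochastically dominates the free one, and one pushes the bound through. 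The main obstacle, and the step requiring real care, is precisely this last transfer — controlling, uniformly in $L$, how the presence of the boundary field $h$ on the faces at distance $\delta L/2$ affects a crossing event of the inner rectangle; this is where Proposition~\ref{prop: equality half space random cluster measures} does the essential work, playing here the role that \cite[Lemma~3.1, Step~3]{Bod05} plays for the Ising model. The remaining ingredients — regularity up to the boundary to control $\mathsf a$ on the faces, FKG and monotonicity in boundary conditions, and Corollary~\ref{cor: monotonicity free measure} to pass from $\mathcal R(L,\delta L)$ to $\mathcal R(L,\delta L/2)$ — are routine, and the $\log L$ shifts that appeared in Lemma~\ref{lem: strip to rectangle} have already been absorbed there, so here one only loses a factor $2$ in the vertical extent.
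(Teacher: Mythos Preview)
Your proposal correctly identifies both the key ingredient (the half-space uniqueness of Proposition~\ref{prop: equality half space random cluster measures}) and the right analogy (Bodineau's Step~3 in \cite[Lemma~3.1]{Bod05}), but you have not extracted the actual mechanism from that reference, and none of the four approaches you sketch carries through. The domain Markov / monotonicity route goes the wrong way, as you note; the crude Radon--Nikodym bound loses a factor $e^{CL^{d-1}}$, as you also note; and the geometric ``ghost-free crossing'' argument is not just delicate but circular, since establishing that the crossing cluster is macroscopically robust under $\Psi^0$ is essentially what you are trying to prove.

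The paper's proof (and Bodineau's) uses an \emph{interpolation} in the boundary field strength: one lets the field on $\partial^{\rm top}\cup\partial^{\rm bot}$ be $s\in[0,h]$ and computes, via a Russo-type formula, $\partial_s \log \Psi^{0,s}_{\mathcal R(L,\delta L),\beta}[\mathcal D]$ where $\mathcal D$ is the disconnection event for the \emph{inner} rectangle $\mathcal R(L,\delta L/2)$. This derivative is a sum over $x\in\partial^{\rm top}\cup\partial^{\rm bot}$ of terms of the form $\Psi^{0,s}[G_x\mid\mathcal D]-\Psi^{0,s}[G_x]$ for a local decreasing function $G_x$. The crucial point---and the real reason for passing to $\delta L/2$---is that $\mathcal D$ is supported at macroscopic distance from the faces, so for a face vertex $x$ away from the corners one can sandwich both the conditioned and unconditioned expectations between the half-space measures with free and with $\mathfrak p$ boundary on the far side of a box $\Lambda^+_{\varepsilon L}(x)$; Proposition~\ref{prop: equality half space random cluster measures} then forces the difference to $o_s(1)$. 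The $O(\varepsilon L^{d-1})$ corner vertices contribute $O(1)$ each by regularity. Integrating in $s$ and sending $\varepsilon\to0$ gives $|\log\Psi^{0,h}[\mathcal D]-\log\Psi^0[\mathcal D]|\leq (c_3/2)L^{d-1}$. This interpolation step is the missing idea in your proposal.
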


\begin{proof}
Let $h,\delta,c_3>0$ be the constants of Lemma~\ref{lem: strip to rectangle}.   
Write $\mathcal{D}=\{\partial^{\rm bot} \mathcal R(L, \delta L/2) \centernot\longleftrightarrow \partial^{\rm top} \mathcal R(L, \delta L/2)\}$.
By inclusion of events, we have 
   \begin{equation}\label{eq:mixed_b.c.}
   \Psi^{0,h}_{\mathcal R(L,\delta L),\beta}[\mathcal{D}] \leq e^{-c_3L^{d-1}}.
   \end{equation}
We now gradually decrease the value of the external magnetic field from $s=h$ to $s=0$ in order to interpolate between $\Psi^{0,h}_{\mathcal R(L,\delta L),\beta}[ \mathcal{D} ]$ and $\Psi^{0}_{\mathcal R(L,\delta L),\beta}[ \mathcal{D} ]$.
To this end, consider the derivative with respect to $s$ and note that by Russo's formula (c.f.\ the calculation of \cite[Theorem~(2.43)]{Grimmett2006RCM} in the case of the usual random cluster measure)
\begin{equation} \label{eq: log derivative}
\frac{\partial \log \Psi^{0,s}_{\mathcal R(L,\delta L),\beta}[ \mathcal{D} ]}{\partial s}= \sum_{x\in \partial^{\pm}\mathcal{R}(L,\delta L)} \Psi^{0,s}_{\mathcal R(L,\delta L),\beta}\left[F_x(\mathsf{a},\omega) \mid  \mathcal{D}\right] -\Psi^{0,s}_{\mathcal R(L,\delta L),\beta}\left[F_x(\mathsf{a},\omega)\right],
\end{equation}
where $\partial^{\pm}\mathcal{R}(L,\delta L)=\partial^{\rm bot}\mathcal{R}(L,\delta L)\cup \partial^{\rm top}\mathcal{R}(L,\delta L)$, and
\begin{equation}
F_x(\mathsf{a},\omega)=-\beta \mathsf{a}_x+\frac{\omega_{x\fg}}{p(\beta,s,\mathsf{a})_{x\fg}(1-p(\beta,s,\mathsf{a})_{x\fg})} \frac{\partial p(\beta,s,\mathsf{a})_{x\fg}}{\partial s}=-\beta\mathsf{a}_x\left(1+2\frac{\omega_{x\fg}}{p(\beta,s,\mathsf{a})_{x\fg}}\right).
\end{equation}
Here the term $-\beta \mathsf{a}_x$ arises from differentiating $e^{-\beta \mathsf{a}_x s}$, while the other term arises from differentiating $\left(\frac{p(\beta,s,\mathsf{a})}{1-p(\beta,s,\mathsf{a})}\right)^{\omega_{xg}}$.

We are going to split the sum in \eqref{eq: log derivative} according to boundary and bulk contributions. Let $\epsilon \in (0,\delta/2)$ to be fixed below. Define
\begin{equation}
{\rm Bulk}(\varepsilon, L):= \{ x \in \partial^{\pm}\mathcal{R}(L,\delta L): {\rm dist}^\infty( x, \partial R(L,\delta L) \setminus \partial^\pm \mathcal R(L,\delta L)) \geq \varepsilon L \},
\end{equation}
and write
\begin{equation}
\eqref{eq: log derivative} =: I({\rm bulk}) + I({\rm boundary})
\end{equation}
in the natural way. 

Let us treat the boundary contribution first. Arguing as in the proof of \eqref{eq: omega connectivity equality} we see that we can replace $2\omega_{x\fg}$ by $p(\beta,s,\mathsf{a})_{x\fg} \left(\mathbbm{1}\{x\overset{\omega}{\longleftrightarrow}\fg\}+1\right)$ to obtain
\begin{equation}\label{eq: derivative formula bulk}
I({\rm boundary}) = \sum_{x \in {\rm Bulk}(\varepsilon,L)^c}
\Psi^{0,s}_{\mathcal R(L,\delta L),\beta}\left[G_x(\mathsf{a},\omega) \mid  \mathcal{D}\right] -\Psi^{0,s}_{\mathcal R(L,\delta L),\beta}\left[G_x(\mathsf{a},\omega)\right],
\end{equation}
where
\begin{equation}
G_x(\mathsf{a},\omega)=-\beta \mathsf{a}_x\left(2+\mathbbm{1}\{x\overset{\omega}{\longleftrightarrow}\fg\}\right).    
\end{equation}
Notice that $G \leq 0$ and $G$ is decreasing. Additionally, note that by Proposition \ref{prop:regularity}, there exists $C>0$ such that, for every $x\in \partial^{\rm bot}\mathcal{R}(L,\delta L)$,
\begin{equation}
    \max \left( \left|\Psi^{0,s}_{\mathcal R(L,\delta L),\beta}\left[G_x(\mathsf{a},\omega)\right]\right|,  
    \left|\Psi^{0,s}_{\mathcal R(L,\delta L),\beta}\left[G_x(\mathsf{a},\omega) \mid  \mathcal{D}\right]\right| \right) \leq C,\label{eq: general control summand}
\end{equation}
where we used the FKG inequality in the second inequality. Hence, there exists $C_1 > 0$ such that, for every $s \in (0,h]$,
\begin{equation}
|I({\rm boundary})| \leq C_1 \varepsilon L^{d-1}.
\end{equation}

We now analyse the bulk contribution, which requires some more care. Without loss of generality, we only treat the case $x\in \partial^{\rm bot}\mathcal{R}(L,\delta L)$. 
Define $\mathcal{B}(L,\varepsilon,x):=\Lambda^+_{\varepsilon L}(x)=\Lambda_L(L\mathbf{e}_d+x)$, and for $\mathsf{b}\in (\mathbb{R}^+)^{\partial \mathcal{B}(L,\varepsilon,x)}$,
\begin{equation}
\mathsf{h}(x;s,\mathsf{b})_y:=
\begin{cases}
s & \text{for } y\in \partial \mathcal{B}(L,\varepsilon,x)\cap \{y\in \mathbb Z^d: y_d=-\delta L\},\\
\mathsf{b}_y & \text{for } y\in \partial \mathcal{B}(L,\varepsilon,x)\cap \{y\in \mathbb Z^d: y_d>-\delta L\},\\
0 & \text{otherwise.}
\end{cases}    
\end{equation} 
See Figure \ref{fig:removinglargemagfield} for an illustration. For $\Psi^{0,s}_{\mathcal R(L,\delta L), \beta}$-a.s. $(\omega', \mathsf a')$, 
\begin{equation}
\Psi^{(\xi^{\omega'},\mathsf a')}_{\mathcal B(L, \varepsilon, x), \beta}[F_x(\mathsf a, \omega)] = \Psi^{(\xi^{\omega'},\mathsf a')}_{\mathcal B(L, \varepsilon, x), \beta}[G_x(\mathsf a, \omega)],
\end{equation}
where we recall from the Proposition \ref{prop: domain markov} that $(\xi^{\omega'},\mathsf a')$ is the boundary condition induced by the pair $(\omega',\mathsf a')$ on $\mathcal B(L,\varepsilon, x)$. Furthermore, note that the event $\mathcal{D}$ is supported on $\mathcal{R}(L,\delta L/2)$.
Hence, by using the display above together with Propositions \ref{prop:regularity} and \ref{prop:monotonicity_FK}, we get that (recall that $G_x$ is a decreasing function) if $x\in \partial^{\rm bot}\mathcal{R}(L,\delta L)$ with $|x|\leq L-\varepsilon L$,
\begin{align}\label{eq: psi lower bound}
\Psi^{0,s}_{\mathcal R(L,\delta L),\beta}\left[F_x(\mathsf{a},\omega) \right]&\geq \Psi^{0}_{\mathcal{B}(L,\varepsilon,x),\beta,\mathsf{h}(x;s,\mathfrak{p})}\left[G_x(\mathsf{a},\omega)\right]-o(1),
\\ \label{eq: psi upper bound}
\Psi^{0,s}_{\mathcal R(L,\delta L),\beta}\left[F_x(\mathsf{a},\omega)\mid  \mathcal{D}\right]
&\leq  \Psi^{0}_{\mathcal{B}(L,\varepsilon,x),\beta,\mathsf{h}(x;s,0)}\left[G_x(\mathsf{a},\omega)\right].
\end{align}
where the $o(1)$ term tends to $0$ as $L\to\infty$, uniformly over $s\in [0,h]$ and $x$ as above. Using Proposition \ref{prop: equality half space random cluster measures}, one has that $\Psi^{0,s}_{\mathbb H,\beta}=\Psi^{1,s}_{\mathbb H,\beta}$ for every $s\in (0,h]$. As a result, 
\begin{equation}\label{eq: summand goes to 0 far away from the boundary}
   \Psi^{0}_{\mathcal{B}(L,\varepsilon,x),\beta,\mathsf{h}(x;s,0)}\left[G_x(\mathsf{a},\omega)\right]-\Psi^{0}_{\mathcal{B}(L,\varepsilon,x),\beta,\mathsf{h}(x;s,\mathfrak{p})}\left[G_x(\mathsf{a},\omega)\right]=o_s(1),
\end{equation}
where, for any $s \in (0,h]$, $o_s(1)$ tends to $0$ as $L$ tends to infinity uniformly over $x\in \partial^{\rm bot}\mathcal{R}(L,\delta L)$ such that $|x|\leq L-\varepsilon L$, and by \eqref{eq: diff g} $|o_s(1)|\leq C$.

\begin{figure}[htb]
    \centering
    \includegraphics[width=0.6\linewidth]{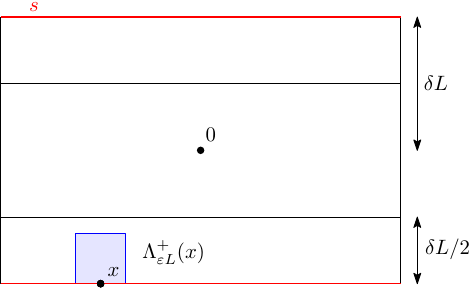}
    \caption{An illustration of the proof of Lemma \ref{lem: large to free bc}. The blue region is the box $\mathcal{B}(L,\varepsilon,x)=\Lambda^+_{\varepsilon L}(x)$. We put an external magnetic field on the bold blue boundary of $\mathcal{B}(L,\delta,x)$ which is either $0$ or $\mathfrak{p}$. The top and bottom boundaries of $\mathcal{R}(L,\delta L)$ (in red) carry a magnetic field $s$. The law of $G_x(\mathsf{a},\omega)$ within $\mathcal{B}(L,\varepsilon,x)$ approximates that of $G_x(\mathsf{a},\omega)$ under the half-space measures $\Psi^{0,s}_{\mathbb H,\beta}$ or $\Psi^{1,s}_{\mathbb H,\beta}$. Thanks to Proposition \ref{prop: equality half space random cluster measures}, these measures coincide.}
    \label{fig:removinglargemagfield}
\end{figure}

Combining the above, we obtain
\begin{equation}
\Bigg|\frac{\partial \log \Psi^{0,s}_{\mathcal R(L,\delta L),\beta}[ \mathcal{D} ]}{\partial s}\Bigg|\leq C_1 \varepsilon L^{d-1}+ \sum_{{\rm Bulk}(\varepsilon,L)} o_s(1).
\end{equation}
Integrating in $s$ and using the dominated convergence theorem, we obtain that there exists $C_2>0$ such that for $L$ large enough,
\begin{equation}
\Big|\log \Psi^{0,h}_{\mathcal R(L,\delta L),\beta}[ \mathcal{D} ]- \log \Psi^{0}_{\mathcal R(L,\delta L),\beta}[ \mathcal{D} ]\Big|\leq C_2\varepsilon L^{d-1}.  
\end{equation}
The proof follows from choosing $\varepsilon$ small enough in such a way that $C_2\varepsilon \leq c_3/2$, and from fixing $c_4$ small enough to accommodate the small values of $L$.
\end{proof}

We are now ready to prove Theorem~\ref{thm:free_surface_tension}.

\begin{proof}[Proof of Theorem~\textup{\ref{thm:free_surface_tension}}]
Lemma \ref{lem: large to free bc} implies that there exist some $\delta,c_4>0$ such that for every $L\geq 1$,
\begin{equation}\label{eq:surface_free}
\Psi^0_{\mathcal R(L,\delta L)}[ \partial \mathcal R^{\mathrm{bot}}(L, \delta L/2) \centernot\longleftrightarrow \partial \mathcal R^{\mathrm {top}}(L, \delta L/2)] \leq e^{-c_4 L^{d-1}}.
\end{equation} Let $\ell\leq \delta L/2$. Note that we can cover the hyperplane $\{-L,\ldots,L\}^{d-1}\times \{0\}$ by $m\leq C({L}/{\ell})^{d-1}$ boxes $\Lambda_\ell(x_1),\ldots,\Lambda_\ell(x_m)$ with $x_1,\ldots,x_m\in\{-L,\ldots,L\}^{d-1}\times \{0\}$. With such a choice in hands, we have 
\begin{equation}
\bigcap_{i=1}^{m} \{\Lambda_\ell(x_i)\centernot\longleftrightarrow\partial \Lambda_{\delta L/2}(x_i)\} \subset \{\partial^{\rm bot} \cR(L, \delta L/2)\centernot\longleftrightarrow\partial^{\rm top} \cR(L, \delta L/2)\}.
\end{equation}
Therefore, by the FKG inequality together with \eqref{eq:surface_free}, we can find an $i_0\in\{1,\ldots,m\}$ such that 
\begin{equation} \label{eq: diff g}
\Psi^0_{\Lambda_{L},\beta}[\Lambda_\ell(x_{i_0})\centernot\longleftrightarrow\partial \Lambda_{\delta L/2}(x_{i_0})]\leq e^{-c_4 \ell^{d-1}/C}.
\end{equation}
By the monotonicity in the volume we obtain 
\begin{equation}
\Psi^0_{\Lambda_{2L}(x_{i_0}),\beta}[\Lambda_\ell(x_{i_0})\centernot\longleftrightarrow\partial \Lambda_{\delta L/2}(x_{i_0})]\leq \Psi^0_{\Lambda_{L},\beta}[\Lambda_\ell(x_{i_0})\centernot\longleftrightarrow\partial \Lambda_{\delta L/2}(x_{i_0})]\leq e^{-c_4 \ell^{d-1}/C}.
\end{equation}
This implies the desired result with $c_1= \min\{\delta/2, c_4/C\}$.
\end{proof}

\section{Local uniqueness and renormalisation}\label{sec:uniqueness}

In this section, we prove Theorem~\ref{thm: local uniqueness}. We handle the dimensions $d=2$ and $d\geq 3$ separately in Sections~\ref{sec:uniq_d=2} and \ref{sec:uniq_d>2} respectively. 
%In dimension $d=2$, we combine the general Russo--Seymour--Welsh theory developed in \cite{KohlerTassionGeneralRSW} with Theorem~\ref{thm:free_surface_tension} to deduce that open circuits lying in dyadic annuli exist with high probability. This is enough to obtain the desired local uniqueness. 

\subsection{Proof of Theorem~\ref{thm: local uniqueness} for $d=2$}\label{sec:uniq_d=2}

In this section, we prove Theorem~\ref{thm: local uniqueness} in dimension $d=2$. The proof combines the general Russo--Seymour--Welsh theory developed by Köhler-Schindler and Tassion in \cite{KohlerTassionGeneralRSW} with Theorem~\ref{thm:free_surface_tension} to deduce that open circuits lying in dyadic annuli exist with high probability. For topological reasons, this is enough to obtain the desired local uniqueness. 

In order to state the result of \cite{KohlerTassionGeneralRSW} in our context, we first introduce some definitions and notation. For $m,n\geq 1$, define the crossing event 
\begin{equation}
      \mathcal{C}(m,n)=\left\{\text{
        \begin{minipage}{.55\linewidth}\centering
          There exists a path crossing from left to right in $[-m,m]\times[-n,n]$ made of open edges 
        \end{minipage}}\right\}.
\end{equation}

\begin{theorem}\label{thm: RSW}
Let $d=2$. There exists a continuous and increasing bijection $f:[0,1]\rightarrow [0,1]$ such that for every $\beta>0$ and every $k\geq 2$ we have 
\begin{equation}
\Psi^0_{\Lambda_{(k+4)n},\beta}[\mathcal{C}(2 n,n)] \geq f \big(\Psi^0_{\Lambda_{kn},\beta}[\mathcal{C}(n,2n)] \big).
\end{equation}
for every $n\geq 1$.
\end{theorem}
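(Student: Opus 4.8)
\textbf{Proof plan for Theorem~\ref{thm: RSW}.} The statement is an instance of the general Russo--Seymour--Welsh theorem of K{\"o}hler-Schindler and Tassion \cite{KohlerTassionGeneralRSW}, which applies to any percolation measure satisfying the FKG inequality together with suitable symmetry and (a weak form of) finite-energy/locality properties. The plan is therefore to verify that the free $\varphi^4$ random cluster measures $\Psi^0_{\Lambda_N,\beta}$ fall within the scope of that theorem, and then to quote it. Concretely, I would first record that, for each fixed environment $\mathsf{a}$, the conditional law $\boldsymbol{\phi}^0_{\Lambda_N,\beta,\mathsf{a}}$ on $\omega$ is a genuine random cluster measure with $q=2$, hence satisfies the FKG lattice condition; integrating over $\mathsf{a}$ and using the absolute-value FKG of Proposition~\ref{prop: absolute value FKG} (equivalently Proposition~\ref{prop:FKG random cluster}) gives that $\Psi^0_{\Lambda_N,\beta}$ (and its $\omega$-marginal $\Phi^0_{\Lambda_N,\beta}$) is positively associated. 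Next I would check the symmetry hypotheses: on the box $\Lambda_N$ with free boundary condition and zero magnetic field, the measure is invariant under the reflections and the $\pi/2$ rotation of $\mathbb{Z}^2$ fixing the origin, since the weights in \eqref{eq: random cluster explicit density} depend only on the graph structure and the i.i.d.\ single-site measure $\rho_{g,a}$. Finally I would note the monotonicity in the domain provided by Corollary~\ref{cor: monotonicity free measure}, which lets one compare crossing events in $\Lambda_{kn}$ and $\Lambda_{(k+4)n}$; this is exactly the form in which \cite{KohlerTassionGeneralRSW} phrases its conclusion, with the universal function $f$ depending only on $d=2$ (and not on $\beta$, the environment law, or $n$).

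The one structural point that needs a word of care is that $\Phi^0_{\Lambda_N,\beta}$ is \emph{not} a finite-energy measure in the naive sense, because conditionally on $\mathsf{a}$ an edge $xy$ is open with probability $p(\beta,\mathsf a)_{xy}=1-e^{-2\beta \mathsf a_x \mathsf a_y}$, which degenerates to $0$ when $\mathsf a_x$ or $\mathsf a_y$ is small. However, the RSW machinery of \cite{KohlerTassionGeneralRSW} is designed precisely to avoid quantitative finite-energy input: it only uses FKG, the symmetries, and a combinatorial ``gluing'' argument that is insensitive to the conditional marginals. Thus I would simply invoke their main theorem in the black-box form stated here, after pointing out that the measure is a lattice percolation measure with the required invariances. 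If one instead wished to stay closer to the original Ising/random-cluster RSW statements, one could alternatively run the argument conditionally on $\mathsf a$ uniformly over a regular event $\{\mathsf a|_{\Lambda}\in[\epsilon,\mathfrak M]\}$ (which has probability close to $1$ by Proposition~\ref{prop:regularity}) and then average, but the clean route is to cite \cite{KohlerTassionGeneralRSW} directly.

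The \textbf{main obstacle}, such as it is, is purely bookkeeping: matching the precise hypotheses of \cite{KohlerTassionGeneralRSW} (their notion of a symmetric FKG measure on a planar lattice, and the exact indices $k$, $k+4$, $2n$, $n$ appearing in the statement) to our setting, and making sure the comparison between boxes of different sizes is justified by Corollary~\ref{cor: monotonicity free measure} rather than by an (unavailable) spatial Markov property with favourable boundary conditions. Once those identifications are in place, the proof is a one-line citation. I would therefore structure the write-up as: (i) recall FKG and the dihedral symmetries of $\Psi^0_{\Lambda_N,\beta}$; (ii) state that these are the only inputs to the RSW theorem of \cite{KohlerTassionGeneralRSW}; (iii) apply it, using Corollary~\ref{cor: monotonicity free measure} to pass between $\Lambda_{kn}$ and $\Lambda_{(k+4)n}$, to obtain the displayed inequality with a $\beta$- and $n$-independent function $f$.
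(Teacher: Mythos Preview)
Your proposal is correct and matches the paper's approach: the paper also proves this by a direct citation of \cite[Theorem~2]{KohlerTassionGeneralRSW}, applied to the three measures $\Psi^0_{\Lambda_{kn},\beta}$, $\Psi^0_{\Lambda_{(k+2)n},\beta}$, $\Psi^0_{\Lambda_{(k+4)n},\beta}$, noting only that the hypotheses hold by monotonicity of the free measure in the volume. Your extended discussion of FKG, dihedral symmetry, and the irrelevance of finite-energy is accurate and simply spells out what the paper leaves implicit.
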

\begin{proof}
The result follows from applying \cite[Theorem~2]{KohlerTassionGeneralRSW} to the measures $\Psi^0_{\Lambda_{kn},\beta}$, $\Psi^0_{\Lambda_{(k+2)n},\beta}$, and $\Psi^0_{\Lambda_{(k+4)n},\beta}$, which satisfy the assumptions of the theorem by the monotonicity of the free measure in the volume.   
\end{proof}

We are now ready to prove Theorem~\ref{thm: local uniqueness} in dimension $d=2$.

\begin{proof}[Proof of Theorem~\textup{\ref{thm: local uniqueness}} when $d=2$] 

Let $d=2$ and $\beta>\beta_c$. By Lemma~\ref{lem: large to free bc}, %where the event $\mathcal{C}(\delta L/2,L)$ was proven to happen high probability under the free measure at a macroscopic distance. Indeed, by Lemma~\ref{lem: large to free bc} and
and monotonicity in the volume (see Corollary \ref{cor: monotonicity free measure}), there exist $\delta>0$ such that
\begin{equation}\label{eq:2Dcrossing_delta}
    \lim_{L\to\infty} \Psi^0_{\Lambda_{L},\beta}[\mathcal{C}(\delta L,L)] = 1.
\end{equation}
It is classical to deduce the same statement for $\mathcal{C}(\delta L,2\delta L)$. Theorem~\ref{thm: RSW} does the much harder task of going from $\mathcal{C}(\delta L,2\delta L)$ to $\mathcal{C}(2\delta L,\delta L)$. Then, it is again classical to construct the local uniqueness event out of $\mathcal{C}(2\delta L,\delta L)$, thus concluding the proof.
The details are given below.

First, notice that if the event $\mathcal{C}(\delta L,L)$ holds, then either one of the rectangles of the form $[-\delta L, \delta L]\times[-L+k\delta L,-L+(k+2)\delta L]$ for integers $0\leq k\leq 2/\delta - 1$ is crossed horizontally or one of the squares of the form $[-\delta L ,\delta L]\times[-L+k\delta L,-L+(k+1)\delta L]$ for $0\leq k\leq 2/\delta-1$ is crossed vertically. By the FKG inequality for the complementary events, monotonicity in volume, inclusion of events and the $\pi/2$ rotational symmetry
\begin{equation}
\Psi^0_{\Lambda_{2L},\beta}[\mathcal{C}(\delta L,2\delta L)]\geq 1-\Psi^0_{\Lambda_{2L},\beta}[\mathcal{C}(\delta L,L)^c]^{\delta/4} \underset{L\rightarrow \infty}{\longrightarrow} 1.    
\end{equation}
Then, Theorem~\ref{thm: RSW} gives $\Psi^0_{\Lambda_{2L +4\delta L},\beta}[\mathcal{C}(2\delta L,\delta L)] \to 1$ as $L\to\infty$, which by monotonicity in boundary conditions implies
\begin{equation}
  \lim_{L\rightarrow \infty}\Psi^0_{\Lambda_{2L},\beta}[\mathcal{C}(2\eta L,\eta L)] =1,
\end{equation}
for some constant $\eta>0$. 

Notice that the $U(L)$ holds as soon as there exist a circuit surrounding the annulus $\Lambda_{4L}\setminus \Lambda_{2L}$ and a crossing from $\Lambda_L$ to $\partial \Lambda_{8L}$. In particular, one can easily realise $U(L)$ as the intersection of a bounded number (depending on $\eta$) of translates and $\pi/2$ rotations of the event $\mathcal{C}(2\eta L,\eta L)$ within the box $\Lambda_{8L}$. Therefore, by monotonicity and a union bound, the previous display readily implies that $\Psi^0_{\Lambda_{10L},\beta}[U(L)] \to 1$ as $L\to\infty$, as we wanted to prove.
\end{proof}

\subsection{Proof of Theorem~\textup{\ref{thm: local uniqueness}} for $d\geq 3$}\label{sec:uniq_d>2}

In this section, we prove Theorem~\textup{\ref{thm: local uniqueness}} for $d\geq 3$ following the approach of \cite{Severo24}. The proof is divided into three steps, done in separate subsections. First, we use Theorem~\ref{thm:free_surface_tension} to show that all macroscopic clusters merge into a unique cluster after sprinkling $\Psi^0_{\Lambda,\beta}$ by an independent Bernoulli bond percolation. We then show that increasing the value of $\beta$ has a stronger effect than this independent Bernoulli sprinkling of $\Psi^0_{\Lambda,\beta}$. This allows us to deduce that the model percolates in sufficiently thick 2D slabs with minimal (free) boundary conditions. Finally, we use the slab technology to obtain local uniqueness without sprinkling via an onion peeling argument.

\subsubsection{Local uniqueness with sprinkling}

We consider a measure obtained from $\Psi^0_{\Lambda,\beta}$ by sprinkling an independent Bernoulli bond percolation. We will show that in $d\geq 3$, all crossing clusters of $\Psi^0_{\Lambda,\beta}$ merge into one cluster after sprinkling. 

Let us first introduce some notation. Given $\beta>0$, $\varepsilon>0$, a finite set $\Lambda\subset \Z^d$ and boundary conditions $(\xi,\mathsf{b})$ on $\Lambda$, 
let $(\omega,\mathsf{a}) \sim_d \Psi^{(\xi,\mathsf{b})}_{\Lambda,\beta}$ and $\gamma \sim_d \mathbb P^{\rm Ber}_{\Lambda,\varepsilon}$ be independent random variables, where 
$\mathbb P^{\rm Ber}_{\Lambda,\varepsilon}=\otimes_{e\in \overline{E}(\Lambda)} \textrm{Ber}(\varepsilon)$ denotes the law of independent identically distributed Bernoulli random variables of parameter $\varepsilon$. Denote $\omega\cup \gamma$ the configuration on $\overline{E}(\Lambda)$ where $(\omega\cup \gamma)_e=1$ if and only if $\omega_e=1$ or $\gamma_e=1$. Define
${\boldsymbol \Gamma}^{(\xi,\mathsf{b})}_{\Lambda,\beta,\varepsilon}:=\Psi^{(\xi,\mathsf{b})}_{\Lambda,\beta}\otimes \mathbb P^{\rm Ber}_{\Lambda,\varepsilon}$ and let $\Gamma^{(\xi,\mathsf{b})}_{\Lambda,\beta,\varepsilon}$ be the pushforward of ${\boldsymbol \Gamma}^{(\xi,\mathsf{b})}_{\Lambda,\beta,\varepsilon}$ under the mapping $(\omega,\eta)\mapsto \omega\cup \eta$.

Let $\mathrm{Unique}(L)$ be the event that there is a cluster in $\omega\cap \Lambda_{L}$ crossing $\Lambda_L\setminus \Lambda_{L/8}$ and that every cluster of $\omega\cap \Lambda_L$ crossing $\Lambda_{L/2}\setminus \Lambda_{L/4}$ are connected to each other in $(\omega\cup\gamma)\cap \Lambda_{L/2}$. 
Our aim is to show that $\mathrm{Unique}(L)$ happens with probability tending to $1$ as $L$ tends to infinity, uniformly over all possible boundary conditions $(\xi,\mathsf{b})$ at a macroscopic distance from $\Lambda_L$.
The proof adapts the work of Benjamini \& Tassion \cite{BenjaminiTassion}, which concerns the connectedness of ``everywhere percolating subsets'' of $\mathbb{Z}^d$ after an $\varepsilon$-Bernoulli sprinkling, to the setting of very dense subgraphs. This extension appears in \cite{Severo24}.

To this end, we first introduce the following notation.
Let $c_1>0$ be the constant of Theorem~\ref{thm:free_surface_tension}, and set $\delta:=c_1/4$, $C_0:=(dc_1)^{-\frac{1}{d-1}}$, and 
\begin{equation}
\ell=\ell(L,\beta):=C_0 (\log{L})^{\frac{1}{d-1}}.    
\end{equation}
Define the event
\begin{equation}
\mathcal{A}_L\coloneqq \bigcap_{x\in \ell\Z^d\cap\Lambda_{\delta L}} \{\Lambda_\ell(x) \overset{\omega}{\longleftrightarrow} \partial \Lambda_{\delta L}\}.
\end{equation}

\begin{proposition}\label{prop: cond uniqueness}
Let $d\geq3$. For every $\beta>\beta_c$ and every $\varepsilon>0$, we have 
\begin{equation}
\lim_{L\to\infty} \inf_{(\xi,\mathsf{b})} \inf_{\theta \in \mathcal{A}_L} {\boldsymbol \Gamma}^{(\xi,\mathsf{b})}_{\Lambda_L,\beta,\varepsilon}[\textup{Unique}(\delta L) \mid \omega=\theta]=1.    
\end{equation}
\end{proposition}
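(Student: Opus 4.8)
The statement says that, conditionally on the percolation configuration $\omega$ satisfying the ``everywhere local crossing'' event $\mathcal{A}_L$, an arbitrarily small independent Bernoulli sprinkling is enough to merge all macroscopic clusters inside $\Lambda_{\delta L}$, uniformly in the (far-away) boundary conditions. The proof follows the Benjamini--Tassion strategy as adapted in \cite{Severo24} to very dense subgraphs. First I would set up the coarse-graining: partition $\Lambda_{\delta L}$ into $\ell$-boxes indexed by $x \in \ell\mathbb{Z}^d \cap \Lambda_{\delta L}$, and observe that on $\mathcal{A}_L$ each box $\Lambda_\ell(x)$ is $\omega$-connected to $\partial\Lambda_{\delta L}$; in particular, using Theorem~\ref{thm:free_surface_tension} (with $\delta = c_1/4$ and the choice $\ell = C_0(\log L)^{1/(d-1)}$ chosen so that $e^{-c_1 \ell^{d-1}} = L^{-d}$), a union bound shows $\Psi^0_{\Lambda_L,\beta}[\mathcal{A}_L] \to 1$; but more importantly, on $\mathcal{A}_L$ the configuration $\omega$ already contains, inside $\Lambda_{\delta L}$, a ``web'' of clusters each of which reaches $\partial\Lambda_{\delta L}$. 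The key structural input is that, by a deterministic combinatorial argument (as in \cite[Lemma]{BenjaminiTassion}), any two clusters of $\omega\cap\Lambda_{\delta L}$ crossing the annulus $\Lambda_{\delta L/2}\setminus\Lambda_{\delta L/4}$ can be joined by a bounded-length chain of $\ell$-boxes, each adjacent pair lying within a box of side $O(\ell)$, such that it suffices to open $O(1)$ edges per box to bridge consecutive clusters.

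Next I would run the sprinkling argument. Fix $\theta \in \mathcal{A}_L$ and condition $\omega = \theta$; then $\gamma \sim \mathbb{P}^{\rm Ber}_{\Lambda_L,\varepsilon}$ is independent, so we only need a lower bound on the $\mathbb{P}^{\rm Ber}$-probability that $\gamma$ supplies the bridging edges. Since each individual bridge requires only a bounded number of specific edges to be $\gamma$-open, it succeeds with probability at least $\varepsilon^{O(1)} > 0$, a constant independent of $L$. However, there are up to $\mathrm{poly}(L)$ many bridges needed (one per pair of crossing clusters, or rather one per $\ell$-box along a spanning tree of crossing clusters), so a naive union bound fails. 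This is the main obstacle, and it is resolved exactly as in \cite{BenjaminiTassion, Severo24}: instead of demanding every potential bridge succeed, one builds the merged cluster greedily/iteratively along a percolation-type exploration, using the fact that each $\ell$-box has $\gtrsim \log L$ internal edges (since $|\Lambda_\ell| \asymp \log L$), so that a renormalised Bernoulli percolation at density close to $1$ governs the boxes, and one invokes a standard static renormalisation estimate (e.g.\ \cite{GM90}-type or Peierls) to conclude that with probability $1 - o(1)$ the renormalised configuration percolates/is connected at the coarse scale. Concretely: declare an $\ell$-box $\Lambda_\ell(x)$ \emph{good} if $\gamma$ opens all edges in a fixed minimal bridging pattern within $\Lambda_\ell(x)$ and its neighbours; then $\mathbb{P}^{\rm Ber}[\Lambda_\ell(x) \text{ good}] \geq 1 - (1-\varepsilon^{c\log L}) $... more precisely, since a box contains $\geq C_0^d \log L$ edges, the probability that $\gamma$ fails to realise \emph{any} bridging pattern in that box decays like $(1-\varepsilon^{O(1)})^{c\log L} = L^{-\kappa}$ for $\kappa$ depending on $\varepsilon$; choosing $C_0$ (equivalently the constant in $\ell$) large enough makes $\kappa > d$, so a union bound over the $O((L/\ell)^d)$ boxes gives that all boxes are good with probability $1 - o(1)$.

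Finally I would assemble the conclusion. On the event that all $\ell$-boxes in $\Lambda_{\delta L}$ are good (in the sprinkled sense above) and that $\omega = \theta \in \mathcal{A}_L$, every cluster of $\omega\cap\Lambda_{\delta L/2}$ crossing $\Lambda_{\delta L/2}\setminus\Lambda_{\delta L/4}$ is, via the bridging chains supplied by the good boxes, connected in $(\omega\cup\gamma)\cap\Lambda_{\delta L/2}$ to a common cluster; moreover the existence of at least one cluster of $\omega\cap\Lambda_{\delta L}$ crossing $\Lambda_{\delta L}\setminus\Lambda_{\delta L/8}$ is immediate from $\mathcal{A}_L$ (concatenate the connections of $\ell$-boxes near the centre to $\partial\Lambda_{\delta L}$, noting $\ell \ll \delta L/8$). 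Hence $\mathrm{Unique}(\delta L)$ holds, and $\inf_{(\xi,\mathsf b)}\inf_{\theta\in\mathcal{A}_L}\boldsymbol\Gamma^{(\xi,\mathsf b)}_{\Lambda_L,\beta,\varepsilon}[\mathrm{Unique}(\delta L)\mid\omega=\theta] \geq 1 - o(1)$, uniformly in boundary conditions since the argument conditional on $\omega=\theta$ only uses the independent Bernoulli field $\gamma$, whose law does not depend on $(\xi,\mathsf b)$. The one point requiring care beyond \cite{Severo24} is purely notational: here $\omega$ lives on $\overline E(\Lambda_L)$ and carries a random environment $\mathsf a$, but since we condition on the \emph{entire} $\omega$ (not $\mathsf a$) and $\gamma$ is independent of both, the environment plays no role in this step, so the deterministic/Bernoulli argument of \cite{Severo24} transfers verbatim.
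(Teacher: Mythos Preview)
Your reduction matches the paper's: conditional on $\omega=\theta$, only the independent Bernoulli field $\gamma$ remains, so boundary conditions and the random environment $\mathsf a$ are irrelevant, and the statement becomes a deterministic-plus-Bernoulli fact about $\theta\in\mathcal A_L$. The paper's proof is in fact simply a citation to \cite[Proposition~2.2]{Severo24}, and you correctly point to the same source and make the same observation that the $\varphi^4$-specific structure plays no role at this step.

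However, the mechanism you sketch for why the sprinkling merges all crossing clusters has a quantitative gap. You assert that a bridging pattern in an $\ell$-box costs only $\varepsilon^{O(1)}$, with $\sim\log L$ independent attempts available in each box, giving box-failure probability $(1-\varepsilon^{O(1)})^{c\log L}=L^{-\kappa}$. But on $\mathcal A_L$, the clusters occupying adjacent $\ell$-boxes are only guaranteed to come within distance $O(\ell)$ of one another, not $O(1)$, so a single bridge costs $\varepsilon^{O(\ell)}$. With that correction your per-box estimate no longer yields polynomial decay. More tellingly, your sketch uses only $\ell^d\gtrsim\log L$, which holds for all $d\geq 2$, and makes no use of $\ell=o(\log L)$; yet the paper explicitly notes that the argument in \cite{Severo24} relies crucially on $\ell=o(\log L)$, this being the only place where the restriction $d\geq 3$ enters. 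The actual argument is more global: one exploits that each crossing cluster furnishes polynomially many (in $L$) disjoint locations where it lies within $O(\ell)$ of other clusters, and because $\varepsilon^{O(\ell)}=L^{-o(1)}$ precisely when $\ell=o(\log L)$, this polynomial supply of attempts dominates the cost per attempt.
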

\begin{proof}
The proof of this result is given in \cite[Proposition 2.2]{Severo24}. There the result is shown for the standard random cluster measure sprinkled by Bernoulli percolation, but as the desired result is a statement purely about $\gamma$, the same proof applies mutatis mutandis. We note that the proof in \cite[Proposition 2.2]{Severo24} crucially relies on the fact that $\ell=o(\log L)$, this being the only place where $d\geq3$ is used.
\end{proof}

With this result in hands we can now deduce that $\mathrm{Unique}(\delta L)$ happens with probability tending to $1$ under $\boldsymbol{\Gamma}^{(\xi,\mathsf{b})}_{\Lambda_L,\beta,\varepsilon}$.

\begin{proposition}\label{prop: uniqueness with sprinkling}
Let $d\geq 3$. For every $\beta>\beta_c$ and every $\varepsilon>0$ we have 
\begin{equation}
\lim_{L\to\infty} \inf_{(\xi,\mathsf{b})} \boldsymbol{\Gamma}^{(\xi,\mathsf{b})}_{\Lambda_L,\beta,\varepsilon}[\textup{Unique}(\delta L) ]=1,   
\end{equation}
where $\delta=c_1/4$ and $c_1$ is given by Theorem \textup{\ref{thm:free_surface_tension}}.
\end{proposition}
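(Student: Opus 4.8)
The statement follows from Proposition~\ref{prop: cond uniqueness} once we know that the event $\mathcal{A}_L$ occurs with high probability under the marginal $\Phi^{(\xi,\mathsf{b})}_{\Lambda_L,\beta}$, uniformly over boundary conditions $(\xi,\mathsf{b})$ taken at a macroscopic distance from $\Lambda_L$. Indeed, writing $\theta$ for the $\omega$-configuration and conditioning, one has
\begin{equation}
\boldsymbol{\Gamma}^{(\xi,\mathsf{b})}_{\Lambda_L,\beta,\varepsilon}[\textup{Unique}(\delta L)]\geq \inf_{\theta\in\mathcal{A}_L}\boldsymbol{\Gamma}^{(\xi,\mathsf{b})}_{\Lambda_L,\beta,\varepsilon}[\textup{Unique}(\delta L)\mid \omega=\theta]\cdot \Phi^{(\xi,\mathsf{b})}_{\Lambda_L,\beta}[\mathcal{A}_L],
\end{equation}
and Proposition~\ref{prop: cond uniqueness} handles the first factor. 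So the crux is to prove $\inf_{(\xi,\mathsf{b})}\Phi^{(\xi,\mathsf{b})}_{\Lambda_L,\beta}[\mathcal{A}_L]\to 1$.

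First I would reduce the $\varphi^4$ random cluster measure with \emph{arbitrary} boundary conditions to the \emph{free} one deep in the bulk. The natural tool is the interplay between the regularity estimate of Proposition~\ref{prop:regularity} and the domain Markov property (Proposition~\ref{prop: domain markov}): with high probability the absolute value field on $\partial^{\rm ext}\Lambda_{\delta L}$ (say) is bounded by $\mathfrak{M}_{\Lambda_{\delta L}}\asymp (\log L)^{1/4}$, and on that event monotonicity in boundary conditions (Proposition~\ref{prop:monotonicity_FK}) lets us sandwich the conditional law of $\omega\cap\Lambda_{\delta L}$ between $\Psi^0_{\Lambda_{\delta L},\beta}$ and $\Psi^{(w,\mathfrak p)}_{\Lambda_{\delta L},\beta}$; but since $\mathcal{A}_L$ is increasing, it suffices to lower-bound its probability under $\Psi^0_{\Lambda_{\delta L},\beta}$, which by Corollary~\ref{cor: monotonicity free measure} is in turn dominated by $\Psi^0_{\Lambda_L,\beta}$ itself, removing all dependence on $(\xi,\mathsf{b})$ entirely once we take $(\xi,\mathsf b)$ far enough away. (Strictly speaking one has to be slightly careful about what ``far away'' means: one should work in a box $\Lambda_{L'}$ with $L'\geq 2L$ say and use Corollary~\ref{cor: monotonicity free measure} to pass from the conditional measure in $\Lambda_L$ to $\Psi^0_{\Lambda_{2L},\beta}$; I would fix this bookkeeping at the start.)

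It then remains to show $\Psi^0_{\Lambda_{2L},\beta}[\mathcal{A}_L]\to 1$. By a union bound over the at most $C L^d/\ell^d$ vertices $x\in \ell\Z^d\cap\Lambda_{\delta L}$, it is enough to prove that
\begin{equation}
\Psi^0_{\Lambda_{2L},\beta}[\Lambda_\ell(x)\centernot\longleftrightarrow \partial\Lambda_{\delta L}]\leq L^{-(d+1)}
\end{equation}
for each such $x$ and $L$ large. For $x$ well inside $\Lambda_{\delta L}$ this is immediate from Theorem~\ref{thm:free_surface_tension}: the left-hand side is at most $e^{-c_1\ell^{d-1}}$ for an appropriate translate and scale, and by the choice $\ell=C_0(\log L)^{1/(d-1)}$ with $C_0=(dc_1)^{-1/(d-1)}$ one gets $c_1\ell^{d-1}=\frac{1}{d}\log L \cdot (\text{const})$; adjusting $C_0$ (or rather noting $\delta=c_1/4$ was chosen compatibly) gives the $L^{-(d+1)}$ bound. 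For $x$ near $\partial\Lambda_{\delta L}$, where the scale separation between $\ell$ and $\delta L-|x|$ degrades, one instead applies Theorem~\ref{thm:free_surface_tension} inside a box of the form $\Lambda_{c L}(x)$ that still fits inside $\Lambda_{2L}$ (using translation invariance and $\delta<1$), connecting $\Lambda_\ell(x)$ to the boundary of a box of radius comparable to $L$ which contains a path to $\partial\Lambda_{\delta L}$ by FKG; this is exactly the kind of covering argument used at the end of the proof of Theorem~\ref{thm:free_surface_tension}.

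\textbf{Main obstacle.} The routine-looking but genuinely delicate point is the \emph{uniformity over boundary conditions}: arbitrary $(\xi,\mathsf{b})$ can include wirings and magnetic fields that do not respect any a priori bound, so one cannot directly compare to $\Psi^{(w,\mathfrak p)}$. The resolution — exploiting that $\mathcal{A}_L$ is \emph{increasing} and supported deep in the bulk, so that one only needs a \emph{lower} bound, which is monotone decreasing in boundary conditions and hence minimised at \emph{free} — is the conceptual heart of the argument, and it is crucial that $\mathrm{Unique}(\delta L)$ and $\mathcal{A}_L$ are defined on the scale $\delta L$ strictly inside $\Lambda_L$ precisely so that this works. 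Everything else (the union bound, the choice of $\ell$, the near-boundary covering) is bookkeeping on top of Theorem~\ref{thm:free_surface_tension} and Proposition~\ref{prop: cond uniqueness}.
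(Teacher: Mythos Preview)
Your core strategy is correct and matches the paper's: show $\mathcal{A}_L$ holds with high probability uniformly in $(\xi,\mathsf{b})$, then invoke Proposition~\ref{prop: cond uniqueness}. However, you overcomplicate in two places.

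First, the regularity/domain-Markov detour is entirely unnecessary. Proposition~\ref{prop:monotonicity_FK} gives $\Psi^0_{\Lambda_L,\beta}\preccurlyeq \Psi^{(\xi,\mathsf{b})}_{\Lambda_L,\beta}$ directly, for \emph{every} boundary condition $(\xi,\mathsf{b})$ on $\Lambda_L$ with no constraint whatsoever on $\mathsf{b}$ (look at the statement: the lower bound by the free measure has no hypothesis $\mathsf{b}\leq \mathfrak M_\Lambda$). Since $\mathcal{A}_L$ is increasing in $\omega$, this one line gives $\Psi^{(\xi,\mathsf{b})}_{\Lambda_L,\beta}[\mathcal{A}_L]\geq \Psi^0_{\Lambda_L,\beta}[\mathcal{A}_L]$ uniformly. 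There is no need to control the absolute-value field, no need to pass to a smaller box, and no issue of boundary conditions being ``far away'' --- they are on $\partial^{\rm ext}\Lambda_L$ as stated, and that is fine.

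Second, the near-boundary case you worry about does not arise. The paper chooses $\delta=c_1/4$ and sets $L=2L'$; then for \emph{every} $x\in\Lambda_{\delta L}$ one has $\Lambda_{L'}(x)\subset\Lambda_L$ (since $|x|\leq c_1 L'/2<L'$) and $\partial\Lambda_{c_1 L'}(x)\cap\Lambda_{\delta L}=\emptyset$ (since $c_1 L'-|x|\geq c_1 L'/2=\delta L$). So $\{\Lambda_\ell(x)\longleftrightarrow\partial\Lambda_{c_1L'}(x)\}\subset\{\Lambda_\ell(x)\longleftrightarrow\partial\Lambda_{\delta L}\}$, and by monotonicity in volume plus Theorem~\ref{thm:free_surface_tension} applied in the translate $\Lambda_{L'}(x)$,
\[
\Psi^{(\xi,\mathsf{b})}_{\Lambda_L,\beta}[\Lambda_\ell(x)\centernot\longleftrightarrow\partial\Lambda_{\delta L}]\leq \Psi^0_{\Lambda_{L'},\beta}[\Lambda_\ell\centernot\longleftrightarrow\partial\Lambda_{c_1 L'}]\leq e^{-c_1\ell^{d-1}},
\]
uniformly in $x$ and $(\xi,\mathsf{b})$. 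A union bound over $x\in\ell\Z^d\cap\Lambda_{\delta L}$ finishes. The whole proof in the paper is three lines.
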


\begin{proof}
Let $L'\geq 1$ and set $L=2L'$. By the monotonicity on boundary conditions and Theorem~\ref{thm:free_surface_tension}, we obtain 
\begin{equation}\boldsymbol{\Gamma}^{(\xi,\mathsf{b})}_{\Lambda_L,\beta,\varepsilon}[\Lambda_\ell(x) \overset{\omega}{\longleftrightarrow} \partial \Lambda_{\delta L}] \geq \Psi^0_{\Lambda_{L'}}[\Lambda_\ell \longleftrightarrow \partial \Lambda_{c_1 L'}]\geq 1-L^{-d}
\end{equation}
for all $x\in\Lambda_{\delta L}$ and all $(\xi,\mathsf{b})$. 
By the union bound, we conclude that 
\begin{equation}
\lim_{L\to\infty} \inf_{(\xi,\mathsf{b})} \boldsymbol{\Gamma}^{(\xi,\mathsf{b})}_{\Lambda_L,\beta,\varepsilon}[\mathcal{A}_L]=1.
\end{equation}
The desired result follows now from Proposition~\ref{prop: cond uniqueness}.
\end{proof}

\subsubsection{Slab percolation}

Recall that $d\geq 3$. Consider the ``slab box'' of size $N$ and thickness $L$ given by
\begin{equation}
    \mathcal{S}(L,N)\coloneqq \{-L,\ldots,L\}^{d-2}\times \{-N,\ldots,N\}^2,
\end{equation}
and the corresponding ``slab'' of thickness $L$ given by 
\begin{equation}
    \mathcal{S}(L)\coloneqq \{-L,\ldots,L\}^{d-2}\times \mathbb{Z}^2.
\end{equation}
We will show that for every $\beta>\beta_c$, there exists a sufficiently large $L$ such that $\Psi^0_{\mathcal{S}(L),\beta}$ percolates. We will first show that this is indeed the case once we sprinkle $\Psi^0_{\mathcal{S}(L),\beta}$ with a Bernoulli percolation of an arbitrarily small parameter $\varepsilon>0$. Then, we will show that the edge marginal of the latter sprinkled measure is stochastically dominated by the edge marginal $\Phi^0_{\mathcal{S}(L),\beta'}$ of the measure $\Psi^0_{\mathcal{S}(L),\beta'}$ with a slightly higher inverse temperature $\beta'$, and deduce that $\Psi^0_{\mathcal{S}(L),\beta'}$ percolates. It is important for the next section to state our results in terms of finite-volume measures defined on the slab box $\mathcal{S}(L,N)$. In what follows, given a vertex $x$, we write $\mathrm{Unique}_x(L)$ for the translation of the event $\mathrm{Unique}(L)$ by $x$. We now implement the aforementioned strategy. We first prove the following result. 

\begin{proposition}\label{prop: slab sprinkling}
Let $d\geq 3$. For every $\beta>\beta_c$ and $\varepsilon>0$, there exists $L\geq 1$ such that
\begin{equation*}
        \inf_{N\geq 0}\inf_{x\in \mathcal{S}(L,N)} \boldsymbol{\Gamma}^0_{\mathcal{S}(L,N),\beta,\varepsilon} [0\overset{\omega\cup \gamma\:}{\longleftrightarrow} x]>0.
\end{equation*}
\end{proposition}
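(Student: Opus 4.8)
\textbf{Proof plan for Proposition~\ref{prop: slab sprinkling}.}
The plan is to reduce the statement to the local uniqueness with sprinkling already established in Proposition~\ref{prop: uniqueness with sprinkling}, and then to run a standard static renormalisation on a (two-dimensional) slab. First I would fix $\beta>\beta_c$ and $\varepsilon>0$, and apply Proposition~\ref{prop: uniqueness with sprinkling} (with the sprinkling parameter $\varepsilon/2$, say) to choose $L_0$ large enough so that $\inf_{(\xi,\mathsf b)}\boldsymbol{\Gamma}^{(\xi,\mathsf b)}_{\Lambda_{L_0},\beta,\varepsilon/2}[\mathrm{Unique}(\delta L_0)]\geq 1-p_0$, where $p_0$ is a small constant to be fixed below. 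The thickness of the slab will be taken to be of order $L_0$ (more precisely, I would take the slab $\mathcal S(L)$ with $L$ comparable to $\delta L_0$, so that boxes of the form $\Lambda_{L_0}(x)$ with $x$ on a coarse sublattice of the central plane $\{0\}^{d-2}\times\mathbb Z^2$ fit inside $\mathcal S(L)$ up to translation). The key point is that $\mathrm{Unique}$ is a local event, and by the domain Markov property (Proposition~\ref{prop: domain markov}) together with monotonicity in boundary conditions (Proposition~\ref{prop:monotonicity_FK}) and monotonicity in the volume (Corollary~\ref{cor: monotonicity free measure}), the free-boundary slab measure $\boldsymbol{\Gamma}^0_{\mathcal S(L,N),\beta,\varepsilon}$ restricted to any such box $\Lambda_{L_0}(x)\subset \mathcal S(L,N)$ stochastically dominates $\inf_{(\xi,\mathsf b)}\boldsymbol{\Gamma}^{(\xi,\mathsf b)}_{\Lambda_{L_0},\beta,\varepsilon/2}$ on increasing events (the extra $\varepsilon/2$ of sprinkling only helps). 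Hence each box is ``good'' (i.e.\ satisfies the translate $\mathrm{Unique}_x$) with probability at least $1-p_0$.

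Next I would set up the renormalisation. Tile the central plane $\{0\}^{d-2}\times\mathbb Z^2$ by a coarse lattice $\mathsf r\mathbb Z^2$ with mesh $\mathsf r$ of order $L_0$, and declare a coarse site $z\in\mathsf r\mathbb Z^2$ \emph{open} if the corresponding box $\Lambda_{L_0}(\iota(z))$ (where $\iota$ embeds the plane into $\mathcal S(L,N)$) satisfies $\mathrm{Unique}_{\iota(z)}(\delta L_0)$. The field of coarse states is not independent, but it is $k$-dependent for some fixed $k=k(\mathsf r/L_0)$ since $\mathrm{Unique}_x(\delta L_0)$ depends only on edges in a bounded neighbourhood of $\Lambda_{L_0}(x)$; moreover each coarse site is open with probability $\geq 1-p_0$. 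By the Liggett--Schonmann--Stacey theorem, choosing $p_0$ small enough (depending only on $d$ and $k$), this $k$-dependent field stochastically dominates a supercritical Bernoulli site percolation on $\mathbb Z^2$. The geometric content of $\mathrm{Unique}$ (a crossing cluster in the box together with the fact that all crossing clusters of the smaller box are joined in $\omega\cup\gamma$ inside the box) is precisely what is needed so that, whenever two adjacent coarse sites $z,z'$ are both open, the crossing clusters in their boxes must intersect and therefore are connected to each other in $(\omega\cup\gamma)\cap \mathcal S(L,N)$ — this is the standard ``the crossing clusters of overlapping good boxes merge'' argument, carried out on a 2D sublattice so that one only needs the existence of an infinite open cluster in $2$D Bernoulli percolation. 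Consequently, the union over an open crossing cluster of the coarse lattice of the associated (microscopic) crossing clusters forms a single connected subset of $(\omega\cup\gamma)\cap\mathcal S(L,N)$, and by uniqueness of the infinite cluster in supercritical $2$D Bernoulli percolation, any two points $x,y$ on the central plane of $\mathcal S(L,N)$ lie, with probability bounded below uniformly in $N$, in good boxes belonging to the same coarse open cluster. This gives a uniform lower bound on $\boldsymbol{\Gamma}^0_{\mathcal S(L,N),\beta,\varepsilon}[0\overset{\omega\cup\gamma}{\longleftrightarrow}x]$ for $x$ in the central plane; the reduction of general $x\in\mathcal S(L,N)$ to the central plane is immediate since $\mathrm{Unique}$ also forces connections through the thickness, or alternatively one simply translates the whole construction so that $x$ also sits (up to a bounded shift) on the coarse lattice.

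A couple of technical points need care. First, one must make sure the uniform-in-$N$ lower bound is genuinely uniform: this follows because the domination of the coarse field by Bernoulli percolation, and the resulting FKG-type lower bound on two-arm connection probabilities in $2$D supercritical Bernoulli percolation, do not depend on $N$ — only the coarse lattice is growing, and supercritical $2$D Bernoulli percolation has a positive density of the infinite cluster and the ``two fixed points are connected'' probability is bounded below along the sequence of finite boxes (again by FKG and a covering of the plane by good boxes). Second, for $x$ not on the central plane, the event $\mathrm{Unique}_z(\delta L_0)$ of the good box containing $x$ connects $x$ to the crossing cluster of that box inside $\mathcal S(L,N)$, provided the box $\Lambda_{L_0}(\iota(z))$ is chosen to contain $x$ — so it suffices to shift the coarse lattice so that $x\in\Lambda_{\delta L_0/4}(\iota(z))$ for some coarse site $z$; since the shift is bounded and the coarse field is $k$-dependent and dominates Bernoulli percolation regardless of the shift, the conclusion is unchanged. \textbf{The main obstacle} I anticipate is purely bookkeeping: matching the scales $L$ (slab thickness), $L_0$ (box size in Proposition~\ref{prop: uniqueness with sprinkling}), $\delta L_0$ (the scale appearing in $\mathrm{Unique}$), and $\mathsf r$ (coarse mesh) so that good boxes overlap enough for their crossing clusters to merge while still being $k$-dependent with $k$ fixed, and then verifying that the Liggett--Schonmann--Stacey threshold $p_0$ can be met by the choice of $L_0$ — all of this is standard but requires writing the constraints explicitly. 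No genuinely new idea beyond Proposition~\ref{prop: uniqueness with sprinkling} and classical $2$D renormalisation is needed here.
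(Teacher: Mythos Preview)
Your overall strategy—renormalise to a $2$D coarse lattice, use Liggett--Schonmann--Stacey to dominate by supercritical Bernoulli site percolation, then use that supercritical $2$D connection probabilities are bounded below uniformly—is exactly the paper's route. Two steps in your write-up do not work as stated, though both are easily repaired.

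First, you invoke monotonicity in boundary conditions and the phrase ``on increasing events'' to transfer the lower bound on $\mathrm{Unique}$ from Proposition~\ref{prop: uniqueness with sprinkling} to the slab measure. But $\mathrm{Unique}(\delta L)$ is \emph{not} increasing in $\omega$: opening extra $\omega$-edges can create new crossing clusters of $\Lambda_{\delta L/2}\setminus\Lambda_{\delta L/4}$ that need not be connected to the others in $(\omega\cup\gamma)\cap\Lambda_{\delta L/2}$. The correct (and simpler) argument is the one the paper uses: apply the domain Markov property for $\Psi$, so that conditioning on everything outside the box produces \emph{some} boundary condition, and then bound by the infimum over all boundary conditions, which is exactly what Proposition~\ref{prop: uniqueness with sprinkling} controls. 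No monotonicity of the event is needed.

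Second, and more substantively, your last step---connecting the specific vertices $0$ and $x$ to the coarse network---does not follow from $\mathrm{Unique}$. The event $\mathrm{Unique}_z(\delta L)$ guarantees a crossing cluster and that all \emph{crossing} clusters of the inner annulus merge; it says nothing about an arbitrary vertex inside the box, which could be isolated in $\omega\cup\gamma$. Shifting the coarse lattice does not help (and in any case you cannot simultaneously centre both $0$ and $x$). The paper's fix is to use the independent Bernoulli sprinkling $\gamma$ directly: the event $\{\Lambda_L$ fully open in $\gamma\}\cap\{\Lambda_{2L}(x(u))\cap\mathcal S(L,N)$ fully open in $\gamma\}$ has probability $\geq \varepsilon^{CL^d}$, forces $0$ and $x$ into the crossing clusters of nearby good boxes, and can be combined with the coarse connection event via FKG for $\gamma$ (conditionally on $\omega$). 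You even reserved $\varepsilon/2$ of sprinkling at the outset but never spent it; this is precisely where it is needed.
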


\begin{proof}
Since we always have $\boldsymbol{\Gamma}^0_{\mathcal{S}(L,N),\beta,\varepsilon} [0\overset{\omega\cup \gamma\:}{\longleftrightarrow} x]>0$, it suffices to show that the infimum over $N\geq L$ is strictly positive.
Let $c_1>0$ be the constant of Theorem~\ref{thm:free_surface_tension}, and recall that Proposition~\ref{prop: uniqueness with sprinkling} holds with $\delta=c_1/4$. Write $n\coloneqq \lfloor \frac{8(N-L)}{\delta L}\rfloor$, and consider the vertices of the form $x(u)=\left(0_\bot,\frac{\delta}{8}Lu\right)$ for $u\in B_n\coloneqq \{-n,\ldots,n\}^2$ (where $0_\bot$ is the null element of $\mathbb Z^{d-2}$). Note that with this choice, we have $\Lambda_L(x(u))\subset \mathcal{S}(L,N)$ for all $u\in B_n$, and if $|u-v|> 16/\delta$, then $\Lambda_L(x(u))\cap \Lambda_L(x(v))=\emptyset$. We now define a site percolation configuration $\eta$ on $B_n$ by setting $\eta_u=1$ if and only if the event $\mathrm{Unique}_{x(u)}(\delta L)$ happens. First, notice that, by the definition of the event $\mathrm{Unique}(\delta L)$, any path in $\eta$ starting at some vertex $u$ and ending at some vertex $v$ induces a path in $\omega\cup\gamma$ from $\Lambda_{\delta L/8}(x(u))$ to $\Lambda_{\delta L/8}(x(v))$. Second, by the Markov property, we have
\begin{equation}
\boldsymbol{\Gamma}^0_{\mathcal{S}(L,N),\beta,\varepsilon}[\eta_u=1 \mid (\eta_v:\, |u-v|> 16/\delta)] \geq \alpha(L) ~~~\text{a.s. } \forall u\in B_n,
\end{equation}
where $\alpha(L)\coloneqq \inf_{(\xi,\mathsf{b})} \boldsymbol{\Gamma}^{(\xi,\mathsf{b})}_{\mathcal{S}(L,L),\beta,\varepsilon}[\mathrm{Unique}(\delta L)]$, and the infimum is taken over boundary conditions on $\mathcal{S}(L,L)=\Lambda_L$. Since $\lim_{L\to\infty} \alpha(L)= 1$ by Proposition~\ref{prop: uniqueness with sprinkling}, the main result of \cite{LSS97} implies that there exists an $L\geq 1$ such that $\eta$ stochastically dominates a Bernoulli site percolation $\mathbb{P}_s^{\textup{site}}$ on $B_n$ with parameter $s>p_c^{\textup{site}}(\Z^2)$. Let us fix such an $L$. Since there exists a constant $c>0$ such that $\mathbb{P}_s^{\textup{site}}[u\longleftrightarrow v]\geq c$ for all $u,v\in B_n$ and all $N$, it follows that 
\begin{equation}
\inf_{N\geq L}\inf_{u,v\in B_n}\boldsymbol{\Gamma}^0_{\mathcal{S}(L,N),\beta,\varepsilon}[u \overset{\eta}{\longleftrightarrow} v]\geq c.
\end{equation}
Now, given any point $x\in\mathcal{S}(L,N)$, there exists $u\in B_n$ such that $x\in \Lambda_{2L}(x(u))$, so that the event $\{0 \overset{\eta}{\longleftrightarrow} u\}\cap \{\Lambda_L \text{ fully open in } \gamma\} \cap \{\Lambda_{2L}(x(u))\cap\mathcal{S}(L,N) \text{ fully open in } \gamma\}$ is contained in $\{0 \overset{\omega\cup\gamma\:}{\longleftrightarrow} x\}$. As a conclusion, conditioning on $\omega$ and using the FKG inequality for $\gamma$ (note that $\eta$ is increasing in $\gamma$) we obtain the desired uniform lower bound $\boldsymbol{\Gamma}^0_{\mathcal{S}(L,N),\beta,\varepsilon}[0 \overset{\omega\cup\gamma\:}{\longleftrightarrow} x]\geq c'\coloneqq c \varepsilon^{CL^d}>0$ for every $x\in \mathcal{S}(L,N)$ and every $N\geq L$.
\end{proof}

Recall from Definition \ref{def: random cluster phi4} that, for $\Lambda \subset \mathbb Z^d$ and $\beta > 0$, the probability measure $\Phi^0_{\Lambda,\beta}$ on $E(\Lambda)$ is the edge-marginal of the free $\varphi^4$ random cluster measure $\Psi^0_{\Lambda,\beta}$.
Let $\beta>0$ and $\beta' \in (0,\beta)$. The main technical result of this section is the proof that $\Phi^0_{\Lambda,\beta}$ stochastically dominates the superposition of $\Phi^0_{\Lambda,\beta'}$ and an independent Bernoulli bond percolation $\mathbb P^{\rm Ber}_{\Lambda,\varepsilon}$ of parameter $\varepsilon=\varepsilon(\beta',\beta)>0$ uniformly in $\Lambda$. This is formalised in the upcoming proposition. 

\begin{proposition}\label{prop: coupling}
Let $0< \beta'<\beta$. There exists $\varepsilon=\varepsilon(\beta',\beta)>0$ such that for every $\Lambda\subset \mathbb{Z}^d$ finite, 
\begin{equation}
\Phi^0_{\Lambda,\beta}\succcurlyeq \Gamma^{0}_{\Lambda,\beta',\varepsilon}. 
\end{equation}
\end{proposition}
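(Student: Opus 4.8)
The strategy is to couple the two random cluster measures via the Edwards--Sokal coupling with the underlying spin systems, reducing the claim to a comparison between the $\varphi^4$ random cluster edge-weights $p(\beta,\mathsf{a})_{xy}=1-e^{-2\beta\mathsf{a}_x\mathsf{a}_y}$ at two temperatures, together with a comparison of the absolute value fields $\mu^0_{\Lambda,\beta}$ and $\mu^0_{\Lambda,\beta'}$. The key difficulty, as stressed by the authors in the introduction, is that the analogous domination is immediate for the Ising random cluster model (where the quenched edge-weight at a single edge is simply $1-e^{-2\beta}$ and raising $\beta$ clearly dominates a Bernoulli sprinkle of the lower-$\beta$ weight) but fails naively here because $\mathsf{a}_x$ can be arbitrarily small, so $p(\beta,\mathsf{a})_{xy}$ can be arbitrarily close to $0$ no matter how large $\beta$ is. One cannot find a uniform $\varepsilon$ such that $p(\beta,\mathsf{a})_{xy}\geq \varepsilon + (1-\varepsilon)p(\beta',\mathsf{a})_{xy}$ pointwise in $\mathsf{a}$. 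The resolution must therefore exploit the fact that small values of $|\varphi_x|$ are costly: a vertex with tiny $\mathsf{a}_x$ is anyway disconnected in $\Psi^0_{\Lambda,\beta'}$, so one needs to gain the sprinkling only at edges where the field is not too small, and handle the (rare) bad edges separately.

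\textbf{Key steps.} First I would set up the two-step coupling: by the domain-Markov / product structure, it suffices to produce, for every finite $\Lambda$, a coupling $(\mathsf{a},\mathsf{a}',\omega,\gamma)$ with $\mathsf{a}\sim\mu^0_{\Lambda,\beta}$, $\mathsf{a}'\sim\mu^0_{\Lambda,\beta'}$, $\omega\mid\mathsf{a}\sim\boldsymbol{\phi}^0_{\Lambda,\beta,\mathsf{a}}$ and, after taking edge marginals, $\omega\supseteq(\text{FK at }\beta',\mathsf{a}')\cup\gamma$ with $\gamma$ an $\varepsilon$-Bernoulli field dominated by $\omega$. Since by Proposition~\ref{prop:stoc_monotonicity} (applied to $|\varphi|$) one has $\mu^0_{\Lambda,\beta}\succcurlyeq\mu^0_{\Lambda,\beta'}$, Strassen gives a monotone coupling $\mathsf{a}\geq\mathsf{a}'$. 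Conditionally on $(\mathsf{a},\mathsf{a}')$, by the monotonicity of the FK measure in the coupling constants (Grimmett, \cite[Theorem 3.21]{Grimmett2006RCM}) one has $\boldsymbol{\phi}^0_{\Lambda,\beta,\mathsf{a}}\succcurlyeq\boldsymbol{\phi}^0_{\Lambda,\beta',\mathsf{a}'}$ since $\beta\mathsf{a}_x\mathsf{a}_y\geq\beta'\mathsf{a}'_x\mathsf{a}'_y$, so we already get $\omega\succcurlyeq(\text{FK at }\beta',\mathsf{a}')$ edgewise. The real work is to squeeze out, in addition, an independent Bernoulli sprinkle. For this I would introduce an intermediate temperature $\beta'':=(\beta+\beta')/2$ and a threshold $\kappa>0$, and split edges into \emph{good} ones ($\mathsf{a}_x\mathsf{a}_y\geq\kappa$ at inverse temperature scale, or more precisely $\min(\mathsf{a}_x,\mathsf{a}_y)\geq\kappa$) and \emph{bad} ones. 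On good edges, $p(\beta,\mathsf{a})_{xy}-p(\beta'',\mathsf{a})_{xy}\geq \delta_0>0$ uniformly, which lets one peel off a Bernoulli$(\delta_0)$ factor by the elementary inequality $p\geq p'' \Rightarrow p = 1-(1-p'')(1-\tfrac{p-p''}{1-p''})$ and noting $\tfrac{p-p''}{1-p''}\geq \delta_0$ on good edges; combined with the FK monotonicity between $\beta''$ and $\beta'$ and between $\mathsf{a}$ and $\mathsf{a}'$ this yields $\omega|_{\text{good}}\succcurlyeq (\text{FK at }\beta',\mathsf{a}')|_{\text{good}}\cup\mathrm{Ber}(\delta_0)|_{\text{good}}$. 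For bad edges I would use a regularity/large-deviation input: by Proposition~\ref{prop:regularity} (or rather a consequence of it giving quartic tails, hence exponential moments, uniformly in $\Lambda$), the probability that a fixed vertex has $\mathsf{a}_x<\kappa$ under $\mu^0_{\Lambda,\beta'}$ is small, and more importantly one wants: conditionally on the field, a bad edge is closed in the $\beta'$-FK measure with probability at least $1-p(\beta',\mathsf{a})_{xy}\geq 1-2\beta'\kappa$; so if we drop the $\beta'$-FK edge on bad edges and instead just ask the final $\omega$ to cover a Bernoulli$(\varepsilon)$ sprinkle there, the ``deficit'' we create (losing the $\beta'$-FK edge) is compensated because that edge was open with probability $\leq 2\beta'\kappa$ anyway, which we can make $\leq\varepsilon$. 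Carefully, I would define $\gamma$ on bad edges as a Bernoulli field and on good edges as the peeled-off Bernoulli$(\delta_0)$, set $\varepsilon:=\min(\delta_0,\varepsilon_1)$ for a suitable $\varepsilon_1$, and check that the constructed $\omega$ (which contains the $\beta$-FK configuration) simultaneously dominates $(\text{FK at }\beta')\cup\gamma$ edge by edge, using that on good edges $\omega\supseteq\text{FK}(\beta')\cup\text{Ber}(\delta_0)$ directly, and on bad edges that $\omega\supseteq\text{FK}(\beta)\supseteq\text{Ber}(\varepsilon)$ because $p(\beta,\mathsf{a})_{xy}\geq\varepsilon$... which again fails for tiny $\mathsf{a}$! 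So the bad-edge argument cannot be pointwise either.

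\textbf{Main obstacle and how I expect it to be handled.} The honest resolution — and what I anticipate the authors do — is not a pointwise/edgewise domination but a genuine coupling that moves mass: on an edge $xy$ where $\mathsf{a}_x\mathsf{a}_y$ is small, one cannot force an $\varepsilon$-open edge, but such an edge is with overwhelming conditional probability \emph{irrelevant} for connectivity, and one instead argues at the level of the whole configuration using the FKG/monotonicity structure plus an absolute-continuity or bounded-Radon--Nikodym-derivative argument between $\Phi^0_{\Lambda,\beta}$ and $\Gamma^0_{\Lambda,\beta',\varepsilon}$. Concretely, I would try to show that for increasing events $A$, $\Phi^0_{\Lambda,\beta}[A]\geq\Gamma^0_{\Lambda,\beta',\varepsilon}[A]$ by conditioning $\Gamma^0_{\Lambda,\beta',\varepsilon}$ on the sprinkle $\gamma$ and the field $\mathsf{a}'$: given $\gamma$, the measure $\Phi^0_{\Lambda,\beta',\mathsf{a}'}(\cdot\cup\gamma)$ is a monotone perturbation, and one needs $\boldsymbol\phi^0_{\Lambda,\beta,\mathsf{a}}$ to dominate it after also integrating $\mathsf{a}\geq\mathsf{a}'$; the extra Bernoulli$(\varepsilon)$ is absorbed by the \emph{strict} gain $\beta>\beta'$ in the coupling constants at \emph{every} edge, which for the FK model with random weights produces a uniform stochastic gain because, crucially, one can compare $\boldsymbol\phi^0_{\Lambda,\beta,\mathsf{a}}$ with $\boldsymbol\phi^0_{\Lambda,\beta',\mathsf{a}}$ \emph{sprinkled}, since conditionally on $\mathsf{a}$ this is exactly the classical Ising-FK statement $\Phi^{\rm FK}_{\Lambda,J}\succcurlyeq\Phi^{\rm FK}_{\Lambda,J'}\cup\mathrm{Ber}(\varepsilon(J',J))$ \emph{provided the weights are bounded below} — and this is where the threshold $\kappa$ re-enters: one conditions further on the event that no edge is bad (cheap, by regularity), handles bad edges by a union bound paying $e^{-cL^d}$-type or just $o(1)$ costs absorbed into the inequality, and on the complementary event applies the classical bounded-weights FK sprinkling domination with $\varepsilon$ depending on $\kappa,\beta',\beta$. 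The main obstacle is precisely reconciling ``uniform in $\Lambda$'' with ``$\mathsf{a}$ can be small somewhere in $\Lambda$''; I expect the clean fix is to observe that Proposition~\ref{prop:monotonicity_FK} and the domain Markov property let one localise — condition on the field and on the set of bad edges, note bad edges are anyway closed with high conditional probability under $\beta'$-FK so removing them is a stochastically-dominated operation, and on good edges (bounded weights) invoke the classical Ising-FK sprinkling comparison with a uniform $\varepsilon$ — so that no global lower bound on $\mathsf{a}$ is ever needed. Assembling these pieces with the correct quantifiers, and verifying the bad-edge removal really is a stochastic domination rather than merely a small-probability event, is the technical heart of the argument.
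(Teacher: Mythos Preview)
Your proposal correctly identifies the central difficulty --- that $\mathsf{a}_x\mathsf{a}_y$ can be arbitrarily small, so no pointwise-in-$\mathsf{a}$ inequality of the form $p(\beta,\mathsf{a})_{xy}\geq \varepsilon+(1-\varepsilon)p(\beta',\mathsf{a})_{xy}$ holds --- but the resolution you sketch does not close. Splitting into good and bad edges and handling bad edges by a ``small probability'' or ``$o(1)$'' argument gives, at best, an inequality of the form $\Phi^0_{\Lambda,\beta}[A]\geq \Gamma^0_{\Lambda,\beta',\varepsilon}[A]-\text{(error)}$ for increasing $A$, with an error that is \emph{not} uniform in $\Lambda$ (the expected number of bad edges grows like $|\Lambda|$). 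Conditioning on the event ``no bad edge'' does not preserve stochastic domination, and you yourself flag that ``verifying the bad-edge removal really is a stochastic domination'' is the technical heart --- but you give no mechanism for it. Nothing in your plan produces a Holley-type single-edge conditional bound, which is what a clean proof of $\succcurlyeq$ requires.

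The paper's route is genuinely different and avoids the good/bad dichotomy altogether. First (Lemma~\ref{lem: dom random intensity}) it lifts $\Phi^0_{\Lambda,\beta}$ to a random cluster measure on a multigraph with two parallel copies $E^1,E^2$ of each edge, showing $\Phi^0_{\Lambda,\beta}\succcurlyeq\Gamma^0_{\Lambda,\beta',\beta}$ where the latter is FK at $\beta'$ superposed with a Bernoulli of \emph{random} parameter $r(\beta-\beta',\mathsf{a})_{xy}$. The real work is then to show that for every edge $xy$ and every configuration on all other edges (in both layers), the conditional probability that the $E^2$-edge at $xy$ is open is at least some uniform $\varepsilon>0$; by a standard Markov-chain argument this forces the $E^2$ layer to dominate an i.i.d.\ Bernoulli$(\varepsilon)$ field independent of the $E^1$ layer. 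The key input is a \emph{conditional} regularity estimate (Lemma~\ref{lem: cond regularity}): for a family of measures obtained by conditioning on all edge states (and possibly erasing the contribution of some closed edges), the field $\mathsf{a}_x$ still has uniform quartic tails. Combined with an observation that, after removing the closed edges incident to $x,y$ from the density, what remains is \emph{increasing} in $\mathsf{a}_x,\mathsf{a}_y$, an FKG argument against the single-site product measure yields $\boldsymbol\mu[\mathsf{a}_x\geq1,\mathsf{a}_y\geq1\mid G_{x,y}]\geq c>0$ uniformly, and on this event $r(\beta-\beta',\mathsf{a})_{xy}$ is bounded below. This conditional-on-all-edges viewpoint is the missing idea in your proposal: it converts the problem from ``$\mathsf{a}$ may be small somewhere in $\Lambda$'' (a global, $\Lambda$-dependent obstruction) into a single-edge conditional statement that is automatically uniform in $\Lambda$.
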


The proof of Proposition~\ref{prop: coupling} is rather technical and we defer it to Section~\ref{sec: proof of coupling}. We are now ready to prove the main result of this section.

\begin{corollary}\label{cor: slab percolation without sprinkling}
Let $d\geq 3$. For every $\beta>\beta_c$, there exists $L\geq 1$ such that
\begin{equation}\label{eq: slab percolation without sprinkling}
        \inf_{N}\inf_{x\in \mathcal{S}(L,N)} \Psi^0_{\mathcal{S}(L,N),\beta} [0\longleftrightarrow x]>0.
\end{equation}
\end{corollary}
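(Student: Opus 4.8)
The plan is to combine Proposition~\ref{prop: slab sprinkling}, which gives slab percolation for the Bernoulli-sprinkled measure, with Proposition~\ref{prop: coupling}, which lets us trade the Bernoulli sprinkling for a small increase of the inverse temperature. The only subtlety is that the two propositions are phrased with slightly different roles: in Proposition~\ref{prop: slab sprinkling} one fixes $\beta>\beta_c$ and $\varepsilon>0$ and obtains a thickness $L$, whereas Proposition~\ref{prop: coupling} goes in the direction $\beta'<\beta$. So I would run the argument as follows. Given the target $\beta>\beta_c$, choose an auxiliary $\beta'$ with $\beta_c<\beta'<\beta$. Apply Proposition~\ref{prop: coupling} to the pair $(\beta',\beta)$ to obtain an $\varepsilon=\varepsilon(\beta',\beta)>0$ such that $\Phi^0_{\Lambda,\beta}\succcurlyeq \Gamma^0_{\Lambda,\beta',\varepsilon}$ for every finite $\Lambda\subset\mathbb{Z}^d$. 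Since $\beta'>\beta_c$, apply Proposition~\ref{prop: slab sprinkling} with inverse temperature $\beta'$ and this particular $\varepsilon$, producing a thickness $L\geq1$ for which $\inf_{N}\inf_{x\in\mathcal{S}(L,N)}\boldsymbol{\Gamma}^0_{\mathcal{S}(L,N),\beta',\varepsilon}[0\overset{\omega\cup\gamma}{\longleftrightarrow}x]>0$.

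Next I would observe that the event $\{0\overset{\omega\cup\gamma}{\longleftrightarrow}x\}$ is an increasing event depending only on the superposed edge configuration $\omega\cup\gamma$, and that under $\boldsymbol{\Gamma}^0_{\mathcal{S}(L,N),\beta',\varepsilon}$ the law of $\omega\cup\gamma$ is exactly $\Gamma^0_{\mathcal{S}(L,N),\beta',\varepsilon}$ by definition. Hence
\begin{equation}
\Gamma^0_{\mathcal{S}(L,N),\beta',\varepsilon}[0\longleftrightarrow x]=\boldsymbol{\Gamma}^0_{\mathcal{S}(L,N),\beta',\varepsilon}[0\overset{\omega\cup\gamma}{\longleftrightarrow}x].
\end{equation}
Applying the stochastic domination of Proposition~\ref{prop: coupling} with $\Lambda=\mathcal{S}(L,N)$ to this increasing event gives
\begin{equation}
\Phi^0_{\mathcal{S}(L,N),\beta}[0\longleftrightarrow x]\geq \Gamma^0_{\mathcal{S}(L,N),\beta',\varepsilon}[0\longleftrightarrow x]=\boldsymbol{\Gamma}^0_{\mathcal{S}(L,N),\beta',\varepsilon}[0\overset{\omega\cup\gamma}{\longleftrightarrow}x].
\end{equation}
Since $\Phi^0_{\mathcal{S}(L,N),\beta}$ is the edge-marginal of $\Psi^0_{\mathcal{S}(L,N),\beta}$, the left-hand side equals $\Psi^0_{\mathcal{S}(L,N),\beta}[0\longleftrightarrow x]$, and taking the infimum over $x\in\mathcal{S}(L,N)$ and over $N$ of the right-hand side — which is strictly positive by the choice of $L$ — yields \eqref{eq: slab percolation without sprinkling}. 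This completes the proof.

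I do not expect any genuine obstacle here: the corollary is a clean concatenation of the two preceding propositions, and the only thing to be careful about is the bookkeeping of quantifiers — making sure the $\varepsilon$ fed into Proposition~\ref{prop: slab sprinkling} is precisely the one handed back by Proposition~\ref{prop: coupling}, and that the auxiliary $\beta'$ is chosen strictly between $\beta_c$ and $\beta$ so that Proposition~\ref{prop: slab sprinkling} is applicable at $\beta'$. The (substantial) work is hidden inside Proposition~\ref{prop: coupling}, whose proof is deferred to Section~\ref{sec: proof of coupling}; relative to that, the present deduction is essentially a two-line monotonicity argument.
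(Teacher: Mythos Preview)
Your proposal is correct and matches the paper's proof essentially line for line: the paper also picks $\beta'\in(\beta_c,\beta)$, applies Proposition~\ref{prop: coupling} to obtain $\varepsilon$, and then invokes Proposition~\ref{prop: slab sprinkling} at $\beta'$ with that $\varepsilon$. Your write-up is in fact more explicit about the bookkeeping (the passage from $\boldsymbol{\Gamma}$ to $\Gamma$ and from $\Phi$ to $\Psi$) than the paper's one-sentence proof.
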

\begin{proof}
This follows from applying Proposition~\ref{prop: slab sprinkling} for some $\beta'\in (\beta_c,\beta)$ and $\varepsilon>0$ sufficiently small so that the stochastic domination of Proposition~\ref{prop: coupling} holds.    
\end{proof}

%\subsection{Local uniqueness without sprinkling}

%In this section, we prove Theorem~\ref{thm: local uniqueness}. We treat dimensions $d=2$ and $d\geq3$ separately in the following subsections.

\subsubsection{Uniqueness without sprinkling}

We conclude the proof of Theorem~\ref{thm: local uniqueness} for $d\geq3$ by showing that the slab percolation statement of Corollary~\ref{cor: slab percolation without sprinkling} implies that the uniqueness (without sprinkling) event $U(L)$ happens with high probability, uniformly in boundary conditions. Our proof is based on a classical onion peeling argument, see e.g.~\cite[Lemma 7.89]{GrimmettPercolation1999}.

\begin{proof}[Proof of Theorem~\textup{\ref{thm: local uniqueness}} in $d\geq 3$]
Let $\beta>\beta_c$. It follows from Propositions~\ref{prop: uniqueness with sprinkling} and \ref{prop: coupling}, and a gluing argument as in the proof of Theorem~\textup{\ref{thm: local uniqueness}} in $d=2$ that 
\begin{equation}
\lim_{L\to\infty}\inf_{(\xi,\mathsf{b})}\Psi^{(\xi,\mathsf{b})}_{\Lambda_{10L},\beta}[\Lambda_L \longleftrightarrow \Lambda_{8L}]= 1.
\end{equation}
It remains to show that with high probability, there is at most one cluster crossing $\Lambda_{4L}\setminus \Lambda_{2L}$. Consider an $L'\geq 1$ large enough so that Corollary \ref{cor: slab percolation without sprinkling} holds, and let $n=2L'+1$. 
First, we partition $\Lambda_{4L}\setminus \Lambda_{2L}$ into disjoint annuli of the form $A(k,n)\coloneqq\Lambda_{(k+1)n-1}\setminus\Lambda_{kn-1}$ for $\lfloor\frac{2L}{n}\rfloor\leq k\leq \lceil\frac{4L-n}{n}\rceil$. By the domain Markov property, the FKG inequality and the monotonicity in boundary conditions, for any $x,y\in \partial \Lambda_{2L}$ and  $\lfloor\frac{2L}{n}\rfloor\leq k\leq \lceil\frac{4L-n}{n}\rceil$ we have
\begin{align*}
\Psi^{(\xi,\mathsf{b})}_{\Lambda_{10L},\beta}&[x\longleftrightarrow y \text{ in } \Lambda_{(k+1)n-1} \mid x\longleftrightarrow \partial \Lambda_{kn}, y \longleftrightarrow \partial \Lambda_{kn},x\centernot\longleftrightarrow y \text{ in } \Lambda_{kn-1}]
\\ &\geq \inf_{u',v'\in \partial \Lambda_{kn-1}}\inf_{(\xi',\mathsf{b}')}\Psi^{(\xi',\mathsf{b}')}_{A(k,n),\beta}[u'\longleftrightarrow v' \text{ in } \Lambda_{(k+1)n-1} \mid u'\longleftrightarrow \partial \Lambda_{kn}, v' \longleftrightarrow \partial \Lambda_{kn}]
\\ &\geq 
\inf_{u,v\in A(k,n)} \Psi^{0}_{A(k,n),\beta}[u\longleftrightarrow v],  
\end{align*}
where in the connectivity event of the first inequality we took into account the fact that $\Psi^{(\xi',\mathsf{b}')}_{A(k,n),\beta}$ is a measure on $\overline{E}(A(k,n))\times (\mathbb R^+)^{A(k,n)}$ (see Section \ref{sec:random_cluster}).
By Corollary~\ref{cor: slab percolation without sprinkling} and the FKG inequality, there exists a constant $\delta>0$ such that $\Psi^{0}_{A(k,n),\beta}[u\longleftrightarrow v]\geq \delta$ for any $u,v\in A(k,n)$ and any $k$ as above. Iterating this bound we see that
\begin{equation}
\Psi^{(\xi,\mathsf{b})}_{\Lambda_{10L},\beta}[x\centernot\longleftrightarrow y \mid x\longleftrightarrow \partial \Lambda_{4L}, y \longleftrightarrow \partial \Lambda_{4L}]\leq (1-\delta)^{2L/n}.  
\end{equation}
The desired result follows from the union bound over all $x,y\in \partial \Lambda_{2L}$. 
\end{proof}

\subsection{Coupling with a sprinkled measure}\label{sec: proof of coupling}

In this section we prove Proposition~\ref{prop: coupling}. As a first step towards the proof, we show that $\Phi^0_{\Lambda,\beta}$ stochastically dominates the superposition of $\Phi^0_{\Lambda,\beta'}$ and a Bernoulli bond percolation with inhomogeneous \emph{random} edge parameter.
Heuristically, conditioning $\Psi^0_{\Lambda,\beta}$ on $\mathsf a$ gives the usual random cluster measure, but with inhomogeneous edge-weights $p(\mathsf a,\beta)$ defined in \eqref{def: random weight rc}.
For these measures, an analogous stochastic domination holds, allowing for the (inhomogeneous) parameter of the independent sprinkling to depend on $\mathsf a$ by direct calculation. A disintegration argument will then yield the result. Let us now define the latter measure. For every $\mathsf{a}\in (\mathbb{R}^+)^{\Lambda}$ and $xy\in E(\Lambda)$, write
\begin{equation}
r(\beta,\mathsf{a})_{xy}=\frac{1-e^{-2\beta\mathsf{a}_x\mathsf{a}_y}}{1+e^{-2\beta\mathsf{a}_x\mathsf{a}_y}}.
\end{equation}
Denote $\Psi^0_{\Lambda,\beta',\beta}$ the measure on $\{0,1\}^{E(\Lambda)} \times (\R^+)^{\Lambda}$
defined by
\begin{equation}
\mathrm{d}\Psi_{\Lambda,\beta',\beta}^0[(\omega,\mathsf{a})]=\gamma^0_{\Lambda,\beta',\beta,\mathsf{a}}[\omega] \mathrm{d}\mu_{\Lambda,\beta'}^0(\mathsf{a}),
\end{equation}
where $\gamma^0_{\Lambda,\beta',\beta,\mathsf{a}}$ is the superposition of $\boldsymbol{\phi}^0_{\Lambda,\beta', \mathsf a}$ and an independent Bernoulli bond percolation on $E(\Lambda)$ with law $\mathbb{P}_{\Lambda,\beta',\beta,\mathsf{a}}^{\rm Ber}$ defined by
\begin{equation}
\mathbb{P}_{\Lambda,\beta',\beta,\mathsf{a}}^{\rm Ber}[\omega]=\prod_{xy \in E(\Lambda)} r(\beta-\beta',\mathsf{a})_{xy}^{\omega_{xy}} (1-r(\beta-\beta',\mathsf{a}))_{xy}^{1-\omega_{xy}}.     
\end{equation}
Finally, write $\Gamma^0_{\Lambda,\beta',\beta}$ for the projection of $\Psi^0_{\Lambda,\beta',\beta}$ on to $\{0,1\}^{E(\Lambda)}$.

In order to state the intermediate stochastic domination result, we will need the following notation. Given $\Lambda\subset \mathbb{Z}^d$, let $G=(\Lambda,E^{1,2})$ be the graph with vertex set $\Lambda$, where for each edge $xy\in E(\Lambda)$, $G$ contains two parallel edges $xy(1)$ and $xy(2)$. We write $E^i=\{xy(i) : xy\in E(\Lambda)\}$. 

\begin{lemma}\label{lem: dom random intensity}
Let $0<\beta'<\beta$. For every $\Lambda\subset \mathbb{Z}^d$ finite, $\Phi^0_{\Lambda,\beta} \succcurlyeq \Gamma^0_{\Lambda,\beta',\beta}$.
\end{lemma}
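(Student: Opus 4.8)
The key observation is that conditionally on $\mathsf{a}$, both sides are (superpositions of) ordinary random cluster / Bernoulli measures, and the claimed domination is exactly the statement that a single FK-Ising random cluster measure at coupling $2\beta\mathsf{a}_x\mathsf{a}_y$ dominates the superposition of a FK-Ising random cluster measure at coupling $2\beta'\mathsf{a}_x\mathsf{a}_y$ with independent Bernoulli percolation at the ``missing'' parameter $r(\beta-\beta',\mathsf{a})_{xy}$. Once this quenched domination is established for every fixed $\mathsf{a}$, the lemma follows by disintegrating over $\mathsf{a}$: writing $\mathrm{d}\Psi^0_{\Lambda,\beta}[(\omega,\mathsf{a})]=\boldsymbol{\phi}^0_{\Lambda,\beta,\mathsf{a}}[\omega]\,\mathrm{d}\mu^0_{\Lambda,\beta}(\mathsf{a})$ and $\mathrm{d}\Psi^0_{\Lambda,\beta',\beta}[(\omega,\mathsf{a})]=\gamma^0_{\Lambda,\beta',\beta,\mathsf{a}}[\omega]\,\mathrm{d}\mu^0_{\Lambda,\beta'}(\mathsf{a})$, one first uses $\mu^0_{\Lambda,\beta}\succcurlyeq\mu^0_{\Lambda,\beta'}$ (a special case of Proposition~\ref{prop:monotonicity_FK}, applied to the absolute-value marginals with $\beta_1=\beta'\le\beta_2=\beta$ and all other data equal) together with the quenched domination and the fact that $\mathsf{a}\mapsto\boldsymbol{\phi}^0_{\Lambda,\beta,\mathsf{a}}$ is stochastically increasing (\cite[Theorem~3.21]{Grimmett2006RCM}); combining these via the standard ``monotone coupling composed with monotone coupling'' argument (Strassen) gives $\Phi^0_{\Lambda,\beta}\succcurlyeq\Gamma^0_{\Lambda,\beta',\beta}$.

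\textbf{The quenched step.} Fix $\mathsf{a}$ and abbreviate $q_{xy}=1-e^{-2\beta\mathsf{a}_x\mathsf{a}_y}=p(\beta,\mathsf{a})_{xy}$, $q'_{xy}=p(\beta',\mathsf{a})_{xy}$, and $r_{xy}=r(\beta-\beta',\mathsf{a})_{xy}$. The claim is that the FK-Ising measure $\boldsymbol{\phi}^0_{\Lambda,\beta,\mathsf{a}}$ on $\{0,1\}^{E(\Lambda)}$ (I restrict attention to genuine edges; the ghost edges are absent here since $\mathsf{h}=0$) dominates the law of $\omega\cup\gamma$ where $\omega\sim\boldsymbol{\phi}^0_{\Lambda,\beta',\mathsf{a}}$ and $\gamma\sim\bigotimes_{xy}\mathrm{Ber}(r_{xy})$ independently. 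The natural route is the doubled-graph construction sketched just before the lemma in the definition of $G=(\Lambda,E^{1,2})$: consider on $\{0,1\}^{E^{1,2}}$ the random cluster-type measure in which the two parallel copies $xy(1),xy(2)$ of each edge $xy$ carry weights chosen so that (i) the law of the ``union'' configuration $xy\mapsto\max(\omega_{xy(1)},\omega_{xy(2)})$ is exactly $\boldsymbol{\phi}^0_{\Lambda,\beta,\mathsf{a}}$, while (ii) the marginal on $E^1$ dominates $\boldsymbol{\phi}^0_{\Lambda,\beta',\mathsf{a}}$ and, conditionally on $E^1$, the marginal on $E^2$ dominates the product $\mathrm{Ber}(r_{xy})$. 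Identity (i) is the standard fact that two parallel edges of parameters $s_1,s_2$ in a random cluster model are equivalent to one edge of parameter $1-(1-s_1)(1-s_2)$; one checks $1-(1-q'_{xy})(1-r_{xy})=q_{xy}$ by the algebraic identity $e^{-2\beta t}=e^{-2\beta't}\cdot\frac{1-r(\beta-\beta',\cdot)}{\,}$-type computation, i.e.\ $\tfrac{1-e^{-2(\beta-\beta')a}}{1+e^{-2(\beta-\beta')a}}$ is precisely the value making $(1-q')(1-r)=1-q$ after using $1-q=e^{-2\beta\mathsf{a}_x\mathsf{a}_y}$; this is a short direct check. For (ii) one invokes the comparison inequalities for random cluster measures on the doubled graph (Holley/FKG together with the fact that $q=2$ and the doubled weights are edge-independent), in the form that conditioning/marginalising a random cluster measure onto a sub-(multi)graph yields a measure dominating the random cluster measure on that subgraph with the same edge weights, and that conditioned on $E^1$ the $E^2$-configuration dominates the associated product measure because deleting edges only increases the number of clusters, boosting the cluster-weight factor $2^{k(\cdot)}$ in the favourable direction.

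\textbf{Main obstacle.} The delicate point is item (ii): on the doubled graph, the conditional law of the $E^2$-copies given the $E^1$-copies is \emph{not} literally a product measure (the $2^{k}$ factor couples them through the component structure), so one must argue the domination by $\mathrm{Ber}(r_{xy})$ rather than compute it. I expect to handle this by a careful application of Holley's criterion to the $E^2$-marginal conditioned on $E^1$, checking that the conditional edge-probabilities are pointwise at least $r_{xy}$ uniformly over the conditioning — here the monotonicity $k(\omega\cup\{e\})\le k(\omega)$ gives $\phi[\omega_e=1\mid\text{rest}]\ge \tfrac{p_e}{p_e+2(1-p_e)}$ which is exactly the ``$q=2$'' one-edge conditional lower bound, and one must verify this equals or exceeds $r_{xy}$ for the doubled weight assigned to $E^2$. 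Getting the bookkeeping of the three parameters $q,q',r$ to line up so that this lower bound is exactly $r_{xy}$ (and hence the conditional $E^2$-law dominates the desired product) is the crux; the rest is routine. Finally, since the statement is quantitative only in requiring \emph{some} $\varepsilon>0$ uniform in $\Lambda$, and since $r(\beta-\beta',\mathsf{a})_{xy}$ can be arbitrarily small when $\mathsf{a}_x\mathsf{a}_y$ is small, the passage from Lemma~\ref{lem: dom random intensity} to Proposition~\ref{prop: coupling} (the genuinely new difficulty flagged in the introduction, since $|\varphi|$ can be tiny) will require an additional argument — presumably already carried out in the main text after this lemma — showing that with overwhelming probability enough edges have $\mathsf{a}_x\mathsf{a}_y$ bounded below, so that the random-intensity sprinkling dominates a genuine homogeneous $\mathrm{Ber}(\varepsilon)$; this lemma itself only needs the quenched/disintegration argument above.
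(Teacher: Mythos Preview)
Your overall strategy matches the paper's: introduce the doubled graph $G=(\Lambda,E^{1,2})$, define a random cluster measure on $G$ whose projection $\omega^{\max}_{xy}=\max(\omega_{xy(1)},\omega_{xy(2)})$ recovers $\boldsymbol{\phi}^0_{\Lambda,\beta,\mathsf{a}}$, show it dominates $\boldsymbol{\phi}^0_{\Lambda,\beta',\mathsf{a}}\otimes\mathbb{P}^{\rm Ber}_{\Lambda,\beta',\beta,\mathsf{a}}$ quenched in $\mathsf{a}$, and then use $\mu^0_{\Lambda,\beta}\succcurlyeq\mu^0_{\Lambda,\beta'}$ together with monotonicity of $\mathsf{a}\mapsto\boldsymbol{\phi}^0_{\Lambda,\beta,\mathsf{a}}$ to finish.

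However, your parameter bookkeeping is wrong, and this is exactly the crux you flagged. The identity $1-(1-q'_{xy})(1-r_{xy})=q_{xy}$ is \emph{false}: writing $s=e^{-2(\beta-\beta')\mathsf{a}_x\mathsf{a}_y}$ one has $(1-q')(1-r)=e^{-2\beta'\mathsf{a}_x\mathsf{a}_y}\cdot\tfrac{2s}{1+s}\neq e^{-2\beta\mathsf{a}_x\mathsf{a}_y}$. The correct weight on the $E^2$ copies is $p(\beta-\beta',\mathsf{a})_{xy}$ (not $r$), since then $(1-q')(1-p(\beta-\beta'))=e^{-2\beta'\mathsf{a}_x\mathsf{a}_y}e^{-2(\beta-\beta')\mathsf{a}_x\mathsf{a}_y}=1-q$. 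With this choice your one-edge conditional lower bound $\tfrac{p_e}{p_e+2(1-p_e)}$ with $p_e=p(\beta-\beta',\mathsf{a})_{xy}=1-s$ evaluates to $\tfrac{1-s}{1+s}=r(\beta-\beta',\mathsf{a})_{xy}$ exactly --- so the Bernoulli comparison does line up, but only once you put the right parameter on $E^2$.

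The paper also streamlines your two-step decomposition (marginal on $E^1$ dominates FK at $\beta'$; conditional law on $E^2$ dominates Bernoulli) into a single Holley comparison on the full doubled graph: one checks directly that for \emph{every} edge $e\in E^{1,2}$ and every pair $\theta\geq\theta'$ on the remaining edges, $\boldsymbol{\phi}^0_{G,\beta',\beta,\mathsf{a}}[\omega_e=1\mid\theta]\geq(\boldsymbol{\phi}^0_{\Lambda,\beta',\mathsf{a}}\otimes\mathbb{P}^{\rm Ber})[\omega_e=1\mid\theta']$. For $e\in E^1$ both conditional probabilities are $p(\beta')$ or $r(\beta')$ according to whether the endpoints are already connected, and the doubled measure has at least as many connections; for $e\in E^2$ the left side is $p(\beta-\beta')$ or $r(\beta-\beta')$ while the right side is always $r(\beta-\beta')$. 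This avoids having to justify separately that marginal domination on $E^1$ and conditional domination on $E^2$ can be stitched together into a joint monotone coupling.
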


\begin{proof}
Let $\mathsf{a}\sim \mu^0_{\Lambda,\beta'}$. We write
$\boldsymbol{\phi}^0_{G,\beta',\beta,\mathsf{a}}$ for the random cluster measure on $G$ defined by
\begin{equation}
\boldsymbol{\phi}^0_{G,\beta',\beta,\mathsf{a}}[\omega]
=
\frac{1}{Z^0_{\Lambda,\beta',\beta, \mathsf{a}}}\prod_{xy \in E(\Lambda)} \Big(\frac{p(\beta',\mathsf{a})_{xy}}{1-p(\beta',\mathsf{a})_{xy}}\Big)^{\omega_{xy(1)}}  \Big(\frac{p(\beta-\beta',\mathsf{a})_{xy}}{1-p(\beta-\beta',\mathsf{a})_{xy}}\Big)^{\omega_{xy(2)}}  \, 2^{k^0(\omega)}.
\end{equation}
Consider also the product measure $\boldsymbol{\phi}^0_{\Lambda,\beta',\mathsf{a}}\otimes \mathbb{P}^{\rm Ber}_{\Lambda,\beta',\beta,\mathsf{a}}$, which we view as a measure on $\{0,1\}^{E^{1,2}}$ in the natural way. 

We first show that $\boldsymbol{\phi}^0_{G,\beta',\beta,\mathsf{a}}$ stochastically dominates $\boldsymbol{\phi}_{\Lambda,\beta',\mathsf{a}}\otimes \mathbb{P}^{\rm Ber}_{\Lambda,\beta',\beta,\mathsf{a}}$. Consider configurations $\theta,\theta'\in \{0,1\}^{E^{1,2}}$ such that $\theta\geq \theta'$. A direct calculation gives
\begin{equation}
\boldsymbol\phi^{0}_{G,\beta',\beta,\mathsf{a}}[\omega_{xy(1)}=1 \mid \omega|_{E^{1,2}\setminus\{xy(1)\}}=\theta|_{E^{1,2}\setminus\{xy(1)\}}]=\begin{cases} p(\beta',\mathsf{a})_{xy} &\text{ if $x\overset{\theta|_{E^{1,2}\setminus\{xy(1)\}}}{\longleftrightarrow} y$,}\\
\displaystyle r(\beta',\mathsf{a})_{xy}&\text{ otherwise,}\end{cases}
\end{equation}
and 
\begin{equation}
\boldsymbol\phi^{0}_{G,\beta',\beta,\mathsf{a}}[\omega_{xy(2)}=1 \mid \omega|_{E^{1,2}\setminus\{xy(2)\}}=\theta|_{E^{1,2}\setminus\{xy(2)\}}]=\begin{cases} p(\beta-\beta',\mathsf{a})_{xy} &\text{ if $x\overset{\theta|_{E^{1,2}\setminus\{xy(2)\}}}{\longleftrightarrow} y$,}\\
\displaystyle r(\beta-\beta',\mathsf{a})_{xy}&\text{ otherwise.}\end{cases}
\end{equation}
A similar calculation combined with the fact that $p(t,\mathsf{a})\geq r(t,\mathsf{a})$ for $t\geq 0$ shows that $\boldsymbol\phi^{0}_{G,\beta',\beta,\mathsf{a}}[\omega_{xy(1)}=1 \mid \omega|_{E^{1,2}\setminus\{xy(1)\}}=\theta|_{E^{1,2}\setminus\{xy(1)\}}]$ is at least $\boldsymbol\phi^{0}_{\Lambda,\beta',\mathsf{a}}\otimes \mathbb{P}^{\rm Ber}_{\Lambda,\beta',\beta.\mathsf{a}}[\omega_{xy(1)}=1 \mid \omega|_{E^{1,2}\setminus\{xy(1)\}}=\theta'|_{E^{1,2}\setminus\{xy(1)\}}]$. A standard Markov chain argument then implies that $\boldsymbol{\phi}^0_{G,\beta',\beta,\mathsf{a}}$ stochastically dominates $\boldsymbol{\phi}_{\Lambda,\beta',\mathsf{a}}\otimes \mathbb{P}_{\Lambda,\beta',\beta,\mathsf{a}}$. See e.g.\ \cite[Lemma 1.5]{DuminilLecturesOnIsingandPottsModels2019}.

Let $\omega^{\max} \in \{0,1\}^{E(\Lambda)}$ be the configuration defined by
$\omega^{\max}_{xy}=\max\{\omega_{xy(1)},\omega_{xy(2)}\}$. Notice that if $\omega\sim \boldsymbol{\phi}^0_{G,\beta',\beta,\mathsf{a}}$, then $\omega^{\max}$ is distributed according to $\boldsymbol{\phi}^0_{\Lambda,\beta,\mathsf{a}}$.
The desired result now follows from the fact that $\mu_{\Lambda,\beta}^0$ stochastically dominates $\mu_{\Lambda,\beta'}^0$ and the monotonicity of the random cluster and Bernoulli percolation measures in the edge parameter.
\end{proof}

Note that the parameter $r(\beta-\beta',\mathsf{a})_{xy}$ is close to $0$ when $\mathsf{a}_x\mathsf{a}_y$ is close to $0$. Thanks to Lemma \ref{lem: dom random intensity}, in order to prove Proposition~\ref{prop: coupling} it will suffice to show that $\mathsf{a}_x\mathsf{a}_y$ remains bounded away from $0$ with good $\mu^0_{\Lambda,\beta'}$-probability when we condition on the state of all edges. We will begin by showing that, conditionally on the state of all edges, $\mathsf{a}_x$ remains bounded with good probability for every vertex $x$. For technical reasons that will become clear later, we will work with a slightly more general family of measures, where we ignore the contribution coming from certain closed edges.

As above, let us fix $0<\beta'<\beta$ and $\Lambda \subset \mathbb Z^d$ finite. Let $A,B \subset E(\Lambda)$, and  $\omega^1,\omega^2 \in \{0,1\}^{E(\Lambda)}$. Denote by $\boldsymbol \mu=\boldsymbol \mu(\Lambda,\beta',\beta,A,B,\omega^1,\omega^2)$ the probability measure on $(\mathbb R^+)^\Lambda$ defined by the Radon--Nikodym derivative
\begin{equation}
d\boldsymbol\mu(\mathsf a) = \frac{1}{\boldsymbol Z}\boldsymbol \phi^0_{\Lambda,\beta',\mathsf a}[\omega^1] \mathbb P^{\rm Ber}_{\Lambda,\beta',\beta,\mathsf a}[\omega^2|_{E(\Lambda)\setminus B}] \prod_{xy \in A}e^{\beta' \mathsf a_x \mathsf a_y} \mathrm{d}\mu^0_{\Lambda,\beta'}(\mathsf a),
\end{equation}
where $\boldsymbol Z$ is a normalisation constant and we recall from \eqref{eq: expression dmu section 6} that
\begin{equation}
\mathrm{d}\mu^0_{\Lambda,\beta'}(\mathsf{a})=\frac{\mathbf{Z}^{0}_{\Lambda,\beta',\mathsf{a}} }{Z^0_{\Lambda,\beta'}} \prod_{xy\in E(\Lambda)} e^{-\beta' \mathsf{a}_x \mathsf{a}_y} \prod_{x\in \Lambda} \mathrm{d}\rho_{g,a}(\mathsf{a}_x).
\end{equation}
Note that when $A=B=\emptyset$, then $\boldsymbol \mu$ is the projection of the conditional measure $\Psi^0_{G,\beta',\beta}[\: \cdot \mid \omega|_{E^1}=\omega^1, \omega|_{E^2}=\omega^2]$ to $(\mathbb{R}^+)^{\Lambda}$, where $G=(\Lambda, E^{1,2})$ is as above and 
by abuse of notation we write $\Psi^0_{\Lambda,\beta',\beta}$ to denote its natural lift to the multigraph $G$. 

\begin{lemma}\label{lem: cond regularity}
Let $0<\beta'<\beta$. There exists a constant $C=C(\beta',\beta,d)>1$ such that for every $\Lambda\subset \mathbb{Z}^d$ finite, every $A,B\subset E(\Lambda)$, every pair of configurations $\omega_1, \omega_2 \in \{0,1\}^{E(\Lambda)}$, and every $x\in \Lambda$ we have
\begin{equation}
\boldsymbol \mu [ \mathsf a_x \geq s] \leq C \int_s^\infty e^{-\frac g2 t^4 - at^2} \mathrm{d}t, \qquad \forall s \geq C. 
\end{equation}
\end{lemma}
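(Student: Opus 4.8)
\textbf{Proof strategy for Lemma~\ref{lem: cond regularity}.}

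The plan is to reduce the statement to a one-site estimate where the contribution of all edges and all percolation configurations can be bounded uniformly. First I would write out the Radon--Nikodym density of $\boldsymbol\mu$ with respect to the product single-site measure $\prod_{x\in\Lambda}\mathrm{d}\rho_{g,a}(\mathsf a_x)$. Collecting the factors, the density is proportional to
\begin{equation}
\mathbf Z^0_{\Lambda,\beta',\mathsf a}\cdot \boldsymbol\phi^0_{\Lambda,\beta',\mathsf a}[\omega^1]\cdot \mathbb P^{\rm Ber}_{\Lambda,\beta',\beta,\mathsf a}[\omega^2|_{E(\Lambda)\setminus B}]\cdot\prod_{xy\in A}e^{\beta'\mathsf a_x\mathsf a_y}\cdot\prod_{xy\in E(\Lambda)}e^{-\beta'\mathsf a_x\mathsf a_y}.
\end{equation}
The key point is that, using the relation \eqref{eq: partition function equality} between $\mathbf Z^0_{\Lambda,\beta',\mathsf a}$ and $Z^{\rm Ising,0}_{\Lambda,\beta',\mathsf a}$ (which absorbs the product $\prod_{xy}\sqrt{1-p(\beta',\mathsf a)_{xy}}=\prod_{xy}e^{-\beta'\mathsf a_x\mathsf a_y}$), and expanding $\boldsymbol\phi^0_{\Lambda,\beta',\mathsf a}[\omega^1]$ via its explicit density \eqref{def: random weight rc}, every factor $e^{2\beta'\mathsf a_x\mathsf a_y}$ that could make the density grow in $\mathsf a$ is paired with an Ising partition function $Z^{\rm Ising,0}_{\Lambda,\beta',\mathsf a}\leq e^{\beta'\sum_{xy}\mathsf a_x\mathsf a_y}$ in the \emph{denominator} — that is, in the normalisation $\boldsymbol Z$. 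Concretely I would show that, pointwise in $\mathsf a$ and uniformly in $A,B,\omega^1,\omega^2$, the un-normalised density at $\mathsf a$ is bounded above by $C^{|E(\Lambda)|}\exp\big(\beta\sum_{xy\in E(\Lambda)}\mathsf a_x\mathsf a_y\big)$ times $\prod_x\mathrm{d}\rho_{g,a}(\mathsf a_x)$, while the normalisation $\boldsymbol Z$ is bounded \emph{below} by a comparable quantity — here one uses that $Z^{\rm Ising,0}_{\Lambda,\beta',\mathsf a}\geq \text{(const)}^{|E(\Lambda)|}e^{\beta'\sum_{xy}\mathsf a_x\mathsf a_y}$ from the all-plus configuration, together with the trivial bound that restricting $\mathsf a$ to a fixed compact set of positive $\rho$-measure gives a lower bound on $\boldsymbol Z$. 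The per-edge constants cancel and one is left comparing $\boldsymbol\mu$ to a measure whose density against $\prod_x\mathrm{d}\rho_{g,a}$ is $\exp\big((\beta-\beta')\sum_{xy}\mathsf a_x\mathsf a_y\big)$ up to a bounded multiplicative factor; since $\beta$ and $\beta'$ are fixed this is precisely a $\varphi^4$-type Gibbs weight at inverse temperature $\beta-\beta'$, for which the single-site tail bound follows from \cite[Corollary~D.12]{GunaratnamPanagiotisPanisSeveroPhi42022} exactly as in the proof of Proposition~\ref{prop:regularity}, after completing the fourth-order square $g t^4\mapsto \tfrac g2 t^4$ to absorb the quadratic cross-terms $\mathsf a_x\mathsf a_y\leq \tfrac12(\mathsf a_x^2+\mathsf a_y^2)$.

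In more detail, the steps in order would be: (i) rewrite the density of $\boldsymbol\mu$ as above and identify that the dangerous exponential factors are all of the form $e^{c\,\mathsf a_x\mathsf a_y}$ with $0\le c\le \beta$, with no factor growing faster than $e^{\beta\mathsf a_x\mathsf a_y}$ per edge — here one checks that $\mathbb P^{\rm Ber}_{\Lambda,\beta',\beta,\mathsf a}[\omega^2|_{E(\Lambda)\setminus B}]\le 1$ and that each factor $\big(p(\beta',\mathsf a)_{xy}/(1-p(\beta',\mathsf a)_{xy})\big)^{\omega^1_{xy}}=(e^{2\beta'\mathsf a_x\mathsf a_y}-1)^{\omega^1_{xy}}\le e^{2\beta'\mathsf a_x\mathsf a_y}$, so $\boldsymbol\phi^0_{\Lambda,\beta',\mathsf a}[\omega^1]\cdot Z^0_{\Lambda,\beta'}\le \mathbf Z^0_{\Lambda,\beta',\mathsf a}\cdot e^{2\beta'\sum_{xy}\mathsf a_x\mathsf a_y}\cdot(\text{const})$; (ii) use \eqref{eq: partition function equality} and the all-plus lower bound on $Z^{\rm Ising,0}$ to get $\mathbf Z^0_{\Lambda,\beta',\mathsf a}\le C^{|E(\Lambda)|}e^{\beta'\sum_{xy}\mathsf a_x\mathsf a_y}$; (iii) combine to get the claimed upper bound on the un-normalised density; (iv) bound $\boldsymbol Z$ from below by the same per-edge constants times the $\rho$-mass of a fixed box; (v) invoke \cite[Corollary~D.12]{GunaratnamPanagiotisPanisSeveroPhi42022} (or directly the argument of Proposition~\ref{prop:regularity}) for the resulting effective $\varphi^4$ weight at inverse temperature $\beta-\beta'$ to extract the tail bound $\int_s^\infty e^{-\frac g2 t^4-at^2}\mathrm{d}t$; the constant $C$ produced this way depends only on $\beta,\beta',d$ (through the coordination number $2d$) and on $g,a$, which are fixed.

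The main obstacle I anticipate is bookkeeping the per-edge constants so that the upper and lower bounds genuinely cancel, uniformly in $\Lambda$ — in particular making sure that the ``bad'' exponential growth in $\mathsf a$ is at inverse temperature \emph{strictly less than} $\beta$ after all cancellations (this is where $\beta'>0$ is essential: the factor $e^{-\beta'\sum\mathsf a_x\mathsf a_y}$ in $\mathrm{d}\mu^0_{\Lambda,\beta'}$, combined with the Ising partition function bounds, is what shaves the effective inverse temperature down from $\beta$ to $\beta-\beta'$). A secondary point of care is that the bound must hold for \emph{arbitrary} $A,B,\omega^1,\omega^2$; this is handled by noting monotonically that inserting the extra factors $\prod_{xy\in A}e^{\beta'\mathsf a_x\mathsf a_y}$ only contributes to the ``bad'' side and is already accounted for in step (i)'s budget of $e^{\beta\mathsf a_x\mathsf a_y}$ per edge, while dropping edges of $B$ from the Bernoulli factor only makes that factor larger but still bounded by $1$. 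Finally, the conversion from a statement about the density on all of $(\mathbb R^+)^\Lambda$ to the one-site tail $\boldsymbol\mu[\mathsf a_x\ge s]$ is done exactly as in Proposition~\ref{prop:regularity}: integrate out all other coordinates against the dominating product measure, which costs only a further bounded factor.
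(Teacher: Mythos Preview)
Your strategy has a genuine gap at precisely the point you flagged as the main obstacle: the per-edge constants in your upper bound on the un-normalised density and in your lower bound on $\boldsymbol Z$ are \emph{different}, and no amount of bookkeeping makes them cancel. Restricting $\mathsf a$ to $[1,2]^\Lambda$ gives $\boldsymbol Z\geq c_0^{|E(\Lambda)|}\rho_{g,a}([1,2])^{|\Lambda|}$ for some $c_0=c_0(\beta',\beta)>0$, while your pointwise upper bound on the density is at best $\prod_{uv}e^{\beta'\mathsf a_u\mathsf a_v}$. After integrating out the coordinates $\mathsf a_y$, $y\neq x$, against the dominating product measure (the ``further bounded factor'' of your last sentence), you pick up a \emph{different} per-vertex constant $C_1$, and the final bound carries a factor $(C_1/c_0^{d})^{|\Lambda|}$, not uniform in $\Lambda$. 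Invoking Proposition~\ref{prop:regularity} or \cite[Corollary~D.12]{GunaratnamPanagiotisPanisSeveroPhi42022} does not rescue this, because $\boldsymbol\mu$ is not of $\varphi^4$ Gibbs form: the edge factors $(e^{2\beta'\mathsf a_u\mathsf a_v}-1)^{\omega^1_{uv}}$ and $r(\beta-\beta',\mathsf a)_{uv}^{\omega^2_{uv}}(1-r(\beta-\beta',\mathsf a)_{uv})^{1-\omega^2_{uv}}$ cannot be written as $e^{c\,\mathsf a_u\mathsf a_v}$ up to a constant uniformly over all $\mathsf a$, so there is no uniform-in-$\Lambda$ multiplicative comparison of $\boldsymbol\mu$ to a Gibbs density.

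The paper's proof replaces your global bound by a \emph{local comparison}: for each vertex $u$, freeze all other coordinates and compare the density at a large value of $\mathsf a_u$ to its value at some $t\in[1,2]$. Only the at most $2d$ edge factors $a_{uv},b_{uv}$ incident to $u$ change, so the ratio is controlled by a function of $\mathsf a_u$ and its neighbours alone, and no volume factor ever appears. A further subtlety your outline does not address is that those neighbours $\mathsf a_v$ are themselves unbounded: the paper works on the event $\{\mathsf a_u\geq C\max_{v\sim u}\mathsf a_v^{1/3}\}$, on which $\beta'\mathsf a_u\mathsf a_v\leq \tfrac{g}{8d}\mathsf a_u^4$ is absorbed by the quartic single-site potential; a short combinatorial argument then shows that if $\mathsf a_x>s$, \emph{some} vertex $u$ must satisfy both $\mathsf a_u\geq s^{d(x,u)+1}$ and this neighbour condition, and a union bound over $u$ (using the super-exponential tail in $s^{d(x,u)+1}$) concludes.
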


\begin{proof}
Fix a vertex $x\in \Lambda$ and let $C>1$ be a sufficiently large constant to be determined. For $s\geq C^3$, let $E_s=E_s(x)$ be the event that $\mathsf{a}_y\leq \max\{s^{d(x,y)+1}, C\max_{z\sim y}\{\mathsf{a}_z^{1/3}\}\}$ for every $y \in \Lambda$.
We claim that on the event $E_s$, $\mathsf{a}_x\leq s$. Indeed, consider a configuration $\mathsf a\in E_s$, and let $y\in \Lambda$ be such that $\mathsf a_y/s^{d(x,y)+1}$ is maximal. Then by definition of the event $E_s$, 
\begin{equation}
\begin{aligned}
M:=\frac{\mathsf{a}_y}{s^{d(x,y)+1}}&\leq \max\left\{1, \frac{C}{s^{2(d(x,y)+1)/3}}\max_{z\sim y}\left\{\left(\frac{\mathsf{a}_z}{s^{d(x,y)+1}}\right)^{1/3}\right\}\right\}\\ 
&\leq \max\left\{1, \frac{Cs^{1/3}}{s^{2(d(x,y)+1)/3}} \max_{z\sim y}\left\{\left(\frac{\mathsf{a}_z}{s^{d(x,z)+1}}\right)^{1/3}\right\}\right\}\\
&\leq \max\left\{1, \frac{Cs^{1/3}}{s^{2(d(x,y)+1)/3}} M^{1/3}\right\}\\
&\leq \max\{1, M^{1/3}\}.
\end{aligned}
\end{equation}
Now either $M\leq 1$ or $M\leq M^{1/3}$. In both cases, $M\leq 1$. This implies in particular that $\mathsf{a}_x\leq s$, as claimed.

Thus, it suffices to estimate the probability of $E_s$ happening for some constant $C>1$ large enough. We will prove this by comparing the value of the density $\mathrm{d}\boldsymbol{\mu}(\mathsf{a})$ of $\boldsymbol{\mu}$ for different values of $\mathsf{a}$. To this end,
for each edge $uv \in E(\Lambda)$, let
\begin{equation}\label{eq: a definition}
a_{uv}(\mathsf{a}_u,\mathsf{a}_v)=
\begin{cases}
e^{-\beta' \mathsf{a}_u \mathsf{a}_v}\Big(\frac{p(\beta',\mathsf{a})_{uv}}{1-p(\beta',\mathsf{a})_{uv}}\Big)^{\omega^1_{uv}}, & uv\not\in A\\
1, & uv\in A
\end{cases}
\end{equation}
and
\begin{equation}\label{eq: b definition}
b_{uv}(\mathsf{a}_u,\mathsf{a}_v)=
\begin{cases}
r(\beta-\beta',\mathsf{a})^{\omega^2_{uv}}_{uv}\left(1-r(\beta-\beta',\mathsf{a})_{uv}\right)^{1-\omega^2_{uv}}, & uv\not\in B\\
1, & uv\in B,
\end{cases}
\end{equation}
where $r(\beta-\beta',\mathsf{a})$ is as above.  We first compare the values of $a_{uv}(\mathsf{a}_u,\mathsf{a}_v)$ and $b_{uv}(\mathsf{a}_u,\mathsf{a}_v)$ when the first argument $\mathsf{a}_u\in [1,2]$ and when $\mathsf{a}_u$ is large, where in both instances we consider $\mathsf a_v$ fixed. Let $f(t)=\log(1-e^{-t})$ and note that $f(2\beta'\mathsf a_u \mathsf a_v)=\log p(\mathsf a, \beta')_{uv}$.

For $t>0$,
\begin{equation}
f'(t)=\frac{e^{-t}}{1-e^{-t}}=\frac{1}{e^t-1}\leq \frac{1}{t},
\end{equation}
since $e^t\geq 1+t$. By integrating we obtain that for $\mathsf{a}_u\geq 2$ and $t\in [1,2]$
\begin{equation}
f(2\beta' \mathsf{a}_u \mathsf{a}_v)-f(2\beta' t \mathsf{a}_v)\leq \log(2\beta' \mathsf{a}_u \mathsf{a}_v)-\log(2\beta' t \mathsf{a}_v)\leq \log(\mathsf{a}_u).
\end{equation}
Hence $p(\beta',\mathsf a)_{uv} \leq \mathsf a_u(1- e^{-2\beta' t \mathsf a_v})$.
By taking cases according to whether $uv \in A$ or not, and whether $\omega^1_{uv}=1$ or not, we see that in any case have
\begin{equation}\label{eq: a estimate}
a_{uv}(\mathsf{a}_u,\mathsf{a}_v)\leq e^{\beta'\mathsf{a}_u \mathsf{a}_v} \mathsf{a}_u a_{uv}(t,\mathsf{a}_v)   
\end{equation} 
for every $\mathsf{a}_u\geq 2$ and $t\in [1,2]$.

We now turn to $b_{uv}(\mathsf{a}_u,\mathsf{a}_v)$. Let $g(t)=\log(1+e^{-t})$ and note that $\log r(\beta-\beta',\mathsf{a})=f(2(\beta-\beta')\mathsf{a}_u \mathsf{a}_v)-g(2(\beta-\beta')\mathsf{a}_u \mathsf{a}_v)$.
For $t>0$ we have
\begin{equation}
g'(t)=-\frac{e^{-t}}{1+e^{-t}}=-\frac{1}{e^t+1}\geq -\frac{1}{t}.   
\end{equation}
Hence for every $\mathsf{a}_u\geq 2$ and $t\in [1,2]$ we have 
$\exp g(2(\beta-\beta')\mathsf{a}_u \mathsf{a}_v)\geq \mathsf{a}_u^{-1} \exp g(2(\beta-\beta')t \mathsf{a}_v)$, and hence $r(\beta-\beta',\mathsf{a}) \leq \mathsf a_u^2 \frac{1-e^{-2\beta' t \mathsf a_v}}{1+e^{-2\beta' t \mathsf a_v}}.$
Thus, by examining each case separately, we have
\begin{equation}\label{eq: b estimate}
b_{uv}(\mathsf{a}_u,\mathsf{a}_v)\leq \mathsf{a}_u^2 b_{uv}(t,\mathsf{a}_v).    
\end{equation}
Now if $\mathsf{a}_u\geq C\mathsf{a}_v^{1/3}$ for $C=\sqrt[3]{\frac{8\beta' d}{g}}$, then
\begin{equation}\label{eq: beta estimate}
e^{\beta' \mathsf{a}_u \mathsf{a}_v}\leq e^{\frac{g}{8d}\mathsf{a}_u^4}.    
\end{equation}
By increasing the value of $C$ if necessary, we can further ensure that 
\begin{equation}\label{eq: third power estimate}
\mathsf{a}_u^3 e^{-\frac{g}{8d}\mathsf{a}_u^4}\leq \min\{1,e^{-gt^4-at^2} \}    
\end{equation}
for every $\mathsf{a}_u\geq C$ and every $t\in [1,2]$. Since $u$ has degree at most $2d$, combining inequalities \eqref{eq: a estimate}, \eqref{eq: b estimate}, \eqref{eq: beta estimate} and \eqref{eq: third power estimate} we get
\begin{equation}
\begin{aligned}
\prod_{v\sim u} a_{uv}(\mathsf{a}_u,\mathsf{a}_v)b_{uv}(\mathsf{a}_u,\mathsf{a}_v)e^{-g\mathsf{a}_u^4-a\mathsf{a}_u^2}&\leq e^{-\frac{g}{2}\mathsf{a}_u^4-a\mathsf{a}_u^2}\min_{t\in [1,2]}\left\{\prod_{v \sim u} a_{uv}(t,\mathsf{a}_v) b_{uv}(t,\mathsf{a}_v) e^{-gt^4-at^2}\right\} \\
&\leq   e^{-\frac{g}{2}\mathsf{a}_u^4-a\mathsf{a}_u^2} \int_0^\infty \prod_{v \sim u} a_{uv}(t,\mathsf{a}_v)  b_{uv}(t,\mathsf{a}_v) e^{-gt^4-at^2}\mathrm{d} t  
\end{aligned}
\end{equation}
for every $\mathsf{a}_u\geq C\max\{1,\max_{v\sim u}\{\mathsf{a}_v^{1/3}\}\}$.
By integrating and letting $E_{u,s}=\{\mathsf{a}_u\leq \max\{s^{d(x,u)+1},C\max_{v\sim u}\{\mathsf{a}_v^{1/3}\}\}\}$ for $s\geq C$, we obtain that 
\begin{equation}\label{eq: ineq event S}
\boldsymbol\mu[E_{u,s}^c]\leq \int_{s^{d(x,u)+1}}^\infty e^{-\frac{g}{2}t^4-at^2}\mathrm{d}t.
\end{equation}
It is not hard to see that there exists a constant $c>0$ such that for $C>1$ large enough and every $s\geq C$ and $k\geq 2$,
\begin{equation}\label{eq: integral decay}
\begin{aligned}
 \int_{s^k}^\infty e^{-\frac{g}{2}t^4-at^2}\mathrm{d}t &\leq  \int_{s^k}^\infty e^{-\frac{g}{4}t^4-\frac{a}{2}t^2}e^{-\frac{g}{2}t^{4/k}-at^{2/k}}\mathrm{d}t \\ 
 &=\int_{s}^\infty k t^{\frac{k-1}{k}} e^{-\frac{g}{4}t^{4k}-\frac{a}{2}t^{2k}}e^{-\frac{g}{2}t^{4}-at^{2}}\mathrm{d}t \\
 &\leq e^{-c(k-1)}\int_{s}^\infty e^{-\frac{g}{2}t^4-at^2}\mathrm{d}t.
 \end{aligned}
\end{equation}
The desired result now follows (up to a redefinition of $C$ as necessary) from \eqref{eq: ineq event S} and \eqref{eq: integral decay} and a union bound over all vertices $u$ in $\Lambda$.
\end{proof}

We are now ready to prove Proposition~\ref{prop: coupling}.
\begin{proof}[Proof of Proposition~\textup{\ref{prop: coupling}}]
Consider two neighbouring vertices $x,y\in \Lambda$, and let $\omega_1\in \{0,1\}^{E(\Lambda)}$, $\omega_2 \in \{0,1\}^{E(\Lambda)\setminus\{xy\}}$. Our aim is to show that 
\begin{equation}\label{eq: conditional psi}
\Psi^0_{\Lambda,\beta',\beta}[\omega_{xy(2)}=1 \mid \omega|_{E^1}=\omega_1, \omega|_{E^2\setminus xy(2)}=\omega_2]\geq \varepsilon
\end{equation}
for some constant $\varepsilon>0$ that depends only on $\beta'$ and $\beta$. Assuming that \eqref{eq: conditional psi} holds, we can deduce that the projection of $\Psi^0_{\Lambda,\beta',\beta}$ on $E^2$ stochastically dominates a Bernoulli bond percolation on $E(\Lambda)$ which is independent from the projection of $\Psi^0_{\Lambda,\beta',\beta}$ on $E^1$. The desired result then follows from Lemma~\ref{lem: dom random intensity}.

It remains to prove \eqref{eq: conditional psi}.
Let $S_i$ be the set of edges $uv$ with $u\in \{x,y\}$ such that $\omega^i_{uv}=0$, and let $A=S_1$ and $B=S_2\cup\{xy\}$. 
It is not hard to see that for $\boldsymbol{\mu}=\boldsymbol{\mu}(\Lambda,\beta',\beta,A,B,\omega_1,\omega_2)$, the Radon--Nikodym derivative
\begin{equation}\label{eq:density increasing formula}
\frac{\mathrm{d}\boldsymbol{\mu}(\mathsf{a})}{\prod_{x\in \Lambda}\mathrm{d}\rho_{g,a}(\mathsf{a}_x)} =\frac{1}{Z} \prod_{uv\in E(\Lambda)} a_{uv}(\mathsf{a}_u,\mathsf{a}_v)b_{uv}(\mathsf{a}_u,\mathsf{a}_v)=:f(\mathsf{a})  
\end{equation} 
is an increasing function of $\mathsf{a}_x$ and $\mathsf{a}_y$, where $a_{uv}$ and $b_{uv}$ are defined in \eqref{eq: a definition} and \eqref{eq: b definition}, and where $Z=Z(\Lambda,\beta',\beta,A,B,\omega_1,\omega_2)$ is a normalisation constant. Indeed, this follows from the fact that the functions $a_{uv}$ and $b_{uv}$ are increasing for all edges $uv$ such that $u\in \{x,y\}$. Let $C>1$ a large enough constant to be determined. Define
\begin{equation}
F(\mathsf{a}_x,\mathsf{a}_y):=\int_{(\mathbb R)^{\Lambda\setminus \{x,y\}}}\mathbbm{1}_{G_{x,y}}f(\mathsf{a})\prod_{x\in \Lambda\setminus \{x,y\}}\mathrm{d}\rho_{g,a}(\mathsf{a}_x),
\end{equation}
where $G_{x,y}$ is the event that $\mathsf{a}_u\leq C$ for every $u$ which either lies in $\{x,y\}$ or has a neighbour in $\{x,y\}$. Note that since $f(\mathsf{a})$ is increasing in $\mathsf{a}_x$ and $\mathsf{a}_y$, the function $F$ is increasing.

Using the absolute value FKG for the product measure $\rho^{\otimes 2,\leq C}:=\rho_{g,a}[\:\cdot \mid \mathsf{a}_x\leq C]\otimes \rho_{g,a}[\:\cdot \mid \mathsf{a}_y\leq C]$, which is a special case of Proposition~\ref{prop: absolute value FKG} for $\beta=0$, we obtain that
\begin{equation}\label{eq: cond FKG}
\boldsymbol{\mu}[\mathsf{a}_x \geq 1, \mathsf{a}_y\geq 1 \mid G_{x,y}]=\frac{\rho^{\otimes 2,\leq C}[F(\mathsf{a}_x,\mathsf{a}_y)\mathbbm{1}_{\mathsf{a}_x\geq 1,\:\mathsf{a}_y\geq 1}]}{\rho^{\otimes 2,\leq C}[F(\mathsf{a}_x,\mathsf{a}_y)]}\geq \left(\rho_{g,a}[\mathsf{a} \geq 1 \mid \mathsf{a}\leq C]\right)^2,   
\end{equation}
On the event $G_{x,y}$ we have that $1-r(\beta-\beta',\mathsf{a})_{uv}$ and $e^{-\beta' \mathsf{a}_u \mathsf{a}_v}$ are bounded away from $0$ for every edge $uv$ with $u\in \{x,y\}$, while on the event $\{\mathsf{a}_x\geq 1, \mathsf{a}_y\geq 1\}$ we have $r(\beta-\beta',\mathsf{a})_{xy}$ is bounded away from $0$. Writing $S(\omega_1,\omega_2)=\{\omega|_{E^1}=\omega_1, \omega|_{E^2\setminus xy(2)}=\omega_2\}$, we can now deduce that 
\begin{equation}
\begin{aligned}
\frac{\Psi^0_{\Lambda,\beta',\beta}[\omega_{xy(2)}=1 \mid S(\omega_1,\omega_2)]}{\boldsymbol{\mu}[G_{x,y}]}&\geq
\frac{\Psi^0_{\Lambda,\beta',\beta}[\omega_{xy(2)}=1, \mathsf{a}_x \geq 1, \mathsf{a}_y\geq 1, G_{x,y}\mid S(\omega_1,\omega_2)]}{\boldsymbol{\mu}[G_{x,y}]}\\&\geq
\delta \boldsymbol{\mu}[ \mathsf{a}_x \geq 1, \mathsf{a}_y\geq 1 \mid G_{x,y}] \\
&\geq \delta \left(\rho_{g,a}[\mathsf{a} \geq 1 \mid \mathsf{a}\leq C]\right)^2
\end{aligned}
\end{equation}
for some constant $\delta>0$ depending only on $\beta'$ and $\beta$, where we used that once the contribution of the closed edges in $S_1$ and $S_2$ is removed from $\Psi^0_{\Lambda,\beta',\beta}[\:\cdot \mid S(\omega_1,\omega_2)]$, we obtain a measure whose projection on $\mathsf{a}$ coincides with $\boldsymbol{\mu}$. By Lemma~\ref{lem: cond regularity} and a union bound, there exists $C>1$ large enough so that $\boldsymbol{\mu}[G_{x,y}]\geq 1/2$.
The desired inequality \eqref{eq: conditional psi} follows readily. 
\end{proof}

\section{Surface order large deviations}\label{sec:coarse_graining}

In this section, we prove the surface order (lower) large deviations of Theorem~\ref{thm:ldp free}, which will follow rather easily from Lemmas~\ref{lem: cutting a to finite values} and \ref{lem: volume ldp small average} and the key Proposition~\ref{prop: cluster renormalisation} below. The latter is proved by combining Theorem~\ref{thm: local uniqueness} and a Pisztora's coarse graining. 
We also show the volume order (upper) large deviations mentioned in Remark~\ref{rem: vol order}, which follows from Lemma~\ref{lem: volume ldp large average}.

The following simple lemma allows us to disregard large absolute values of the field.

\begin{lemma}\label{lem: cutting a to finite values}
Let $\beta\geq 0$ and $\delta>0$. There exists $M_0=M_0(\delta)$ such that, for every $M\geq M_0$ and every $n$ large enough,
\begin{align}\label{eq:dev_large_a}
\Psi^{0}_{\Lambda_n,\beta}\left[\frac{1}{|\Lambda_n|}\sum_{x\in \Lambda_n} \mathsf{a}_x \mathbbm{1}_{\mathsf{a}_x\geq M}\geq \delta\right]&\leq e^{-n^d}.
\end{align}
\end{lemma}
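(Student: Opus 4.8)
The plan is to reduce the statement to a tail estimate on a single coordinate $\mathsf{a}_x$ under the marginal $\mu^0_{\Lambda_n,\beta}$ (equivalently, under the absolute value field of $\nu_{\Lambda_n,\beta}$), and then apply a Chernoff-type bound using the uniform quartic tails established in Proposition~\ref{prop:regularity}. First I would observe that by Proposition~\ref{prop: phi4rc is annealed rndisingrc}, $\mathsf{a}$ under $\Psi^0_{\Lambda_n,\beta}$ is distributed as $|\varphi|$ under $\nu_{\Lambda_n,\beta}$ (with free boundary conditions, so $\mathsf h\equiv 0$), and hence by Proposition~\ref{prop:regularity} (applied with $h=0$, Remark~\ref{rem:boundary regularity}), each $\mathsf a_x$ is stochastically dominated by $C+|\cdot|$ where $|\cdot|$ has law $\rho_{g/2,0}$; in particular $\mathsf a_x$ has tails bounded by $e^{-(g/2)t^4}$ uniformly in $x\in\Lambda_n$ and in $n$.

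The main step is then a first-moment plus exponential Markov estimate. Writing $X:=\frac{1}{|\Lambda_n|}\sum_{x\in\Lambda_n}\mathsf a_x\mathbbm{1}_{\mathsf a_x\geq M}$, I would use the elementary bound $\mathbbm{1}\{X\geq\delta\}\leq e^{\lambda|\Lambda_n|X - \lambda\delta|\Lambda_n|}$ for a suitable constant $\lambda>0$ (say $\lambda=g/4$, independent of everything), so that
\begin{equation}
\Psi^0_{\Lambda_n,\beta}[X\geq\delta]\leq e^{-\lambda\delta|\Lambda_n|}\,\Psi^0_{\Lambda_n,\beta}\Big[\prod_{x\in\Lambda_n}e^{\lambda\mathsf a_x\mathbbm{1}_{\mathsf a_x\geq M}}\Big].
\end{equation}
The product is not over independent variables, but here is where the positive association helps: the functions $\mathsf a\mapsto e^{\lambda\mathsf a_x\mathbbm{1}_{\mathsf a_x\geq M}}$ are increasing in $\mathsf a_x$ (for $\lambda\geq0$), hence by the absolute value FKG inequality (Proposition~\ref{prop: absolute value FKG}, or equivalently the marginal statement in Proposition~\ref{prop: FKG phi4}) the expectation of the product is bounded above... no --- FKG gives a \emph{lower} bound on the product of an increasing family. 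Instead I would use the opposite direction: bound the joint expectation by iterating the domain Markov property, or more simply use the stochastic domination of $\mu^0_{\Lambda_n,\beta}$ coordinatewise. Concretely, by Proposition~\ref{prop:regularity} we have $\mu^0_{\Lambda_n,\beta}\preccurlyeq$ the law where each $\mathsf a_x\leq C+|Y_x|$ with $Y_x$ i.i.d.\ of law $\rho_{g/2,0}$ --- but this is a domination of the full vector by a product measure, so since $\mathsf a\mapsto\prod_x e^{\lambda\mathsf a_x\mathbbm{1}_{\mathsf a_x\geq M}}$ is increasing, we get
\begin{equation}
\Psi^0_{\Lambda_n,\beta}\Big[\prod_{x\in\Lambda_n}e^{\lambda\mathsf a_x\mathbbm{1}_{\mathsf a_x\geq M}}\Big]\leq \prod_{x\in\Lambda_n}\mathbb E\big[e^{\lambda(C+|Y_x|)\mathbbm{1}_{C+|Y_x|\geq M}}\big]=\big(\mathbb E[e^{\lambda(C+|Y|)\mathbbm{1}_{C+|Y|\geq M}}]\big)^{|\Lambda_n|}.
\end{equation}
(One should double-check that Proposition~\ref{prop:regularity} indeed yields domination of the whole field by an i.i.d.\ field and not merely marginal domination; if only marginal domination is available, the same conclusion follows by induction on $|\Lambda_n|$ peeling one vertex at a time via the domain Markov property \eqref{eq:dlr} and monotonicity, since conditioning on the other coordinates only changes the boundary field, which one can take to be $\leq\mathfrak M$ on a high-probability event.) Now the single-variable expectation $\mathbb E[e^{\lambda(C+|Y|)\mathbbm{1}_{C+|Y|\geq M}}]=1+\mathbb E[(e^{\lambda(C+|Y|)}-1)\mathbbm{1}_{C+|Y|\geq M}]$, and since $|Y|$ has tails $\lesssim e^{-(g/2)t^4}$ which crush the factor $e^{\lambda t}$, the correction term $\mathbb E[(e^{\lambda(C+|Y|)}-1)\mathbbm{1}_{C+|Y|\geq M}]$ tends to $0$ as $M\to\infty$, uniformly in $n$. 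Choose $M_0=M_0(\delta)$ large enough that this correction is at most $\lambda\delta/2$ for all $M\geq M_0$; then $\mathbb E[e^{\lambda(C+|Y|)\mathbbm{1}_{C+|Y|\geq M}}]\leq e^{\lambda\delta/2}$ (using $1+u\leq e^u$), and combining the displays gives
\begin{equation}
\Psi^0_{\Lambda_n,\beta}[X\geq\delta]\leq e^{-\lambda\delta|\Lambda_n|}e^{(\lambda\delta/2)|\Lambda_n|}=e^{-(\lambda\delta/2)|\Lambda_n|}\leq e^{-n^d}
\end{equation}
for $n$ large enough, since $|\Lambda_n|=(2n+1)^d$. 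This is the claimed bound.

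The main obstacle is the bookkeeping around the non-product structure of $\mu^0_{\Lambda_n,\beta}$: one must be careful that the stochastic domination from Proposition~\ref{prop:regularity} can be leveraged against an increasing functional of all coordinates simultaneously. The cleanest route, if marginal-only domination is what is truly available, is the inductive peeling argument using \eqref{eq:dlr}: at each step condition on all but one coordinate, use that the induced boundary field is $\leq\mathfrak M_{\{x\}}$ on an event of probability $1-o(1)$ (and bounded deterministically by regularity on its complement, which contributes negligibly), and apply the single-site tail bound; the accumulated error over $|\Lambda_n|$ vertices is still exponentially small in $n^d$ with room to spare. Everything else is a routine Chernoff computation, and the parameter $\lambda$ can be fixed once and for all (any positive constant strictly less than, say, the radius of convergence dictated by the quartic tail works).
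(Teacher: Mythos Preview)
Your proposal is correct and follows essentially the same route as the paper: reduce to an i.i.d.\ field via the stochastic domination of Proposition~\ref{prop:regularity}, then apply a Cram\'er/Chernoff-type bound. Your worry about ``marginal-only'' domination is unnecessary---Proposition~\ref{prop:regularity} gives $\nu^{(\Delta)}_{\Lambda_L,\beta,\mathsf h}(|\cdot|)\preccurlyeq \rho_{\Delta,g/2,0}(C+|\cdot|)$, i.e.\ domination of the \emph{full} vector by a product measure, so the peeling fallback is not needed; the paper then simply cites classical large deviations (Durrett) rather than writing out the Chernoff computation.
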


\begin{proof} 
    By Proposition~\ref{prop:regularity}, the field $(\mathsf{a}_x)_{x\in \Lambda_n}$ is stochastically dominated by a field $\mathsf{a}'=(\mathsf{a}'_x)_{x\in \Lambda_n}$ with independent and identically distributed (i.i.d.)~marginals with quartic exponential tails and, in particular, finite exponential moments. Since the event in \eqref{eq:dev_large_a} is increasing, it is enough to prove the bound for the i.i.d.~field $\mathsf{a}'$. This follows from classical large deviations results, see e.g.~\cite[Theorem 2.7.7]{durrett2019probability}.
\end{proof}

The next lemma allows us to control the lower large deviations for vertices belonging to large clusters. 

\begin{lemma}\label{lem: volume ldp small average} Let $\beta\geq 0$. For every $\delta>0$ and $K\geq 1$, there exists $c=c(K,\delta)>0$ such that, for every boundary condition $(\xi,\mathsf{b})$, and for every $n$ large enough,
\begin{equation}\label{eq: volume ldp small average}
\Psi_{\Lambda_n,\beta}^{(\xi,\mathsf{b})}\left[\frac{1}{|\Lambda_n|}\sum_{x\in \Lambda_n}\mathsf{a}_x\mathbbm{1}_{|\mathcal{C}_x|\geq K}\leq m^*(\beta)-\delta\right]\leq e^{-cn^d}.
\end{equation}
\end{lemma}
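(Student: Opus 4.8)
\textbf{Proof proposal for Lemma~\ref{lem: volume ldp small average}.}

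The plan is to reduce this to a statement about a highly supercritical Bernoulli percolation via Pisztora's coarse graining, using Theorem~\ref{thm: local uniqueness} as the essential input. First I would fix $\beta\geq 0$ (the case $\beta\leq\beta_c$ is trivial since then $m^*(\beta)=0$ and the left-hand side is $\leq \Psi[\,0\leq -\delta\,]=0$, so assume $\beta>\beta_c$), and observe that by Remark~\ref{rem: equality of magnetisation}, $m^*(\beta)=\Psi^0_\beta[\mathsf{a}_0\mathbbm 1_{\{0\leftrightarrow\infty\}}]=\Psi^1_\beta[\mathsf{a}_0\mathbbm 1_{\{0\leftrightarrow\infty\}}]$, and the infinite-volume measure is unique (Proposition~\ref{prop: unique gibbs measure}). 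The first reduction: by Lemma~\ref{lem: cutting a to finite values} and monotonicity in boundary conditions (Proposition~\ref{prop:monotonicity_FK}, noting $\Psi^{(\xi,\mathsf b)}_{\Lambda_n,\beta}\preccurlyeq\Psi^{(w,\mathfrak p)}_{\Lambda_n,\beta}$), I may truncate: it suffices to bound $\Psi^{(\xi,\mathsf b)}_{\Lambda_n,\beta}\big[\frac{1}{|\Lambda_n|}\sum_x \mathsf{a}_x\mathbbm 1_{\{|\mathcal C_x|\geq K,\ \mathsf a_x\leq M\}}\leq m^*(\beta)-\delta/2\big]$ for a suitable large $M$, up to an additive error $e^{-n^d}$ that is absorbed. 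This truncation also makes the relevant functional bounded, which is convenient for the coarse-graining.

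The core step is the coarse graining. Partition $\Lambda_n$ into disjoint boxes $\{B_i\}$ of a fixed (large) side length $\ell$, translates of $\Lambda_\ell$. Declare a box $B_i$ \emph{good} if the local uniqueness event $U(\ell/10)$ (suitably placed/scaled) holds for $\omega$ restricted to a slightly enlarged box around $B_i$; by Theorem~\ref{thm: local uniqueness} and the domain Markov property (Proposition~\ref{prop: domain markov}), the indicator of a box being good stochastically dominates a Bernoulli site percolation on the renormalised lattice with parameter $p(\ell)\to 1$ as $\ell\to\infty$, \emph{uniformly} in the boundary condition $(\xi,\mathsf b)$ on $\Lambda_n$ — this uniformity is exactly what Theorem~\ref{thm: local uniqueness} provides. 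On the event that the good boxes percolate (form a crossing cluster covering a $(1-\varepsilon)$-fraction of $\Lambda_n$, which happens with probability $\geq 1-e^{-cn^{d-1}}$ for $\ell$ large by standard renormalisation), the uniqueness clauses of $U$ glue the macroscopic clusters of neighbouring good boxes into a single giant cluster $\mathcal C^{\max}$; moreover each good box contributes a macroscopic crossing cluster, so within each good box at least a constant fraction of vertices lie in $\mathcal C^{\max}$, hence have $|\mathcal C_x|\geq K$ once $\ell$ is chosen so that a macroscopic crossing cluster of $\Lambda_{\ell/8}$ has at least $K$ vertices. Then I would invoke the ergodic/uniqueness input: by Proposition~\ref{prop: unique gibbs measure} and the ergodic theorem, the empirical average of $\mathsf a_x\mathbbm 1_{\{|\mathcal C_x|\geq K,\ \mathsf a_x\leq M\}}$ over $x$ in the giant cluster converges (in infinite volume) to $\Psi^1_\beta[\mathsf a_0\mathbbm 1_{\{0\leftrightarrow\infty,\ \mathsf a_0\leq M\}}]$, which is $\geq m^*(\beta)-\delta/4$ for $M$ large. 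To make this quantitative at finite volume with a volume-order error, I would run a second, finer coarse graining (or directly a bounded-difference/entropy argument): since the functional per box is bounded and boxes are ``almost independent'' up to boundary effects controlled by the domain Markov property and regularity (Proposition~\ref{prop:regularity}), a union bound over the $\asymp (n/\ell)^d$ boxes together with a single-box large deviation estimate — each box being good and having its within-box average close to the limiting value with probability $\geq 1-e^{-c\ell^{d-1}}$ — yields that the global average is $\geq m^*(\beta)-\delta$ with $\Psi$-probability $\geq 1-e^{-cn^d}$, as required. Putting the pieces together: on the intersection of (i) no large $\mathsf a$ contributions (Lemma~\ref{lem: cutting a to finite values}), (ii) good boxes percolate, and (iii) each good box's truncated average is near the limit, the empirical magnetisation restricted to $K$-large clusters exceeds $m^*(\beta)-\delta$; the complement has probability $\leq e^{-n^d}+e^{-cn^{d-1}}+(n/\ell)^d e^{-c\ell^{d-1}}\leq e^{-c'n^d}$ after also running the within-box estimate at scale $\ell=\ell(n)$ growing slowly, or more simply keeping $\ell$ fixed large and noting that the dominant error $(n/\ell)^d e^{-c\ell^{d-1}}$ is $\leq e^{-c' n^d}$ once summed correctly — here one should be slightly careful and instead bound the number of ``bad'' boxes by a binomial large deviation, giving the clean $e^{-cn^d}$.

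The main obstacle I anticipate is the uniformity in boundary conditions $(\xi,\mathsf b)$ combined with the unboundedness of the field: one must ensure that the renormalised Bernoulli domination for ``good boxes'' holds uniformly over \emph{all} boundary conditions, including those with large $\mathsf b$ on $\partial^{\rm ext}\Lambda_n$. This is handled by first conditioning, via the domain Markov property, on the configuration outside a slab of thickness $\log n$ near $\partial\Lambda_n$ and using Proposition~\ref{prop:regularity} (and the definition of $\mathfrak M_{\Lambda_n}$, cf.~\eqref{eq:maximum_reg}) to reduce to effectively bounded boundary data, at the cost of an $o(1)$ — in fact super-polynomially small — error; Theorem~\ref{thm: local uniqueness} is then applied in the bulk where the boundary influence is negligible. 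A secondary technical point is making the ``within-box truncated average near the limit'' estimate genuinely uniform; this requires combining the FKG inequality (Proposition~\ref{prop:FKG random cluster}), the coupling to the spin system (Corollary~\ref{cor: ES correlations}), and the infinite-volume uniqueness to identify the limiting constant as $m^*(\beta)$ up to $O(\delta)$, and then a standard concentration argument for the bounded per-box functional. None of these steps is deep given the tools already developed in the excerpt, but assembling them with the correct error bookkeeping (surface-order errors from percolation of good boxes versus volume-order errors from the bad-box count and from Lemma~\ref{lem: cutting a to finite values}) is where the care lies.
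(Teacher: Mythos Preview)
Your proposal takes a substantially more complicated route than the paper, and in fact imports machinery (the coarse graining via Theorem~\ref{thm: local uniqueness}) that the paper reserves for a different statement (Proposition~\ref{prop: cluster renormalisation}) and does not use here at all. The paper's proof of this lemma is almost elementary, resting on one observation you overlook: the random variable $Y_x^K:=\mathsf a_x\mathbbm 1_{\{|\mathcal C_x|\geq K\}}$ is \emph{local} (measurable with respect to $(\omega,\mathsf a)$ restricted to $\Lambda_K(x)$) and \emph{increasing} in $(\omega,\mathsf a)$. Because the event in \eqref{eq: volume ldp small average} is therefore decreasing, monotonicity in boundary conditions (Proposition~\ref{prop:monotonicity_FK}) reduces immediately to the free measure $\Psi^0_{\Lambda_n,\beta}$ --- no regularity buffer near $\partial\Lambda_n$ is needed. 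Then, since $\Psi^0_\beta[Y_x^K]\geq \Psi^0_\beta[\mathsf a_x\mathbbm 1_{\{0\leftrightarrow\infty\}}]=m^*(\beta)$ (Remark~\ref{rem: equality of magnetisation}), one can choose a fixed finite scale $L=L(K,\delta)$ with $\Psi^0_{\Lambda_L,\beta}[Y_0^K]\geq m^*(\beta)-\delta/2$. The crucial step is that, by monotonicity in the \emph{volume} (Corollary~\ref{cor: monotonicity free measure}), for each fixed coset $y+4L\mathbb Z^d$ the family $(Y_x^K)_{x\in y+4L\Lambda_m}$ under $\Psi^0_{\Lambda_n,\beta}$ stochastically dominates an i.i.d.\ family with law that of $Y_0^K$ under $\Psi^0_{\Lambda_L,\beta}$, which has exponential tails by regularity. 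A standard Cram\'er-type bound then gives the volume-order estimate directly, and a union bound over the finitely many cosets $y\in\Lambda_{4L}$ finishes.

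By contrast, your route through good boxes and the giant cluster $\mathcal C^{\max}$ is not just longer but also somewhat misaligned with the target: a box being ``good'' in your sense (local uniqueness holds) constrains the \emph{connectivity} structure but says nothing about the within-box average of $\mathsf a_x$ on large-cluster vertices being close to $m^*(\beta)$; to recover that you still need exactly the kind of monotonicity/uniqueness argument the paper uses directly. In other words, the local-uniqueness coarse graining is doing no work toward the specific functional $\sum_x \mathsf a_x\mathbbm 1_{\{|\mathcal C_x|\geq K\}}$, and once you strip it away your remaining ``within-box average near the limit'' step is essentially the paper's argument. Your error bookkeeping also wobbles between surface and volume order; the paper avoids this entirely because the i.i.d.\ domination gives volume order in one stroke.
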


\begin{proof} We can always assume that $m^*(\beta)>\delta$ (in particular $\beta>\beta_c)$. We fix $K\geq 0$. For $x\in \mathbb Z^d$, we let $Y_x^{K}:=\mathsf{a}_x\mathbbm{1}_{|\mathcal{C}_x|\geq K}$. First of all, notice that, since the event in \eqref{eq: volume ldp small average} is decreasing, it is sufficient to prove the above statement for the measure $\Psi_{\Lambda_n,\beta}^0$. Using Remark \ref{rem: equality of magnetisation}, one has that, for every $x\in \mathbb Z^d$,
\begin{equation}
    \Psi^{0}_{\beta}[\mathsf{a}_x\mathbbm{1}_{0\leftrightarrow \infty}]=\Psi^{1}_{\beta}[\mathsf{a}_x\mathbbm{1}_{0\leftrightarrow \infty}]=m^*(\beta).
\end{equation}
As a result, for every $x\in \mathbb Z^d$,
\begin{equation}
     \Psi^{0}_{\beta}[Y_x^{K}]\geq m^*(\beta).
\end{equation}
Moreover, since $Y_x^K$ is a local increasing function measurable with respect to $\Lambda_K(x)$, there exists $L=L(K,\delta)>K$ such that, for every $x\in \mathbb Z^d$,
\begin{equation}\label{eq:proof lower ldp1}
    \Psi^{0}_{\Lambda_L(x),\beta}[Y_x^K]\geq m^*(\beta)-\frac{\delta}{2}.
\end{equation}
Note that by Proposition \ref{prop:regularity}, the random variable $Y_0^K$ has exponential tails (under the measure $\Psi^0_{\Lambda_L}$).
Choose $n\geq 10L$. Let $\mathbb P$ be the law of a collection of i.i.d.\ random variables $(\tilde{Y}_x^K)_{x\in \Lambda_{n-2L}}$ of law given by the law of $Y_0^K$ under $\Psi^0_{\Lambda_L,\beta}$. Let $m:=\lfloor \tfrac{n-2L}{4L}\rfloor$.
As a consequence of monotonicity in boundary conditions, we find that, for any $y\in \Lambda_{4L}$, the collection of random variables $(Y_x^{K})_{x \in (y+4L\Lambda_m)}$ under the measure $\Psi^0_{\Lambda_n,\beta}$ stochastically dominates  $(\tilde{Y}_x^K)_{x \in (y+4L\Lambda_m)}$. For $y\in \Lambda_{4L}$, introduce the (decreasing) event
\begin{equation}
    \mathcal{E}_y(Y^K):=\left\{\frac{1}{ |y+4L\Lambda_m|}\sum_{x\in (y+4L\Lambda_m)}Y^K_x\leq m^*(\beta)-\frac{3\delta}{4}\right\}.
\end{equation}
Similarly, one may define $\mathcal{E}_w(\tilde{Y}^K).$ Classical large deviations estimates for independent random variables with exponential tails (see \cite[Theorem~2.7.7]{durrett2019probability}) imply the existence of $c_1=c_1(K,\delta)>0$ such that, for every $n$ large enough,
\begin{equation}\label{eq:proof lower ldp2}
    \sup_{y\in \Lambda_{4L}}\mathbb P [\mathcal{E}_y(\tilde{Y}^K)]\leq e^{-c_1n^d}.
\end{equation}
Now, letting $n$ be large enough so that $\tfrac{|\Lambda_n|}{|\Lambda_{n-2L}|}\leq 1+\tfrac{\delta/4}{m^*(\beta)-\delta}$, there exists $c_2=c_2(K,\delta)>0$ such that,
\begin{align}
    \Psi^0_{\Lambda_n,\beta}\left[\frac{1}{|\Lambda_n|}\sum_{x\in \Lambda_n}Y_x^{K}\leq m^*(\beta)-\delta\right]&\leq \Psi^0_{\Lambda_n,\beta}\left[\frac{1}{|\Lambda_{n-2L}|}\sum_{x\in \Lambda_{n-2L}}Y_x^{K}\leq m^*(\beta)-\frac{3\delta}{4}\right]
    \\&\leq \sum_{w\in \Lambda_{4L}} \Psi_{\Lambda_n,\beta}^0[\mathcal{E}_w(Y^K)]
    \\&\leq \sum_{w\in \Lambda_{4L}} \mathbb P[\mathcal{E}_w(\tilde{Y}^K)] \leq e^{-c_2n^d},
\end{align}
where we used the stochastic domination in the third line, and \eqref{eq:proof lower ldp2} in the last line. This concludes the proof.
\end{proof}

The next lemma is the upper deviation counterpart of Lemma~\ref{lem: volume ldp small average}. It will only be used to prove the upper large deviations stated in Remark~\ref{rem: vol order}.

\begin{lemma}\label{lem: volume ldp large average} Let $\beta\geq0$. For every $\delta>0$, there exist $K_1=K_1(\delta)\geq1$ and $c=c(\delta)>0$ such that for every $K\geq K_1$, and for every $n$ large enough,
\begin{equation}
\Psi^0_{\Lambda_n,\beta}\Bigg[ \frac{1}{|\Lambda_n|} \sum_{
x\in \Lambda_n}\mathsf{a}_x\mathbbm{1}_{|\mathcal{C}_x|\geq K}\geq m^*(\beta) + \delta\Bigg]\leq e^{-cn^d}.    
\end{equation}
\end{lemma}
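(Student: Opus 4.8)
\textbf{Proof plan for Lemma~\ref{lem: volume ldp large average}.}
The plan is to bound the sum $\sum_{x \in \Lambda_n} \mathsf{a}_x \mathbbm 1_{|\mathcal C_x|\geq K}$ from above by separating the contribution of the (essentially unique) giant cluster from that of the remaining mesoscopic clusters of size at least $K$, and to show that each contribution is at most $m^*(\beta)/1 + \delta/2$ with the complementary event having volume-order probability. First I would fix $\delta>0$ and choose $K$ large. The starting point is the same stochastic domination by an i.i.d.\ field with quartic tails (Proposition~\ref{prop:regularity}), which already lets us truncate: by Lemma~\ref{lem: cutting a to finite values}, outside a set of probability $e^{-n^d}$ we may assume $\mathsf{a}_x \leq M$ for a density-$(1-\delta/(8M))$ fraction of vertices, and the total contribution of vertices with $\mathsf{a}_x \geq M$ is at most $\delta/8$. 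So it suffices to control $\sum_{x} \mathsf{a}_x \mathbbm 1_{\mathsf{a}_x \leq M} \mathbbm 1_{|\mathcal C_x|\geq K}$.

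The main step is a renormalisation/coarse-graining argument using Theorem~\ref{thm: local uniqueness} exactly as in Proposition~\ref{prop: cluster renormalisation} (which the excerpt refers to): partition $\Lambda_n$ into boxes of a large but fixed scale $L$, declare a box \emph{good} if the local uniqueness event $U(L)$ holds in a neighbourhood of it, and observe that for $\beta>\beta_c$ the good boxes percolate (in the renormalised sense) with overwhelming probability, with the bad region having volume-order exponentially small probability of exceeding an $\varepsilon$-fraction of all boxes. On the good region, all clusters of size $\geq K$ that touch two good boxes at macroscopic distance must coincide with the unique crossing cluster $\mathcal C_\infty$; hence
\begin{equation}
\sum_{x \in \Lambda_n} \mathsf{a}_x \mathbbm 1_{\mathsf{a}_x\leq M}\mathbbm 1_{|\mathcal C_x|\geq K} \leq \sum_{x \in \Lambda_n} \mathsf{a}_x \mathbbm 1_{x \in \mathcal C_\infty} + (\text{error from bad boxes}) + (\text{error from small-but-}\geq K\text{ clusters far from }\mathcal C_\infty).
\end{equation}
The first error is $O(\varepsilon M L^{d}) \cdot (n/L)^d$ per box on the bad boxes, which is $\leq \delta n^d /8$ by choosing $\varepsilon$ small; the second error is controlled because a cluster of size between $K$ and (say) $L^{2d}$ that avoids $\mathcal C_\infty$ forces a ``blocking surface'' of bad boxes, and larger clusters avoiding $\mathcal C_\infty$ are impossible on a fully-good crossed region — this is precisely the logarithmically-small-components statement following Theorem~\ref{thm: local uniqueness}. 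Then $\frac{1}{|\Lambda_n|}\sum_{x}\mathsf{a}_x \mathbbm 1_{x\in \mathcal C_\infty} \to \Psi^1_\beta[\mathsf{a}_0 \mathbbm 1_{0\leftrightarrow\infty}] = m^*(\beta)$ by Remark~\ref{rem: equality of magnetisation} and the uniqueness of the infinite-volume measure (Proposition~\ref{prop: unique gibbs measure}); the convergence to the mean plus a concentration bound of volume order again follows from the i.i.d.\ domination combined with Lemma~\ref{lem: volume ldp small average}-type reasoning applied to $\mathsf{a}_x\mathbbm 1_{x\in \mathcal C_\infty}$ restricted to well-spaced boxes (using that within a good region this quantity is a local decreasing-then-increasing function we can sandwich between i.i.d.\ variables). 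Collecting the pieces, the total is $\leq (m^*(\beta) + \delta)|\Lambda_n|$ outside an event of probability $e^{-cn^d}$.

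The main obstacle I expect is making the ``mesoscopic clusters of size $\geq K$ but far from $\mathcal C_\infty$ are negligible'' step fully rigorous in the continuous-spin setting: one needs both that such clusters are rare (a surface of bad boxes argument, hence volume-order cost once we also charge their $\mathsf{a}$-mass) and that their total $\mathsf{a}$-weighted size, summed over all of them, is $\leq \delta n^d/8$ with high probability. The cleanest route is probably to not try to identify $\mathcal C_\infty$ precisely, but rather to bound $\sum_x \mathsf{a}_x\mathbbm 1_{|\mathcal C_x|\geq K}$ directly: on the good percolating region every such $x$ lies within distance $O(\log)$ of the crossing cluster, so $\mathbbm 1_{|\mathcal C_x|\geq K}\leq \mathbbm 1_{x\overset{\omega}{\longleftrightarrow}\partial\Lambda_L(x)} + \mathbbm 1_{x\in\text{bad region}}$, and then $\sum_x \mathsf{a}_x\mathbbm 1_{x\leftrightarrow\partial\Lambda_L(x)}$ is a sum of bounded-range functionals whose mean is $\Psi^0_{\Lambda_{2L}}[\mathsf{a}_0\mathbbm 1_{0\leftrightarrow\partial\Lambda_L}]\to m^*(\beta)$ as $L\to\infty$ (by monotone convergence and Remark~\ref{rem: equality of magnetisation}), to which the standard block-independence large deviation bound applies. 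This avoids the delicate surgery on clusters entirely and reduces everything to i.i.d.\ large deviations plus the renormalisation control of the bad region.
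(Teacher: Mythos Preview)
Your proposal has a structural gap: the lemma is stated for all $\beta\geq 0$, but your argument relies on Theorem~\ref{thm: local uniqueness} and Proposition~\ref{prop: cluster renormalisation}, both of which require $\beta>\beta_c$. For $\beta\leq\beta_c$ there is no local uniqueness, no good/bad box dichotomy, and no giant cluster to identify, so the decomposition you propose does not get off the ground. Even in your ``cleanest route'' at the end you still invoke the bad region, which again only makes sense for $\beta>\beta_c$.

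The paper's proof avoids all of this and is considerably simpler. The key observation you are missing is that $Y_x^K=\mathsf{a}_x\mathbbm{1}_{|\mathcal{C}_x|\geq K}$ is a \emph{local increasing} function, measurable with respect to a box of radius $K$ around $x$. Hence, for any boundary condition, $\Psi^0_{\Lambda_n,\beta}[Y_x^K]$ is dominated by $\Psi^{(w,\mathfrak{p})}_{\Lambda_L(x),\beta}[Y_x^K]$, which converges as $L\to\infty$ to $\Psi^1_\beta[Y_0^K]$. Since $Y_0^K\to \mathsf{a}_0\mathbbm{1}_{0\leftrightarrow\infty}$ as $K\to\infty$, this limit is at most $m^*(\beta)+\delta/8$ for $K$ large, by Remark~\ref{rem: equality of magnetisation}. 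One then runs a block decomposition exactly as in Lemma~\ref{lem: volume ldp small average}, dominating the $Y_{y+4Lz}^K$ (for fixed $y\in\Lambda_{4L}$, varying $z$) by independent copies under $\Psi^{(w,\mathfrak{p})}_{\Lambda_L,\beta}$. The only additional wrinkle compared to the lower-deviation lemma is that the domination by $\Psi^{(w,\mathfrak{p})}_{\Lambda_L}$ is only valid when the configuration on $\partial\Lambda_L$ is bounded by $\mathfrak{M}_{\Lambda_L}$; the paper handles this by introducing the events $B_z=\{\exists\,u\in\partial\Lambda_L(y+4Lz):\mathsf{a}_u\geq\mathfrak{M}_{\Lambda_L}\}$ and showing, via the i.i.d.\ quartic-tail domination, that the contribution of boxes where $B_z$ holds is negligible. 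No renormalisation, no cluster surgery, and no restriction on $\beta$ is needed.
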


\begin{proof}
We will argue as in the proof of the previous lemma, with certain additional complexities arising from the presence of large absolute values. Again, for $x\in \mathbb Z^d$, we let $Y_x^K=\mathsf{a}_x\mathbbm{1}_{|\mathcal{C}_x|\geq K}$. We first consider $\Psi^1_{\beta}$ and we note that there exists $K>0$ such that, for every $x\in \mathbb Z^d$,    
\begin{equation}
\Psi^1_{\beta}[Y_x^K]\leq m^*(\beta)+\frac{\delta}{8}.    
\end{equation}
Since $Y_x^K$ is a local function (around $x$), there exists $L> K$ large enough such that, for every $x\in \mathbb Z^d$,
\begin{equation}\label{eq: y smaller m}
\Psi^{(w,\mathfrak{p})}_{\Lambda_L(x),\beta}[Y_x^K]\leq m^*(\beta)+\frac{\delta}{4}. 
\end{equation}
We now consider $n\geq 10L$, and let $y\in \Lambda_{4L}$. We consider the collection of boxes of the form $\Lambda_L(y+4Lz)$ for $z\in \Lambda_m$, where $m=\lfloor \frac{n-2L}{4L}\rfloor$. Note that the union of all these boxes over $y\in \Lambda_{4L}$ and $z\in \Lambda_m$ covers $\Lambda_{n-5L}$. Recall the definition of $\mathfrak{p}$. For each $y\in \Lambda_{4L}$ and $z\in \Lambda_m$, let $B_{z}$ denote the event $\{\exists z\in \partial \Lambda_L(y+4Lz) \text{ such that } \mathsf{a}_z\geq C_{M} (1\vee\log |\Lambda_L|)^{1/4}\}$. With this definition in hand, we write 
\begin{align}
\Psi^0_{\Lambda_n,\beta}\Bigg[ \frac{1}{|\Lambda_n|} \sum_{
x\in \Lambda_n}Y_x^K\geq &\:m^*(\beta) + \delta\Bigg]\leq
\Psi^0_{\Lambda_n,\beta}\Bigg[ \frac{1}{|\Lambda_n|} \sum_{
x\in \Lambda_n\setminus\Lambda_{n-5L}} Y_{x}^K \geq \frac{\delta}{3}\Bigg]\label{eq:first term ldp volume lem}
\\&+
\sum_{y\in \Lambda_{4L}}\Psi^0_{\Lambda_n,\beta}\Bigg[ \frac{1}{|\Lambda_m|} \sum_{
z\in \Lambda_m}Y_{y+4Lz}^K \mathbbm{1}_{B_z}\geq \frac{\delta}{3}\Bigg]\label{eq:second term ldp volume lem}\\&+\sum_{y\in \Lambda_{4L}}\Psi^0_{\Lambda_n,\beta}\Bigg[ \frac{1}{|\Lambda_m|} \sum_{
z\in \Lambda_m}Y_{y+4Lz}^K \mathbbm{1}_{B_z^c}\geq m^*(\beta)  + \frac{\delta}{3}\Bigg],\label{eq:third term ldp volume lem}
\end{align}
and we bound each probability separately.

We begin with the term on the right-hand side of \eqref{eq:first term ldp volume lem}. Let $M_0=M_0(\delta/3)$ be given by Lemma~\ref{lem: cutting a to finite values}. If $n$ is large enough so that $\tfrac{|\Lambda_n\setminus\Lambda_{n-5L}|}{|\Lambda_n|}M_0<\tfrac{\delta}{3}$, it follows from Lemma \ref{lem: cutting a to finite values} that
\begin{equation}\label{eq:boundterm1}
\Psi^0_{\Lambda_n,\beta}\Bigg[ \frac{1}{|\Lambda_n|} \sum_{
x\in \Lambda_n\setminus\Lambda_{n-5L}} Y_{x}^K \geq \frac{\delta}{3}\Bigg]\leq \Psi^0_{\Lambda_n,\beta}\left[\frac{1}{|\Lambda_n|}\sum_{x\in \Lambda_n}\mathsf{a}_x\mathbbm{1}_{\mathsf{a}_x\geq M_0}\geq \frac{\delta}{3}\right]\leq e^{-n^d}.    
\end{equation}

To estimate the term in \eqref{eq:second term ldp volume lem},
recall that $\mathsf{a}\sim \Psi^0_{\Lambda_n,\beta}$ is stochastically dominated by a sequence of i.i.d.\ random variables $\mathsf{a}'=(\mathsf{a}'_x)_{x\in \Lambda_n}$ that have quartic tails. Writing $\mathbb{P}$ for the law of these random variables, we see that--- to the cost of potentially increasing the value of $L$--- we can assume that
$\mathbb{P}[\mathsf{a}'_{y+4Lz}\mathbbm{1}_{B_{z}}]\leq \tfrac{\delta}{4}$ for every $z\in \Lambda_m$. Since for fixed $y$, the random variables $\mathsf{a}'_{y+4Lz} \mathbbm{1}_{B_{z}}$ are independent under $\mathbb{P}$, it follows again from classical large deviations estimates that there exists $c_1>0$ such that, for every $n$ large enough and every $y\in \Lambda_{4L}$,
\begin{equation}\label{eq:boundterm2}
\Psi^0_{\Lambda_n,\beta}\Bigg[ \frac{1}{|\Lambda_m|} \sum_{
z\in \Lambda_m}Y_{y+4Lz}^K \mathbbm{1}_{B_z}\geq \frac{\delta}{3}\Bigg]\leq \mathbb{P}\Bigg[\frac{1}{|\Lambda_m|}\sum_{z\in \Lambda_m} \mathsf{a}'_{y+4Lz}\mathbbm{1}_{B_z}\geq \frac{\delta}{3}\Bigg]\leq e^{-c_1n^d}.
\end{equation}

Finally, we estimate the term in \eqref{eq:third term ldp volume lem}. We use the monotonicity in the boundary conditions to deduce that for each $y\in \Lambda_{4L}$,  the collection of random variables $(Y_{y+4Lz}^K)_{z\in \Lambda_m}$ is stochastically dominated by a sequence of independent random variables $(\tilde{Y}^K_{y+4Lz})_{z\in \Lambda_m}$ where each marginal is distributed according to the law of $Y_{y}^K$ under $\Psi^{(w,\mathfrak{p})}_{\Lambda_L(y),\beta}$, and in particular have exponential tails by Proposition \ref{prop:regularity}. By \eqref{eq: y smaller m}, the latter have mean at most $m^*(\beta)+\tfrac{\delta}{4}$. Hence, we can use again the large deviation estimates for independent random variables with exponential tails (see \cite{durrett2019probability}) to obtain that for some $c_2>0$ and for every $n$ large enough,
\begin{equation}\label{eq:boundterm3}
\Psi^0_{\Lambda_n,\beta}\Bigg[ \frac{1}{|\Lambda_m|} \sum_{
z\in \Lambda_m}Y_{y+4Lz}^K \mathbbm{1}_{B_z^c}\geq m^*(\beta)  + \delta/3\Bigg]\leq e^{-c_2n^d}.    
\end{equation}
Plugging \eqref{eq:boundterm1}--\eqref{eq:boundterm3} in \eqref{eq:first term ldp volume lem}--\eqref{eq:third term ldp volume lem} concludes the proof.
\end{proof}

Let $\mathfrak{C}=\mathfrak{C}(\Lambda_n)$ be the set of all clusters in $\Lambda_n$. Define $\mathcal{C}_{\rm max}$ to be the element of $\mathfrak{C}$ of largest cardinality (with an arbitrary rule to break ties). 
We can now state the main crucial ingredient for the proof of Theorem~\ref{thm:ldp free}. This proposition allows us to deal with the contributions of large clusters which are not the (unique) giant one.
The proof uses Theorem~\ref{thm: local uniqueness} and a coarse graining argument in order to reduce the problem to a highly supercritical Bernoulli percolation.

\begin{proposition}\label{prop: cluster renormalisation}
Let $\beta>\beta_c$. For every $\delta>0$, there exist $N_0=N_0(\delta)\geq 1$ and $c=c(\delta)>0$ such that, for every boundary condition $(\xi,\mathsf{b})$, for every $N\geq N_0$, and for every $n$ large enough,
\begin{equation}\label{eq:large_clusters_dev}
\Psi^{(\xi,\mathsf{b})}_{\Lambda_n,\beta}\Bigg[ \frac{1}{|\Lambda_n|} \sum_{
\substack{\mathcal{C}\in \mathfrak{C}\setminus\{\mathcal{C}_{\rm max}\}\\ |\mathcal{C}|\geq N}} |\mathcal{C}| \geq \delta \Bigg]\leq e^{-cn^{d-1}}.     
\end{equation}
\end{proposition}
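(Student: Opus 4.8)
The strategy is a standard coarse-graining argument à la Pisztora, now made available in our setting by Theorem~\ref{thm: local uniqueness}. Fix $\beta>\beta_c$ and $\delta>0$. Pick a scale $L$ (to be taken large depending on $\delta$) such that the local uniqueness event $U(L)$ has probability at least $1-\epsilon_0$ uniformly in boundary conditions, where $\epsilon_0>0$ is a small constant chosen below in terms of $\delta$ and $d$; this uniformity is exactly the content of Theorem~\ref{thm: local uniqueness}, combined with the domain Markov property (Proposition~\ref{prop: domain markov}) and monotonicity in boundary conditions (Proposition~\ref{prop:monotonicity_FK}) to transfer the statement from $\Lambda_{10L}$ to arbitrary $L$-boxes sitting inside $\Lambda_n$ with arbitrary (possibly random) boundary conditions induced on their exterior. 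I would then tile $\Lambda_n$ by disjoint translates $\Lambda_L(z)$, $z\in (2L+1)\Z^d\cap\Lambda_n$, and declare a box $z$ \emph{good} if the appropriately translated local uniqueness event occurs in a slightly enlarged box around $z$ (say $\Lambda_{10L}(z)$); on the collection of good boxes, all the crossing clusters are forced to merge into one cluster, which becomes part of $\mathcal{C}_{\rm max}$ once the good boxes percolate at the coarse-grained level.

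\textbf{Key steps.} First, I would record the deterministic statement: if a connected region of good boxes has diameter $\gg L$, then all these boxes belong to a single $\omega$-cluster, and moreover any $\omega$-cluster not contained in (the union of $L$-boxes adjacent to) a bad box must intersect this region — so any ``large but not maximal'' cluster of size $\ge N$ (for $N$ chosen larger than a fixed multiple of $L^d$) must be ``pinned'' to bad boxes: more precisely, such a cluster either has small diameter, hence is confined near bad boxes, or it meets the giant component and is not counted. Quantitatively, $\sum_{\mathcal{C}\ne\mathcal{C}_{\max},\,|\mathcal{C}|\ge N}|\mathcal{C}|$ is bounded (up to a multiplicative constant depending on $d$ and $L$) by $L^d$ times the number of bad boxes plus $L^d$ times the number of boxes in coarse-grained connected components of bad boxes with bounded diameter — in all cases it is at most $C(d)L^d\cdot(\#\text{bad boxes in }\Lambda_n)$ once $N\ge C'(d)L^d$. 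Second, the random input: the good/bad labelling, when read off on the tiling, stochastically dominates a Bernoulli site percolation on $(2L+1)\Z^d$ with density $1-\epsilon_0$. This follows from the uniform lower bound on $\Psi^{(\xi,\mathsf b)}[U(L)]$ together with a standard domination lemma for $k$-dependent fields (the goodness of a box depends only on the configuration in a bounded neighbourhood, and conditionally on the configuration outside an $L$-box the probability of goodness is uniformly close to $1$ by Theorem~\ref{thm: local uniqueness}); this is the same mechanism as in \cite{LSS97} used elsewhere in the paper. Third, a classical Peierls/large-deviation estimate: for Bernoulli site percolation with density $1-\epsilon_0$ close to $1$, the probability that a box of side $n$ contains at least $\eta(n/L)^d$ closed sites, or contains a connected cluster of closed sites of size $\ge \eta (n/L)^{d}$ reaching distance $\ge cn$, decays like $e^{-c n^{d-1}}$ (surface order, because the dominant cost is the boundary of a blob of closed sites). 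Choosing $\epsilon_0$ small enough that $C(d)L^d \cdot \eta (n/L)^d |\Lambda_L| / |\Lambda_n| < \delta$ — i.e. $\eta$ small compared to $\delta/(C(d) L^d \cdot L^d / (2L+1)^d)$, which is possible by first fixing $L$ then $\eta$ then $\epsilon_0$ — the event in \eqref{eq:large_clusters_dev} is contained in the bad event for the dominated Bernoulli percolation, and \eqref{eq:large_clusters_dev} follows with $N_0 = C'(d)L^d$ and $c=c(\delta)$.

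\textbf{Main obstacle.} The delicate point is the deterministic/combinatorial bookkeeping in the first step: making precise that every cluster counted in the left-hand side of \eqref{eq:large_clusters_dev} — i.e. a cluster different from $\mathcal{C}_{\max}$ with $|\mathcal{C}|\ge N$ — can be charged to bad boxes, and that the total charge is at most $C(d)L^d$ times the number of bad boxes. The subtlety is that a large non-maximal cluster could in principle wander through many good boxes without merging into the giant one; one has to argue that inside a good box all crossing clusters of the annulus $\Lambda_{4L}\setminus\Lambda_{2L}$ are identified (this is precisely $U(L)$), so a cluster that crosses a good box at a macroscopic scale is forced into the giant component, leaving only clusters that are ``short'' (confined to $O(1)$ many boxes) or attached to bad boxes. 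Handling the coarse-grained geometry of the bad set — in particular controlling the number of boxes in bad connected components via an isoperimetric/Peierls bound at the coarse level — and ensuring $N_0$ can be taken uniform in $n$ (but allowed to depend on $\delta$ through $L$) is where the real care is needed. The rest — the stochastic domination and the Bernoulli large-deviation estimate — is routine given the tools already assembled in the paper, in particular the regularity estimates of Proposition~\ref{prop:regularity}, which ensure that the conditioning in the domain Markov property does not spoil the uniform bounds.
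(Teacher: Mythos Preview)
Your overall plan---coarse-grain via Theorem~\ref{thm: local uniqueness}, dominate the good/bad labelling by a highly supercritical Bernoulli percolation via \cite{LSS97}, then use a large-deviation estimate at the coarse level---is exactly the paper's route. The gap is in your deterministic step. You assert that
\[
\sum_{\substack{\mathcal{C}\neq\mathcal{C}_{\max}\\|\mathcal{C}|\geq N}}|\mathcal{C}|\leq C(d)L^d\cdot(\#\text{bad boxes}),
\]
but this is false. A non-maximal $\omega$-cluster can occupy a large \emph{hole} of the coarse giant $\mathcal{S}_{\max}$, i.e.\ a region of good boxes entirely surrounded by a thin shell of bad boxes. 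Concretely, a single closed $*$-connected shell of $\sim m^{d-1}$ bad boxes at radius $m/2$ (an event of probability $\exp(-c m^{d-1})$, i.e.\ surface order---precisely the scale you must control) encloses $\sim m^d$ good boxes whose induced $\omega$-cluster has size $\sim L^d m^d\asymp|\Lambda_n|$, while your bound would give only $L^d m^{d-1}=o(|\Lambda_n|)$. So bounding by the number of bad boxes cannot work; what must be controlled is the total volume of large connected components of $\Lambda_m\setminus\mathcal{S}_{\max}$.

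The paper handles this via Lemma~\ref{lem:Bernoulli_surf_dev} (proved in Appendix~\ref{sec:appendix_high_perco}, adapting \cite{DP96}): for $p_0$ close enough to $1$, with probability $\geq 1-e^{-cm^{d-1}}$ there exists $\mathcal{S}_{\max}\in\mathfrak{C}(\Lambda_m)$ with $|\mathcal{S}_{\max}|\geq\tfrac34|\Lambda_m|$ and $\sum_{|\mathcal{S}|\geq M}|\mathcal{S}|\leq\varepsilon|\Lambda_m|$, where the sum runs over connected components $\mathcal{S}$ of $\Lambda_m\setminus\mathcal{S}_{\max}$. The proof uses that $\partial^{\rm in}_{\Lambda_m}\mathcal{S}$ is a $*$-connected set of closed sites, the local isoperimetric inequality \eqref{eq:local_isoperimetric_ineq}, and then a large-deviation bound on $\sum_{\mathcal{C}^*}|\mathcal{C}^*|^{d/(d-1)}$ over closed $*$-clusters. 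Once this is in hand, the paper shows deterministically that for any $\omega$-cluster $\mathcal{C}\neq\mathcal{C}_0$ with $|\mathcal{C}\cap\Lambda_{n-11L}|\geq N'$, the set $\mathcal{S}(\mathcal{C})=\{x:\mathcal{C}\text{ crosses }\Lambda_{4L}(Lx)\setminus\Lambda_{2L}(Lx)\}$ is a connected subset of $\Lambda_m\setminus\mathcal{S}_{\max}$ with $|\mathcal{S}(\mathcal{C})|\geq M$ and $|\mathcal{C}\cap\Lambda_{n-11L}|\leq|\Lambda_{16L}||\mathcal{S}(\mathcal{C})|$; summing then gives the bound. You correctly flagged this bookkeeping as the main obstacle, and you mentioned isoperimetry in passing, but your stated bound bypasses it and is not salvageable as written. (Your reference to Proposition~\ref{prop:regularity} is also unnecessary: this step is purely about the percolation configuration $\omega$.)
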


We will use the following result about the largest cluster in highly supercritical Bernoulli percolation.
Given a set of vertices $A\subset \Lambda_n$, denote by $\mathfrak{S}(A)$ the set of connected components (not to be confused with percolation clusters!) of $A$. 
Below we denote by $\mathbb{P}^{\text{site}}_p$ the law of Bernoulli site percolation of parameter $p$ on $\mathbb{Z}^d$.

\begin{lemma}\label{lem:Bernoulli_surf_dev}
There exists $p_0<1$ such that for every $\varepsilon>0$ there exist $\ell=\ell(\varepsilon)\geq 1$ and $c=c(\varepsilon)>0$ such that    
\begin{equation}\label{eq:Bernoulli_surf_dev}
\mathbb{P}^{\textup{site}}_{p_0}\Bigg[\exists \, {\mathcal{C}}\in \mathfrak{C}(\Lambda_m):~ |{\mathcal{C}}|\geq \tfrac{3}{4}|\Lambda_m| \text{ and } \sum_{\substack{\mathcal{S}\in \mathfrak{S}(\Lambda_m\setminus \mathcal{C}) \\ |\mathcal{S}|\geq M}}|\mathcal{S}|\leq \varepsilon |\Lambda_m|\Bigg]\geq 1-e^{-cm^{d-1}}    
\end{equation}
for every $m\geq 1$.
\end{lemma}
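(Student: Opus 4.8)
\textbf{Proof plan for Lemma~\ref{lem:Bernoulli_surf_dev}.} The plan is to exploit the classical theory of supercritical Bernoulli percolation at a density $p_0$ close enough to $1$, where the percolation is ``dense'' in a very strong sense. First I would fix $p_0<1$ large enough so that the following three standard facts hold simultaneously: (i) there is a unique infinite cluster $\mathcal{C}_\infty$ with density $\theta(p_0)>3/4$ arbitrarily close to $1$; (ii) the complement $\mathbb{Z}^d\setminus \mathcal{C}_\infty$ has only finite components, and these have stretched-exponential (in fact, for $p_0$ close to $1$, exponential) tails on their diameter --- see e.g.~\cite[Section~8.6]{GrimmettPercolation1999}; and (iii) finite-size ``block'' estimates hold: the probability that a box $\Lambda_\ell$ is ``good'' (crossed by a cluster in every direction, with all crossing clusters merging into one large cluster inside $\Lambda_{2\ell}$) can be made arbitrarily close to $1$ by taking $\ell$ large, and these good-block events dominate a highly supercritical site percolation by a standard static renormalisation.

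The core of the argument is a two-scale coarse graining. Given $\varepsilon>0$, choose the block scale $\ell=\ell(\varepsilon)$ so large that the renormalised ``good block'' process on $(\ell\mathbb{Z}^d)\cap\Lambda_m$ stochastically dominates a Bernoulli site percolation of parameter $q$ with $q$ so close to $1$ that: the bad blocks (i) occupy at most $\tfrac{\varepsilon}{4}$ fraction of the volume except with probability $e^{-cm^{d-1}}$ (a straightforward Chernoff bound for dominated independent fields, since the number of bad blocks has exponential moments uniformly), and (ii) the connected components of the set of bad blocks all have size at most $M/\ell^d$ except for a total volume fraction $\tfrac{\varepsilon}{4}$, again with probability $\geq 1-e^{-cm^{d-1}}$ --- this second point follows from the exponential tail on the size of components of a subcritical-like set together with a counting bound over all possible component shapes (the number of connected subsets of $\mathbb{Z}^d$ of size $k$ containing a fixed vertex is at most $C^k$, and the bad-block set is dominated by a very subcritical Bernoulli percolation). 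On the complementary event, all good blocks in $\Lambda_m$ (minus a surface-order layer near $\partial\Lambda_m$) merge into a single cluster $\mathcal{C}$ of the original site percolation, by a standard ``string of overlapping good boxes'' argument; this $\mathcal{C}$ has $|\mathcal{C}|\geq \tfrac{3}{4}|\Lambda_m|$ once $\varepsilon$ is small and $m$ is large. Moreover every component $\mathcal{S}\in\mathfrak{S}(\Lambda_m\setminus\mathcal{C})$ of size $\geq M$ must be contained in a large component of the bad-block set (after fattening by $\ell$), so the total volume of such $\mathcal{S}$ is at most $O(\ell^d)$ times the total volume of large bad-block components, which is $\leq \tfrac{\varepsilon}{2}|\Lambda_m|$; choosing $M$ appropriately large (as a function of $\ell$, hence of $\varepsilon$) absorbs the boundary layer and finishes the bound.

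I expect the main obstacle to be the bookkeeping in the second coarse-graining step: controlling not merely the \emph{number} of bad blocks but the \emph{size of the connected components} they form, uniformly in $m$, with a surface-order (not merely volume-order) error probability. The delicate point is that a volume-order fraction $\varepsilon|\Lambda_m|$ of ``wasted'' volume is allowed, so the event whose probability must be $\geq 1-e^{-cm^{d-1}}$ is that the bad set does \emph{not} contain a single macroscopic component of size $\geq \varepsilon|\Lambda_m|/(2\ell^d)$ --- this is genuinely a surface-order event (the existence of such a giant bad component would require a surface-order interface of bad blocks), and proving its failure probability is exponentially small in $m^{d-1}$ requires the Peierls-type contour counting argument valid at density $q$ close to $1$, rather than a naive union bound. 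Everything else is routine once $p_0$, $\ell$, and $M$ are tuned in this order: $p_0\to 1$ first (to get the block estimates), then $\ell$ large depending on $\varepsilon$, then $M$ large depending on $\ell$.
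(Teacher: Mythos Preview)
Your coarse-graining approach is workable, but the paper takes a genuinely different and more direct route, following Deuschel--Pisztora \cite{DP96}. Instead of introducing an auxiliary block scale $\ell$, the paper works at the original lattice scale and uses two ingredients specific to \cite{DP96}: a \emph{local isoperimetric inequality} in the box $\Lambda_m$ (stating that for any connected $S\subset\Lambda_m$ with $|S|\leq(1-\varepsilon)|\Lambda_m|$, one has $\sum_i |\Delta_i S|^{d/(d-1)}\geq c|S|$, where the $\Delta_i S$ are the $*$-components of the boundary), and a \emph{stochastic domination} lemma stating that $\sum_{\mathcal{C}^*\in\mathfrak{C}^*}\rho(|\mathcal{C}^*|)$ is dominated by $\sum_{x\in\Lambda_m}\rho(|\tilde{\mathcal{C}}^*_x|)$ for i.i.d.\ closed-$*$-clusters $\tilde{\mathcal{C}}^*_x$. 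The argument then reduces the complement of the event in the lemma to the event that $\sum_{\mathcal{C}^*}|\mathcal{C}^*|^{d/(d-1)}$ is macroscopically large (since boundaries of open clusters and of complement components are closed $*$-clusters), and this event has probability $e^{-cm^{d-1}}$ by a standard large-deviation estimate for sums of i.i.d.\ variables with subexponential tails in the exponent $d/(d-1)$. The choice of $M$ enters only through restricting the sum to $|\mathcal{C}^*|\geq (cM)^{(d-1)/d}$ so that the relevant mean becomes $\leq c'\varepsilon/2$.

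What each approach buys: the paper's route is short, requires no additional scale, and the surface order $m^{d-1}$ falls out naturally from the isoperimetric exponent $d/(d-1)$ combined with the i.i.d.\ LDP. Your route is more ``generic'' renormalisation and does not rely on the DP96 lemmas, but it needs more care than you indicate. In particular, your claim that every large $\mathcal{S}\in\mathfrak{S}(\Lambda_m\setminus\mathcal{C})$ is contained in a fattened bad-block component is not automatic: defects (sites not in the crossing cluster) inside \emph{good} blocks can in principle concatenate across block boundaries into a large $\mathcal{S}$ that never touches a bad block. To close this you must strengthen the good-block event to include a \emph{surrounding} condition (every path from $\Lambda_\ell(x)$ to $\partial\Lambda_{2\ell}(x)$ meets the crossing cluster), which then forces any $\mathcal{S}$ of diameter $\gtrsim\ell$ to be confined to the fattened bad-block set. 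This is standard (it is essentially the Pisztora good-block definition), but it is the step that makes the bookkeeping work and should be stated explicitly.
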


Lemma~\ref{lem:Bernoulli_surf_dev} is a slight modification of the main result of \cite{DP96}. The proof, very similar to that of \cite{DP96}, is presented in the Appendix~\ref{sec:appendix_high_perco}.

\begin{proof}[Proof of Proposition~\textup{\ref{prop: cluster renormalisation}}]
Fix $\beta>\beta_c$ and $\delta>0$. We will prove the desired result by a renormalisation argument. Take $p_0<1$ given by Lemma~\ref{lem:Bernoulli_surf_dev}. For $x\in \mathbb{Z}^d$ and $L\geq 1$, let $U_x(L)$ denote the translation of the event $U(L)$ from Theorem~\ref{thm: local uniqueness} by $x$. Consider the renormalised percolation model $\eta\in \{0,1\}^{\Lambda_m}$, where $m=\lfloor \frac{n-10L}{L}\rfloor$, given by 
$$\eta_x \coloneqq \mathbbm{1}_{U_{Lx}(L)}.$$
Since $\inf_{\#}\Psi^{\#}_{\Lambda_{10L},\beta}[U(L)] \to 1$ as $L\to\infty$, it follows from the main result of \cite{LSS97} that there exists an $L=L(p_0)\geq 1$ sufficiently large such that, no matter the boundary condition $(\xi,\mathsf{b})$, $\eta$ stochastically dominates Bernoulli percolation with parameter $p_0$. Let $\varepsilon>0$ be a constant to be chosen later. By Lemma~\ref{lem:Bernoulli_surf_dev}, there exists $M=M(\varepsilon)\geq1$ and $c=c(\varepsilon)>0$ such that  
\begin{equation}\label{eq:large_clusters_dev_2}
\Psi^{(\xi,\mathsf{b})}_{\Lambda_n,\beta} \Bigg[ \exists \, \mathcal{S}_{\rm max} \in \mathfrak{C}(\eta):~ |\mathcal{S}_{\rm max}|\geq \tfrac{3}{4}|\Lambda_m| \text{ and } \frac{1}{|\Lambda_m|} \sum_{
\substack{\mathcal{S}\in \mathfrak{S}(\Lambda_m\setminus \mathcal{S}_{\rm max})\\ |\mathcal{S}|\geq M}} |\mathcal{S}| \leq \varepsilon \Bigg]\geq 1- e^{-cm^{d-1}},
\end{equation}
for every boundary condition $(\xi,\mathsf{b})$ and for every $n$ large enough. 

We call $\mathcal{G}$ the event in \eqref{eq:large_clusters_dev_2}.
We claim that $\mathcal{G}$ is contained in the complement of the event in \eqref{eq:large_clusters_dev} for $\varepsilon:=\frac{\min\{\delta,1\}}{2|\Lambda_{16L}|}$. This fact, combined with \eqref{eq:large_clusters_dev_2}, implies the desired result. Let us prove this claim. The proof goes in two steps.

Because of the way the event $U(L)$ is defined, the cluster $\mathcal{S}_{\rm max}$ as in \eqref{eq:large_clusters_dev_2} induces an $\omega$ cluster $\mathcal{C}_0$ which intersects all boxes $\Lambda_L(Lx)$ for $x\in \mathcal{S}_{\rm max}$. In particular if $\mathcal{G}$ occurs, $\mathcal{C}_0$ has size $|\mathcal{C}_0|\geq \frac{3}{4}|\Lambda_m|$. We first prove that $\mathcal{C}_0$ is the largest $\omega$ cluster, i.e.\ $\mathcal{C}_0=\mathcal{C}_{\rm max}$. To see this, let $N':=M|\Lambda_{16L}|$. Consider any other $\omega$-cluster $\mathcal{C}$ such that $|\mathcal{C}\cap \Lambda_{n-11L}|\geq N'$, and let $\mathcal{S}$ be the set of vertices $x\in \Lambda_m$ such that the annulus $\Lambda_{4L}(Lx)\setminus\Lambda_{2L}(Lx)$ is crossed by $\mathcal{C}$. Note that 
\begin{equation}\label{eq: simple inequality}
|\mathcal{C}\cap \Lambda_{n-11L}|\leq |\Lambda_{16L}||\mathcal{S}|,
\end{equation}
hence $|\mathcal{S}|\geq M$. Moreover, we have 
\begin{equation}\label{eq: simple inequality 2}
\mathcal{S}\subset \Lambda_m\setminus \mathcal{S}_{\rm max}.
\end{equation}
Thus, $\mathcal{S}$ lies in a connected component of $\Lambda_m\setminus \mathcal{S}_{\rm max}$, and in particular, $|\mathcal{S}|\leq \varepsilon |\Lambda_m|$. Hence, by our choice of $\varepsilon$, $|\mathcal{C}|\leq \frac{1}{2} |\Lambda_m|+|\Lambda_{n}\setminus\Lambda_{n-11L}|<|\mathcal{C}_0|$ for every $n$ large enough. This implies that $\mathcal{C}_0=\mathcal{C}_{\rm max}$. 

Now, if $\mathcal{G}$ occurs, we can use \eqref{eq: simple inequality} and \eqref{eq: simple inequality 2} to deduce that 
\begin{equation}
\frac{1}{|\Lambda_n|}\sum_{
\substack{\mathcal{C}\in \mathfrak{C}\setminus\{\mathcal{C}_{\rm max}\}\\ |\mathcal{C}\cap \Lambda_{n-11L}|\geq N'}}|\mathcal{C}\cap \Lambda_{n-11L}|\leq \frac{|\Lambda_{16L}|}{|\Lambda_n|}\sum_{
\substack{\mathcal{S}\in \mathfrak{S}(\Lambda_m\setminus \mathcal{S}_{\rm max})\\ |\mathcal{S}|\geq M}} |\mathcal{S}|\leq \frac{\varepsilon |\Lambda_{16L}| |\Lambda_m|}{|\Lambda_n|}.    
\end{equation}
Hence, for $N=2N'$,
\begin{equation}
\begin{aligned}
\frac{1}{|\Lambda_n|} \sum_{
\substack{\mathcal{C}\in \mathfrak{C}\setminus\{\mathcal{C}_{\rm max}\}\\ |\mathcal{C}|\geq N}} |\mathcal{C}| &\leq \frac{1}{|\Lambda_n|} \sum_{
\substack{\mathcal{C}\in \mathfrak{C}\setminus\{\mathcal{C}_{\rm max}\}\\ |\mathcal{C}\cap \Lambda_{n-11L}|\leq N'\\ |\mathcal{C}|\geq N}} |\mathcal{C}|+\frac{1}{|\Lambda_n|} \sum_{
\substack{\mathcal{C}\in \mathfrak{C}\setminus\{\mathcal{C}_{\rm max}\}\\ |\mathcal{C}\cap \Lambda_{n-11L}|\geq N'}} |\mathcal{C}\setminus \Lambda_{n-11L}|\\
&+\frac{1}{|\Lambda_n|} \sum_{
\substack{\mathcal{C}\in \mathfrak{C}\setminus\{\mathcal{C}_{\rm max}\}\\ |\mathcal{C}\cap \Lambda_{n-11L}|\geq N'}} |\mathcal{C}\cap \Lambda_{n-11L}|\\
&\leq \frac{|\Lambda_n\setminus \Lambda_{n-11L-N'}|}{|\Lambda_n|}+\frac{\varepsilon |\Lambda_{16L}| |\Lambda_m|}{|\Lambda_n|}
\leq \frac{\delta}{4}+\frac{\varepsilon |\Lambda_{16L}| |\Lambda_m|}{|\Lambda_n|}<\delta
\end{aligned}
\end{equation}
for every $n$ large enough. This proves the claim that the event in \eqref{eq:large_clusters_dev_2} is contained in the complement of the event in \eqref{eq:large_clusters_dev}. The desired result follows.
\end{proof}

We are now in a position to prove the main result of this section.
\begin{proof}[Proof of Theorem \textup{\ref{thm:ldp free}}] We fix $\beta>\beta_c$ and drop it from the notations. Let $\delta>0$.

\paragraph{Proof of the upper bound in \eqref{eq:ldp surface}.} Let $M:=M_0(\delta/8)$ be given by Lemma \ref{lem: cutting a to finite values}. Let $N:=N_0(\delta/(8M))$ be given by Proposition \ref{prop: cluster renormalisation}. Applying these two results, and also Lemma \ref{lem: volume ldp small average} to $K=N$ and $\tfrac{\delta}{4}$, we obtain $c_1>0$ such that, for every $n$ large enough,
\begin{equation}\label{eq:final ldp surf1}
    \Psi^0_{\Lambda_n}[B_1\cup B_2]\leq e^{-c_1n^d}, \qquad \Psi^0_{\Lambda_n}[ B_3]\leq e^{-c_1n^{d-1}},
\end{equation}
where $B_1:=\left\{\tfrac{1}{|\Lambda_n|}\sum_{x\in \Lambda_n}\mathsf{a}_x\mathbbm{1}_{\mathsf{a}_x\geq M}\geq \frac{\delta}{8}\right\}$, $B_2:=\left\{\tfrac{1}{|\Lambda_n|}\sum_{x\in \Lambda_n}\mathsf{a}_x\mathbbm{1}_{|\mathcal{C}_x|\geq N}\leq m^*-\frac{\delta}{4}\right\}$, and $B_3:=\left\{\tfrac{1}{|\Lambda_n|}\sum_{\mathcal{C}\in \mathfrak{C}\setminus \{\mathcal{C}_{\rm max}\}: |\mathcal{C}|\geq N}|\mathcal{C}|\geq \tfrac{\delta}{8M}\right\}$. First, we remark that
\begin{equation}
    \Psi^0_{\Lambda_n}[B_4]\leq \Psi^0_{\Lambda_n}[B_1\cup B_3],
\end{equation}
where $B_4:=\left\{\tfrac{1}{|\Lambda_n|}\sum_{\mathcal{C}\in \mathfrak{C}\setminus \{\mathcal{C}_{\rm max}\}: |\mathcal{C}|\geq N}\sum_{x\in \mathcal{C}}\mathsf{a}_x\geq \tfrac{\delta}{4}\right\}$. Combined with \eqref{eq:final ldp surf1}, this implies that for every $n$ large enough,
\begin{equation}\label{eq:final ldp surf1.5}
    \Psi^0_{\Lambda_n}[B_4]\leq 2e^{-c_1n^{d-1}}.
\end{equation}
Now, remark that,
\begin{equation}\label{eq:final ldp surf2}
    \Psi^0_{\Lambda_n}\left[\frac{1}{|\Lambda_n|}\sum_{x\notin \mathcal{C}_{\rm max}}\varphi_x\notin(-\tfrac{\delta}{2},\tfrac{\delta}{2})\right]\leq \Psi_{\Lambda_n}^0[B_1\cup B_4]+2\Psi^0_{\Lambda_n}[C_1],
\end{equation}
where
\begin{equation}
    C_1:=\left\{\frac{1}{|\Lambda_n|}\sum_{\substack{\mathcal{C}\in \mathfrak{C}\setminus \{\mathcal{C}_{\rm max}\}\\ |\mathcal{C}|\leq N}}\sigma_{\mathcal{C}}\sum_{x\in \mathcal{C}}\mathsf{a}_x\mathbbm{1}_{\mathsf{a}_x\leq M}\geq\frac{\delta}{8}\right\},
\end{equation}
and we recall that $\sigma_{\mathcal{C}}$ denotes the sign of $\mathcal{C}$.
Using the Edwards--Sokal coupling, we get that--- conditionally on $\mathsf{a}$--- the random variables
\begin{equation}
  \mathbbm{1}_{|\mathcal{C}|\leq N}\sigma_{\mathcal{C}}\sum_{x\in \mathcal{C}}\mathsf{a}_x\mathbbm{1}_{\mathsf{a}_x\leq M}, \qquad \mathcal{C}\in \mathfrak{C}\setminus\{\mathcal{C}_{\rm max}\}
\end{equation}
are centred, independent, and bounded (by $N\cdot M$). Classical large deviations estimates (see \cite{durrett2019probability}) then imply the existence of $c_2>0$ such that for every $n$ large enough,
\begin{equation}\label{eq:final ldp surf3}
    \Psi^0_{\Lambda_n}[C_1]\leq e^{-c_2n^d}.
\end{equation}
Combining \eqref{eq:final ldp surf1}, \eqref{eq:final ldp surf1.5}, \eqref{eq:final ldp surf2}, and \eqref{eq:final ldp surf3}, we obtain $c_3>0$ such that for all $n$ large enough,
\begin{equation}\label{eq:final ldp surf4}
    \nu_{\Lambda_n}\left[\frac{1}{|\Lambda_n|}\sum_{x\in \Lambda_n}\varphi_x\in [-m^*+\delta,m^*-\delta]\right]\leq e^{-c_3n^{d-1}}+\Psi^0_{\Lambda_n}\left[\frac{1}{|\Lambda_n|}\sum_{x\in \Lambda_n}\mathsf{a}_x\mathbbm{1}_{x\in \mathcal{C}_{\rm max}}\leq m^*-\frac{\delta}{2}\right].
\end{equation}
Now, writing $\mathbbm{1}_{x\in \mathcal{C}_{\rm max}}=\mathbbm{1}_{|\mathcal{C}_x|\geq N}-\mathbbm{1}_{|\mathcal{C}_x|\geq N, \: \mathcal{C}_x\neq \mathcal{C}_{\rm max}}$, observe that
\begin{equation}\label{eq:final ldp surf5}
    \Psi^0_{\Lambda_n}\left[\frac{1}{|\Lambda_n|}\sum_{x\in \Lambda_n}\mathsf{a}_x\mathbbm{1}_{x\in \mathcal{C}_{\rm max}}\leq m^*-\frac{\delta}{2}\right]\leq \Psi_{\Lambda_n}^0[B_2\cup B_4].
\end{equation}
Combining \eqref{eq:final ldp surf4} and \eqref{eq:final ldp surf5}, and using again \eqref{eq:final ldp surf1} and \eqref{eq:final ldp surf1.5}, we obtain that there exists $c_4>0$ such that for all $n$ large enough
\begin{equation}
    \nu_{\Lambda_n}\left[\frac{1}{|\Lambda_n|}\sum_{x\in \Lambda_n}\varphi_x\in [-m^*+\delta,m^*-\delta]\right]\leq e^{-c_4n^{d-1}}.
\end{equation}

\paragraph{Proof of the lower bound in \eqref{eq:ldp surface}.} Let $\varepsilon=\varepsilon(\delta)>0$ be a small constant to be defined. We partition $\Lambda_n$ into boxes of the form $\Lambda_{\varepsilon n}((\varepsilon n +1)x)\cap \Lambda_n$, where $x\in \Lambda_m$, $m=\lfloor\frac{n}{\varepsilon n+1}\rfloor$. Let $S$ be the set of all the edges in $\Lambda_n$ that connect neighbouring boxes $\Lambda_{\varepsilon n}((\varepsilon n +1)y)$ and $\Lambda_{\varepsilon n}((\varepsilon n +1)x)$. Since $|S|\leq m^d \cdot 2d (2\varepsilon n)^{d-1}$, by the FKG inequality,
\begin{equation}
\Psi^0_{\Lambda_n}[\omega_e=0 ,\; \forall \; e\in S]\geq e^{-Cn^{d-1}}
\end{equation}
for some constant $C=C(\varepsilon)>0$. Furthermore, by Proposition \ref{prop:regularity}, there exists a constant $R>0$, independent of $\varepsilon$, such that 
\begin{equation}
\Psi^0_{\Lambda_n}\Big[\sum_{y\in \Lambda_{\varepsilon n}((\varepsilon n+1)x)} \mathsf{a}_y\leq R|\Lambda_{\varepsilon n}| ,\; \forall x\in \Lambda_m\Big]\geq 1- |\Lambda_m| e^{-|\Lambda_{\varepsilon n}|}.
\end{equation}
A union bound gives that for every $n$ large enough
\begin{equation}
\Psi^0_{\Lambda_n}\Big[\omega_e=0 ,\, \forall \; e\in S, \sum_{y\in \Lambda_{\varepsilon n}((\varepsilon n+1)x)} \mathsf{a}_y\leq R|\Lambda_{\varepsilon n}| ,\; \forall x\in \Lambda_m\Big]\geq e^{-2Cn^{d-1}}.
\end{equation}
On the latter event, we use the Edwards--Sokal coupling to assign to each cluster independent $\pm 1$ spins. Conditionally on such a pair $(\omega,\mathsf{a})$, the expectation of $\sum_{z\in \Lambda_n} \varphi_z$ is equal to $0$. Furthermore, since on the event $\{\omega_e=0 ,\, \forall \; e\in S\}$ spins in different $\varepsilon n$-boxes are independent, the (conditional) variance of $\sum_{z\in \Lambda_n} \varphi_z$ is at most
\begin{equation}
\sum_{x\in \Lambda_m} \sum_{u,v\in \Lambda_{(\varepsilon n+1)x}} \mathsf{a}_u \mathsf{a}_v\leq m^d R^2 |\Lambda_{\epsilon n}|^2.
\end{equation}
This implies that conditionally on such a pair $(\omega,\mathsf{a})$, with probability $1/2$ we have that $|\sum_{z\in \Lambda_n} \varphi_z|\leq \sqrt{2 m^d R^2 |\Lambda_{\epsilon n}|^2}$. Now, we choose $\varepsilon>0$ to be small enough so that we have $2 m^d R^2 |\Lambda_{\epsilon n}|^2\leq (m^*(\beta)-\delta)^2 |\Lambda_n|^2$. The desired result follows by combining the above inequalities.
\end{proof}

We now prove the volume order large deviations mentioned in Remark \ref{rem: vol order}.
\begin{proof}[Proof of \textup{\eqref{eq:ldp volume}}.]
We keep the notations of the preceding proof. For the lower bound, we note that for each $x\in \Lambda_n$, the probability that $\varphi_x\leq m^*(\beta)+\delta$ stays bounded away from $0$. The FKG inequality then gives the desired lower bound.

For the upper bound, let $K_1=K_1(\delta/2)$ be given by Lemma \ref{lem: volume ldp large average}. Using this result, there exists $c_1>0$ such that for every $n$ large enough,
\begin{equation}\label{eq:final ldp vol1}
    \Psi^0_{\Lambda_n}\left[B_5\right]\leq e^{-c_1n^d},
\end{equation}
where $B_5:=\left\{\tfrac{1}{|\Lambda_n|}\sum_{x\in \Lambda_n}\mathsf{a}_x\mathbbm{1}_{|\mathcal{C}_x|\geq K_1}\geq m^*+\frac{\delta}{2}\right\}$. Notice that
\begin{equation}\label{eq:final ldp vol2}
    \nu_{\Lambda_n}\Big[m_{\Lambda_n}\notin [-m^*-\delta,m^*+\delta]\Big]\leq \Psi^0_{\Lambda_n}\left[B_1\cup B_5\right]+\Psi^0_{\Lambda_n}[C_2],
\end{equation}
where
\begin{equation}
    C_2:=\left\{\frac{1}{|\Lambda_n|}\sum_{\mathcal{C}\in \mathfrak{C}:|\mathcal{C}|\leq K_1}\text{sgn}(\mathcal{C})\sum_{x\in \mathcal{C}}\mathsf{a}_x\mathbbm{1}_{\mathsf{a}_x\leq M}\geq\frac{\delta}{4}\right\}.
\end{equation}
Using again a classical large deviations estimate, we argue the existence of $c_2>0$ such that for every $n$ large enough
\begin{equation}\label{eq:final ldp vol3}
    \Psi^0_{\Lambda_n}[C_2]\leq e^{-c_2n^d}.
\end{equation}
Plugging \eqref{eq:final ldp vol1}, \eqref{eq:final ldp vol3}, and \eqref{eq:final ldp surf1} in \eqref{eq:final ldp vol2} yields the existence of $c_3>0$ such that for every $n$ large enough,
\begin{equation}
    \nu_{\Lambda_n}\Big[m_{\Lambda_n}\notin [-m^*-\delta,m^*+\delta]\Big]\leq e^{-c_3n^d}.
\end{equation}
\end{proof}

\section{Spectral gap decay}\label{section:spectral gap}

In this section, we prove Theorem~\ref{theorem: dynamics} as a consequence of Theorem~\ref{thm:ldp free}. We begin by introducing the necessary definitions before proceeding to the proof.

\subsection{Definition of dynamics}
Let $\Lambda \subset \mathbb Z^d$ be finite and let $\beta > 0$. A dynamical $\varphi^4$ model on $\Lambda$ at inverse temperature $\beta$ is a continuous-time Markov process evolving on the state space $\mathbb R^\Lambda$ with unique invariant measure $\nu_{\Lambda,\beta}$. As stated in the introduction, we focus on Langevin and heat-bath dynamics.

In order to define both dynamics precisely, let us recall some basic facts about Feller processes. Given a Markov process $(X_t)_{t \geq 0}$, we consider the semigroup $(P_t)_{t \geq 0}$ which acts on bounded measurable functions $f:\mathbb R^\Lambda \rightarrow \mathbb R$ via
\begin{equation}
P_t f (\varphi) := \mathbf E_\varphi[ f(X_t)],
\end{equation}
where $\mathbf E_\varphi$ denotes expectation with respect to the path measure $\mathbf P_\varphi$ under which the process is conditioned to satisfy $X_0=\varphi$ almost surely. Feller processes correspond to processes whose semigroups have a further regularity property when restricting its action to $C_0(\mathbb R^\Lambda)$, the space of continuous functions vanishing at infinity. Namely, for every $t \geq 0$, $P_t:C_0(\mathbb R^\Lambda) \rightarrow C_0(\mathbb R^\Lambda)$. It is well known that Feller processes are characterised by a generator, i.e.\ $P_t = e^{t\mathcal L}$ for some negative-definite linear operator $\mathcal L$ acting on a dense domain of $C_0(\mathbb R^\Lambda)$. Furthermore, we only consider examples where $\mathcal L$ is (or rather, extends to) a self-adjoint operator on $L^2(\nu_{\Lambda,\beta})$. This means that the process is reversible.

We now turn to the precise definition of Langevin and heat-bath dynamics.

\subsubsection{Langevin dynamics}

We begin by describing the Langevin dynamics. Recall that the graph Laplacian $\Delta_\Lambda: \mathbb R^\Lambda \rightarrow \mathbb R^\Lambda$ is the linear map defined by
\begin{equation}
(\Delta_\Lambda \varphi)_x = \sum_{y \sim x} (\varphi_y-\varphi_x), \qquad \forall \varphi \in \mathbb R^\Lambda, \, \forall x \in \Lambda.
\end{equation} 
Let us write
\begin{equation}
e^{-\beta H_{\Lambda}(\varphi)} \prod_{x \in \Lambda} {\rm d}\rho_{g,a}(\varphi_x) = e^{-U_{\Lambda,\beta}(\varphi)-\frac \beta 2\sum_{xy \in E}(\varphi_x-\varphi_y)^2} \prod_{x \in \Lambda}{\rm d}\varphi_x,
\end{equation}
where the potential $U_{\Lambda,\beta}$ is given by
\begin{equation}
U_{\Lambda,\beta}(\varphi) = \sum_{x \in V} \left( g\varphi_x^4 + (a-\beta {\rm deg}_\Lambda(x)/2)\varphi_x^2 \right),
\end{equation}
and ${\rm deg}_\Lambda(x)$ is the degree of $x$ in $\Lambda$.

The Langevin dynamics $(\mathbf X_t^{\rm LA})_{t \geq 0}$ started from $\varphi \in \mathbb R^\Lambda$ is the solution of a system of stochastic differential equations:
\begin{equation}
{\rm d} \mathbf X_t^{\rm LA} = \left(\beta \Delta_\Lambda \mathbf X^{\rm LA}_t -\nabla U_{\Lambda,\beta} ( \mathbf X^{\rm LA}_t) \right) {\rm d}t + \sqrt 2  {\rm d}B_t^{\Lambda}, \qquad \mathbf X^{\rm LA}_0 = \varphi,
\end{equation}
where $\nabla$ is the gradient on $\mathbb R^\Lambda$, and $(B_t^\Lambda)_{t \geq 0}$ is a vector of i.i.d.\ Brownian motions started from $0$ at each lattice point in $\Lambda$. It is classical that $(\mathbf X_t^{\rm LA})_{t \geq 0}$ defines a continuous-time Markov process with invariant measure $\nu_{\Lambda,\beta}$ and with corresponding generator acting on smooth functions $f:\mathbb R^\Lambda \rightarrow \mathbb R$ via
\begin{equation}
\mathcal L_{\Lambda_L,\beta}^{\rm LA} f(\varphi) = -\frac 12\nabla U_{\Lambda,\beta}(\varphi) \cdot \nabla f(\varphi) + \frac 12 \Delta_\Lambda f(\varphi).
\end{equation}
We refer to \cite{RT96} for further details.

\subsubsection{Heat-bath dynamics}

We now turn our attention to defining heat-bath dynamics, which is a general state-space Markov chain (see \cite{meyn2012markov}). We consider a family of \emph{transition rates}  $\{ p_{\Lambda,\beta}(\varphi, x;s) :\varphi \in \mathbb R^\Lambda, x \in \Lambda, s \in \mathbb R \}$. The transition rate $p_{\Lambda,\beta}(\varphi,x;s)$ is the infinitesimal rate at which $\varphi$ jumps to $\varphi^{(x,s)}$, which is the configuration obtained from $\varphi$ by replacing its value at $x$ by $s$. Heat-bath dynamics corresponds to defining the transition rates according to the conditional $\varphi^4$ measure on the singleton $\{x\}$ with boundary conditions given by $\varphi|_{\Lambda\setminus\{x\}}$. Let us give a precise definition: for every $\varphi$, $x$, and $s$, 
\begin{equation}
p_{\Lambda,\beta}(\varphi, x; s) := \nu_{\Lambda, \beta} (\varphi_x = s \mid \varphi_{\Lambda\setminus\{x\}}) = \frac{e^{ - gs^4-as^2 + \beta s \sum_{y \sim x}\varphi_y}}{\int_{\mathbb R} e^{- gt^4-at^2 + \beta t \sum_{y \sim x}\varphi_y} {\rm d}t }.
\end{equation}

We build a Markov process $(\mathbf X^{\rm HB}_t)_{t \geq 0}$ associated to these jump rates by a classical construction. Let $\mathcal P$ be a homogeneous Poisson point process of rate 1 on $\mathbb R^+\times \Lambda$. Let us consider a sample of $\mathcal P$, i.e.\ a sequence of random space-time points $\mathcal T:= ( (t_i,x_i)_{i \geq 1} : 0\leq  t_1 < t_2 < \dots < t_n \leq \dots, \, x_i \in \Lambda\}$. Since $\mathcal P$ has no atoms, the strict monotonicity of the times occurs almost surely. Starting from some initial configuration $\varphi^0$, or initial law $\tilde \nu$ on $\mathbb R^\Lambda$, we then sequentially update the process $(\mathbf X_t^{\rm HB})_{t \geq 0}$ conditionally on $\mathcal P$ at the space-time points $\mathcal T$, where each time the spin configuration is resampled according to the transition rates defined above.

The above construction yields a Markov process with invariant measure $\nu_{\Lambda,\beta}$ and generator $\mathcal L_{\Lambda,\beta}^{\rm HB}$ whose action for every $f \in C_0(\mathbb R^\Lambda)$ is given by
\begin{equation}
\mathcal L_{\Lambda,\beta}^{\rm HB} f(\varphi) = \int_{\mathbb R} \sum_{x \in \Lambda} p_{\Lambda,\beta}(\varphi,x;s)(f(\varphi^{(x,s)})-f(\varphi)) {\rm d}s, \qquad \forall \varphi \in \mathbb R^\Lambda. 
\end{equation}

\begin{remark}
We restrict to heat-bath dynamics for convenience. We expect our results to hold for a larger class of Glauber dynamics jump rates, but we do not pursue this direction further.
\end{remark}

\subsection{Spectral gap and proof of Theorem \ref{theorem: dynamics}}

We first recall some basic facts about spectral gaps in our context. As above, let $\mathcal L$ be the generator of a Feller process that is reversible with respect to/self-adjoint in (a dense domain) of $L^2(\nu_{\Lambda,\beta})$. In the cases we consider, $\mathcal L$ is a negative-definite linear operator and hence the spectrum is contained in $(-\infty,0]$. Furthermore, constant functions are eigenvectors of eigenvalue $0$. The spectral gap $\lambda(\mathcal L)$ is the largest $\lambda \geq 0$ such that the spectrum of $\mathcal L$ on the subspace orthogonal to constant functions is contained in $(-\infty,-\lambda]$. We can give an equivalent formulation in terms of the Dirichlet form $\mathcal{E}$ defined by:
\begin{equation}
\mathcal E(f):= \langle -\mathcal L f \cdot f \rangle_{\Lambda,\beta},  
\end{equation}
where $f$ is taken in the domain $\mathcal{D}(\mathcal{E})$ of $\mathcal E$ (appropriately defined). The spectral gap is characterised by the best constant in the Poincaré inequality, that is 
\begin{equation}
    \lambda(\mathcal{L})=\inf_{f \in \mathcal{D}(\mathcal{E}): {\rm Var}_{\Lambda,\beta}(f)\neq 0}\frac{\mathcal{E}(f)}{{\rm Var}_{\Lambda,\beta}(f)}.
\end{equation}

\begin{remark}
Recall the semigroup $(P_t)_{t \geq 0}$. For every $t \geq 0$, $P_t = e^{t\mathcal L}$. By this observation and standard results from spectral theory, $\lambda(\mathcal{L})>0$ implies an exponential relaxation of variances. For every $f$ in the domain of $\mathcal L$ and every $t \geq 0$,
\begin{equation}
{\rm Var}_{\Lambda,\beta} \left( \mathbf E_\varphi[f(X_t)] \right) \leq e^{-2\lambda(\mathcal{L}) t} {\rm Var}_{\Lambda,\beta}(f).
\end{equation}
\end{remark}

It turns out that the two dynamics we consider are known to have strictly positive spectral gaps. For Langevin dynamics, this follows from functional inequalities developed in \cite{ledoux2001logarithmic} (which apply since the potential $U_{\Lambda,\beta}$ is strictly convex at infinity). 

In the case of heat-bath dynamics, it is known that strict positivity of spectral gaps is equivalent to a condition known as \emph{geometric ergodicity} \cite{roberts2001geometric}. In order to state this, let us observe that, for every $\varphi$ and every $t\geq 0$, the map $A \mapsto \mathbf E_\varphi[\mathbbm 1_A(X_t)]$ defines a probability measure. We call this measure $P_t^\varphi$. Geometric ergodicity asserts that there exists $c>0$ such that, almost surely for every $\varphi \sim \nu_{\Lambda,\beta}$, there exists $C(\varphi)>0$, such that
\begin{equation}
\| P_t^\varphi - \nu_{\Lambda,\beta} \|_{{\rm TV}} \leq C(\varphi) e^{-c t},
\end{equation}
where $\|\cdot\|_{{\rm TV}}$ is the total variation distance of these measures. Let us stress that, unlike the case of finite state-space chains, we cannot take $C(\varphi)$ uniform in $\varphi$. The geometric ergodicity of heat-bath dynamics was established, as a special case of geometric ergodicity of Metropolis--Hastings chains (see the definition of Gibbs sampler in \cite{RR04}), in \cite{RT96b}.

\begin{remark}
For full disclosure, the aforementioned geometric ergodicity results are proven for the discrete-time analogues of heat-bath chains. As with finite state-space Markov chains, one can deduce ergodicity results for continuous-time chains from the results for discrete-time chains.
\end{remark}

We now prove the surface order exponential decay of spectral gaps for $\varphi^4$ dynamics in the supercritical regime. We only do the case of heat-bath dynamics since the proof for Langevin dynamics is similar and simpler (see for example the spectral gap estimates in \cite{CGW22}). Additionally, we drop the superscript ``HB'' in all the notations. 

\begin{proof}[Proof of Theorem~\textup{\ref{theorem: dynamics}}]

We let $d\geq 2$ and $\beta>\beta_c$. We also take $\Lambda=\Lambda_n$ for some $n$. Let $m \in (0,m^*(\beta)-3\delta)$ and let $\chi_m:\mathbb R \rightarrow [-1,1]$ be a smooth, increasing, and odd function satisfying: $\chi_m(a) \equiv -1$ for $a \leq -m$ and $\chi_m(a) \equiv 1$ for $a \geq m$. Let us write
\begin{equation}
    f(\varphi) := \chi_m(m_{\Lambda_n}(\varphi)).
\end{equation}
By symmetry, one has that $\langle f \rangle_{\Lambda_n,\beta} = 0$. Moreover,
\begin{equation}\label{eq: upper bound sg 0}
    {\rm Var}_{\Lambda_n,\beta}(f)=\langle f^2\rangle_{\Lambda_n,\beta}\geq 2\nu_{\Lambda_n,\beta}[m_{\Lambda_n}\geq m]\geq 2\nu_{\Lambda_n,\beta}[m_{\Lambda_n}\geq m^*(\beta)-3\delta].
\end{equation}
Recall the definition of the Dirichlet form. In the case of heat-bath dynamics, it is explicitly given by:
\begin{equation}\label{eq: upper bound sg 1}
    \mathcal E_{\Lambda_n,\beta}(f) = \frac 12 \sum_{x \in \Lambda_n} \Big\langle \int_{\mathbb R} p_{\Lambda_n,\beta}(\varphi,x;s) |f(\varphi)-f(\varphi^{(x,s)})|^2 {\rm d}s \Big\rangle_{\Lambda_n,\beta}.
\end{equation}
Now, observe that
\begin{equation}\label{eq: upper bound sg 1.5}
|f(\varphi)-f(\varphi^{(x,s)})| \leq |\chi_m'|_\infty \frac{|\varphi_x| + |s|}{n^d} \mathbbm 1 \{ m_{\Lambda_n} \in [-m-(|\varphi_x|+|s|)n^{-d},m+(|\varphi_x|+|s|)n^{-d}] \}.
\end{equation}
We need to control the contribution of the term $|\varphi_x|+|s|$. In order to do so, let us define the events
\begin{equation}
    \mathcal G_1(x):= \{ |\varphi_x|\leq \delta n^d \}, \qquad \mathcal G_2:= \{ |s|\leq \delta n^d \}.  
\end{equation}
Note that by the Markov property and regularity, there exists $c_1>0$ such that, for every $n$ large enough,
\begin{equation}\label{eq: upper bound sg 2}
\Big\langle\int_{\mathbb R} p_{\Lambda_n,\beta}(\varphi,x;s) \mathbbm 1\{ \mathcal{G}_2^c \} {\rm d}s \Big\rangle_{\Lambda_n,\beta } = \nu_{\Lambda_n,\beta}[\mathcal{G}_1(x)^c] \leq \exp(-c_1 n^{4d}). 
\end{equation}
Thus, since $|f|_\infty \leq 1$, there exists $c_2>0$ such that, for every $n$ large enough,
\begin{align}
\sum_{x \in \Lambda_n}\Big\langle \int_{\mathbb R} p_{\Lambda_n,\beta}(\varphi,x;s) |f(\varphi)-f(\varphi^{(x,s)})|^2 &(\mathbbm 1\{ \mathcal{G}_1(x)^c \} + \mathbbm 1\{ \mathcal{G}_2^c\}){\rm d}s  \Big\rangle_{\Lambda_n,\beta} 
\\&\leq 8 \sum_{x\in \Lambda_n}\nu_{\Lambda_n,\beta}[\mathcal{G}_1(x)^c]\\&\leq\exp(-c_2 n^{4d}),\label{eq: upper bound sg 2.5}
\end{align}
where we used the fact that $\int_{\mathbb R}p_{\Lambda_n,\beta}(\varphi,x;s){\rm d}s=1$ and $\Vert f\Vert_\infty=1$ in the second line, and \eqref{eq: upper bound sg 2} in the third line.
Let us define the event
\begin{equation}
\mathcal{E} = \{ m_{\Lambda_n} \in (-m^*(\beta)+\delta, m^*(\beta)-\delta) \}.
\end{equation}
Using Theorem \ref{thm:ldp free}, there exists $c_3>0$ such that for every $n$ large enough,
\begin{equation}\label{eq: upper bound sg 3}
\nu_{\Lambda_n,\beta}[\mathcal{E}] \leq \exp(-c_3 n^{d-1}).
\end{equation}
Then, there exists $c_4>0$ such that for every $n$ large enough,
\begin{align}
\sum_{x \in \Lambda_n}\Big\langle \int_{\mathbb R} p_{\Lambda_n,\beta}(\varphi,x;s) |f(\varphi)&-f(\varphi^{(x,s)})|^2 (\mathbbm 1\{ \mathcal{G}_1(x) \cap \mathcal{G}_2 \}){\rm d}s  \Big\rangle_{\Lambda_n,\beta} 
\\&\leq \frac{4|\chi_m'|_\infty^2}{n^{2d}}\sum_{x \in \Lambda_n}\Big\langle \int_{\mathbb R} p_{\Lambda_n,\beta}(\varphi,x;s)(|\varphi_x|^2+|s|^2) \mathbbm 1\{\mathcal{E}\}{\rm d}s  \Big\rangle_{\Lambda_n,\beta}
\\&\leq \frac{8|\chi_m'|_\infty^2}{n^{2d}}\sum_{x\in \Lambda_n}\nu_{\Lambda_n,\beta}[|\varphi_x|^4]^{1/2}\nu_{\Lambda_n,\beta}[\mathcal{E}]^{1/2}
\\&\leq \exp(-c_4 n^{d-1}),\label{eq: upper bound sg 4}
\end{align}
where in the second line we used \eqref{eq: upper bound sg 1.5} and the fact that $\{ m_{\Lambda_n} \in [-m-(|\varphi_x|+|s|)n^{-d},m+(|\varphi_x|+|s|)n^{-d}] \}\cap \mathcal{G}_1\cap \mathcal{G}_2\subset \mathcal{E}$, in the third line we used the Cauchy--Schwarz inequality (twice)--- applied to the measure $\nu_{\Lambda_n,\beta}({\rm d}\varphi)p_{\Lambda_n,\beta}(\varphi,x;s){\rm d}s$--- to argue that 
\begin{align}
\Big\langle \int_{\mathbb R} p_{\Lambda_n,\beta}(\varphi,x;s)|s|^2\mathbbm 1\{\mathcal{E}\}{\rm d}s  \Big\rangle_{\Lambda_n,\beta}&\leq \Big\langle \int_{\mathbb R}p_{\Lambda_n,\beta}(\varphi,x;s)|s|^4\mathrm{d}s\Big\rangle_{\Lambda_n,\beta}^{1/2}\nu_{\Lambda_n,\beta}[\mathcal{E}]^{1/2}\\&=\langle \varphi_x^4\rangle_{\Lambda_n,\beta}^{1/2}\nu_{\Lambda_n,\beta}[\mathcal{E}]^{1/2},
\end{align}
and that
\begin{equation}
\Big\langle \int_{\mathbb R} p_{\Lambda_n,\beta}(\varphi,x;s)|\varphi_x|^2\mathbbm 1\{\mathcal{E}\}{\rm d}s  \Big\rangle_{\Lambda_n,\beta}\leq \langle \varphi_x^4\rangle_{\Lambda_n,\beta}^{1/2}\nu_{\Lambda_n,\beta}[\mathcal{E}]^{1/2},
\end{equation}
and in the fourth line we used \eqref{eq: upper bound sg 3}.

Combining \eqref{eq: upper bound sg 1}, \eqref{eq: upper bound sg 1.5}, \eqref{eq: upper bound sg 2.5}, and \eqref{eq: upper bound sg 4} yields, for every $n$ large enough
\begin{equation}
    \mathcal{E}_{\Lambda_n,\beta}(f)\leq \exp(-c_5n^{d-1}).
\end{equation}
Moreover, \eqref{eq: upper bound sg 0} and \eqref{eq: upper bound sg 3} imply that, for every $n$ large enough,
\begin{equation}
    \textup{Var}_{\Lambda_n,\beta}(f)\geq \frac{1}{2}.
\end{equation}
Putting the two last displayed equations together yields, for every $n$ large enough,
\begin{equation}
    \lambda(\Lambda_n)\leq \frac{\mathcal{E}_{\Lambda_n,\beta}(f)}{\textup{Var}_{\Lambda_n,\beta}(f)}\leq 2\exp(-c_5n^{d-1}),
\end{equation}
and concludes the proof.
\end{proof}

\appendix

\section{Toolbox}

\subsection{Large degree deviations}

In this section, we obtain a large deviations estimate for the degrees $\Delta{\n}(x)$ of a random current $\n$. Recall that for $e=uv$, $\varphi_e$ denotes the product $\varphi_u\varphi_v$, and by convention $\varphi_{\fg}\equiv 1$.

\begin{lemma}\label{lem:degree_exp.moments} 
For every $\Lambda\subset \Z^d$, $o\in\Lambda$, $\mathsf{h}\in(\mathbb{R}^+)^\Lambda$ and every set of edges $\mathcal{E}$ in $\Lambda[\mathsf{h}]$, we have
\begin{equation}\label{eq:degree_exp.moments_sources}
\langle\varphi_o\rangle_{\Lambda,\beta,\mathsf{h}}\mathbf{E}_{\Lambda[\mathsf{h}],\beta}^{o\fg} \Big[2^{\sum_{e\in\mathcal{E}} \n_e}\Big] = \langle \varphi_o\prod_{e\in \mathcal{E}}\exp(J_e\varphi_e)\rangle_{\Lambda,\beta,\mathsf{h}}
\end{equation}
and
\begin{equation}\label{eq:degree_exp.moments}
\mathbf{E}_{\Lambda[\mathsf{h}],\beta}^{\emptyset}\Big[2^{\sum_{e\in\mathcal{E}} \n_e}\Big] = \langle \prod_{e\in \mathcal{E}}\exp(J_e \varphi_e)\rangle_{\Lambda,\beta,\mathsf{h}},
\end{equation}
where $J_e:=\beta \mathbbm{1}_{e\in E(\Lambda)}+\sum_{x\in \Lambda}\beta \mathsf{h}_x\mathbbm{1}_{e=x\fg}$.
In particular, if the endpoints of $\mathcal{E}$ are all contained in $\Delta\cup\{\fg\}$, for a set $\Delta\subset \Lambda$ satisfying the stochastic domination \eqref{eq:regularity} from Proposition~\textup{\ref{prop:regularity}} with some $C\in(0,\infty)$, then the right hand sides of both \eqref{eq:degree_exp.moments_sources} and \eqref{eq:degree_exp.moments} are smaller than $e^{C'|\mathcal{E}|}$ for some constant $C'\in(0,\infty)$ depending only on $C$. 
\end{lemma}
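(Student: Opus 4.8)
The backbone is the random current expansion of Lemma~\ref{lem: current expansion}, together with the elementary identity
\begin{equation}
\sum_{n\geq 0}2^{n}\frac{\lambda^{n}}{n!}=e^{2\lambda},\qquad \sum_{n\geq0}\frac{\lambda^n}{n!}=e^{\lambda},
\end{equation}
which lets one absorb a factor $2^{\n_e}$ on each edge $e$ into a doubling of the coupling constant $J_e$. Concretely, for \eqref{eq:degree_exp.moments} I would start from the definition of $\mathbf{E}^{\emptyset}_{\Lambda[\mathsf h],\beta}$ and write
\begin{equation}
\mathbf{E}^{\emptyset}_{\Lambda[\mathsf h],\beta}\Big[2^{\sum_{e\in\mathcal E}\n_e}\Big]
=\frac{1}{\sum_{\partial\m=\emptyset}w^{\emptyset}_{\beta,\mathsf h}(\m)}\sum_{\partial\n=\emptyset}\Big(\prod_{e\in\mathcal E}2^{\n_e}\Big)w^{\emptyset}_{\beta,\mathsf h}(\n),
\end{equation}
using that the tangling measure $\rho^{\emptyset}_{\n}$ is a probability measure (so the $\ft$-marginal sums to $1$). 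The denominator is $Z^{\varphi^4}_{\Lambda,\beta,\mathsf h}/\langle 1\rangle_0^{|\Lambda|}$ up to the usual normalisation; more importantly, pulling the $2^{\n_e}$ into the weight $w^{\emptyset}_{\beta,\mathsf h}$ multiplies the factor $J_e^{\n_e}/\n_e!$ by $2^{\n_e}$, i.e.\ replaces $J_e$ by $2J_e$ on the edges of $\mathcal E$. Resumming the current expansion backwards (Lemma~\ref{lem: current expansion} applied to the inhomogeneous coupling field $J'$ equal to $2J$ on $\mathcal E$ and $J$ elsewhere, with source function $\emptyset$) identifies the numerator with $Z^{\varphi^4}_{\Lambda,\beta,\mathsf h,J'}$-type quantity, and taking the ratio yields exactly $\langle\prod_{e\in\mathcal E}\exp(J_e\varphi_e)\rangle_{\Lambda,\beta,\mathsf h}$, since increasing $J_e$ to $2J_e$ in the Hamiltonian corresponds to inserting $e^{\beta(2J_e-J_e)\varphi_e}=e^{J_e\varphi_e}$ into the expectation. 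The source version \eqref{eq:degree_exp.moments_sources} is the same computation carried out with source function $o\fg$ in place of $\emptyset$, which produces the extra insertion $\varphi_o$ (note $\varphi_\fg\equiv1$, so the ghost edge contributes nothing beyond $\varphi_o$), and the prefactor $\langle\varphi_o\rangle_{\Lambda,\beta,\mathsf h}$ comes from writing the numerator of the current representation of $\langle\varphi_o\prod_e e^{J_e\varphi_e}\rangle$ over the \emph{same} sourceless denominator.

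For the final ``in particular'' claim, once the moment generating function identity is in hand, the task reduces to bounding $\langle\varphi_o\prod_{e\in\mathcal E}\exp(J_e\varphi_e)\rangle_{\Lambda,\beta,\mathsf h}$ (and its sourceless analogue) by $e^{C'|\mathcal E|}$. Here $J_e\leq\beta\vee\beta h$ is bounded, and each $\varphi_e=\varphi_u\varphi_v$ with $u,v\in\overline\Delta$; using $\varphi_u\varphi_v\leq\tfrac12(\varphi_u^2+\varphi_v^2)$ and the fact that each vertex of $\overline\Delta$ has degree at most $2d+1$ in the graph $\Lambda[\mathsf h]$, the product $\prod_{e\in\mathcal E}\exp(J_e\varphi_e)$ is dominated by $\exp\big(c\sum_{x\in\overline\Delta}\varphi_x^2\big)$ for a constant $c$ depending only on $\beta,h,d$, with the exponent involving at most $(2d+1)|\mathcal E|$ many terms $\varphi_x^2$ (each counted with bounded multiplicity). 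By the stochastic domination \eqref{eq:regularity} of Proposition~\ref{prop:regularity}, the law of $(\varphi_x)_{x\in\overline\Delta}$ under $\nu^{(\Delta)}_{\Lambda,\beta,\mathsf h}$ is dominated by a product of shifted $\rho_{g/2,0}$ measures, which have finite exponential-of-square moments (since $e^{cx^2}e^{-(g/2)x^4}$ is integrable for every $c$). Hence $\langle\exp(c\varphi_x^2)\rangle\leq K$ for a constant $K=K(\beta,h,d)$ uniformly over the relevant vertices, and by the FKG inequality (Proposition~\ref{prop: FKG phi4} for the absolute value field, applicable since the $\varphi_x^2$ are increasing functions of $|\varphi|$ and $\mathsf h\geq0$) the expectation of the product is at most the product of the expectations, giving $K^{(2d+1)|\mathcal E|}=e^{C'|\mathcal E|}$ with $C'=(2d+1)\log K$. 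The factor $\langle\varphi_o\rangle$ (or $\varphi_o$ inside the expectation) is harmless: either $o\in\overline\Delta$, in which case it is absorbed into the same Cauchy--Schwarz/FKG bound at the cost of enlarging $C'$, or one simply notes $\langle\varphi_o\rangle_{\Lambda,\beta,\mathsf h}\leq\langle\varphi_o^2\rangle^{1/2}$ is bounded by Proposition~\ref{prop:regularity} as well.

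The main obstacle I anticipate is bookkeeping rather than conceptual: one must be careful that in resumming the current expansion with doubled couplings the \emph{denominator} stays the original sourceless partition function (it does, because $2^{\sum_{e}\n_e}$ only reweights the numerator in the definition of $\mathbf P^{\#}$), and that the tangling measures integrate out cleanly — this is fine since $\sum_{\ft}\rho^{A}_{\n}(\ft)=1$ for every $\n$, so the tangled-current moment equals the plain random-current moment and no information about $\ft$ is needed. A second minor point requiring care is the multiplicity with which a given vertex $x$ appears across the edges of $\mathcal E$: a vertex of $\overline\Delta$ can be an endpoint of up to $2d+1$ edges of $\mathcal E$, so the exponent $c\sum_x\varphi_x^2$ should be read with the understanding that each $\varphi_x^2$ carries a coefficient $\leq c(2d+1)$; enlarging $c$ accordingly and using that $e^{c'x^2}$ still has finite $\rho_{g/2,0}$-expectation closes the argument. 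No delicate estimate is involved beyond the integrability of $e^{c'x^2}$ against $e^{-(g/2)x^4}$, which is immediate.
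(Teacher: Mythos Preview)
Your derivation of the two identities \eqref{eq:degree_exp.moments_sources} and \eqref{eq:degree_exp.moments} is correct and in fact cleaner than the paper's. The paper expands $2^{\n_e}=\sum_{k_e\geq0}\binom{\n_e}{k_e}$ via the binomial theorem, then for each fixed tuple $(k_e)$ shifts $\n_e\mapsto\n_e-k_e$, reidentifies the resulting sum as a moment $\langle\varphi_o\prod_e\varphi_e^{k_e}\rangle$ through Lemma~\ref{lem: current expansion}, and finally resums over $(k_e)$ to recover the exponential. You instead absorb $2^{\n_e}$ directly into the weight as a doubling of $J_e$ and resum the current expansion once, with the modified coupling; the ratio then reads off as the expectation of $\prod_{e\in\mathcal{E}}e^{J_e\varphi_e}$. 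Both routes are short, but yours is one step rather than two.

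For the ``in particular'' part there is a slip: the FKG inequality for increasing functions gives $\langle\prod_x e^{c\varphi_x^2}\rangle\geq\prod_x\langle e^{c\varphi_x^2}\rangle$, the wrong direction for what you want. The fix is that FKG is not needed at all. The stochastic domination \eqref{eq:regularity} is by a \emph{product} measure $\rho_{\Delta,g/2,0}(C+|\cdot|)$; since $\prod_x e^{c_x\varphi_x^2}$ is increasing in $|\varphi|$, you may pass to the dominating measure and then the expectation factorises exactly, giving the bound $\prod_x\rho_{g/2,0}\big[e^{c_x(C+|\varphi|)^2}\big]\leq K^{2|\mathcal{E}|}$. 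With this correction your argument for the tail bound goes through. (The paper itself does not spell out this last part; it only proves the identities.)
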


\begin{proof} 
We only prove \eqref{eq:degree_exp.moments_sources}. The proof of \eqref{eq:degree_exp.moments} is similar and simpler. For ease of notation, we write $\mathbf{P}=\mathbf{P}_{\Lambda[\mathsf{h}],\beta}^{o\fg}$. 
We use the binomial theorem to write 
\begin{equation}\label{eq: binomial}
    \mathbf{E}\left[2^{\sum_{e\in \mathcal{E}}\n_e}\right]=\sum_{e\in \mathcal{E}}\sum_{k_e\geq 0}\mathbf{E}\left[\prod_{e\in \mathcal{E}}\binom{\n_e}{k_e}\mathbbm{1}_{\n_e\geq k_e}\right].
\end{equation}
To express the latter, given a current $\n$ that satisfies $\n_e\geq k_e$ for all $e\in \mathcal{E}$, let $\tilde \n$ be the current defined by $\tilde\n_f:=\n_f-k_f\mathbbm 1\{f\in \mathcal{E}\}$. With this definition in hands, we can write
\begin{equation}
    w^{o\fg}_{\beta,\mathsf{h}}(\n)\prod_{e\in \mathcal{E}}\binom{\n_e}{k_e}=w^{A}_{\beta,\mathsf{h}}(\tilde \n)\prod_{e\in \mathcal{ E}\cap E(\Lambda)}\frac{\beta^{k_e}}{k_e!}\prod_{\substack{x\in \Lambda\\e=x\fg\in \mathcal{E}}}\frac{(\beta\mathsf{h}_x)^{k_e}}{k_e!},
\end{equation}
where $A\in \mathcal{M}(\Lambda^\fg)$ is defined by 
\begin{align}
    A_x&:=\sum_{\substack{e\in \mathcal{E}\\ \: e\ni x}}k_e+\mathbbm{1}_{x=o}, \quad x\in \Lambda, 
    \\A_\fg&:=\sum_{\substack{e\in \mathcal{E}\\e\ni \fg}}k_e+1\text{ mod }2.
\end{align}
Moreover, $\partial\tilde \n=\partial A$. 
Using Lemma~\ref{lem: current expansion} we get
\begin{equation}
\begin{aligned}
    \mathbf{E}\left[\prod_{e\in \mathcal{E}}\binom{\n_e}{k_e}\mathbbm{1}_{\n_e\geq k_e}\right]&=\frac{1}{\langle \varphi_o \rangle_{\Lambda,\beta,\mathsf{h}}}\frac{\sum_{\partial \tilde \n=\partial A} w^A_{\beta,\mathsf{h}}(\tilde\n)}{\sum_{\partial \tilde \n=\emptyset} w_{\beta,\mathsf{h}}(\tilde\n)}\prod_{e\in \mathcal{ E}\cap E(\Lambda)}\frac{\beta^{k_e}}{k_e!}\prod_{\substack{x\in \Lambda\\e=x\fg\in \mathcal{E}}}\frac{(\beta\mathsf{h}_x)^{k_e}}{k_e!}\\
    &=\frac{\langle \prod_{x\in \Lambda}\varphi_x^{A_x}\rangle_{\Lambda,\beta,\mathsf{h}}}{\langle \varphi_o\rangle_{\Lambda,\beta,\mathsf{h}}}\prod_{e\in \mathcal{ E}\cap E(\Lambda)}\frac{\beta^{k_e}}{k_e!}\prod_{\substack{x\in \Lambda\\e=x\fg\in \mathcal{E}}}\frac{(\beta\mathsf{h}_x)^{k_e}}{k_e!}
    \\&=
    \frac{\langle \varphi_o\prod_{e\in \mathcal{E}\cap E(\Lambda)}(\beta\varphi_e)^{k_e}\prod_{\substack{x\in \Lambda\\e=x\fg\in \mathcal{E}}}(\beta \mathsf{h}_x\varphi_x)^{k_e}\rangle_{\Lambda,\beta,\mathsf{h}}}{\langle \varphi_o\rangle_{\Lambda,\beta,\mathsf{h}}}\prod_{e\in \mathcal{E}}\frac{1}{k_e!}.
\end{aligned}    
\end{equation}
We can now use \eqref{eq: binomial} to deduce that 
\begin{equation}
    \mathbf{E}\left[2^{\sum_{e\in \mathcal{E}}\n_e}\right]=
    \frac{\langle \varphi_o\prod_{e\in \mathcal{E}}\exp(J_e\varphi_e)\rangle_{\Lambda,\beta,\mathsf{h}}}{\langle \varphi_o\rangle_{\Lambda,\beta,\mathsf{h}}}.
\end{equation}
\end{proof}

\subsection{Tanglings estimates}\label{appendix:strict stochastic dom tanglings}

In this section we prove Proposition \ref{prop: tanglings ECT}.
We recall from \cite{GunaratnamPanagiotisPanisSeveroPhi42022} that the measure $\rho^{2k}$ (resp.\ $\rho^{2k_1,2k_2}$) is constructed from taking a weak limit of the single (resp.\ double) random current measure associated with a near-critical Ising model on the complete graph $K_n$. We refer to \cite{GunaratnamPanagiotisPanisSeveroPhi42022,KPP24} for more details. In order to prove the desired result, we will use some properties of this random current expansion established in \cite{KPP24}. We denote the random current measure on $K_n$ with source set $S$ by $P^S_{K_n}$.

\begin{proof}[Proof of Proposition \textup{\ref{prop: tanglings ECT}}]
Let $S_1=\{1,2,\ldots,2k_1\}$ and $S_2=\{2k_1+1,2k_1+2,\ldots,2k_1+2k_2\}$. Let $\n_1$ and $\n_2$ be random currents on $K_n$ with source sets $S_1$ and $S_2$, respectively. Let $\Pi_i(\n_i)$ be the random partition of $S_i$ induced by the current $\n_i$, where $x,y$ are in the same partition class if and only if $x\longleftrightarrow y$ in $\n_i$. Define similarly $\Pi(\n_1,\n_2)$ to be the random partition of $S_1\cup S_2$ induced by the current $\n_1+\n_2$. We will show that for every even partition $P_1$ of $S_1$ and every even partition $P_2$ of $S_2$, there exists $\varepsilon>0$ such that
\begin{equation}
P^{S_1,S_2}_{K_n}[\Pi(\n_1,\n_2)=\{S_1\cup S_2\} \mid \Pi_1(\n_1)=P_1, \Pi_2(\n_2)=P_2]\geq \varepsilon    
\end{equation}
for every $n$ large enough. The desired result will then follow by taking the limit as $n$ tends to infinity.

Indeed, by the main result of \cite{KPP24}, there exist $\delta>0$ and $c>0$ such that 
\begin{equation}
P^{S_1}_{K_n}[\Pi_1(\n_1)=P_1, |C_{\n_1}(i)|\geq c\sqrt{n} \; \forall i\in S_1]\geq \delta     
\end{equation}
and 
\begin{equation}
P^{S_2}_{K_n}[\Pi_2(\n_2)=P_2, \sqrt{n}/c\geq |C_{\n_2}(i)|\geq c\sqrt{n} \; \forall i\in S_2]\geq \delta     
\end{equation}
for every $n$ large enough, where $C_{\n_i}(x)$ denotes the connected component of $x$ induced by the current $\n_i$, and $|C_{\n_i}(x)|$ denotes its cardinality (number of vertices). Our aim is to show that conditionally on the above events, there is positive probability that each cluster $C_{\n_2}(i)$ with $i\in S_2$ intersects all clusters $C_{\n_1}(j)$ with $j\in S_1$. The desired result then follows readily.

To this end, it suffices to show that for every $i\in S_2$, conditionally on the clusters $C_{\n_1}(1),\ldots,C_{\n_1}(2k_1)$ and $C_{\n_2}(2k_1+1),\ldots,C_{\n_2}(i-1)$, the cluster $C_{\n_2}(i)$ has positive probability to intersect all clusters $C_{\n_1}(1),\ldots,C_{\n_1}(2k_1)$. Note that under this conditioning, $C_{\n_2}(i)\setminus S_2$ is sampled uniformly at random from $K_n$ without $C_{\n_2}(2k_1+1), \ldots, C_{\n_2}(i-1)$ and $S_2$. For ease of notation we prove the above only for $i=2k_1+1$, and the general case follows similarly, up to changing the value of $n$. 

Let $P_1=\{Q_1,\ldots, Q_j\}$ be an even partition of $S_1$, and on the event $\{\Pi(\n_1)=P_1\}$, write $C_{\n_1}(Q_\ell)$ for the cluster of $Q_\ell$ in $\n_1$. Denote $A=A(B_1,\ldots,B_j,r)$ the event $\{C_{\n_1}(Q_\ell)=B_\ell$ for every $\ell\in \{1,2,\ldots,j\}, |C_{\n_2}(2k_1+1)|=r\}$, where the sets $B_\ell$ are possible realisations satisfying $|B_\ell|\geq c\sqrt{n}$, and where $\sqrt{n}/c\geq r\geq c\sqrt{n}$. 
Note that under $P^{S_2}_{K_n}[\:\cdot \mid |C_{\n_2}(2k_1+1)|=r]$, the vertices lying in $C_{\n_2}(2k_1+1)\setminus \{2k_1+1,\ldots,2k_1+2k_2\}$ are distributed uniformly at random among subsets of cardinality $r-2k_2$ chosen from a set of cardinality $n-2k_2$. Furthermore, the probability that the set of these $r-2k_2$ points intersects each $B_\ell$ is increasing as a function of $r$ and the size of each $B_\ell$. Thus, we may assume that each $B_\ell$ has size $r_0:=\lfloor c\sqrt{n} \rfloor$, and $|C_{\n_2}(2k_1+1)|=r_0$. By asking $C_{\n_2}(2k_1+1)$ to contain exactly one point from each $B_\ell$ we obtain the lower bound
\begin{equation}
P^{S_1,S_2}_{K_n}[2k_1+1 \connect{\n_1+\n_2\:} Q_\ell, \: \forall \ell=1,2,\ldots,j \mid A]\geq \frac{(r_0-2k_1-2k_2)^j\binom{n-m-2k_2}{r_0-j-2k_2}}{\binom{n-2k_2}{r_0-2k_2}},    
\end{equation}
where $m:=j(r_0-2k_1-2k_2)$. Here the term $(r_0-2k_1-2k_2)^j$ is a lower bound for the number of ways to choose exactly one vertex from each $B_\ell$, and $\binom{n-m-2k_2}{r_0-j-2k_2}$ is a lower bound for the number of ways to choose the remaining $r_0-j-2k_2$ vertices. 

For every $n$ large enough we have
\begin{equation}
\begin{aligned}
\frac{\binom{n-m-2k_2}{r_0-j-2k_2}}{\binom{n-2k_2}{r_0-j-2k_2}}&=\frac{(n-m-2k_2)!(n-r_0+j)!}{(n-2k_2)!(n-m-r_0+j)!}=\prod_{p=0}^{m-1} \frac{n-r_0+j-p}{n-2k_2-p}\\&=\prod_{p=0}^{m-1} \left(1-\frac{r_0-j-2k_2}{n-2k_2-p}\right)
\geq \left(1-\frac{2r_0}{n}\right)^m=(1+o(1))e^{-2jc^2}.
\end{aligned}
\end{equation} 
Moreover, for some constant $c'>0$ we have
\begin{equation}
\begin{aligned}
\frac{\binom{n-2k_2}{r_0-j-2k_2}}{\binom{n-2k_2}{r_0-2k_2}}=
\frac{(r_0-2k_2)!(n-r_0)!}{(r_0-j-2k_2)!(n-r_0+j)!}
=(1+o(1))\frac{r_0^j}{n^j}\geq \frac{c'}{(r_0-2k_1-2k_2)^j}.
\end{aligned}
\end{equation}
This implies that 
\begin{equation}
P^{S_1,S_2}_{K_n}[2k_1+1 \connect{\n_1+\n_2\:} Q_\ell ,\; \forall \ell=1,2,\ldots,j \mid A]\geq (1+o(1))c' e^{-2jc^2}.   
\end{equation}
The desired result follows.
\end{proof}

\subsection{Large deviation for highly supercritical percolation}\label{sec:appendix_high_perco}

Here we prove Lemma~\ref{lem:Bernoulli_surf_dev} by adapting \cite{DP96}. We start by introducing some notations and recalling basic facts. Given a graph $G=(V,E)$ and a subset $S\subset V$, we define the inner and outer boundaries  $\partial^{\rm in}_G S\coloneqq\{x\in S:~\exists\, y\in V\setminus S,\, \{x,y\}\in E\}$ and $\partial^{\rm out}_G S\coloneqq\{y\in V\setminus S:~\exists\, x\in S,\, \{x,y\}\in E\}$. We define the notion of $*$-connectivity in $\Z^d$, where every pair of vertices $x,y$ satisfying $|x-y|_\infty=1$ are connected by an edge. 
We will use the fact, proved in \cite[Lemma 2.1 (ii)]{DP96}, that for every connected set $C\subset \Lambda_m$ and every $S\in\mathfrak{S}(\Lambda_m\setminus C)$, both $\partial^{\rm in}_{\Lambda_m} S$ and $\partial^{\rm out}_{\Lambda_m} S$ are $*$-connected. 

Our main tool will be a local isoperimetric inequality for boxes of $\Z^d$, $d\geq2$, proved in \cite[Proposition 2.2]{DP96}: for every $\varepsilon>0$, there exists a constant $c=c(\varepsilon)>0$ such that for every $m\geq 1$ and every connected set $S\subset \Lambda_m$ satisfying $|S|\leq (1-\varepsilon)|\Lambda_m|$, we have
\begin{equation}\label{eq:local_isoperimetric_ineq}
    \sum_{i=1}^{\ell} |\Delta_i S|^{\frac{d}{d-1}} \geq c |S|,
\end{equation}
where $\Delta S$ can be either $\partial^{\rm in}_{\Lambda_m} S$ or $\partial^{\rm out}_{\Lambda_m} S$ and $(\Delta_i S)_{i=1}^{\ell}$ are the $*$-connected components of $\Delta S$.

\begin{proof}[Proof of Lemma~\textup{\ref{lem:Bernoulli_surf_dev}}]
Fix $m\geq1$. Recall that $\mathfrak{C}=\mathfrak{C}(\Lambda_m)$ denotes the set of open clusters.  We also denote by $\mathfrak{C}^*=\mathfrak{C}^*(\Lambda_m)$ to be the set of closed $*$-connected clusters. 

Assume that $|\mathcal{C}|< \tfrac{3}{4}|\Lambda_m|$ for every $\mathcal{C}\in\mathfrak{C}$. For convenience, we extend $\mathfrak{C}$ to $\overline{\mathfrak{C}}$ by including each closed vertex as a singleton. For every $\mathcal{C}\in \overline{\mathfrak{C}}$, let $\Delta \mathcal{C}=\partial^{\rm out}_{\Lambda_m} \mathcal{C}$ if $\mathcal{C}\in\mathfrak{C}$, and $\Delta \mathcal{C}=\{x\}$ if $\mathcal{C}=\{x\}$ is a closed singleton. In particular, the $*$-connected components $(\Delta_i \mathcal{C})_{i=1}^{\ell(\mathcal{C})}$ of $\Delta \mathcal{C}$, $\mathcal{C}\in \overline{\mathfrak{C}}$, are all fully closed and, by our assumption, satisfy the local isoperimetric inequality \eqref{eq:local_isoperimetric_ineq} with $c=c(1/4)$. Since each vertex can only appear in at most $2d+1$ many components $\Delta_i \mathcal{C}$, one has $(2d+1)|\mathcal{C}^*|\geq \sum_{\Delta_i \mathcal{C} \subset \mathcal{C}^* } |\Delta_i \mathcal{C}|$. Taking both sides to the power $d/(d-1)$ and using \eqref{eq:local_isoperimetric_ineq}, we conclude that
\begin{equation}
|\Lambda_m|=\sum_{\mathcal{C}\in\overline{\mathfrak{C}}} |\mathcal{C}| \leq \tfrac{1}{c} \sum_{\mathcal{C}\in\overline{\mathfrak{C}}} \sum_{i=1}^{\ell(\mathcal{C})} |\Delta_i \mathcal{C}|^{\frac{d}{d-1}} \leq \tfrac{1}{c'} \sum_{\mathcal{C}^*\in \mathfrak{C}^*} |\mathcal{C}^*|^{\frac{d}{d-1}},
\end{equation}
where $c'=c/(2d+1)^{\frac{d}{d-1}}$. 

Let us now assume that there exists a component $\mathcal{C}\in\mathfrak{C}$ such that $|\mathcal{C}|\geq \tfrac{3}{4}|\Lambda_m|$, but that $\sum_{\substack{\mathcal{S}\in\mathfrak{S}(\Lambda_m\setminus \mathcal{C})\\ |\mathcal{S}|\geq M}} |\mathcal{S}| > \varepsilon|\Lambda_m|$. 
For each $\mathcal{S}\in\mathfrak{S}(\Lambda_m\setminus \mathcal{C})$, let $\Delta \mathcal{S} \coloneqq \partial_{\Lambda_m}^{\rm in} \mathcal{S}$ and notice that $\Delta \mathcal{S}$ is closed, $*$-connected and satisfies the local isoperimetric inequality \eqref{eq:local_isoperimetric_ineq}. Using again that each vertex appears in at most $2d+1$ many $\Delta \mathcal{S}$, we conclude that
\begin{equation}
\varepsilon|\Lambda_m|< \sum_{ \substack{\mathcal{S}\in\mathfrak{S}(\Lambda_m\setminus \mathcal{C})\\ |\mathcal{S}|\geq M}} |\mathcal{S}| \leq \tfrac{1}{c} \sum_{ \substack{\mathcal{S}\in\mathfrak{S}(\Lambda_m\setminus \mathcal{C})\\ |\Delta \mathcal{S}|\geq (cM)^{(d-1)/d} }} |\Delta \mathcal{S}|^{\frac{d}{d-1}} \leq \tfrac{1}{c'} \sum_{\substack{ \mathcal{C}^*\in \mathfrak{C}^* \\ |\mathcal{C}^*|\geq (cM)^{(d-1)/d} }} |\mathcal{C}^*|^{\frac{d}{d-1}}.
\end{equation}

Combining the two last paragraphs, we have
\begin{equation}
    1-\mathbb{P}^{\textup{site}}_{p_0}[\mathcal{E}]\leq  \mathbb{P}^{\textup{site}}_{p_0} \Bigg[ \sum_{\mathcal{C}^*\in \mathfrak{C}^*} |\mathcal{C}^*|^{\frac{d}{d-1}} \geq c' |\Lambda_m| \Bigg]  + \mathbb{P}^{\textup{site}}_{p_0} \Bigg[ \sum_{\substack{ \mathcal{C}^*\in \mathfrak{C}^*\\ |\mathcal{C}^*|\geq (cM)^{{(d-1)}/{d}} }} |\mathcal{C}^*|^{\frac{d}{d-1}} \geq c'\varepsilon |\Lambda_m| \Bigg],
\end{equation}
where $\mathcal{E}$ is the event in \eqref{eq:Bernoulli_surf_dev}.
Under a certain probability measure $\mathbb{P}$, let ($\tilde{\mathcal{C}}^*_x)_{x\in \Lambda_m}$ be independent random variables, each distributed as the closed $*$-connected cluster of $x$ under $\mathbb{P}^{\textup{site}}_{p_0}$.
As proved in \cite[Lemma 2.3]{DP96}, for every increasing function $\rho:\N\to \R_+$, we have that $\sum_{\mathcal{C}^* \in\mathfrak{C}(\Lambda_m)} \rho(|\mathcal{C}^*|)$ is stochastically dominated by $\sum_{x\in\Lambda_m} \rho(|\tilde{\mathcal{C}}^*_x|)$.
Therefore,
\begin{equation}\label{eq:Bernoulli_surf_dom}
    1-\mathbb{P}^{\textup{site}}_{p_0}[\mathcal{E}]\leq  
    \mathbb{P} \Bigg[ \sum_{x\in\Lambda_m} |\tilde{\mathcal{C}}^*_x|^{\frac{d}{d-1}} \geq c' |\Lambda_m| \Bigg]  + \mathbb{P} \Bigg[ \sum_{x\in\Lambda_m } |\tilde{\mathcal{C}}^*_x|^{\frac{d}{d-1}} \mathbbm{1}\{|\tilde{\mathcal{C}}^*_x|\geq (c M)^{\frac{d-1}{d}}\} \geq c'\varepsilon |\Lambda_m| \Bigg].
\end{equation}
It is standard (see e.g.~\cite[Equation (3.8)]{DP96}) to prove that for $p_0<1$ sufficiently close to $1$, the tail $\mathbb{P}[|\tilde{\mathcal{C}}^*_x|\geq k]$ of the random variable $|\tilde{\mathcal{C}}^*_x|$ decays exponentially fast to $0$ in $k$. We can further choose $p_0<1$ such that $\mathbb{E}[|\tilde{\mathcal{C}}^*_x|]\leq c'/2$, and then choose $M=M(\varepsilon)\geq 1$ large enough such that $\mathbb{E}[|\tilde{\mathcal{C}}^*_x| \mathbbm{1}\{|\tilde{\mathcal{C}}^*_x|\geq (c M)^{\frac{d-1}{d}}\}]\leq c'\varepsilon/2$. It then follows from standard large deviation estimates for i.i.d.~random variables (see e.g.~\cite{durrett2019probability}) that both probabilities in the right hand side of \eqref{eq:Bernoulli_surf_dom} are decay exponentially in $|\Lambda_m|^{\frac{d-1}{d}} \asymp m^{d-1}$, as we wanted to prove.
\end{proof}

\section{The weak plus measure}

In this section, we consider the question of how small can a boundary field $\mathsf{h}_L$ be so that $\langle \cdot \rangle_{\Lambda_L,\beta,\mathsf{h}_L}\underset{L\to\infty}{\longrightarrow} \langle \cdot \rangle^+_\beta$. We show that the convergence holds for boundary fields satisfying $\mathsf{h}_L\leq \mathfrak{p}_{\Lambda_L}$ and $L^{d-1}\mathsf{h}_L\to\infty$, which was claimed in Remark~\ref{rem:def_p}. We first need the following lemma.

\begin{lemma}\label{lem: coupling lemma}
Let $\Lambda\subset \mathbb{Z}^d$ be a finite set and let $\eta\in \mathbb{R}^{\partial \Lambda}$ such that $|\eta|\leq \mathfrak{p}=\mathfrak{p}_{\Lambda}$. There exists a coupling $(\mathbb{P}_{\Lambda,\beta},\varphi^\mathfrak{p},\varphi^{\eta})$, $\varphi^\mathfrak{p}\sim \langle \cdot \rangle_{\Lambda,\beta,\mathfrak{p}}$, $\varphi^{\eta}\sim \langle \cdot \rangle_{\Lambda,\beta,\eta}$, such that $\mathbb{P}_{\Lambda,\beta}$-almost surely, $|\varphi^\mathfrak{p}_x|\geq |\varphi^{\eta}_x|$ and
\begin{equation}
 \mathbb{E}_{\Lambda,\beta}\left[\textup{sgn}(\varphi^\mathfrak{p}_x)-\textup{sgn}(\varphi^{\eta}_x) \mid |\varphi^\mathfrak{p}|,|\varphi^{\eta}|\right]
\geq 0
\end{equation}
for every $x\in \Lambda$.
\end{lemma}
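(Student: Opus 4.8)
The statement I want to prove is a simultaneous monotone coupling between the $\varphi^4$ measure with the maximal boundary field $\mathfrak{p}$ and the measure with an arbitrary boundary field $\eta$ satisfying $|\eta|\leq\mathfrak{p}$, in which the absolute value fields are ordered pointwise and, conditionally on the two absolute value fields, the signs of $\varphi^{\mathfrak{p}}$ dominate those of $\varphi^{\eta}$ in expectation. The natural route is through the random cluster representation of Section~\ref{sec:random_cluster} together with the Edwards--Sokal coupling, exactly as one would prove the analogous statement for the Ising model. First I would pass to the random cluster measures $\Psi^0_{\Lambda,\beta,\mathfrak{p}}$ and $\Psi^0_{\Lambda,\beta,\eta}$ (recall that boundary fields are treated on the same footing as magnetic fields in Definition~\ref{def: random cluster phi4}, and that for $\eta$ we replace $\eta$ by $|\eta|$ at the level of the absolute value field). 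By Proposition~\ref{prop:monotonicity_FK} (monotonicity of $\Psi$ in the magnetic field, using $|\eta|\leq\mathfrak{p}$), we have $\Psi^0_{\Lambda,\beta,\eta}\preccurlyeq \Psi^0_{\Lambda,\beta,\mathfrak{p}}$, hence by Strassen's theorem \cite{Strassen1965} there is a coupling $((\omega^{\eta},\mathsf a^{\eta}),(\omega^{\mathfrak{p}},\mathsf a^{\mathfrak{p}}))$ with $\mathsf a^{\eta}\leq \mathsf a^{\mathfrak{p}}$ and $\omega^{\eta}\leq\omega^{\mathfrak{p}}$ almost surely. Since $\mu^0_{\Lambda,\beta,\mathfrak{p}}$ is the law of $|\varphi^{\mathfrak{p}}|$ and $\mu^0_{\Lambda,\beta,\eta}$ that of $|\varphi^{\eta}|$ (Proposition~\ref{prop: phi4rc is annealed rndisingrc}), this already gives the first assertion $|\varphi^{\mathfrak{p}}_x|\geq |\varphi^{\eta}_x|$ once we extend the coupling to the spin fields.

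The second, more delicate assertion concerns the signs. The idea is to use the \emph{from percolation to spin model} direction of the Edwards--Sokal coupling: given $(\omega,\mathsf a)$, one samples i.i.d.\ signs $\sigma_{\mathcal C}$ over the clusters $\mathcal C$ of $\omega$, with $\sigma_{\mathcal C}\equiv 1$ for the cluster of $\mathfrak g$ (if $\eta$ has mixed signs one must be a little careful here---$\eta$ is not necessarily nonnegative, so the ghost edges encode $|\eta|$ and the actual sign of $\eta_x$ should be absorbed into a gauge transformation $\varphi_x\mapsto \mathrm{sgn}(\eta_x)\varphi_x$ on the boundary; alternatively one works directly with the Ising model in the random environment $\mathsf a$ with boundary field of the correct sign and invokes the monotonicity in boundary conditions of Proposition~\ref{prop:stoc_monotonicity} applied to the sign field). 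The key point is that, conditionally on $(\omega^{\mathfrak{p}},\mathsf a^{\mathfrak{p}},\omega^{\eta},\mathsf a^{\eta})$, the sign field $\mathrm{sgn}(\varphi^{\mathfrak{p}})$ has the law of an Ising model in environment $\mathsf a^{\mathfrak{p}}$ with ``$+$'' (via $\mathfrak M_\Lambda$) boundary, while $\mathrm{sgn}(\varphi^{\eta})$ has the law of an Ising model in environment $\mathsf a^{\eta}$ with boundary field $\eta$ of absolute value at most $\mathfrak M_\Lambda$; then Griffiths' inequality (which gives $\langle\sigma_x\rangle^{\mathrm{Ising},\xi}_{\Lambda,\beta,\mathsf h_2,\mathsf a}\geq \langle\sigma_x\rangle^{\mathrm{Ising},\xi}_{\Lambda,\beta,\mathsf h_1,\mathsf a}$ whenever $|\mathsf h_1|\leq \mathsf h_2$, as already used throughout Section~\ref{sec:random_cluster}) yields
\[
\langle \mathrm{sgn}(\varphi_x^{\mathfrak{p}}) \mid \mathsf a^{\mathfrak{p}}\rangle \;\geq\; \langle \mathrm{sgn}(\varphi_x^{\eta}) \mid \mathsf a^{\eta}\rangle
\]
for each $x$. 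Taking the Edwards--Sokal sign-resampling to be performed \emph{independently} on top of the already-coupled $(\omega,\mathsf a)$ pair, and integrating out the signs, this is precisely the claimed inequality $\mathbb E_{\Lambda,\beta}[\mathrm{sgn}(\varphi^{\mathfrak{p}}_x)-\mathrm{sgn}(\varphi^{\eta}_x)\mid |\varphi^{\mathfrak{p}}|,|\varphi^{\eta}|]\geq 0$, since the conditioning on the absolute value fields is exactly the conditioning on $(\mathsf a^{\mathfrak{p}},\mathsf a^{\eta})$ (and the coupling of $\omega^{\mathfrak{p}},\omega^{\eta}$ with these does not affect the sign expectations, which depend only on the environments and the Edwards--Sokal/Ising conditional law).

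\textbf{Main obstacle.} The one point that needs genuine care is reconciling the two requirements simultaneously: we need the \emph{same} probability space to carry (i) the pointwise domination $|\varphi^{\mathfrak p}|\geq|\varphi^\eta|$ of the absolute values and (ii) the conditional sign comparison. The cleanest way is to first build the Strassen coupling of the two absolute value fields (equivalently the two $\mu^0$ marginals), which is possible because $\mu^0_{\Lambda,\beta,\eta}\preccurlyeq \mu^0_{\Lambda,\beta,\mathfrak p}$---this itself follows from the conditional absolute value FKG of Proposition~\ref{prop: absolute value FKG} together with the increasing Radon--Nikodym derivative argument used in the proof of Proposition~\ref{prop:monotonicity_FK}, i.e.\ $\mathsf a\mapsto Z^{\mathrm{Ising},\xi}_{\Lambda,\beta,\mathfrak p,\mathsf a}/Z^{\mathrm{Ising},\xi}_{\Lambda,\beta,\eta,\mathsf a}$ is increasing by Griffiths. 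Then, \emph{given} the coupled pair $(|\varphi^\mathfrak p|,|\varphi^\eta|)$, one samples the two sign fields \emph{conditionally independently}, each from its Edwards--Sokal/Ising conditional law in its own environment; the sign comparison (ii) then holds $\mathbb P_{\Lambda,\beta}$-almost surely at the level of conditional expectations by Griffiths' inequality applied in each fixed environment, while (i) holds by construction. One should double-check that sampling the signs conditionally independently still produces the correct joint marginal laws $\langle\cdot\rangle_{\Lambda,\beta,\mathfrak p}$ and $\langle\cdot\rangle_{\Lambda,\beta,\eta}$ for $\varphi^\mathfrak p$ and $\varphi^\eta$ respectively---this is immediate since the law of $\varphi$ is determined by the law of $|\varphi|$ together with the conditional law of $\mathrm{sgn}(\varphi)$ given $|\varphi|$, and we have faithfully reproduced both. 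This disintegration sidesteps any need to couple the percolation configurations $\omega^{\mathfrak p},\omega^{\eta}$ at all, making the proof short and robust. I would write up the absolute-value domination $\mu^0_{\Lambda,\beta,\eta}\preccurlyeq\mu^0_{\Lambda,\beta,\mathfrak p}$ as a preliminary observation, then invoke Strassen, then the conditional sign comparison via Griffiths, then assemble.
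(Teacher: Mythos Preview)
Your proposal is correct and, by the time you reach the ``Main obstacle'' paragraph, matches the paper's proof essentially line for line: Strassen-couple the absolute value fields (the paper cites Lemma~2.13 of \cite{GunaratnamPanagiotisPanisSeveroPhi42022} for the needed domination $\mu^0_{\Lambda,\beta,\eta}\preccurlyeq\mu^0_{\Lambda,\beta,\mathfrak p}$), sample the two sign fields conditionally independently as Ising models in their respective environments, and compare the one-point functions via the Ising Ginibre/Griffiths inequality. Just be careful in the write-up that this last step must accommodate simultaneously the change of boundary field $\eta\to\mathfrak p$ \emph{and} the change of environment $\mathsf a^\eta\to\mathsf a^{\mathfrak p}$ (the coupling constants $\beta\mathsf a_x\mathsf a_y$ also increase), not ``each fixed environment'' as you wrote; the Ising Ginibre inequality handles both at once, and the paper invokes it in exactly this way.
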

\begin{proof}
Using Lemma 2.13 in \cite{GunaratnamPanagiotisPanisSeveroPhi42022} and Strassen's theorem \cite{Strassen1965}, we obtain a coupling $(\mathbb{P}_{\Lambda,\beta},\psi^\mathfrak{p},\psi^{\eta})$, where $\psi^\mathfrak{p}\sim \langle |\cdot |\rangle_{\Lambda,\beta,\mathfrak{p}}$ and $\psi^{\eta}\sim \langle |\cdot |\rangle_{\Lambda,\beta,\eta}$, such that $\mathbb{P}_{\Lambda,\beta}$-almost surely, 
\begin{equation}
    \psi^\mathfrak{p}_x\geq \psi^{\eta}_x \quad \text{for every } x\in \Lambda.
\end{equation}
Enlarging our probability space, we can assume that in the same probability space, there is a family $I=\{\sigma^\mathfrak{p}(\mathsf{a}) \sim \langle \cdot \rangle^{\textup{Ising},0}_{\Lambda,\beta,\mathfrak{p},\mathsf{a}}, \sigma^{\eta}(\mathsf{a}) \sim \langle \cdot \rangle^{\textup{Ising},0}_{\Lambda,\beta,\eta,\mathsf{a}}  \mid \mathsf{a}\in (\mathbb{R}^+)^{\Lambda}\}$ of independent Ising models that are also independent from $\psi^\mathfrak{p}$ and $\psi^{\eta}$. Now let $\varphi^\mathfrak{p}:=\psi^\mathfrak{p} \cdot \sigma^\mathfrak{p}(\psi^\mathfrak{p})$ and $\varphi^\eta:=\psi^\eta \cdot\sigma^\eta(\psi^\eta)$. It follows that $\varphi^\mathfrak{p}\sim \langle \cdot \rangle_{\Lambda,\beta,\mathfrak{p}}$, $\varphi^{\eta}\sim \langle \cdot \rangle_{\Lambda,\beta,\eta}$, and $|\varphi^\mathfrak{p}_x|\geq |\varphi^{\eta}_x|$ for every $x\in \Lambda$.
Moreover, by the Ginibre
inequality for the Ising model,
\begin{equation}
\mathbb{E}_{\Lambda,\beta}\left[\textup{sgn}(\varphi^\mathfrak{p}_x) -\textup{sgn}(\varphi^\eta_x)  \mid |\varphi^\mathfrak{p}|,|\varphi^\eta|\right]
=\langle \sigma_x  \rangle^{\textup{Ising},0}_{\Lambda,\beta,\mathfrak{p},\psi^\mathfrak{p}}-\langle \sigma_x \rangle^{\textup{Ising},0}_{\Lambda,\beta,\eta, \psi^\eta} \geq 0.
\end{equation}
\end{proof}

We now prove the main result of this section.

\begin{proposition}\label{prop: weak plus measure}
Let $\beta>\beta_c$. Let $\eta_L \in (\mathbb{R}^+)^{\partial \Lambda_L}$ be an external magnetic field such that $\eta_L\leq \mathfrak{p}_{\Lambda_L}$ and $L^{d-1}\inf_{x\in \partial \Lambda}\eta_L(x)\to\infty$ as $L\to \infty$. Then for every $x\in \mathbb Z^d$, 
\begin{equation}\label{eq:weak plus equal plus}
\lim_{L\to\infty}\langle \varphi_x \rangle_{\Lambda_L,\beta,\eta_L}= \langle \varphi_x \rangle^+_{\beta}.    
\end{equation}
As a consequence, the sequence of measures $(\langle \cdot \rangle_{\Lambda_L,\beta,\eta_L})_{L\geq 1}$ converges weakly towards $\langle\cdot\rangle_\beta^+$.
\end{proposition}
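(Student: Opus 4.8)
\textbf{Proof plan for Proposition~\ref{prop: weak plus measure}.} The strategy is to sandwich $\langle \varphi_x \rangle_{\Lambda_L,\beta,\eta_L}$ between quantities that both converge to $\langle \varphi_x \rangle^+_\beta$. The upper bound is immediate: since $\eta_L \leq \mathfrak{p}_{\Lambda_L}$, monotonicity in the magnetic field (Proposition~\ref{prop:stoc_monotonicity}, or the Ginibre inequality Proposition~\ref{prop:Ginibre}) gives $\langle \varphi_x \rangle_{\Lambda_L,\beta,\eta_L} \leq \langle \varphi_x \rangle_{\Lambda_L,\beta,\mathfrak{p}_{\Lambda_L}}$, and the right-hand side tends to $\langle \varphi_x \rangle^+_\beta$ by \eqref{eq:conv_plus}. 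So the entire content is the matching lower bound $\liminf_{L\to\infty}\langle \varphi_x \rangle_{\Lambda_L,\beta,\eta_L} \geq \langle \varphi_x \rangle^+_\beta$.

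For the lower bound I would use the coupling of Lemma~\ref{lem: coupling lemma} together with the Edwards--Sokal/random cluster picture and Theorem~\ref{thm:free_surface_tension}. Fix $x$, and for simplicity take $x=0$. Apply Lemma~\ref{lem: coupling lemma} on $\Lambda=\Lambda_L$ with $\eta=\eta_L$ to get a coupling $(\mathbb{P}_{\Lambda_L,\beta},\varphi^{\mathfrak p},\varphi^{\eta_L})$ with $|\varphi^{\mathfrak p}_y|\geq|\varphi^{\eta_L}_y|$ a.s.\ and $\mathbb{E}[\mathrm{sgn}(\varphi^{\mathfrak p}_0)-\mathrm{sgn}(\varphi^{\eta_L}_0)\mid |\varphi^{\mathfrak p}|,|\varphi^{\eta_L}|]\geq 0$. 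One then wants to show $\langle \varphi_0\rangle_{\Lambda_L,\beta,\mathfrak p_{\Lambda_L}} - \langle \varphi_0\rangle_{\Lambda_L,\beta,\eta_L} = \mathbb{E}[\varphi^{\mathfrak p}_0 - \varphi^{\eta_L}_0] \to 0$. Write this difference as $\mathbb{E}[(\psi^{\mathfrak p}_0-\psi^{\eta_L}_0)\,\mathrm{sgn}(\varphi^{\mathfrak p}_0)] + \mathbb{E}[\psi^{\eta_L}_0(\mathrm{sgn}(\varphi^{\mathfrak p}_0)-\mathrm{sgn}(\varphi^{\eta_L}_0))]$ in the notation of that lemma's proof. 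The first term is controlled by the fact (already used, e.g., in Proposition~\ref{prop: thick conv} via Cauchy--Schwarz and regularity Proposition~\ref{prop:regularity}) that the absolute value fields with boundary conditions $\mathfrak{p}$ and $\eta_L$ become close in the bulk: indeed $\psi^{\mathfrak p}_0-\psi^{\eta_L}_0\geq 0$ and $\mathbb{E}[\psi^{\mathfrak p}_0-\psi^{\eta_L}_0] = \langle |\varphi_0|\rangle_{\Lambda_L,\beta,\mathfrak p_{\Lambda_L}} - \langle |\varphi_0|\rangle_{\Lambda_L,\beta,\eta_L}$, which tends to $0$ because both converge to $\langle|\varphi_0|\rangle^+_\beta$ (the $\eta_L$ version sandwiched between the free measure $\nu_{\Lambda_L,\beta}$, whose absolute value field converges to $\langle|\varphi_0|\rangle^0_\beta = \langle|\varphi_0|\rangle^+_\beta$ by \eqref{eq: abs value equality}, and the $\mathfrak p$ version from above); combined with $|\mathrm{sgn}|\leq 1$ this term vanishes. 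The second (and main) term requires showing that $\mathbb{E}[\psi^{\eta_L}_0(\mathrm{sgn}(\varphi^{\mathfrak p}_0)-\mathrm{sgn}(\varphi^{\eta_L}_0))] \to 0$, equivalently that conditionally on the absolute value fields the Ising sign spin at $0$ has almost the same expectation under the two (random) environments with boundary fields $\mathfrak p$ and $\eta_L$ respectively.

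The crux is therefore the sign comparison, and I expect this to be the real obstacle. The clean way is to pass to the random cluster representation: in the environment given by $\psi^{\eta_L}$, the Ising two-to-ghost difference $\langle \sigma_0\rangle^{\mathrm{Ising},0}_{\Lambda_L,\beta,\mathfrak p,\psi}-\langle \sigma_0\rangle^{\mathrm{Ising},0}_{\Lambda_L,\beta,\eta_L,\psi}$ is bounded by the FK probability that $0$ is connected to $\partial \Lambda_L$ but \emph{not} to the ``ghost wired through $\eta_L$'', i.e.\ roughly $\Phi^{0}_{\Lambda_L,\beta,\mathfrak p\text{-env}}[\,0\leftrightarrow\partial\Lambda_L,\ 0\centernot\leftrightarrow\mathfrak g_{\eta_L}\,]$. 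Since $\eta_L$ puts field $\eta_L(y)$ at each boundary vertex, the probability that a given boundary vertex $y$ that is FK-connected to $0$ fails to connect to $\mathfrak g$ is $e^{-2\beta \eta_L(y)\mathsf a_y}\leq e^{-2\beta\,(\inf\eta_L)\,\mathsf a_y}$; Theorem~\ref{thm:free_surface_tension} (applied in the $\mathfrak p$ environment, or rather its consequence that $0$ connects to order $L^{d-1}$ many boundary vertices with high probability, together with regularity to control $\mathsf a_y$ from below on a positive fraction of them) forces $0$ to be connected to of order $L^{d-1}$ good boundary vertices, so the conditional probability of missing the ghost is at most $(1-\delta)^{cL^{d-1}\inf\eta_L}$-type, which $\to 0$ precisely under the hypothesis $L^{d-1}\inf_{x}\eta_L(x)\to\infty$. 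This is where the quantitative hypothesis on $\eta_L$ enters, and it mirrors the ``good sites with bounded degree, uniformly positive chance to connect to $\fg$'' mechanism of Propositions~\ref{prop: thick conv} and \ref{prop: unique half-space measure}. Once \eqref{eq:weak plus equal plus} holds for every $x$, the weak convergence of the full measure follows exactly as in Remark~\ref{rem: equality mag implies equality meas}: $(\langle\cdot\rangle_{\Lambda_L,\beta,\eta_L})_L$ is tight by regularity (Proposition~\ref{prop:regularity}), it is stochastically dominated by $\langle\cdot\rangle^+_\beta$, hence any weak subsequential limit is dominated by $\langle\cdot\rangle^+_\beta$ and has the same one-point functions at every $x$, so by Strassen it equals $\langle\cdot\rangle^+_\beta$.
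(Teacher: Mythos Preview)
Your overall architecture matches the paper's: upper bound by monotonicity; decompose the gap via the coupling of Lemma~\ref{lem: coupling lemma} into an absolute-value term (which vanishes because $\langle|\varphi_0|\rangle^0_\beta=\langle|\varphi_0|\rangle^+_\beta$, i.e.\ Proposition~\ref{prop: unique gibbs measure}) and a sign term; reduce the sign term to showing $\Psi^0_{\Lambda_L,\beta,\eta_L}[0\leftrightarrow\fg]\to\Psi^0_\beta[0\leftrightarrow\infty]$; deduce weak convergence as in Remark~\ref{rem: equality mag implies equality meas}. The paper does exactly this (it uses Cauchy--Schwarz plus $0\leq Y\leq 2$ to turn $\mathbb E[|\varphi^{\eta_L}_0|Y]$ into $\sqrt{2\,\mathbb E[(\varphi^{\eta_L}_0)^2]\,\mathbb E[Y]}$, reducing to $\mathbb E[Y]\to0$, i.e.\ to the sign convergence \eqref{eq: sign convergence}).

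The substantive gap is in your ``many boundary vertices'' step. You invoke Theorem~\ref{thm:free_surface_tension}, but that result only says a box $\Lambda_\ell$ connects to $\partial\Lambda_{c_1L}$ under $\Psi^0_{\Lambda_L,\beta}$; it does not yield $|\mathcal C_0\cap\partial\Lambda_L|\gtrsim L^{d-1}$ conditionally on $\{0\leftrightarrow\partial\Lambda_L\}$, which is what you need. The paper uses Theorem~\ref{thm: local uniqueness} instead: it renormalises by declaring a coarse site $x\in\Lambda_m$ open if $U(k,kx)$ holds, so the resulting site configuration $\gamma$ stochastically dominates highly supercritical Bernoulli percolation (via \cite{LSS97}). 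A Peierls-type argument on the $*$-connected exterior boundary then shows that at least $cL^{d-1}$ vertices of a face of $\partial\Lambda_m$ are $\gamma$-connected to $\Lambda_{m/2}$; combined with $\{0\leftrightarrow\partial\Lambda_k\}$ and $U(L)$, this gives $\mathcal B=\{0\leftrightarrow\partial\Lambda_{L-10k},\ |\mathcal C_0\cap\partial\Lambda_{L-10k}|\geq cL^{d-1}\}$ with $\Psi^0_{\Lambda_L,\beta,\eta_L}$-probability at least $\Psi^0_\beta[0\leftrightarrow\infty]-4\varepsilon$. From $\mathcal B$, the connection to $\fg$ is completed by observing that, for well-separated $x\in\mathcal C_0\cap\partial\Lambda_{L-10k}$, the events $\{\omega|_{E(\Lambda_{10k}(x))\cup\{u\fg\}}=1\}$ dominate i.i.d.\ Bernoulli of parameter $r\,\inf\eta_L$ (via FKG and monotonicity in volume applied to $\Psi^0_{\{u\},\beta,\eta_L}[\omega_{u\fg}=1]$, \emph{not} by bounding $\mathsf a_y$ from below as you suggest), and the hypothesis $L^{d-1}\inf\eta_L\to\infty$ finishes. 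In short: your plan is right, but the ``consequence'' you parenthesise is the heart of the argument and needs Theorem~\ref{thm: local uniqueness}, not Theorem~\ref{thm:free_surface_tension}.
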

\begin{proof} Assuming the first part, the second part of the statement follows from the strategy described in Remark \ref{rem: equality mag implies equality meas}. We therefore focus on proving \eqref{eq:weak plus equal plus}. We do the computation for $x=0$ as the general case follows by the same argument. 

We claim that it suffices to show that  
\begin{equation}\label{eq: sign convergence}
\lim_{L\to\infty}\langle \textup{sgn}(\varphi_0) \rangle_{\Lambda_L,\beta,\eta_L}= \langle \textup{sgn}(\varphi_0) \rangle^+_{\beta}.    
\end{equation}
Indeed, assume that \eqref{eq: sign convergence} holds and recall the monotone coupling $(\mathbb{P}_{\Lambda,\beta},\varphi^{\mathfrak{p}},\varphi^{\eta_L})$ of Lemma~\ref{lem: coupling lemma}. Let $Y=\textup{sgn}(\varphi^{\mathfrak{p}}_0)-\textup{sgn}(\varphi^{\eta}_0)$. Using the defining properties of the monotone coupling, the Cauchy-Schwarz inequality and the fact that $0\leq Y\leq 2$ we obtain that 
\begin{equation}
\begin{aligned}
\langle \varphi_0\rangle_{\Lambda_L,\beta,\mathfrak{p}_L}-\langle \varphi_0\rangle_{\Lambda_L,\beta,\eta_L}&=\mathbb{E}_{\Lambda,\beta}[(|\varphi^{\mathfrak{p}}_0|-|\varphi^{\eta_L}_0|)\textup{sgn}(\varphi^{\mathfrak{p}}_0)]+\mathbb{E}_{\Lambda,\beta}[|\varphi^{\eta_L}_0|Y] \\  
&\leq \mathbb{E}_{\Lambda,\beta}[|\varphi^{\mathfrak{p}}_0|-|\varphi^{\eta_L}_0|]+\sqrt{\mathbb{E}_{\Lambda,\beta}[(\varphi^{\eta_L}_0)^2]\mathbb{E}_{\Lambda,\beta}[Y^2]} \\
&\leq \mathbb{E}_{\Lambda,\beta}[|\varphi^{\mathfrak{p}}_0|-|\varphi^{\eta_L}_0|]+\sqrt{2\mathbb{E}_{\Lambda,\beta}[(\varphi^{\eta_L}_0)^2]\mathbb{E}_{\Lambda,\beta}[Y]}.
\end{aligned}  
\end{equation}
Since $\langle |\varphi_0|\rangle_{\Lambda,\beta,\eta_L}\geq \langle |\varphi_0|\rangle^0_{\Lambda,\beta}$, and $\Psi^0=\Psi^1$ by Proposition~\ref{prop: unique gibbs measure}, it follows that $\mathbb{E}_{\Lambda,\beta}[|\varphi^{\mathfrak{p}}_0|-|\varphi^{\eta_L}_0|]$ tends to $0$. Furthermore, $\mathbb{E}_{\Lambda,\beta}[Y]$ tends to $0$ by our assumption. We can thus conclude that 
\begin{equation}
\lim_{L\to\infty}\langle \varphi_0 \rangle_{\Lambda_L,\beta,\eta_L}= \langle \varphi_0 \rangle^+_{\beta}, 
\end{equation}
as desired.

We now proceed with the proof of \eqref{eq: sign convergence}.
By the Edwards--Sokal coupling and Proposition~\ref{prop: unique gibbs measure}, it suffices to prove that 
\begin{equation}\label{eq: theta convergence}
\lim_{L\to\infty} \Psi_{\Lambda_L,\beta,\eta_L}^0[0\longleftrightarrow \fg]= \Psi^0_{\beta}[0\longleftrightarrow \infty].    
\end{equation}
We will prove this by using a renormalisation argument. 
To this end, recall Theorem~\ref{thm: local uniqueness}. Let $\varepsilon>0$ and consider some $s\in (0,1)$ and $k\geq 1$ to be chosen in terms of $\varepsilon$. Let $L\geq 1$ be much larger than $k$, and consider boxes of the form $\Lambda_k(kx)$ for $x\in \Lambda_m$, where $m=\lfloor \frac{L-10k}{k} \rfloor$. We define a site percolation configuration $\gamma$ on $\Lambda_m$ by letting $\gamma_x=1$ if $U(k,kx)$ happens and $\gamma_x=0$ otherwise. As in the proof of Proposition~\ref{prop: slab sprinkling}, there exists $k=k(s)$ large enough, such that $\gamma$ dominates a Bernoulli site percolation $\mathbb{P}_{\Lambda_m,s}$ of parameter $s$. We will choose $s$ sufficiently close to $1$ (which corresponds to choosing $k$ large enough) below.

Let $P_m$ denote the projection of $\Lambda_{m/2}$ on a fixed side of $\partial \Lambda_m$. Let also $\mathcal{E}$ denote the event that the number of vertices $x\in P_m$ that are connected to $\Lambda_{m/2}$ by a path of open vertices is at least $m^{d-1}/2^{d}$. We will use a Peierls-type argument to show that $\mathcal{E}$ occurs with probability close to $1$ under $\mathbb{P}_{\Lambda_m,s}$. To this end, for $x\in \partial \Lambda_m$, let $\mathcal{C}_x$ denote the connected component of $x$ in $\Lambda_m$ consisting of open vertices, and define $\mathcal{C}=\bigcup_{x\in \partial \Lambda_m}\mathcal{C}_x$. For $x\in P_m$, let $K_x$ denote the column that contains $x$ and intersects $\Lambda_{m/2}$. Note that when there is no path of open vertices connecting a vertex $x\in P_m$ to  $\Lambda_{m/2}$, there exists a vertex $y\in \partial^{\mathrm{ext}} \mathcal{C}$ that lies in $K_x$. Indeed, let $z$ be the last vertex of $K_{x}$ that lies in $\mathcal{C}_x$, and let $y$ be the vertex in $K_x$ after $z$. Then $y$ lies in $\partial^{\mathrm{ext}} \mathcal{C}$, as claimed. We can now deduce that
\begin{equation}
\mathbb{P}_{A_m,s}[\mathcal{E}^c]\leq \mathbb{P}_{A_m,s}\left[|\partial^{\mathrm{ext}} \mathcal{C}|\geq m^{d-1}/2^d\right],    
\end{equation}
where we used that $|P_m|-m^{d-1}/2^{d}\geq m^{d-1}/2^{d}$.

To bound the latter probability, let $\Delta \mathcal{C}=\partial^{\mathrm{ext}} \mathcal{C} \cup \partial^{\rm ext} \Lambda_m$. Note that $\Delta \mathcal{C}$ is $*$-connected, since each $\partial^{\mathrm{ext}} \mathcal{C}_x$ is $*$-connected. Since the number of $*$-connected subgraphs of $\mathbb{Z}^d$ with $k$ vertices that contain $0$ is at most $e^{Ck}$ for some constant $C>0$, we can use a union bound to obtain that
\begin{equation}
\mathbb{P}_{\Lambda_m,s}\left[|\partial^{\mathrm{ext}} \mathcal{C}|\geq m^{d-1}/2^d\right]\leq \sum_{i\geq m^{d-1}/2^d}\exp\left(C|\partial^{\rm ext} \Lambda_m|+Ci\right) (1-s)^{i}.
\end{equation}
By choosing $s$ to be close enough to $1$, we can ensure that the latter is at most $\varepsilon$, which implies that
\begin{equation}
\limsup_{L\to\infty} \Psi_{\Lambda_L,\beta,\eta_L}^0[\gamma \in \mathcal{E}^c]\leq \varepsilon.
\end{equation}

Now, note that when the events $\mathcal{E}$ and $\{0 \longleftrightarrow \partial \Lambda_{m}\}$ happen under $\gamma$, and additionally, $\{0\longleftrightarrow \partial \Lambda_k\}$ and $U(L)$ happen under $\omega$, then the event $\mathcal{B}=\{0\longleftrightarrow \partial \Lambda_{L-10k}, |\mathcal{C}_0 \cap \partial \Lambda_{L-10k}|\geq  cL^{d-1}\}$ happens under $\omega$, for some $c>0$. By further increasing the value of $s$, we can assume that 
$\mathbb{P}_{\Lambda_m,s}[0 \longleftrightarrow \partial \Lambda_m]\geq 1-\varepsilon$. Choosing $L$ to be large enough so that $\Psi^0_{\Lambda_L,\beta,\eta_L}[U(L)]\geq 1-\varepsilon$ and $\Psi^0_{\Lambda_L,\beta,\eta_L}[0\longleftrightarrow \partial \Lambda_k]\geq \Psi^0_{\beta}[0\longleftrightarrow \partial \Lambda_k]-\varepsilon$, we obtain that
\begin{equation}
\Psi^0_{\Lambda_L,\beta,\eta_L}[\mathcal{B}]\geq \Psi^0_{\beta}[0\longleftrightarrow \partial \Lambda_k]-4\varepsilon\geq \Psi^0_{\beta}[0\longleftrightarrow \infty]-4\varepsilon.     
\end{equation}

It remains to show that conditionally on $\mathcal{B}$, $0$ is connected to $\fg$ with high probability.
Indeed, if some $x\in \Lambda_{L-10k}$ is connected to $0$, when we fully open $E(\Lambda_{10k}(x))$ and we also open an edge of the form $u\fg$ for some $u \in \Lambda_{10k}(x)$, $0$ is connected to $\fg$. Now note that 
\begin{equation}
\Psi^0_{\Lambda_{10k}(x),\beta,\eta_L}[\omega|_{E(\Lambda_{10k}(x))\cup\{u\fg\}}=1]\geq \Psi^0_{\Lambda_{10k}(x),\beta}[\omega|_{E(\Lambda_{10k}(x))}=1]
\Psi^0_{\{u\},\beta,\eta_L}[\omega_{u\fg}=1]\geq r\eta_L,
\end{equation}
for some $r>0$ by the FKG inequality and monotonicity in the volume.
It follows that the family of events $\{\omega|_{E(\Lambda_{10k}(x))\cup\{u\fg\}}=1\}$ for $x\in \mathcal{C}_0\cap \partial \Lambda_{L-10k}$ at distance at least $20k$ apart from each other stochastically dominates a sequence of Bernoulli random variables of parameter $r\eta_L>0$. Hence, conditionally on $\mathcal{B}$, $\bigcup_{x\in \mathcal{C}_0\cap \partial \Lambda_{L-10k}}\{\omega|_{E(\Lambda_{10k}(x))\cup\{u\fg\}}=1\}$ happens with high probability, where here we use that $L^{d-1}\eta_L\to\infty$. 
Thus, for every $L$ large enough we have 
\begin{equation}
\Psi^0_{\Lambda_L,\beta,\eta_L}[0\longleftrightarrow\fg \mid \mathcal{B}]\geq 1-\varepsilon.    
\end{equation}
It follows that
\begin{equation}
\liminf_{L\to\infty}\Psi^0_{\Lambda_L,\beta,\eta_L}[0\longleftrightarrow\fg]\geq (1-\varepsilon)(\Psi^0_{\beta}[0\longleftrightarrow \infty]-4\varepsilon).
\end{equation}
Since $\varepsilon$ is arbitrary, we can conclude that
\eqref{eq: theta convergence} holds, as desired. 
\end{proof}

\end{document}